\newtheorem{introtheorem}{Theorem}
\newtheorem{theorem}{Theorem}[section]
\newtheorem{introcitation}{Citation}
\newtheorem{Citation}[theorem]{Citation}
\newtheorem{corollary}[theorem]{Corollary}
\newtheorem{proposition}[theorem]{Proposition}
\newtheorem{lemma}[theorem]{Lemma}
\newtheorem{conjecture}[theorem]{Conjecture}
\newtheorem{introquestion}{Question}
\theoremstyle{definition}
\newtheorem{definition}[theorem]{Definition}
\newtheorem{example}{Example}
\newtheorem{introexample}{Example}
\newtheorem{notation}[theorem]{Notation}
\theoremstyle{remark}
\newtheorem{remark}[theorem]{Remark}
\newcommand{\F}{\mathbb{F}}
\newcommand{\N}{\mathbb{N}}
\newcommand{\Q}{\mathbb{Q}}
\newcommand{\R}{\mathbb{R}}
\newcommand{\Z}{\mathbb{Z}}
\newcommand{\defeq}{\mathrel{\mathop{:}}=}
\newcommand{\eqdef}{=\mathrel{\mathop{:}}}
\newcommand{\fac}{\leqslant}
\newcommand{\pinf}{\partial_\infty}
\newcommand{\hgt}{\operatorname{ht}}
\newcommand{\SigmaStd}{\operatorname{\underline{\Sigma}}}
\newcommand{\vstd}{\operatorname{\underline{v}}}
\newcommand{\thick}{\operatorname{th}}
\newcommand{\abs}[1]{\lvert #1 \rvert}
\newcommand\Set[2]{\{\,#1\mid#2\,\}}
\DeclareMathOperator{\Aut}{Aut}
\DeclareMathOperator{\BN}{BN}
\DeclareMathOperator{\CAT}{CAT}
\DeclareMathOperator{\Ch}{Ch}
\DeclareMathOperator{\conv}{conv}
\DeclareMathOperator{\dist}{dist}
\DeclareMathOperator{\End}{End}
\DeclareMathOperator{\GL}{GL}
\DeclareMathOperator{\Hom}{Hom}
\DeclareMathOperator{\hor}{hor}
\DeclareMathOperator{\id}{id}
\DeclareMathOperator{\Isom}{Isom}
\DeclareMathOperator{\lk}{lk}
\DeclareMathOperator{\Opp}{Opp}
\DeclareMathOperator{\op}{op}
\DeclareMathOperator{\PGL}{PGL}
\DeclareMathOperator{\pr}{pr}
\DeclareMathOperator{\RGD}{RGD}
\DeclareMathOperator{\SL}{SL}
\DeclareMathOperator{\SO}{SO}
\DeclareMathOperator{\SOL}{SOL}
\DeclareMathOperator{\st}{st}
\DeclareMathOperator{\St}{St}
\DeclareMathOperator{\supp}{supp}
\numberwithin{equation}{section}
\title[The $\Sigma$-invariants of $S$-arithmetic subgroups of Borel groups]
{The $\Sigma$-invariants of $S$-arithmetic subgroups of Borel groups}
\author[E.~Schesler]{Eduard Schesler}
\address{Fakult\"{a}t f\"{u}r Mathematik und Informatik, FernUniversit\"{a}t in Hagen, \newline 58084 Hagen, Germany}
\email{eduard.schesler@fernuni-hagen.de}
\thanks{The author was supported by the DFG grant WI 4079/4 within the SPP 2026 Geometry at infinity.}
\date{\today}
\begin{document}

\begin{abstract}
Given a Chevalley group $\mathcal{G}$ of classical type and a Borel subgroup $\mathcal{B} \subseteq \mathcal{G}$, we compute the $\Sigma$-invariants of the $S$-arithmetic groups $\mathcal{B}(\Z[1/N])$, where $N$ is a product of large enough primes.
To this end, we let $\mathcal{B}(\Z[1/N])$ act on a Euclidean building $X$ that is given by the product of Bruhat--Tits buildings $X_p$ associated to $\mathcal{G}$, where $p$ runs over the primes dividing $N$.
In the course of the proof we introduce necessary and sufficient conditions for convex functions on $\CAT(0)$-spaces to be continuous.
We apply these conditions to associate to each simplex at infinity $\tau \subset \pinf X$ its so-called parabolic building $X^{\tau}$, which we study from a geometric point of view.
Moreover, we introduce new techniques in combinatorial Morse theory, which enable us to take advantage of the concept of essential $n$-connectivity rather than actual $n$-connectivity.
Most of our building theoretic results are proven in the general framework of spherical and Euclidean buildings.
For example, we prove that the complex opposite each chamber in a spherical building $\Delta$ contains an apartment, provided $\Delta$ is thick enough and $\Aut(\Delta)$ acts chamber transitively on $\Delta$.
\end{abstract}

\maketitle

\setcounter{tocdepth}{1}

\section*{Introduction}

A group $G$ is said to be of \emph{type $F_n$} if it admits a classifying space with finite $n$-skeleton.
Being of type $F_n$ generalizes the fundamental concepts of finite generation, which is equivalent to type $F_1$, and finite presentability, which is equivalent to type $F_2$.
If $G$ is of type $F_n$ for every $n \in \N_0$, then we say that $G$ is of type $F_{\infty}$.
Regarding the appearance of ``finiteness'' in these properties, they are often referred to as finiteness properties.
Probably one of the most natural questions that was raised in the study of finiteness properties is whether for each $n \in \N_0$ there is a group of type $F_n$ but not of type $F_{n+1}$.
Meanwhile, there are many families of groups known to confirm this question, see e.g. \cite{BestvinaBrady97}, \cite{BuxKoehlWitzel2013}, \cite{SkipperWitzelZaremsky2019} and the examples below.
The first such family of solvable groups that was found consists of the groups
\[
A_n(\Z[1/p]) \defeq
\Small{\begin{pmatrix}
1 & \ast & \cdots & \cdots & \ast \\
0 & \ast & \cdots & \cdots & \ast \\
\vdots & \ddots & \ddots & \ddots & \vdots \\
0 & \dots & 0 & \ast & \ast \\
0 & \dots & \dots & 0 & 1
\end{pmatrix}}
\leq \GL_n(\Z[1/p]),
\]
where $p$ is a prime.
These groups, nowadays known as Abels' groups, where introduced by Abels~\cite{Abels1979} in the case $n = 4$ to provide the first examples of finitely presented solvable groups with non-finitely presented quotients.
Shortly afterwards, it was shown by Bieri~\cite{Bieri1980} that $A_n(\Z[1/p])$ is not of type $F_{n-1}$ for every $n \geq 2$.
Later, Abels and Brown~\cite{AbelsBrown85} resolved the question about finiteness properties of Abels groups by showing that each $A_n(\Z[1/p])$ is of type $F_{n-2}$.
Since then, Abels groups remained a rich source of groups with exotic properties (see e.g.~\cite{Cornulier2007},~\cite{Carrion2013},~\cite{Becker2019}) and several generalizations of them emerged.
Before proceeding to them, let us observe that $A_n(\Z[1/p])$ has an analogue in $\SL_n(\Z[1/p])$ that is given by
\[
\mathcal{A}_n(\Z[1/p]) \defeq
\left\{
\Small{\begin{pmatrix}
a_{11} & \ast & \cdots & \ast \\
0 & \ddots & \ddots & \vdots \\
\vdots & \ddots & \ddots & \ast \\
0 & \dots & 0 & a_{nn}
\end{pmatrix}}
\in \SL_n(\Z[1/p]) \ \Biggl\lvert \ v_p(a_{11}) = v_p(a_{nn}) \right\},
\]
where $v_p$ denotes the $p$-adic valuation.
To see that $A_n(\Z[1/p])$ and $\mathcal{A}_n(\Z[1/p])$ are indeed related, one can consider their images in $\PGL_n(\Q)$.
Then it is easy to see that $A_n(\Z[1/p])$ and $\mathcal{A}_n(\Z[1/p])$ are congruent after dividing out appropriate finite subgroups.
In particular, the finiteness properties of these groups coincide.
Since we will always work with Chevalley group schemes, such as $\SL_n$, it will be convenient for us to consider the groups $\mathcal{A}_n(\Z[1/p])$ from now on.
Let $\mathcal{B}_n(\Z[1/p]) \leq \SL_n(\Z[1/p])$ and $B_n(\Z[1/p]) \leq \GL_n(\Z[1/p])$ denote the corresponding subgroups of upper triangular matrices.
Note that we can describe $\mathcal{A}_n(\Z[1/p])$ as the kernel of a \emph{character} of $\mathcal{B}_n(\Z[1/p])$, i.e.\ a homomorphism to $\R$, that is given by
\[
\widetilde{\chi} \colon \mathcal{B}_n(\Z[1/p]) \rightarrow \R,\ (a_{ij}) \mapsto v_p(a_{11}) - v_p(a_{nn}).
\]
Similarly, there is a character of $B_n(\Z[1/p])$ whose kernel contains $A_n(\Z[1/p])$ as a subgroup of finite index.
In this article we study generalizations of $\mathcal{A}_n(\Z[1/p])$ by replacing
\begin{itemize}
\item $\widetilde{\chi}$ with an arbitrary character $\chi \in \Hom(\mathcal{B}_n(\Z[1/p]),\R)$,
\item $\Z[1/p]$ with the ring $\Z[1/N]$ for arbitrary $N \in \N$,
\item $\mathcal{B}_n$ with Borel subgroup schemes in general Chevalley group schemes.
\end{itemize}
The second type of generalization was considered by Lyul'ko~\cite{Lyulko86} who showed that $A_n(\Z[1/N])$ is finitely presented for all $n,N \in \N$ with $n \geq 4$.
In the case of certain $S$-arithmetic rings in positive characteristic, these generalizations were studied, among other things, in~\cite{CornulierTessera2013},~\cite{Rego2021}, and~\cite{Bux04} respectively for the second, the latter two, and all three types of replacements.
By varying the characters of $B_n(\Z[1/p])$, Witzel~\cite{Witzel13} provided examples of groups whose finiteness properties can differ arbitrarily much from their corresponding Bredon finiteness properties (see~\cite{FluchWitzel12} for the relevant background).
In fact, each of the characters in~\cite{Witzel13} has a kernel with the same (ordinary) finiteness properties as $A_n(\Z[1/p])$.
In view of the following result, which is a consequence of the $\SL_n$-case in Theorem~\ref{thm:B}, we see that the kernel of a character $\chi \in \Hom(\mathcal{B}_n(\Z[1/p]),\R)$ can have arbitrary finiteness properties.
To formulate it, let us fix some terminology. 
Let $\mathcal{U}_n(\Z[1/N]) \leq \mathcal{B}_n(\Z[1/N])$ denote the subgroup of unipotent matrices.
For every $1 \leq k < n$ and every prime $p \in \N$ dividing $N$, we define the character
\[
\chi_{k,p} \colon \mathcal{B}_n(\Z[1/N]) \rightarrow \R, (a_{i,j}) \mapsto v_p(a_{k+1,k+1})-v_p(a_{k,k}).
\]
Let $\Delta_{N,n}^{(0)}$ denote the union of these characters, which can be easily seen to be a basis of $\Hom(\mathcal{B}_n(\Z[1/N]),\R)$.
Let $C_{N,n}$ be the set of non-trivial characters of the form $\sum \limits_{\chi \in \Delta_{N,n}^{(0)}} \lambda_{\chi} \cdot \chi$, where all coefficients $\lambda_{\chi}$ are non-negative.
For each $k \in \N_0$ we write $C_{N,n}^{(k)} \subseteq C_{N,n}$ to denote the subset of characters $\sum \limits_{\chi \in \Delta_{N,n}^{(0)}} \lambda_{\chi} \cdot \chi$ with at most $k$ non-zero coefficients.

\begin{introtheorem}\label{introtheorem:Fn-SLn}
Let $k,n,N \in \N$ and let $H \leq \mathcal{B}_n(\Z[1/N])$ be a subgroup.
Suppose that $H$ contains $\mathcal{U}_n(\Z[1/N])$.
Then the following hold.
\begin{enumerate}
\item $H$ is of type $F_{\infty}$ if and only if $\chi(H) \neq 0$ for every $\chi \in C_{N,n}$.
\item If $H$ is of type $F_k$, then $\chi(H) \neq 0$ for every $\chi \in C_{N,n}^{(k)}$.
\item Suppose that every prime factor $p$ of $N$ satisfies $p \geq 2^{n-2}$.
Then $H$ is of type $F_k$ if and only if $\chi(H) \neq 0$ for every $\chi \in C_{N,n}^{(k)}$.
\end{enumerate}
\end{introtheorem}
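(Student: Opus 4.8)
The plan is to derive this statement from the main building-theoretic theorem, Theorem~\ref{thm:B}, by translating the hypothesis ``$H$ contains $\mathcal{U}_n(\Z[1/N])$'' into a statement about $\Sigma$-invariants. Recall that for a group $G$ of type $F_m$ and a character $\psi \in \Hom(G,\R)$, the finiteness length of $\kernel(\psi)$ is controlled by the $\BNSR$-invariants $\Sigma^k(G)$: if $[\psi] \in \Sigma^k(G)$ for all relevant classes then $\kernel(\psi)$ inherits type $F_k$, and conversely. Since $H \leq \mathcal{B}_n(\Z[1/N])$ contains the derived-subgroup-like piece $\mathcal{U}_n(\Z[1/N])$, the quotient $\mathcal{B}_n(\Z[1/N])/H$ is abelian, so $H$ is the kernel of the natural map to a quotient torus; equivalently $H = \bigcap_{\chi} \kernel(\chi)$ over the characters $\chi$ vanishing on $H$. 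By the subgroup criterion for $\Sigma$-invariants (the $\Sigma$-invariant of $G$ restricted to the subspace of characters vanishing on $H$ governs the finiteness properties of $H$), the property ``$H$ is of type $F_k$'' is equivalent to the statement that the sphere $S(H) \defeq \Set{[\chi]}{\chi \in \Hom(\mathcal{B}_n(\Z[1/N]),\R),\ \chi(H) = 0}$ of directions killing $H$ lies inside $\Sigma^k(\mathcal{B}_n(\Z[1/N]))$ — provided $\mathcal{B}_n(\Z[1/N])$ itself is of type $F_k$, which for $k \leq \infty$ follows from Theorem~\ref{thm:B} together with the fact that the full character sphere is then the relevant object.

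The first main step is therefore to record what Theorem~\ref{thm:B} says about $\Sigma^k(\mathcal{B}_n(\Z[1/N]))$: its complement in the character sphere is the union, over certain ``building-theoretic'' directions, of subspheres, and under the large-prime hypothesis $p \geq 2^{n-2}$ this description is exact in all degrees, while without it one only gets the stated inclusion in one direction (part~(2)) and the $F_\infty$ statement (part~(1)), since the $F_\infty$ case only needs the complement of $\Sigma^\infty$, which is a more robust invariant. The second step is to identify, combinatorially, which closed subspheres $S(H) = S(\chi_1,\ldots) $ are contained in $\Sigma^k$; this is exactly the condition that no ``bad'' subsphere of the complement is contained in $S(H)$. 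Unwinding the geometry of the Bruhat--Tits building $X = \prod_{p \mid N} X_p$ and the root-system combinatorics of type $A_{n-1}$, the bad directions correspond precisely to the characters $\chi_{k,p}$ and their non-negative combinations, i.e.\ to the cone $C_{N,n}$, with the ``height'' $k$ of a character (number of non-zero coefficients) matching the connectivity degree $k$ in the Morse theory. Thus $S(H) \subseteq \Sigma^k$ iff $H$ is not killed by any character in $C_{N,n}^{(k)}$, which is the asserted equivalence; part~(1) is the $k = \infty$ limit, and part~(2) is the always-available inclusion $\Sigma^k(\mathcal{B}_n(\Z[1/N])) \subseteq \{[\psi] : \psi \notin C_{N,n}^{(k)}\}$ coming from the ``negative'' (easy) half of the $\Sigma$-computation.

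The main obstacle is the bookkeeping in the second step: matching the abstract description of the non-$\Sigma^k$ locus coming from Theorem~\ref{thm:B} — phrased in terms of horospherical directions in the Euclidean building and the parabolic buildings $X^\tau$ — with the explicit cone $C_{N,n}$ spanned by the $\chi_{k,p}$. Concretely, one must check that a character $\chi = \sum_\chi \lambda_\chi \chi_{k,p}$ with exactly $j$ non-zero coefficients lies in the complement of $\Sigma^{j}$ but (generically) in $\Sigma^{j-1}$, which amounts to computing the connectivity of the relevant descending links / ascending subcomplexes and seeing that it jumps exactly at level $j$; this is where the large-prime hypothesis enters, guaranteeing the buildings are thick enough for the sharp connectivity estimates (via the ``opposite chamber contains an apartment'' result quoted in the abstract). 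Once this dictionary is in place, the three parts follow formally, and the reduction to $\GL_n$ / Abels' groups mentioned in the introduction is an immediate corollary via the finite-index and central-quotient comparisons already discussed.
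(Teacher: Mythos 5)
Your proposal follows the paper's own route: since $[\Gamma,\Gamma]\subseteq\mathcal{U}_n(\Z[1/N])\subseteq H$ for $\Gamma=\mathcal{B}_n(\Z[1/N])$, Renz's criterion (Theorem~\ref{thm:relation-sigma-fn}) reduces each finiteness statement for $H$ to the inclusion of the subsphere $\Set{[\chi]\in S(\Gamma)}{\chi(H)=0}$ in $\Sigma^k(\Gamma)$, and the three parts then follow formally from the three parts of the $\SL_n$-case of Theorem~\ref{thm:B} (Theorem~\ref{introthm:B-intro}), exactly as intended in the paper. Two small caveats: the relevant condition is that this subsphere does not \emph{meet} $\Delta_{N,n}^{(k-1)}$ (equivalently, no $\chi\in C^{(k)}_{N,n}$ vanishes on $H$), not that no ``bad subsphere is contained in'' it — your final equivalence does state this correctly — and your third paragraph overstates the remaining work, since once Theorem~\ref{thm:B} is cited no descending-link or connectivity verification is left to do; all that remains is the elementary identification of the basis characters $\chi_{\alpha,p}$ with the matrix-entry characters $\chi_{k,p}$, which is sign-sensitive bookkeeping rather than Morse theory.
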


As a consequence of Theorem~\ref{introtheorem:Fn-SLn}, we obtain an abundance of new examples of solvable groups with interesting finiteness properties.
Since each character $\chi$ of $\mathcal{B}_n(\Z[1/N])$ vanishes on $\mathcal{U}_n(\Z[1/N])$, we can apply Theorem~\ref{introtheorem:Fn-SLn} on the kernel of $\chi$ to obtain such examples.
Suppose that $\chi$ maps onto $\Z$.
Then every character that vanishes on $\ker(\chi)$ is given by $\lambda \cdot \chi$ for some $\lambda \in \R$.
Hence $\ker(\chi)$ is of type $F_{\infty}$ if and only if not all coefficients of $\chi$ with respect to $\Delta_{N,n}^{(0)}$ have the same sign.
Similarly, this case allows us to deduce that $\ker(\chi)$ is of type $F_k$ if $\chi$ has more than $k$ non-vanishing coefficients with respect to $\Delta_{N,n}^{(0)}$ and $p \geq 2^{n-2}$ for every prime factor $p$ of $N$.
Let us look at some examples.

\begin{introexample}\label{exa:1}
For every prime $p$, the group
\[
H_1 \defeq \left\{
\begin{pmatrix}
p^k & \ast & \ast \\
0 & 1 & \ast \\
0 & 0 & p^{-k} \\
\end{pmatrix}
\in \mathcal{B}_3(\Z[1/p]) \mid k \in \Z
\right\}
\]
is of type $F_{\infty}$.
To see this, we consider the character $\chi \defeq \chi_{1,p} - \chi_{2,p}$ of $\mathcal{B}_3(\Z[1/p])$ and a matrix $(a_{i,j}) \in \ker(\chi)$.
Then $v_p(a_{11}) - 2v_p(a_{22}) + v_p(a_{33}) = 0$.
Together with
\[
v_p(a_{11}) + v_p(a_{22}) + v_p(a_{33})
= v_p(\det((a_{i,j})))
= 0,
\]
this shows that a matrix $(a_{i,j}) \in \mathcal{B}_3(\Z[1/p])$ lies in $\ker(\chi)$ if and only if $v_p(a_{11}) = - v_p(a_{33})$ and $v_p(a_{22}) = 0$.
Since $\Z[1/p]^{\times} = \Set{\pm p^{k}}{k \in \Z}$, we see that $H_1$ is a finite index subgroup of $\ker(\chi)$, and is therefore of type $F_{\infty}$.
\end{introexample}

\begin{introexample}\label{exa:4}
Let $p,q$ be distinct primes.
The group
\[
\left\{
\begin{pmatrix}
p^{k_1}q^{\ell_1} & \ast & \ast \\
0 & p^{k_2}q^{\ell_2} & \ast \\
0 & 0 & p^{k_3}q^{\ell_3} \\
\end{pmatrix}
\in \mathcal{B}_3(\Z[1/pq]) \mid k_1+\ell_1+2 \ell_2 = 3 \ell_3+k_3
\right\},
\]
denoted by $H_2$, is of type $F_3$ but not $F_4$.
To see this, consider the character $\chi \defeq \chi_{1,p}+\chi_{2,p}+\chi_{1,q}+3\chi_{2,q}$, whose kernel is easily seen to contain $H_2$ as a finite index subgroup.
Now the claim follows since $\chi$ has exactly $4$ non-vanishing coefficients, all of which have the same sign.
\end{introexample}

\begin{introexample}\label{exa:3}
Let $k,n,N \in \N$ and let $S \subset \N$ be the set of prime factors of $N$.
Let $\chi$ denote the sum of all characters in $\Delta_{N,n}^{(0)}$.
By applying $\chi$ to a matrix $(a_{ij}) \in \mathcal{B}_n(\Z[1/N])$ we obtain
\[
\chi((a_{ij}))
= \sum \limits_{p \in S} \sum \limits_{k=1}^{n-1} \chi_{k,p}((a_{ij}))
= \sum \limits_{p \in S} v_p(a_{11}) - \sum \limits_{p \in S} v_p(a_{nn}).
\]
From this we see that the group
\[
H_3 \defeq \left\{
\SMALL{\begin{pmatrix}
\prod \limits_{p \in S} p^{e_p} & \ast & \cdots & \ast \\
0 & \ast & \ddots & \vdots \\
\vdots & \ddots & \ast & \ast \\
0 & \dots & 0 & \prod \limits_{p \in S} p^{f_p}
\end{pmatrix}}
\in \mathcal{B}_n(\Z[1/N]) \ \Biggl\lvert \ \sum \limits_{p \in S} e_p = \sum \limits_{p \in S} f_p \right\}
\]
is a finite index subgroup of $\ker(\chi)$.
In view of Theorem~\ref{introtheorem:Fn-SLn}, we can therefore deduce that $H_3$ is not of type $F_{\abs{S} \cdot (n-1)}$.
If moreover $p \geq 2^{n-2}$ for every $p \in S$, then $H_3$ is of type $F_{\abs{S} \cdot (n-1)-1}$.
\end{introexample}

\subsection*{$\Sigma$-invariants}

In order to prove Theorem~\ref{introtheorem:Fn-SLn}, we compute the so-called $\Sigma$-invariants of $\mathcal{B}_n(\Z[1/N])$.
These invariants had their origin in the study of metabelian groups.
In fact, the first definition of $\Sigma$-invariants, which was introduced by Bieri and Strebel in~\cite{BieriStrebel80}, was only applicable for metabelian groups.
After a series of generalizations (see e.g.~\cite{BieriNeumannStrebel87},~\cite{Brown87}, and \cite{Meigniez90}),
the version of $\Sigma$-invariants that will be studied in this article was defined by Bieri and Renz in~\cite{BieriRenz88}.
Unlike most invariants in group theory, the $\Sigma$-invariants of a finitely generated group $G$ are not algebraic structures themselves but rather appear as geometric subsets on a sphere associated to $G$.
This sphere, known as the \emph{character sphere} of $G$, is given by
\[
S(G) \defeq (\Hom(G,\R) \backslash \{0\})/ \hspace{-1.5mm} \sim,
\]
where $\chi \sim \psi$ if there is some $\lambda > 0$ with $\lambda \cdot \chi = \psi$.
If $G$ is of type $F_n$ for some $n \in \N \cup \{\infty\}$, then the $n$th $\Sigma$-invariant of $G$, denoted by $\Sigma^n(G)$, can be defined as a certain subset of $S(G)$.
Together these subsets form a decreasing sequence
\[
S(G) \supseteq \Sigma^1(G) \supseteq \Sigma^2(G) \supseteq \dots \supseteq \Sigma^{\infty}(G).
\]
To make this more precise, suppose that $G$ is of type $F_n$ and let $\chi$ be a non-trivial character of $G$.
Then we can choose a contractible cell complex $X$ on which $G$ acts freely such that $G \backslash X$ has finite $n$-skeleton.
Moreover, it is shown in~\cite[Konstruktion II.2.2]{Renz88} that there is a continuous function $h \colon X \rightarrow \R$ with $h(g(x)) = \chi(g) + h(x)$ for all $g \in G, x \in X$.
Using this terminology, $\chi$ represents a class in $\Sigma^n(G)$ if and only if the system $(h^{-1}([r,\infty)))_{r \in \R}$ is essentially $(n-1)$-connected (see Definition~\ref{def:essentially-connected}).
One of the main motivations in studying $\Sigma$-invariants is their relation to finiteness properties of groups.
This can be demonstrated by the following result, which was proven by Renz in his thesis~\cite[Satz C]{Renz88}.

\begin{introcitation}\label{introcitation:relation-sigma-fn-intro}
Let $G$ be a group of type $F_n$ and let $[G,G] \subseteq H \subseteq G$ be a subgroup.
Then $H \text{ is of type } F_n \Leftrightarrow \Set{\chi \in S(G)}{\chi(H)=0} \subseteq \Sigma^n(G)$.
\end{introcitation}

In particular, this result tells us that the $\Sigma$-invariants of a group $G$ contain the information about the finiteness properties of all subgroups $[G,G] \subseteq H \subseteq G$.
This also indicates that computing the $\Sigma$-invariants of a group is very difficult in general.
In fact, there are not many examples of groups available of which all $\Sigma$-invariants are known.
In the case of right-angled Artin groups a full computation of its $\Sigma$-invariants was achieved by Meier, Meinert, and VanWyk~\cite{MeierMeinertVanWyk98} and reproved by Bux and Gonzalez with more geometric methods~\cite{BuxGonzalez99}.
For Thompsons group $F$, all $\Sigma$-invariants were computed by Bieri, Geoghegan, and Kochloukova in~\cite{BieriGeogheganKochloukova2010}.
The main goal of this article is to determine the $\Sigma$-invariants of the $S$-arithmetic groups $\mathcal{B}(\Z[1/N])$, where $\mathcal{B}$ is a Borel subgroup scheme in a Chevalley group scheme.
For $S$-arithmetic rings $\mathcal{O}_S$ in positive characteristic, partial computations of the $\Sigma$-invariants of $\mathcal{B}(\mathcal{O}_S)$ where obtained by Bux~\cite{Bux04}.
In the case of Chevalley groups of classical type, we completely determine the $\Sigma$-invariants of $\mathcal{B}(\Z[1/N])$ if the prime factors of $N$ are large enough.
Let us formulate our main result for the groups $\mathcal{B}_{n}(\Z[1/N])$, see Theorem~\ref{thm:B} for the general case.
To this end, we consider the images of $C_{N,n}$ and $C_{N,n}^{(k)}$ in $S(\mathcal{B}_{n}(\Z[1/N]))$, which we denote by $\Delta_{N,n}$, respectively $\Delta_{N,n}^{(k-1)}$.

\begin{introtheorem}\label{introthm:B-intro}
Let $n,N \in \N$ and let $S \subseteq \N$ be the set of prime factors of $N$.
The $\Sigma$-invariants of $\Gamma \defeq \mathcal{B}_n(\Z[1/N])$ satisfy the following.
\begin{enumerate}
\item $\Sigma^{\infty}(\Gamma) = S(\Gamma) \backslash \Delta_{N,n}$.
\item $\Sigma^k(\Gamma) \subseteq S(\Gamma) \backslash \Delta_{N,n}^{(k-1)}$ for every $k \in \N$.
\item If every $p \in S$ satisfies $p \geq 2^{n-2}$, then $\Sigma^k(\Gamma) = S(\Gamma) \backslash \Delta_{N,n}^{(k-1)}$ for every $k \in \N$.
\end{enumerate}
\end{introtheorem}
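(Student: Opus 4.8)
\medskip
\noindent\textbf{Proof strategy.} This is the instance $\mathcal{G} = \SL_n$ of the general theorem on $S$-arithmetic subgroups of Borel groups (Theorem~\ref{thm:B}); since that theorem is not yet available, I sketch the underlying argument directly and leave the (routine) translation of the combinatorics to the end. Let $S$ be the set of prime divisors of $N$, let $X_p$ be the Bruhat--Tits building of $\SL_n(\Q_p)$, and let $\Gamma \defeq \mathcal{B}_n(\Z[1/N])$ act on the Euclidean building $X \defeq \prod_{p \in S} X_p$. The $p$-adic Iwasawa decomposition $\SL_n(\Q_p) = \mathcal{B}_n(\Q_p)\,\SL_n(\Z_p)$ makes $\mathcal{B}_n(\Q_p)$ transitive on the type-$0$ vertices of $X_p$, and --- this is where solvability of $\mathcal{B}_n$ is used --- $\Gamma$ turns out to be cocompact in $\prod_{p \in S}\mathcal{B}_n(\Q_p)$; hence $\Gamma$ acts cocompactly on the contractible complex $X$ with cell stabilizers commensurable with $\mathcal{B}_n(\Z)$, which are virtually polycyclic and so of type $F_\infty$. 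In particular $\Gamma$ is of type $F_\infty$, so all the $\Sigma^k(\Gamma)$ are defined. Moreover $\Gamma$ fixes the chamber $\sigma_0 \subset \pinf X$ that is fixed by $\mathcal{U}_n(\Z[1/N])$, and the diagonal torus acts on every apartment of $X$ asymptotic to $\sigma_0$ by translations; identifying $\chi_{k,p}$ with the corresponding coordinate functional on these translation lattices, every non-trivial $\chi \in \Hom(\Gamma,\R)$ extends to a $\chi$-equivariant continuous Morse function $h_\chi \colon X \to \R$ (with $h_\chi(g.x) = h_\chi(x) - \chi(g)$), and the continuity criterion for convex functions on $\CAT(0)$-spaces is what makes this extension work on the product building. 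By the Bieri--Renz criterion (Definition~\ref{def:essentially-connected}), $[\chi] \in \Sigma^m(\Gamma)$ if and only if the superlevel filtration $\bigl(h_\chi^{-1}([r,\infty))\bigr)_{r \in \R}$ of $X$ is essentially $(m-1)$-connected.

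For the upper bound --- $\Sigma^m(\Gamma) \cap \Delta_{N,n}^{(m-1)} = \emptyset$, which is statement~(2) together with the ``$\subseteq$'' halves of (1) and (3), and which needs no hypothesis on the primes --- suppose $\chi$ is represented by a non-negative combination of the $\chi_{k,p}$ supported on a set $J$ with $|J| = \ell \le m$. The support $J$ singles out a face $\tau$ of $\sigma_0$ with $\dim \tau = \ell - 1 \le m-1$, and retracting $X$ horospherically towards $\tau$ shows that the superlevel filtration of $h_\chi$ retains, up to essential equivalence of filtrations, the homotopy type of the parabolic building $X^{\tau}$ attached to $\tau$. A geometric analysis of $X^{\tau}$ --- which reflects the $S$-arithmetic parabolic subgroup of $\mathcal{B}_n$ determined by $\tau$ --- shows it behaves like a building of dimension $\dim\tau$: it is essentially $(\dim\tau - 1)$-connected but not essentially $(\dim\tau)$-connected, the persistent class in degree $\dim\tau$ coming ultimately from the failure of finite generation of the one-parameter root subgroups $\cong(\Z[1/N],+)$, in the Stallings--Bieri--Abels fashion in which each non-zero coefficient of $\chi$ costs one degree of finiteness. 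Hence the filtration is not essentially $(\dim\tau)$-connected, so $[\chi] \notin \Sigma^{\dim\tau + 1}(\Gamma)$, and since $\dim\tau + 1 \le \ell \le m$ also $[\chi] \notin \Sigma^m(\Gamma)$; letting $m \to \infty$ gives $\Sigma^\infty(\Gamma) \subseteq S(\Gamma) \setminus \Delta_{N,n}$.

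The lower bound has a soft half and a quantitative half. The soft half yields the ``$\supseteq$'' of (1): if $\chi$ has non-zero coefficients of both signs then $h_\chi$ is modelled on a direction not asymptotic to $\sigma_0$, the superlevel filtration is essentially $\infty$-connected (equivalently, $\ker(\chi)$ is of type $F_\infty$), and the argument uses no thickness bound on the $\F_p$-flag complexes, hence no condition on the primes. The quantitative half yields the ``$\supseteq$'' of (3): if $[\chi] \notin \Delta_{N,n}^{(m-1)}$ then the filtration is essentially $(m-1)$-connected. Combinatorial Morse theory reduces this to a uniform connectivity estimate for the descending links, which by the product decomposition $X = \prod_{p \in S} X_p$ are joins $\ast_{p \in S} \lk_\downarrow(v_p)$ whose factors are subcomplexes of the residue buildings $\lk_{X_p}(v_p)$ --- flag complexes over $\F_p$ of type $A_{n-1}$ --- consisting of the simplices in suitably general position relative to $e(\chi)$. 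When $\chi$ has more than $m$ non-zero coefficients (or coefficients of mixed sign), $e(\chi)$ is in general enough position that each factor is as highly connected as the ambient spherical building, and the join is then $(m-1)$-connected; the decisive input is the paper's theorem that in a sufficiently thick spherical building with chamber-transitive automorphism group the complex opposite a chamber contains an apartment, together with the connectivity estimates for opposition complexes that it yields, and $p \ge 2^{n-2}$ is precisely the ``sufficiently thick'' bound for the $\F_p$-flag complexes of type $A_{n-1}$ (whose panels carry $p+1$ chambers). Because the superlevel sets are only essentially, not actually, highly connected, this step is run through the new combinatorial Morse theory adapted to essential $n$-connectivity.

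I expect the main obstacle to be exactly this quantitative lower bound: establishing the sharp connectivity of the opposition and parabolic subcomplexes inside the $\F_p$-flag complexes under the precise hypothesis $p \ge 2^{n-2}$, and keeping careful track of the join over the $|S|$ factors so that a character with few non-zero coefficients genuinely only ``sees'' a low-dimensional simplex of $\pinf X$. Once this is in place, what remains is routine: the dictionary between the basis $\{\chi_{k,p}\} = \Delta_{N,n}^{(0)}$, the cones $C_{N,n}$ and $C_{N,n}^{(k)}$, and the root-theoretic data underlying Theorem~\ref{thm:B}; the assembly of statements (1)--(3) from the upper bound, the soft lower bound, and --- under $p \ge 2^{n-2}$ --- the quantitative lower bound; and the deduction of Theorem~\ref{introtheorem:Fn-SLn} from Citation~\ref{introcitation:relation-sigma-fn-intro}.
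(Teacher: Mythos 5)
Your global architecture is the paper's: let $\Gamma$ act diagonally on $X_S=\prod_{p\in S}X_p$, check cocompactness and type $F_\infty$ of cell stabilizers, use Theorem~\ref{thm:sigmas-and-stabilizers} to translate $[\chi]\in\Sigma^m(\Gamma)$ into essential $(m-1)$-connectivity of the superlevel filtration of an equivariant height function $h_\chi\in X_S^\ast$, treat mixed-sign characters by flowing into $\sigma_0$ (contractible superlevel sets), reduce characters with few nonzero coefficients via the parabolic building, and let the bound $p\ge 2^{n-2}$ enter through opposition complexes in the $\F_p$-links of type $A_{n-1}$. But the two nontrivial halves are not just imprecise; the key inputs are attached to the wrong halves, and one half has no mechanism at all. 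For the upper bound (statement~(2)) you offer only that the persistent class comes ``from the failure of finite generation of the root subgroups, in the Stallings--Bieri--Abels fashion''; that is not an argument that the filtration fails essential $(\ell-1)$-acyclicity. In the paper this is the content of Section~\ref{sec:neg-dir}: one takes an apartment inside $\Opp_{\pinf X}(\sigma)$ --- this is where Proposition~\ref{prop:existence-of-opposite-app} (opposition complexes contain apartments; applied to $\pinf X$, which has \emph{infinite} thickness, hence no condition on the primes) is used --- cones it into the building, and shows via the mod-$2$ chains with branching-number coefficients, the uniqueness of bounding chains (Lemma~\ref{lem:unique-bounding-disc}), and a height-shifting automorphism coming from the torus (Lemmas~\ref{lem:cocompactness-torus} and~\ref{lem:lifting-shifts}) that these boundary cycles survive arbitrarily far down the filtration (Theorem~\ref{thm:negative-global-geom}). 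Conversely, for the lower bound (statement~(3)) you name as ``the decisive input'' the chamber-transitivity apartment theorem ``together with the connectivity estimates it yields''; it yields no connectivity estimates --- it only gives non-contractibility. The sphericity of opposition complexes in the finite links is Abramenko's theorem (Theorem~\ref{thm:connectivity-of-opp}), an external citation, and that is exactly where $p\ge 2^{n-2}$ comes from (via the $(\SOL)$ property feeding into Theorem~\ref{thm:essentially-connected}). So the roles of the two opposition-complex results, and of the thickness hypothesis, are swapped relative to what the proof actually needs.

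A secondary but real slip is the parabolic-building bookkeeping: the reduction (Corollary~\ref{cor:reduction}) collapses along the \emph{horizontal} face $\sigma_{\hor}$ of $\sigma_0$, spanned by the directions on which $h_\chi$ is constant, i.e.\ determined by the \emph{vanishing} coefficients of $\chi$, not by the support $J$. For $|J|=\ell$ one gets $\dim\sigma_{\hor}=\dim X-\ell-1$ and a reduced building of dimension $\ell$, which is essentially $(\ell-2)$-connected but not essentially $(\ell-1)$-acyclic in its top degree; your face ``of dimension $\ell-1$'' and a building ``of dimension $\dim\tau$'' do not match this, even though your final degree count accidentally agrees. (Also, your equivariance convention $h_\chi(g.x)=h_\chi(x)-\chi(g)$ is the height function of $-\chi$ in the paper's normalization; with superlevel sets this flips which cone of characters is excluded, so fix the sign before assembling (1)--(3).)
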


This partially confirms a conjecture of my PhD-supervisor Stefan Witzel.
In the setting of Theorem~\ref{introthm:B-intro} his conjecture says that for each $k \in \N$ the equality \[
\Sigma^k(\Gamma) = S(\Gamma) \backslash \Delta_{N,n}^{(k-1)},
\]
holds without any restriction on the prime factors of $N$, see Conjecture~\ref{conjecture:large-factors} for the general Conjecture.

\subsection*{Euclidean buildings}

To prove Theorem~\ref{introthm:B-intro}, we consider the action of $\mathcal{B}_n(\Z[1/N])$ on the product of Bruhat--Tits buildings $X_p$ corresponding to $\SL_n(\Q_p)$, where $p$ is a prime factor of $N$.
Let $X$ denote this product.
The cocompactness of this action and the fact that all cell stabilizers are of type $F_{\infty}$ will allows us to translate the problem of determining the $\Sigma$-invariants of $\mathcal{B}_n(\Z[1/N])$ into a natural geometric problem in $X$.
To state this problem, we consider an apartment $A$ in $X$, a chamber at infinity $\sigma \subseteq \pinf A$, and the corresponding retraction $\rho \defeq \rho_{\sigma,A} \colon X \rightarrow A$ (see Subsection~\ref{subsec:height-on-X} for a construction of $\rho$).
Since $X$ is a Euclidean building, we can view $A$ as a real vector space.
In particular, we can define the dual space of $A$, which we denote by $A^{\ast}$.
By precomposing the linear forms in $A^{\ast}$ with $\rho$, we obtain the vector space
$X^{\ast}_{\sigma} = \Set{\alpha \circ \rho}{\alpha \in A^{\ast}}$.
We think of each $h \in X^{\ast}_{\sigma}$ as a height function on $X$.
Using this point of view, we consider the filtration of $X$ that is given by the superlevelsets $X_{h \geq r} \defeq h^{-1}([r,\infty))$ for every $r \in \R$.
Now the geometric problem in $X$ mentioned above can be stated as follows.

\begin{introquestion}\label{que:essential-connectivity}
Given height function $h \in X^{\ast}_{\sigma}$ and an integer $k \geq 0$, is the system $(X_{h \geq r})_{r \in \R}$ essentially $k$-connected?
\end{introquestion}

Most part of this article is devoted to answer Question~\ref{que:essential-connectivity} in some generality.
In the $\SL_n$-case, we can formulate an ad-hoc version of our main result on Question~\ref{que:essential-connectivity} as follows, see Theorem~\ref{thm:A-new} for the general case.

\begin{introtheorem}\label{introtheorem:C}
Let $\alpha \in A^{\ast} \setminus \{0\}$, let $\eta \defeq \pinf(\alpha^{-1}([0,\infty))) \subseteq \pinf A$, and let $k$ denote the dimension of $\eta \cap \overline{\sigma}$.
Then $h \defeq \alpha \circ \rho \in X_{\sigma}^{\ast}$ satisfies the following.
\begin{enumerate}
\item If $k = \dim(\sigma)$, then $(X_{h \geq r})_{r \in \R}$ is essentially contractible.
\item If $k < \dim(\sigma)$, then $(X_{h \geq r})_{r \in \R}$ is not essentially $(\dim(X)-k-2)$-connected.
\item Suppose that each prime factor $p$ of $N$ satisfies $p \geq 2^{n-2}$ and that $k < \dim(\sigma)$.
Then $(X_{h \geq r})_{r \in \R}$ is essentially $(\dim(X)-k-3)$-connected.
\end{enumerate}
\end{introtheorem}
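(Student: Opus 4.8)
The plan is to derive all three statements from one Morse-theoretic study of the filtration $(X_{h\ge r})_{r\in\R}$ of the Euclidean building $X$. The function $h=\alpha\circ\rho$ is convex, being the composite of the linear form $\alpha$ with the retraction $\rho=\rho_{\sigma,A}$, and it is continuous by the continuity criterion for convex functions on $\CAT(0)$-spaces established earlier; hence, after a generic piecewise-linear perturbation adapted to the polytopal structure of $X$, I can use $h$ as a Morse function. As usual in combinatorial Morse theory, the essential connectivity of $(X_{h\ge r})_r$ is then governed by the connectivity of the descending links $\lk^{\downarrow}(v)$ of cells $v$ of $X$: lowering $r$ to $r'<r$ amounts to coning off the descending links of cells with height in $(r',r]$. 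The crucial point — and the reason the new techniques in combinatorial Morse theory are needed — is that it suffices to control the $\lk^{\downarrow}(v)$ only up to \emph{essential} $n$-connectivity, which is exactly what makes the bound in (3) reachable.

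The first substantive step is to identify $\lk^{\downarrow}(v)$ geometrically. Because $\rho$ ``unfolds $X$ away from $\sigma$'', I expect $\lk^{\downarrow}(v)$ to be, up to the perturbation, the subcomplex of the spherical building $\lk_X(v)$ spanned by the chambers that are far from the face of $\lk_X(v)$ determined by $\sigma$; via the product splitting $X=\prod_{p\mid N}X_p$ this is a join of the analogous subcomplexes in the factors with a horizontal round sphere. The integer $k=\dim(\eta\cap\overline{\sigma})$ records how large the ``forbidden'' face is, and therefore by how much the connectivity of $\lk^{\downarrow}(v)$ drops below the Solomon--Tits value $\dim X-1$. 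I also expect the geometry of the parabolic building $X^{\tau}$ attached to $\tau\defeq\eta\cap\overline{\sigma}$ to describe the behaviour of the filtration as $r\to\infty$, with its Solomon--Tits spheres living in the critical dimension $\dim X-k-2$.

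For (1), where $k=\dim\sigma$ and hence $\tau=\overline{\sigma}$ (equivalently $\alpha\ge0$ on all of $\overline{\sigma}$), moving towards $\sigma$ never lowers $h$, so every descending link is empty or contractible and Morse theory identifies each $X_{h\ge r}$ up to homotopy with an exhausting family of contractible subspaces of the $\CAT(0)$-space $X$; alternatively I could push $X_{h\ge r}$ up towards $\sigma$ along geodesics, using convexity and continuity of $h$ to see the flow is defined. Either way $(X_{h\ge r})_r$ is essentially contractible. For the negative statement (2) I would produce a class in degree $\dim X-k-2$ surviving the whole filtration: locate inside $X$ a thin model of $X^{\tau}$ — a single apartment, or a controlled union of apartments sharing $\tau$ at infinity — on which $h$ restricts to an honest affine function whose superlevel sets are half-space-like and carry a sphere $Z$ of that dimension near their boundary at infinity; since $\rho$ is distance non-increasing with $h\circ\rho=h$, any filling of $Z$ inside some $X_{h\ge s}$ would push forward under $\rho$ to a filling in $A$ of height bounded below, contradicting non-triviality of the corresponding half-apartment cycle there for dimension reasons. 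This uses no thickness hypothesis, matching the hypotheses of (2).

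For the positive statement (3), under the bound $p\ge2^{n-2}$ for all $p\mid N$, the task reduces to showing $\lk^{\downarrow}(v)$ is essentially $(\dim X-k-3)$-connected. With the join/product description above, this becomes a connectivity estimate for subcomplexes of the spherical buildings $\lk_{X_p}(v)$ of the shape ``chambers opposite, or far from, a face of cotype determined by $\tau$''; this is precisely where the structural results of the paper enter, since under chamber-transitivity and sufficient thickness — guaranteed by $p\ge2^{n-2}$ — the complex opposite a chamber contains an apartment and its higher analogues are highly connected, and combining this with Solomon--Tits and a join/spectral-sequence computation yields the required bound, in the essential sense. The Morse machinery then gives essential $(\dim X-k-3)$-connectivity of $(X_{h\ge r})_r$. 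I expect the main obstacle to be exactly this last point: pinning down the sharp essential connectivity of the descending links, i.e.\ (i) the precise description of $\lk^{\downarrow}(v)$ as a ``far-from-$\tau$'' subcomplex of a product of spherical buildings, (ii) a clean connectivity bound for such subcomplexes — the step that forces the thickness hypothesis and produces the constant $2^{n-2}$ — and (iii) the bookkeeping converting actual connectivity into essential connectivity, so that the single-dimension gap between (2) and (3) is all that is lost.
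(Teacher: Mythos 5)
Your treatment of (1) is essentially the paper's own argument (flow along geodesics towards an interior point of $\sigma$, then use convexity of a sector intersected with a superlevel set), so that part is fine. The serious problems are in (2) and (3).

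For (3) and the underlying generic case, the mechanism you propose --- perturb $h$ to a Morse function, note that lowering $r$ cones off descending links, and identify those links as ``far from the face determined by $\sigma$'' (generalized Schulz) subcomplexes whose essential connectivity one then estimates --- is exactly the route the paper argues is a dead end: the natural generalization of the Bux--Wortman Morse function fails to produce the correct essential connectivity properties \emph{no matter how connected} these generalized Schulz complexes are, and your step (ii) (a clean connectivity bound for ``far-from-$\tau$'' subcomplexes) is precisely the unresolved hard point that the paper never proves and instead circumvents. The actual proof works in the fixed apartment with the upper complex $U_h(r)=\bigcup_{h(w)\geq r}\overline{K_w(\sigma^{\op})}$, deconstructs $\sigma$-convex complexes chamber by chamber (Corollary~\ref{cor:deconstr-sigma-conv-complexes}, Proposition~\ref{prop:removing-one-sector}) so that at each step the link that matters in $X$ is an opposition complex $\Opp_{\lk_X(A)}(\pr_A(\sigma))$ --- spherical by Abramenko's theorem, which is the only place the bound $p\geq 2^{n-2}$ enters --- and concludes that $\widehat{U_h(r)}=\rho^{-1}(U_h(r))$ is $(\dim X-2)$-connected. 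Essential connectivity is then gained not by weakening the required connectivity of descending links, as you suggest, but by the sandwich $X_{h\geq r}\rightarrow \widehat{U_h(r-\varepsilon)}\rightarrow X_{h\geq r-\varepsilon}$: the superlevel-set inclusion factors through a highly connected interpolating complex that is itself neither a superlevel nor a sublevel set. Moreover, the passage from the generic case to general $k$ is not done at the level of descending links but by replacing $(X,h)$ with the parabolic building $(X_{\tau},h_{\tau})$ via explicit deformation retractions (Section~\ref{sec:parabolic} and Corollary~\ref{cor:reduction}), which preserves links, dimension drops by $\dim(\tau)+1$, and essential connectivity is unchanged.

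Your argument for (2) has a concrete gap: $h\circ\rho=h$, so $\rho$ maps $X_{h\geq s}$ into the half-space-like set $A_{h\geq s}$, which is \emph{convex and contractible}; hence pushing a filling forward to $A$ can never contradict anything, and a single apartment (or its superlevel sets) carries no non-trivial cycle in dimension $\dim X-k-2$ to begin with. The paper instead takes an entire apartment $S$ of $\pinf X$ inside $\Opp_{\pinf X}(\sigma)$ --- whose existence uses thickness of $X$ via Proposition~\ref{prop:existence-of-opposite-app}, so your claim that (2) ``uses no thickness hypothesis'' is misleading (no prime bound is needed, but thickness and homogeneity of $X$ are) --- forms the abstract cone over $S$, and considers the $\F_2$-chains $c_r$ weighted by branching numbers. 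Non-triviality of $\partial_d c_r$ in a band of heights (Propositions~\ref{prop:non-vanishing-part} and~\ref{prop:vanishing-part}), uniqueness of top-dimensional bounding chains in the contractible $d$-dimensional complex $X$ (Lemma~\ref{lem:unique-bounding-disc}), and a height-translating automorphism supplied by strong transitivity (Lemma~\ref{lem:lifting-shifts}) are all needed to defeat essential $(\dim X-k-2)$-acyclicity at every scale; none of these ingredients appears in your sketch.
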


In the course of the proof of Theorem~\ref{introtheorem:C} we will always work in the general framework of arbitrary spherical and Euclidean buildings.
In fact, our main result on Question~\ref{que:essential-connectivity} can be stated in a purely building theoretic context, see Corollary~\ref{cor:A}.
As in Theorem~\ref{introtheorem:Fn-SLn} and Theorem~\ref{introthm:B-intro}, there is some evidence that Theorem~\ref{introtheorem:C} holds without any restriction on the primes dividing $N$.
An interesting special case arises if $h = \alpha \circ \rho \in X_{\sigma}^{\ast}$ is a Busemann function on $X$.
It is easily seen that $h$ is a Busemann function if and only if $\alpha$ is a Busemann function on $A$ that is centered at a point $\xi \in \overline{\sigma}$.
In this case $(X_{h \leq r})_{r \in \R}
$ is a system of horoball complements, whose essential connectivity properties where determined by Bux and Wortmann in~\cite{BuxWortman11} under the minor restriction that $X$ is a thick building.
A key ingredient for this result was provided by Schulz~\cite{Schulz13}.
He showed that, given a thick spherical building $\Delta$ and a point $x \in \Delta$, the set of points of distance more than $\pi / 2$ from $x$ is $(\dim(\Delta)-1)$-connected and not contractible.
Such spaces, which we shall call Schulz complexes, arise as descending links for certain Morse functions that were constructed in~\cite{BuxWortman11} and later in~\cite{BuxKoehlWitzel2013}.
In the case where $h \in X_{\sigma}^{\ast}$ is not a Busemann function, the natural analogue of a Schulz complex is given by the preimage of an open hemisphere under the retraction corresponding to an apartment and a chamber in a spherical building.
Such generalized Schulz complexes turn out to be dramatically more difficult than Schulz complexes.
Even worse, the natural generalization of the Morse function in~\cite{BuxWortman11} to the present context fails to produce the right essential connectivity properties, no matter how connected these generalized Schulz complexes are.
To overcome these difficulties, we will not work with a classical Morse lemma, which would typically produce a result on the connectivity properties of each superlevelset $X_{h \leq r}$.
Instead, we introduce techniques that allow us to take advantage of the concept of \emph{essential} connectivity properties rather than connectivity properties themselves.

\subsection*{The paper is organized as follows.}
In Section~\ref{sec:backgroud-topology} we recall some basic notions on polysimplicial complexes.
The necessary background on $\Sigma$-invariants is given in Section~\ref{sec:background-sigma-invariants}.
In Section~\ref{sec:coxeter-complex} we recall some well-known constructions related to spherical and Euclidean buildings.
Moreover, we introduce sufficient conditions for a spherical building $\Delta$ to ensure that the complex opposite each chamber in $\Delta$ contains an apartment.
In Section~\ref{sec:deconstr} we collect some auxiliary results on Euclidean Coxeter complexes that will be used in Section~\ref{sec:pos-dir} to prove that certain systems of superlevelsets $(X_{h \geq r})_{r \in \R}$ are essentially $(\dim(X)-2)$-connected, provided $h$ is chosen generically.
Using the same sort of genericity, we will prove in Section~\ref{sec:neg-dir} that $(X_{h \geq r})_{r \in \R}$ is not essentially $(\dim(X)-1)$-connected.
In Section~\ref{sec:convex-on-cat0} we introduce necessary and sufficient conditions for convex functions on $\CAT(0)$-spaces to be continuous.
These conditions will be used in Section~\ref{sec:parabolic} to review the concept of a parabolic building $X^{\tau}$ associated to a Euclidean building $X$ and a simplex at infinity $\tau \subseteq \pinf X$.
We take the opportunity to develop a geometric description of $X^{\tau}$ that allows us to apply methods from discrete Morse theory on $X^{\tau}$.
As an application, we reduce the general question on essential connectivity of $(X_{h \geq r})_{r \in \R}$ to the generic case from Sections~\ref{sec:pos-dir} and~\ref{sec:neg-dir}.
In Section~\ref{sec:the-geom-main} we prove a building theoretic version of our main result on $\Sigma$-invariants.
Section~\ref{sec:chevalley-groups} is devoted to give a short construction of the Chevalley group $\mathcal{G}(\Phi,\rho,K)$ associated to a root system $\Phi$, a representation $\rho$ of the corresponding complex semisimple Lie-Algebra $\mathcal{L}$, and a field $K$.
Moreover, we review the construction of the Bruhat--Tits buildings associated to $\mathcal{G}(\Phi,\rho,K)$ in the case where $K$ possesses a discrete valuation.
This will be done from a geometric point of view that allows us to formulate our main result about the $\Sigma$-invariants of $S$-arithmetic subgroups of Borel groups, which we finally prove in Section~\ref{sec:final}.

\subsection*{Acknowledgments}
This article grew out of my PhD thesis, which I wrote under the supervision of Stefan Witzel.
I would like to thank him for his guidance, understanding, and his endless patience.
I am grateful to Kevin Wortman for agreeing to act as a referee for my thesis, for the warm hospitality during my stay in Salt Lake City and all the discussions we had.
Many thanks go to Benjamin Br\"uck, Jonas Flechsig, Christoph Hilmes, Dawid Kielak, Thomas Lessmann, and Yuri Santos Rego for proofreading parts of my thesis.

The author was supported by the DFG grant WI 4079/4 within the SPP 2026 Geometry at infinity.

\section{Background on topology}\label{sec:backgroud-topology}

\subsection{Polytopes and cell complexes}

We quickly recall some basic aspects of the theory of polyhedral cell complexes.
The details of the constructions can be found in~\cite[I.7]{BridsonHaefliger99}.
For the rest of this section we fix a metric cell complex $X$ whose cells are isometric to \emph{Euclidean polysimplices}, i.e.\ products of simplices in some Euclidean space.
Further, we will assume that, up to isometry, there are only finitely many cells in $X$.
In order to deal with polysimplices, it is sometimes useful to note that they are simple polytopes.

\begin{definition}\label{def:simple-polytope}
A $d$-dimensional convex polytope $C$ is called \emph{simple} if every vertex in $C$ is contained in exactly $d$ \emph{facets}, i.e.\ faces of codimension $1$.
\end{definition}

From this characterization of simple polytopes it follows that products of simple polytopes are again simple.
In particular we see that polysimplices are simple.
In view of this, the characterization of simple polytopes given in~\cite[Proposition 2.16]{Ziegler95} tells us the following.

\begin{lemma}\label{lem:faces-in-polysimplices}
Let $A$ be a face of a polysimplex $C$ of codimension $k$.
Then there are precisely $k$ facets of $C$ that contain $A$.
In particular, there is a unique set of facets of $C$ that intersects in $A$.
\end{lemma}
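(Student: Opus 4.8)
The plan is to reduce the statement to the characterization of simple polytopes recalled just before, via the product structure. First I would recall the precise combinatorial fact (this is \cite[Proposition 2.16]{Ziegler95} together with the observation that polysimplices are simple): in a simple $d$-polytope $C$, the face poset is such that every face $A$ of codimension $k$ is the intersection of exactly $k$ facets, and conversely any $k$ facets with nonempty intersection meet in a face of codimension $k$; equivalently, the interval from $A$ up to $C$ in the face lattice is a Boolean lattice on $k$ atoms, where the atoms are the facets containing $A$. Since we have already noted that products of simple polytopes are simple and hence that every polysimplex is simple, this fact applies verbatim to $C$.

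The argument then runs as follows. Let $A \subseteq C$ be a face of codimension $k$. By simplicity of $C$ and the cited characterization, the set $\mathcal{F}_A$ of facets of $C$ containing $A$ has exactly $k$ elements, which gives the first assertion. For the ``in particular'' clause I would argue that $\mathcal{F}_A$ is the unique set of facets whose intersection equals $A$. On one hand, $A \subseteq \bigcap_{F \in \mathcal{F}_A} F$; since $\mathcal{F}_A$ consists of $k$ facets of a simple polytope with common point (any point of $A$), the cited characterization forces $\bigcap_{F \in \mathcal{F}_A} F$ to have codimension exactly $k$, and as it contains the codimension-$k$ face $A$ it must equal $A$. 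On the other hand, if $\mathcal{G}$ is any collection of facets with $\bigcap_{F \in \mathcal{G}} F = A$, then each such $F$ contains $A$, so $\mathcal{G} \subseteq \mathcal{F}_A$; but then $A = \bigcap_{F\in\mathcal{G}} F \supseteq \bigcap_{F \in \mathcal{F}_A} F = A$ forces equality of the intersections, and since the intersection of any subfamily of $\mathcal{F}_A$ of size $j$ has codimension $j$ (again by simplicity), we must have $|\mathcal{G}| = k$ and hence $\mathcal{G} = \mathcal{F}_A$.

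The only real point requiring care — and thus the main obstacle, though a mild one — is making the passage from ``products of simple polytopes are simple'' to the combinatorial statement about intersections of facets fully rigorous, i.e.\ being explicit that the local product structure of the face lattice of a polysimplex at a vertex is Boolean and that this is exactly what \cite[Proposition 2.16]{Ziegler95} encodes. One clean way to do this without invoking the product decomposition of the face lattice directly is to work at a vertex $v \in A$: by simplicity $v$ lies in exactly $d$ facets, the faces of $C$ containing $v$ are in inclusion-reversing bijection with the subsets of these $d$ facets, and $A$ corresponds to the subset of those facets among the $d$ that contain $A$; counting gives $|\mathcal{F}_A| = d - \dim A = k$, and the uniqueness follows since distinct subsets of the $d$ facets through $v$ have distinct intersections. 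I would phrase the write-up along these lines, citing \cite[Proposition 2.16]{Ziegler95} for the vertex-local Boolean structure and deriving both claims from it.
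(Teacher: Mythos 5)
Your proposal is correct and follows essentially the same route as the paper, which deduces the lemma directly from the simplicity of polysimplices (products of simple polytopes being simple) together with the characterization of simple polytopes in \cite[Proposition 2.16]{Ziegler95}. The paper simply cites this characterization without further argument, so your vertex-local Boolean-lattice elaboration just fills in details the paper leaves implicit.
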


\begin{notation}\label{not:open-cells}
Unless otherwise stated, the term \emph{cell} will always be used to denote the relative interior of its ambient closed polytope.
Nevertheless, we will say that a cell $A$ is a \emph{face} of a cell $B$ if $A$ is contained in $\overline{B}$.
In this case $B$ is said to be a \emph{coface} of $A$.
\end{notation}

One advantage of working with relatively open cells is that every point $x \in X$ is contained in a unique cell.

\begin{definition}\label{def:star}
Let $Y \subseteq X$ be a subcomplex, let $x \in Y$, and let $A \subseteq Y$ be the cell containing $x$.
The \emph{relative star} of $A$ in $Y$, denoted by $\st_Y(A)$, is the union of cofaces $B \subseteq Y$ of $A$.
The \emph{relative star} of $x$ in $Y$ is defined by $\st_Y(x) = \st_Y(A)$.
We define the \emph{relative link} of $x$ in $Y$, denoted by $\lk_Y(x)$, as the set of \emph{directions} at $x$, i.e.\ germs of geodesics emanating from $x$, that point into $Y$.
The \emph{relative link} of $A$ in $Y$ is the subspace $\lk_Y(A) \subseteq \lk_Y(\mathring{A})$ of directions that are orthogonal to $A$, where $\mathring{A}$ denotes the barycenter of $A$.
If $Y$ coincides with $X$, then we will often omit the subscript that indicates the ambient space and just speak about the stars/links of $A$ and $x$.
\end{definition}

\begin{notation}\label{not:boundary-simplex}
Let $A$ be a cell in $X$.
The \emph{boundary} $\partial A$ of $A$ is the complex of proper faces of $A$.
In particular, we have $\partial(v) = \emptyset$ for each vertex $v$ of $X$.
We will use the convention ${\dim(\emptyset) = -1}$.
\end{notation}

By assumption, each cell in $X$ isometrically embeds as a convex subspace of some Euclidean vector space.
This allows us to speak about angles between geodesic germs that emanate from the same point and to define a metric on the space of these germs.
Details of this construction can be found in~\cite[I.7.38]{BridsonHaefliger99}.
Using this topology, it is easy to see that the topological boundary of the relative star of a cell $A$ in $Y$ decomposes as $\partial(\st_Y(A)) \cong \partial A \ast \lk_Y(A)$, where $\ast$ denotes the join.
This decomposition will be especially useful when we combine it with the following result, which tells us that connectivity properties of cell complexes behave well under taking joins (see e.g.~\cite[Lemma 2.3]{Milnor56}).

\begin{lemma}\label{lem:connected-joins}
Let $Y$ and $Z$ be two cell complexes.
If $Y$ is $m$-connected and $Z$ is $n$-connected then their join $Y \ast Z$ is $(m+n+2)$-connected.
\end{lemma}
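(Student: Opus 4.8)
The plan is to realize $Y \ast Z$ as a homotopy pushout, compute its reduced homology via a Mayer--Vietoris sequence together with the Künneth formula, and then upgrade the resulting homological vanishing to a homotopical one through the Hurewicz theorem, handling the low-dimensional edge cases by hand. We may as well assume both factors are non-empty (otherwise $Y \ast Z$ equals the other factor and there is nothing to prove), i.e.\ $m, n \ge -1$.

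First I would set up the standard join coordinate $t \colon W \to [0,1]$ on $W \defeq Y \ast Z$ recording the ``join parameter'', and cover $W$ by the open sets $U = \{t < 1\}$ and $V = \{t > 0\}$. Pushing $t$ toward the two endpoints shows that $U$ deformation retracts onto $Y$, that $V$ deformation retracts onto $Z$, and that $U \cap V$ deformation retracts onto the slice $\{t = \tfrac12\} \cong Y \times Z$; equivalently, $W$ is the homotopy pushout of $Y \xleftarrow{\pr_1} Y \times Z \xrightarrow{\pr_2} Z$. Next I would dispose of the low cases: since $Y, Z \neq \emptyset$, any two points of $W$ are joined by a path running along one or two join segments through a point of the opposite factor, so $W$ is path-connected, which already settles the claim when $m + n + 2 \le 0$. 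When $m + n + 2 \ge 1$, at least one factor — say $Z$ — is path-connected, and via the standard homotopy equivalence $W \simeq \Sigma(Y \wedge Z)$ (for CW complexes, with basepoint taken in that factor, so that $Y \wedge Z$ is connected) $W$ is the suspension of a connected complex and hence simply connected. This settles the statement whenever $m + n + 2 \le 1$ and, crucially, supplies the simple-connectivity hypothesis that Hurewicz will need in the remaining range.

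The heart of the argument is the homology computation. From the reduced Mayer--Vietoris sequence of the cover $U \cup V$ one obtains the exact sequence relating $\tilde H_\ast(Y \times Z)$, $\tilde H_\ast(Y) \oplus \tilde H_\ast(Z)$ and $\tilde H_\ast(W)$, and the key point is that the map $\psi_k \colon \tilde H_k(Y \times Z) \to \tilde H_k(Y) \oplus \tilde H_k(Z)$ induced by the two projections is split surjective — picking a point in $Z$ splits $\pr_1$ and picking a point in $Y$ splits $\pr_2$ — so every connecting homomorphism vanishes and $\tilde H_k(W) \cong \operatorname{coker}\psi_k$. Feeding in the Künneth decomposition of $\tilde H_\ast(Y \times Z)$, under which $\psi_k$ is the identity on the $\tilde H_k(Y)$ and $\tilde H_k(Z)$ summands and zero on the rest, this cokernel is exactly $\bigoplus_{i+j=k-1} \tilde H_i(Y) \otimes \tilde H_j(Z)$ together with the corresponding $\operatorname{Tor}$-terms in total degree $k-2$. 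Since $\tilde H_i(Y) = 0$ for $i \le m$ and $\tilde H_j(Z) = 0$ for $j \le n$, every surviving summand lives in degree $k \ge m + n + 3$, so $\tilde H_k(W) = 0$ for all $k \le m + n + 2$. Combined with simple connectivity (available once $m + n + 2 \ge 2$), the Hurewicz theorem then yields $\pi_k(W) = 0$ for all $k \le m + n + 2$, i.e.\ $Y \ast Z$ is $(m+n+2)$-connected. One may alternatively package the homology step as $\tilde H_k(Y \ast Z) \cong \tilde H_{k-1}(Y \wedge Z)$ and quote the Künneth-type connectivity estimate for smash products.

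I expect the only genuine friction to be bookkeeping in the low-dimensional cases — in particular, making the $\pi_1$ step robust when $Y$ is disconnected, so that $U \cap V \simeq Y \times Z$ is disconnected and a naive application of van Kampen to the cover $U \cup V$ is not literally available. This is precisely why I would route simple connectivity through the suspension description $W \simeq \Sigma(Y \wedge Z)$ rather than through van Kampen. The homological core, by contrast, is entirely routine: two invocations of standard exact sequences and a one-line connectivity count.
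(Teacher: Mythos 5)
The paper does not actually prove this lemma --- it is quoted from Milnor --- so there is no in-text argument to compare against; the question is only whether your proof is correct. Your overall strategy (cover the join by $\{t<1\}$ and $\{t>0\}$, identify the pieces with $Y$, $Z$ and $Y\times Z$, run reduced Mayer--Vietoris plus K\"unneth, get simple connectivity from $Y\ast Z\simeq\Sigma(Y\wedge Z)$, and finish with Hurewicz) is the standard route and is sound, including your handling of the empty and low-dimensional cases.

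However, one step is wrong as written. Split surjectivity of $\psi_k\colon\widetilde{H}_k(Y\times Z)\to\widetilde{H}_k(Y)\oplus\widetilde{H}_k(Z)$ does \emph{not} make the connecting homomorphisms vanish, and it cannot give $\widetilde{H}_k(Y\ast Z)\cong\operatorname{coker}\psi_k$: the cokernel of a surjection is zero, so your sentence, read literally, would show that every join is acyclic in all degrees, which already fails for $S^0\ast S^0=S^1$. What surjectivity of $\psi_k$ for all $k$ actually yields is that the maps $\widetilde{H}_k(Y)\oplus\widetilde{H}_k(Z)\to\widetilde{H}_k(Y\ast Z)$ are zero, hence each connecting map is \emph{injective} and $\widetilde{H}_k(Y\ast Z)\cong\ker\psi_{k-1}$. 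By naturality of the K\"unneth decomposition, $\ker\psi_{k-1}$ consists exactly of the cross terms, i.e.\ $\bigoplus_{i+j=k-1}\widetilde{H}_i(Y)\otimes\widetilde{H}_j(Z)$ together with the $\operatorname{Tor}$ terms in total degree $k-2$ --- which is precisely the formula you then write down, only mislabelled as a cokernel (note the degree shift in your formula is the tell-tale sign that it is $\ker\psi_{k-1}$, not $\operatorname{coker}\psi_k$). With this correction the connectivity count ($\widetilde{H}_i(Y)\neq 0$ forces $i\geq m+1$, $\widetilde{H}_j(Z)\neq 0$ forces $j\geq n+1$, so nonzero terms only occur for $k\geq m+n+3$), the suspension argument for $\pi_1$, and the Hurewicz step all go through, and the proof is complete.
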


\begin{lemma}\label{lem:gluing-for-inclusion}
Let $n \in \N_0$ and let $Z$ be a cell complex that can be written as a union of subcomplexes $Z = X \cup \bigcup_{i \in I} Y_i$, where $I$ is an index set.
Assume that
\begin{enumerate}
\item each $Y_i$ is contractible,\label{item:contractible}
\item $Y_i \cap Y_j \subseteq X$, and that\label{item:intersection-in-X}
\item $Y_i \cap X$ is $(n-1)$-connected.
\label{item:intersection-nice}
\end{enumerate}
Then the pair $(Z,X)$ is $n$-connected. The same holds if ``$n$-connected'' is replaced by ``$n$-acyclic''.
\end{lemma}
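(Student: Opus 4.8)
\emph{Plan.} The engine behind both statements is the combinatorial consequence of hypothesis~\ref{item:intersection-in-X}: since the cells of $Z$ are pairwise disjoint as relatively open cells (Notation~\ref{not:open-cells}) and $Y_i \cap Y_j \subseteq X$ for $i \neq j$, every cell of $Z$ lies either in $X$ or in $Y_i \setminus X$ for a \emph{single} index $i$. Equivalently, after collapsing $X$ the images of the $Y_i$ in $Z/X$ meet only in the base point, so that $Z/X \cong \bigvee_{i \in I} Y_i/(Y_i \cap X)$. I will use this identification for the acyclic statement and the underlying cell decomposition for the connectivity statement.

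\emph{The acyclic case.} Here each $Y_i \cap X$ is assumed $(n-1)$-acyclic. The long exact homology sequence of the pair $(Y_i, Y_i \cap X)$, together with the contractibility of $Y_i$, gives $\widetilde{H}_k\bigl(Y_i/(Y_i \cap X)\bigr) \cong H_k(Y_i, Y_i \cap X) = 0$ for all $k \leq n$. Since reduced homology carries wedges to direct sums, $\widetilde{H}_k(Z/X) = 0$ for $k \leq n$; as $(Z,X)$ is a CW pair this yields $H_k(Z,X) = 0$ for $k \leq n$, i.e.\ the pair is $n$-acyclic. The very same long exact sequence argument, with homotopy groups in place of homology, shows that under the connectivity hypothesis each pair $(Y_i, Y_i \cap X)$ is in fact $n$-connected, a fact I use below.

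\emph{The connectivity case.} I would argue by a compression argument, inducting on the number of indices. First reduce to $I$ finite: a map out of a disk has image meeting only finitely many cells of $Z$, hence lying in $X \cup Y_{i_1} \cup \dots \cup Y_{i_m}$ for finitely many indices, and any sub-collection of the $Y_i$ again satisfies all the hypotheses. Now fix $0 \leq k \leq n$ and a map $\phi \colon (D^k, S^{k-1}) \to (Z, X)$; I claim it is homotopic rel $S^{k-1}$ to a map into $X$, and induct on $m = \abs{I}$, the case $m = 0$ being trivial. For $m \geq 1$, pass to a simplicial subdivision of $Z$ and, after a simplicial approximation that keeps $\phi|_{S^{k-1}}$ inside $X$, assume that $\phi$ maps every simplex of the domain triangulation $L$ into a single closed cell of $Z$, hence into $X$ or into a single $Y_i$. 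Put $L_m = \{\sigma \in L : \phi(\sigma) \subseteq Y_m\}$ and $L' = \overline{L \setminus L_m}$. Using the combinatorial observation above one checks that $\phi$ maps $L_m \cap L'$ into $Y_m \cap X$ and maps every simplex of $L'$ into $X \cup Y_1 \cup \dots \cup Y_{m-1}$. Since $(Y_m, Y_m \cap X)$ is $n$-connected and $\dim L_m \leq k \leq n$, the standard compression lemma homotopes $\phi|_{L_m} \colon (L_m, L_m \cap L') \to (Y_m, Y_m \cap X)$, rel $L_m \cap L'$ and rel the simplices already landing in $Y_m \cap X$ (which include those in $S^{k-1}$), to a map into $Y_m \cap X \subseteq X$. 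Extending by the constant homotopy over $L'$ gives a homotopy rel $S^{k-1}$ from $\phi$ to a map into $X \cup Y_1 \cup \dots \cup Y_{m-1}$, to which the induction hypothesis applies.

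\emph{Main obstacle.} The delicate point is the compatibility of the compression with the remainder of the domain: one must verify that the homotopy pushing $\phi$ off $Y_m$ extends by a constant homotopy over $L'$ without moving $S^{k-1}$ and without re-introducing simplices into $Y_m \setminus X$. This is exactly where hypothesis~\ref{item:intersection-in-X} is indispensable, since it forces the simplices of $L_m$ that meet $L'$ to already map into $X$, so that $L_m$ is attached to $L'$ along a part of $\phi^{-1}(X)$; everything else (the long exact sequences, the compression lemma, the simplicial approximation, and the reduction to finite $I$) is routine.
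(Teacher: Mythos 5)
Your proposal is correct, and it reaches the conclusion by a genuinely different route than the paper, at least for the homotopy statement. For the acyclic half the two arguments have the same mathematical content: both reduce to the long exact sequence of the pair $(Y_i, Y_i \cap X)$ together with contractibility of $Y_i$; you assemble the local computations via the identification $Z/X \cong \bigvee_{i} Y_i/(Y_i \cap X)$ (which is exactly where hypothesis (2) enters, since each open cell of $Z \setminus X$ lies in a unique $Y_i$), whereas the paper assembles them by excision plus the fact that homology commutes with the colimit over the $Y_i$. For the connectivity half the paper argues $k=0$ by hand, $k=1$ by van Kampen, and $k \geq 2$ by the relative Hurewicz theorem, thereby reducing everything to the homology computation; you instead first upgrade the hypotheses to ``each pair $(Y_i, Y_i\cap X)$ is $n$-connected'' via the homotopy long exact sequence, and then compress an arbitrary map $(D^k,S^{k-1}) \to (Z,X)$ directly, using simplicial approximation, the finiteness of the cells met by a compact image, and an induction over the finitely many relevant $Y_i$, with hypothesis (2) guaranteeing that the part of the domain mapped into $Y_m$ is attached to the rest along a subcomplex already landing in $Y_m \cap X$ so that the compression lemma applies rel that subcomplex and rel $S^{k-1}$. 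Your route is longer on the page but entirely elementary: it proves the homotopy statement without invoking Hurewicz or van Kampen (in particular it avoids the fundamental-group bookkeeping that the relative Hurewicz theorem requires), and it cleanly separates the acyclic version as an easier, purely homological argument; the paper's route is shorter but leans on those two classical theorems. The technical points you flag as routine (simplicial approximation respecting the subcomplex $X$, the compression lemma, the reduction to finitely many indices) are indeed standard and are used correctly.
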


\begin{proof}
By definition we have to show that for each $0 \leq k \leq n$ every map $(B^{k},S^{k-1}) \rightarrow (Z,X)$ is homotopic relative $S^{k-1}$ to a map whose image lies in $X$.
Thus for $k = 0$ it suffices to check that each point $p \in Z$ can be connected by a path to a point in $X$.
But this is clear since each $Y_i$ is path-connected by \eqref{item:contractible} and its intersection with $X$ is non-empty by~\eqref{item:intersection-in-X}. 
Note that this allows us to restrict ourselves to the case where $X$ and $Z$ are path-connected.
For $k=1$ the claim follows from the van Kampen theorem.
In view of Hurewicz's theorem it remains to show that the relative homology groups $\widetilde{H}_k(Z,X)$ vanish for $2 \leq k < n$.
Since taking colimits commutes with the homology functor (see e.g.~\cite[Theorem 14.6]{May99}) it follows from assumption~\eqref{item:intersection-in-X} that it is sufficient to consider the case where $I=\{i\}$ is a singleton.
We write $Y \defeq Y_i$ and consider the part
\[
0 = \widetilde{H}_k(Y) \to \widetilde{H}_{k}(Y,Y \cap X) \to \widetilde{H}_{k-1}(Y \cap X) \to \widetilde{H}_{k-1}(Y) = 0
\]
of the long exact sequence for the pair $(Y,Y \cap X)$. 
By~\eqref{item:intersection-in-X} we see that $\widetilde{H}_k(Y,Y\cap X) \cong \widetilde{H}_{k-1}(Y \cap X) = 0$ for $k < n$.
Since $Z = X \cup Y$ it remains to observe that excision gives us $\widetilde{H}_k(Z,X) \cong \widetilde{H}_k(Y,Y\cap X)$ for $k < n$.
\end{proof}

The following simple homological observation will help us to deduce negative connectivity properties of certain spaces.

\begin{lemma}\label{lem:unique-bounding-disc}
Let $X$ be a contractible cell complex of dimension $d$.
Let $z \in Z_{d-1}(X;\F_2)$ be a cycle of dimension $d-1$.
Then there is a unique $d$-chain $b \in C_{d}(X;\F_2)$ with $\partial b = z$.
\end{lemma}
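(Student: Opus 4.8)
The plan is to use the fact that a contractible complex has trivial reduced homology in all degrees, so in particular $\widetilde H_d(X;\F_2) = 0$ and $\widetilde H_{d-1}(X;\F_2) = 0$. Existence of $b$ is just exactness: since $X$ has dimension $d$, the chain group $C_{d+1}(X;\F_2)$ vanishes, hence $H_d(X;\F_2) = Z_d(X;\F_2) = \ker(\partial_d)$. The vanishing of $H_d$ therefore gives $Z_d(X;\F_2) = 0$, which by itself is not what we want; rather the relevant statement is $H_{d-1}(X;\F_2)=0$, i.e. $Z_{d-1}(X;\F_2) = B_{d-1}(X;\F_2) = \partial_d(C_d(X;\F_2))$. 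So any $(d-1)$-cycle $z$ is a boundary, giving a $d$-chain $b$ with $\partial b = z$. This settles existence.

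For uniqueness, suppose $b, b' \in C_d(X;\F_2)$ both satisfy $\partial b = \partial b' = z$. Then $b - b' \in \ker(\partial_d) = Z_d(X;\F_2)$. As noted above, $C_{d+1}(X;\F_2) = 0$ because $X$ is $d$-dimensional, so $B_d(X;\F_2) = 0$, and hence $Z_d(X;\F_2) = H_d(X;\F_2)$, which vanishes by contractibility of $X$. Therefore $b - b' = 0$, i.e. $b = b'$. Working over $\F_2$ lets us write $b-b'$ and $b+b'$ interchangeably, but nothing in the argument actually requires characteristic $2$; it is stated this way because that is the coefficient ring used in the sequel.

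There is essentially no obstacle here — the only thing to be careful about is the dimension bookkeeping, namely that $X$ having dimension $d$ forces $C_{d+1}=0$ and hence $Z_d = H_d$, which is exactly the input that both promotes "a $d$-chain exists" to "a unique $d$-chain exists." I would state the argument in two short lines: existence from $H_{d-1}(X)=0$, uniqueness from $H_d(X) = Z_d(X) = 0$. One could alternatively phrase the whole thing as: the map $\partial_d \colon C_d(X;\F_2) \to Z_{d-1}(X;\F_2)$ is injective (kernel $= Z_d = H_d = 0$) and surjective (cokernel $\hookrightarrow H_{d-1} = 0$), hence bijective, which packages existence and uniqueness simultaneously.
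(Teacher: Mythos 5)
Your proof is correct and takes essentially the same route as the paper: uniqueness follows because $X$ has no $(d+1)$-cells, so $Z_d(X;\F_2)=\widetilde H_d(X;\F_2)=0$ by contractibility, and any two chains with the same boundary differ by a $d$-cycle. The only difference is that you also spell out the existence step via $\widetilde H_{d-1}(X;\F_2)=0$, which the paper leaves implicit.
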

\begin{proof}
First, observe that $Z_{d}(X;\F_2)=0$ since there are no cells of dimension $d+1$ and $\widetilde{H}_{d}(X;\F_2)=0$.
Suppose there are $d$-chains $B,B'$ such that $\partial B = \partial B' = Z$.
Then $\partial (B - B') = 0$ and therefore $B - B' \in Z_{d}(X;\F_2) = 0$.
\end{proof}

\begin{notation}\label{not:supported-subcomplex}
Let $X$ be a cell complex and let $M \subseteq X$ be an arbitrary subset.
The largest subcomplex of $X$ contained in $M$ will be denoted by $X(M)$.
We will say that $X(M)$ is the \emph{subcomplex of $X$ supported by~$M$}.
\end{notation}

\section{Background on $\Sigma$-invariants}\label{sec:background-sigma-invariants}

Unlike most invariants in group theory, the $\Sigma$-invariants of a group are not algebraic structures themselves.
Instead, they live on the so-called character sphere of a group, which consists of equivalence classes of non-trivial characters of the group.

\begin{notation}\label{not:homothety-classes-general}
Given a finite dimensional real vector space $V$, we write $S(V) \defeq (V \setminus \{0\}) / \hspace{-1.5mm} \sim$ to denote the space of positive homothety classes of non-trivial elements of $V$.
That is, for every $v,w \in V \setminus \{0\}$, we define $v \sim w$ if there is a real number $r > 0$ with $v = r w$.
\end{notation}

Note that $S(V)$, endowed with the natural topology, is a sphere of dimension $\dim(V)-1$.

\begin{definition}\label{def:charsphere}
Let $G$ be a finitely generated group.
A group homomorphism $\chi \colon G \rightarrow \R$ is called a \emph{character} of $G$.
The \emph{character sphere} of $G$ is defined as $S(G) \defeq S(\Hom(G,\R))$.
\end{definition}

\begin{definition}\label{def:essentially-connected}
Let $(X_{\lambda})_{\lambda \in \Lambda}$ be a directed system of cell complexes for a poset $(\Lambda,\leq)$ and let $X_{\alpha} \xrightarrow{f_{\alpha,\beta}} X_{\beta}$ be continuous maps for $\alpha \leq \beta$.
The system $(X_{\lambda})_{\lambda \in \Lambda}$ is \emph{essentially $n$-connected} for some $n \in \N_0$ if for every index $\alpha \in \Lambda$ there is an index $\beta \in \Lambda$ with $\alpha \leq \beta$ such that the induced maps
\[
\pi_k(f_{\alpha,\beta}) \colon \pi_k(X_{\alpha},x) \rightarrow \pi_k(X_{\beta},x)
\]
are trivial for every $x \in X_{\alpha}$ and every $0 \leq k \leq n$.
Analogously, we say that the system $(X_{\lambda})_{\lambda \in \Lambda}$ is \emph{essentially $n$-acyclic} for some $n \in \N_0$ if for every $\alpha \in \Lambda$ there is a $\beta \geq \alpha$ such that $\widetilde{H}_k(f_{\alpha,\beta})=0$ for every $0 \leq k \leq n$.
\end{definition}

The notion of essential connectivity appears naturally if one has to deal with group actions that are not cocompact.

\begin{definition}\label{def:type-fn}
A group $G$ is said to be of type $F_n$ if it acts freely on a contractible cell complex $X$ such that the quotient of the $n$-skeleton of $X$ by the group action is compact.
\end{definition}

In the following, we will often suppress the class of a character by just writing $\chi \in S(G)$.
This will not cause any problems since all properties of characters that we are going to look at are invariant under scaling with positive real numbers.
To define the $\Sigma$-invariants of a group $G$, we have to extend its characters equivariantly to an appropriate cell complex on which $G$ acts.

\begin{definition}\label{def:ext-char-and-superlvl}
Let $G$ be a group acting on a topological space $X$.
Let further $\chi$ be a character of $G$.
A continuous function $h \colon X \rightarrow \R$ is called a \emph{height function associated to $\chi$} if it is equivariant with respect to the action of $G$ on $\R$ via $\chi$, i.e.
\[
h(g(x)) = \chi(g) + h(x)
\]
for all $g \in G, x \in X$.
A \emph{superlevelset} in $X$ with respect to $h$ is a subset of the form $X_{h \geq r} \defeq h^{-1}([r,\infty))$ for $r \in \R$.
Analogously we define $X_{h \leq r}, X_{h=r}$ etc.
\end{definition}

If the action is free, then it is always possible to find height functions for characters (see~\cite[Konstruktion II.2.2]{Renz88}).

\begin{proposition}\label{prop:ex-of-height-fct}
Let $G$ be a group.
Suppose that $G$ acts freely on a contractible cell complex $X$ such that $G \backslash X$ has finite $n$-skeleton.
Then for every character $\chi \colon G \rightarrow \R$ there is a height function $h \colon X \rightarrow \R$ associated to $\chi$.
\end{proposition}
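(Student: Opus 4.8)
The plan is to build $h$ by induction on the skeleta of $X$, making arbitrary choices on a set of orbit representatives in each dimension, propagating them by the rule forced by equivariance, and using the contractibility of $\R$ to carry out the extension over cells. Neither the contractibility of $X$ nor the finiteness of the $n$-skeleton plays any role in this particular construction.

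To set up the bookkeeping, for each $k \geq 0$ I would fix a set $R_k$ of representatives for the $G$-orbits of (closed) $k$-cells of $X$. Since $G$ acts freely and cellularly — and since, as is implicit in the requirement that $G \backslash X$ be a cell complex, $G$ permutes the cells of $X$ freely — every $k$-cell of $X$ is uniquely of the form $g\sigma$ with $g \in G$ and $\sigma \in R_k$. For $k = 0$ I would choose $h(v) \in \R$ arbitrarily for each $v \in R_0$ and set $h(gv) \defeq \chi(g) + h(v)$; this is well defined by the uniqueness just noted, and it satisfies $h(g'(gv)) = \chi(g'g) + h(v) = \chi(g') + h(gv)$, so it is a height function associated to $\chi$ on the $0$-skeleton $X^{(0)}$. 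Now suppose inductively that $h$ has been defined on $X^{(k)}$, continuously and with $h(g(x)) = \chi(g) + h(x)$ for all $g \in G$ and $x \in X^{(k)}$. For each $\sigma \in R_{k+1}$ the boundary $\partial\sigma$ is a subcomplex of $X^{(k)}$, so $h$ is already defined there; since $\R$ is contractible, $h|_{\partial\sigma} \colon \partial\sigma \to \R$ extends continuously to $\sigma$. Concretely, writing $\mathring{\sigma}$ for the barycenter, every $x \in \sigma \setminus \{\mathring{\sigma}\}$ has a unique expression $x = (1-t)\mathring{\sigma} + ty$ with $y \in \partial\sigma$ and $t \in (0,1]$ (as $\sigma$ is a convex polytope and $\mathring{\sigma}$ an interior point), and one sets $h_\sigma(x) \defeq t\, h(y)$ and $h_\sigma(\mathring{\sigma}) \defeq 0$; this $h_\sigma$ is continuous (at $\mathring{\sigma}$ because $\lvert h_\sigma(x)\rvert \leq t \cdot \max_{\partial\sigma}\lvert h\rvert \to 0$) and restricts to $h$ on $\partial\sigma$. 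One then defines $h$ on the cell $g\sigma$ by $h(g(x)) \defeq \chi(g) + h_\sigma(x)$ for $x \in \sigma$.

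It remains to record why this is consistent. Using the inductive hypothesis one checks that the new $h$ agrees with the old $h$ on $X^{(k)}$: a boundary point $g(y)$ with $y \in \partial\sigma$ receives the value $\chi(g) + h_\sigma(y) = \chi(g) + h(y) = h(g(y))$. Continuity on $X^{(k+1)}$ follows because each closed cell is attached continuously and $X^{(k+1)}$ carries the weak topology, and $\chi$-equivariance follows from $h(g'g(x)) = \chi(g'g) + h_\sigma(x) = \chi(g') + h(g(x))$. Passing to the union over all $k$ yields a continuous $\chi$-equivariant $h \colon X \to \R$, since a function on a cell complex is continuous as soon as its restriction to every skeleton is. The only point that requires genuine care — the closest thing to an obstacle — is ensuring that $G$ acts freely on the \emph{set} of cells, not merely on points, so that the propagation rule is unambiguous; this is guaranteed once $G \backslash X$ is an honest cell complex, which can always be arranged by passing to a barycentric subdivision.
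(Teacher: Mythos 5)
Your construction is correct, and it is essentially the argument the paper relies on: the paper gives no proof of this proposition but defers to Renz's Konstruktion II.2.2, which is exactly this skeleton-by-skeleton equivariant extension — choose orbit representatives, prescribe values there, propagate by $h(g x)=\chi(g)+h(x)$, and extend over higher cells using the fact that maps to $\R$ always extend. One small remark: freeness on the set of cells is automatic for a free cellular action (a nontrivial $g$ with $g\sigma=\sigma$ would fix a point of $\overline{\sigma}$ by Brouwer's fixed point theorem), so the barycentric subdivision is not actually needed, and you are right that neither the contractibility of $X$ nor the finiteness of the $n$-skeleton enters the construction.
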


We are now ready to define $\Sigma$-invariants.

\begin{definition}\label{def:sigma-invariants}
Let $G$ be a group that acts freely on a contractible cell complex $X$ such that the quotient of the $n$-skeleton of $X$ by the group action is compact.
For every character $\chi$ of $G$ let $h_{\chi}$ be a height function associated to $\chi$.
The $n$th $\Sigma$-invariant of $G$, denoted by $\Sigma^n(G)$, is defined to be the subset of the character sphere that consists of characters $\chi$ such that the system $(X_{h_{\chi} \geq r})_{r \in \R}$ is essentially $(n-1)$-connected.
\end{definition}

Note that $\Sigma^n(G)$ is only defined for groups of type $F_n$.
It can be shown (see~\cite[Bemerkungen 3.5]{Renz88}) that the definition of $\Sigma$-invariants does not depend on the choices that have been made.
The next theorem, which is a special case of~\cite[Theorem 12.1]{BieriGeoghegan03}, tells us that the assumption of the freeness of the action can be considerably weakened.

\begin{theorem}\label{thm:sigmas-and-stabilizers}
Let $G$ be a group that acts on a contractible cell complex $X$ such that the quotient of the $n$-skeleton of $X$ by the group action is compact.
Suppose that the stabilizer of each $p$-cell is of type $F_{n-p}$ for $p \leq n-1$.
Let $\chi$ be a non-trivial character of $G$.
Suppose further that there is a height function $h \colon X \rightarrow \R$ associated to $\chi$.
Then $\chi$ lies in $\Sigma^{n}(G)$ if and only if the system $(X_{h \geq r})_{r \in \R}$ is essentially $(n-1)$-connected.
\end{theorem}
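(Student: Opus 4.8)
The free-action case is nothing but Definition~\ref{def:sigma-invariants} combined with the independence of $\Sigma^n(G)$ from the chosen free model, so the entire content of the statement is the passage from the given, possibly non-free, cocompact action to a free one. The plan is therefore to manufacture out of $X$ and the cell stabilizers a free contractible $G$-CW complex $\widetilde X$ together with a $G$-equivariant cellular projection $p\colon \widetilde X \to X$ such that $G\backslash \widetilde X^{(n)}$ is compact and such that, for each cell $\sigma$ of $X$, the point-preimage $p^{-1}(x)$ over $x \in \sigma$ is a classifying space $\mathrm{E}G_\sigma$ for the stabilizer $G_\sigma$, in particular contractible.

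To build $\widetilde X$ I would proceed orbit of cells by orbit of cells, in increasing dimension, following Brown's finiteness criterion: over a representative of each $p$-cell orbit one glues in $G\times_{G_\sigma}\mathrm{E}G_\sigma$, where $\mathrm{E}G_\sigma$ is chosen with $G_\sigma$-finite $(n-p)$-skeleton, which is possible precisely because $G_\sigma$ is of type $F_{n-p}$ for $p\leq n-1$ (for $p\geq n$ no control on skeleta is needed). Since the pieces are contractible and are assembled along the poset of cells of the contractible complex $X$, the resulting $\widetilde X$ is contractible, $G$ acts freely on it, $G\backslash\widetilde X^{(n)}$ is compact, and the projection $p$ collapsing each fibre is a weak homotopy equivalence. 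Pulling back the height function, $\widetilde h\defeq h\circ p$ is a height function on $\widetilde X$ associated to $\chi$ (equivariance of $p$ gives $\widetilde h(gx)=\chi(g)+\widetilde h(x)$), and the filtrations match exactly: $\widetilde X_{\widetilde h\geq r}=p^{-1}(X_{h\geq r})$ for all $r\in\R$.

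By Definition~\ref{def:sigma-invariants} applied to $\widetilde X$, $\chi\in\Sigma^n(G)$ if and only if $(\widetilde X_{\widetilde h\geq r})_{r\in\R}$ is essentially $(n-1)$-connected, so it remains to prove that this holds if and only if $(X_{h\geq r})_{r\in\R}$ is essentially $(n-1)$-connected. For this I would first replace both systems by the cofinal subsystems of genuine subcomplexes $\widetilde X(\widetilde X_{\widetilde h\geq r})$ and $X(X_{h\geq r})$ in the sense of Notation~\ref{not:supported-subcomplex}, using that for each $r$ one finds $r'<r$ so that the inclusion $X(X_{h\geq r})\hookrightarrow X_{h\geq r'}$ is $\pi_k$-trivial in the relevant range (and likewise upstairs), so that essential connectivity is unaffected. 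On these subcomplexes $p$ is cellular with contractible point-preimages and hence a weak homotopy equivalence at each index, compatibly with the bonding maps; thus the two directed systems have isomorphic homotopy groups compatibly with their structure maps, and essential $(n-1)$-connectivity of one is equivalent to that of the other, in both directions.

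The hard part will be the interaction between the purely cellular Brown-style construction of $\widetilde X$ and the merely continuous height function $h$: one must arrange that the superlevel sets $X_{h\geq r}$, which need not be subcomplexes, are controlled by subcomplexes in a fashion uniform enough to survive the passage to $\widetilde X$ and cofinal enough not to disturb the directed-system bookkeeping, and one must verify that the skeletal control "$G_\sigma$ of type $F_{n-\dim\sigma}$" is exactly what makes $p$ behave like an $n$-equivalence at the level of the filtrations, so that the index $n-1$ comes out unshifted. All of this is the substance of~\cite[Theorem 12.1]{BieriGeoghegan03}, of which the present statement is a special case.
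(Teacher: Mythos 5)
The paper does not prove this statement at all: it is recorded as a special case of \cite[Theorem 12.1]{BieriGeoghegan03}, which is exactly the citation your argument ultimately rests on, so your proposal agrees with the paper on the only point that matters. Your Brown-style sketch of how one would prove it (a free resolution $\widetilde X \to X$ built from classifying spaces of the cell stabilizers, with the $F_{n-p}$ hypotheses controlling skeleta, followed by a comparison of the two filtrations) is a reasonable outline of the underlying machinery, and the difficulties you yourself flag — that contractible point-preimages alone do not give the needed equivalences on the non-subcomplex superlevel sets without the uniform sandwiching that cocompactness provides — are precisely the technical content supplied by the cited theorem rather than by the paper.
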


One of the most important properties of the $\Sigma$-invariants of a group $G$ is that they completely determine the finiteness properties of all subgroups $H \subseteq G$ that contain $[G,G]$.
The following result of Bieri and Renz~\cite[Satz C]{Renz88} makes this more precise.

\begin{theorem}\label{thm:relation-sigma-fn}
Let $G$ be a group of type $F_n$ and let $[G,G] \subseteq H \subseteq G$ be a subgroup.
Then $H \text{ is of type } F_n \Leftrightarrow \Set{\chi \in S(G)}{\chi(H)=0} \subseteq \Sigma^n(G)$.
\end{theorem}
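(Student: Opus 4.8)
The plan is to reduce the statement to Brown's finiteness criterion applied to $H$, fed by a single $\R^{d}$-valued height function on a contractible $G$-complex that simultaneously encodes every character of $G$ vanishing on $H$. \emph{Reductions and set-up.} We may assume $n\geq 1$, so $G$ is finitely generated and $Q\defeq G/H$ is a finitely generated abelian group; write $Q\cong\Z^{d}\oplus T$ with $T$ finite. Replacing $H$ by the preimage $H'\leq G$ of $T$ changes neither side of the equivalence: $H$ and $H'$ are commensurable (so one is of type $F_{n}$ iff the other is), and a homomorphism $G\to\R$ vanishes on $H$ iff it vanishes on $H'$ since it must kill torsion. Hence we may assume $Q=\Z^{d}$; if $d=0$ then $H$ has finite index in $G$, so $H$ is of type $F_{n}$ and $\Set{\chi\in S(G)}{\chi(H)=0}=\emptyset$, and both sides trivially hold. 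So assume $d\geq 1$. Since $G$ is of type $F_{n}$, fix a free contractible $G$-CW-complex $X$ with $G$-cocompact $n$-skeleton. By $d$ applications of Proposition~\ref{prop:ex-of-height-fct} to the coordinate characters $G\twoheadrightarrow\Z^{d}\to\Z$, assemble a $G$-equivariant map $h\colon X\to\R^{d}$ with $h(gx)=\chi_{H}(g)+h(x)$, where $\chi_{H}\colon G\twoheadrightarrow\Z^{d}=Q$ is the quotient map. For each $\ell\in(\R^{d})^{\ast}\setminus\{0\}$ the composite $\ell\circ h$ is a height function for the character $\chi_{\ell}\defeq\ell\circ\chi_{H}$, with $X_{\ell\circ h\geq r}=h^{-1}(\ell^{-1}([r,\infty)))$; moreover $\Set{[\chi_{\ell}]}{\ell\neq 0}$ is exactly the subsphere $\Set{\chi\in S(G)}{\chi(H)=0}$, because these characters form the subspace $\Hom(Q,\R)\cong(\R^{d})^{\ast}$ of $\Hom(G,\R)$. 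Finally, $h$ is $H$-invariant and descends to a $\Z^{d}$-equivariant map $H\backslash X\to\R^{d}$ over the cocompact $G\backslash X$, so the subcomplexes $h^{-1}([-j,j]^{d})$ are $H$-cocompact (through dimension $n$) and exhaust $X$; by Brown's criterion~\cite{Brown87} (cf.~\cite{BieriGeoghegan03}), $H$ is of type $F_{n}$ if and only if the filtration $\bigl(h^{-1}([-j,j]^{d})\bigr)_{j\in\N}$ is essentially $(n-1)$-connected. It therefore remains to match this with $\Set{\chi\in S(G)}{\chi(H)=0}\subseteq\Sigma^{n}(G)$, i.e.\ (Definition~\ref{def:sigma-invariants}) with essential $(n-1)$-connectivity of the system $(X_{\ell\circ h\geq r})_{r\in\R}$ for every $\ell\neq 0$.

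\emph{The direction ``$H$ of type $F_{n}$ $\Rightarrow$ $\Set{\chi\in S(G)}{\chi(H)=0}\subseteq\Sigma^{n}(G)$''.} Assume the cube filtration is essentially $(n-1)$-connected and fix $\ell\neq 0$ and $r\in\R$. A compact cycle representing a class in $\pi_{k}$ or $\widetilde H_{k}$ of $X_{\ell\circ h\geq r}$ ($0\leq k\leq n-1$) has bounded image under $h$, hence lies in some $h^{-1}([-j,j]^{d})$, hence dies in $h^{-1}([-j',j']^{d})$ for the $j'$ provided by essential connectivity of the cube filtration; since $h^{-1}([-j',j']^{d})\subseteq X_{\ell\circ h\geq s}$ for $s\defeq\min\{r,\ \min_{[-j',j']^{d}}\ell\}\leq r$, the cycle bounds already inside $X_{\ell\circ h\geq s}$. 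Thus $X_{\ell\circ h\geq r}\hookrightarrow X_{\ell\circ h\geq s}$ is trivial in degrees $\leq n-1$, so $[\chi_{\ell}]\in\Sigma^{n}(G)$ by Definition~\ref{def:sigma-invariants} (alternatively Theorem~\ref{thm:sigmas-and-stabilizers}).

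\emph{The direction ``$\Set{\chi\in S(G)}{\chi(H)=0}\subseteq\Sigma^{n}(G)$ $\Rightarrow$ $H$ of type $F_{n}$''.} Here each coordinate form $x_{i}$ and its negative satisfies $[\chi_{\pm x_{i}}]\in\Set{\chi\in S(G)}{\chi(H)=0}\subseteq\Sigma^{n}(G)$, so by Definition~\ref{def:sigma-invariants} each of the $2d$ half-space filtrations $\bigl(h^{-1}(\{\pm x_{i}\geq r\})\bigr)_{r}$ is essentially $(n-1)$-connected. Writing $h^{-1}([-j,j]^{d})=\bigcap_{i}\bigl(h^{-1}(\{x_{i}\geq -j\})\cap h^{-1}(\{x_{i}\leq j\})\bigr)$, one obtains essential $(n-1)$-connectivity of the cube filtration from that of the $2d$ half-space filtrations by an iterated Mayer--Vietoris / gluing argument in the spirit of Lemma~\ref{lem:gluing-for-inclusion}, using that $X$ is $(n-1)$-connected and that a half-space and the far side of a parallel one are separated by an $H$-cocompact slab. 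Brown's criterion then gives that $H$ is of type $F_{n}$.

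\emph{Main obstacle.} The real content of both directions hides in the word ``essentially'': upgrading ``every compact cycle eventually dies'' to ``a single later stage kills all of $\pi_{\leq n-1}$'', and keeping the bounds uniform throughout the intersection argument above. This is exactly where the cocompactness of the $G$-action must be used in full strength --- in practice, by treating $C_{\ast}(X)$ as a chain complex of finitely generated free $\Z G$-modules and controlling the submodules supported on the superlevelsets --- and it is precisely the bookkeeping carried out in~\cite[Satz C]{Renz88} (see also~\cite{BieriRenz88}), which is the source we cite for the theorem.
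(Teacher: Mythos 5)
There is nothing in the paper to compare against: Theorem~\ref{thm:relation-sigma-fn} is not proved there, it is quoted from Renz's thesis \cite[Satz~C]{Renz88}. Judged on its own terms, your sketch has a genuine gap in each direction, and the closing paragraph makes it circular, since it delegates exactly the hard part (``the bookkeeping carried out in \cite[Satz~C]{Renz88}'') to the very result you are supposed to prove. In the forward direction, your argument kills one compact cycle at a time, with the target stage $s$ depending on the cycle through $j$ and $j'$; Definition~\ref{def:essentially-connected} requires a single $s\leq r$ that simultaneously kills all of $\pi_k(X_{\ell\circ h\geq r})$ for all $k\leq n-1$. You acknowledge this but do not resolve it, so that direction is incomplete rather than wrong.

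The backward direction, however, is wrong as stated: essential $(n-1)$-connectivity of the cube filtration does \emph{not} follow from essential connectivity of only the $2d$ coordinate half-space filtrations, so no Mayer--Vietoris or gluing argument in the spirit of Lemma~\ref{lem:gluing-for-inclusion} can close this step. Concretely, take $n=1$, $G=F_2\times F_2$ with factors $\langle a,b\rangle$, $\langle c,d\rangle$, and $\phi\colon G\to\Z^2$ with $\phi(a)=e_1$, $\phi(b)=e_2$, $\phi(c)=\phi(d)=e_1+e_2$, and $H\defeq\ker\phi$. By the known computation of $\Sigma^1$ for this group (e.g.\ via \cite{MeierMeinertVanWyk98}), a character lies in $\Sigma^1(F_2\times F_2)$ if and only if it is nonzero on both factors; hence all four coordinate characters $\pm x_i\circ\phi$ lie in $\Sigma^1(G)$, while $\psi\defeq(x_1-x_2)\circ\phi$ vanishes on $\langle c,d\rangle$ and does not. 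Moreover $H$ is not finitely generated: otherwise $\ker\psi$, an extension of $\ker\psi/H\cong\Z$ by $H$, would be finitely generated, but $\ker\psi=K\times F_2$ with $K=\ker(F_2\to\Z,\ a\mapsto 1,\ b\mapsto -1)$ free of infinite rank. So here the $2d$ coordinate half-space filtrations are essentially $0$-connected, yet (by your own Brown-criterion equivalence) the cube filtration is not. This shows the reduction to coordinate directions is fundamentally insufficient: one must use the entire subsphere $\Set{\chi\in S(G)}{\chi(H)=0}$, and extracting the cube (or slab) filtration's essential connectivity from all of those directions, with uniform bounds, is precisely the content of \cite[Satz~C]{Renz88} rather than a routine gluing step.
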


\section{Background on buildings}\label{sec:coxeter-complex}

\subsection{Basic definitions}

The spaces we are going to look at will mainly be subcomplexes of spherical and Euclidean buildings.
Before defining them, let us recall that a spherical (resp. Euclidean) Coxeter group $W$ is generated by the set of reflections corresponding to a certain system $\mathcal{H}$ of linear (resp. affine) hyperplanes in a finite dimensional Euclidean vector space $V$.
We will think of the spherical (resp. Euclidean) Coxeter complex $\Sigma$ of $W$ as the standard sphere $S \subseteq V$ (resp. $V$) whose cell structure is given by intersecting $S$ (resp. $V$) with the hyperplanes in $\mathcal{H}$.
We will always assume that the action of a spherical Coxeter group $W$ on $\Sigma$ is essential, i.e.\ that $W$ does not fix a non-trivial subspace of $\Sigma$.
This ensures that the cell structure of $\Sigma$ is simplicial.
Similarly, we assume that a Euclidean Coxeter group acts essentially on the boundary at infinity $\pinf \Sigma$ (see Subsection~\ref{subsec:build-at-inf}) of $\Sigma$, which ensures that the cells in $\Sigma$ are polysimplicial.

\medskip

The following definition of a building is a slight variation of~\cite[Definition 4.1]{AbramenkoBrown08} in that we do not require the building to be simplicial.
This allows us to work with buildings whose apartments are products of irreducible Euclidean Coxeter complexes.

\begin{definition}\label{def:building}
A building is a cell complex $\Delta$ that can be expressed as the union of subcomplexes $\Sigma$ (called apartments) satisfying the following axioms:
\begin{enumerate}
\item[(B0)] Each apartment $\Sigma$ is a Coxeter complex.
\item[(B1)] For every two cells $A,B \subseteq \Delta$, there is an apartment $\Sigma$ containing both of them.\label{item:building-B1}
\item[(B2)] If $\Sigma_1$ and $\Sigma_2$ are two apartments containing two cells $A$ and $B$, then there is an isomorphism $\Sigma_1 \rightarrow \Sigma_2$ fixing $A$ and $B$ pointwise.
\end{enumerate}
The building $\Delta$ is called spherical (respectively Euclidean) if its apartments are spherical (respectively Euclidean) Coxeter complexes.
\end{definition}

Using (B1), we can define a $\CAT(1)$-metric on spherical building (see~\cite[Example 12.39]{AbramenkoBrown08}) and a $\CAT(0)$-metric on a Euclidean building (see~\cite[Theorem 11.16]{AbramenkoBrown08}) as follows.

\begin{theorem}\label{thm:buildings-are-CAT0}
Let $\Delta$ be a spherical (resp. Euclidean) building and let $d \colon \Delta \times \Delta \rightarrow \R$ be the function given by $(x,y) \mapsto d_{\Sigma}(x,y)$, where $\Sigma$ is any apartment containing $x$ and $y$, and $d_{\Sigma}$ is the metric induced by the spherical (resp. Euclidean) structure on $\Sigma$.
Then $d$ is a $\CAT(1)$-metric if $\Delta$ is spherical, and a $\CAT(0)$-metric if $\Delta$ is Euclidean.
\end{theorem}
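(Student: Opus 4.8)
The plan is to verify that the function $d$ is well-defined, is a metric, and then invoke the standard gluing criteria for $\CAT(\kappa)$-spaces (Bridson--Haefliger, Reshetnyak-type gluing) applied to the apartments. First I would check that $d$ is well-defined, i.e.\ independent of the choice of apartment $\Sigma$ containing $x$ and $y$: given a second apartment $\Sigma'$ containing both, axiom (B2) supplies an isomorphism $\Sigma \to \Sigma'$ fixing $x$ and $y$, and since the Coxeter-complex isomorphisms are isometries for the spherical (resp.\ Euclidean) structure, $d_\Sigma(x,y) = d_{\Sigma'}(x,y)$. Symmetry and the vanishing condition $d(x,y)=0 \iff x=y$ are immediate; the triangle inequality requires a little more care, since three points $x,y,z$ need not lie in a common apartment. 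Here I would use (B1) to put $x,y$ in some apartment and then use the retraction onto that apartment (centered at a cell or, in the spherical case, the folding map), which is distance-nonincreasing from a common apartment containing $x$ and $z$; combining the inequalities obtained this way yields $d(x,z) \le d(x,y) + d(y,z)$. Alternatively one reduces to the fact, provable by (B1)+(B2), that any two points together with any geodesic between them already lie in a single apartment.

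Next I would establish the geodesic structure. For any two points $x,y$, choose an apartment $\Sigma \ni x,y$; the segment joining them in the Euclidean (resp.\ spherical, assuming $d(x,y)<\pi$) model of $\Sigma$ is a geodesic in $\Delta$ with respect to $d$, because $d$ restricted to any apartment agrees with $d_\Sigma$ and $d$ is globally bounded below by $d$ restricted to an apartment through the endpoints --- combined with the triangle inequality this forces the $\Sigma$-segment to be globally geodesic. Thus $\Delta$ is a geodesic metric space (in the spherical case, a $\pi$-geodesic space: points at distance $<\pi$ are joined by a geodesic), which is the hypothesis needed to run the local-to-global comparison.

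Finally I would verify the $\CAT$ inequality. The cleanest route is: each apartment $\Sigma$, with its induced metric, is isometric to a Euclidean space (resp.\ a round sphere), hence is $\CAT(0)$ (resp.\ $\CAT(1)$); a geodesic triangle in $\Delta$ with vertices $p,q,r$ can, by (B1), have each of its sides placed in an apartment, but to compare the whole triangle one wants it inside a single apartment. For this I would invoke the standard building fact (a consequence of (B1) and (B2)) that any geodesic triangle lies in an apartment, or argue via the retraction $\rho_{\Sigma,A}$: retracting a triangle onto an apartment $\Sigma$ containing one vertex preserves the two sides through that vertex and does not increase the length of the third, so the comparison inequality in $\Sigma$ (where it holds by flatness of the model space) implies the comparison inequality in $\Delta$. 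For the spherical case one must additionally track the hypothesis $d < \pi$ and use that small triangles in the round sphere satisfy $\CAT(1)$. I would then cite~\cite[Example 12.39]{AbramenkoBrown08} and~\cite[Theorem 11.16]{AbramenkoBrown08} for the details of these retraction estimates. The main obstacle is the bookkeeping around the retraction maps: one must know that the apartment retractions are simplicial, surjective, and $1$-Lipschitz, and that they restrict to isometries on apartments containing the basepoint cell --- these facts rest on (B1) and (B2) but are exactly where the argument has genuine content rather than routine verification.
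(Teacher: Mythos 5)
The paper itself gives no proof of this statement: it is quoted as a known result, with the details delegated to \cite[Example 12.39]{AbramenkoBrown08} and \cite[Theorem 11.16]{AbramenkoBrown08}, so the comparison here is against that classical argument. Your overall plan (well-definedness via (B2), triangle inequality and the comparison estimates via the apartment retractions $\rho_{\Sigma,C}$, geodesics realized inside apartments, then the $\CAT(\kappa)$ inequality with the extra $d<\pi$ bookkeeping in the spherical case) is exactly the route those references take, so in spirit you are reproducing the standard proof rather than deviating from it.

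Two points in your sketch are genuinely off, though. First, the fallback claim that ``any geodesic triangle lies in an apartment'' is false: in a tree (a rank-one Euclidean building) a nondegenerate geodesic triangle is a tripod and lies in no apartment, so only the retraction route is viable. Second, the retraction step as you state it does not suffice: centering $\rho_{\Sigma,C}$ at a chamber containing one \emph{vertex} preserves the lengths of the two sides through that vertex and does not increase the third, but comparing the three side lengths alone gives no control over distances from a vertex to points on the opposite side, which is what the $\CAT(0)$ (resp.\ $\CAT(1)$) inequality requires. The standard repair, and what \cite[Theorem 11.16]{AbramenkoBrown08} actually does, is to center the retraction at a chamber containing the relevant \emph{interior} point: put the side $[x,y]$ (and hence its midpoint $m$, or more generally the point $z$ on that side) in an apartment $\Sigma$, take $\rho=\rho_{\Sigma,C}$ with $C$ a chamber of $\Sigma$ containing $m$; then $d(\rho(p),m)=d(p,m)$ while $d(\rho(p),x)\leq d(p,x)$ and $d(\rho(p),y)\leq d(p,y)$, and the Euclidean CN identity in $\Sigma$ (together with Alexandrov's lemma, or the Bruhat--Tits criterion that a geodesic space satisfying the CN inequality is $\CAT(0)$) yields the comparison inequality; the spherical case is the analogous estimate for triangles of perimeter less than $2\pi$. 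With that correction your outline matches the cited proof.
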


\begin{definition}\label{def:chamber-panel}
Let $\Delta$ be a building.
A cell $A \subseteq \Delta$ of maximal dimension is called a \emph{chamber}.
A cell $A \subseteq \Delta$ of codimension 1 is called a \emph{panel}.
For every subcomplex $X \leq \Delta$, we write $\Ch(X)$ to denote the set of chambers in $X$.
\end{definition}

\begin{definition}
Let $\Delta$ be an arbitrary building.
A finite sequence of chambers $\Gamma = E_1, \ldots, E_n$ in $\Delta$ is called a \emph{gallery} from $E_1$ to $E_n$ if every two consecutive chambers $E_i,E_{i+1}$ share a common panel.
In this case we write $\Gamma = E_1 \vert \ldots \vert E_n$ and call $n-1$ the \emph{length} of $\Gamma$.
For every two chambers $C$, $D$ in $\Delta$, we define the \emph{gallery distance} between $C$ and $D$, denoted by $\dist_{\Delta}(C,D)$, to be the minimal length of a gallery from $C$ to $D$.
A gallery $\Gamma = C \vert \ldots \vert D$ of length $\dist_{\Delta}(C,D)$ is called minimal.
\end{definition}

Given a spherical or Euclidean buildings $\Delta$, there are at least to two useful notions of convexity.
On the one hand, we say that a subset of $X \subseteq \Delta$ is metrically convex if every geodesic connecting two points of $X$ stays in $X$.
On the other hand, there is a combinatorial notion of convexity given by the following definition, which applies to arbitrary buildings.

\begin{definition}
Let $\Delta$ be a building and let $X \subseteq \Delta$ be a subcomplex such that every maximal cell in $X$ is a chamber.
We say that $X$ is convex if for all $C,D \in \Ch(\Delta)$ every minimal gallery from $C$ to $D$ stays in $X$.
Given a subset $S \subseteq \Delta$, we define the (combinatorial) convex hull $\conv(S) \subseteq \Delta$ to be the smallest convex subcomplex containing $S$.
\end{definition}

An important feature of spherical buildings is that there is a notion of opposition.
We say that two points $x,y$ in a spherical building $\Delta$ are \emph{opposite} to each other if they are antipodal in some (hence every) apartment $\Sigma \subseteq \Delta$ containing them.
Similarly, we say that two cells $A,B \subseteq \Delta$ are opposite to each other if they are antipodal in some apartment containing them.

\begin{definition}\label{def:projections}
Let $\Delta$ be a Euclidean building and let $A,B \subseteq \Delta$ be two cells.
Given any two points $a \in A$, $b \in B$ we define the \emph{projection} of $A$ to $B$, denoted by $\pr_B(A)$, as the unique cell that contains an initial part of the open geodesic $(a,b)$ from $a$ to $b$.
The same definition applies if $\Delta$ is spherical, unless $A$ and $B$ are opposite, in which case there is no projection.
\end{definition}

\begin{lemma}\label{lem:existence-aps-sph-build}
Let $\Delta$ be a spherical building and let $\Sigma$ be an apartment of $\Delta$.
For each simplex $A \subseteq \Sigma$ and every opposite simplex $B$ of $A$ there is an apartment $\Sigma'$ containing $B$ and the star $\st_{\Sigma}(A)$ of $A$.
\end{lemma}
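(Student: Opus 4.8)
The plan is to reduce the statement to a fact about a single apartment by using the building axioms to transport $\st_\Sigma(A)$ into a prescribed apartment. First I would recall that in a spherical Coxeter complex, for any simplex $A$ and any simplex $B$ opposite to $A$, the star $\st_\Sigma(A)$ and $B$ together already determine an apartment: indeed, the projection map $\pr_A(-)$ restricted to $\Ch(\st_\Sigma(B))$ identifies the chambers of $\st_\Sigma(B)$ with the chambers of $\st_\Sigma(A)$, and carrying out the ``opposite'' construction inside $\Sigma$ shows that the convex hull of $\st_\Sigma(A)$ and $B$ is all of $\Sigma$. The point, however, is that in the building $\Delta$ we are handed an opposite $B$ that need \emph{not} lie in $\Sigma$; so the actual content is that we can still find \emph{some} apartment $\Sigma'$ containing both $B$ and the fixed subcomplex $\st_\Sigma(A)$.

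The key step is the following. Pick a chamber $C$ of $\st_\Sigma(A)$ and let $D \defeq \pr_B(C)$ be its projection onto $B$ (this exists: $C$ and $B$ are not opposite, since $C \supseteq A$ and $A$ is opposite $B$, forcing $C$ and $B$ to be at gallery distance strictly less than the diameter). By (B1) choose an apartment $\Sigma'$ containing the chambers $C$ and $D$. Now I claim $\Sigma'$ contains all of $\st_\Sigma(A)$ and all of $B$. For $B$: since $A \subseteq \overline C \subseteq \Sigma'$ and $\Sigma'$ is spherical, $A$ has a unique opposite $A^{\op}$ in $\Sigma'$, and one checks that $D \subseteq \overline{\st_{\Sigma'}(A^{\op})}$ together with the opposition relation forces $B \subseteq \Sigma'$ — concretely, $B$ is recovered from $D$ as the face of $D$ opposite to the face $C \cap (\text{something})$, an operation internal to $\Sigma'$. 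For $\st_\Sigma(A)$: both $\Sigma$ and $\Sigma'$ contain the cells $A$ and $C$, so by (B2) there is an isomorphism $\phi \colon \Sigma \to \Sigma'$ fixing $A$ and $C$ pointwise; since $\st_\Sigma(A)$ is built from $A$ by the intrinsic Coxeter-complex operation of taking cofaces, and $\phi$ fixes $A$ pointwise, $\phi$ must fix $\st_\Sigma(A)$ pointwise as well, whence $\st_\Sigma(A) = \phi(\st_\Sigma(A)) \subseteq \Sigma'$.

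I would then assemble these observations: $\Sigma'$ is an apartment containing $B$ and containing $\st_\Sigma(A)$, which is exactly the assertion. A clean way to package the ``$B \subseteq \Sigma'$'' half, avoiding case analysis on which face of $D$ yields $B$, is to invoke the standard fact (available from the gate property of projections in spherical buildings, cf.\ the projection formalism recalled before Lemma~\ref{lem:existence-aps-sph-build}) that for opposite simplices $A, B$ one has $\Ch(\st(B)) \to \Ch(\st(A))$, $E \mapsto \pr_A(E)$ a bijection with inverse $E' \mapsto \pr_B(E')$; applying this inside $\Sigma'$ with $A^{\op}$ in place of $A$ shows the preimage of $D$ is forced to be a chamber of $\st_{\Sigma'}(A^{\op})$ opposite $C$ in $\Sigma'$, and uniqueness of opposites pins it down to be exactly the chamber of $\st(B)$ we started from, so $B \subseteq \Sigma'$.

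The main obstacle I anticipate is the ``$B \subseteq \Sigma'$'' step: it is tempting but false to argue that $\Sigma'$ contains $B$ merely because it contains $D$ and some initial geodesic segment, since $B$ is a face of $D$ only if we know which face, and that in turn uses the opposition combinatorics. Getting this right requires being careful that the projection $\pr_B(C)$ genuinely lies in the closed star of $A^{\op}$ within $\Sigma'$ — equivalently, that $A^{\op}$ is a face of $D$ — which is where the hypothesis that $B$ is opposite $A$ (not merely that $C$ and $B$ are in general position) is essential. Everything else is a routine application of (B1), (B2), and the structure theory of spherical Coxeter complexes.
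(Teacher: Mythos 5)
Your argument fails at the step where you conclude $\st_{\Sigma}(A) \subseteq \Sigma'$. The isomorphism $\phi \colon \Sigma \rightarrow \Sigma'$ supplied by (B2) fixes $A$ and $C$ pointwise, but it does not fix $\st_{\Sigma}(A)$ pointwise: it carries $\st_{\Sigma}(A)$ onto $\st_{\Sigma'}(A)$, and in a thick building these are in general different subcomplexes of $\Delta$. ``Being a coface of $A$'' is not a condition that pins a simplex down inside $\Delta$; an isomorphism of apartments only reproduces the copy of the star inside the target apartment. Moreover the strategy itself, not just its justification, cannot work: with $D \defeq \pr_B(C)$ the chambers $C$ and $D$ are not opposite, so there are many apartments containing both, and a generic one misses part of $\st_{\Sigma}(A)$. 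Concretely, let $\Delta$ be a thick building of type $A_2$ (a projective plane), let $A = p$ be a point, $\Sigma$ the apartment of a triangle $p,q,r$, and $B = qr$. For $C = \{p,pq\}$ one gets $\pr_B(C) = \{q,qr\}$, and the apartments containing these two chambers correspond to triangles $p,q,s$ with $s$ a point of $qr$ other than $q$; unless $s = r$ such an apartment does not contain the second chamber $\{p,pr\}$ of $\st_{\Sigma}(A)$. (Your worry about $B \subseteq \Sigma'$, on the other hand, is a non-issue: $B$ is a face of $\pr_B(C)$ by the very definition of the projection, so it lies in any subcomplex of $\Delta$ containing that closed chamber; no analysis of ``which face'' is needed.)

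The missing idea -- and this is how the paper argues -- is to choose the second chamber inside the star rather than inside $\st(B)$: take $D \subseteq \st_{\Sigma}(A)$ to be the chamber opposite to $C$ in $\st_{\Sigma}(A)$, and let $\Sigma'$ be the unique apartment containing the pair of opposite chambers $C$ and $\pr_B(D)$. Then $B \subseteq \overline{\pr_B(D)} \subseteq \Sigma'$ at once; the identity $\pr_A(\pr_B(D)) = D$ from \cite[Proposition 4.69]{AbramenkoBrown08}, combined with the fact that apartments are closed under projections (both $A$ and $B$ lie in $\Sigma'$), forces $D \subseteq \Sigma'$; and since $\conv(C,D) = \st_{\Sigma}(A)$ and apartments are convex, the whole star lies in $\Sigma'$. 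The uniqueness of the apartment spanned by two opposite chambers is exactly the rigidity your choice of $\Sigma'$ lacks, and it is what makes the containment of the star automatic rather than something one could hope to transport via (B2).
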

\begin{proof}
Let $A$ and $B$ be a pair of opposite simplices in $\Delta$.
Let ${C \subseteq \st_{\Sigma}(A)}$ be a chamber and let $D \subseteq \st_{\Sigma}(A)$ be the opposite chamber of $C$ in $\st_{\Sigma}(A)$.
Let $\Sigma'$ be an (in fact the unique) apartment containing $C$ and the projection chamber $\pr_B(D)$.
From~\cite[Proposition 4.69]{AbramenkoBrown08} it follows that $\pr_A(\pr_B(D)) = D$.
Since apartments are closed under taking projections we get $D \subseteq \Sigma'$.
Now the claim follows since the convex hull $\conv(C,D)$ coincides with $\st_{\Sigma}(A)$.
\end{proof}

In the following we will mainly be interested in thick buildings.
These are defined as follows.

\begin{definition}\label{def:thickness}
Let $\Delta$ be a building.
The \emph{thickness} of $\Delta$, denoted by $\thick(\Delta)$, is the maximal $t \in \N \cup \{\infty\}$ such that for every panel $P \subseteq \Delta$ there are at least $t$ chambers having $P$ as a face.
If $\thick(\Delta) \geq 3$, then we just say that $\Delta$ is \emph{thick}.
\end{definition}

\subsection{Buildings arising from BN-pairs}\label{subsec:from-BN-to-building}

The buildings we are interested in are associated to $\BN$-pairs.
Let us quickly recall what these are.

\begin{definition}\label{def:BN-pair}
Let $G$ be a group and let $B$, $N$ be subgroups of $G$ such that $G$ is generated by $B$ and $N$.
The pair $(B,N)$ is called a \emph{$\BN$-pair} of $G$ if $T \defeq B \cap N$ is normal in $N$, and the quotient $W \defeq N / T$ admits a set of generators $S$ such that the following conditions are satisfied.
\begin{itemize}
\item[$(\BN1)$] $sBw \subseteq BswB \cup BwB$ for every $s \in S$ and every $w \in W$.
\item[$(\BN2)$] $sBs^{-1} \nleq B$ for every $s \in S$.
\end{itemize}
In this case the tuple $(G,B,N,S)$ is called a \emph{Tits system}.
\end{definition}

As the notation suggest one can show that the pair $(W,S)$ coming from a Tits system $(G,B,N,S)$ is a Coxeter system (see~\cite[Theorem 6.56]{AbramenkoBrown08}).
Let us next describe how $(G,B,N,S)$ gives rise to a simplicial building $\Delta(B,N)$.
For each subset $J \subseteq S$ let $W_J$ denote the group generated by $J$.
From~\cite[Proposition 6.27]{AbramenkoBrown08}) we know that the union of double cosets $P_J = \bigcup \limits_{w \in W_J} B\widetilde{w}B$ is a group, where $\widetilde{w} \in N$ is any representative of $w$.
The groups $P_J$ are called \emph{standard parabolic subgroups}.
For every $s \in S$ we write $P_{\hat{s}} \defeq P_{S \backslash \{s\}}$.

\begin{definition}\label{def:from-BN-to-building}
Let $(B,N)$ be a $\BN$-pair of a group $G$ and let $(G,B,N,S)$ be a corresponding Tits system.
We define $\Delta(B,N)$ as the abstract simplicial complex whose simplices are given by the sets of the form $\Set{gP_{\hat{s}}}{s \in J}$, where $J \subseteq S$ and $g \in G$ are arbitrary.
Moreover we define $\Sigma(B,N)$ as the subcomplex of $\Delta(B,N)$ that consists of simplices $\Set{nP_{\hat{s}}}{s \in J}$ with $n \in N$.
\end{definition}

\begin{remark}\label{rem:independence-of-S}
It can be shown that every group $B \subseteq P \subseteq G$ is already of the form $P_J$ for some subset $J \subseteq S$ (see~\cite[Theorem 6.43]{AbramenkoBrown08}).
In particular, this tells us that $\Delta(B,N)$ and $\Sigma(B,N)$ do not depend on the choice of $S$.
\end{remark}

The next result summarizes some properties of $\Delta(B,N)$ (see e.g.~\cite[Exercise 6.54]{AbramenkoBrown08} and its solution starting on page 708).

\begin{Citation}\label{cit:from-BN-to-building}
Let $(B,N)$ be a $\BN$-pair of a group $G$.
Then $\Delta(B,N)$ is a building and $\Sigma(B,N)$ is an apartment in $\Delta(B,N)$.
\end{Citation}

\subsection{The spherical building at infinity}\label{subsec:build-at-inf} 

In the following it will be crucial for us to work with the spherical building at infinity of a Euclidean building $X$.
As a set, the spherical building at infinity of $X$ is given by the visual boundary $\pinf X$ of $X$.
In order to define the cell structure on $\pinf X$, we first have to introduce the notion of a special vertex.

\begin{definition}\label{def:special-vertex}
Let $\Sigma$ be a Euclidean Coxeter complex and let $\mathcal{H}$ be the corresponding set of hyperplanes.
A vertex $v \in \Sigma$ is called \emph{special} if for every hyperplane $H \in \mathcal{H}$ there is a parallel hyperplane $H' \in \mathcal{H}$ that contains $v$.
\end{definition}

The existence of a special vertex in a Euclidean Coxeter complex $\Sigma$ is guaranteed by~\cite[Proposition 10.17]{AbramenkoBrown08} together with~\cite[Proposition 10.19]{AbramenkoBrown08}.
In what follows, they will often serve as the origin of an apartment.

\begin{definition}\label{def:simpl-at-infinity-in-coxeter-complex}
Let $\Sigma$ be a Euclidean Coxeter complex and let $v \in \Sigma$ be a special vertex.
For every proper coface $A$ of $v$, let $S^{\Sigma}_v(A) \subseteq \pinf \Sigma$ be the subset of points $\xi$ such that the open ray $(v,\xi) \defeq [v,\xi) \backslash \{v\}$ has an initial segment in $A$.
Let $K^{\Sigma}_v(A) \subseteq \Sigma$ denote the union of all rays $(v,\xi)$ with $\xi \in S^{\Sigma}_v(A)$.
\end{definition}

Consider now a Euclidean building $X$ and let $S_{\infty}(X)$ be the set of subsets $S^{\Sigma}_v(A) \subseteq \pinf X$, where $\Sigma \subseteq X$ is an apartment, $v \in \Sigma$ is a special vertex, and $A$ is a proper coface of $v$.
Note that $S_{\infty}(X)$ can be endowed with a natual incidence structure by declaring $S^{\Sigma}_v(A)$ and $S^{\Sigma}_v(B)$ to be incident if $A$ and $B$ are incident.
Moreover, each of the spaces $S^{\Sigma}_v(A)$ has a natural metric given by the angle at $v$.
It is well-known that this structure defines a spherical building (see~\cite[Section 11]{AbramenkoBrown08}).

\begin{definition}\label{def:build-at-inft}
Let $X$ be a Euclidean building.
The \emph{spherical building at infinity} of $X$ is defined as the visual boundary $\pinf X$ endowed with the stucture of open cells given by $S_{\infty}(X)$.
\end{definition}

The following definition provides us with a more flexible version of the subsets $K^{\Sigma}_v(A)$ in that it allows $v$ to be an arbitrary point rather than a special vertex.

\begin{definition}\label{def:cones-induced-by-infty}
Let $X$ be a Euclidean building and let $\sigma \subseteq \pinf X$ be a simplex at infinity.
For each point $p \in X$ we define
\[
K_p(\sigma) = \bigcup \limits_{\xi \in \sigma} (p, \xi).
\]
If $\sigma$ is a chamber, we say that $K_p(\sigma)$ is a \emph{sector}.
\end{definition}

It will be often important for us to know whether two sectors correspond to the same chamber at infinity.
The following well-known result provides us with a useful criterion to decide this (see~\cite[Proposition 11.77]{AbramenkoBrown08}).

\begin{proposition}\label{prop:common-subsector}
Let $K_{p_1}(\sigma)$, $K_{p_2}(\tau)$ be two sectors in a Euclidean building $X$.
Then $\sigma = \tau$ if and only if $K_{p_1}(\sigma) \cap K_{p_2}(\tau)$ contains a sector.
\end{proposition}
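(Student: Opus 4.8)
The plan is to treat the two implications separately; the implication ``$K_{p_1}(\sigma)\cap K_{p_2}(\tau)$ contains a sector $\Rightarrow \sigma=\tau$'' is soft, whereas the converse carries the geometric content. For the ``if'' direction I would first record that the visual boundary of a sector recovers its chamber at infinity: for any chamber $\nu\subseteq\pinf X$ and any point $p\in X$ one has $\pinf\bigl(K_p(\nu)\bigr)=\overline{\nu}$. Indeed, every sector lies inside some apartment (a standard fact, see~\cite[Section~11]{AbramenkoBrown08}); fixing an apartment $A\supseteq K_p(\nu)$, the set $K_p(\nu)$ is an ordinary full--dimensional simplicial cone in the Euclidean vector space $A$, so its visual boundary computed in $\pinf A$ is the closed chamber $\overline{\nu}$, and by convexity of $A$ this agrees with the visual boundary taken in $X$. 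Now assume $K_{p_3}(\nu)\subseteq K_{p_1}(\sigma)\cap K_{p_2}(\tau)$ with $\nu\in\Ch(\pinf X)$. Passing to visual boundaries in the nested convex subsets $K_{p_3}(\nu)\subseteq K_{p_1}(\sigma)$ gives $\overline{\nu}\subseteq\overline{\sigma}$; since $\nu$ and $\sigma$ are chambers of $\pinf X$, hence closed simplices of equal dimension, this forces $\nu=\sigma$, and symmetrically $\nu=\tau$, so $\sigma=\tau$.

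For the converse, assume $\sigma=\tau$. I would isolate two elementary observations about a fixed apartment. First, if $p\in K_q(\sigma)$ then $K_p(\sigma)\subseteq K_q(\sigma)$: in an apartment $A\supseteq K_q(\sigma)$ the two cones are the translates $q+C$ and $p+C$ of the direction cone $C$ of $\sigma$, and $p\in q+C$ yields $p+C\subseteq q+C$. Second, two sectors $K_a(\sigma),K_b(\sigma)$ contained in one apartment $A$ have a common subsector: writing $a-b=u-w$ with $u,w\in C$ (possible since $C$ is full--dimensional, so $C-C=A$), the point $c=a+w$ satisfies $c-a=w\in C$ and $c-b=u\in C$, hence $K_c(\sigma)\subseteq K_a(\sigma)\cap K_b(\sigma)$. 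Granting these, it remains to bring $p_1$ and the sector $K_{p_2}(\sigma)$ into a single apartment, which I would do via the standard building lemma that a point (or a chamber through it) and a sector always lie in a common apartment after the sector is replaced by a suitable subsector~\cite[Section~11]{AbramenkoBrown08}. Applied to $p_1$ and $K_{p_2}(\sigma)$ it yields an apartment $A$ with $p_1\in A$ and a subsector $K_b(\sigma)\subseteq K_{p_2}(\sigma)\cap A$; then $\sigma\subseteq\pinf A$, so $K_{p_1}(\sigma)\subseteq A$ by convexity, and the second observation produces a common subsector $K_c(\sigma)$ of $K_{p_1}(\sigma)$ and $K_b(\sigma)$ inside $A$. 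By construction $K_c(\sigma)$ is a sector contained in $K_{p_1}(\sigma)\cap K_{p_2}(\sigma)$.

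The one genuinely nontrivial input is the building lemma invoked in the last step: that a point and a sector share an apartment once the sector is allowed to shrink. I expect this to be the main obstacle for a self-contained treatment. It is proved by induction on the gallery distance from a chamber through $p_1$ to the sector, folding chambers one at a time into the apartment obtained at the previous stage, the delicate point being to verify that the fold keeps a subsector of the original sector. This is exactly the mechanism underlying the retraction $\rho_{A,\sigma}$ towards a chamber at infinity, so one could equally route the whole argument through that retraction once its basic properties (identity on $A$, an isomorphism on every apartment whose boundary contains $\sigma$, distance non-increasing) are available. Everything else above is either formal or a computation inside a single Euclidean apartment.
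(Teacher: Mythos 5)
Your proposal is correct. Note that the paper gives no proof of this proposition at all: it is imported as a known fact from \cite[Proposition 11.77]{AbramenkoBrown08}, so the comparison is with that reference rather than with an internal argument. Your route is sound and essentially the standard one. The ``if'' direction via visual boundaries works: $\pinf\bigl(\overline{K_p(\nu)}\bigr)=\overline{\nu}$, monotonicity of visual boundaries for nested convex sets, and maximality of chambers force $\nu=\sigma=\tau$. The ``only if'' direction rests on the one nontrivial input you correctly isolate, namely that a chamber of $X$ and a sector lie in a common apartment after passing to a subsector (\cite[Theorem 11.63.(1)]{AbramenkoBrown08}), a fact the paper itself invokes repeatedly (e.g.\ in Definition~\ref{def:apartments-towards-sigma} and Lemma~\ref{lem:rho-inv-sigma-min-gal}); the rest is elementary cone arithmetic in a single apartment ($C+C\subseteq C$ for the open convex cone $C$, and $C-C$ equal to the whole apartment since $C$ is open and nonempty). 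The only point that must be made explicit --- and you do state it --- is that $K_p(\sigma)$ in the sense of Definition~\ref{def:cones-induced-by-infty}, with arbitrary tip $p$, is itself contained in an apartment having $\sigma$ in its boundary; this again follows from \cite[Theorem 11.63.(1)]{AbramenkoBrown08} together with convexity of apartments, and it is needed both to compute $\pinf \overline{K_p(\sigma)}=\overline{\sigma}$ and to realize the sectors as translates $p+C$ in a common apartment. With that granted, all steps check out; what your argument buys is a self-contained proof from ingredients already in the paper's toolkit, in place of an external citation.
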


\subsection{The opposition complex}

In the following, it will be crucial for us to understand the topological properties of the subcomplex of a spherical building that is opposite to a given chamber.

\begin{definition}\label{def:opposite-complex}
Let $\Delta$ be a spherical building and let $C$ be a chamber in $\Delta$.
The subcomplex of $\Delta$ that consists of all cells $A$ that are opposite to some face of $C$ will be denoted by $\Opp_{\Delta}(C)$.
\end{definition}

Recall that a $d$-dimensional cell complex $X$ is called \emph{spherical} if it is $(d-1)$-connected.
If a spherical building $\Delta$ is non-exceptional and thick enough, it is a well-known result of Abramenko~\cite[Theorem B]{Abramenko96} that $\Opp_{\Delta}(C)$ is spherical.

\begin{theorem}\label{thm:connectivity-of-opp}
Let $\Delta$ be an arbitrary building of type $A_{n+1}$, $C_{n+1}$ or $D_{n+1}$, but not an exceptional $C_3$ building.
Assume that $\thick(\Delta) \geq 2^{n}+1$ in the $A_{n+1}$ case, respectively $\thick(\Delta) \geq 2^{2n+1}+1$ in the other two cases.
Then $\Opp_{\Delta}(C)$ is spherical but not contractible for every chamber $C$ in $\Delta$.
\end{theorem}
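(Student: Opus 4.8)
The plan is to split the statement into the two assertions ``$\Opp_{\Delta}(C)$ is spherical'' and ``$\Opp_{\Delta}(C)$ is not contractible''. Since each of the three Dynkin types listed is irreducible, no reduction to irreducible factors is needed, and we may assume throughout that $\Delta$ has rank $n+1$, so $\dim\Delta = n$. I would first record the elementary fact that $\Opp_{\Delta}(C)$ is the union $\bigcup_{D}\overline{D}$ of the closed chambers over all chambers $D$ opposite to $C$: such chambers exist (take an apartment containing $C$ together with its antipodal chamber), each face of such a $D$ is opposite to the corresponding face of $C$, and conversely every cell opposite to a face of $C$ lies in some chamber opposite to $C$ by standard building combinatorics (cf.\ \cite[\S5]{AbramenkoBrown08}). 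In particular $\Opp_{\Delta}(C)$ is pure of dimension $n$, so ``spherical'' here means exactly ``$(n-1)$-connected''; and once that is known, $\Opp_{\Delta}(C)$ is non-contractible if and only if it fails to be acyclic, for which it suffices that $\chi(\Opp_{\Delta}(C)) \neq 1$.

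The $(n-1)$-connectivity is precisely the content of Abramenko's theorem \cite[Theorem~B]{Abramenko96}: the bounds $\thick(\Delta) \geq 2^{n}+1$ in the $A_{n+1}$ case and $\thick(\Delta) \geq 2^{2n+1}+1$ in the $C_{n+1}$ and $D_{n+1}$ cases, together with the exclusion of the exceptional $C_{3}$ building, are exactly his hypotheses; these bounds are not cosmetic, since for small thickness $\Opp_{\Delta}(C)$ can fail already to be simply connected. For this half I would simply quote \cite{Abramenko96}, after translating his parameters into $\thick(\Delta)$.

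For non-contractibility I would compute $\chi(\Opp_{\Delta}(C))$ directly. The rank-$2$ cases ($A_{2}$ and $C_{2}$) I would handle by hand in terms of the parameters of the relevant thick generalized polygon; for instance, for a thick projective plane of order $q$, the complex $\Opp_{\Delta}(C)$ is the $q$-regular bipartite graph on $2q^{2}$ vertices with $q^{3}$ edges, so $\chi(\Opp_{\Delta}(C)) = q^{2}(2-q) \neq 1$ for every $q \geq 2$. In rank $\geq 3$, Tits' classification presents $\Delta$ as the flag complex attached to a (skew) field $K$ (together with a suitable form in the $C$ and $D$ cases), and the cells of $\Opp_{\Delta}(C)$ of a prescribed type are exactly the partial flags in general position with respect to the corresponding face of $C$; counting complements, their number is a monomial (or a product of $q$-integers) in $q \defeq |K|$, the top one being $q^{N}$ with $N$ the number of positive roots. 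Summing these counts with signs gives a polynomial in $q$ which equals $1$ only at $q = 1$ (the thin case, where $\Opp_{\Delta}(C)$ is a closed simplex) and hence never for $q \geq 2$; this is already visible for $A_{3}$, where the count yields $\chi(\Opp_{\Delta}(C)) = q^{3}(q^{3} - 3q^{2} + q + 2)$. Thus $\Opp_{\Delta}(C)$ is not acyclic, a fortiori not contractible. (In substance this computation, together with the connectivity, is already in \cite{Abramenko96}, where an explicit non-bounding top cycle is produced; the Euler-characteristic argument is merely a shortcut once the connectivity has been granted.)

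The step I expect to be the main obstacle is making the non-contractibility argument uniform across the three families. The opposition involution on the Dynkin diagram---and hence the pairing between cells of $\Opp_{\Delta}(C)$ and faces of $C$, as well as the precise ``general position'' conditions entering the count---behaves differently for $A_{n+1}$ (non-trivial diagram automorphism), for $C_{n+1}$ (trivial automorphism, but $-1 \in W$), and for $D_{n+1}$ (according to the parity of $n+1$), so the cell count has to be organised type by type; and one must verify that these combinatorial conditions genuinely detect opposition of simplices rather than merely opposition of their vertices. Everything else---the reduction, the bookkeeping with dimensions, and the connectivity input---is routine or quoted.
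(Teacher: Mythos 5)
First, a point of comparison: the paper does not actually prove this statement — it is imported wholesale from Abramenko \cite[Theorem B]{Abramenko96}, whose result gives both the sphericity and the non-contractibility (the latter via an explicit non-bounding top-dimensional cycle). Your first half, quoting \cite{Abramenko96} for the $(n-1)$-connectivity, therefore coincides with the paper's treatment, and your preliminary observations (purity of $\Opp_{\Delta}(C)$, the reduction of non-contractibility to non-acyclicity once sphericity is known) are fine. The gap is in the second half, which you offer as an independent argument.

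The Euler-characteristic computation only makes sense when $\Delta$ is finite, but the hypothesis $\thick(\Delta)\geq 2^{n}+1$ (resp.\ $2^{2n+1}+1$) explicitly allows $\thick(\Delta)=\infty$: buildings of type $A_{n+1}$, $C_{n+1}$, $D_{n+1}$ over infinite (skew) fields satisfy the hypotheses, and for these there are no cell counts, no order $q$, and no $\chi$ — so your proof of non-contractibility says nothing about a large part of the stated generality. (The paper's Proposition~\ref{prop:existence-of-opposite-app} would cover the infinite-thickness case by exhibiting an apartment inside $\Opp_{\Delta}(C)$, but its hypothesis — thickness exceeding the number of chambers of an apartment — is far stronger than $2^{n}+1$, so it does not rescue the finite case with the stated bounds either.) Even in the finite case the count is not a monomial or polynomial in a single $q$: for $C_{n+1}$-buildings the panels of different types may have different orders, and the rank-two residues are generalized quadrangles of order $(s,t)$, so one must show a multi-parameter polynomial avoids the value $1$ on the admissible range; you flag this uniformity issue yourself as the main obstacle, and it is precisely the part that is neither routine nor quoted. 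The clean repair is the one you mention parenthetically: take the non-contractibility from Abramenko's explicit top cycle, i.e.\ cite \cite{Abramenko96} for both halves — which is exactly what the paper does.
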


\begin{remark}\label{rem:type-B-and-C}
Recall that the Coxeter groups that appear as the Weyl groups of the root systems of type $B_n$ and $C_n$ coincide.
In the theory of buildings it is a common convention to speak of buildings of type $C_n$ in this case.
\end{remark}

If the type of a spherical building $\Delta$ is arbitrary, then we can still show that $\Opp_{\Delta}(C)$ is not contractible, provided $\Delta$ is thick enough.
In fact, the following result endows us with a criterion to deduce that $\Opp_{\Delta}(C)$ contains an apartment, and hence has non-vanishing topdimensional homology.

\begin{proposition}\label{prop:existence-of-opposite-app}
Let $\Delta$ be a spherical building and let $\Sigma$ be an apartment in $\Delta$.
Suppose that the thickness of $\Delta$ is larger than the number of chambers in $\Sigma$.
Then there is a chamber $C$ in $\Delta$ such that $\Sigma$ is contained in $\Opp_{\Delta}(C)$.
If moreover $\Aut(\Delta)$ acts transitively on the chambers in $\Delta$, then each chamber $C$ in $\Delta$ contains an apartment in $\Opp_{\Delta}(C)$.
\end{proposition}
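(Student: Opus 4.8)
The plan is to reduce the second assertion to the first by an equivariance argument, and to prove the first by building the chamber $C$ one ``opposition constraint'' at a time, using the thickness bound only to avoid finitely many chambers at each step.

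\emph{Reduction.} Every $g \in \Aut(\Delta)$ preserves incidence and opposition, so $g(\Opp_\Delta(C_0)) = \Opp_\Delta(g(C_0))$ for any chamber $C_0$. Granting a chamber $C_0$ with $\Sigma \subseteq \Opp_\Delta(C_0)$, chamber transitivity gives, for an arbitrary chamber $C$, some $g$ with $g(C_0) = C$, and then $g(\Sigma)$ is an apartment contained in $g(\Opp_\Delta(C_0)) = \Opp_\Delta(C)$. So it suffices to prove the first assertion, and for that it is enough to find a chamber $C$ that is opposite \emph{every} chamber of $\Sigma$: any cell $A \subseteq \Sigma$ is a face of some chamber $D \subseteq \Sigma$, and since $C$ and $D$ are opposite they span a unique apartment $\Sigma_D$; being a spherical Coxeter complex, $\Sigma_D$ admits an antipodal automorphism, which carries $D$ to $C$ and hence $A$ to a face of $C$ opposite to $A$, so $A \in \Opp_\Delta(C)$.

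\emph{Construction.} Let $\ell$ be the maximal gallery distance between two chambers of $\Delta$ --- finite, as $\Delta$ is spherical --- so that two chambers are opposite exactly when they lie at distance $\ell$. List $\Ch(\Sigma) = \{D_1, \dots, D_m\}$, with $m = \abs{\Ch(\Sigma)}$, so that for each $k \ge 2$ the chamber $D_k$ is adjacent to some $D_i$ with $i < k$; this is possible since the chamber graph of $\Sigma$ is connected. By hypothesis $\thick(\Delta) \ge m+1$. I build chambers $C_1, \dots, C_m$ with $C_j$ opposite $D_1, \dots, D_j$: let $C_1$ be the antipode of $D_1$ in $\Sigma$, and pass from $C_j$ to $C_{j+1}$ as follows. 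If $C_j$ is opposite $D_{j+1}$, set $C_{j+1} \defeq C_j$. Otherwise $\dist_\Delta(C_j, D_{j+1}) < \ell$; but adjacency of $D_{j+1}$ to some $D_i$ with $i \le j$ forces $\dist_\Delta(C_j, D_{j+1}) \ge \dist_\Delta(C_j, D_i) - 1 = \ell - 1$, so $\dist_\Delta(C_j, D_{j+1}) = \ell - 1$. Then $C_j$ has a panel $R$ of which $C_j$ is the \emph{gate} towards $D_{j+1}$: passing to a common apartment of $C_j$ and $D_{j+1}$, the fact that only the longest element of a Coxeter group has every simple reflection as a descent produces a panel $R$ of $C_j$ whose other chamber in that apartment is farther from $D_{j+1}$ than $C_j$ is; and by the gate property of the rank-one residue $\st_\Delta(R)$ --- the gallery distances from $D_{j+1}$ to its chambers take only two consecutive values, attained minimally by a single chamber --- that single chamber must be $C_j$. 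Hence every chamber of $\st_\Delta(R)$ other than $C_j$ is opposite $D_{j+1}$. Finally, for each $\nu \le j$ the chamber $C_j$ cannot be the gate of $R$ towards $D_\nu$ --- otherwise some chamber of $\st_\Delta(R)$ would be at distance $\ell + 1$ from $D_\nu$, exceeding $\ell$ --- so exactly one chamber of $\st_\Delta(R)$, that gate $G_\nu \ne C_j$, fails to be opposite $D_\nu$. Thus the chambers of $\st_\Delta(R)$ that are not opposite all of $D_1, \dots, D_{j+1}$ lie among $C_j, G_1, \dots, G_j$, at most $j+1$ of them, and since $\abs{\st_\Delta(R)} \ge \thick(\Delta) \ge m+1 > j+1$ we may choose $C_{j+1} \in \st_\Delta(R)$ opposite all of $D_1, \dots, D_{j+1}$. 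After $m-1$ steps, $C \defeq C_m$ is opposite every chamber of $\Sigma$.

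The step I expect to be the main obstacle is the inductive step, specifically the passage from the Coxeter-theoretic statement (valid inside one apartment containing $C_j$ and $D_{j+1}$) to the conclusion that $C_j$ is the gate of the panel $R$ in the full thick building $\Delta$, rather than merely the nearer of the two chambers through $R$ in that apartment; the gate property of rank-one residues is exactly what bridges this gap. This is also where the hypothesis is used sharply: at the final step $j = m-1$ one must avoid $m$ forbidden chambers inside a single residue, which requires thickness at least $m+1$. A secondary but essential point is that the $D_i$ must be ordered so that each is adjacent to a predecessor, which is what guarantees $\dist_\Delta(C_j, D_{j+1}) = \ell - 1$ and hence that a single step out through $R$ achieves opposition to $D_{j+1}$.
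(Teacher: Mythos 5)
Your argument is correct, but its global structure differs from the paper's. The inner mechanism is the same in both proofs: if the current chamber $C$ fails to be opposite some $D \in \Ch(\Sigma)$, one finds a panel $P$ of $C$ with $\pr_P(D) = C$ (the paper quotes this from Abramenko--Brown, Lemma 4.106, while you rederive it via the descent characterization of the longest element plus the gate property of the rank-one residue -- and you correctly close the gap you flag, namely that the gate must be $C$ itself and not a third chamber of $\st_\Delta(P)$), and then the thickness hypothesis lets one pick a chamber of $\st_\Delta(P)$ avoiding the at most $\abs{\Ch(\Sigma)}$ projection chambers $\pr_P(D')$, $D' \in \Ch(\Sigma)$. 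Where you differ is in the outer logic: the paper runs a one-shot extremal argument, choosing $C$ to maximize $N_C = \sum_{E \in \Ch(\Sigma)} \dist(C,E)$ and showing that the chamber produced in $\st_\Delta(P)$ would strictly increase $N_C$, a contradiction; this needs no ordering of $\Ch(\Sigma)$, no mention of the diameter $\ell$, and no claim about exact distances. Your proof instead is a greedy induction over an adjacency ordering $D_1,\dots,D_m$ of $\Ch(\Sigma)$, which requires the extra bookkeeping that $\dist(C_j, D_{j+1}) = \ell - 1$ (this is where the adjacency of $D_{j+1}$ to an earlier $D_i$ is used) and that the previously achieved oppositions are preserved because only the gates $G_\nu$ can fail; both points are justified correctly, and the count of at most $j+1 \le m$ forbidden chambers against $\thick(\Delta) \ge m+1$ matches the paper's use of the hypothesis. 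The trade-off: the paper's potential-function argument is shorter and ordering-free, while yours is more constructive, exhibiting the chamber by explicit successive modifications within panels. Your reduction of the second assertion via chamber transitivity, and of ``$\Sigma \subseteq \Opp_\Delta(C)$'' to ``$C$ opposite every chamber of $\Sigma$'', coincides with what the paper does.
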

\begin{proof}
For each chamber $D \in \Ch(\Delta)$ let $N_D \defeq \sum \limits_{E \in \Ch(\Sigma)} \dist(D,E)$.
Let $C$ be a chamber such that $N_C$ is maximal.
Suppose that $\Sigma$ does not lie in $\Opp_{\Delta}(C)$.
Then there is a chamber $D \in \Ch(\Sigma)$ that is not opposite to $C$.
In this case~\cite[Lemma 4.106]{AbramenkoBrown08} tells us that there is a panel $P$ of $C$ such that $\pr_P(D) = C$.
Moreover we know from~\cite[Proposition 4.95]{AbramenkoBrown08} that each chamber $D' \in \Ch(\Delta)$ satisfies the so-called gate property
\[
\dist(D',C) = \dist(D',\pr_P(D')) + \dist(\pr_P(D'),C).
\]
From our assumption on the thickness of $\Delta$ we can therefore deduce that there is a chamber $C' \in \Ch(\st_{\Delta}(P)) \setminus \Set{\pr_P(D')}{D' \in \Ch(\Sigma)}$ that satisfies $\dist(C',D) > \dist(C,D)$ and $\dist(C',D') \geq \dist(C,D')$ for every $D' \in \Ch(\Sigma)$.
By this implies $N_{C'} > N_C$, which contradicts our choice of $C'$.

Suppose now that $\Aut(\Delta)$ acts transitively on $\Ch(\Delta)$.
Let $D$ be an arbitrary chamber in $\Delta$.
From the first claim we know that there is some chamber $C \in \Ch(\Delta)$ such that $\Opp_{\Delta}(C)$ contains an apartment $\Sigma$.
Let $\alpha \in \Aut(\Delta)$ be such that $\alpha(C) = D$.
Then $\alpha(\Sigma)$ is an apartment that is opposite to $D$, which gives us the second claim.
\end{proof}

Note that Proposition~\ref{prop:existence-of-opposite-app} implies in particular that for every spherical building $\Delta$ of infinite thickness and every chamber $C \subseteq \Delta$ the opposition complex $\Opp_{\Delta}(C)$ contains an apartment of $\Delta$.
This will be applied in Section~\ref{sec:neg-dir} on the boundary at infinity of a thick Euclidean building.

\section{Deconstructing subcomplexes of Coxeter complexes}\label{sec:deconstr}
Throughout this section we fix a $d$-dimensional Euclidean Coxeter complex $\Sigma$.
Let $\Sigma = \prod \limits_{i=1}^{s} \Sigma_i$ be the decomposition of $\Sigma$ into its irreducible factors.
We fix a special vertex $v \in \Sigma$. 
This allows us to view $\Sigma$ as a vector space with origin $v$.
Let $\sigma \subseteq \pinf \Sigma$ be a chamber at infinity and let $E \subseteq \st_{\Sigma}(v)$ be the corresponding chamber in $K_{v}(\sigma)$.
Let $\{P_1, \ldots, P_d\}$ be the set of panels of $E$ that contain $v$.
For each panel $P_i$ let $\alpha_i$ be a linear form on $\Sigma$ that satisfies $\alpha_i(P_i) = 0$ and $\alpha_i(E) > 0$.
For convenience, we choose $\alpha_i$ such that the parallel walls of $\alpha_i^{-1}(0)$ in $\Sigma$ are given by $W_{i,k} \defeq \alpha_i^{-1}(k)$, $k \in \Z$.
For each $i$ we consider the unique vertex $\xi_i$ of $\sigma$ for which the ray $[v, \xi_i)$ does not lie in $W_{i,0}$.
The Busemann function associated to $\xi_i$ and $v$ will be denoted by $\beta_i$.
It is an easy exercise to see that $\beta_i \colon \Sigma \rightarrow \R$ is the linear form characterized by $\beta_i([v,\xi_i)(t))=t$ and $\beta_i(\xi_i(1)^{\perp})=0$ (see~\cite[II.8.24.(1)]{BridsonHaefliger99}).

\medskip

In the next sections we will study height functions on Euclidean buildings that are induced by linear forms on a fixed apartment.
In this section we prepare this study by introducing auxiliary combinatorial properties of the superlevelsets in $\Sigma$ that come from certain linear forms on $\Sigma$.
To this end we fix a non-trivial linear form $h \colon \Sigma \rightarrow \R$ such that the composition $h \circ \xi \colon [0,\infty) \rightarrow \R$ is strictly decreasing for each $\xi \in \overline{\sigma}$.

\begin{remark}\label{rem:general_position}
Let $\eta = \pinf(h^{-1}((-\infty,r]))$, which does not depend on $r \in \R$.
The condition that $h \circ \xi$ is strictly decreasing for each $\xi \in \overline{\sigma}$ can also be expressed by saying that $\overline{\sigma} \subseteq \eta^\circ$ where $\eta^\circ$ denotes the interior of $\eta$.
Another equivalent condition is that $\eta \cap \overline{\sigma^{\op}} = \emptyset$, where $\sigma^{\op}$ denotes the opposite chamber of $\sigma$ in $\pinf \Sigma$.
\end{remark}

\begin{definition}\label{def:projection-to-inner-simplices}
Let $A$ be a cell in $\Sigma$ and let $\tau$ be a simplex in $\pinf \Sigma$.
The \emph{projection of $\tau$ to $A$}, denoted by $\pr_A(\tau)$, is the unique cell in $\st_{\Sigma}(A)$ such that for any (interior) point $\xi \in \tau$ and any point $x \in A$, there is an initial segment of $(x,\xi)$ that lies in $\pr_A(\tau)$.
\end{definition}

The following definition specifies the idea of moving towards a chamber at infinity.

\begin{definition}\label{def:moving-towards-infinity}
Let $\tau \subseteq \pinf \Sigma$ be a chamber.
A gallery $\Gamma = C_1 \vert \ldots \vert C_n$ in $\Sigma$ is \emph{$\tau$-minimal} if for every two consecutive chambers $C_i$, $C_{i+1}$ of $\Gamma$, separated by a panel $P$, the condition $\pr_P(\tau) = C_{i+1}$ is satisfied.
In this case we also say that $\Gamma$ is moving towards $\tau$.
\end{definition}

\begin{lemma}\label{lem:seeing-locally-where-sigma-is}
Let $A$ be a cell in $\Sigma$ of codimension at least $1$ and let $\tau$ be a chamber in $\pinf \Sigma$.
If $\Gamma = C_1 \lvert \ldots \rvert C_n$ is a minimal gallery in $\st_{\Sigma}(A)$ that terminates in $\pr_{A}(\tau)$, then $\Gamma$ is $\tau$-minimal.
\end{lemma}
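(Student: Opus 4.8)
The plan is to work entirely inside the star $\st_\Sigma(A)$, which (after passing to the link) is itself a spherical Coxeter complex on which all the combinatorial machinery of projections and galleries is available. Write $k = \codim(A) - 1 \geq 0$, so that $\lk_\Sigma(A)$ is a $k$-sphere with the structure of a spherical Coxeter complex, and the chambers of $\st_\Sigma(A)$ correspond bijectively to the chambers of this link. The projection $\pr_A(\tau)$ is a chamber of $\st_\Sigma(A)$; call it $D$. I want to show that the minimal gallery $\Gamma = C_1 \vert \ldots \vert C_n$ ending in $C_n = D$ is $\tau$-minimal, i.e. that for each consecutive pair $C_i, C_{i+1}$ sharing a panel $P$, we have $\pr_P(\tau) = C_{i+1}$.

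First I would set up the key geometric fact: for any chamber $C$ of $\st_\Sigma(A)$ and any panel $P$ of $C$ that is a coface of $A$, the chamber $\pr_P(\tau)$ is the unique chamber of $\st_\Sigma(P)$ that lies on a $\tau$-minimal gallery from $C$; moreover, by the standard behaviour of projections toward a chamber at infinity (the ``moving towards $\tau$'' direction always strictly decreases gallery distance to $\pr_A(\tau)$, cf.\ the gate property and~\cite[Proposition 4.95]{AbramenkoBrown08} applied in the link), one has the distance formula
\[
\dist_{\st_\Sigma(A)}(C, D) = \dist_{\st_\Sigma(A)}(\pr_P(\tau), D) + 1
\]
whenever $C \neq D$ and $P$ is the panel of $C$ with $\pr_P(\tau) \neq C$. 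The point is that $\pr_A(\tau) = D$ is characterized as the unique chamber of $\st_\Sigma(A)$ at gallery distance $0$ from itself along $\tau$-minimal galleries, and projecting toward $\tau$ across the ``correct'' panel is the unique way to decrease distance to $D$. I would extract this from the results already cited in the proof of Proposition~\ref{prop:existence-of-opposite-app}, specializing them to the spherical building $\lk_\Sigma(A)$.

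Now the argument is a downward induction on $i$, starting from $C_n = D$. Since $\Gamma$ is minimal in $\st_\Sigma(A)$, we have $\dist(C_i, D) = n - i$ for every $i$, and consecutive chambers differ by exactly one in distance to $D$. Consider the panel $P$ shared by $C_i$ and $C_{i+1}$. Because $\dist(C_i, D) = \dist(C_{i+1}, D) + 1 > 0$, the chamber $C_i$ is not equal to $D$, so there is a well-defined panel $Q$ of $C_i$ with $\pr_Q(\tau) \neq C_i$, and the distance formula says $\dist(\pr_Q(\tau), D) = \dist(C_i, D) - 1 = \dist(C_{i+1}, D)$. Since $\st_\Sigma(P)$ — equivalently the relevant panel in $\lk_\Sigma(A)$, or rather the wall it spans — contains exactly the chambers $C_i$ and $C_{i+1}$ inside the apartment $\Sigma$, and $\pr_Q(\tau)$ is the chamber adjacent to $C_i$ across the distance-decreasing panel $Q$, I must show $Q = P$ and hence $\pr_P(\tau) = \pr_Q(\tau) = C_{i+1}$. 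This follows because $C_{i+1}$ is the unique neighbour of $C_i$ in the gallery with $\dist(C_{i+1},D) = \dist(C_i,D)-1$, and the distance-decreasing neighbour is also unique with that property (any other neighbour across a different panel is either farther from $D$ or, by minimality of $\Gamma$ inside the apartment, simply not available); matching the two forces $P = Q$ and $C_{i+1} = \pr_P(\tau)$.

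The main obstacle I anticipate is the bookkeeping in the link: one has to be careful that ``panel of $C_i$ containing $A$'', ``panel in $\st_\Sigma(A)$'', and ``panel in $\lk_\Sigma(A)$'' all refer to the same combinatorial object, and that $\pr_P(\tau)$ in $\Sigma$ agrees with the projection computed in $\lk_\Sigma(A)$ of the image of $\tau$ (this uses that $\tau$ is a chamber at infinity, so its direction at $A$ is a chamber of $\lk_\Sigma(A)$ — here Remark~\ref{rem:general_position} and Definition~\ref{def:projection-to-inner-simplices} are what guarantee $\pr_A(\tau)$ is well-defined and a full chamber). Once that dictionary is in place, the uniqueness of the distance-decreasing neighbour in a minimal gallery, combined with the gate property, closes the argument with no further computation.
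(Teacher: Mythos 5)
Your plan has a genuine gap at its core: the ``key geometric fact'' you set up --- that crossing a panel of $\st_{\Sigma}(A)$ towards $\tau$ strictly decreases gallery distance to $D = \pr_A(\tau)$, equivalently that $\pr_P(\tau)$ agrees with the combinatorial projection (gate) of $D$ to $P$ for every panel $P$ of the star --- is not something you can cite; it is essentially equivalent to the lemma you are trying to prove. The gate property and \cite[Proposition 4.95]{AbramenkoBrown08} concern combinatorial projections of simplices inside the complex and say nothing about the ray-based notion $\pr_P(\tau)$ of Definition~\ref{def:projection-to-inner-simplices}; the entire content of Lemma~\ref{lem:seeing-locally-where-sigma-is} is the bridge between these two notions. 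The paper supplies that bridge by a short Euclidean argument which has no counterpart in your outline: if $P$ is the panel separating $C_i$ from $C_{i+1}$ and $W$ is the wall it spans, minimality of $\Gamma$ forces $\pr_A(\tau)$ to lie in the halfspace $R$ bounded by $W$ that contains $C_{i+1}$; the initial segment of $[a,\xi)$ (with $a \in A$, $\xi \in \tau$) lies in $\pr_A(\tau) \subseteq R$, and since $A$ and $P$ both lie on $W$, its parallel translate $[x,\xi)$ with $x \in P$ enters the same open halfspace, whence $\pr_P(\tau) = C_{i+1}$. Some such argument (parallel rays based on the wall enter the same side) must appear somewhere; the purely combinatorial machinery cannot produce it.

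Two further steps are false as stated, though repairable. A chamber $C \neq D$ of the star can have several panels $P$ with $\pr_P(\tau) \neq C$, and several neighbours strictly closer to $D$: take the square-grid apartment, $A$ a vertex, and $C$ the chamber of $\st_{\Sigma}(A)$ opposite $D$; both of its panels through $A$ have these properties. So ``the panel $Q$ of $C_i$ with $\pr_Q(\tau) \neq C_i$'' and ``the unique distance-decreasing neighbour'' are not well defined, and the step ``matching the two forces $P = Q$'' collapses. If you did have the key fact for every panel, the clean finish is simply to apply it to $C_{i+1}$ and $P$: were $\pr_P(\tau) = C_i$, it would give $\dist(C_i,D) = \dist(C_{i+1},D) - 1$, contradicting $\dist(C_i,D) = \dist(C_{i+1},D) + 1$ from minimality. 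But that only relocates the work: the geometric compatibility itself is what has to be proven, and your proposal assumes it.
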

\begin{proof}
Let $1 \leq i \leq n-1$ and let $P$ be the panel separating $C_i$ and $C_{i+1}$.
Let $W$ be the wall spanned by $P$ and let $R$ be the half space bounded by $W$ that contains $C_{i+1}$.
Since $\Gamma$ is minimal and terminating in $\pr_{A}(\tau)$ it follows that $\pr_{A}(\tau)$ lies in $R$.
Let $\xi \in \tau$ and $a \in A$ be arbitrary points.
By definition we have $[a,\xi)((0,\varepsilon)) \subseteq \pr_{A}(\tau)$ for some $\varepsilon > 0$.
In particular we see that $[a,\xi)((0,\varepsilon))$ is contained in $R$.
Hence for every point $x \in P$ the translate $[x,\xi)((0,\varepsilon))$ of $[a,\xi)((0,\varepsilon))$ is also contained in $R$.
It therefore follows that $\pr_P(\tau) = C_{i+1}$, which proves the claim.
\end{proof}

Lemma~\ref{lem:seeing-locally-where-sigma-is} gives us the following characterization of $\pr_{A}(\tau)$.

\begin{corollary}\label{cor:other-char-of-proj}
Let $A$ be a cell in $\Sigma$ and let $C$ be a chamber in $\st_{\Sigma}(A)$.
For each chamber $\tau$ in $\pinf \Sigma$ we have $\pr_A(\tau) = C$ if and only if $\pr_{P}(\tau) = C$ for every panel $A \subseteq P \subset C$.
\end{corollary}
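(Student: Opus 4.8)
The plan is to deduce both implications directly from Lemma~\ref{lem:seeing-locally-where-sigma-is}, after first disposing of the degenerate case in which $A$ is a chamber. If $A$ is a chamber, then $\st_{\Sigma}(A)$ consists of $A$ alone, so $C = A$ and $\pr_A(\tau) = \pr_C(\tau) = C$ holds by the definition of the projection (a short initial segment of $(x,\xi)$ stays in the open chamber $C$ for $x \in C$), while the family of panels $P$ with $A \subseteq P \subset C$ is empty; hence both sides of the asserted equivalence hold vacuously. So from now on we may assume $\operatorname{codim}_{\Sigma}(A) \geq 1$, which is precisely the hypothesis needed to invoke Lemma~\ref{lem:seeing-locally-where-sigma-is}.

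For the forward implication, assume $\pr_A(\tau) = C$ and fix a panel $P$ with $A \subseteq P \subset C$. Since $\Sigma$ is a thin Coxeter complex, there is a unique chamber $C' \neq C$ adjacent to $C$ across $P$; as $A$ is a face of $P$ and $P$ is a face of $C'$, the chamber $C'$ lies in $\st_{\Sigma}(A)$. The length-one gallery $\Gamma = C' \vert C$ is a minimal gallery in $\st_{\Sigma}(A)$ that terminates in $\pr_A(\tau)$, so Lemma~\ref{lem:seeing-locally-where-sigma-is} applies and $\Gamma$ is $\tau$-minimal. By the definition of $\tau$-minimality, its unique step, which crosses $P$, satisfies $\pr_P(\tau) = C$, as required.

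For the reverse implication, assume $\pr_P(\tau) = C$ for every panel $P$ with $A \subseteq P \subset C$, and set $D \defeq \pr_A(\tau) \in \Ch(\st_{\Sigma}(A))$. Choose a minimal gallery $\Gamma = C_1 \vert \ldots \vert C_n$ in $\st_{\Sigma}(A)$ from $C_1 = C$ to $C_n = D$ (such a gallery exists since $\st_{\Sigma}(A)$ is gallery connected). Since $\Gamma$ terminates in $\pr_A(\tau)$, Lemma~\ref{lem:seeing-locally-where-sigma-is} shows that $\Gamma$ is $\tau$-minimal. Suppose $n \geq 2$ and let $P_1$ be the panel separating $C_1 = C$ and $C_2$. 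Both $C_1$ and $C_2$ are cofaces of $A$, so $A$ is a face of $\overline{C_1} \cap \overline{C_2} = \overline{P_1}$ by thinness of $\Sigma$; thus $A \subseteq P_1 \subset C$ and the hypothesis gives $\pr_{P_1}(\tau) = C$. On the other hand, $\tau$-minimality of $\Gamma$ yields $\pr_{P_1}(\tau) = C_2$, whence $C_2 = C_1$, contradicting minimality of $\Gamma$. Therefore $n = 1$ and $D = C$, i.e.\ $\pr_A(\tau) = C$.

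There is no substantial obstacle here; the whole statement is a short deduction from Lemma~\ref{lem:seeing-locally-where-sigma-is}. The only points requiring a little care are the bookkeeping that the panels crossed by galleries inside $\st_{\Sigma}(A)$ are exactly the panels $P$ with $A \subseteq P \subset C$ (which rests on thinness: two adjacent chambers of a Coxeter complex meet in exactly a panel), and the separate treatment of the case where $A$ is a chamber, for which Lemma~\ref{lem:seeing-locally-where-sigma-is} is not available.
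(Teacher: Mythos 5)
Your proof is correct and follows exactly the route the paper intends: the corollary is stated there as an immediate consequence of Lemma~\ref{lem:seeing-locally-where-sigma-is}, and your argument simply spells out that deduction (length-one galleries for the forward direction, a minimal gallery in $\st_{\Sigma}(A)$ ending in $\pr_A(\tau)$ for the converse), with the codimension-zero case correctly set aside. The careful handling of the hypothesis of the lemma and of the panels crossed inside the star is exactly the bookkeeping the paper leaves implicit, so there is nothing to add.
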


\begin{lemma}\label{lem:upper-face}
Let $C \subseteq \Sigma$ be a chamber and let $I$ be the set of faces $A$ of $C$ that satisfy $\pr_A(\sigma) = C$.
Then $I$ contains a unique, minimal, proper, non-empty face of $C$.
In other words, there a non-empty face $U \subsetneq C$ such that $A \in I$ if and only if $U \fac A \fac C$.
\end{lemma}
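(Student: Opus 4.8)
The plan is to first describe the set $K$ of those panels $P$ of $C$ with $\pr_P(\sigma)=C$, and then to recognize $K$ as the set of facets of the polysimplex $C$ containing one distinguished face. By Corollary~\ref{cor:other-char-of-proj}, a face $A$ of $C$ lies in $I$ if and only if every panel of $C$ containing $A$ lies in $K$. Hence the assertion is equivalent to saying that $K$, regarded as a set of facets of $C$, is the \emph{unique} set of facets intersecting in a non-empty proper face $U$ of $C$; by Lemma~\ref{lem:faces-in-polysimplices} this in turn amounts to showing that $K$ is non-empty and that, for each irreducible factor of $\Sigma$, $K$ omits at least one of the facets of $C$ coming from that factor. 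Granting this, Lemma~\ref{lem:faces-in-polysimplices} yields the face $U$ with $\overline{U}=\bigcap_{P\in K}\overline{P}$, and one checks directly that $J(A)\subseteq K=J(U)$ (where $J(A)$ is the set of panels of $C$ containing $A$) holds if and only if $U\fac A$, so that $I=\Set{A}{U\fac A\fac C}$ with $U$ as its unique minimal, proper, non-empty element.

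For the two combinatorial statements about $K$ I would pass to the irreducible decomposition $\Sigma=\prod_{i=1}^{s}\Sigma_i$, writing accordingly $C=\prod_i C_i$ and $\sigma=\prod_i\sigma_i$ (a join at infinity). Since a geodesic ray towards an interior point of $\sigma$ is a product of geodesic rays towards interior points of the $\sigma_i$, the projection $\pr_P(\sigma)$ decomposes coordinatewise; thus for the panel $P$ of $C$ obtained by replacing the $i$-th coordinate $C_i$ with a facet $Q$ of $C_i$, we have $\pr_P(\sigma)=C$ if and only if $\pr_Q(\sigma_i)=C_i$ in $\Sigma_i$. So it suffices to prove, for a single irreducible factor (I drop the index), that the set $K_0$ of facets $Q$ of the chamber $C_0$ --- a genuine simplex, as the factor is irreducible --- with $\pr_Q(\sigma_0)=C_0$ is non-empty and is not the set of \emph{all} facets of $C_0$.

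To see this, fix $\xi\in\sigma_0^{\circ}$ and let $\ell_1,\dots,\ell_{d_0+1}$ be the barycentric coordinate functions of the simplex $C_0$, so that $\ell_j$ is an affine form vanishing on the wall $W_j$ spanned by the facet $Q_j$, positive on $C_0^{\circ}$, and $\sum_j\ell_j\equiv 1$. Because $\sigma_0$ is a chamber of the spherical Coxeter complex $\pinf\Sigma_0$ and each $\pinf W_j$ is a wall of it, $\xi\notin\pinf W_j$; hence along any ray in direction $\xi$ the affine function $\ell_j$ has non-zero slope, so it is strictly monotone and has a well-defined eventual sign $\varepsilon_j\in\{+,-\}$. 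For $y$ in the relative interior of $Q_j$ --- a point lying on $W_j$ and on no other wall --- the ray $[y,\xi)$ leaves $W_j$ immediately and therefore enters, for small positive parameter, the open chamber on the $\varepsilon_j$-side of $W_j$; since $\Sigma_0$ is thin, this chamber is $C_0$ precisely when $\varepsilon_j=+$, so $Q_j\in K_0\iff\varepsilon_j=+$. If all $\varepsilon_j=+$, a ray in direction $\xi$ would eventually lie in $\bigcap_j\{\ell_j>0\}=C_0^{\circ}$, which is a bounded polytope --- impossible; if all $\varepsilon_j=-$, it would eventually lie in $\bigcap_j\{\ell_j<0\}$, which is empty since $\sum_j\ell_j\equiv 1$. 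This gives $\emptyset\neq K_0\subsetneq\{\text{facets of }C_0\}$, which is what was needed. The one step that requires care is the initial translation, via Corollary~\ref{cor:other-char-of-proj} and Lemma~\ref{lem:faces-in-polysimplices}, of the condition ``$A\in I$'' into the statement that $K$ is a face-defining set of facets of the polysimplex $C$; once that is in place, the coordinatewise decomposition of $\pr_P(\sigma)$ and the two boundedness observations above are short.
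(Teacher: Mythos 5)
Your proposal is correct, and it rests on the same ingredients as the paper's proof: the panel criterion of Corollary~\ref{cor:other-char-of-proj}, the facet combinatorics of Lemma~\ref{lem:faces-in-polysimplices}, the decomposition of $\Sigma$ into irreducible factors, and the geometric fact that a ray towards an \emph{interior} point of $\sigma$ is transverse to every wall of each factor and so cannot stay inside the bounded simplex $C_i$. The organization differs, though. The paper argues bottom-up: for $A,B\in I$ it shows that every panel containing $A\cap B$ again has projection $C$ and that $A\cap B\neq\emptyset$ (because in each coordinate $j$ some facet must be omitted, since otherwise the ray towards $\xi$ would be constant in that coordinate, contradicting the interiority of $\xi$), so $I$ is closed under intersection and $U$ is its minimal element. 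You argue top-down: you characterize the whole panel set $K$ at once via the signs $\varepsilon_j$ of the barycentric coordinates along the direction $\xi$, and define $U$ directly as the face cut out by $K$; your trapping argument for ``not all $\varepsilon_j=+$'' is just a more explicit version of the paper's ``constant coordinate'' argument. One genuine bonus of your write-up: by also excluding the case that all $\varepsilon_j$ are negative (via $\sum_j\ell_j\equiv 1$), you show that $K$ contains at least one facet in each factor, hence that $U$ is a \emph{proper} face of $C$; the paper's proof only establishes the omission of facets needed for non-emptiness of intersections and leaves the existence of at least one panel $P$ with $\pr_P(\sigma)=C$ implicit. Your coordinatewise description of $\pr_P(\sigma)$ for a product chamber is also used implicitly in the paper, so spelling it out, as you do, is a reasonable addition rather than a detour.
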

\begin{proof}
Let $A,B \in I$ be two cells and let $\mathcal{P}_A$ and $\mathcal{P}_B$ be the sets of panels of $C$ that are cofaces of $A$ respectively $B$.
Corollary~\ref{cor:other-char-of-proj} tells us that $\pr_P(\sigma) = C$ for every $P \in \mathcal{P}_A \cup \mathcal{P}_B$.
On the other hand, we know from Lemma~\ref{lem:faces-in-polysimplices} that $A = \bigcap \limits_{P \in \mathcal{P}_A} P$ and $B = \bigcap \limits_{P \in \mathcal{P}_B} P$ and therefore $A \cap B = \bigcap \limits_{P \in \mathcal{P}_A \cup \mathcal{P}_B} P$.
Thus the uniqueness statement in Lemma~\ref{lem:faces-in-polysimplices} implies that every panel $A \cap B \subseteq P \subset C$ is contained in $\mathcal{P}_A \cup \mathcal{P}_B$ and therefore satisfies $\pr_P(\sigma) = C$.
In view of Corollary~\ref{cor:other-char-of-proj}, it remains to show that $A \cap B$ is not empty.
To see this, let $C = \prod \limits_{i=1}^s C_i$ be the decomposition of $C$ into simplices $C_i \subseteq \Sigma_i$ and let $1 \leq j \leq s$ be a fixed coordinate.
We claim that there is a panel $P$ of $C_j$ such that the corresponding panel
\[
C_1 \times \ldots C_{j-1} \times P \times C_{j+1} \ldots \times C_s
\]
of $C$ is not contained in $\mathcal{P}_A \cup \mathcal{P}_B$.
Indeed, otherwise $[x,\xi)$ would stay in
\[
\Sigma_1 \times \ldots \Sigma_{j-1} \times C_j \times \Sigma_{j+1} \ldots \times \Sigma_s
\]
for every $x \in C$ and every point $\xi$ in the (open) chamber $\sigma$.
But then $[x,\xi)$ is constant in the coordinate $j$, which is only possible if $\xi \in \overline{\sigma}$ is not an interior point of $\sigma$.
On the other hand, the intersection over a set of closed panels of a chamber is empty if and only if the set consists of all panels of the chamber.
Together this shows that $A \cap B$ is non-empty, which proves the claim.
\end{proof}

\begin{definition}\label{def:upper-face}
For each chamber $C$ in $\Sigma$ we define its \emph{upper face}, denoted by $C^{\uparrow}$, as the intersection of all panels $P \subset C$ with $\pr_P(\sigma) = C$ (the face $U$ in Lemma~\ref{lem:upper-face}).
Similarly, we define the \emph{lower face} of $C$, denoted by $C^{\downarrow}$, as the intersection of all panels $P \subset C$ with $\pr_P(\sigma^{\op}) = C$, where $\sigma^{\op} \subseteq \pinf \Sigma$ denotes the chamber opposite to $\sigma$.
\end{definition}

Let us recall the so-called gate property for Coxeter complexes~\cite[Proposition 3.105]{AbramenkoBrown08}.

\begin{proposition}\label{prop:gate-groperty}
Let $C$ be a chamber in $\Sigma$ and let $A \subset \Sigma$ be an arbitrary cell.
Then every chamber $D \subseteq \st_{\Sigma}(A)$ satisfies the equality
\[
d(D,C) = d(D,\pr_{A}(C)) + d(\pr_{A}(C),C).
\]
\end{proposition}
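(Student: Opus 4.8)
The plan is to translate the statement into the combinatorics of walls and half-spaces, where the gate property becomes a counting identity. Recall the standard fact (see \cite[Chapter~3]{AbramenkoBrown08}) that for any two chambers $X,Y$ of $\Sigma$ the gallery distance $d(X,Y)$ equals the number of walls of $\Sigma$ that \emph{separate} $X$ from $Y$, i.e.\ have $X$ and $Y$ in opposite open half-spaces. Fix the cell $A$ and split the set $\mathcal{H}$ of walls of $\Sigma$ as $\mathcal{H} = \mathcal{H}_A \sqcup \mathcal{H}'$, where $\mathcal{H}_A$ is the (finite) set of walls containing $A$ and $\mathcal{H}' = \mathcal{H} \setminus \mathcal{H}_A$. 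First I would record the elementary observation that every chamber of $\st_{\Sigma}(A)$ lies on the same side of each wall $H \in \mathcal{H}'$ as $A$ does: since $A$ is an open cell of the arrangement not contained in $H$, it is disjoint from $H$ and hence lies in one open half-space $H^{>}$; any chamber $D'$ with $A \subseteq \overline{D'}$ then meets $H^{>}$, and being connected and disjoint from $H$ it is contained in $H^{>}$. In particular, no wall of $\mathcal{H}'$ separates two chambers of $\st_{\Sigma}(A)$.

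The second ingredient is the characterization of the gate $\pr_A(C)$. Here I would use that the residue $\st_{\Sigma}(A)$ is itself a Coxeter complex whose set of walls is naturally identified with $\mathcal{H}_A$; since $C$ is a chamber it avoids every wall, so the family of open half-spaces $(H^{C})_{H \in \mathcal{H}_A}$ (where $H^{C}$ denotes the one containing $C$) singles out a unique chamber of this residue, which I claim coincides with $\pr_A(C)$ in the sense of Definition~\ref{def:projections}. Granting this, $\pr_A(C)$ is the unique chamber of $\st_{\Sigma}(A)$ lying on the $C$-side of every wall in $\mathcal{H}_A$; equivalently $C$ and $\pr_A(C)$ are separated by no wall of $\mathcal{H}_A$, so, by the previous paragraph, every wall separating $C$ from $\pr_A(C)$ lies in $\mathcal{H}'$.

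With these two facts the count is immediate. Let $D \subseteq \st_{\Sigma}(A)$ be a chamber. A wall $H \in \mathcal{H}_A$ separates $D$ from $C$ if and only if it separates $D$ from $\pr_A(C)$, since $C$ and $\pr_A(C)$ lie on the same side of $H$; conversely every wall separating $D$ from $\pr_A(C)$ lies in $\mathcal{H}_A$, as $D,\pr_A(C) \in \st_{\Sigma}(A)$. A wall $H \in \mathcal{H}'$ does not separate $D$ from $\pr_A(C)$ (both lie on the $A$-side of $H$), so it separates $D$ from $C$ if and only if it separates $\pr_A(C)$ from $C$. Hence the set of walls separating $D$ from $C$ is the disjoint union of the set separating $D$ from $\pr_A(C)$ and the set separating $\pr_A(C)$ from $C$; taking cardinalities yields $d(D,C) = d(D,\pr_A(C)) + d(\pr_A(C),C)$.

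An alternative, purely algebraic route would identify chambers of $\Sigma$ with elements of the Coxeter group $W$ (with $d$ corresponding to word length), $\st_{\Sigma}(A)$ with a coset of a standard parabolic subgroup $W_J$, and $\pr_A(C)$ with its minimal-length representative; the identity then drops out of the standard fact that every element of such a coset factors as (minimal representative)$\,\cdot\, u$ with $u \in W_J$ and lengths adding, together with the corresponding statement for the coset on the other side to handle the arbitrary position of $C$. The only genuinely nontrivial point in either approach is the gate characterization of the second paragraph --- the existence of the chamber of $\st_{\Sigma}(A)$ on the $C$-side of all walls of $\mathcal{H}_A$ and its agreement with the geodesic projection of Definition~\ref{def:projections}; that is where I would spend the care, since everything else is bookkeeping. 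Note also that nothing above uses sphericity or irreducibility, so the argument applies verbatim in the Euclidean, possibly reducible setting considered here.
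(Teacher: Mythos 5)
The paper itself gives no proof of this statement --- it is quoted directly from \cite[Proposition 3.105]{AbramenkoBrown08} --- so there is no internal argument to compare yours against. What you propose is, in substance, the standard half-space/sign-vector proof of the gate property, and your counting skeleton is correct: once one knows (i) that no wall outside $\mathcal{H}_A$ separates two chambers of $\st_{\Sigma}(A)$ and (ii) that $\pr_A(C)$ is the unique chamber of $\st_{\Sigma}(A)$ lying on the $C$-side of every wall in $\mathcal{H}_A$, the set of walls separating $D$ from $C$ is the disjoint union of those separating $D$ from $\pr_A(C)$ and those separating $\pr_A(C)$ from $C$, and the identity follows from the fact that the gallery distance between two chambers equals the number of separating walls (which holds for arbitrary, in particular reducible Euclidean, Coxeter complexes, as you note). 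Your ``alternative algebraic route'' via minimal coset representatives is essentially the argument of Abramenko--Brown.

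The one place where your text is not yet a proof is exactly the point you flag yourself: claim (ii), together with the existence of a chamber of $\st_{\Sigma}(A)$ on the $C$-side of all walls of $\mathcal{H}_A$, is asserted (``I claim'', ``granting this'') rather than shown. It closes in a few lines from Definition~\ref{def:projections}, with no need to invoke the Coxeter structure of the residue. Choose $a \in A$ and $c \in C$. Since $a$ lies in the open cell $A$, the walls through $a$ are exactly those in $\mathcal{H}_A$, and for each such $H$ we have $c \notin H$, so every point $a + t(c-a)$ with $t \in (0,1]$ lies strictly on the $C$-side of $H$; by local finiteness of the arrangement, a short enough initial segment also misses all walls not through $a$. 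Hence that segment lies in a single chamber $E$ with $a \in \overline{E}$, so $A \subseteq \overline{E}$, i.e.\ $E \subseteq \st_{\Sigma}(A)$; by Definition~\ref{def:projections} this $E$ is $\pr_A(C)$ (which in particular confirms that $\pr_A(C)$ is a chamber), and by construction $E$ lies on the $C$-side of every $H \in \mathcal{H}_A$. Uniqueness is immediate from your first observation: two distinct chambers of $\st_{\Sigma}(A)$ are separated by some wall, which must lie in $\mathcal{H}_A$. With this inserted, your argument is complete.
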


As a consequence of Proposition~\ref{prop:gate-groperty}, we see that there is a minimal gallery from $D$ to $C$ passing through $\pr_{A}(C)$.
Recall from Subsection~\ref{subsec:build-at-inf} that for each point $x \in \Sigma$ and each simplex $\tau \subseteq \pinf \Sigma$ we write
\[
K_x(\tau) = \bigcup \limits_{\xi \in \tau} (x, \xi)
\]
to denote the (relatively open) cone corresponding to $\tau$ with tip in $x$.

\begin{remark}\label{rem:description-of-sectors}
By our choice of $\alpha_1,\ldots,\alpha_d$, every sector $K_x(\sigma)$ can be described as the set of points $y \in \Sigma$ such that $\alpha_i(y) > \alpha_i(x)$ for every $1 \leq i \leq d$.
Similarly, $K_x(\sigma^{\op})$ can be described as the set of points $y \in \Sigma$ with $\alpha_i(y)<\alpha_i(x)$ for every $1 \leq i \leq d$.
\end{remark}

\begin{lemma}\label{lem:truncated-sectors}
For every point $x \in \Sigma$ and every $r \in \R$ the intersection
\[
\overline{K_x(\sigma^{\op})} \cap h^{-1}((-\infty,r])
\]
is compact.
\end{lemma}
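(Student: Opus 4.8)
The plan is to make everything explicit in the coordinate system given by $\alpha_1,\dots,\alpha_d$. First I would record that $\alpha_1,\dots,\alpha_d$ form a basis of the dual space $\Sigma^{\ast}$. Indeed, the special vertex $v$ is the unique face of the chamber $E$ on which all the panels $P_1,\dots,P_d$ meet, so by Lemma~\ref{lem:faces-in-polysimplices} the walls $\alpha_i^{-1}(0)$ intersect only in $v$; since $v$ is our origin this says $\bigcap_i \ker\alpha_i = \{0\}$, and as there are $d$ of them in a $d$-dimensional space the map $(\alpha_1,\dots,\alpha_d)\colon \Sigma \to \R^d$ is a linear isomorphism. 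Writing $u_1,\dots,u_d$ for the dual basis ($\alpha_i(u_j)=\delta_{ij}$), the form $h$ then has a unique expression $h = \sum_i \lambda_i \alpha_i$ with $\lambda_j = h(u_j)$.

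Next I would pin down the signs of the $\lambda_i$. For each $j$, the ray $t \mapsto v + t u_j$ has direction satisfying $\alpha_i \geq 0$ for all $i$, hence lies in $\overline{K_v(\sigma)}$ by Remark~\ref{rem:description-of-sectors}, so its endpoint at infinity lies in $\overline{\sigma}$. By the standing hypothesis $h$ is strictly decreasing along it, which forces $\lambda_j = h(u_j) < 0$. Thus $h$ is a combination of the $\alpha_i$ with all coefficients strictly negative.

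Now the boundedness computation. By Remark~\ref{rem:description-of-sectors}, $\overline{K_x(\sigma^{\op})}$ is exactly the set $\{y : \alpha_i(y) \leq \alpha_i(x) \text{ for all } i\}$, so, setting $t_i = \alpha_i(y)$ and $c_i = \alpha_i(x)$, a point $y$ lies in $\overline{K_x(\sigma^{\op})} \cap h^{-1}((-\infty,r])$ precisely when $t_i \leq c_i$ for every $i$ and $\sum_i \lambda_i t_i \leq r$. Putting $\mu_i = -\lambda_i > 0$, the last inequality reads $\sum_i \mu_i t_i \geq -r$; isolating the $j$-th term and using $t_i \leq c_i$ for $i \neq j$ yields the lower bound $t_j \geq \big(-r - \sum_{i\neq j} \mu_i c_i\big)/\mu_j$. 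Hence each coordinate $\alpha_j(y)$ is confined to a bounded interval, and since $(\alpha_1,\dots,\alpha_d)$ is a linear isomorphism the set in question is bounded; being an intersection of closed half-spaces it is closed, hence compact.

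I do not expect a serious obstacle here; the only steps needing any care are the two preliminary facts (that the $\alpha_i$ form a basis, and that $h$ has strictly negative coordinates with respect to it), which are immediate from Lemma~\ref{lem:faces-in-polysimplices}, Remark~\ref{rem:description-of-sectors}, and the hypothesis on $h$. As an alternative one could argue via recession cones: the recession cone of this closed convex set is $\{u : \alpha_i(u) \leq 0 \ \forall i,\ h(u) \leq 0\}$, and any nonzero such $u$ would give a point of $\eta \cap \overline{\sigma^{\op}}$, contradicting Remark~\ref{rem:general_position}; but the explicit coordinate bound above is the shortest route and I would present that one.
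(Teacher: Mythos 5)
Your proof is correct, but it takes a different route from the paper's. The paper disposes of the lemma in two lines via the boundary at infinity: it computes $\pinf\bigl(\overline{K_x(\sigma^{\op})} \cap h^{-1}((-\infty,r])\bigr) = \overline{\sigma^{\op}} \cap \eta = \emptyset$ using Remark~\ref{rem:general_position}, and concludes that a closed polyhedron in the Euclidean space $\Sigma$ with empty visual boundary is compact — essentially the recession-cone argument you mention at the end and set aside. You instead make the coordinate system $(\alpha_1,\dots,\alpha_d)$ explicit, verify that it is a linear isomorphism onto $\R^d$ (the walls $\ker\alpha_i$ meet only in $v$), show from the standing hypothesis on $h$ that $h=\sum_i\lambda_i\alpha_i$ with every $\lambda_i<0$, and then trap each coordinate $\alpha_j(y)$ between $\bigl(-r-\sum_{i\neq j}\mu_i\alpha_i(x)\bigr)/\mu_j$ and $\alpha_j(x)$; the sign computation is right, and the two preliminary facts are sound (the basis claim is also what Remark~\ref{rem:description-of-sectors} and Lemma~\ref{lem:sector-covering-compact} implicitly rely on). What the paper's argument buys is brevity and no coordinates, at the cost of invoking general facts about visual boundaries of convex sets; what yours buys is a completely elementary, self-contained bound, and as a by-product the explicit observation that $h$ has strictly negative coordinates in the basis dual to $\sigma$, which is the quantitative content behind Remark~\ref{rem:general_position}. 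Either proof is acceptable.
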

\begin{proof}
The polyhedron $\overline{K_x(\sigma^{\op})} \cap h^{-1}((-\infty,r])$ has boundary
\begin{align*}
\pinf(\overline{K_x(\sigma^{\op})} \cap h^{-1}((- \infty,r])) &= \pinf(\overline{K_x(\sigma^{\op})}) \cap \pinf(h^{-1}((- \infty,r]))\\ &= \overline{\sigma^{\op}} \cap \eta = \emptyset\text{,}
\end{align*}

see Remark~\ref{rem:general_position}.
It is therefore compact and the claim follows.
\end{proof}

Note that Lemma~\ref{lem:truncated-sectors} implies in particular that the subcomplex
\[
\Sigma(\overline{K_x(\sigma^{\op})} \cap h^{-1}((-\infty,r]))
\]
of $\Sigma$ that is supported on $\overline{K_x(\sigma^{\op})} \cap h^{-1}((-\infty,r])$ is also compact.

\begin{lemma}\label{lem:sector-covering-compact}
For every bounded subset $Z$ of $\Sigma$ there is a special vertex $w \in \Sigma$ such that $Z$ is contained in $K_w(\sigma^{\op})$.
\end{lemma}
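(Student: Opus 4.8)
The plan is to reduce the assertion, by means of the explicit description of sectors in Remark~\ref{rem:description-of-sectors}, to the following numerical statement: \emph{for every $r \in \R$ there is a special vertex $w \in \Sigma$ with $\alpha_i(w) > r$ for all $1 \leq i \leq d$}. Granting this, suppose $Z \subseteq \Sigma$ is bounded and non-empty. Then each linear form $\alpha_i$ is bounded on $Z$ (since $\alpha_i(z) = \alpha_i(z-v)$ and $Z$ has finite diameter in the vector space $\Sigma$ with origin $v$), so $r \defeq \sup\Set{\alpha_i(z)}{z \in Z,\ 1 \leq i \leq d}$ is finite. Choosing a special vertex $w$ with $\alpha_i(w) > r$ for every $i$, each $z \in Z$ satisfies $\alpha_i(z) \leq r < \alpha_i(w)$ for all $i$, which by Remark~\ref{rem:description-of-sectors} says precisely that $z \in K_w(\sigma^{\op})$; hence $Z \subseteq K_w(\sigma^{\op})$, as desired.

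To establish the numerical statement, I would first observe that the whole orbit $Wv$ consists of special vertices: $v$ is special, and every $g \in W$ permutes the hyperplanes of $\mathcal{H}$ and therefore preserves the condition of Definition~\ref{def:special-vertex}. Next, since the chamber $\overline{E}$ is compact with $v \in \overline{E}$ and $W$ acts transitively on the chambers of $\Sigma$, we have $W\overline{E} = \Sigma$ (as every point of $\Sigma$ lies in the closure of some chamber), so $Wv$ is $R$-dense in $\Sigma$ with $R \defeq \diam(\overline{E})$: given $x \in \Sigma$, pick $g \in W$ with $g^{-1}x \in \overline{E}$, and then $\dist(x, gv) = \dist(g^{-1}x, v) \leq R$ because $W$ acts by isometries.

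Finally, I would fix an interior point $\xi \in \sigma$ and, viewing $\Sigma$ as a vector space with origin $v$, let $u$ be the unit vector with $[v,\xi)(t) = v + tu$. Since $\alpha_i(v) = 0$ (as $v \in P_i$ and $\alpha_i(P_i) = 0$) and $(v,\xi) \subseteq K_v(\sigma) = \Set{y \in \Sigma}{\alpha_i(y) > 0 \text{ for all } i}$ by Remark~\ref{rem:description-of-sectors}, we get $t\,\alpha_i(u) = \alpha_i(v + tu) > 0$ for all $t > 0$ and all $i$, whence $c \defeq \min_i \alpha_i(u) > 0$. Putting $C \defeq \max_i \lVert \alpha_i \rVert$, for each $s > 0$ one picks a special vertex $w_s \in Wv$ with $\dist(w_s, v + su) \leq R$ and estimates $\alpha_i(w_s) \geq \alpha_i(v + su) - CR = s\,\alpha_i(u) - CR \geq sc - CR$ for every $i$. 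Any choice of $s > (r + CR)/c$ then yields a special vertex $w_s$ with $\alpha_i(w_s) > r$ for all $i$, completing the proof.

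The argument is essentially bookkeeping once Remark~\ref{rem:description-of-sectors} is available; the only load-bearing inputs are that the $W$-action on $\Sigma$ is cocompact (so that a single orbit of special vertices is $R$-dense) and that a direction $u$ pointing into the open chamber $\sigma$ has $\min_i \alpha_i(u) > 0$. I do not anticipate any real obstacle.
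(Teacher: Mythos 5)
Your argument is correct, and its first half is exactly the paper's reduction: bound each $\alpha_i$ on the bounded set $Z$ and then invoke Remark~\ref{rem:description-of-sectors}, so that everything comes down to producing a special vertex $w$ with all $\alpha_i(w)$ large. Where you diverge is in how that vertex is produced. The paper simply takes an integer $n$ with $\alpha_i(z)<n$ on $Z$ and \emph{defines} $w$ by $\alpha_i(w)=n$ for all $i$; this is a one-line construction, but it silently relies on the normalization fixed at the start of Section~\ref{sec:deconstr} (the walls parallel to $\alpha_i^{-1}(0)$ are exactly the integer level sets $\alpha_i^{-1}(k)$), from which one reads off that any point with all $\alpha_i$-coordinates integral is a special vertex. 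You instead produce $w$ metrically: the orbit $Wv$ consists of special vertices and is $R$-dense with $R=\diam(\overline{E})$ by cocompactness, the direction $u$ into the open chamber $\sigma$ has $\min_i\alpha_i(u)>0$, and a special vertex within distance $R$ of a point far out along $[v,\xi)$ loses at most a fixed amount $CR$ in each coordinate. Your route is longer but does not use the integrality normalization of the $\alpha_i$ at all, so it would apply verbatim to any linear forms cutting out the sector; the paper's route buys brevity at the price of leaning on that normalization. Both are complete, and your estimates ($\alpha_i(w_s)\geq sc-CR$, then $s>(r+CR)/c$) are sound.
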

\begin{proof}
Since $Z$ is bounded there is an integer $n$ such that $\alpha_i(z) < n$ for every $z \in Z$ and every $1 \leq i \leq d$.
In view of Remark~\ref{rem:description-of-sectors} it therefore suffices to define $w \in \Sigma$ by $\alpha_i(w) = n$ for all $1 \leq i \leq d$.
\end{proof}

The following lemma provides us with a uniform lower bound for the special vertex in Lemma~\ref{lem:sector-covering-compact} in the case where $Z$ consists of a single point.

\begin{lemma}\label{lem:sector-covering-simplex}
There is a constant $\varepsilon > 0$ such that for every point $x \in \Sigma$ there is a special vertex $w \in \Sigma$ of height $h(w) > h(x) - \varepsilon$ such that $K_w(\sigma^{\op})$ contains $x$.
\end{lemma}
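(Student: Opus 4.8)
The plan is to exploit that the special vertices of $\Sigma$ contain a full-rank lattice, and then, given $x$, to move it a fixed amount towards $\sigma$ and round to a nearby special vertex, controlling $h$ and all the $\alpha_i$ uniformly along the way. Concretely, recall that the Euclidean Coxeter group $W$ of $\Sigma$ — the group generated by the reflections in the walls of $\mathcal{H}$ — contains a subgroup $L$ of translations which, since $W$ acts cocompactly and has finite linear part, is a lattice of full rank $d$ (this is standard, cf.\ \cite[Ch.~10]{AbramenkoBrown08}). Every element of $W$ is an isometry preserving $\mathcal{H}$, hence maps special vertices to special vertices, so the coset $v + L$ consists of special vertices; let $R \defeq \sup_{y \in \Sigma} \dist(y, v + L) < \infty$ be its covering radius. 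Thus every point of $\Sigma$ lies within distance $R$ of a special vertex of the form $v+\ell$ with $\ell \in L$.

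Next, since $\sigma$ is a chamber at infinity, the sector $K_v(\sigma)$ is $d$-dimensional, so by Remark~\ref{rem:description-of-sectors} (applied with tip $v$, where $\alpha_i(v)=0$) there is a vector $u \in \Sigma$ with $\alpha_i(u) > 0$ for all $1 \le i \le d$; the direction of $u$ lies in $\overline{\sigma}$, so the standing hypothesis on $h$ forces $h(u) < 0$. Denoting by $\lVert \varphi \rVert$ the operator norm (Lipschitz constant) of a linear form $\varphi$ on $\Sigma$, I set
\[
T \defeq 1 + \max_{1 \le i \le d} \frac{\lVert \alpha_i \rVert \, R}{\alpha_i(u)}
\qquad\text{and}\qquad
\varepsilon \defeq 1 + T\,\abs{h(u)} + \lVert h \rVert\, R ,
\]
both of which depend only on the fixed data $\Sigma, \mathcal{H}, \alpha_1, \ldots, \alpha_d, h, u$, and not on $x$.

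Finally, given $x \in \Sigma$, I choose a special vertex $w$ with $\dist(w, x + Tu) \le R$. For each $i$, the bound $\abs{\alpha_i(w) - \alpha_i(x+Tu)} \le \lVert \alpha_i \rVert R$ together with the choice of $T$ gives $\alpha_i(w) \ge \alpha_i(x) + T\alpha_i(u) - \lVert \alpha_i \rVert R > \alpha_i(x)$, so $x \in K_w(\sigma^{\op})$ by Remark~\ref{rem:description-of-sectors}. Likewise $\abs{h(w) - h(x+Tu)} \le \lVert h \rVert R$ yields $h(w) \ge h(x) + Th(u) - \lVert h \rVert R \ge h(x) - T\abs{h(u)} - \lVert h \rVert R > h(x) - \varepsilon$, which is exactly the assertion. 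The only input that is not purely formal is the fact that the special vertices contain a full-rank lattice coset of finite covering radius; the rest is a uniform rounding estimate, and the point to watch is simply that $T$ and $\varepsilon$ are built from $R$ and the fixed linear forms alone, hence are independent of $x$.
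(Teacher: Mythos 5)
Your argument is correct and runs along essentially the same lines as the paper's proof: both move $x$ a uniformly bounded amount in the direction of $\sigma$ (so the loss in $h$ is uniformly bounded, independent of $x$) and then pass to a nearby special vertex $w$ with $\alpha_i(w) > \alpha_i(x)$ for all $i$, so that $x \in K_w(\sigma^{\op})$ by Remark~\ref{rem:description-of-sectors}. The only difference is bookkeeping: the paper takes a special vertex of a chamber containing $x$ and shifts it twice by the vector on which every $\alpha_i$ equals $1$, whereas you produce $w$ from the covering radius of the translation lattice of special vertices together with operator-norm estimates, which is a perfectly sound (and, on the point of which points are actually special vertices, slightly more careful) variant.
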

\begin{proof}
Let $C \subseteq \Sigma$ be a chamber with $x \in \overline{C}$ and let $u_1$ be a special vertex of $C$.
We consider the points $z_i =\alpha_i(u_1)$ for every ${1 \leq i \leq d}$.
Let $u_2$ be the special vertex characterized by $\alpha_i(u_2) = z_i + 1$ for every $1 \leq i \leq d$.
Then the subcomplex $\overline{K_{u_1}(\sigma^{\op})} \subseteq \Sigma$ lies in the (open) sector $K_{u_2}(\sigma^{\op})$.
It follows that $\st_{\Sigma}(u_1)$ is contained in $K_{u_2}(\sigma^{\op})$.
In particular we see that $x \in \overline{\st(u_1)} \subseteq \overline{K_{u_2}(\sigma^{\op})}$.
Let us now consider the special vertex $w \in \Sigma$ that is characterized by $\alpha_i(w) = z_i + 2$ for every $1 \leq i \leq d$.
Then $\overline{K_{u_2}(\sigma^{\op})}$ is contained in $K_{w}(\sigma^{\op})$, which proves the second part of the lemma.
To see the first part, let $\delta_1$ be the $h$-distance between $u_1$ and $u_2$ and let $\delta_2$ be the $h$-diameter of the star of a special vertex.
Then it follows from our construction that $w$ satisfies $h(w) \geq h(x) - \varepsilon$ for $\varepsilon = 2\delta_1 + 2\delta_2$.
\end{proof}

\begin{definition}\label{def:sigma-convexity}
A subcomplex $Z \subseteq \Sigma$ is called \emph{$\sigma$-convex} if for every two cells $A, B \subseteq Z$ the following is satisfied.
Every $\sigma$-minimal gallery $\Gamma$ from $\pr_A(\sigma)$ to $\pr_B(\sigma^{\op})$ is contained in $Z$.
\end{definition}

\begin{remark}\label{rem:sigm-con-is-sigm-op-con}
We emphasize that Definition~\ref{def:sigma-convexity} does not require the existence of a $\sigma$-minimal gallery in $Z$.
\end{remark}

\begin{definition}\label{def:non-separating-boundary}
Given a subcomplex $Z$ of $\Sigma$, we write $R(Z)$ to denote the union of cells $A \subseteq Z$ that satisfy $\pr_A(\sigma^{\op}) \nsubseteq Z$.
\end{definition}

\begin{lemma}\label{lem:R-is-a-subcomplex}
Let $Z \subseteq \Sigma$ be a subcomplex.
If $Z$ is $\sigma$-convex, then $R(Z)$ is a subcomplex of $Z$.
\end{lemma}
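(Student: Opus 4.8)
The plan is to verify directly that $R(Z)$ is closed under passing to faces. Fix a cell $D$ of $R(Z)$; by Definition~\ref{def:non-separating-boundary} this means $D \subseteq Z$ and the chamber $C \defeq \pr_D(\sigma^{\op})$ is \emph{not} contained in $Z$. In particular $D$ cannot be a chamber, since otherwise $\pr_D(\sigma^{\op}) = D \subseteq Z$; thus $D$ has codimension at least $1$. Let $D' \subsetneq D$ be a proper face. Since $Z$ is a subcomplex we have $D' \subseteq Z$, so it remains to prove that $C' \defeq \pr_{D'}(\sigma^{\op})$ is not contained in $Z$, and I would argue this by contradiction, assuming that $C'$ is a chamber of $Z$.

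The key step is to produce a $\sigma$-minimal gallery from $C'$ to $C$. Since $D' \leq D \leq C$ and $D' \leq C'$, both $C$ and $C'$ belong to the star $\st_\Sigma(D')$, which is a convex chamber subcomplex of $\Sigma$ (residues of a Coxeter complex are convex); hence there is a minimal gallery $\Gamma$ from $C$ to $C'$ lying inside $\st_\Sigma(D')$. As $\Gamma$ is a minimal gallery in $\st_\Sigma(D')$ terminating in $C' = \pr_{D'}(\sigma^{\op})$, and $D'$ has codimension at least $1$, Lemma~\ref{lem:seeing-locally-where-sigma-is} (applied with $A = D'$ and $\tau = \sigma^{\op}$) shows that $\Gamma$ is $\sigma^{\op}$-minimal, i.e.\ each step of $\Gamma$ crosses a panel $P$ with $\pr_P(\sigma^{\op})$ equal to the chamber it enters. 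I claim that the reversed gallery $\Gamma'$, which runs from $C'$ to $C$, is then $\sigma$-minimal. Indeed, for any panel $P$ with spanned wall $W$, the interiors of $\sigma$ and of $\sigma^{\op}$ lie in the two distinct open hemispheres of $\pinf\Sigma$ determined by the wall $\pinf W$; hence $\pr_P(\sigma)$ and $\pr_P(\sigma^{\op})$ are the two distinct chambers having $P$ as a face, so a step crossing $P$ for which the chamber entered is $\pr_P(\sigma^{\op})$ becomes, after reversal, a step crossing $P$ for which the chamber entered is $\pr_P(\sigma)$.

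Finally I would apply the $\sigma$-convexity of $Z$ (Definition~\ref{def:sigma-convexity}) to the pair of cells $C'$ and $D$, both of which lie in $Z$ (the former by the contradiction hypothesis, the latter since $D \in R(Z)$). Because $\pr_{C'}(\sigma) = C'$ and $\pr_D(\sigma^{\op}) = C$, the gallery $\Gamma'$ is a $\sigma$-minimal gallery from $\pr_{C'}(\sigma)$ to $\pr_D(\sigma^{\op})$, hence $\Gamma' \subseteq Z$; in particular its terminal chamber $C = \pr_D(\sigma^{\op})$ is contained in $Z$, contradicting $D \in R(Z)$. Therefore $C' \nsubseteq Z$, so $D' \in R(Z)$, and $R(Z)$ is a subcomplex. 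The only delicate point is the construction of $\Gamma'$: one must use that $C$ actually lies in $\st_\Sigma(D')$ (this is exactly where $D' \leq D$ enters), that a minimal gallery between two chambers of a star of a Coxeter complex can be chosen inside that star, and that passing to the reversed gallery interchanges $\sigma$-minimality with $\sigma^{\op}$-minimality; all three are standard facts about Coxeter complexes, but they need to be assembled in the right order.
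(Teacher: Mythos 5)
Your argument is correct, and it follows the same overall template as the paper's proof: produce a $\sigma$-minimal gallery joining $\pr_{D'}(\sigma^{\op})$ to $\pr_{D}(\sigma^{\op})$ inside $\st_\Sigma(D')$ and feed it to $\sigma$-convexity to contradict $D \subseteq R(Z)$. The differences lie in how $\sigma$-minimality is certified and which cells are plugged into Definition~\ref{def:sigma-convexity}. The paper runs the gallery from $\pr_{D'}(\sigma^{\op})$ towards $\pr_{D}(\sigma^{\op})$, extends it within the star to a minimal gallery terminating in $\pr_{D'}(\sigma)$ (citing Abramenko--Brown) so that Lemma~\ref{lem:seeing-locally-where-sigma-is} yields $\sigma$-minimality, and then applies $\sigma$-convexity to the cells $C^{\uparrow}$ and $D^{\downarrow}$, which requires the upper/lower-face machinery of Lemma~\ref{lem:upper-face}. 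You instead orient the minimal gallery towards $\pr_{D'}(\sigma^{\op})$, obtain $\sigma^{\op}$-minimality directly from Lemma~\ref{lem:seeing-locally-where-sigma-is}, and reverse it, using the observation that $\pr_P(\sigma)$ and $\pr_P(\sigma^{\op})$ are the two distinct chambers adjacent along any panel $P$; you then apply $\sigma$-convexity to the pair consisting of the chamber $C'$ itself (via the trivial identity $\pr_{C'}(\sigma)=C'$) and the cell $D$, so no upper or lower faces are needed. The reversal duality you justify (via the two open hemispheres of $\pinf\Sigma$ determined by $\pinf W$) is exactly what the paper relies on implicitly elsewhere, e.g.\ in the proofs of Lemma~\ref{lem:removing-chambers-of-zero-length} and Proposition~\ref{prop:cones-are-sigma-convex}, so making it explicit is a small but genuine gain; in exchange the paper's route avoids having to argue that the two projections of $\sigma$ and $\sigma^{\op}$ across a panel are distinct. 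Both proofs are complete and of comparable length.
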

\begin{proof}
Let $B$ be a cell in $R(Z)$ and let $A$ be a face of $B$.
Let $\Gamma$ be a minimal gallery from $\pr_A(\sigma^{\op}) \eqdef C$ to $\pr_B(\sigma^{\op}) \eqdef D$.
Then $\Gamma$ is contained in $\st_{\Sigma}(A)$ and it follows from~\cite[Lemma 4.96]{AbramenkoBrown08} that $\Gamma$ can be extended to a minimal gallery $\Gamma'$ from $C$ to $\pr_A(\sigma)$.
In this case Lemma~\ref{lem:seeing-locally-where-sigma-is} implies that $\Gamma$ is $\sigma$-minimal.
In order to apply the $\sigma$-convexity of $Z$, we note that $C = \pr_{C^{\uparrow}}(\sigma)$ and $D = \pr_{D^{\downarrow}}(\sigma^{\op})$.
From Lemma~\ref{lem:upper-face} it follows that $B$ is a coface of $D^{\downarrow}$.
In particular, we see that $D^{\downarrow}$ lies in $Z$.
Suppose that $A$ is not a cell of $R(Z)$.
Then $C^{\uparrow} \subseteq C \subseteq Z$ and we may apply the $\sigma$-convexity of $Z$ to deduce that the entire gallery $\Gamma$ is contained in $Z$.
In particular, $D$ is a chamber in $Z$, which contradicts our assumption that $B$ is a cell in $R(Z)$.
Thus $A$ is a cell of $R(Z)$, which proves the claim.
\end{proof}

\begin{definition}\label{def:length-of-chambers}
Let $Z$ be a subcomplex of $\Sigma$.
For each chamber $C$ in $Z$, we define its \emph{$\sigma$-length} in $Z$, denoted by $\ell_Z(C)$, as the length of the longest $\sigma$-minimal gallery in $Z$ that starts in $C$.
If there are arbitrarily long $\sigma$-minimal galleries in $Z$ that in $C$, then we define $\ell_Z(C) = \infty$.
\end{definition}

\begin{lemma}\label{lem:removing-chambers-of-zero-length}
Let $Z$ be a $\sigma$-convex subcomplex of $\Sigma$ and let $C \subseteq Z$ be a chamber with $\ell_{Z}(C)=0$.
Then the following are satisfied.
\begin{enumerate}
\item $\st_{Z}(C^{\downarrow}) \subseteq \overline{C}$,
\item $Z \backslash \st_{Z}(C^{\downarrow})$ is $\sigma$-convex,
\item $R(Z \backslash \st_{Z}(C^{\downarrow})) = R(Z)$.
\end{enumerate}
\end{lemma}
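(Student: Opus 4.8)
The plan is to reduce all three parts to one geometric fact: $C=\pr_{C^{\downarrow}}(\sigma^{\op})$, and $C$ is opposite to $\pr_{C^{\downarrow}}(\sigma)$ inside $\st_{\Sigma}(C^{\downarrow})$, so that every coface of $C^{\downarrow}$ is reached from $C$ by a $\sigma$-minimal gallery. Once this is combined with the hypothesis $\ell_Z(C)=0$ and the $\sigma$-convexity of $Z$, the three assertions follow by bookkeeping about where $C$ can occur in a $\sigma$-minimal gallery.

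First I would record the structure of $C^{\downarrow}$: by Definition~\ref{def:upper-face} together with Lemma~\ref{lem:upper-face} applied to the chamber $\sigma^{\op}$ (whose proof only uses that $\sigma^{\op}$ is a chamber at infinity), the face $C^{\downarrow}$ is the unique minimal non-empty proper face of $C$ with $\pr_{C^{\downarrow}}(\sigma^{\op})=C$, and a face $A$ of $C$ satisfies $\pr_A(\sigma^{\op})=C$ if and only if $C^{\downarrow}\le A\le C$. Since $\sigma$ and $\sigma^{\op}$ are opposite at infinity, the directions at a point of $C^{\downarrow}$ pointing towards $\sigma$ and towards $\sigma^{\op}$ are antipodal, so $\pr_{C^{\downarrow}}(\sigma)$ and $C=\pr_{C^{\downarrow}}(\sigma^{\op})$ are opposite chambers of $\st_{\Sigma}(C^{\downarrow})$; hence every chamber $D$ with $C^{\downarrow}\le D$ lies on a minimal gallery from $C$ to $\pr_{C^{\downarrow}}(\sigma)$ (the standard property of opposite chambers), which by Lemma~\ref{lem:seeing-locally-where-sigma-is} is $\sigma$-minimal. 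Thus there is a $\sigma$-minimal gallery from $C$ to each such $D$.

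For (1), given a chamber $D\subseteq Z$ with $C^{\downarrow}\le D$, the $\sigma$-minimal gallery from $C=\pr_C(\sigma)$ to $D=\pr_D(\sigma^{\op})$ lies in $Z$ by $\sigma$-convexity; if $D\ne C$ it has positive length, and its first step is a $\sigma$-minimal gallery of length $1$ starting at $C$ and staying in $Z$, contradicting $\ell_Z(C)=0$. So $C$ is the only chamber of $Z$ containing $C^{\downarrow}$; applying this to $\pr_A(\sigma^{\op})$ for an arbitrary cell $A\subseteq Z$ with $C^{\downarrow}\le A$ (which again lies in $Z$ by the same argument and hence equals $C$) shows $A\le C$, i.e.\ $\st_Z(C^{\downarrow})\subseteq\overline C$; in fact $\st_Z(C^{\downarrow})$ is exactly the set of faces of $C$ above $C^{\downarrow}$. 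For (2), set $Z'=Z\setminus\st_Z(C^{\downarrow})$, a subcomplex whose chambers are $\Ch(Z)\setminus\{C\}$, and note that a gallery inside $Z$ meets $\st_Z(C^{\downarrow})$ precisely when $C$ is one of its chambers. Given cells $A,B\subseteq Z'$ and a $\sigma$-minimal gallery $\Gamma$ from $\pr_A(\sigma)$ to $\pr_B(\sigma^{\op})$, $\sigma$-convexity of $Z$ gives $\Gamma\subseteq Z$; and $C$ cannot be a chamber of $\Gamma$, since $\pr_B(\sigma^{\op})=C$ is impossible ($B\in Z'$ forbids $C^{\downarrow}\le B$), so if $C$ occurred in $\Gamma$ it would not be the last chamber and hence have a successor, yielding a length-$1$ $\sigma$-minimal gallery from $C$ inside $Z$ — again impossible. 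Hence $\Gamma\subseteq Z'$, so $Z'$ is $\sigma$-convex. For (3), unwinding Definition~\ref{def:non-separating-boundary}: a cell $A$ with $\pr_A(\sigma^{\op})\subseteq Z$ but $\pr_A(\sigma^{\op})\not\subseteq Z'$ must have $\pr_A(\sigma^{\op})=C$, which forces $C^{\downarrow}\le A\le C$, so $A\in\st_Z(C^{\downarrow})$ and $A\notin Z'$; conversely any $A\in R(Z)$ cannot contain $C^{\downarrow}$ (else $\pr_A(\sigma^{\op})=C\subseteq Z$), so $A\in Z'$ and $\pr_A(\sigma^{\op})\not\subseteq Z\supseteq Z'$, while any $A\in R(Z')$ has $A\in Z'$ and $\pr_A(\sigma^{\op})\ne C$ (else $C^{\downarrow}\le A$), so $\pr_A(\sigma^{\op})\not\subseteq Z$. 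A short double inclusion then gives $R(Z\setminus\st_Z(C^{\downarrow}))=R(Z)$.

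The main obstacle is the geometric input of the second paragraph — that $C=\pr_{C^{\downarrow}}(\sigma^{\op})$ and that every coface of $C^{\downarrow}$ sits at the end of a $\sigma$-minimal gallery issuing from $C$ — because this is the bridge converting the hypothesis $\ell_Z(C)=0$ into usable constraints via $\sigma$-convexity. After that, parts (2) and (3) are essentially a case analysis on where $C$ can occur in a $\sigma$-minimal gallery together with a definition chase.
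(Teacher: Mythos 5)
Your proposal is correct and takes essentially the same route as the paper: both proofs rest on Lemma~\ref{lem:upper-face} applied to $\sigma^{\op}$, Lemma~\ref{lem:seeing-locally-where-sigma-is}, the $\sigma$-convexity of $Z$, and the hypothesis $\ell_Z(C)=0$, and your arguments for (2) and (3) coincide with the paper's case analysis. The only cosmetic difference is in (1): you build the required $\sigma$-minimal gallery from $C$ to $\pr_A(\sigma^{\op})$ by using that $C=\pr_{C^{\downarrow}}(\sigma^{\op})$ and $\pr_{C^{\downarrow}}(\sigma)$ are opposite in $\st_{\Sigma}(C^{\downarrow})$ (a fact the paper invokes elsewhere), whereas the paper takes a minimal gallery from $\pr_A(\sigma^{\op})$ to $C$ inside the star and applies Lemma~\ref{lem:seeing-locally-where-sigma-is} with $\sigma^{\op}$ directly.
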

\begin{proof}
To prove the first claim, let $A$ be a cell in $\st_{Z}(C^{\downarrow})$ and let $D \defeq \pr_{A}(\sigma^{\op})$.
Let further $\Gamma$ be a minimal gallery from $D$ to $C$.
Since $A$ is a coface of $C^{\downarrow}$, it follows that $\Gamma$ is contained in $\st_{\Sigma}(C^{\downarrow})$.
On the other hand we have $C = \pr_{C^{\downarrow}}(\sigma^{\op})$ so that we can apply Lemma~\ref{lem:seeing-locally-where-sigma-is} to deduce that $\Gamma$ is $\sigma^{\op}$-minimal.
Since also $C = \pr_{C^{\uparrow}}(\sigma)$, it follows from the $\sigma$-convexity of $Z$ that $\Gamma$ is contained in $Z$.
Now the condition $\ell_Z(C)=0$ implies $C = D$ and therefore $A \subseteq \overline{C}$.
For the second claim, let $A$ and $B$ be two cells in $Z \backslash \st_{Z}(C^{\downarrow})$
such that there is a $\sigma$-minimal gallery $\Gamma$ from $\pr_{A}(\sigma)$ to $\pr_{B}(\sigma^{\op})$.
We have to show that $\Gamma$ lies in $Z \backslash \st_{Z}(C^{\downarrow})$.
By the first claim it thus suffices to prove that $\Gamma$ does not contain $C$.
Suppose that $\Gamma$ contains $C$ and let $\Gamma'$ be the subgallery of $\Gamma$ starting at $C$.
If $C \neq \pr_{B}(\sigma^{\op})$, then the $\sigma$-convexity of $Z$ implies that $\Gamma'$ is contained in $Z$ and therefore $\ell(C) > \ell(\pr_{B}(\sigma^{\op}))$, which is a contradiction to $\ell(C) = 0$.
On the other hand, if $C = \pr_{B}(\sigma^{\op})$,
then $B$ is a coface of $C^{\downarrow}$ by Lemma~\ref{lem:upper-face}.
But this is a contradiction since there are no cofaces of $C^{\downarrow}$ lying in $Z \backslash \st_{Z}(C^{\downarrow})$.
To prove the third claim, let $A$ be a cell in $R(Z)$.
By definition we have $\pr_{A}(\sigma^{\op}) \nsubseteq Z$, which gives us $\pr_{A}(\sigma^{\op}) \nsubseteq Z \backslash \st_{Z}(C^{\downarrow})$.
To prove that $A$ is contained in $R(Z \backslash \st_{Z}(C^{\downarrow}))$, it suffices to show that $A \nsubseteq \st_{Z}(C^{\downarrow})$.
Suppose the opposite.
Then the first claim tells us that $A$ is a coface of $C^{\downarrow}$ lying in $\overline{C}$ so that we can apply Lemma~\ref{lem:upper-face} to deduce that
\[
\pr_{A}(\sigma^{\op}) = \pr_{C^{\downarrow}}(\sigma^{\op}) = C \subseteq Z.
\]
But this contradicts our assumption that $\pr_{A}(\sigma^{\op}) \nsubseteq Z$.
Suppose now that $A$ is a cell in $R(Z \backslash \st_{Z}(C^{\downarrow}))$.
Then $\pr_{A}(\sigma^{\op}) \nsubseteq R(Z \backslash \st_{Z}(C^{\downarrow}))$ and the first claim implies that either $\pr_{A}(\sigma^{\op}) = C$ or $\pr_{A}(\sigma^{\op}) \nsubseteq Z$.
Since we are done otherwise, we can assume that $\pr_{A}(\sigma^{\op}) = C$.
But in this case another application of Lemma~\ref{lem:upper-face} tells us that $A$ is a coface of $C^{\downarrow}$, which cannot lie in $Z \backslash \st_{Z}(C^{\downarrow})$.
\end{proof}

Note that Lemma~\ref{lem:removing-chambers-of-zero-length} can be applied inductively by starting with a $\sigma$-convex complex $Z$ and removing a chamber of length $0$ at each step.
We thus obtain the following way of filtering $\sigma$-convex complexes.

\begin{corollary}\label{cor:deconstr-sigma-conv-complexes}
Let $Z$ be a compact $\sigma$-convex subcomplex of $\Sigma$ and let $n$
be the number of chambers in $Z$.
There is a filtration
\[
Z_0 \lneq Z_1 \lneq \ldots \lneq Z_n = Z
\]
of $Z$ by subcomplexes $Z_i$ such that
\begin{enumerate}
\item $Z_0 = R(Z)$,
\item $Z_{m+1} = Z_{m} \cup \overline{C_{m+1}}$ for some chamber
$C_{m+1} \subseteq Z$ with \linebreak${\ell_{Z_{m+1}}(C_{m+1}) = 0}$,
\item $Z_{m} \cap \overline{C_{m+1}} = Z_{m} \cap \partial(\st_{Z_{m+1}}(C_{m+1}^{\downarrow}))$, and
\item $\st_{Z_{m+1}}(C_{m+1}^{\downarrow}) \subseteq \overline{C_{m+1}}$.
\end{enumerate}
\end{corollary}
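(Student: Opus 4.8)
The plan is to construct the filtration by downward recursion on the number of chambers, peeling off one chamber at a time via Lemma~\ref{lem:removing-chambers-of-zero-length}, while maintaining the invariant that every intermediate complex is a compact $\sigma$-convex subcomplex with $R(\cdot)$ equal to $R(Z)$. Concretely, I would set $Z_n \defeq Z$ and, given $Z_{m+1}$ satisfying the invariant and having $m+1 \geq 1$ chambers, produce $Z_m$ as follows.

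The crux is to show that $Z_{m+1}$ contains a chamber of $\sigma$-length zero. Here I would first observe that a $\sigma$-minimal gallery in $\Sigma$ crosses every wall at most once: when it crosses a panel $P$ from a chamber to $\pr_P(\sigma)$, the latter lies on the side of the wall spanned by $P$ that faces $\sigma$ at infinity, so all crossings of a fixed wall happen in the same direction. Consequently $\sigma$-minimal galleries are non-repeating and have length at most $\abs{\Ch(Z_{m+1})}$, so among all $\sigma$-minimal galleries in $Z_{m+1}$ (there is at least the trivial one, since $Z_{m+1}$ has a chamber) a longest one exists. Its terminal chamber $C_{m+1}$ must satisfy $\ell_{Z_{m+1}}(C_{m+1}) = 0$, since otherwise a non-trivial $\sigma$-minimal gallery starting at $C_{m+1}$ could be appended to it --- $\sigma$-minimality being a local condition on consecutive chambers --- to produce a longer one, contradicting maximality.

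Next I would apply Lemma~\ref{lem:removing-chambers-of-zero-length} to $Z_{m+1}$ and $C_{m+1}$ and set $Z_m \defeq Z_{m+1} \setminus \st_{Z_{m+1}}(C_{m+1}^{\downarrow})$. Part~(1) of that lemma gives $\st_{Z_{m+1}}(C_{m+1}^{\downarrow}) \subseteq \overline{C_{m+1}}$, which is condition~(4) and also shows that $C_{m+1}$ is the only chamber removed, so $Z_m$ is a proper subcomplex with exactly $m$ chambers; parts~(2) and~(3) give that $Z_m$ is $\sigma$-convex and $R(Z_m) = R(Z_{m+1}) = R(Z)$, preserving the invariant. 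Since $\overline{C_{m+1}} \subseteq Z_{m+1}$ while $Z_{m+1} \setminus Z_m = \st_{Z_{m+1}}(C_{m+1}^{\downarrow}) \subseteq \overline{C_{m+1}}$, we get $Z_{m+1} = Z_m \cup \overline{C_{m+1}}$, which is condition~(2). For condition~(3), I would use that every coface of $C_{m+1}^{\downarrow}$ in $Z_{m+1}$ is a face of $C_{m+1}$ (again part~(1)), so the closed star of $C_{m+1}^{\downarrow}$ equals $\overline{C_{m+1}}$ and hence $\partial(\st_{Z_{m+1}}(C_{m+1}^{\downarrow})) = \overline{C_{m+1}} \setminus \st_{Z_{m+1}}(C_{m+1}^{\downarrow})$; intersecting with $Z_m$ and using that $Z_m$ is disjoint from the removed open star yields $Z_m \cap \overline{C_{m+1}} = Z_m \cap \partial(\st_{Z_{m+1}}(C_{m+1}^{\downarrow}))$.

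After $n$ steps the resulting complex $Z_0$ has no chambers, and it remains to identify it with $R(Z)$: for any cell $A \subseteq Z_0$ the projection $\pr_A(\sigma^{\op})$ is a chamber, hence not contained in the chamberless $Z_0$, so $A \subseteq R(Z_0)$; thus $Z_0 = R(Z_0)$, which together with the invariant $R(Z_0) = R(Z)$ gives condition~(1). I expect the only genuinely non-routine step to be the existence of a $\sigma$-length-zero chamber at each stage --- in essence the wall-crossing monotonicity of $\sigma$-minimal galleries --- while everything else is bookkeeping built directly on Lemma~\ref{lem:removing-chambers-of-zero-length}.
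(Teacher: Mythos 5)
Your proposal is correct and follows essentially the same route as the paper: peel off one chamber of $\sigma$-length zero at a time via Lemma~\ref{lem:removing-chambers-of-zero-length}, with conditions (2)--(4) read off from that lemma and condition (1) from the preservation of $R(\cdot)$ together with the observation that a chamberless complex equals its own $R$. The only difference is that you spell out the existence of a zero-length chamber (via the wall-crossing monotonicity of $\sigma$-minimal galleries), which the paper asserts in one line from compactness.
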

\begin{proof}
We prove the corollary by induction on $n$.
If $n=0$, then there is nothing to show.
Suppose that the claim is true if $Z$ has at most $n$ chambers.
Consider now a compact, $\sigma$-convex subcomplex $Z \subseteq \Sigma$ with $n+1$ chambers.
Since the length of a minimal gallery in $Z$ is bounded, it follows that there is a chamber $C_{n+1} \subseteq Z$ with $\ell_{Z}(C_{n+1})=0$.
Let $Z_{n+1} = Z$ and let $Z_n = Z_{n+1} \backslash \st_{Z_{n+1}}(C_{n+1}^{\downarrow})$.
From Lemma~\ref{lem:removing-chambers-of-zero-length} we know that $Z_n$ is $\sigma$-convex, that $R(Z_n) = R(Z_{n+1})$, and that $\st_{Z_{n+1}}(C_{n+1}^{\downarrow}) \subseteq \overline{C_{n+1}}$.
From the latter property we obtain $Z_{n+1} = Z_n \cup \overline{C_{n+1}}$ and $Z_{n} \cap \overline{C_{n+1}} = Z_{n} \cap \partial(\st_{Z_{n+1}}(C_{n+1}^{\downarrow}))$.
By our induction hypothesis there is a sequence
\begin{equation}\label{eq:deconstr-sigma-conv-complexes}
Z_0 \lneq Z_1 \lneq \ldots \lneq Z_n
\end{equation}
of subcomplexes as in the corollary.
Now the claim follows by extending~\eqref{eq:deconstr-sigma-conv-complexes} with $Z_{n+1}$.
\end{proof}

For short reference we note the following easy property of sectors.

\begin{lemma}\label{lem:proj-in-sectors}
Let $w \in \Sigma$ be a special vertex and let $\tau$ be a chamber in $\pinf \Sigma$.
For every cell $A$ in $\overline{K_w(\tau)}$, the projection chamber $\pr_A(\tau)$ lies in $\overline{K_w(\tau)}$.
\end{lemma}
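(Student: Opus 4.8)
The plan is to argue geometrically. As a metric space the Euclidean Coxeter complex $\Sigma=\prod_{i=1}^{s}\Sigma_i$ is a Euclidean vector space, and since $w$ is a \emph{special} vertex the closed sector $\overline{K_w(\tau)}$ is a subcomplex of $\Sigma$ which, as a subset, is a convex polyhedral cone with apex $w$: it equals the intersection $\bigcap_i H_i^{+}$ of the closed half-spaces bounded by the walls $H_i$ of $\Sigma$ that pass through $w$ and support the panels of the chamber $E\subseteq\st_{\Sigma}(w)$ determined by $\tau$ (with $H_i^{+}$ chosen to contain $E$). In particular $\overline{K_w(\tau)}$ is metrically convex and $\pinf\overline{K_w(\tau)}=\bigcap_i\pinf H_i^{+}=\overline{\tau}$. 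Establishing this description is the one structural point worth spelling out; it is exactly where specialness of $w$ enters, and it is the standard reason sectors are based at special vertices. One could instead invoke the linear-form description as in Remark~\ref{rem:description-of-sectors}, writing $\overline{K_w(\tau)}=\Set{y\in\Sigma}{\alpha_i(y)\geq 0}$ with each $\alpha_i^{-1}(0)$ a wall of $\Sigma$ through $w$; then both steps below reduce to one-line computations with affine functions.

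Granting this, I would fix a cell $A\subseteq\overline{K_w(\tau)}$, set $C\defeq\pr_A(\tau)$, and choose an interior point $a\in A$ together with an interior point $\xi\in\tau$. By Definition~\ref{def:projection-to-inner-simplices} there is some $\varepsilon>0$ with $(a,\xi)((0,\varepsilon))\subseteq C$. Now $a\in H_i^{+}$ for every $i$, and $\xi\in\overline{\tau}\subseteq\pinf H_i^{+}$, so the ray $[a,\xi)$ starts in the half-space $H_i^{+}$ and points in a direction of $\pinf H_i^{+}$; hence it stays in $H_i^{+}$. As this holds for every $i$, I get $[a,\xi)\subseteq\bigcap_i H_i^{+}=\overline{K_w(\tau)}$, and in particular the segment $(a,\xi)((0,\varepsilon))$ lies in $C\cap\overline{K_w(\tau)}$.

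It then remains to upgrade this to $\overline{C}\subseteq\overline{K_w(\tau)}$. For this I would use that each $H_i$ is a wall of $\Sigma$, hence disjoint from the open chamber $C$; therefore each affine functional cutting out $\overline{K_w(\tau)}$ has constant sign on $C$, and since it is nonnegative somewhere on $C$ (on the segment above) it is nonnegative on all of $C$. Thus $C\subseteq\overline{K_w(\tau)}$, and taking closures gives $\overline{C}\subseteq\overline{K_w(\tau)}$, i.e.\ $\pr_A(\tau)$ lies in $\overline{K_w(\tau)}$.

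I do not expect a genuine obstacle: the content is the elementary fact that a ray issuing from a point of a convex cone in a recession direction stays in the cone, combined with the fact that walls of a Coxeter complex do not cross open cells. The only care needed is in the first step — making precise that $\overline{K_w(\tau)}$ is cut out by walls of $\Sigma$ through the special vertex $w$ — after which the argument is a few lines, in keeping with the ``easy property'' billing of the lemma.
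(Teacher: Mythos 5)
Your proof is correct and follows essentially the same route as the paper: the paper's own argument is just the observation that for every $\xi \in \tau$ and every $x \in \overline{K_w(\tau)}$ the ray $[x,\xi)$ stays in $\overline{K_w(\tau)}$, which is exactly your central step via the half-space description at the special vertex $w$. Your final upgrade (constant sign of the wall functionals on the open chamber) is just an explicit version of what the paper leaves implicit, namely that $\overline{K_w(\tau)}$ is a subcomplex and hence contains the whole chamber once it contains an interior segment of it.
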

\begin{proof}
This follows directly from the fact that for every  $\xi \in \tau$ and every $x \in \overline{K_w(\tau)}$ the ray $[x,\xi)$ stays in $\overline{K_w(\tau)}$.
\end{proof}

The following corollary follows by applying Lemma~\ref{lem:proj-in-sectors} on a panel $P$ that lies in a wall of a sector $K_w(\tau)$.

\begin{corollary}\label{cor:sigma-min-gal-stay-in-cones}
Let $w \in \Sigma$ be a special vertex and let $\tau$ be a chamber in $\pinf \Sigma$.
Let $\Gamma = E_1 \vert \ldots \vert E_n$ be a $\tau$-minimal gallery in $\Sigma$.
If $E_1$ lies in $K_w(\tau)$, then the whole gallery $\Gamma$ lies in $K_w(\tau)$.
\end{corollary}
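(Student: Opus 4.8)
The plan is to prove the statement by induction on the index of the chamber in the gallery, the only content being a single application of Lemma~\ref{lem:proj-in-sectors} at each step. I would show that $E_j \subseteq K_w(\tau)$ for every $1 \leq j \leq n$ by induction on $j$; the base case $j = 1$ is precisely the hypothesis $E_1 \subseteq K_w(\tau)$. For the inductive step, assume $E_i \subseteq K_w(\tau)$ and let $P$ be the panel shared by $E_i$ and $E_{i+1}$. Since $P$ is a face of $\overline{E_i}$ and $\overline{K_w(\tau)}$ is closed, we get $P \subseteq \overline{E_i} \subseteq \overline{K_w(\tau)}$, so $P$ is a cell of $\Sigma$ contained in $\overline{K_w(\tau)}$ --- exactly the situation required by Lemma~\ref{lem:proj-in-sectors}. (This is the configuration the hint alludes to: $P$ may well lie on one of the walls bounding the sector, but this causes no difficulty, since Lemma~\ref{lem:proj-in-sectors} only asks $P \subseteq \overline{K_w(\tau)}$.)

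Next I would invoke $\tau$-minimality of $\Gamma$: by Definition~\ref{def:moving-towards-infinity} the panel $P$ satisfies $\pr_P(\tau) = E_{i+1}$. Applying Lemma~\ref{lem:proj-in-sectors} to the cell $A = P$ and the chamber $\tau \subseteq \pinf \Sigma$ then gives $E_{i+1} = \pr_P(\tau) \subseteq \overline{K_w(\tau)}$, hence $\overline{E_{i+1}} \subseteq \overline{K_w(\tau)}$. It remains to pass from the closed sector back to the open sector $K_w(\tau)$. A sector is an intersection of finitely many open half-spaces (cf.\ Remark~\ref{rem:description-of-sectors}), in particular $K_w(\tau)$ is an open convex subset of $\Sigma$, so the interior of $\overline{K_w(\tau)}$ equals $K_w(\tau)$. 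Since $E_{i+1}$ is a chamber, it is a top-dimensional relatively open cell, i.e.\ an open subset of $\Sigma$; being an open subset contained in $\overline{K_w(\tau)}$, it lies in the interior of $\overline{K_w(\tau)}$, which is $K_w(\tau)$. This completes the induction.

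I do not expect a genuine obstacle here: the whole argument is the observation that the defining property $\pr_P(\tau) = E_{i+1}$ of a $\tau$-minimal gallery is exactly the input Lemma~\ref{lem:proj-in-sectors} consumes, so a $\tau$-minimal gallery starting inside the sector cannot leave $\overline{K_w(\tau)}$. The only point worth a line of care is the distinction between the open sector $K_w(\tau)$ and its closure, which is handled by the elementary fact that an open convex set coincides with the interior of its closure, together with the fact that chambers are top-dimensional open cells.
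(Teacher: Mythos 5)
Your proof is correct and follows essentially the same route as the paper, which disposes of the corollary by a single application of Lemma~\ref{lem:proj-in-sectors} to the crossing panel of a $\tau$-minimal gallery. Your extra care in passing from $\overline{K_w(\tau)}$ back to the open sector (a chamber is open, and an open convex set is the interior of its closure) is exactly the small bookkeeping the paper leaves implicit, so there is no gap.
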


\begin{lemma}\label{lem:proj-in-sector-complements}
Let $w \in \Sigma$ be a special vertex and $A$ be a cell in $\Sigma \backslash K_w(\sigma)$.
Then $\pr_A(\sigma^{\op})$ lies in $\Sigma \backslash K_w(\sigma)$.
\end{lemma}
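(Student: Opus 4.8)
The plan is to work in the coordinates $\alpha_1,\dots,\alpha_d$ fixed at the beginning of the section and to reduce the statement to a one-line computation via Remark~\ref{rem:description-of-sectors}, which describes both $K_x(\sigma)$ and $K_x(\sigma^{\op})$ by linear inequalities in the $\alpha_i$.

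First I would record a dichotomy for cells relative to the sector $K_w(\sigma)$. Since $w$ is special, for each $i$ there is a wall of $\Sigma$ parallel to $\alpha_i^{-1}(0)$ passing through $w$, so $\alpha_i(w)\in\Z$ and $\alpha_i^{-1}(\alpha_i(w))$ is one of the walls $W_{i,k}$. Hence no cell of $\Sigma$ crosses this wall, and for every cell $A'$ of $\Sigma$ the image $\alpha_i(A')$ is contained in exactly one of $(\alpha_i(w),\infty)$, $\{\alpha_i(w)\}$, or $(-\infty,\alpha_i(w))$. Combined with the description of $K_w(\sigma)$ in Remark~\ref{rem:description-of-sectors}, this shows that a cell $A'$ lies in $K_w(\sigma)$ if and only if $\alpha_i(a')>\alpha_i(w)$ for every $i$ and every $a'\in A'$; equivalently, $A'$ lies in $\Sigma\setminus K_w(\sigma)$ if and only if there is an index $j$ with $\alpha_j(a')\le\alpha_j(w)$ for every $a'\in A'$. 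In particular, to prove that $\pr_A(\sigma^{\op})$ lies in $\Sigma\setminus K_w(\sigma)$ it suffices to produce a single point $b\in\pr_A(\sigma^{\op})$ and a single index $j$ with $\alpha_j(b)\le\alpha_j(w)$.

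To finish, fix an index $j$ with $\alpha_j(a)\le\alpha_j(w)$ for all $a\in A$, which exists by the hypothesis $A\subseteq\Sigma\setminus K_w(\sigma)$ together with the dichotomy. Choose any $a\in A$ and any interior point $\xi\in\sigma^{\op}$. By Definition~\ref{def:projection-to-inner-simplices}, some initial segment of the open ray $(a,\xi)$ lies in $\pr_A(\sigma^{\op})$; let $b$ be a point on that segment. Since $(a,\xi)\subseteq K_a(\sigma^{\op})$, Remark~\ref{rem:description-of-sectors} gives $\alpha_j(b)<\alpha_j(a)\le\alpha_j(w)$, so $b\notin K_w(\sigma)$, and the dichotomy above yields $\pr_A(\sigma^{\op})\subseteq\Sigma\setminus K_w(\sigma)$.

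I do not expect a genuine obstacle here; the only point requiring a little care is the dichotomy step, which is precisely where the hypothesis that $w$ is a \emph{special} vertex is used, so that the bounding hyperplanes of $K_w(\sigma)$ are honest walls of $\Sigma$ and no cell can partly enter the sector. An alternative route would be to argue combinatorially via $\sigma^{\op}$-minimal galleries and the projection lemmas (Lemma~\ref{lem:proj-in-sectors} and its corollaries), but the direct computation with the linear forms $\alpha_i$ seems shortest.
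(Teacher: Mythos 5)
Your argument is correct, but it follows a different route than the paper. You prove the statement directly in the coordinates $\alpha_1,\dots,\alpha_d$: speciality of $w$ guarantees that each $\alpha_j^{-1}(\alpha_j(w))$ is an honest wall of $\Sigma$, so every open cell lies entirely in one of the three regions $\alpha_j>\alpha_j(w)$, $\alpha_j=\alpha_j(w)$, $\alpha_j<\alpha_j(w)$; combining this trichotomy with Remark~\ref{rem:description-of-sectors} and the fact that an initial segment of $(a,\xi)$, $\xi\in\sigma^{\op}$, lies in $\pr_A(\sigma^{\op})\cap K_a(\sigma^{\op})$ gives $\alpha_j(b)<\alpha_j(a)\le\alpha_j(w)$ at a single point $b$, which by the trichotomy forces the whole chamber $\pr_A(\sigma^{\op})$ out of $K_w(\sigma)$. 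The paper instead argues by contradiction and combinatorially: if $\pr_A(\sigma^{\op})\subseteq K_w(\sigma)$, then $A\subseteq\overline{K_w(\sigma)}$, so $\pr_A(\sigma)\subseteq\overline{K_w(\sigma)}$ by Lemma~\ref{lem:proj-in-sectors}, and convexity gives $\conv(\pr_A(\sigma),\pr_A(\sigma^{\op}))=\st_{\Sigma}(A)\subseteq K_w(\sigma)$; since the star is an open neighborhood of $A$, this puts $A$ in the open sector, contradicting the hypothesis. Your version is more computational and self-contained (it leans only on the normalization of the $\alpha_i$ fixed at the start of the section and makes the role of speciality completely explicit), while the paper's version is coordinate-free and reuses the projection/convexity machinery (Lemma~\ref{lem:proj-in-sectors}, the gate property picture) that the surrounding lemmas are built on — which is essentially the alternative route you sketch in your closing remark. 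Both proofs are sound.
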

\begin{proof}
Suppose that $\pr_A(\sigma^{\op}) \subseteq K_w(\sigma)$.
Then $A$ is a cell in $\overline{K_w(\sigma)}$ and we obtain
$\pr_A(\sigma) \subseteq \overline{K_w(\sigma)}$ from Lemma~\ref{lem:proj-in-sectors}.
From the convexity of $K_w(\sigma)$ we deduce that
\[
\conv(\pr_A(\sigma),\pr_A(\sigma^{\op})) = \st_{\Sigma}(A)
\]
lies in $K_w(\sigma)$.
Since $\st_{\Sigma}(A)$ is an open neighborhood of $A$ it follows that $A$ lies in the (open) sector $K_w(\sigma)$.
But this contradicts the choice of $A$.
\end{proof}

\begin{proposition}\label{prop:cones-are-sigma-convex}
Let $w \in \Sigma$ be a special vertex.
The closed sectors $\overline{K_w(\sigma)}$, $\overline{K_w(\sigma^{\op})}$ and the complements of the open sectors $\Sigma \backslash K_w(\sigma)$ and $\Sigma \backslash K_w(\sigma^{\op})$ are $\sigma$-convex.
\end{proposition}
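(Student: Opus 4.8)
The plan is to reduce all four assertions to a single chamber‑level statement and then feed in the projection lemmas already available. The elementary observation that drives the reduction is this: since $w$ is special, each $\alpha_i(w)$ is an integer, so the bounding hyperplanes $\alpha_i^{-1}(\alpha_i(w))$ of the sectors $\overline{K_w(\sigma)}=\{y : \alpha_i(y)\geq\alpha_i(w)\text{ for all }i\}$ and $\overline{K_w(\sigma^{\op})}=\{y : \alpha_i(y)\leq\alpha_i(w)\text{ for all }i\}$ (see Remark~\ref{rem:description-of-sectors}) are walls of $\Sigma$. As no chamber of $\Sigma$ crosses a wall, a chamber $E$ satisfies $\overline{E}\subseteq\overline{K_w(\sigma)}$ if and only if $E\subseteq K_w(\sigma)$, and $\overline{E}\subseteq\Sigma\backslash K_w(\sigma)$ if and only if $E\nsubseteq K_w(\sigma)$; likewise for $\sigma^{\op}$. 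Hence a $\sigma$-minimal gallery $\Gamma=E_1\vert\ldots\vert E_n$ (together with its closed faces) lies in the relevant supported subcomplex as soon as every chamber $E_i$ lies, resp. does not lie, in the appropriate open sector, and this is what I will verify in each case.

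\emph{The closed sectors.} Let $A,B$ be cells of $Z$ and let $\Gamma=E_1\vert\ldots\vert E_n$ be a $\sigma$-minimal gallery from $\pr_A(\sigma)$ to $\pr_B(\sigma^{\op})$. If $Z=\Sigma(\overline{K_w(\sigma)})$, then $A\subseteq\overline{K_w(\sigma)}$ together with Lemma~\ref{lem:proj-in-sectors} gives $E_1=\pr_A(\sigma)\subseteq\overline{K_w(\sigma)}$, so $E_1\subseteq K_w(\sigma)$ by the observation; since $\Gamma$ is moving towards $\sigma$, Corollary~\ref{cor:sigma-min-gal-stay-in-cones} pushes all of $\Gamma$ into $K_w(\sigma)\subseteq Z$. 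If $Z=\Sigma(\overline{K_w(\sigma^{\op})})$, one controls the other end: $B\subseteq\overline{K_w(\sigma^{\op})}$ and Lemma~\ref{lem:proj-in-sectors} (applied to the chamber $\sigma^{\op}$ at infinity) give $E_n=\pr_B(\sigma^{\op})\subseteq\overline{K_w(\sigma^{\op})}$, hence $E_n\subseteq K_w(\sigma^{\op})$. The reversal $E_n\vert E_{n-1}\vert\ldots\vert E_1$ of a $\sigma$-minimal gallery is $\sigma^{\op}$-minimal — in the thin complex $\Sigma$ each panel has exactly two chambers, and the one projecting to $\sigma^{\op}$ is the one not projecting to $\sigma$ — so Corollary~\ref{cor:sigma-min-gal-stay-in-cones} with $\tau=\sigma^{\op}$ forces $\Gamma$ into $K_w(\sigma^{\op})\subseteq Z$.

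\emph{The complements.} For $Z=\Sigma(\Sigma\backslash K_w(\sigma))$ with cells $A,B$ and a $\sigma$-minimal gallery $\Gamma$ from $\pr_A(\sigma)$ to $\pr_B(\sigma^{\op})$: Lemma~\ref{lem:proj-in-sector-complements} gives $E_n=\pr_B(\sigma^{\op})\subseteq\Sigma\backslash K_w(\sigma)$, i.e.\ $E_n\nsubseteq K_w(\sigma)$. If some $E_i$ were contained in $K_w(\sigma)$, the $\sigma$-minimal tail $E_i\vert\ldots\vert E_n$ would lie in $K_w(\sigma)$ by Corollary~\ref{cor:sigma-min-gal-stay-in-cones}, forcing $E_n\subseteq K_w(\sigma)$, a contradiction; hence every $E_i\nsubseteq K_w(\sigma)$ and $\Gamma\subseteq Z$. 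The case $Z=\Sigma(\Sigma\backslash K_w(\sigma^{\op}))$ is the mirror image: running the argument of Lemma~\ref{lem:proj-in-sector-complements} with the roles of $\sigma$ and $\sigma^{\op}$ interchanged (legitimate, since that proof uses only convexity of $K_w(\cdot)$ and Lemma~\ref{lem:proj-in-sectors}, both symmetric in $\sigma\leftrightarrow\sigma^{\op}$) shows $\pr_A(\sigma)\subseteq\Sigma\backslash K_w(\sigma^{\op})$, so $E_1\nsubseteq K_w(\sigma^{\op})$; and if some $E_i\subseteq K_w(\sigma^{\op})$ then, applying Corollary~\ref{cor:sigma-min-gal-stay-in-cones} with $\tau=\sigma^{\op}$ to the reversed tail $E_i\vert\ldots\vert E_1$ (a $\sigma^{\op}$-minimal gallery), we would get $E_1\subseteq K_w(\sigma^{\op})$, a contradiction.

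\emph{Main obstacle.} There is no deep difficulty: the proposition is an assembly of Lemma~\ref{lem:proj-in-sectors}, Corollary~\ref{cor:sigma-min-gal-stay-in-cones}, and Lemma~\ref{lem:proj-in-sector-complements}. The points that need care are purely organizational — keeping track of which endpoint of the $\sigma$-minimal gallery one is entitled to control (the $\pr_A(\sigma)$-end for $\overline{K_w(\sigma)}$ and $\Sigma\backslash K_w(\sigma^{\op})$, the $\pr_B(\sigma^{\op})$-end for $\overline{K_w(\sigma^{\op})}$ and $\Sigma\backslash K_w(\sigma)$), whether to propagate forward along $\Gamma$ or backward along the $\sigma^{\op}$-minimal reversal, and the wall observation from the first paragraph, which is what allows one to move freely between the set‑theoretic conditions ``$\overline{E}\subseteq\overline{K_w(\sigma)}$'' and ``$E\subseteq K_w(\sigma)$''. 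Once these are in place, each of the four cases is two lines.
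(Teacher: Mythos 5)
Your proof is correct, and for the two sector complements it is essentially the paper's argument verbatim: project the relevant endpoint into the complement via Lemma~\ref{lem:proj-in-sector-complements} (resp.\ its $\sigma\leftrightarrow\sigma^{\op}$ mirror, which you rightly note is proved symmetrically and which the paper invokes without comment), then rule out any chamber of the gallery lying in the open sector by applying Corollary~\ref{cor:sigma-min-gal-stay-in-cones} to the tail, or to the reversed tail after observing that the reversal of a $\sigma$-minimal gallery is $\sigma^{\op}$-minimal. Where you genuinely diverge is the closed sectors: the paper disposes of them in one line by saying that convex subcomplexes are $\sigma$-convex, whereas you argue directly, using Lemma~\ref{lem:proj-in-sectors} to place the appropriate end chamber ($\pr_A(\sigma)$ for $\overline{K_w(\sigma)}$, $\pr_B(\sigma^{\op})$ for $\overline{K_w(\sigma^{\op})}$) inside the closed sector, upgrading it to the open sector via your wall observation (valid because $w$ is special, so the bounding hyperplanes are walls and open chambers meet no wall), and then propagating along the gallery or its reversal with Corollary~\ref{cor:sigma-min-gal-stay-in-cones}. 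Your route is slightly longer but arguably more self-contained: the paper's appeal to convexity is not literally immediate from Definition~\ref{def:sigma-convexity}, since the gallery's endpoints $\pr_A(\sigma)$ and $\pr_B(\sigma^{\op})$ need not be chambers of the convex subcomplex, so making that one-liner precise requires essentially the endpoint-control-plus-propagation argument you wrote out anyway. Your bookkeeping of which endpoint can be controlled in which of the four cases, and the explicit justification that opposite projections $\pr_P(\sigma)$, $\pr_P(\sigma^{\op})$ are the two distinct chambers at a panel (underpinning the reversal step, which the paper also uses tacitly), are both sound.
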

\begin{proof}
Since $\overline{K_w(\sigma)}$ and $\overline{K_w(\sigma^{\op})}$ are convex subcomplexes it follows that they are also $\sigma$-convex.
Next we consider the complements.
We start with $\Sigma \backslash K_w(\sigma)$.
Let $A$ and $B$ be cells in $\Sigma \backslash K_w(\sigma)$ and suppose that there is a $\sigma$-minimal gallery $\Gamma = E_1 \lvert \ldots \rvert E_n$ from $E_1 = \pr_A(\sigma)$ to $E_n = \pr_B(\sigma^{\op})$.
By Lemma~\ref{lem:proj-in-sector-complements} the chamber $\pr_B(\sigma^{\op})$
is contained in $\Sigma \backslash K_w(\sigma)$.
Suppose that $\Gamma$ contains a chamber $E_{i_0}$ in $K_w(\sigma)$.
Then the subgallery $\Gamma' \defeq E_{i_0} \lvert \ldots \rvert E_n$ is $\sigma$-minimal
and hence by Lemma~\ref{cor:sigma-min-gal-stay-in-cones} stays in $K_w(\sigma)$.
A contradiction to $E_n = \pr_B(\sigma^{\op}) \subseteq \Sigma \backslash K_w(\sigma)$.

Suppose now that $A$ and $B$ are cells in $\Sigma \backslash K_w(\sigma^{\op})$ and that there is a $\sigma$-minimal gallery $\Gamma = E_1 \lvert \ldots \rvert E_n$ from $E_1 = \pr_A(\sigma)$ to $E_n = \pr_B(\sigma^{\op})$.
By Lemma~\ref{lem:proj-in-sector-complements} the chamber $\pr_A(\sigma)$ is contained in $\Sigma \backslash K_w(\sigma^{\op})$.
Note that the reversed gallery $\Gamma^{\op} = E_n \lvert \ldots \rvert E_1$ is moving towards $\sigma^{\op}$.
Hence if $\Gamma^{\op}$ contains a chamber $E_{i_0} \subseteq K_w(\sigma^{\op})$, then the subgallery $E_{i_0} \lvert \ldots \rvert E_1$ is contained in $K_w(\sigma^{\op})$ by Lemma~\ref{cor:sigma-min-gal-stay-in-cones}.
But this is a contradiction to $E_1 = \pr_A(\sigma) \subseteq \Sigma \backslash K_w(\sigma^{\op})$.
\end{proof}


\begin{lemma}\label{lem:intersec-nonsep-boundary}
Let $Y$ and $Z$ be two subcomplexes of $\Sigma$.
Then
\[
R(Y \cap Z) = Y \cap Z \cap (R(Y) \cup R(Z)).
\]
\end{lemma}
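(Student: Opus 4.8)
The plan is to prove the identity cell by cell, which reduces the whole statement to elementary set-theoretic bookkeeping together with the definition of $R(\cdot)$. Since the open cells of $\Sigma$ are pairwise disjoint and each of $R(Y\cap Z)$, $R(Y)$, $R(Z)$ is by definition a union of open cells, it suffices to fix an arbitrary cell $A\subseteq\Sigma$ and show that $A$ is one of the cells making up $R(Y\cap Z)$ if and only if $A$ is a cell of $Y\cap Z$ lying in $R(Y)\cup R(Z)$.

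First I would record two preliminary facts: that $Y\cap Z$ is again a subcomplex of $\Sigma$, and that for every cell $A$ the chamber $\pr_A(\sigma^{\op})\subseteq\st_{\Sigma}(A)$ is well defined independently of any ambient subcomplex. With this in hand, Definition~\ref{def:non-separating-boundary} says that $A$ is a cell of $R(Y\cap Z)$ exactly when $A\subseteq Y\cap Z$ and $\pr_A(\sigma^{\op})\nsubseteq Y\cap Z$.

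Next I would unwind the two ``intersection'' conditions. We have $A\subseteq Y\cap Z$ iff $A\subseteq Y$ and $A\subseteq Z$; and $\pr_A(\sigma^{\op})\nsubseteq Y\cap Z$ iff $\pr_A(\sigma^{\op})\nsubseteq Y$ or $\pr_A(\sigma^{\op})\nsubseteq Z$, since a set fails to be contained in an intersection precisely when it fails to be contained in one of the factors. Distributing the disjunction over the conjunction shows that $A$ is a cell of $R(Y\cap Z)$ iff either $A\subseteq Z$, $A\subseteq Y$, and $\pr_A(\sigma^{\op})\nsubseteq Y$, or else $A\subseteq Y$, $A\subseteq Z$, and $\pr_A(\sigma^{\op})\nsubseteq Z$; by Definition~\ref{def:non-separating-boundary} this is the same as saying that $A\subseteq Z$ and $A\in R(Y)$, or $A\subseteq Y$ and $A\in R(Z)$.

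Finally I would match this with the right-hand side: since $A\in R(Y)$ already forces $A\subseteq Y$, the clause ``$A\subseteq Z$ and $A\in R(Y)$'' is equivalent to ``$A\subseteq Y\cap Z$ and $A\in R(Y)$'', and symmetrically for the other clause; taking the disjunction yields exactly ``$A\subseteq Y\cap Z$ and $A\in R(Y)\cup R(Z)$'', as desired. There is no substantive obstacle here; the only point requiring a little care is to keep the distinction between $A$ \emph{being one of the cells of} $R(\cdot)$ and the set inclusion $A\subseteq R(\cdot)$, but these coincide because the open cells of $\Sigma$ partition $\Sigma$.
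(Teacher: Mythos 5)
Your proposal is correct and follows essentially the same route as the paper: both arguments proceed cell by cell, unwind Definition~\ref{def:non-separating-boundary}, and use the elementary equivalence that $\pr_A(\sigma^{\op})\nsubseteq Y\cap Z$ iff $\pr_A(\sigma^{\op})\nsubseteq Y$ or $\pr_A(\sigma^{\op})\nsubseteq Z$. The extra care you take in distinguishing ``being a cell of $R(\cdot)$'' from set containment is a fine (if implicit in the paper) bookkeeping point, but the substance is identical.
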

\begin{proof}
Let $A$ be a cell in $\Sigma$.
From the definitions it directly follows that
\begin{align*}
A \subseteq R(Y \cap Z)
&\Leftrightarrow A \subseteq Y \cap Z \text{ and } \pr_A(\sigma^{\op}) \nsubseteq Y \cap Z\\
&\Leftrightarrow A \subseteq Y \cap Z \text{ and } (\pr_A(\sigma^{\op}) \nsubseteq Y \text{ or } \pr_A(\sigma^{\op}) \nsubseteq Z)\\
&\Leftrightarrow A \subseteq Y \cap Z \cap (R(Y) \cup R(Z)),
\end{align*}
which proves the claim.
\end{proof}

\begin{definition}\label{def:lower-complex}
For each $r \in \R$ let $M(r)$ denote the set of special vertices $w \in \Sigma$ of height $h(w) \geq r$.
We define \emph{upper complex} associated to $h$ and $r$ by
\[
U_h(r) = \bigcup \limits_{w \in M(r)} \overline{K_w(\sigma^{\op})}.
\]
Moreover we define the \emph{lower complex} associated to $h$ and $r$, denoted by $L_h(r)$, as the complement of the interior of $U_h(r)$ in $\Sigma$,\ i.e.
\[
L_h(r) = \Sigma \backslash \bigcup \limits_{w \in M(r)} K_w(\sigma^{\op}).
\]
\end{definition}

Let us summarize some properties of $L_h(r)$.

\begin{proposition}\label{prop:intersection-cones}
There is a constant $\varepsilon > 0$ such that for every $r \in \R$
\begin{enumerate}
\item $L_h(r)$ is $\sigma$-convex,
\item $h^{-1}((-\infty,r]) \subseteq L_h(r)$,
\item $L_h(r) \subseteq h^{-1}((-\infty,r+\varepsilon])$, and
\item $R(L_h(r)) \subseteq h^{-1}([r,r+\varepsilon])$.
\end{enumerate}
\end{proposition}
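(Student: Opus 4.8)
The plan is to verify the four assertions one at a time, using the coordinate description of sectors from Remark~\ref{rem:description-of-sectors} throughout, and taking for $\varepsilon$ the constant produced by Lemma~\ref{lem:sector-covering-simplex}. For part~(1), write $L_h(r) = \bigcap_{w \in M(r)} (\Sigma \setminus K_w(\sigma^{\op}))$ by De Morgan. Since every $w \in M(r)$ is a special vertex, the walls bounding $K_w(\sigma^{\op})$ are walls of $\Sigma$, so $\Sigma \setminus K_w(\sigma^{\op})$, and hence the intersection, is a subcomplex. By Proposition~\ref{prop:cones-are-sigma-convex} each $\Sigma \setminus K_w(\sigma^{\op})$ is $\sigma$-convex, and an arbitrary intersection of $\sigma$-convex subcomplexes is again $\sigma$-convex: given cells $A,B$ in the intersection, any $\sigma$-minimal gallery from $\pr_A(\sigma)$ to $\pr_B(\sigma^{\op})$ lies in each factor by $\sigma$-convexity, hence in the intersection.

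For part~(2), the standing hypothesis on $h$, equivalently $\overline{\sigma^{\op}} \cap \eta = \emptyset$ (Remark~\ref{rem:general_position}), says that $h$ is strictly increasing along every ray pointing into $\overline{\sigma^{\op}}$; consequently, if $x \in K_w(\sigma^{\op})$ for a special vertex $w$, then $h(x) > h(w)$. Thus if $h(x) \leq r$, then for every $w \in M(r)$ we have $h(x) \leq r \leq h(w)$, forcing $x \notin K_w(\sigma^{\op})$, i.e.\ $x \in L_h(r)$. For part~(3), let $x$ satisfy $h(x) > r + \varepsilon$. Lemma~\ref{lem:sector-covering-simplex} supplies a special vertex $w$ with $h(w) > h(x) - \varepsilon > r$, so $w \in M(r)$ and $x \in K_w(\sigma^{\op})$; hence $x \notin L_h(r)$, which proves $L_h(r) \subseteq h^{-1}((-\infty, r+\varepsilon])$.

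For part~(4), in view of part~(3) it remains to prove $R(L_h(r)) \subseteq h^{-1}([r,\infty))$. The crucial observation is that, expanding $h$ in the basis $\alpha_1,\dots,\alpha_d$ of linear forms on $\Sigma$ (these $d$ forms are linearly independent since they cut out the $d$ facets of $E$ through $v$) as $h = \sum_{i=1}^{d} c_i \alpha_i$, one has $c_i < 0$ for every $i$: the ray $[v,\xi_i)$ has a direction on which $\alpha_j$ vanishes for $j \neq i$ and $\alpha_i$ is positive, so strict decrease of $h \circ \xi_i$ forces $c_i < 0$. Now take a cell $A \in R(L_h(r))$. Then $D \defeq \pr_A(\sigma^{\op})$ is a chamber that is not contained in $L_h(r)$, so $D$ meets some sector $K_w(\sigma^{\op})$ with $w \in M(r)$; since $D$ is a chamber and the boundary of that sector consists of walls of $\Sigma$, in fact $D \subseteq K_w(\sigma^{\op})$. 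As $A$ is a face of $D$, Remark~\ref{rem:description-of-sectors} gives $\alpha_i(a) \leq \alpha_i(w)$ for all $a \in \overline{A}$ and all $i$; multiplying each inequality by $c_i < 0$ and summing yields $h(a) \geq h(w) \geq r$. Hence $A \subseteq h^{-1}([r, r+\varepsilon])$, which completes the proof.

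I expect part~(4) to be the main obstacle. The point is to recognize that the relevant content of the hypothesis on $h$ is precisely that $h$ is a strictly negative combination of the coordinate forms $\alpha_i$, and then to combine this with the $\alpha$-coordinate descriptions of sectors and of the projection $\pr_A(\sigma^{\op})$ in order to get the lower height bound on $R(L_h(r))$. Parts~(1)--(3) are essentially bookkeeping once Proposition~\ref{prop:cones-are-sigma-convex}, Remark~\ref{rem:general_position}, and Lemma~\ref{lem:sector-covering-simplex} are in hand.
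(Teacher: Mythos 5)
Your proof is correct and follows essentially the same route as the paper: the same intersection-of-sector-complements argument for $\sigma$-convexity, the same use of Lemma~\ref{lem:sector-covering-simplex} for parts (2)--(3), and the same chain $A \subseteq \overline{K_w(\sigma^{\op})} \subseteq h^{-1}([h(w),\infty))$ for part (4). The only difference is that you verify explicitly, via the expansion $h=\sum_i c_i\alpha_i$ with $c_i<0$, that $w$ is the $h$-lowest point of its sector, a fact the paper treats as immediate from the choice of $h$.
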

\begin{proof}
Note that we can write the lower complex as an intersection
\[
L_h(r) = \Sigma \backslash \bigcup \limits_{w \in M(r)} K_w(\sigma^{\op}) = \bigcap \limits_{w \in M(r)} \Sigma \backslash K_w(\sigma^{\op})
\]
of sector complements.
Now the first claim follows by Proposition~\ref{prop:cones-are-sigma-convex} and the obvious observation that the intersection of $\sigma$-convex subcomplexes is $\sigma$-convex.
Recall from our choice of $h$ that $w$ is the lowest point in $K_w(\sigma^{\op})$.
As a consequence, it follows that $K_w(\sigma^{\op}) \subseteq h^{-1}((r,\infty))$ for every vertex $w \in M(r)$.
Thus the sublevelset $h^{-1}((-\infty,r])$ is contained in $\Sigma \backslash K_w(\sigma^{\op})$ for every vertex $w \in M(r)$, which shows the second claim.
To prove the third claim, let $\varepsilon > 0$ be the constant from Lemma~\ref{lem:sector-covering-simplex} and let $x \in L_h(r)$ be an arbitrary point.
Then Lemma~\ref{lem:sector-covering-simplex} provides us with a special vertex $w \in \Sigma$ of height $h(w) > h(x)-\varepsilon$ such that $x \in K_w(\sigma^{\op})$.
Thus $h(w) < r$ and we deduce that $h(x) < r+\varepsilon$.
To verify the last claim, let $A \subseteq L_h(r)$ be a cell with $\pr_A(\sigma^{\op}) \nsubseteq L_h(r)$.
Then there is a vertex $w \in M(r)$ such that $\pr_A(\sigma^{\op}) \subseteq K_w(\sigma^{\op})$.
This gives us a chain of inclusions
\[
A \subseteq \overline{K_w(\sigma^{\op})} \subseteq h^{-1}([h(w),\infty)) \subseteq h^{-1}([r,\infty)).
\]
On the other hand, the third claim gives us $A \subseteq L_h(r) \subseteq h^{-1}((-\infty,r+\varepsilon])$, which completes the proof.
\end{proof}

\begin{lemma}\label{lem:intersection-cones}
There is a constant $\varepsilon > 0$ such that for every $r \in \R$ and every special vertex $w \in \Sigma$, the intersection $\overline{K_w(\sigma^{\op})} \cap L_h(r)$ is $\sigma$-convex and $R(\overline{K_w(\sigma^{\op})} \cap L_h(r))$ lies in $h^{-1}([r,r+\varepsilon])$.
\end{lemma}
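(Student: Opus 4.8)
The plan is to reduce both assertions to facts that have already been recorded for closed sectors and for the lower complex $L_h(r)$. For the $\sigma$-convexity statement I would simply note that $\overline{K_w(\sigma^{\op})}$ is $\sigma$-convex by Proposition~\ref{prop:cones-are-sigma-convex} and that $L_h(r)$ is $\sigma$-convex by Proposition~\ref{prop:intersection-cones}; since an intersection of $\sigma$-convex subcomplexes is again $\sigma$-convex (as already used in the proof of Proposition~\ref{prop:intersection-cones}), the complex $\overline{K_w(\sigma^{\op})} \cap L_h(r)$ is $\sigma$-convex.

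For the second statement I would let $\varepsilon > 0$ be the constant furnished by Proposition~\ref{prop:intersection-cones} and apply Lemma~\ref{lem:intersec-nonsep-boundary} with $Y = \overline{K_w(\sigma^{\op})}$ and $Z = L_h(r)$, which gives
\[
R\bigl(\overline{K_w(\sigma^{\op})} \cap L_h(r)\bigr) = \overline{K_w(\sigma^{\op})} \cap L_h(r) \cap \bigl(R(\overline{K_w(\sigma^{\op})}) \cup R(L_h(r))\bigr).
\]
The crucial observation is that $R(\overline{K_w(\sigma^{\op})}) = \emptyset$: for any cell $A \subseteq \overline{K_w(\sigma^{\op})}$, Lemma~\ref{lem:proj-in-sectors} (with $\tau = \sigma^{\op}$) shows that $\pr_A(\sigma^{\op})$ again lies in $\overline{K_w(\sigma^{\op})}$, so no cell of $\overline{K_w(\sigma^{\op})}$ lies in $R(\overline{K_w(\sigma^{\op})})$. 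Hence $R(\overline{K_w(\sigma^{\op})} \cap L_h(r)) \subseteq R(L_h(r))$, and Proposition~\ref{prop:intersection-cones} yields $R(L_h(r)) \subseteq h^{-1}([r,r+\varepsilon])$, which is the desired conclusion. The uniformity of $\varepsilon$ over all $r$ and $w$ is inherited verbatim from Proposition~\ref{prop:intersection-cones}, since the vanishing of $R(\overline{K_w(\sigma^{\op})})$ introduces no new constant.

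I do not anticipate a genuine obstacle; the argument is essentially bookkeeping with the non-separating boundary operator $R$. The one point that deserves a line of care is verifying that chambers of $\overline{K_w(\sigma^{\op})}$ are not erroneously counted in $R(\overline{K_w(\sigma^{\op})})$ either — but this is immediate because $\pr_C(\sigma^{\op}) = C$ for a chamber $C$, so a chamber of $\overline{K_w(\sigma^{\op})}$ trivially has its $\sigma^{\op}$-projection inside $\overline{K_w(\sigma^{\op})}$. With this in hand, the result follows purely formally from Lemma~\ref{lem:proj-in-sectors}, Lemma~\ref{lem:intersec-nonsep-boundary}, Proposition~\ref{prop:cones-are-sigma-convex}, and Proposition~\ref{prop:intersection-cones}.
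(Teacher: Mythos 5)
Your proposal is correct and follows essentially the same route as the paper's own proof: $\sigma$-convexity of the intersection from Proposition~\ref{prop:cones-are-sigma-convex} and Proposition~\ref{prop:intersection-cones}, then Lemma~\ref{lem:intersec-nonsep-boundary} combined with the observation that $R(\overline{K_w(\sigma^{\op})}) = \emptyset$ by Lemma~\ref{lem:proj-in-sectors}, and finally claim (4) of Proposition~\ref{prop:intersection-cones} with its constant $\varepsilon$. Nothing to add.
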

\begin{proof}
Let $\varepsilon$ be as in Proposition~\ref{prop:intersection-cones}.
The complexes $\overline{K_w(\sigma^{\op})}$ and $L_h(r)$ are $\sigma$-convex by  Proposition~\ref{prop:cones-are-sigma-convex} and Proposition~\ref{prop:intersection-cones}.
Thus their intersection is $\sigma$-convex as well.
From Lemma~\ref{lem:intersec-nonsep-boundary} we know that
\[
R(\overline{K_w(\sigma^{\op})} \cap L_h(r))
= \overline{K_w(\sigma^{\op})} \cap L_h(r) \cap \Big( R(\overline{K_w(\sigma^{\op})}) \cup R(L_h(r))\Big).
\]
On the other hand, it follows from Lemma~\ref{lem:proj-in-sectors} that $R(\overline{K_w(\sigma^{\op})}) = \emptyset$.
Together with claim $(4)$ of Proposition~\ref{prop:intersection-cones} this implies
\[
R(\overline{K_w(\sigma^{\op})} \cap L_h(r)) \subseteq R(L_h(r)) \subseteq h^{-1}([r,r+\varepsilon]),
\]
which proves the lemma.
\end{proof}

\begin{lemma}\label{lem:panels:are-nonisolating}
Let $r \in \R$, let $w \in \Sigma$ be a special vertex, and let
\[
P \subseteq R\Big(\overline{K_w(\sigma^{\op})} \cap L_h(r)\Big) \cap K_w(\sigma^{\op})
\]
be a panel.
Then $\pr_{P}(\sigma)$ lies in $K_w(\sigma^{\op}) \cap L_h(r)$.
\end{lemma}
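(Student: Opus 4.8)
The plan is to deduce both parts of the claim from the hyperplane description of sectors in Remark~\ref{rem:description-of-sectors} together with the projection stability of closed sectors in Lemma~\ref{lem:proj-in-sectors}. Since $R(X)\subseteq X$ for any subcomplex $X$, the hypothesis already gives $P\subseteq \overline{K_w(\sigma^{\op})}\cap L_h(r)$, hence $P\subseteq K_w(\sigma^{\op})\cap L_h(r)$; this is the only part of the hypothesis I will use. The key elementary observation is that any open sector $K_v(\sigma^{\op})$ based at a special vertex $v$ equals $\Set{y\in\Sigma}{\alpha_i(y)<\alpha_i(v)\text{ for all }i}$ and is cut out by the walls $\alpha_i^{-1}(\alpha_i(v))$, $1\leq i\leq d$, of $\Sigma$; consequently such a sector is a union of open cells, its interior in $\Sigma$ equals $K_v(\sigma^{\op})$, and its closure is $\overline{K_v(\sigma^{\op})}$. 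From this one gets that whenever a panel $Q$ lies in such an open sector, both chambers of $\st_\Sigma(Q)$ do too: a chamber $C\supseteq Q$ avoids each wall $\alpha_i^{-1}(\alpha_i(v))$, and since $Q$ lies strictly on its $\alpha_i<\alpha_i(v)$ side and $C$ is connected, $C$ lies on the same side for every $i$. Applying this with $Q=P$ and $v=w$ shows $\pr_P(\sigma)\subseteq K_w(\sigma^{\op})$, which is the first assertion.

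For the second assertion I would argue by contradiction, supposing $\pr_P(\sigma)\nsubseteq L_h(r)$. By definition $L_h(r)=\Sigma\setminus\bigcup_{v\in M(r)}K_v(\sigma^{\op})$, and since each $K_v(\sigma^{\op})$ is a union of open cells, the open chamber $\pr_P(\sigma)$ must be entirely contained in $K_v(\sigma^{\op})$ for some special vertex $v\in M(r)$. Then $P$, being a face of $\pr_P(\sigma)$, lies in $\overline{K_v(\sigma^{\op})}$, so Lemma~\ref{lem:proj-in-sectors}, applied to the cell $P$ and the chamber $\sigma^{\op}$ at infinity, gives $\pr_P(\sigma^{\op})\subseteq\overline{K_v(\sigma^{\op})}$ as well. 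Here I use that $\pr_P(\sigma)$ and $\pr_P(\sigma^{\op})$ are the two distinct chambers of the panel residue $\st_\Sigma(P)$: the geodesic rays from an interior point of $P$ towards $\sigma$ and towards its antipode $\sigma^{\op}$ leave $P$ into the two opposite open half-spaces of the wall spanned by $P$. Hence $\overline{\st_\Sigma(P)}=\overline{\pr_P(\sigma)}\cup\overline{\pr_P(\sigma^{\op})}\subseteq\overline{K_v(\sigma^{\op})}$, and since $\st_\Sigma(P)$ is an open neighbourhood of $P$ in $\Sigma$, it follows that $P\subseteq \st_\Sigma(P)\subseteq K_v(\sigma^{\op})$. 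As $v\in M(r)$, this contradicts $P\subseteq L_h(r)$, so $\pr_P(\sigma)\subseteq L_h(r)$; combined with the first assertion we conclude $\pr_P(\sigma)\subseteq K_w(\sigma^{\op})\cap L_h(r)$.

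The argument is short, and I do not expect a serious obstacle; the only delicate points are the two local facts used above — that a panel contained in an open sector has both of its chambers inside it, and that $\pr_P(\sigma)\neq\pr_P(\sigma^{\op})$ — both of which come down to the fact that a sector based at a special vertex is cut out by walls of $\Sigma$. The main thing to be careful about is keeping the distinction between open cells, their closures, and subcomplexes consistent throughout.
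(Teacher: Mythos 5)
Your proof is correct and follows essentially the same route as the paper: the first inclusion comes from the wall description of the open sector at a special vertex, and the second is the same contradiction using Lemma~\ref{lem:proj-in-sectors} to trap $\st_{\Sigma}(P)$ inside $K_u(\sigma^{\op})$ for some $u \in M(r)$. You merely spell out two local facts the paper asserts without comment, which does no harm.
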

\begin{proof}
Since $P$ lies in the (open) sector $K_w(\sigma^{\op})$, it follows that $\pr_{P}(\sigma)$ is contained in $K_w(\sigma^{\op})$.
Suppose that $\pr_{P}(\sigma)$ is not contained in $L_h(r)$.
Then there is a vertex $u \in M(r)$ with $\pr_{P}(\sigma) \subseteq K_u(\sigma^{\op})$.
On the other hand, Lemma~\ref{lem:proj-in-sectors} tells us that $\pr_{P}(\sigma^{\op})$ lies in $K_u(\sigma^{\op})$.
Together this gives us $P \subseteq \st_{\Sigma}(P) \subseteq K_u(\sigma^{\op})$.
But this contradicts our assumption that $P$ lies $L_h(r)$, which proves the lemma.
\end{proof}

\begin{proposition}\label{prop:removing-one-sector}
Let $r \in \R$ and let $w \in \Sigma$ be a special vertex.
Then the complex $Z = U_h(r) \cup \overline{K_w(\sigma^{\op})}$ has a filtration
\[
U_h(r) = Z_0 \lneq Z_1 \lneq \ldots \lneq Z_n = Z
\]
by subcomplexes $Z_i$ such that the following hold for each $0 \leq m < n$:
\begin{enumerate}
\item $Z_{m+1} = Z_{m} \cup \overline{C_{m+1}}$ for some chamber
$C_{m+1} \subseteq Z_{m+1}$ that satisfies ${\ell_{Z_{m+1}}(C_{m+1}) = 0}$.
\item $Z_{m} \cap \overline{C_{m+1}} = Z_{m} \cap \partial(\st_{Z_{m+1}}(C_{m+1}^{\downarrow}))$.
\item $\st_{Z_{m+1}}(C_{m+1}^{\downarrow}) \subseteq \overline{C_{m+1}}$.
\end{enumerate}
\end{proposition}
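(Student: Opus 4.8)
The plan is to reduce the proposition to Corollary~\ref{cor:deconstr-sigma-conv-complexes} by splitting off the genuinely new part of $Z$, namely $Z' \defeq \overline{K_w(\sigma^{\op})} \cap L_h(r)$. Since $L_h(r) \subseteq h^{-1}((-\infty,r+\varepsilon])$ by Proposition~\ref{prop:intersection-cones}, the complex $Z'$ is contained in $\overline{K_w(\sigma^{\op})} \cap h^{-1}((-\infty,r+\varepsilon])$ and is therefore compact by Lemma~\ref{lem:truncated-sectors}; it is $\sigma$-convex by Lemma~\ref{lem:intersection-cones}. Applying Corollary~\ref{cor:deconstr-sigma-conv-complexes} to $Z'$ then yields a filtration $R(Z') = Z'_0 \lneq \dots \lneq Z'_n = Z'$ whose steps $Z'_{m+1} = Z'_m \cup \overline{C_{m+1}}$ satisfy $\ell_{Z'_{m+1}}(C_{m+1}) = 0$, $\st_{Z'_{m+1}}(C_{m+1}^{\downarrow}) \subseteq \overline{C_{m+1}}$, and $Z'_m \cap \overline{C_{m+1}} = Z'_m \cap \partial(\st_{Z'_{m+1}}(C_{m+1}^{\downarrow}))$. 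I would then put $Z_m \defeq U_h(r) \cup Z'_m$. To see $Z_0 = U_h(r)$ one checks $R(Z') \subseteq U_h(r)$: for $A \subseteq Z'$ with $\pr_A(\sigma^{\op}) \nsubseteq Z'$, Lemma~\ref{lem:proj-in-sectors} gives $\pr_A(\sigma^{\op}) \subseteq \overline{K_w(\sigma^{\op})}$, so necessarily $\pr_A(\sigma^{\op}) \nsubseteq L_h(r)$, i.e.\ $\pr_A(\sigma^{\op}) \subseteq K_u(\sigma^{\op})$ for some $u \in M(r)$, whence $A \subseteq \overline{K_u(\sigma^{\op})} \subseteq U_h(r)$ since $A$ is a face of $\pr_A(\sigma^{\op})$. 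Moreover $\Sigma = U_h(r) \cup L_h(r)$ gives $Z_n = Z$, and since each $C_{m+1}$ is a chamber of $L_h(r) = \Sigma \setminus \bigcup_{u \in M(r)} K_u(\sigma^{\op})$, it cannot lie in a closed sector $\overline{K_u(\sigma^{\op})}$ with $u \in M(r)$ (a chamber contained in a closed sector lies in the open one), hence $C_{m+1} \nsubseteq U_h(r)$ and the inclusions $Z_m \lneq Z_{m+1} = Z_m \cup \overline{C_{m+1}}$ are proper.

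The key point is then the claim that \emph{no coface of $C_{m+1}^{\downarrow}$ is contained in $U_h(r)$}. Suppose $B$ were such a coface; as each closed sector is a subcomplex, $B \subseteq \overline{K_u(\sigma^{\op})}$ for a single $u \in M(r)$, so $C_{m+1}^{\downarrow} \subseteq \overline{K_u(\sigma^{\op})}$ and hence $C_{m+1} = \pr_{C_{m+1}^{\downarrow}}(\sigma^{\op}) \subseteq \overline{K_u(\sigma^{\op})}$ by Lemma~\ref{lem:proj-in-sectors}, so $C_{m+1} \subseteq K_u(\sigma^{\op})$, contradicting $C_{m+1} \subseteq L_h(r)$; here I use that $\pr_{C_{m+1}^{\downarrow}}(\sigma^{\op}) = C_{m+1}$, which is the defining property of the lower face (the analogue of Lemma~\ref{lem:upper-face} for $\sigma^{\op}$, cf.\ Definition~\ref{def:upper-face}). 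Consequently the cofaces of $C_{m+1}^{\downarrow}$ in $Z_{m+1} = U_h(r) \cup Z'_{m+1}$ are precisely those in $Z'_{m+1}$, i.e.\ $\st_{Z_{m+1}}(C_{m+1}^{\downarrow}) = \st_{Z'_{m+1}}(C_{m+1}^{\downarrow}) \subseteq \overline{C_{m+1}}$, which is condition~(3), and the same equality holds after passing to topological frontiers.

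It remains to transfer conditions (1) and (2) from the filtration of $Z'$ to that of $Z$. For $\ell_{Z_{m+1}}(C_{m+1}) = 0$: a $\sigma$-minimal gallery of positive length in $Z_{m+1}$ starting at $C_{m+1}$ would, in its first step, cross a panel $P$ of $C_{m+1}$ with $\pr_P(\sigma^{\op}) = C_{m+1}$ into a chamber $D \neq C_{m+1}$; then $P$, and hence $D$, is a coface of $C_{m+1}^{\downarrow}$, so by the claim $D \subseteq Z'_{m+1}$, and $\st_{Z'_{m+1}}(C_{m+1}^{\downarrow}) \subseteq \overline{C_{m+1}}$ would force $D$ to be a proper face of a chamber — impossible. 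For condition (2), the inclusion ``$\supseteq$'' is immediate from $\st_{Z_{m+1}}(C_{m+1}^{\downarrow}) \subseteq \overline{C_{m+1}}$; for ``$\subseteq$'', a cell $A$ in $Z_m \cap \overline{C_{m+1}}$ is a face of the coface $C_{m+1}$ of $C_{m+1}^{\downarrow}$, hence lies in $\overline{\st_{Z_{m+1}}(C_{m+1}^{\downarrow})}$, and one only has to rule out that $A$ is itself a coface of $C_{m+1}^{\downarrow}$: if it were, then $A \subseteq U_h(r)$ contradicts the claim, while $A \subseteq Z'_m$ puts $A$ into $Z'_m \cap \overline{C_{m+1}} = Z'_m \cap \partial(\st_{Z'_{m+1}}(C_{m+1}^{\downarrow}))$, which contains no coface of $C_{m+1}^{\downarrow}$. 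The main obstacle is precisely this transfer step: one must verify that enlarging $Z'$ to $Z = U_h(r) \cup Z'$ neither enlarges the stars $\st(C_{m+1}^{\downarrow})$ nor produces a new $\sigma$-minimal gallery emanating from $C_{m+1}$, and the claim above is what makes both go through; the rest is a bookkeeping application of Corollary~\ref{cor:deconstr-sigma-conv-complexes} and Lemma~\ref{lem:proj-in-sectors}.
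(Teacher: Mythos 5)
Your proposal is correct and follows essentially the same route as the paper: you intersect the new sector with the lower complex to get the compact, $\sigma$-convex complex $\overline{K_w(\sigma^{\op})}\cap L_h(r)$, run Corollary~\ref{cor:deconstr-sigma-conv-complexes} on it, and transfer the three properties using Lemma~\ref{lem:proj-in-sectors} together with $\pr_{C^{\downarrow}}(\sigma^{\op})=C$ to rule out cofaces of $C_{m+1}^{\downarrow}$ inside $U_h(r)$, which is exactly the paper's mechanism for claims (1)--(3). The only differences are cosmetic bookkeeping: you make explicit the checks $R(\overline{K_w(\sigma^{\op})}\cap L_h(r))\subseteq U_h(r)$ and the properness of the inclusions, which the paper leaves implicit, and you derive (1) from the star containment rather than repeating the sector argument.
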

\begin{proof}
By the third claim of Proposition~\ref{prop:intersection-cones} there is some $\varepsilon > 0$ such that $L_h(r) \subseteq h^{-1}((-\infty,r+\varepsilon])$.
Since $\overline{K_w(\sigma^{\op})} \cap {h^{-1}((-\infty,r+\varepsilon])}$ is compact by Lemma~\ref{lem:truncated-sectors}, it follows that there are only finitely many cells in $U_h(r) \cup \overline{K_w(\sigma^{\op})}$ not lying in $U_h(r)$.
We saw in Lemma~\ref{lem:intersection-cones} that $Y = L_h(r) \cap \overline{K_w(\sigma^{\op})}$ is $\sigma$-convex.
Hence Proposition~\ref{cor:deconstr-sigma-conv-complexes} provides us with a filtration
\[
R(Y) = Y_0 \lneq Y_1 \lneq \ldots \lneq Y_n = Y
\]
such that for each $0 \leq m < n$ the following hold
\begin{enumerate}
\item $Y_{m+1} = Y_{m} \cup \overline{D_{m+1}}$ for some chamber
$D_{m+1} \subseteq Y_{m+1}$ that satisfies $\ell_{Y_{m+1}}(D_{m+1}) = 0$,
\item $Y_{m} \cap \overline{D_{m+1}} = Y_{m} \cap \partial(\st_{Y_{m+1}}(D_{m+1}^{\downarrow}))$, and
\item $\st_{Y_{m+1}}(D_{m+1}^{\downarrow}) \subseteq \overline{D_{m+1}}$.
\end{enumerate}
We claim that we obtain the desired filtration be setting $Z_0 = U_h(r)$, $C_m = D_m$, and $Z_{m+1} = Z_{m} \cup \overline{C_{m+1}}$ for every $0 \leq m < n$.
To prove the first claim, it suffices to show that $\ell_{Z_{m+1}}(C_{m+1}) = 0$.
By construction we know that $\ell_{Y_{m+1}}(D_{m+1}) = 0$.
Suppose that $\ell_{Z_{m+1}}(C_{m+1}) > 0$.
Then there is a panel $P$ of $C_{m+1}$ such that $C_{m+1} \neq \pr_P(\sigma) \subseteq Z_{m+1}$.
This shows that $C_{m+1} = \pr_P(\sigma^{\op})$.
Since $\ell_{Y_{m+1}}(D_{m+1}) = 0$ it follows that $\pr_P(\sigma)$ does not lie in $Y_{m+1}$ and hence that $\pr_P(\sigma) \subseteq U_h(r)$.
Thus there is a vertex $u \in M(r)$ such that $\pr_P(\sigma)$ is a chamber of $K_u(\sigma^{\op})$.
In particular this implies $P \subset \overline{K_u(\sigma^{\op})}$ so that we can apply Lemma~\ref{lem:proj-in-sectors} to deduce that $\pr_P(\sigma^{\op}) = C_{m+1}$ lies in $K_u(\sigma^{\op})$.
But this is a contradiction to our assumption that $C_{m+1} \subseteq L_h(r)$.
To prove the third claim, let $A \subseteq \st_{Z_{m+1}}(C_{m+1}^{\downarrow})$ be a cell.
Since $\st_{Y_{m+1}}(C_{m+1}^{\downarrow}) \subseteq \overline{C_{m+1}}$, it suffices to consider the case where $A$ does not lie in $\st_{Y_{m+1}}(C_{m+1}^{\downarrow})$.
Then $A$ is a coface of $C_{m+1}^{\downarrow}$ that lies in $U_h(r)$.
Hence there is a vertex $u \in M(r)$ such that $A \subseteq \overline{K_u(\sigma^{\op})}$.
In particular $C_{m+1}^{\downarrow} \subseteq \overline{K_u(\sigma^{\op})}$ so that Lemma~\ref{lem:proj-in-sectors} implies
\[
C_{m+1} = \pr_{C_{m+1}^{\downarrow}}(\sigma^{\op}) \subseteq K_u(\sigma^{\op}) \subseteq U_h(r).
\]
But this is a contradiction to our assumption that $C_{m+1} \subseteq Y_{m+1} \subseteq L_h(r)$.
Let us verify the second claim.
To this end, we apply the third claim to obtain
\[
\partial(\st_{Z_{m+1}}(C_{m+1}^{\downarrow}))
\subseteq \overline{\st_{Z_{m+1}}(C_{m+1}^{\downarrow})}
\subseteq \overline{C_{m+1}}.
\]
Thus we have $Z_m \cap \partial(\st_{Z_{m+1}}(C_{m+1}^{\downarrow})) \subseteq Z_m \cap \overline{C_{m+1}}$.
To prove the reverse inclusion, let $A$ be a cell in $Z_{m} \cap \overline{C_{m+1}}$.
Since
\begin{align*}
Y_{m} \cap \overline{C_{m+1}} &= Y_{m} \cap \overline{D_{m+1}}\\
&= Y_{m} \cap \partial(\st_{Y_{m+1}}(D_{m+1}^{\downarrow}))\\
&\subseteq Z_{m} \cap \partial(\st_{Y_{m+1}}(C_{m+1}^{\downarrow})),
\end{align*}
it suffices to consider the case when $A$ does not lie in $Y_{m}$.
Then there is a vertex $u \in M(r)$ such that $A$ lies in the (open) sector $K_u(\sigma^{\op})$.
But this implies $C_{m+1} \subseteq \st_{\Sigma}(A) \subseteq K_u(\sigma^{\op})$, which contradicts our choice of $C_{m+1}$ and completes the proof.
\end{proof}

\section{The positive direction in top dimension}\label{sec:pos-dir}

In the previous section we considered filtrations of certain subcomplexes of Euclidean Coxeter complexes.
In this section we will apply this filtration to filter subcomplexes of Euclidean buildings that appear as preimages of retractions from infinity.

\subsection{Height functions on Euclidean buildings}\label{subsec:height-on-X}

Let $X$ be a thick, $d$-dimensional Euclidean building.
We fix an apartment $\SigmaStd$ in $X$ and a chamber $\sigma \subseteq \pinf \SigmaStd$.
As in the previous section, we think of $\SigmaStd$ as a Euclidean vector space whose origin is given by a fixed special vertex $\vstd \in \SigmaStd$.
In particular, this allows us to consider the dual space $\SigmaStd^{\ast} = \Hom(\SigmaStd,\R)$.
Our first goal is to extend the linear forms in $\SigmaStd^{\ast}$ to height functions on $X$.
To this end we consider the set $\mathcal{A}_{\sigma}$ of apartments in $X$ that contain a subsector of $K_{\vstd}(\sigma) \subseteq \SigmaStd$.
From~\cite[Theorem 11.63.(1)]{AbramenkoBrown08} we know $X$ is covered by the apartments in $\mathcal{A}_{\sigma}$.
As a consequence, the following definitions makes sense.

\begin{definition}\label{def:apartments-towards-sigma}
For each $\Sigma \in \mathcal{A}_{\sigma}$, let
$f_{\Sigma} \colon \Sigma \rightarrow \SigmaStd$ denote the isomorphism given by the building axiom (B2).
We define the \emph{retraction from infinity corresponding to $\SigmaStd$ and $\sigma$} as the function $\rho \defeq \rho_{\sigma, \SigmaStd} \colon X \rightarrow \SigmaStd$ that is given by $x \mapsto f_{\Sigma}(x)$, where $\Sigma$ is any apartment in $\mathcal{A}_{\sigma}$ containing $x$.
\end{definition}


\begin{definition}\label{def:retr-height-functions}
Let $X^{\ast}_{\sigma,\vstd} = \Set{\alpha \circ \rho}{\alpha \in \SigmaStd^{\ast}}$ denote the real vector space of functions $f \colon X \rightarrow \R$ that are given by precomposing linear forms $\alpha \in \SigmaStd^{\ast}$ with $\rho$.
\end{definition}

As notation suggests, the space $X^{\ast}_{\sigma,\vstd}$ does not depend on $\SigmaStd$.
More precisely, this means that $X^{\ast}_{\sigma,\vstd} = \Set{\alpha \circ \rho_{\sigma,\Sigma}}{\alpha \in \Sigma^{\ast}}$, where $\Sigma \in \mathcal{A}_{\sigma}$ is any apartment with $\vstd$ as its origin.
Indeed, for every $\alpha \in \SigmaStd^{\ast}$ we have
\[
\alpha \circ \rho_{\sigma,\SigmaStd}
= \alpha \circ f_{\Sigma} \circ \rho_{\sigma,\Sigma}
= \alpha' \circ \rho_{\sigma,\Sigma}
\]
with $\alpha' = \alpha \circ f_{\Sigma} \in \Sigma^{\ast}$, which shows that $X^{\ast}_{\sigma,\vstd}$ is contained in $\Set{\alpha \circ \rho_{\sigma,\Sigma}}{\alpha \in \Sigma^{\ast}}$.
The same argument also shows the reverse inclusion.

\begin{remark}\label{rem:another-char-of-retr-pres}
Note that $X^{\ast}_{\sigma,\vstd}$ can be defined as the space of $\rho$-invariant real valued functions on $X$ that are linear on $\SigmaStd$,\ i.e.
\[
X^{\ast}_{\sigma,\vstd} = \Set{f \colon X \rightarrow \R}{f \circ \rho = f \text{ and } f_{|\SigmaStd} \in \SigmaStd^{\ast}}.
\]
\end{remark}

For the rest of this section we fix a height function $h = \alpha \circ \rho \in X^{\ast}_{\sigma,\vstd}$, where $\alpha \circ [x,\xi) \colon [0,\infty) \rightarrow \R$ is strictly decreasing for every $\xi \in \overline{\sigma}$ and every $x \in \SigmaStd$.
Our goal is to prove that, under certain conditions on $X$, the system of superlevelsets $(X_{h \geq r})_{r \in \R}$ is essentially $(d-2)$-connected.
In the following sections we will see how this result can be used to determine the essential connectivity properties for more general height functions.

\subsection{Deconstructing retraction preimages}

Many of the complexes we are going to study involve the $\rho$-preimages of certain subsets of $\SigmaStd$.
The following lemma provides us with a criterion to ensure that the image of a minimal gallery under $\rho$ stays minimal.

\begin{lemma}\label{lem:rho-inv-sigma-min-gal}
Let $A$ be a cell in $X$ and let $C = \pr_{A}(\sigma)$.
\begin{enumerate}
\item For every chamber $D \subseteq \st_X(A)$ there is an apartment $\Sigma \in \mathcal{A}_{\sigma}$ such that $C,D \subseteq \Sigma$.
\item If $\Gamma = C_1 \vert \ldots \vert C_n \subseteq \st_X(A)$ is a minimal gallery terminating in $C$, then $\rho(\Gamma) = \rho(C_1) \vert \ldots \vert \rho(C_n)$ is a minimal gallery in $\SigmaStd$ that terminates in $\pr_{\rho(A)}(\sigma)$.
\end{enumerate}
\end{lemma}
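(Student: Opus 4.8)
The plan is to establish the two parts in order, deriving part~(2) from part~(1). Throughout I would rely on two standard structural facts about the $\CAT(0)$-building $X$: apartments are closed convex subspaces, and a minimal gallery joining two chambers that both lie in a common apartment is contained in that apartment. I would also use that the projection $\pr_A(\sigma)$ is a chamber of $\st_X(A)$, since $\sigma$ is a chamber at infinity and rays towards interior points of $\sigma$ point into top-dimensional cells.

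For part~(1), fix a chamber $D$ having $A$ as a face. Since $X$ is covered by the apartments in $\mathcal{A}_{\sigma}$, I would pick an interior point $d$ of $D$ and an apartment $\Sigma \in \mathcal{A}_{\sigma}$ with $d \in \Sigma$; as $\Sigma$ is a subcomplex this already forces $D \subseteq \Sigma$, hence $A \subseteq \overline{D} \subseteq \Sigma$. By definition $\Sigma$ contains a subsector $K_{q}(\sigma)$ of $K_{\vstd}(\sigma)$, so $\sigma \subseteq \pinf(K_{q}(\sigma)) \subseteq \pinf\Sigma$. Now, by convexity of $\Sigma$, for a point $a \in \mathring{A}$ and $\xi \in \sigma$ the open ray $(a,\xi)$ stays inside $\Sigma$, so its initial segment lies in a cell of $\st_{\Sigma}(A)$, which by the defining property of the projection is $\pr_{A}(\sigma)$. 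Hence $C = \pr_{A}(\sigma) \subseteq \Sigma$, and $\Sigma \in \mathcal{A}_{\sigma}$ contains both $C$ and $D$. (This argument also records, for later use, that $\pr_{A}(\sigma)$ may be computed inside any apartment of $\mathcal{A}_{\sigma}$ containing $A$.)

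For part~(2), I would apply part~(1) with $D = C_{1}$ to obtain an apartment $\Sigma \in \mathcal{A}_{\sigma}$ containing $C_{1}$ and $C = C_{n} = \pr_{A}(\sigma)$; since $\Gamma$ is a minimal gallery with both endpoints in $\Sigma$, it follows that $\Gamma \subseteq \Sigma$ (here one uses that a gallery in $\st_{X}(A)$ between cofaces of $A$ is minimal in $\st_{X}(A)$ exactly when it is minimal in $X$, which one sees by projecting an arbitrary minimal gallery of $X$ to $A$). As $\Sigma \in \mathcal{A}_{\sigma}$, the restriction $\rho|_{\Sigma}$ equals the type-preserving isometric isomorphism $f_{\Sigma}\colon \Sigma \to \SigmaStd$, so $\rho(\Gamma) = f_{\Sigma}(C_{1}) \vert \ldots \vert f_{\Sigma}(C_{n})$ is a gallery in $\SigmaStd$ of the same length, hence again minimal. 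Finally, $f_{\Sigma}$ fixes the common subsector $K_{q}(\sigma)$ pointwise, so $\pinf f_{\Sigma}$ fixes $\overline{\sigma}$ pointwise; therefore $f_{\Sigma}$ carries each ray $(a,\xi)$ with $a \in \mathring{A}$, $\xi \in \sigma$, to $(f_{\Sigma}(a),\xi)$, and so maps the initial cell $\pr_{A}(\sigma)$ of the former to the initial cell $\pr_{f_{\Sigma}(A)}(\sigma) = \pr_{\rho(A)}(\sigma)$ of the latter. Thus $\rho(\Gamma)$ terminates in $\rho(C_{n}) = \pr_{\rho(A)}(\sigma)$.

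The genuinely delicate point — the one I expect to need the most care — is the compatibility of $\rho$ with projections towards $\sigma$: one must choose the apartment in part~(1) so that it simultaneously lies in $\mathcal{A}_{\sigma}$ (so that $\rho$ restricts there to the intended isomorphism $f_{\Sigma}$, and not to some other retraction) and contains $\pr_{A}(\sigma)$, and one must exploit that $f_{\Sigma}$ pointwise fixes a subsector in the direction $\sigma$, which is exactly what makes it commute with $\sigma$-projections and forces $\rho(\pr_{A}(\sigma)) = \pr_{\rho(A)}(\sigma)$. Everything else — convexity of apartments, minimal galleries remaining inside apartments, and the apartment-independence of $\pr_{A}(\sigma)$ — is routine.
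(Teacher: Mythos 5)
Your proof is correct and follows essentially the same route as the paper's: for (1) you take an apartment of $\mathcal{A}_{\sigma}$ containing $D$ and use that rays towards points of $\sigma$ stay in it to conclude $C=\pr_A(\sigma)\subseteq\Sigma$, and for (2) you apply (1) to $C_1$, use that the minimal gallery lies in that apartment where $\rho$ restricts to the isomorphism $f_\Sigma$, and deduce $\rho(\pr_A(\sigma))=\pr_{\rho(A)}(\sigma)$ from $f_\Sigma$ fixing rays towards $\overline{\sigma}$. The extra justifications you supply (minimality in $\st_X(A)$ versus in $X$, and the pointwise-fixed subsector) only make explicit steps the paper leaves implicit.
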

\begin{proof}
To prove the first claim let $\Sigma \in \mathcal{A}_{\sigma}$ be an apartment containing~$D$.
Let further $x \in A$ and $\xi \in \sigma$ be (interior) points.
Since $\sigma \subseteq \pinf \Sigma$ it follows that the ray $[x,\xi)$ is contained in $\Sigma$.
On the other hand, there is an initial segment of $(x,\xi)$ that lies in $\pr_{A}(\sigma) = C$.
Thus we see that $\Sigma$ contains a point of $C$, which gives us $C \subseteq \Sigma$.
For the second claim, let $\Sigma \in \mathcal{A}_{\sigma}$ be an apartment containing $C_1$ and $C_n = C$.
Since $\Sigma \in \mathcal{A}_{\sigma}$, it follows that the restriction $\rho_{\vert \Sigma} \colon \Sigma \rightarrow \SigmaStd$ is an isomorphism.
In particular, the minimal gallery $\Gamma$ is mapped to the minimal gallery $\rho(\Gamma)$.
It remains to prove that $C = \pr_{A}(\sigma) = \pr_{\rho(A)}(\sigma)$.
Let $\xi \in \sigma$ and let $x \in A$.
It is sufficient to show that the image of $[x,\xi)$ under $\rho$ is a geodesic ray of the form $[\rho(x),\xi)$.
But this follows from our assumption that $\Sigma$ lies in $\mathcal{A}_{\sigma}$, which implies $\xi \in \pinf \Sigma$.
\end{proof}

\begin{notation}\label{not:opposition-in-stars}
Let $A \subseteq X$ be a cell of codimension at least $1$ and let $B,C$ be cofaces of $A$.
We say that $B$ and $C$ are opposite in $\st_X(A)$, denoted by $B \op_{\st_X(A)} C$, if their induced simplices in $\lk_X(A)$ are opposite to each other.
\end{notation}

In order to switch easily between subsets of $\SigmaStd$ and their $\rho$-preimages in $X$, we introduce the following notation.

\begin{notation}\label{not:hat}
For each subset $Z \subseteq \SigmaStd$ we write $\widehat{Z} \defeq \rho^{-1}(Z)$.
\end{notation}

In view of Lemma~\ref{lem:rho-inv-sigma-min-gal}, we see that $\rho$ commutes with taking opposite chambers of $\pr_A(\sigma)$.
More precisely, this can be stated as follows.

\begin{corollary}\label{cor:opposition-compatible}
Let $A \subseteq X$ be a cell of codimension at least $1$ and let $C = \pr_A(\sigma)$.
Then $\pr_{\rho(A)}(\sigma) = \rho(C)$ and for every subcomplex $Z \subseteq \SigmaStd$ we have
\[
\Set{D \subseteq \st_Z(A)}{D \op_{\st_X(A)} C} = \Set{D \subseteq \st_Z(A)}{\rho(D) \op_{\st_{\SigmaStd}(\rho(A))} \rho(C)}.
\]
\end{corollary}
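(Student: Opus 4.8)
The two assertions are essentially already contained in the proof of Lemma~\ref{lem:rho-inv-sigma-min-gal}, so the plan is to isolate them cleanly. For the identity $\pr_{\rho(A)}(\sigma)=\rho(C)$, I would first apply the first part of Lemma~\ref{lem:rho-inv-sigma-min-gal}, taking the chamber there to be $C=\pr_A(\sigma)$ itself, to fix an apartment $\Sigma\in\mathcal{A}_{\sigma}$ containing $C$; since $A$ is a face of $C$ and $\Sigma$ is a subcomplex, we also have $A\subseteq\Sigma$. The restriction $\rho_{\vert\Sigma}\colon\Sigma\to\SigmaStd$ is an isomorphism fixing $\sigma$ pointwise at infinity, so for interior points $x\in A$ and $\xi\in\sigma$ it maps the ray $[x,\xi)\subseteq\Sigma$ onto the ray $[\rho(x),\xi)$. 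An initial segment of $[x,\xi)$ lies in $C=\pr_A(\sigma)$, hence an initial segment of $[\rho(x),\xi)$ lies in $\rho(C)$, which is a chamber and a coface of $\rho(A)$; by the defining property of the projection this forces $\rho(C)=\pr_{\rho(A)}(\sigma)$.

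For the displayed equality of sets, I would reduce it to the pointwise equivalence
\[
D\op_{\st_X(A)}C\ \Longleftrightarrow\ \rho(D)\op_{\st_{\SigmaStd}(\rho(A))}\rho(C),
\]
valid for every chamber $D$ of $\st_X(A)$; the equality then follows by intersecting both conditions with the common collection of chambers $D$ occurring on the two sides. To establish the equivalence, I would again use the first part of Lemma~\ref{lem:rho-inv-sigma-min-gal} to pick an apartment $\Sigma\in\mathcal{A}_{\sigma}$ containing both $C$ and $D$, so that $A\subseteq\Sigma$ as above. Since $\rho_{\vert\Sigma}$ is an isomorphism of Euclidean Coxeter complexes taking $A$ to $\rho(A)$, it induces an isomorphism of spherical Coxeter complexes $\lk_{\Sigma}(A)\xrightarrow{\ \sim\ }\lk_{\SigmaStd}(\rho(A))$ carrying the chambers induced by $C$ and $D$ to those induced by $\rho(C)$ and $\rho(D)$ respectively.

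The equivalence then drops out of two standard facts. First, $\lk_{\Sigma}(A)$ is an apartment of the spherical building $\lk_X(A)$ containing the two chambers in question, so by the ``some (hence every) apartment'' characterisation of opposition, these chambers are opposite in $\lk_X(A)$ --- that is, $D\op_{\st_X(A)}C$ --- exactly when they are antipodal inside $\lk_{\Sigma}(A)$. Second, an isomorphism of Coxeter complexes preserves antipodality, so the chambers induced by $\rho(C)$ and $\rho(D)$ are antipodal in the Coxeter complex $\lk_{\SigmaStd}(\rho(A))$ if and only if the chambers induced by $C$ and $D$ are antipodal in $\lk_{\Sigma}(A)$; and antipodality in $\lk_{\SigmaStd}(\rho(A))$ is by definition the relation $\rho(D)\op_{\st_{\SigmaStd}(\rho(A))}\rho(C)$. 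Concatenating these equivalences finishes the proof. I do not anticipate a real obstacle; the only subtlety is to run the entire argument inside a single apartment of $\mathcal{A}_{\sigma}$, where $\rho$ is genuinely an isomorphism and restricts well to links, rather than comparing gallery distances in $X$ directly --- $\st_X(A)$ need not be combinatorially convex in $X$, so $\rho$-images of minimal galleries are only controlled once the gallery already lies in such an apartment, which is exactly what Lemma~\ref{lem:rho-inv-sigma-min-gal} supplies.
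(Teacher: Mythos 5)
Your argument is correct and is essentially the paper's intended derivation: the corollary is stated there as an immediate consequence of Lemma~\ref{lem:rho-inv-sigma-min-gal}, and that is exactly what you carry out, using part (1) of that lemma to place $C$ and $D$ in a single apartment of $\mathcal{A}_{\sigma}$ on which $\rho$ restricts to an isomorphism fixing $\overline{\sigma}$ at infinity, so that both $\pr_{\rho(A)}(\sigma)=\rho(C)$ and the preservation of opposition in the links follow. (Only your closing aside is slightly off: $\st_X(A)$ is a residue and hence is combinatorially convex in $X$, but nothing in your proof depends on this remark.)
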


The following definition extends Definition~\ref{def:upper-face} to the case of Euclidean buildings.

\begin{definition}\label{def:upper-face-building}
For each chamber $C$ in $\Sigma$ we define its \emph{upper face} $C^{\uparrow}$ as the intersection of all panels $P \subset C$ such that $\pr_P(\sigma) = C$ (the face $U$ in Lemma~\ref{lem:upper-face}).
The lower face $E^{\downarrow}$ of $E$ is defined to be the intersection of all panels $P \subset C$ with $\pr_P(\sigma) \neq C$.
\end{definition}

Let us note the following consequence of Corollary~\ref{cor:opposition-compatible}, which we formulate in terms of links rather than stars.

\begin{corollary}\label{cor:link-is-opp-complex}
Let $Z \subseteq \SigmaStd$ be a subcomplex.
Suppose that $Z$ contains a chamber $D$ such that $\st_Z(D^{\downarrow}) \subseteq \overline{D}$ and let $A$ be a cell in $\rho^{-1}(D^{\downarrow})$.
Let $a \subseteq \lk_X(A)$ be the chamber that corresponds to $\pr_{A}(\sigma)$.
Then $\lk_{\widehat{Z}}(A) = \Opp_{\lk_X(A)}(a)$.
\end{corollary}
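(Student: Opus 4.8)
The plan is to reduce both sides to conditions on the cofaces of $A$ and then match them, with the retraction $\rho$ translating links in $X$ into links in $\SigmaStd$. Since $\rho$ preserves dimensions and $\rho(A)=D^{\downarrow}$ is a proper face of $D$, the cell $A$ has codimension at least $1$; hence $\lk_X(A)$ is a spherical building and $\pr_A(\sigma)$ is a genuine coface of $A$, so $a$ is a well-defined chamber of $\lk_X(A)$. I would identify each cell $b$ of $\lk_X(A)$ with the coface $B\supsetneq A$ it comes from, and use throughout that $\rho$ is cellular and carries cofaces of $A$ onto cofaces of $D^{\downarrow}$. The first step is then purely formal: $b\in\lk_{\widehat Z}(A)$ iff $B\subseteq\widehat Z=\rho^{-1}(Z)$ iff $\rho(B)\subseteq Z$; as $\rho(B)$ is a coface of $D^{\downarrow}$, the hypothesis $\st_Z(D^{\downarrow})\subseteq\overline D$ together with $D\in Z$ makes this equivalent to $\rho(B)\fac D$. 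So $\lk_{\widehat Z}(A)=\{\,b\mid\rho(B)\fac D\,\}$.

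Second, I would upgrade this to a statement about chambers only: the claim is that $\{\,b\mid\rho(B)\fac D\,\}$ equals the union of the closed chambers $\overline c$ of $\lk_X(A)$ whose corresponding coface $C'$ satisfies $\rho(C')=D$. The inclusion $\supseteq$ is immediate from cellularity of $\rho$. For $\subseteq$, given $B$ with $\rho(B)\fac D$, I would take an apartment $\Sigma\in\mathcal{A}_{\sigma}$ containing $B$ (hence $A$), e.g.\ by Lemma~\ref{lem:rho-inv-sigma-min-gal}(1); then $\rho|_{\Sigma}\colon\Sigma\to\SigmaStd$ is an isomorphism, and $C'\defeq(\rho|_{\Sigma})^{-1}(D)$ is a chamber of $\Sigma$ that contains $A$ (because $D^{\downarrow}\fac D$) and contains $B$ (because $\rho(B)\fac D$), which is exactly what is needed.

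Third, I would identify those chambers with the chambers of $\lk_X(A)$ opposite $a$. A chamber $c$ is opposite $a$ precisely when $C'\op_{\st_X(A)}C$ for $C=\pr_A(\sigma)$; by Corollary~\ref{cor:opposition-compatible}, applied with $Z=\SigmaStd$ so that $\widehat Z=X$, this holds iff $\rho(C')\op_{\st_{\SigmaStd}(D^{\downarrow})}\rho(C)$, where $\rho(C)=\pr_{D^{\downarrow}}(\sigma)$. Since $\st_{\SigmaStd}(D^{\downarrow})$ lies in an apartment, $\rho(C')$ is opposite $\pr_{D^{\downarrow}}(\sigma)$ there iff $\rho(C')$ is the unique chamber of that star opposite $\pr_{D^{\downarrow}}(\sigma)$; and because $\sigma$ and $\sigma^{\op}$ are opposite at infinity, that unique chamber is $\pr_{D^{\downarrow}}(\sigma^{\op})=D$. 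Hence $\{\,c\mid\rho(C')=D\,\}$ is exactly the set of chambers of $\lk_X(A)$ opposite $a$.

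Finally I would invoke, and prove in one short paragraph, the general fact that in any spherical building $\Delta$ and for any chamber $C$ one has $\Opp_{\Delta}(C)=\bigcup\{\,\overline c\mid c\in\Ch(\Delta),\ c\text{ opposite }C\,\}$: a face of a chamber $c$ opposite $C$ is, in a common apartment of $c$ and $C$, antipodal to a face of $C$; conversely, if $e$ is opposite a face $f$ of $C$, then choosing an apartment containing $e$ and $C$ and using that opposite cells are antipodal in every common apartment forces $f$ to be the antipode of $e$, so $e$ is the antipode of $f$ and hence a face of the chamber antipodal to $C$ there. Combining this (for $\Delta=\lk_X(A)$ and the chamber $a$) with the first three steps yields $\lk_{\widehat Z}(A)=\Opp_{\lk_X(A)}(a)$. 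The two points needing genuine care rather than bookkeeping are the building-theoretic facts used at the end: that $\pr_{D^{\downarrow}}(\sigma)$ and $\pr_{D^{\downarrow}}(\sigma^{\op})$ are opposite in $\st_{\SigmaStd}(D^{\downarrow})$ (which I would read off from the fact that in the apartment $\SigmaStd$ the rays toward $\sigma$ and toward $\sigma^{\op}$ issue from a point of $D^{\downarrow}$ in antipodal directions orthogonal to $D^{\downarrow}$), and the description of $\Opp_{\Delta}(C)$ as a union of closed chambers opposite $C$; neither is hard, but each requires a clean argument.
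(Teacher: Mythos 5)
Your proposal is correct and takes essentially the paper's route: both arguments reduce, via Corollary~\ref{cor:opposition-compatible}, to the fact that $D = \pr_{D^{\downarrow}}(\sigma^{\op})$ is the unique chamber of $\st_{\SigmaStd}(D^{\downarrow})$ opposite $\pr_{D^{\downarrow}}(\sigma)$, and your steps merely make explicit the bookkeeping (identifying $\lk_{\widehat{Z}}(A)$ with cofaces mapping into $\overline{D}$, lifting such cofaces to chambers inside an apartment of $\mathcal{A}_{\sigma}$, and writing $\Opp$ as the union of closed opposite chambers) that the paper compresses into ``it suffices to show''. The one imprecision is your parenthetical justification of that opposition: the rays toward $\sigma$ and $\sigma^{\op}$ issuing from a point of $D^{\downarrow}$ are antipodal but in general not orthogonal to $D^{\downarrow}$, so you should pass to their components orthogonal to $D^{\downarrow}$ (equivalently, to the induced simplices in $\lk_{\SigmaStd}(D^{\downarrow})$), which is the standard fact the paper also asserts without proof.
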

\begin{proof}
In view of Corollary~\ref{cor:opposition-compatible}, it suffices to show that $\rho(D)$ is opposite to $\rho(\pr_{A}(\sigma))$ in $\st_{\SigmaStd}(\rho(A))$.
Let $\Sigma \in \mathcal{A}_{\sigma}$ be an apartment containing $D$ and let $\sigma_{\Sigma}^{\op}$ be the opposite chamber of $\sigma$ in $\pinf \Sigma$.
Then we have $D = \pr_{D^{\downarrow}}(\sigma_{\Sigma}^{\op})$, which is opposite to $\pr_{D^{\downarrow}}(\sigma)$ in $\op_{\st_{X}(D^{\downarrow})}$.
From Lemma~\ref{lem:rho-inv-sigma-min-gal} it therefore follows that $\rho(D)$ is opposite to
\[
\rho(\pr_{D^{\downarrow}}(\sigma)) = 
\pr_{\rho(D^{\downarrow})}(\sigma) = \pr_{\rho(A)}(\sigma)
\]
in $\rho(\st_{X}(D^{\downarrow})) = \st_{\SigmaStd}(\rho(A))$.
\end{proof}

\begin{definition}\label{def:link-property}
Let $\Delta$ be a spherical building.
We say that $\Delta$ has spherical opposition complexes if $\Opp_{\Delta}(C)$ is spherical for every chamber $C \subseteq \Delta$.
In this case we also say that $\Delta$ satisfies $(\SO)$.
If moreover the link of every simplex $A \subseteq \Delta$ satisfies $(\SO)$, then we say that $\Delta$ satisfies $(\SOL)$.
Similarly, we say that a Euclidean building satisfies $(\SOL)$ if all of its links satisfy $(\SO)$.
\end{definition}

Recall from Theorem~\ref{thm:connectivity-of-opp} that thick enough buildings $\Delta$ of type $X_n \in M \defeq \Set{X_i}{X \in \{A,C,D\},\ i \in \N}$ satisfy $(\SO)$, unless $\Delta$ is an exceptional $C_3$ building.
This implies that thick enough joins $\Delta = \ast_{i=1}^{n} \Delta_i$ of non-exceptional spherical buildings $\Delta_i$ of type $X_{n_i} \in M$ satisfy $(\SO)$.
Indeed, this directly follows from Lemma~\ref{lem:connected-joins} and the easy observation that the opposition complex of a chamber $c = \ast_{i=1}^{n} c_i \subseteq \Delta$ is given by $\Opp_{\Delta}(c) = \ast_{i=1}^{n} \Opp_{\Delta_i}(c_i)$.
Regarding the constants from Theorem~\ref{thm:connectivity-of-opp} and the types of links that appear in buildings of type $X_n \in M$, we deduce the following.

\begin{example}\label{ex:SOL}
Let $\Delta$ be an arbitrary building of type $A_{n+1}$, $C_{n+1}$ or $D_{n+1}$, but not an exceptional $C_3$ building.
Assume that $\thick(\Delta) \geq 2^{n}+1$ in the $A_{n+1}$ case, respectively $\thick(\Delta) \geq 2^{2n+1}+1$ in the other two cases.
Then $\Delta$ satisfies $(\SOL)$.
\end{example}

\begin{lemma}\label{lem:remove-one-chamber}
Suppose that $X$ satisfies $(\SOL)$.
Let $Z \subseteq \SigmaStd$ be a subcomplex that contains a chamber $D$ with $\st_{Z}(D^{\downarrow}) \subseteq \overline{D}$ and let $Y = Z \backslash \st_{\SigmaStd}(D^{\downarrow})$.
Then the inclusion $\iota \colon \widehat{Y} \rightarrow \widehat{Z}$ induces a monomorphism
\[
\pi_{k}(\iota) \colon \pi_{k}(\widehat{Y}) \rightarrow \pi_{k}(\widehat{Z})
\]
for every $0 \leq k \leq d-2$.
\end{lemma}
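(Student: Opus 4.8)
\emph{Proof strategy.} The plan is to realise $\widehat{Z}$ as $\widehat{Y}$ with the closed stars of the cells lying over $D^{\downarrow}$ glued on along their boundaries, to identify each such boundary as the join of a sphere with an opposition complex via Corollary~\ref{cor:link-is-opp-complex}, and to finish by an application of the gluing Lemma~\ref{lem:gluing-for-inclusion}.

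First I would put $\mathcal{A} \defeq \Set{A \subseteq X}{\rho(A) = D^{\downarrow}}$, which is non-empty because $D^{\downarrow} \subseteq \SigmaStd$ and $\rho$ restricts to the identity on $\SigmaStd$. Since every cell of $X$ is contained in some apartment of $\mathcal{A}_{\sigma}$ (Lemma~\ref{lem:rho-inv-sigma-min-gal}(1)) and $\rho$ is an isometry on each such apartment, $\rho$ is injective on every closed cell of $X$; hence every cell of $\widehat{Z}$ whose $\rho$-image is a coface of $D^{\downarrow}$ has a \emph{unique} face belonging to $\mathcal{A}$. Recalling that $\widehat{Y}$ is the preimage of the cells of $Z$ that are not cofaces of $D^{\downarrow}$, this bookkeeping gives
\[
\widehat{Z} \setminus \widehat{Y} = \bigsqcup_{A \in \mathcal{A}} \st_{\widehat{Z}}(A),
\]
so that $\widehat{Z} = \widehat{Y} \cup \bigcup_{A \in \mathcal{A}} \overline{\st_{\widehat{Z}}(A)}$, the closed stars $\overline{\st_{\widehat{Z}}(A)}$ pairwise meet only inside $\widehat{Y}$, and $\widehat{Y} \cap \overline{\st_{\widehat{Z}}(A)} = \partial(\st_{\widehat{Z}}(A)) \cong \partial A \ast \lk_{\widehat{Z}}(A)$.

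Next I would bound the connectivity of $\partial A \ast \lk_{\widehat{Z}}(A)$. By hypothesis $\st_Z(D^{\downarrow}) \subseteq \overline{D}$ and $A \in \rho^{-1}(D^{\downarrow})$, so Corollary~\ref{cor:link-is-opp-complex} yields $\lk_{\widehat{Z}}(A) = \Opp_{\lk_X(A)}(a)$, where $a$ is the chamber of $\lk_X(A)$ corresponding to $\pr_A(\sigma)$. As $X$ satisfies $(\SOL)$, the spherical building $\lk_X(A)$, which has dimension $d-1-\dim A$, satisfies $(\SO)$, so $\Opp_{\lk_X(A)}(a)$ is spherical and therefore $(d-\dim A - 2)$-connected. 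Since $\partial A$ is a $(\dim A - 1)$-sphere it is $(\dim A - 2)$-connected, so Lemma~\ref{lem:connected-joins} shows that $\partial A \ast \lk_{\widehat{Z}}(A)$ is $(d-2)$-connected; with the usual conventions (where $\partial A = \emptyset$ counts as $(-2)$-connected) this also covers the case $\dim A = 0$.

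Finally I would apply Lemma~\ref{lem:gluing-for-inclusion} with ambient complex $\widehat{Z}$, subcomplex $\widehat{Y}$, the family $\{\overline{\st_{\widehat{Z}}(A)}\}_{A \in \mathcal{A}}$ and $n = d-1$: each closed star is contractible, two distinct ones meet only in $\widehat{Y}$, and each meets $\widehat{Y}$ in the $(d-2)$-connected complex $\partial A \ast \lk_{\widehat{Z}}(A)$. The lemma then gives that the pair $(\widehat{Z}, \widehat{Y})$ is $(d-1)$-connected, whence $\pi_k(\iota)$ is an isomorphism, in particular the asserted monomorphism, for all $0 \leq k \leq d-2$. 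I expect the main obstacle to be the first step: checking that the closed stars attach to $\widehat{Y}$ along precisely their topological boundaries (this rests on the uniqueness of the face over $D^{\downarrow}$ and on the assumption $\st_Z(D^{\downarrow}) \subseteq \overline{D}$, which is exactly what feeds Corollary~\ref{cor:link-is-opp-complex}) and making sure that the possibly infinite index set $\mathcal{A}$ causes no difficulty in Lemma~\ref{lem:gluing-for-inclusion}.
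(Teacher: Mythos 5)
Your proposal is correct and follows essentially the same route as the paper's proof: the same decomposition $\widehat{Z} = \widehat{Y} \cup \bigcup_{A} \overline{\st_{\widehat{Z}}(A)}$ over the cells $A$ lying over $D^{\downarrow}$, the identification $\partial\st_{\widehat{Z}}(A) \cong \partial A \ast \Opp_{\lk_X(A)}(a)$ via Corollary~\ref{cor:link-is-opp-complex}, the $(d-2)$-connectivity count using $(\SOL)$ and Lemma~\ref{lem:connected-joins}, and the conclusion via Lemma~\ref{lem:gluing-for-inclusion}. Your extra bookkeeping (injectivity of $\rho$ on closed cells, uniqueness of the face over $D^{\downarrow}$) just makes explicit what the paper leaves implicit, and the infinite index set is harmless since Lemma~\ref{lem:gluing-for-inclusion} allows arbitrary index sets.
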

\begin{proof}
Let $I$ be the set of cells in $\rho^{-1}(D^{\downarrow})$.
Then $\widehat{Z}$ can be written as $\widehat{Z} = \widehat{Y} \cup \bigcup \limits_{A \in I} \overline{\st_{\widehat{Z}}(A)}$.
Note also that $\overline{\st_{\widehat{Z}}(A)} \cap \overline{\st_{\widehat{Z}}(B)} \subseteq \widehat{Y}$ for all $A,B \in I$ with $A \neq B$.
In order to apply Lemma~\ref{lem:gluing-for-inclusion}, it therefore remains to verify that $\overline{\st_{\widehat{Z}}(A)} \cap \widehat{Y}$ is $(d-2)$-connected for every $A \in I$.
To this end, we observe that
\[
\overline{\st_{\widehat{Z}}(A)} \cap \widehat{Y}
= \partial \st_{\widehat{Z}}(A)
\cong \partial A \ast \lk_{\widehat{Z}}(A).
\]
On the other hand, we have $\lk_{\widehat{Z}}(A) = \Opp_{\lk_X(A)}(a)$ by Corollary~\ref{cor:link-is-opp-complex}, where $a$ is the chamber induced by $\pr_{A}(\sigma)$.
The $(\SOL)$-property of $X$ tells us that $\Opp_{\lk_{X}(A)}(\pr_{A}(\sigma))$ is $(\dim(\lk_{X}(A))-1)$-connected.
Since moreover $\partial A$ is homeomorphic to a sphere of dimension $\dim(A)-1$, it follows from Lemma~\ref{lem:connected-joins} that
$\Opp_{\lk_{X}(A)}(\pr_{A}(\sigma)) \ast \partial A$ is $k$-connected for
\[
k = (\dim(\lk_{X}(A))-1)+(\dim(A)-2)+2.
\]
Now the claim follows since $\dim(A)+\dim(\lk_{X}(A)) = d-1$.
\end{proof}

Let us translate the notion of the upper complex from Definition~\ref{def:lower-complex} to our fixed apartment $\SigmaStd$.

\begin{definition}
For every $r \in \R$ let $M(r)$ denote the set of special vertices $w \in \SigmaStd$ of height $h(w) \geq r$.
We define the \emph{upper complex} in $\SigmaStd$ associated to $h$ and  $r \in \R$ by $U_h(r) = \bigcup \limits_{w \in M(r)} \overline{K_w(\sigma^{\op})}$, where $\sigma^{\op}$ denotes the opposite chamber of $\sigma$ in $\pinf \SigmaStd$.
\end{definition}

\begin{proposition}\label{prop:lower-complex-mono}
Suppose that $X$ satisfies $(\SOL)$.
Then for every $r \in \R$ and every special vertex $v \in \SigmaStd$, the canonical inclusion
\[
\iota \colon \widehat{U_h(r)} \rightarrow \widehat{U_h(r)} \cup \widehat{\overline{K_v(\sigma^{\op})}}
\]
induces monomorphisms
\[
\pi_k(\iota) \colon \pi_k(\widehat{U_h(r)}) \rightarrow \pi_k(\widehat{U_h(r)} \cup \widehat{\overline{K_v(\sigma^{\op})}})
\]
for every $0 \leq k \leq d-2$.
\end{proposition}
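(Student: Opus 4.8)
The plan is to deduce the statement from Proposition~\ref{prop:removing-one-sector} by pushing its filtration of $Z \defeq U_h(r) \cup \overline{K_v(\sigma^{\op})} \subseteq \SigmaStd$ up to $\widehat{Z} = \rho^{-1}(Z)$ and observing that each step is an instance of Lemma~\ref{lem:remove-one-chamber}. Since $\rho^{-1}$ commutes with finite unions, the target of $\iota$ is $\widehat{Z}$, so it suffices to prove that the inclusion $\widehat{U_h(r)} \hookrightarrow \widehat{Z}$ induces monomorphisms on $\pi_k$ for $0 \le k \le d-2$. Apply Proposition~\ref{prop:removing-one-sector} (with $w = v$) to obtain the filtration $U_h(r) = Z_0 \lneq Z_1 \lneq \dots \lneq Z_n = Z$ together with the chambers $C_{m+1}$ and properties (1)--(3). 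Taking $\rho$-preimages yields a filtration $\widehat{U_h(r)} = \widehat{Z_0} \subseteq \widehat{Z_1} \subseteq \dots \subseteq \widehat{Z_n} = \widehat{Z}$ of subcomplexes, and because a composition of monomorphisms is a monomorphism and each $\widehat{Z_m}$ contains any chosen basepoint of $\widehat{U_h(r)}$, it is enough to show that each inclusion $\widehat{Z_m} \hookrightarrow \widehat{Z_{m+1}}$ induces monomorphisms on $\pi_k$ for $0 \le k \le d-2$. If $\overline{K_v(\sigma^{\op})} \subseteq U_h(r)$ then $n = 0$ and $\iota$ is the identity, so we may assume $n \geq 1$.

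For a fixed $m$ I would identify the $m$-th step as the situation of Lemma~\ref{lem:remove-one-chamber} with the chamber $D \defeq C_{m+1}$ of $Z_{m+1}$. The required hypothesis $\st_{Z_{m+1}}(C_{m+1}^{\downarrow}) \subseteq \overline{C_{m+1}}$ is property~(3). One then checks that $Z_m = Z_{m+1} \setminus \st_{\SigmaStd}(C_{m+1}^{\downarrow})$: the inclusion ``$\subseteq$'' follows from (2) and (3), since by (3) one has $Z_m \cap \st_{\SigmaStd}(C_{m+1}^{\downarrow}) = Z_m \cap \st_{Z_{m+1}}(C_{m+1}^{\downarrow}) \subseteq Z_m \cap \overline{C_{m+1}} \cap \st_{Z_{m+1}}(C_{m+1}^{\downarrow})$, which by (2) is contained in $\partial(\st_{Z_{m+1}}(C_{m+1}^{\downarrow})) \cap \st_{Z_{m+1}}(C_{m+1}^{\downarrow}) = \emptyset$; the reverse inclusion is precisely how the filtration is built in the proof of Proposition~\ref{prop:removing-one-sector} (and, underneath, in Lemma~\ref{lem:removing-chambers-of-zero-length}), where one passes from $Z_{m+1}$ to $Z_m$ by deleting the open star of $C_{m+1}^{\downarrow}$. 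With this identification, Lemma~\ref{lem:remove-one-chamber} applied to $Z_{m+1}$ and $C_{m+1}$ gives that $\widehat{Z_m} = \widehat{Z_{m+1} \setminus \st_{\SigmaStd}(C_{m+1}^{\downarrow})} \hookrightarrow \widehat{Z_{m+1}}$ induces monomorphisms on $\pi_k$ for $0 \le k \le d-2$; this is the only place the hypothesis that $X$ satisfies $(\SOL)$ is used, and it enters solely through that lemma. Composing over $m = 0, \dots, n-1$ yields the claim.

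The main obstacle is the bookkeeping identification $Z_m = Z_{m+1} \setminus \st_{\SigmaStd}(C_{m+1}^{\downarrow})$, i.e.\ making precise that ``adding the closed chamber $\overline{C_{m+1}}$ along $\partial(\st_{Z_{m+1}}(C_{m+1}^{\downarrow}))$'' in Proposition~\ref{prop:removing-one-sector} is the same operation as ``deleting the open star of the lower face $C_{m+1}^{\downarrow}$''; properties (1)--(3) only give the inclusion $Z_m \subseteq Z_{m+1}\setminus\st_{\SigmaStd}(C_{m+1}^{\downarrow})$, and one has to invoke the construction itself (via Lemma~\ref{lem:removing-chambers-of-zero-length}) for equality. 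Everything else is formal: $\rho^{-1}$ preserves unions and subcomplexes, monomorphisms compose, and the topological content is entirely contained in Lemma~\ref{lem:remove-one-chamber}.
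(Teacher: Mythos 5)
Your proposal is correct and follows essentially the same route as the paper: pull the filtration of $U_h(r)\cup\overline{K_v(\sigma^{\op})}$ from Proposition~\ref{prop:removing-one-sector} back through $\rho$, apply Lemma~\ref{lem:remove-one-chamber} at each step, and compose the resulting monomorphisms. Your explicit verification that $Z_m = Z_{m+1}\setminus\st_{\SigmaStd}(C_{m+1}^{\downarrow})$ (which the paper leaves implicit when invoking Lemma~\ref{lem:remove-one-chamber}) is a reasonable extra precaution, and as you note it does require going back to the construction of the filtration rather than properties (1)--(3) alone.
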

\begin{proof}
By Proposition~\ref{prop:removing-one-sector} there is a filtration
\[
U_h(r) = Z_0 \lneq Z_1 \lneq \ldots \lneq Z_n = U_h(r) \cup \overline{K_v(\sigma^{\op})}
\]
by subcomplexes $Z_i$ such that the following is satisfied for each $0 \leq m < n$.
\begin{enumerate}
\item $Z_{m+1} = Z_{m} \cup \overline{C_{m+1}}$ for some chamber
$C_{m+1} \subseteq Z_{m+1}$ with\\$\ell_{Z_{m+1}}(C_{m+1}) = 0$.
\item $Z_{m} \cap \overline{C_{m+1}} = Z_{m} \cap \partial(\st_{Z_{m+1}}(C_{m+1}^{\downarrow}))$.
\item $\st_{Z_{m+1}}(C_{m+1}^{\downarrow}) \subseteq \overline{C_{m+1}}$.
\end{enumerate}
In view of Lemma~\ref{lem:remove-one-chamber} this gives us a filtration
\[
\widehat{U_h(r)} = \widehat{Z_0} \lneq \widehat{Z_2} \lneq \ldots \lneq \widehat{Z_n} = \widehat{U_h(r)} \cup \widehat{\overline{K_v(\sigma^{\op})}}
\]
such that the each inclusion induces a monomorphism
\[
\pi_{k}(\iota) \colon \pi_{k}(\widehat{Z_m}) \rightarrow \pi_{k}(\widehat{Z_{m+1}})
\]
for every $0 \leq k \leq d-2$.
Now the claim follows be composing these monomorphisms.
\end{proof}

\begin{corollary}\label{cor:homotopy-lower-complex}
Suppose that $X$ satisfies $(\SOL)$.
Then $\widehat{U_h(r)}$ is $(d-2)$-connected for every $r \in \R$.
\end{corollary}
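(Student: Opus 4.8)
The plan is to deduce the corollary from Proposition~\ref{prop:lower-complex-mono} by exhibiting $\widehat{U_h(r)}$ as the bottom stage of an exhaustion of the (contractible) building $X$ by subcomplexes to which that proposition applies. First I would record that $X = \rho^{-1}(\SigmaStd)$ is $\CAT(0)$ by Theorem~\ref{thm:buildings-are-CAT0}, hence contractible, and that by Lemma~\ref{lem:sector-covering-compact} every point of $\SigmaStd$ lies in some closed sector $\overline{K_w(\sigma^{\op})}$ based at a special vertex; consequently $\SigmaStd = \bigcup_w \overline{K_w(\sigma^{\op})}$, the union ranging over the (countably many, since $\SigmaStd$ is locally finite) special vertices of $\SigmaStd$.

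Next I would enumerate the special vertices of $\SigmaStd$ that do \emph{not} lie in $M(r)$ as $w_1, w_2, \dots$, and put $Z_0 = U_h(r)$ and $Z_n = Z_{n-1} \cup \overline{K_{w_n}(\sigma^{\op})}$. These form an increasing sequence of subcomplexes of $\SigmaStd$ with $\bigcup_n Z_n = \SigmaStd$, so applying $\rho^{-1}$ yields an increasing sequence of subcomplexes $\widehat{Z_0} \subseteq \widehat{Z_1} \subseteq \cdots$ of $X$ whose union is $X$ (the cellular retraction $\rho$ maps each cell of $X$ onto a single cell of $\SigmaStd$, which lies in some $Z_n$). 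Each $Z_{n-1}$ is a union of closed sectors $\overline{K_w(\sigma^{\op})}$ over a set of special vertices containing $M(r)$; in particular $\SigmaStd \setminus Z_{n-1} \subseteq \SigmaStd \setminus U_h(r) \subseteq L_h(r) \subseteq h^{-1}((-\infty, r+\varepsilon])$ by Proposition~\ref{prop:intersection-cones}. Since this containment is the only feature of $U_h(r)$ entering the proofs of Proposition~\ref{prop:removing-one-sector} (through the compactness of the set of new cells, Lemma~\ref{lem:truncated-sectors}) and of Proposition~\ref{prop:lower-complex-mono}, those proofs go through verbatim with $U_h(r)$ replaced by $Z_{n-1}$ and show that the inclusion $\widehat{Z_{n-1}} \hookrightarrow \widehat{Z_n}$ induces a monomorphism $\pi_k(\widehat{Z_{n-1}}, x) \to \pi_k(\widehat{Z_n}, x)$ for every $0 \le k \le d-2$ and every basepoint $x$.

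Finally I would pass to the colimit: any map of a $k$-sphere (or of a $(k{+}1)$-ball, for nullhomotopies) into $X$ has compact image and hence factors through some $\widehat{Z_n}$, so $\pi_k(X) = \varinjlim_n \pi_k(\widehat{Z_n})$ for $0 \le k \le d-2$; as all bonding maps of this directed system are injective, the structure map $\pi_k(\widehat{U_h(r)}) = \pi_k(\widehat{Z_0}) \to \pi_k(X)$ is injective as well. Since $X$ is contractible this forces $\pi_k(\widehat{U_h(r)}) = 0$ for all $0 \le k \le d-2$ (the case $k = 0$ yielding path-connectedness), i.e.\ $\widehat{U_h(r)}$ is $(d-2)$-connected.

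The only point requiring genuine care — bookkeeping rather than a real obstacle — is the assertion that Proposition~\ref{prop:lower-complex-mono} applies to each pair $(\widehat{Z_{n-1}}, \widehat{Z_n})$ and not merely to $(\widehat{U_h(r)}, \widehat{U_h(r)} \cup \widehat{\overline{K_v(\sigma^{\op})}})$: one must check that the compactness input (Lemma~\ref{lem:truncated-sectors}) and the $\sigma$-convexity input (Lemma~\ref{lem:intersection-cones}, Corollary~\ref{cor:deconstr-sigma-conv-complexes}) are unaffected when $M(r)$ is enlarged by finitely many special vertices of arbitrary height, which is immediate from the displayed containment $\SigmaStd\setminus Z_{n-1}\subseteq h^{-1}((-\infty,r+\varepsilon])$. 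One could equally well circumvent this by stating Propositions~\ref{prop:removing-one-sector} and~\ref{prop:lower-complex-mono} from the outset for unions $\bigcup_{w \in T}\overline{K_w(\sigma^{\op})}$ with $M(r) \subseteq T$ a set of special vertices whose associated sector complement lies in a sublevelset of $h$.
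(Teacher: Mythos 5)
Your argument is correct, but it is organized differently from the paper's proof. You exhaust $X$ by the subcomplexes $\widehat{Z_n}$ obtained from $\widehat{U_h(r)}$ by adjoining, one special vertex at a time, the closed sectors over all remaining special vertices, prove $\pi_k$-injectivity at every stage, and pass to the colimit; this forces you to re-run Propositions~\ref{prop:removing-one-sector} and~\ref{prop:lower-complex-mono} for unions of closed sectors over arbitrary sets $T \supseteq M(r)$ of special vertices, since their statements as written do not cover the pairs $(\widehat{Z_{n-1}},\widehat{Z_n})$. Your check that only the compactness input (via $\SigmaStd \setminus Z_{n-1} \subseteq L_h(r) \subseteq h^{-1}((-\infty,r+\varepsilon])$, Lemma~\ref{lem:truncated-sectors}) and the sector-union structure (feeding $\sigma$-convexity and Lemma~\ref{lem:proj-in-sectors}) enter those proofs is right, so the generalization is harmless, but it is a genuine restatement rather than a verbatim citation. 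The paper avoids this entirely and needs only the single-sector statement: given $f \colon S^k \rightarrow \widehat{U_h(r)}$ with $k \leq d-2$, contract it inside the contractible space $X$, note that the nullhomotopy has compact image $Z$, and apply Lemma~\ref{lem:sector-covering-compact} to the compact set $\rho(Z)$ to find one special vertex $v$ with $\rho(Z) \subseteq \overline{K_v(\sigma^{\op})}$; hence the whole nullhomotopy already lies in $\widehat{U_h(r)} \cup \widehat{\overline{K_v(\sigma^{\op})}}$, and a single application of Proposition~\ref{prop:lower-complex-mono} kills $[f]$ in $\pi_k(\widehat{U_h(r)})$. So your route buys a statement about a genuine exhaustion of $X$ and injectivity of $\pi_k(\widehat{U_h(r)}) \rightarrow \pi_k(X)$, at the price of strengthening two propositions and a colimit argument; the paper's route buys economy by letting the retraction $\rho$ and Lemma~\ref{lem:sector-covering-compact} absorb the entire compact nullhomotopy into one added sector.
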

\begin{proof}
Let $0 \leq k \leq d-2$ be an integer and let $f \colon S^{k} \rightarrow \widehat{U_h(r)}$ be a continuous function.
Since $X$ is a $\CAT(0)$-space it is contractible.
Hence there is a compact subspace $Z \subseteq X$ such that $f$ can be contracted in $Z$.
Then $\rho(Z) \subseteq \SigmaStd$ is also compact and Lemma~\ref{lem:sector-covering-compact} implies that there is a special vertex $v \in \SigmaStd$ such that $\rho(Z)$ is contained in $\overline{K_v(\sigma^{\op})}$.
In particular we see that $f$ is contractible in $\widehat{U_h(r)} \cup \widehat{\overline{K_v(\sigma^{\op})}}$.
As a consequence, $f$ represents the trivial element in $\pi_k(\widehat{U_h(r)} \cup \widehat{\overline{K_v(\sigma^{\op})}})$.
On the other hand, Proposition~\ref{prop:lower-complex-mono} tells us that the inclusion
\[
\iota \colon \widehat{U_h(r)} \rightarrow \widehat{U_h(r)} \cup \widehat{\overline{K_v(\sigma^{\op})}}
\]
induces a monomorphism
\[
\pi_k(\iota) \colon \pi_k(\widehat{U_h(r)}) \rightarrow \pi_k(\widehat{U_h(r)} \cup \widehat{\overline{K_v(\sigma^{\op})}})
\]
for each $0 \leq k \leq d-2$.
Thus $f$ represents the trivial element in $\pi_k(\widehat{U_h(r)})$, which proves the claim.
\end{proof}

We are now ready to prove the main result of this section.
For easy reference, we recall some of the involved notation.

\begin{theorem}\label{thm:essentially-connected}
Let $X$ be a thick Euclidean building, let $\SigmaStd$ be an apartment in $X$, let $\sigma \subseteq \pinf \SigmaStd$ be a chamber, and let $\vstd \in \SigmaStd$ be a special vertex, which we think of as the origin of $\SigmaStd$.
Consider a height function $h \in X^{\ast}_{\sigma,\vstd}$ such that $h \circ [x,\xi) \colon [0,\infty) \rightarrow \R$ is strictly decreasing for every $\xi \in \overline{\sigma}$ and every $x \in \SigmaStd$.
Suppose that $X$ satisfies $(\SOL)$.
Then the system of superlevelsets $(X_{h \geq r})_{r \in \R}$ is essentially $(\dim(X)-2)$-connected.
\end{theorem}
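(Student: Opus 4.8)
The plan is to reduce the assertion to the $(\dim(X)-2)$-connectivity of the upper complexes $\widehat{U_h(r)}$ established in Corollary~\ref{cor:homotopy-lower-complex} -- this is precisely where the hypothesis $(\SOL)$ is consumed. Write $d=\dim(X)$. Unwinding Definition~\ref{def:essentially-connected} for the system of superlevelsets (which grows as $r$ decreases), what I have to show is that for every $r\in\R$ there is some $s\le r$ such that the inclusion $\iota\colon X_{h\ge r}\hookrightarrow X_{h\ge s}$ induces the trivial map $\pi_k(X_{h\ge r},x)\to\pi_k(X_{h\ge s},x)$ for every $x\in X_{h\ge r}$ and every $0\le k\le d-2$. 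The strategy is to arrange, for $s$ small enough, a factorization $X_{h\ge r}\hookrightarrow\widehat{U_h(s)}\hookrightarrow X_{h\ge s}$; since the middle complex is $(d-2)$-connected, $\iota$ then kills all of $\pi_k$ in the relevant range.

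First I would record the elementary identity $X_{h\ge t}=\widehat{\SigmaStd_{h\ge t}}$ for every $t\in\R$, where $\SigmaStd_{h\ge t}\defeq\{x\in\SigmaStd : h(x)\ge t\}$; this is immediate from $h=\alpha\circ\rho$ with $\alpha=h|_{\SigmaStd}$ together with $\rho\circ\rho=\rho$. The core of the argument is then the chain of inclusions of subcomplexes of $X$,
\[
X_{h\ge r}\ \subseteq\ \widehat{U_h(s)}\ \subseteq\ X_{h\ge s}\qquad \text{for all } s<r-\varepsilon ,
\]
where $\varepsilon>0$ is the uniform constant of Proposition~\ref{prop:intersection-cones}. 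For the right-hand inclusion I would use that each $w\in M(s)$ is the lowest point of $\overline{K_w(\sigma^{\op})}$ (noted in the proof of Proposition~\ref{prop:intersection-cones}), so $\overline{K_w(\sigma^{\op})}\subseteq\SigmaStd_{h\ge h(w)}\subseteq\SigmaStd_{h\ge s}$; taking the union over $w\in M(s)$ gives $U_h(s)\subseteq\SigmaStd_{h\ge s}$, and applying $\rho^{-1}$ gives $\widehat{U_h(s)}\subseteq X_{h\ge s}$. For the left-hand inclusion I would invoke claim~(3) of Proposition~\ref{prop:intersection-cones}, which gives $L_h(s)\subseteq\SigmaStd_{h\le s+\varepsilon}$; complementing in $\SigmaStd$ and recalling $L_h(s)=\SigmaStd\setminus\bigcup_{w\in M(s)}K_w(\sigma^{\op})$ turns this into $\SigmaStd_{h>s+\varepsilon}\subseteq\bigcup_{w\in M(s)}K_w(\sigma^{\op})\subseteq U_h(s)$, and since $s+\varepsilon<r$ this yields $\SigmaStd_{h\ge r}\subseteq U_h(s)$; applying $\rho^{-1}$ gives $X_{h\ge r}\subseteq\widehat{U_h(s)}$.

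With the chain in hand I would fix $s=r-\varepsilon-1$ and conclude: the inclusion $\iota\colon X_{h\ge r}\hookrightarrow X_{h\ge s}$ factors through $\widehat{U_h(s)}$, which is $(d-2)$-connected by Corollary~\ref{cor:homotopy-lower-complex}; hence for $1\le k\le d-2$ the homomorphism $\pi_k(\iota)$ factors through the trivial group $\pi_k(\widehat{U_h(s)},x)$, while for $k=0$ the path-connectedness of $\widehat{U_h(s)}$ forces all of $X_{h\ge r}$ into a single path component of $X_{h\ge s}$. Since $r$ was arbitrary, this is exactly essential $(d-2)$-connectivity. I do not expect any single step to be a genuine obstacle once the machinery of Sections~\ref{sec:deconstr} and~\ref{sec:pos-dir} is available; the only point that really needs attention is that the gap $\varepsilon$ between $L_h(s)$ and the sublevelset forces the passage from $r$ down to a level strictly below $r-\varepsilon$ rather than merely below $r$, so the uniformity of $\varepsilon$ (in both $r$ and $w$) in Proposition~\ref{prop:intersection-cones} is what makes the argument go through.
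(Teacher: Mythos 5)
Your argument is correct and coincides with the paper's own proof: both factor the inclusion $X_{h\ge r}\hookrightarrow X_{h\ge s}$ through $\widehat{U_h(s)}$ using the inclusions of Proposition~\ref{prop:intersection-cones} and then quote the $(\dim(X)-2)$-connectivity of $\widehat{U_h(s)}$ from Corollary~\ref{cor:homotopy-lower-complex}. The only (harmless) difference is that you derive the two inclusions directly with a strict gap $s<r-\varepsilon$, whereas the paper takes $s=r-\varepsilon$ and passes to closures of complements of the chain $X_{h\le r-\varepsilon}\subseteq\widehat{L_h(r-\varepsilon)}\subseteq X_{h\le r}$.
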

\begin{proof}
Let $r \in \R$ be a real number and let $L_h(r) = \Sigma \backslash \bigcup \limits_{w \in M(r)} K_w(\sigma^{\op})$.
According to Proposition~\ref{prop:intersection-cones} there is some $\varepsilon > 0$ that gives us a chain of inclusions
\begin{equation}\label{eq:essentially-connected}
X_{h \leq r-\varepsilon} \rightarrow \widehat{L_h(r-\varepsilon)} \rightarrow X_{h \leq r}.
\end{equation}
By taking the closure of the complements of the sets in~\eqref{eq:essentially-connected}, we see that the inclusion $\iota \colon X_{h \geq r} \rightarrow X_{h \geq r-\varepsilon}$ factorizes as
\[
X_{h \geq r} \xrightarrow{\iota_1} \widehat{U_h(r-\varepsilon)} \xrightarrow{\iota_2} X_{h \geq r-\varepsilon}.
\]
Since $\widehat{U_h(r-\varepsilon)}$ is $(\dim(X)-2)$-connected by Corollary~\ref{cor:homotopy-lower-complex}, the claim follows from the functoriality of $\pi_k$.
\end{proof}

\begin{remark}\label{rem:is-essential-needed}
Note that Theorem~\ref{thm:essentially-connected} does not tell us that the superlevelsets $X_{h \geq r}$ themselves are $(\dim(X)-2)$-connected, which is typically the case when applying discrete Morse theory.
In fact, I am not aware of a classical Morse theoretic argument that proves Theorem~\ref{thm:essentially-connected}.
Instead, it was crucial to use the concept of \emph{essential} $n$-connectivity in order to apply the highly connected complexes $\widehat{U_h(r)}$, which sit between two $h$-levels.
As far as I know this is the first time that a part of the computation of the $\Sigma$-invariants of a group naturally benefits from the concept of \emph{essential} connectivity.
\end{remark}

\section{The negative direction in top dimension}\label{sec:neg-dir}

In the previous section we proved that certain systems of superlevelsets in an appropriate Euclidean building $X$ are essentially $(\dim(X)-2)$-connected.
In this section we will show that these systems are not essentially $(\dim(X)-1)$-connected.
In fact, we will prove this result for thick Euclidean buildings $X$ of arbitrary type under the mild assumption that some $\alpha \in \Aut(X)$ acts non-trivially on the superlevelsets of $X$.

\medskip

\noindent Let us fix a thick, $d$-dimensional Euclidean building $X$, an apartment $\SigmaStd \subset X$, a special vertex $\vstd \in \SigmaStd$, and a pair of opposite chambers $\sigma,\sigma^{\op} \subseteq \pinf \SigmaStd$.
The set of apartments in $X$ that contain a subsector of $K_{\vstd}(\sigma)$ will be denoted by $\mathcal{A}_{\sigma}$.

\subsection{The abstract cone}

We start by constructing some auxiliary cell complexes that will help us to ``pull compact subcomplexes of $\pinf X$ into $X$''.

\begin{lemma}\label{lem:ap-containing-opp-sectors}
For every chamber $\delta \subseteq \Opp_{\pinf X}(\sigma)$ there is a unique apartment $\Sigma \in \mathcal{A}_{\sigma}$ such that $\delta \subseteq \pinf \Sigma$.
\end{lemma}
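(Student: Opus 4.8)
The plan is to produce the apartment $\Sigma$ as the unique apartment of $X$ containing a subsector of $K_{\vstd}(\sigma)$ together with a sector pointing towards $\delta$, and then to verify both membership in $\mathcal{A}_\sigma$ and uniqueness. First I would recall that since $\delta$ is opposite to $\sigma$ in $\pinf X$, the two chambers $\sigma,\delta \subseteq \pinf X$ lie in a common apartment of the spherical building $\pinf X$; by the standard correspondence between apartments of $X$ and apartments of $\pinf X$ (see~\cite[Theorem 11.63 and Section 11.8]{AbramenkoBrown08}), such an apartment of $\pinf X$ is of the form $\pinf\Sigma$ for some apartment $\Sigma \subseteq X$. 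Thus there exists an apartment $\Sigma$ of $X$ with $\sigma,\delta \subseteq \pinf\Sigma$. It remains to see that $\Sigma$ can be taken in $\mathcal{A}_\sigma$, i.e.\ that $\Sigma$ contains a subsector of $K_{\vstd}(\sigma)$.

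For that step I would argue as follows. Pick a sector $S = K_{\vstd}(\sigma) \subseteq \SigmaStd$ and a sector $S' \subseteq \Sigma$ with $\pinf S' = \sigma$, say $S' = K_{p}(\sigma)$ for some $p \in \Sigma$. Both $S$ and $S'$ are sectors in $X$ with the same chamber at infinity $\sigma$, so by Proposition~\ref{prop:common-subsector} their intersection $S \cap S'$ contains a sector $S''$; in particular $S'' \subseteq \Sigma$ and $S'' \subseteq K_{\vstd}(\sigma)$, which shows $\Sigma \in \mathcal{A}_\sigma$. Since also $\delta \subseteq \pinf\Sigma$, this $\Sigma$ has the required properties, establishing existence.

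For uniqueness, suppose $\Sigma_1,\Sigma_2 \in \mathcal{A}_\sigma$ both satisfy $\delta \subseteq \pinf\Sigma_i$. Each $\Sigma_i$ contains a subsector $S_i$ of $K_{\vstd}(\sigma)$, and since any two subsectors of a common sector intersect in a subsector, we may assume after shrinking that $S_1 = S_2 =: S_0$ is a common sector contained in both. Now $\Sigma_i$ is an apartment containing the sector $S_0$ and with $\delta$ opposite (in $\pinf\Sigma_i$) to $\sigma = \pinf S_0$; hence $\Sigma_i$ contains the sector $K_{q}(\delta)$ based at any point $q$ deep enough in $S_0$ (opposite sectors from a common point). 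Two apartments are equal as soon as they contain a pair of opposite sectors, because such a pair already spans a full apartment and an apartment is a maximal such flat (alternatively: $\Sigma_1 \cap \Sigma_2$ is convex and contains a sector and an opposite sector, whose convex hull is all of $\Sigma_1$, forcing $\Sigma_1 \subseteq \Sigma_2$ and symmetrically). Therefore $\Sigma_1 = \Sigma_2$.

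The main obstacle I anticipate is making the uniqueness argument clean: one must be careful that ``containing a subsector of $K_{\vstd}(\sigma)$'' together with ``$\delta \subseteq \pinf\Sigma$'' really pins down enough of the apartment — the key point being that a sector and an opposite sector based at a common point already determine a full apartment — and to quote the right statement from~\cite{AbramenkoBrown08} (the sector/apartment dictionary around Proposition~\ref{prop:common-subsector} and~\cite[Section 11.8]{AbramenkoBrown08}) rather than re-proving it. The existence half is essentially bookkeeping with Proposition~\ref{prop:common-subsector}; the uniqueness half is where the geometric input genuinely lies.
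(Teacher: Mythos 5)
Your proposal is correct and takes essentially the same route as the paper: existence is obtained there by directly citing \cite[Theorem 11.63.(2)]{AbramenkoBrown08} (your detour through a common apartment of $\pinf X$, \cite[Theorem 11.79]{AbramenkoBrown08}, and Proposition~\ref{prop:common-subsector} just reproves that statement), and the uniqueness argument is exactly the paper's observation that an apartment is the convex hull of a pair of its sectors pointing to opposite chambers, which is what your common-base-point argument spells out.
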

\begin{proof}
The existence of $\Sigma$ is guaranteed by Theorem~\cite[11.63.(2)]{AbramenkoBrown08}.
The uniqueness statement follows from the easy observation that every apartment is the convex hull of a pair of its sectors that correspond to opposite chambers.
\end{proof}

In view of Lemma~\ref{lem:ap-containing-opp-sectors} the following definition makes sense.

\begin{definition}\label{def:aps-induced-by-s}
For every chamber $\delta \subseteq \Opp_{\pinf X}(\sigma)$ let $\Sigma_{\delta} \in \mathcal{A}_{\sigma}$ denote the unique apartment with $\delta \subseteq \pinf \Sigma_{\delta}$.
\end{definition}

For the rest of this section we fix a compact subcomplex $S$ of $\Opp_{\pinf X}(\sigma)$ in which all maximal simplices are chambers.
Recall that we write $\Ch(S)$ to denote the set of chambers in $S$.

\begin{lemma}\label{lem:special-vertex-in-intersection}
There is a special vertex $v \in \bigcap \limits_{\delta \in \Ch(S)} \Sigma_{\delta}$.
\end{lemma}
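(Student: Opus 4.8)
The plan is to exploit the defining property of $\mathcal{A}_{\sigma}$, namely that each apartment $\Sigma_{\delta}$ contains a subsector of the sector $K_{\vstd}(\sigma)$, together with the elementary fact that finitely many subsectors of a fixed sector admit a common subsector, and the standard fact that every full-dimensional cone in a Euclidean Coxeter complex contains a special vertex.

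First I would reduce to finitely many chambers: since $S$ is a compact subcomplex of a cell complex, it is contained in a finite subcomplex, so $\Ch(S) = \{\delta_1,\dots,\delta_m\}$ is finite. Each $\delta_j$ lies in $\Opp_{\pinf X}(\sigma)$, so the apartment $\Sigma_{\delta_j}\in\mathcal{A}_{\sigma}$ of Definition~\ref{def:aps-induced-by-s} is well defined, and by definition of $\mathcal{A}_{\sigma}$ (see also Lemma~\ref{lem:ap-containing-opp-sectors}) it contains a subsector of $K_{\vstd}(\sigma)$. The next point to record is that such a subsector has the form $K_{p_j}(\sigma)$ with $p_j\in\overline{K_{\vstd}(\sigma)}\subseteq\SigmaStd$: a subsector is a sector $K_{p_j}(\tau)$ with $K_{p_j}(\tau)\subseteq K_{\vstd}(\sigma)$, so $K_{p_j}(\tau)\cap K_{\vstd}(\sigma)$ contains a sector and Proposition~\ref{prop:common-subsector} forces $\tau=\sigma$; the inclusion of open cones $K_{p_j}(\sigma)\subseteq K_{\vstd}(\sigma)$ then forces $p_j\in\overline{K_{\vstd}(\sigma)}$.

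Next I would introduce linear coordinates on $\SigmaStd$ exactly as in Section~\ref{sec:deconstr}: viewing $\SigmaStd$ as a vector space with origin $\vstd$, let $E$ be the chamber of $\SigmaStd$ at $\vstd$ contained in $K_{\vstd}(\sigma)$, let $P_1,\dots,P_d$ be its panels through $\vstd$ (Lemma~\ref{lem:faces-in-polysimplices}), and let $\alpha_i$ be the linear form with $\alpha_i(P_i)=0$ and $\alpha_i(E)>0$. Then $\alpha_1,\dots,\alpha_d$ is a basis of $\SigmaStd^{\ast}$, and by Remark~\ref{rem:description-of-sectors} we have $K_q(\sigma)=\Set{y\in\SigmaStd}{\alpha_i(y)>\alpha_i(q)\text{ for all }i}$ for every $q\in\overline{K_{\vstd}(\sigma)}$. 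Letting $p\in\SigmaStd$ be the unique point with $\alpha_i(p)=\max_{1\le j\le m}\alpha_i(p_j)$ for each $i$, we obtain $K_p(\sigma)\subseteq K_{p_j}(\sigma)\subseteq\Sigma_{\delta_j}$ for every $j$, hence $K_p(\sigma)\subseteq\bigcap_{\delta\in\Ch(S)}\Sigma_{\delta}$.

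Finally I would invoke the standard structure theory of Euclidean Coxeter complexes: the translation lattice $T$ of the affine Weyl group of $\SigmaStd$ has full rank $d$ and carries the special vertex $\vstd$ to special vertices, so $\vstd+T$ is a full-rank lattice consisting of special vertices and therefore meets the full-dimensional open cone $K_p(\sigma)$. Any $v\in(\vstd+T)\cap K_p(\sigma)$ is then a special vertex lying in $\bigcap_{\delta\in\Ch(S)}\Sigma_{\delta}$, which is the assertion of the lemma. I do not expect a genuine obstacle in this argument; the only steps needing slight care are the identification of the subsectors of $K_{\vstd}(\sigma)$ via Proposition~\ref{prop:common-subsector} and the finiteness of $\Ch(S)$ coming from compactness of $S$.
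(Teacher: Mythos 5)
Your argument is correct and follows essentially the same route as the paper: compactness of $S$ gives finitely many chambers, the subsectors of $K_{\vstd}(\sigma)$ contained in the apartments $\Sigma_{\delta}$ are intersected to a common subsector (the paper does this by iterating Proposition~\ref{prop:common-subsector}, you do it explicitly with the linear forms of Remark~\ref{rem:description-of-sectors}), and a special vertex is then found inside that subsector. You merely make explicit two steps the paper leaves implicit, namely that subsectors of $K_{\vstd}(\sigma)$ have the form $K_{p}(\sigma)$ and that every full-dimensional sector contains a special vertex via the translation lattice.
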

\begin{proof}
Since $S$ is compact, the lemma follows by inductive application of Proposition~\ref{prop:common-subsector}, which tells us that any two subsectors of $K_{\vstd}(\sigma)$ contain a common subsector.
\end{proof}

From now on we fix a special vertex $v$ as in Lemma~\ref{lem:special-vertex-in-intersection}.
Moreover we consider the subcomplex
\[
K_{S,v} = \bigcup \limits_{\delta \in \Ch(S)} \overline{K_v(\delta)}.
\]
of $X$.
As before, we will write $\widehat{Z} = \rho^{-1}(Z)$ to denote the preimage of a subset $Z \subseteq \SigmaStd$ under the retraction $\rho = \rho_{\sigma, \SigmaStd}$.

\begin{remark}\label{rem:where-k-lives}
Note that $K_{S,v}$ is a subcomplex of $\widehat{\overline{K_{v}(\sigma^{\op})}}$ and that $\rho$ restricts to an isomorphism $\rho_{\vert \overline{K_{v}(\tau)}} \colon \overline{K_{v}(\tau)} \rightarrow \overline{K_{v}(\sigma^{\op})}$ for every chamber $\tau \subseteq S$.
\end{remark}


\begin{definition}\label{def:auxiliary-cone}
Consider the disjoint union of closed sectors $\coprod \limits_{\delta \in \Ch(S)} \overline{K_v(\delta)}$.
We say that two points $(p,\delta),(p',\delta') \in \coprod \limits_{\delta \in \Ch(S)} \overline{K_v(\delta)}$ are \emph{equivalent}, denoted by $(p,\delta) \sim (p',\delta')$, if $p=p' \in \partial \overline{K_v(P)}$ for some panel $P \subseteq \Opp_{\pinf X}(\sigma)$.  
We define the \emph{abstract cone} as the quotient space $\widetilde{K}_{S,v} \defeq \coprod \limits_{\delta \in \Ch(S)} \overline{K_v(\delta)} / \hspace{-1.5mm} \sim$.
\end{definition}

Note we have a well-defined projection $\pi \colon \widetilde{K}_{S,v} \rightarrow K_{S,v}$, which is given by $\pi([(p,\delta)]) = p$.
To simplify the notation, we will often write $K \defeq K_{S,v}$ and $\widetilde{K} \defeq \widetilde{K}_{S,v}$.

\subsection{Homology of superlevelsets}

Recall from Definition~\ref{def:retr-height-functions} and Remark~\ref{rem:another-char-of-retr-pres} that $X^{\ast} \defeq X^{\ast}_{\sigma,\vstd} = \Set{\alpha \circ \rho}{\alpha \in \SigmaStd^{\ast}}$ denotes the space of $\rho$-invariant functions on $X$ whose restrictions to $\SigmaStd$ are linear.
Let us fix a function $h \in X^{\ast}$ for which $h \circ [x,\xi) \colon [0,\infty) \rightarrow \R$ is strictly decreasing for every $x \in \SigmaStd$ and every $\xi \in \overline{\sigma}$.

\begin{definition}\label{def:branching-number}
Let $A$ be a cell in $K$.
The \emph{branching number} of $A$, denoted by $b(A)$, is the number of chambers $\tau \subseteq S$ such that $A$ is contained in $K_v(\tau)$.
\end{definition}

Note for example that $b(v)=\vert \Ch(S) \vert$ and that $b(E)$ is the number of chambers in $\pi^{-1}(E)$ when $E$ is a chamber in $K$.

\medskip

\noindent In order to prove that $(X_{h \geq r})_{r \in \R}$ is not essentially $(d-1)$-connected, we will construct sequences of cycles in $C_{d-1}(X_{h \geq r};\F_2)$.
These cycles will appear as boundaries of $d$-chains in $C_{d}(X;\F_2)$, whose coefficients will depend on the branching numbers of certain cells.

\begin{notation}\label{def:support-chain}
For every $k \in \N_0$ and every $k$-chain
\[
c = \sum \limits_{A \subseteq X^{(k)}} \lambda_A \cdot A \in C_k(X;\F_2),
\]
let $\supp(c)$ denote the set of all $k$-cells $A$ with $\lambda_A = 1$.
\end{notation}

A nice feature of working with affine cell complexes is that the attaching map is a homeomorphism for each closed cell.
In this case, the cellular boundary formula (see~\cite[Section 2.2]{Hatcher02}) tells us that each boundary map $\partial_k$ is given by
\begin{equation}\label{eq:branching-numb-in-boundary}
\partial_k \colon C_{k}(X;\F_2) \rightarrow C_{k-1}(X;\F_2), c \mapsto \sum \limits_{A \subseteq X^{(k-1)}} \lambda_A \cdot A,
\end{equation}
where $\lambda_A$ denotes the number of $k$-dimensional cofaces of $A$ in $\supp(c)$.

\begin{lemma}\label{lem:no-branching}
For every $\delta \in \Ch(S)$ there is a special vertex $w \in K_{v}(\delta)$ such that $\overline{K_{w}(\delta)} \cap \Sigma_{\tau} = \emptyset$ for every $\tau \in \Ch(S) \backslash \{\delta\}$.
\end{lemma}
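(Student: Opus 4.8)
The plan is to do everything inside the single apartment $\Sigma_\delta$, which we regard as a Euclidean vector space with origin $v$, the special vertex from Lemma~\ref{lem:special-vertex-in-intersection} (so $v\in\Sigma_\tau$ for every $\tau\in\Ch(S)$, in particular $v\in\Sigma_\delta\cap\Sigma_\tau$). First I would record two elementary facts. Since $\Sigma_\delta,\Sigma_\tau\in\mathcal{A}_\sigma$ both contain $v$ and have $\sigma$ as a chamber at infinity, each contains the full sector $K_v(\sigma)$; hence $K_v(\sigma)\subseteq\Sigma_\delta\cap\Sigma_\tau$. Second, for $\tau\neq\delta$ one has $\Sigma_\tau\neq\Sigma_\delta$: otherwise $\tau$ and $\delta$ would be two chambers of the one apartment $\pinf\Sigma_\delta$ both opposite to $\sigma$, forcing $\tau=\delta$. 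Since an apartment of a $d$-dimensional Euclidean building is an isometrically embedded, metrically convex copy of $\R^d$, no apartment can be properly contained in another, so $\Sigma_\delta\not\subseteq\Sigma_\tau$; thus $\Sigma_\delta\cap\Sigma_\tau$ is a closed convex subcomplex that is a \emph{proper} subset of $\Sigma_\delta$.

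Now fix $\tau\in\Ch(S)\setminus\{\delta\}$. Being closed, convex and properly contained in $\Sigma_\delta\cong\R^d$, the set $\Sigma_\delta\cap\Sigma_\tau$ lies in some proper closed half-space $H_\tau^+=\{x:m_\tau(x)\leq c_\tau\}$ with $m_\tau$ a nonconstant affine form. Write $\ell_1,\dots,\ell_d$ for the linear forms cutting out the sector, i.e. $\overline{K_v(\delta)}=\{x:\ell_j(x-v)\geq 0\text{ for all }j\}$; these form a basis of the dual space. The inclusion $K_v(\sigma)\subseteq\Sigma_\delta\cap\Sigma_\tau\subseteq H_\tau^+$ forces the linear part of $m_\tau$ to be bounded above on the opposite cone $K_v(\sigma)-v=\{\ell_j\leq 0\ \forall j\}$, hence to be a combination $\sum_j\mu_j\ell_j$ with all $\mu_j\geq 0$; and since $m_\tau$ is nonconstant, $\mu_{j(\tau)}>0$ for some index $j(\tau)$. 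Evaluating on $x\in\overline{K_v(\delta)}\cap H_\tau^+$ and using $\ell_j(x-v)\geq 0$, $\mu_j\geq 0$, I get $\mu_{j(\tau)}\ell_{j(\tau)}(x-v)\leq\sum_j\mu_j\ell_j(x-v)\leq c_\tau-m_\tau(v)$, so $\ell_{j(\tau)}$ is bounded above on $\overline{K_v(\delta)}\cap\Sigma_\tau$, say by a constant $b_\tau$. The point is that $\ell_{j(\tau)}$ is one of the \emph{sector} coordinates of $K_v(\delta)$, so each $\overline{K_v(\delta)}\cap\Sigma_\tau$ is confined to a slab $\{\ell_{j(\tau)}(x-v)\leq b_\tau\}$ across the sector.

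Finally I would choose the vertex. Since $S$ is compact, $\Ch(S)$ is finite, so $b:=\max_{\tau\neq\delta}b_\tau$ is finite. The special vertices of $\Sigma_\delta$ are invariant under a full-rank lattice of translations, hence meet the interior of any full-dimensional cone arbitrarily far out; so there is a special vertex $w\in K_v(\delta)$ with $\ell_j(w-v)>b$ for every $j=1,\dots,d$. For any $x\in\overline{K_w(\delta)}$ one has $\ell_j(x-w)\geq 0$, hence $\ell_j(x-v)=\ell_j(x-w)+\ell_j(w-v)>b\geq b_\tau$ for all $j$ and all $\tau\neq\delta$; in particular $\ell_{j(\tau)}(x-v)>b_\tau$, so $x\notin\overline{K_v(\delta)}\cap\Sigma_\tau$. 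As $\overline{K_w(\delta)}\subseteq\overline{K_v(\delta)}$, this gives $x\notin\Sigma_\tau$, and therefore $\overline{K_w(\delta)}\cap\Sigma_\tau=\emptyset$ for every $\tau\in\Ch(S)\setminus\{\delta\}$, which is the claim. The one step needing care is the second paragraph: one must see that the separating half-space controls a \emph{sector} direction $\ell_{j(\tau)}$ of $K_v(\delta)$, not an arbitrary direction — this is exactly what the sign constraint forced by $K_v(\sigma)\subseteq H_\tau^+$ provides, and it is also what lets a single $w$ work for all $\tau$ simultaneously.
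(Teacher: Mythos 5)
Your argument is correct, but it takes a different route from the paper. The paper's proof stays entirely building-theoretic: it fixes an interior point $\xi \in \delta$, notes that $\xi \notin \pinf \Sigma_{\tau}$ for $\tau \neq \delta$ (since $\tau$ is the unique chamber of $\pinf \Sigma_{\tau}$ opposite $\sigma$), so the single ray $[v,\xi)$ leaves each $\Sigma_{\tau}$ at some finite time; finiteness of $\Ch(S)$ gives a point $p = [v,\xi)(T)$ beyond all of these, and then the key trick is that for any $x \in \overline{K_p(\delta)}$ lying in some $\Sigma_{\tau}$ one would have $\overline{K_x(\sigma)} \subseteq \Sigma_{\tau}$ (because $\sigma \subseteq \pinf \Sigma_{\tau}$) while $p \in \overline{K_x(\sigma)}$, a contradiction; finally $w$ is taken to be a special vertex in $K_p(\delta)$. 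You instead argue by Euclidean convex geometry inside $\Sigma_{\delta}$: the intersection $\Sigma_{\delta} \cap \Sigma_{\tau}$ is closed, convex and proper, contains $K_v(\sigma)$, hence lies in a half-space whose linear form is (by the dual-cone/Farkas observation) a nonnegative combination of the sector coordinates $\ell_j$, so some coordinate $\ell_{j(\tau)}$ is bounded on $\overline{K_v(\delta)} \cap \Sigma_{\tau}$; finiteness of $\Ch(S)$ lets you push a special vertex $w$ past all these bounds. The underlying geometric input is the same in both cases — the intersection of the two apartments contains the $\sigma$-sector at $v$ and is ``closed under moving towards $\sigma$'' — but the paper extracts the conclusion with one ray and the backward-sector trick, which is shorter and reuses a pattern appearing elsewhere in the paper (e.g.\ Lemma~\ref{lem:proj-in-sector-complements}), whereas your separation argument is more quantitative, yielding an explicit slab $\{\ell_{j(\tau)}(x-v) \leq b_{\tau}\}$ confining $\Sigma_{\tau} \cap \overline{K_v(\delta)}$, and it isolates exactly where finiteness of $\Ch(S)$ and the existence of special vertices deep in the open sector (full-rank translation lattice, which is guaranteed by the standing essentiality/thickness assumptions and is also implicit in the paper's final step) are needed.
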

\begin{proof}
Let $\xi \in \delta$ be an arbitrary point.
Since $v$ lies in $\Sigma_{\delta}$, it follows that the ray $[v,\xi)$ is contained in $\Sigma_{\delta}$.
On the other hand, we have $\xi \notin \tau$ for every $\tau \in \Ch(S) \backslash \{\delta\}$.
Since $\tau$ is the unique chamber in $\partial \Sigma_{\tau}$ that is opposite to $\sigma$ we obtain $\xi \notin \partial \Sigma_{\tau}$.
It follows that for every $\tau \in \Ch(S) \backslash \{\delta\}$ there is a number $T_{\tau} > 0$ such that the point $[v,\xi)(T_{\tau})$ is not contained in $\Sigma_{\tau}$.
Since $S$ is finite we can choose $T$ such that $p \defeq [v,\xi)(T) \notin K_{v}(\tau)$ for every $\tau \in \Ch(S) \backslash \{\delta\}$.
Consider now an arbitrary point $x \in \overline{K_p(\delta)}$.
Suppose that $x \in \Sigma_{\tau}$ for some $\tau \in \Ch(S) \backslash \{\delta\}$.
Since $\sigma \subseteq \pinf \Sigma_{\tau}$, it then follows that $\overline{K_x(\sigma)}$ is contained in $\Sigma_{\tau}$.
On the other hand, the description of sectors given in Remark~\ref{rem:description-of-sectors} implies that $\overline{K_x(\sigma)}$ contains $p$, which contradicts our choice of $p$.
Now the claim follows by defining $w$ as a special vertex in $K_p(\delta)$.
\end{proof}

Let us observe the following direct consequence of Lemma~\ref{lem:no-branching}.

\begin{corollary}\label{cor:no-branching}
For every $\delta \in \Ch(S)$ there is a special vertex $w \in K_{v}(\delta)$ such that $b(A)=1$ for every cell $A \subseteq \overline{K_{w}(\delta)}$.
\end{corollary}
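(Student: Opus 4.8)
The plan is to read this off directly from Lemma~\ref{lem:no-branching} after unwinding Definition~\ref{def:branching-number}. Fix $\delta \in \Ch(S)$ and let $w \in K_v(\delta)$ be the special vertex furnished by Lemma~\ref{lem:no-branching}, so that $\overline{K_w(\delta)} \cap \Sigma_\tau = \emptyset$ for every $\tau \in \Ch(S) \setminus \{\delta\}$. I want to show that every cell $A \subseteq \overline{K_w(\delta)}$ satisfies $b(A) = 1$, i.e.\ that $\delta$ is the unique chamber of $S$ whose closed sector from $v$ contains $A$.

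For the lower bound $b(A) \geq 1$, I would first note that $\overline{K_w(\delta)} \subseteq \overline{K_v(\delta)}$: the closed sector $\overline{K_v(\delta)}$ is a convex Euclidean cone whose set of directions at infinity is exactly $\overline{\delta}$, and $w$ lies in it, so every ray $[w,\xi)$ with $\xi \in \overline{\delta}$ stays inside $\overline{K_v(\delta)}$. In particular each cell $A \subseteq \overline{K_w(\delta)}$ is a cell of $K = K_{S,v}$ contained in $\overline{K_v(\delta)}$, so $\delta$ contributes to $b(A)$.

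For the upper bound I would argue by contradiction: suppose $A \subseteq \overline{K_v(\tau)}$ for some $\tau \in \Ch(S) \setminus \{\delta\}$. By Lemma~\ref{lem:special-vertex-in-intersection} the vertex $v$ lies in $\Sigma_\tau$, and by the definition of $\Sigma_\tau$ we have $\tau \subseteq \pinf \Sigma_\tau$; hence every ray $[v,\xi)$ with $\xi \in \overline{\tau}$ is contained in $\Sigma_\tau$, which gives $\overline{K_v(\tau)} \subseteq \Sigma_\tau$. Then the non-empty set $A$ lies in both $\overline{K_w(\delta)}$ and $\Sigma_\tau$, contradicting the defining property of $w$. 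Therefore $\delta$ is the only chamber of $S$ with $A \subseteq \overline{K_v(\delta)}$, so $b(A) = 1$.

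There is no genuine obstacle here; the only points needing a little care are the conventions (relatively open cells versus closed sectors in the reading of Definition~\ref{def:branching-number}, which is why I pass to closed sectors throughout, consistently with $b(v) = \lvert \Ch(S)\rvert$) and the two elementary cone facts $\overline{K_w(\delta)} \subseteq \overline{K_v(\delta)}$ and $\overline{K_v(\tau)} \subseteq \Sigma_\tau$. All the real work is carried by Lemma~\ref{lem:no-branching}.
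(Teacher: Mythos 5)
Your proof is correct and follows the paper's route exactly: the paper records this corollary as a direct consequence of Lemma~\ref{lem:no-branching}, and your two cone facts ($\overline{K_w(\delta)} \subseteq \overline{K_v(\delta)}$ and $\overline{K_v(\tau)} \subseteq \Sigma_\tau$, using that $v \in \Sigma_\tau$ by Lemma~\ref{lem:special-vertex-in-intersection}) are precisely the unwinding the paper leaves implicit. Nothing is missing.
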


Note that our height function $h$ has a natural translation to $\widetilde{K}$ by defining
\[
\widetilde{h} \colon \widetilde{K} \rightarrow \R,\ [(p,\tau)] \mapsto h(p).
\]

Using $\widetilde{h}$ and $h$, we can define the complexes $\widetilde{K}_r = \Set{x \in \widetilde{K}}{\widetilde{h}(x) \leq r}$ and $K_r = K \cap X_{h \leq r}$ for every $r \in \R$, which we will study from now on.

\begin{lemma}\label{lem:on-finiteness-of-chain}
The subspace $K_r \subseteq X$ is compact.
In particular there are only finitely many chambers in $K_r$.
\end{lemma}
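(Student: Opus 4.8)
The claim is that $K_r = K \cap X_{h \leq r}$ is compact. The plan is to exploit the product-like structure of $K = K_{S,v} = \bigcup_{\delta \in \Ch(S)} \overline{K_v(\delta)}$ together with the fact that $\Ch(S)$ is a \emph{finite} set (since $S$ is a compact subcomplex of $\Opp_{\pinf X}(\sigma)$). Since a finite union of compact sets is compact, it suffices to show that each piece $\overline{K_v(\delta)} \cap X_{h \leq r}$ is compact. So the whole problem reduces to a single closed sector.

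**Reducing to one sector via $\rho$.** For a fixed chamber $\delta \subseteq S$, Remark~\ref{rem:where-k-lives} tells us that $\rho = \rho_{\sigma,\SigmaStd}$ restricts to an isomorphism $\overline{K_v(\delta)} \xrightarrow{\ \sim\ } \overline{K_v(\sigma^{\op})}$ of cell complexes. Because $h = \alpha \circ \rho$ is $\rho$-invariant by construction, this isomorphism carries $\overline{K_v(\delta)} \cap X_{h \leq r}$ isometrically (or at least homeomorphically, which is all we need) onto $\overline{K_v(\sigma^{\op})} \cap h^{-1}((-\infty,r])$, a subset of the apartment $\SigmaStd$. First I would make this identification explicit, so that the question becomes: is $\overline{K_v(\sigma^{\op})} \cap h^{-1}((-\infty,r])$ compact in $\SigmaStd$?

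**Invoking the Coxeter-complex lemma.** But this last statement is precisely Lemma~\ref{lem:truncated-sectors}, applied in the apartment $\SigmaStd$ with the special vertex $v$ in the role of the point $x$: the intersection $\overline{K_v(\sigma^{\op})} \cap h^{-1}((-\infty,r])$ is compact. (The hypotheses match: $h$ is a linear form on $\SigmaStd$ that is strictly decreasing along every ray toward $\overline{\sigma}$, which is exactly the standing assumption on $h$ in this section.) Pulling back along the isomorphism $\rho_{|\overline{K_v(\delta)}}^{-1}$, each $\overline{K_v(\delta)} \cap X_{h \leq r}$ is compact; taking the finite union over $\delta \in \Ch(S)$ gives that $K_r$ is compact. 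Finally, a compact subcomplex of a locally finite cell complex contains only finitely many cells, hence finitely many chambers, giving the ``in particular'' clause.

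**Main obstacle.** There is no serious obstacle here; the statement is essentially a bookkeeping consequence of Lemma~\ref{lem:truncated-sectors} plus finiteness of $\Ch(S)$. The only point requiring a line of care is verifying that $\rho$ genuinely restricts to a \emph{homeomorphism} (and carries the subcomplex structure correctly) on $\overline{K_v(\delta)}$ — this is the content of Remark~\ref{rem:where-k-lives} and ultimately rests on $\Sigma_\delta \in \mathcal{A}_\sigma$, so it can be cited rather than reproved. Everything else is a routine ``finite union of compacta'' argument.
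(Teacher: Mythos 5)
Your proof is correct and follows essentially the same route as the paper: reduce to a single closed sector via the isomorphism from Remark~\ref{rem:where-k-lives} (noting $h$ is $\rho$-invariant), invoke Lemma~\ref{lem:truncated-sectors} for compactness of the truncated sector in $\SigmaStd$, and conclude by finiteness of $\Ch(S)$. The only cosmetic point is that the image sector in the apartment should be written $\overline{K_{\rho(v)}(\sigma^{\op})}$, since the tip $v$ need not lie in $\SigmaStd$; this changes nothing in the argument.
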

\begin{proof}
Recall that $\overline{K_{\rho(v)}(\sigma^{\op})} \cap X_{h \leq r} \subseteq \SigmaStd$ is compact by Lemma~\ref{lem:truncated-sectors}.
Since $S$ is finite, it follows from Remark~\ref{rem:where-k-lives} that $K_r$ is the union of finitely many subcomplexes that are homeomorphic to $\overline{K_{\rho(v)}(\sigma^{\op})} \cap X_{h \leq r}$.
Thus we see that $K_r$ is compact.
\end{proof}

In view of Lemma~\ref{lem:on-finiteness-of-chain}, the following notation makes sense.

\begin{notation}\label{not:chain}
For every $r \in \R$, we define the $d$-chain
\[
c_r \defeq \sum \limits_{E \in \Ch(K_r)} \overline{b(E)} \cdot E \in C_d(X;\F_2).
\]
\end{notation}

\begin{remark}\label{rem:induced-cycle-from-abstr-cone}
Note that $c_r$ can also be described as the image of the chain $\widetilde{c}_r \defeq \sum \limits_{E \in \Ch(\widetilde{K}_r)} \overline{1} \cdot E \in C_d(\widetilde{K};\F_2)$
under $C_{d}(\pi) \colon C_d(\widetilde{K};\F_2) \rightarrow C_d(K;\F_2)$.
\end{remark}

\begin{proposition}\label{prop:non-vanishing-part}
There is a constant $R \in \R$ such that the boundary $\partial_{d}(c_r) \in C_{d-1}(X;\F_2)$ is non-zero for every $r \geq R$.
\end{proposition}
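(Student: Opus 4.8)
The plan is to show that for $r$ very negative the chain $c_r$ has nonzero boundary, by exhibiting at least one $(d-1)$-cell in $\supp(\partial_d c_r)$. By formula~\eqref{eq:branching-numb-in-boundary}, a $(d-1)$-cell $A$ lies in $\supp(\partial_d c_r)$ if and only if the sum over the $d$-dimensional cofaces $E$ of $A$ lying in $K_r$ of the reductions $\overline{b(E)}$ is odd. The idea is to find a panel $A$ in $K_r$ all of whose chambers in $K_r$ have odd branching number, and such that the total number of such chambers is odd. The natural place to look is near the ``tip'' of one of the sectors $\overline{K_v(\delta)}$, where by Corollary~\ref{cor:no-branching} branching numbers are identically $1$; there the chain $c_r$ restricted to $\overline{K_w(\delta)}$ looks like the full chamber-chain of a single truncated sector, and a truncated sector is homeomorphic to a truncated apartment-sector $\overline{K_{\rho(v)}(\sigma^{\op})} \cap X_{h \le r}$, which is a $d$-manifold with boundary; its top-chain over $\F_2$ has nonzero boundary (the boundary being the fundamental cycle of the bounding $(d-1)$-sphere-with-corners).

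Concretely, I would proceed as follows. First, fix a chamber $\delta \in \Ch(S)$ and use Corollary~\ref{cor:no-branching} to obtain a special vertex $w \in K_v(\delta)$ with $b(A)=1$ for every cell $A \subseteq \overline{K_w(\delta)}$. Set $R \defeq h(w)$; I claim $R$ works. Fix $r \ge R$. The key point is to analyze the contribution to $\partial_d c_r$ coming from cells inside the open sector $K_w(\delta)$. A $(d-1)$-cell $A \subseteq K_w(\delta)$ has exactly two $d$-dimensional cofaces in $\Sigma_\delta$; by Remark~\ref{rem:description-of-sectors} and Lemma~\ref{lem:truncated-sectors}, $\overline{K_w(\delta)} \cap X_{h \le r}$ is compact, and a panel interior to $K_w(\delta)$ either has both cofaces in $K_r$ or exactly one (depending on whether $A$ meets $h^{-1}(r)$ or lies strictly below). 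Moreover such a panel $A$ has $b(A)=1$, hence is not a coface-boundary contribution from chambers outside $\overline{K_w(\delta)}$: any chamber $E \supseteq A$ lies in $K_v(\tau)$ only for $\tau=\delta$ since $b(A)=1$ forces $A \notin K_v(\tau)$ for $\tau \neq \delta$ and $\st(A) \subseteq \overline{K_v(\tau)}$ would be needed. Thus for panels interior to $K_w(\delta)$, the coefficient of $A$ in $\partial_d c_r$ equals (number of cofaces of $A$ in $K_r$) mod $2$, which is exactly the boundary of the $\F_2$-fundamental chain of the compact $d$-manifold-with-boundary $\overline{K_w(\delta)} \cap X_{h\le r}$. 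Since this manifold is nonempty (it contains $w$) and its boundary $(d-1)$-sphere-with-corners is a nonzero $\F_2$-cycle, some interior panel $A$ appears with coefficient $1$. Finally, I must check that this $A$ receives no further contributions from chambers of $K_r$ outside $\overline{K_w(\delta)}$ sharing the panel $A$: but $A \subseteq K_w(\delta)$ open means $\st_X(A) \subseteq \overline{K_v(\delta)}$, and more precisely the two chambers of $\st_{\Sigma_\delta}(A)$ are the only chambers containing $A$, each with $b=1$; hence $\partial_d c_r$ has the coefficient of $A$ equal to the manifold-boundary coefficient, which is $1$ for suitable $A$. Therefore $\partial_d c_r \neq 0$.

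The main obstacle I expect is making precise the ``$d$-manifold-with-boundary $\Rightarrow$ nonzero $\F_2$-boundary of the top-chain'' step in the polysimplicial setting with corners, and especially ruling out cancellation: one must verify that every panel $A$ interior to $K_w(\delta)$ has precisely its two natural cofaces as its only cofaces in all of $X_{h \le r}$, so that no chamber coming from some other sector $\overline{K_v(\tau)}$ (which could meet $\overline{K_w(\delta)}$ along its walls) secretly contributes to $A$. This is where $b(A)=1$ and the fact that $w$ was chosen via Lemma~\ref{lem:no-branching} so that $\overline{K_w(\delta)} \cap \Sigma_\tau = \emptyset$ for $\tau \neq \delta$ does the work: any chamber containing an interior panel $A$ of $K_w(\delta)$ must lie in $\overline{K_w(\delta)} \subseteq \Sigma_\delta$, hence already counted. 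Once this local picture is nailed down, the conclusion is immediate, and in fact the same $R$ will serve for all $r \ge R$, since enlarging $r$ only extends the truncated sector further out where branching is still $1$.
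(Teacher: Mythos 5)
Your strategy is the same as the paper's: localize to a subsector $\overline{K_w(\delta)}$ with branching number $1$ (Corollary~\ref{cor:no-branching}, resting on Lemma~\ref{lem:no-branching}), and exhibit a panel there having exactly one coface in $K_r$, so that its coefficient in $\partial_d(c_r)$ is $\overline{1}$ by~\eqref{eq:branching-numb-in-boundary}. Two points in your execution need repair, both of which the paper's proof handles more directly. First, $R \defeq h(w)$ is too small: for $r$ only slightly above $h(w)$ the set $K_r$ need not contain any full chamber of $K_w(\delta)$, and then your argument produces nothing for that $r$ (also note your opening phrase ``for $r$ very negative'' is a slip; $K_r$ grows with $r$). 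The correct choice, as in the paper, is to take $R$ large enough that $K_R$ contains at least one chamber of $K_w(\delta)$.

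Second, the manifold-with-boundary step is where the real gap sits. The set $\overline{K_w(\delta)} \cap X_{h\le r}$ is not a subcomplex (chambers are cut by the level set), so its ``$\F_2$-fundamental chain'' is not a cellular chain of $X$; the relevant object is the subcomplex spanned by the subsector chambers contained in $X_{h\le r}$, and nonvanishing of the boundary of its chamber-sum follows simply from $Z_d(\Sigma_\delta;\F_2)=0$ (as in Lemma~\ref{lem:unique-bounding-disc}), not from manifold theory. More importantly, ``the boundary is nonzero'' does not by itself give you a panel \emph{interior} to the open cone with odd count: a priori the odd-coefficient panels could all lie on the walls of the subsector, where the second $\Sigma_\delta$-coface lies outside $\overline{K_w(\delta)}$ and can contribute to $c_r$ with unknown parity, so your non-cancellation argument would not apply to them. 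The clean way to close this, which is exactly the paper's argument, is to use that $h$ is unbounded above on $K_w(\delta)$ together with chamber-connectivity of the subsector to produce adjacent chambers $E,F \subseteq K_w(\delta)$ with $E \subseteq K_r$ and $F \nsubseteq K_r$; their common panel $P$ lies in $\overline{K_w(\delta)}$, which by Lemma~\ref{lem:no-branching} is disjoint from $\Sigma_\tau$ for all $\tau \neq \delta$, so $E$ and $F$ are the only chambers of $K$ containing $P$, hence $E$ is the unique chamber of $K_r$ in $\st(P)$ and $P$ has coefficient $\overline{1}$. (Relatedly, your claim $\st_X(A) \subseteq \overline{K_v(\delta)}$ is false in the thick building $X$; what is true, and all you need, is that the only chambers of $K$ containing such a panel lie in $\Sigma_\delta$.) With these two fixes your argument becomes the paper's proof.
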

\begin{proof}
Let $\delta \subseteq S$ be a chamber.
By Corollary~\ref{cor:no-branching} there is a special vertex $w \in K_{v}(\delta)$ such that $b(A)=1$ for every cell $A \subseteq K_{w}(\delta)$.
Let $R \in \R$ be large enough such that $K_R$ contains at least one chamber in $K_{w}(\delta)$ and let $r \geq R$.
Since $K_{w}(\delta)$ is not bounded above with respect to $h$, we can find a pair of adjacent chambers $E,F \subseteq K_{w}(\delta)$ such that $E \subseteq K_r$ and $F \nsubseteq K_r$.
Let $P$ be the common panel of $E$ and $F$.
Then $E$ is the unique chamber in $K_r$ that lies in the star of $P$.
In this case we see from~\eqref{eq:branching-numb-in-boundary} that the coefficient of $P$ in $\partial_{d}(c_r)$ is $\overline{1}$, which proves the claim.
\end{proof}

\subsection{Negative essential connectivity}\label{subsec:negative-essential}

Suppose from now on that $\Ch(S)$ consists of the support of a non-trivial cycle
\[
z \defeq \sum \limits_{\delta \in \Ch(S)} \delta \in Z_{d-1}(\Opp_{\pinf X}(\sigma);\F_2).
\]

\begin{lemma}\label{lem:even-in-abstr-cone}
Let $r \in \R$ and let $P \subseteq \widetilde{K}_r$ be a panel.
Suppose that $\Ch(\st_{\widetilde{K}_r}(P))$ consists of an odd number of chambers.
Then there is a chamber $E \subseteq \st_{\widetilde{K}}(P)$ that contains a point $p$ of height $\widetilde{h}(p) > r$.
In particular, the height of every point of $P$ is bounded below by $r-\varepsilon$, where $\varepsilon$ denotes the maximal diameter of a chamber with respect to $\widetilde{h}$.
\end{lemma}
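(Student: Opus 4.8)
The plan is to extract from the cycle condition on $z$ the combinatorial fact that every panel of the abstract cone $\widetilde{K}$ is a face of an even number of chambers, and then to conclude by a short parity argument.

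First I would record the structure of panels of $\widetilde{K}$. Write $\pi\colon\widetilde{K}\to K$ for the natural projection and let $Q\subseteq\widetilde{K}$ be a panel. Its image $\pi(Q)$ is a $(d-1)$-cell of $K$ and hence lies in some closed sector $\overline{K_v(\delta)}$, $\delta\in\Ch(S)$; since $\dim\overline{K_v(A_\infty)}=1+\dim(A_\infty)$ for a simplex $A_\infty\subseteq\pinf X$, a dimension count shows that either $\pi(Q)$ lies in the relative interior of $\overline{K_v(\delta)}$ for a unique $\delta$, or $\pi(Q)$ is contained in a wall $\overline{K_v(P_\infty)}$ for a unique panel $P_\infty$ of $\pinf X$. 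Since every $\delta\in\Ch(S)$ is opposite $\sigma$, all faces of $\delta$ lie in $\Opp_{\pinf X}(\sigma)$, so in the second case $P_\infty$ is a panel of $\Opp_{\pinf X}(\sigma)$. In the first case the identifications defining $\widetilde{K}$ leave $Q$ untouched, so $\st_{\widetilde{K}}(Q)=\st_{\overline{K_v(\delta)}}(Q)$ has exactly two chambers. In the second case the sectors $\overline{K_v(\delta)}$ with $P_\infty\subseteq\overline{\delta}$ are glued along $\overline{K_v(P_\infty)}\supseteq Q$, and inside each of them $Q$ lies on the topological boundary of the sector, hence is a face of exactly one chamber; thus $\#\Ch(\st_{\widetilde{K}}(Q))$ equals the number of chambers $\delta\in\Ch(S)$ with $P_\infty\subseteq\overline{\delta}$. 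As $z=\sum_{\delta\in\Ch(S)}\delta$ is a cycle, $\partial z=0$ in $C_{d-2}(\Opp_{\pinf X}(\sigma);\F_2)$, and the coefficient of $P_\infty$ in $\partial z$ is exactly this number modulo $2$; so it is even. In either case $\#\Ch(\st_{\widetilde{K}}(Q))$ is even.

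Next I would argue by contradiction. Suppose that no chamber $E\subseteq\st_{\widetilde{K}}(P)$ contains a point of height greater than $r$. Then $\widetilde{h}\leq r$ on every closed chamber having $P$ as a face, so each such chamber is contained in $\widetilde{K}_r$ and therefore $\st_{\widetilde{K}_r}(P)=\st_{\widetilde{K}}(P)$. By the previous paragraph $\Ch(\st_{\widetilde{K}_r}(P))$ would then have an even number of elements, contradicting the hypothesis. Hence some chamber $E\subseteq\st_{\widetilde{K}}(P)$ contains a point $p$ with $\widetilde{h}(p)>r$. For the concluding clause, fix such an $E$ and $p$: since $P$ is a face of $E$, both $p$ and every point $q$ of $P$ lie in $\overline{E}$, so $\widetilde{h}(q)\geq\widetilde{h}(p)-\varepsilon>r-\varepsilon$, where $\varepsilon$ denotes the $\widetilde{h}$-diameter of a chamber.

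The hard part will be the first paragraph, specifically recognizing that the cycle condition $\partial z=0$ is precisely equivalent to the even-valence of panels in $\widetilde{K}$; the dimension count separating the two kinds of panels, the fact that a boundary panel of a sector is a face of exactly one chamber of that sector, and the final parity argument are all routine.
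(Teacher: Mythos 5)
Your proof is correct and follows essentially the same route as the paper: the cycle condition on $z$ forces every panel of $S$, and hence every panel of $\widetilde{K}$, to be a face of an even number of chambers, and the parity mismatch with the odd star in $\widetilde{K}_r$ produces a chamber reaching above height $r$, after which the $\widetilde{h}$-diameter bound gives the final clause. The only difference is that you spell out, via the case analysis of panels interior to a sector versus panels on a wall $\overline{K_v(P_\infty)}$, the step the paper passes over with ``as a consequence''.
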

\begin{proof}
It follows from~\eqref{eq:branching-numb-in-boundary} that every panel in $S$ is a face of an even number of chambers in $S$.
As a consequence, each panel $P \subseteq \widetilde{K}$ is a face of an even number of chambers in $\widetilde{K}$.
Since $\abs{\Ch(\st_{\widetilde{K}_r}(P))}$ is odd, it therefore follows that some chamber $E \subseteq \st_{\widetilde{K}}(P)$ is not contained in $\st_{\widetilde{K}_r}(P)$.
By definition, this means that there is a point $p \in E$ that satisfies $\widetilde{h}(p) > r$.
\end{proof}

Let us return to the chains $c_r$ from the previous subsection.
In Proposition~\ref{prop:non-vanishing-part} we showed that $\partial_d(c_r) \in B_{d-1}(K;\F_2)$ is non-trivial for certain $r \in \R$.
The next proposition gives us a lower bound for the height of the panels in $\supp(\partial_d(c_r))$.

\begin{proposition}\label{prop:vanishing-part}
There is a constant $\varepsilon > 0$ such that for every $r \in \R$ the panels $P \in \supp(\partial_d(c_r))$ are contained in $X_{r \geq h \geq r-\varepsilon}$.
\end{proposition}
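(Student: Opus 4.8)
The plan is to take $\varepsilon$ to be the constant from Lemma~\ref{lem:even-in-abstr-cone}, that is, the maximal $\widetilde{h}$-diameter of a chamber of $\widetilde{K}$; since every chamber of $\widetilde{K}$ is mapped homeomorphically (in fact isometrically) onto a chamber of $X$ by the projection $\pi$, and $X$ has only finitely many isometry types of cells, this is a well-defined positive real number that does not depend on $r$. One then verifies the two inequalities $h(P)\leq r$ and $h(P)\geq r-\varepsilon$ separately for every panel $P\in\supp(\partial_d(c_r))$.

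The upper bound is immediate. Such a $P$ is a face of some chamber $E\in\supp(c_r)$, and by Notation~\ref{not:chain} every chamber of $\supp(c_r)$ lies in $\Ch(K_r)$, hence in $X_{h\leq r}$. As the sublevelset $X_{h\leq r}$ is closed in $X$, we get $\overline{E}\subseteq X_{h\leq r}$ and therefore $h(P)\leq r$.

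For the lower bound I would transfer the computation of $\partial_d(c_r)$ to the abstract cone. By Remark~\ref{rem:induced-cycle-from-abstr-cone} we have $c_r=C_d(\pi)(\widetilde{c}_r)$, and since $\pi$ is a cellular map, $C_{\bullet}(\pi)$ is a chain map, so $\partial_d(c_r)=C_{d-1}(\pi)\bigl(\partial_d(\widetilde{c}_r)\bigr)$. Applying the cellular boundary formula~\eqref{eq:branching-numb-in-boundary} inside $\widetilde{K}$ gives $\partial_d(\widetilde{c}_r)=\sum_{\widetilde{P}}\overline{\abs{\Ch(\st_{\widetilde{K}_r}(\widetilde{P}))}}\cdot\widetilde{P}$, the sum running over the panels of $\widetilde{K}$. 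Now fix $P\in\supp(\partial_d(c_r))$. Then the coefficient of $P$ in $C_{d-1}(\pi)(\partial_d(\widetilde{c}_r))$ is nonzero, so among the panels $\widetilde{P}$ of $\widetilde{K}$ with $\pi(\widetilde{P})=P$ there is at least one for which $\abs{\Ch(\st_{\widetilde{K}_r}(\widetilde{P}))}$ is odd. Lemma~\ref{lem:even-in-abstr-cone} then bounds $\widetilde{h}$ from below by $r-\varepsilon$ on all of $\widetilde{P}$, and since $\widetilde{h}=h\circ\pi$ by construction, we conclude that $h\geq r-\varepsilon$ on $P=\pi(\widetilde{P})$. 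Combining the two bounds yields $P\subseteq X_{r\geq h\geq r-\varepsilon}$, as claimed.

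The step I expect to require the most care is the chain-level identity $\partial_d(c_r)=C_{d-1}(\pi)(\partial_d(\widetilde{c}_r))$, which rests on checking that $\pi$ is a genuine cellular map: it sends each cell of $\widetilde{K}$ homeomorphically onto a cell of $K$ (even though it is far from globally injective, this being exactly how it records branching), so that $\widetilde{K}$ is a regular cell complex, the cellular boundary formula is available there, and naturality of $C_{\bullet}(-)$ applies. This is precisely the point at which passing to the abstract cone pays off: it replaces the awkward branching coefficients $b(E)$ on $K$ by plain mod-$2$ chamber counts on $\widetilde{K}$, on which Lemma~\ref{lem:even-in-abstr-cone} is exactly the tool needed.
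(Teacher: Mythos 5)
Your proposal is correct and follows essentially the same route as the paper: pass to the abstract cone via $c_r=C_d(\pi)(\widetilde{c}_r)$, use naturality of the boundary map (the paper's commutative diagram) to get $\partial_d(c_r)=C_{d-1}(\pi)(\partial_d(\widetilde{c}_r))$, and invoke Lemma~\ref{lem:even-in-abstr-cone} with the same constant $\varepsilon$. You additionally make explicit two points the paper leaves implicit — the upper bound $h\leq r$ coming from $\supp(c_r)\subseteq\Ch(K_r)$ and the observation that a nonzero coefficient of $P$ forces an odd star count for at least one lift $\widetilde{P}$ — which is fine.
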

\begin{proof}
Recall from Remark~\ref{rem:induced-cycle-from-abstr-cone} that $c_r$ is the image of
\[
\widetilde{c}_r = \sum \limits_{E \in \Ch(\widetilde{K}_r)} \overline{1} \cdot E \in C_d(\widetilde{K};\F_2)
\]
under $C_d(\pi)$.
Consider the following commutative diagram:
\begin{center}
\begin{tikzpicture}[scale=2]
\node (abscone1) at (0,0) {$C_d(\widetilde{K}_r;\F_2)$};
\node (cone1) at (2,0) {$C_d(K_r;\F_2)$};
\node (abscone2) at (0,-1) {$C_{d-1}(\widetilde{K}_r;\F_2)$};
\node (cone2) at (2,-1) {$C_{d-1}(K_r;\F_2)$};
\draw (abscone1) edge[->] node[above] {$C_{d}(\pi)$} (cone1);
\draw (abscone2) edge[->, left] node[below] {$C_{d-1}(\pi)$} (cone2);
\draw (abscone1) edge[->] node[left] {$\partial_d$} (abscone2);
\draw (cone1) edge[->,left] node[right] {$\partial_d$} (cone2);
\end{tikzpicture}
\end{center}
From Lemma~\ref{lem:even-in-abstr-cone} we know that the height of all panels in $\supp(\partial_d(\widetilde{c}_r))$ is bounded below by $r-\varepsilon$, where
\[
\varepsilon = \sup \Set{\abs{\widetilde{h}(x) - \widetilde{h}(y)}}{x,y \in C \text{ for some } C \in \Ch(\widetilde{K})}.
\]
In particular, we see that the height of all panels in $\supp(C_{d-1}(\pi) \circ \partial_d(\widetilde{c}_r))$ is bounded below by $r-\varepsilon$.
On the other hand, the above diagram tells us that
\[
\partial_d(c_r) = \partial_d \circ C_{d}(\pi)(\widetilde{c}_r) = C_{d-1}(\pi) \circ \partial_d(\widetilde{c}_r),
\]
which completes the proof.
\end{proof}

\begin{proposition}\label{prop:negative-global-geom-V1}
For every $t > 0$ there is a some $s \in \R$ such that the inclusion $\iota \colon X_{h \geq s+t} \rightarrow X_{h \geq s}$ induces a non-trivial morphism
\[
\widetilde{H}_{d-1}(\iota) \colon \widetilde{H}_{d-1}(X_{h \geq s+t};\F_2) \rightarrow \widetilde{H}_{d-1}(X_{h \geq s};\F_2).
\]
\end{proposition}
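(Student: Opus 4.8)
The plan is to exhibit, for suitable pairs of levels, an explicit non-trivial cycle in $\widetilde{H}_{d-1}$ that survives the inclusion: the boundary $z_r \defeq \partial_d c_r$ of the chain $c_r$ from Notation~\ref{not:chain}. The guiding picture is that $z_r$ lives in a thin slab just below level $r$, whereas its \emph{only} possible bounding $d$-chain in $X$, namely $c_r$ itself, is forced to reach far down; so $z_r$ cannot bound in any superlevelset whose bottom lies strictly above that low cell.

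First I would assemble the ingredients. By Proposition~\ref{prop:non-vanishing-part} there is an $R$ such that $z_r = \partial_d c_r$ is a non-zero $(d-1)$-cycle for every $r \ge R$, and by Proposition~\ref{prop:vanishing-part} there is a constant $\varepsilon > 0$ such that every panel in $\supp(z_r)$ has its closure inside the slab $X_{r \ge h \ge r-\varepsilon}$; in particular $z_r$ is a cellular cycle of the subcomplex $X(X_{h\ge r-\varepsilon})$ (Notation~\ref{not:supported-subcomplex}). Let $c_0 < \infty$ be the maximal $h$-diameter of a cell of $X$ (finite, as $X$ has finitely many cell shapes); then for every $p \in \R$ one has the trivial interleaving $X(X_{h\ge p}) \subseteq X_{h\ge p} \subseteq X(X_{h\ge p-c_0})$, the right inclusion because a cell meeting $X_{h\ge p}$ has its entire closure in $X_{h\ge p-c_0}$. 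Finally, fix a chamber $\delta \in \Ch(S)$ (non-empty, since $z$ is non-trivial) and let $w = w_\delta \in K_v(\delta)$ be the special vertex provided by Corollary~\ref{cor:no-branching}, so $b(A) = 1$ for all cells $A \subseteq \overline{K_w(\delta)}$. Choosing a chamber $E \subseteq \overline{K_w(\delta)}$ with $w \in \overline{E}$, its closure lies in the slab $X_{h(w)+c_0 \ge h \ge h(w)}$ and $b(E) = 1$, so for $r$ large $E \in \Ch(K_r)$ with $\overline{b(E)} = \overline{1}$, whence $E \in \supp(c_r)$.

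Now the argument itself. Given $t > 0$, choose $r$ so large that $r \ge R$ and $r - \varepsilon - t > h(w) + 2c_0$, and set $s \defeq r - \varepsilon - t$, so that $s + t = r - \varepsilon$. Then $z_r$ is a $(d-1)$-cycle supported in $X_{h \ge s+t}$ and thus defines a class in $\widetilde{H}_{d-1}(X_{h\ge s+t};\F_2)$; I claim its image in $\widetilde{H}_{d-1}(X_{h\ge s};\F_2)$ is non-zero. If it were zero, $z_r$ would bound in $X_{h\ge s} \subseteq X(X_{h\ge s-c_0})$, hence bound a cellular $d$-chain $b$ of the subcomplex $X(X_{h\ge s-c_0})$. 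But $X$ is contractible of dimension $d$ (Theorem~\ref{thm:buildings-are-CAT0}), so by Lemma~\ref{lem:unique-bounding-disc} the $d$-chain of $X$ with boundary $z_r$ is unique; since $\partial c_r = z_r$ as well, this forces $b = c_r$, i.e.\ $\supp(c_r) \subseteq X(X_{h\ge s-c_0})$. This contradicts the previous paragraph: the chamber $E \in \supp(c_r)$ has $\overline{E} \subseteq X_{h \le h(w)+c_0}$ with $h(w)+c_0 < s-c_0$, so $\overline{E}$ is disjoint from $X_{h\ge s-c_0}$ and $E \notin X(X_{h\ge s-c_0})$. Hence $\widetilde{H}_{d-1}(\iota)$ is non-trivial, as claimed.

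The one point requiring care — rather than genuine difficulty — is that the superlevelsets $X_{h\ge r}$ are polyhedral subsets and not subcomplexes, so Lemma~\ref{lem:unique-bounding-disc} does not apply to them on the nose. I route around this by comparing with the honest subcomplex $X(X_{h\ge s-c_0})$ through the interleaving above (one could equally use the preimage complexes $\widehat{U_h(\cdot)}$ together with Proposition~\ref{prop:intersection-cones}, as in the positive direction), the discrepancy being harmlessly absorbed into the constant $t$. Everything else is bookkeeping with the constants from Propositions~\ref{prop:non-vanishing-part} and~\ref{prop:vanishing-part} and Corollary~\ref{cor:no-branching}.
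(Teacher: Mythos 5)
Your proposal is correct and follows essentially the same route as the paper: the class of $\partial_d c_r$, non-trivial by Proposition~\ref{prop:non-vanishing-part}, supported near level $r$ by Proposition~\ref{prop:vanishing-part}, cannot bound in $X_{h\geq s}$ because Lemma~\ref{lem:unique-bounding-disc} forces any bounding chain to be $c_r$ itself, which contains the low chamber $E$ supplied by Corollary~\ref{cor:no-branching}. Your extra interleaving with the subcomplex $X(X_{h\geq s-c_0})$ is just a more careful handling of the fact that superlevelsets are not subcomplexes, which the paper's proof leaves implicit.
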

\begin{proof}
From Corollary~\ref{cor:no-branching} it follows that there is a number $s \in \R$ and a chamber $E \subseteq X_{h \leq s}$ such that $E \in \supp(c_r)$ for every $r \geq s$.
Moreover Proposition~\ref{prop:non-vanishing-part} tells us that $\partial_{d}(c_r)$ is non-trivial for every sufficiently large $r \in \R$ and Proposition~\ref{prop:vanishing-part} provides us with an $\varepsilon > 0$ such that every panel in $\supp(\partial_d(c_r))$ is contained in $X_{r \geq h \geq r-\varepsilon}$.
Thus we can deduce that $\partial_d(c_r) \in Z_{d-1}(X_{h \geq s+t};\F_2)$ is non-trivial for an appropriate $r > s+t+\varepsilon$.
Suppose that $\partial_d(c_r) \in B_{d-1}(X_{h \geq s};\F_2)$, i.e.\ that there is a chain $c \in C_d(X_{h \geq s};\F_2)$ with $\partial_d(c)=\partial_d(c_r)$.
Since $E \subseteq X_{h \leq s}$, it follows that $E \notin \supp(c)$, which implies $c \neq c_r$.
But this is a contradiction to the uniqueness statement in Lemma~\ref{lem:unique-bounding-disc}.
Together this shows that $\partial_d(c_r)$ represents a non-trivial element in $\widetilde{H}_{d-1}(X_{h \geq s+t};\F_2)$ and $\widetilde{H}_{d-1}(X_{h \geq s};\F_2)$, which completes the proof.
\end{proof}

Let us prove the main result of this section.
For easier reference we recall the assumptions on $X$ we made along the way.

\begin{theorem}\label{thm:negative-global-geom}
Let $X$ be a thick, $d$-dimensional Euclidean building, let $\SigmaStd \subset X$ be an apartment, let $\sigma \subseteq \pinf \SigmaStd$ be a chamber, let $\vstd \in \SigmaStd$ be a special vertex, and let $h \in X_{\sigma,v}^{\ast}$ be such that $h \circ [x,\xi)$ is strictly decreasing for every $x \in X$ and every $\xi \in \overline{\sigma}$.
Suppose there is an $\alpha \in \Aut(X)$ such that $h(\alpha(x)) = h(x) + a$ for some $a \in \R \setminus \{0\}$ and every $x \in X$.
Then $(X_{h \geq r})_{r \in \R}$ is not essentially $(d-1)$-acyclic.
\end{theorem}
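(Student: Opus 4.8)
The plan is to deduce the theorem from Proposition~\ref{prop:negative-global-geom-V1} together with the shift automorphism $\alpha$, once a suitable cycle $z$ has been produced so that the standing hypotheses of Subsection~\ref{subsec:negative-essential} are in force. To produce $z$, I would argue as follows. Since $X$ is a thick Euclidean building of dimension $d$, the building at infinity $\pinf X$ is a spherical building of dimension $d-1$ of infinite thickness (cf.\ the remark following Proposition~\ref{prop:existence-of-opposite-app}), whence $\Opp_{\pinf X}(\sigma)$ contains an apartment $\Sigma'$ of $\pinf X$. Put $S \defeq \Sigma'$; it is a compact subcomplex of $\Opp_{\pinf X}(\sigma)$ whose maximal simplices are its chambers, and since apartments are thin, every panel of $\Sigma'$ is a face of exactly two chambers of $\Sigma'$, so $z \defeq \sum_{\delta \in \Ch(\Sigma')} \delta$ is a nonzero cycle in $Z_{d-1}(\Opp_{\pinf X}(\sigma);\F_2)$ with $\supp(z) = \Ch(S)$. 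Thus all results of Subsection~\ref{subsec:negative-essential}, in particular Proposition~\ref{prop:negative-global-geom-V1}, become available for this choice of $S$ and $z$.

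Next I would read off a sharper, translation-ready version of the non-vanishing from the proof of Proposition~\ref{prop:negative-global-geom-V1}. That proof fixes once and for all a chamber $E \subseteq X$ and constants $R,\varepsilon > 0$ such that, for every $r \geq R$, the chain $\partial_d(c_r)$ is a nonzero cycle whose panels all lie in $X_{r \geq h \geq r-\varepsilon}$ and whose unique $\partial_d$-preimage in the contractible complex $X$ is $c_r$, which has $E$ in its support (Lemma~\ref{lem:unique-bounding-disc}). Since $h = \alpha \circ \rho$ restricts to a non-constant affine function on $\overline E$, we have $m_E \defeq \min_{x \in \overline E} h(x) < \max_{x \in \overline E} h(x)$, and for any $s$ with $s > m_E$ the cycle $\partial_d(c_r)$ is not a boundary in $X_{h \geq s}$: a chain $c' \in C_d(X_{h \geq s};\F_2)$ with $\partial_d c' = \partial_d c_r$ would equal $c_r$, forcing $\overline E \subseteq X_{h\geq s}$ and hence $m_E \geq s$. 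Consequently, given any reals $m_E < s \leq u$, choosing $r \geq \max(R, u+\varepsilon)$ makes $\partial_d(c_r)$ a cycle in $Z_{d-1}(X_{h\geq u};\F_2)$ representing a nonzero class that maps to a nonzero class in $\widetilde H_{d-1}(X_{h\geq s};\F_2)$; thus the inclusion $X_{h\geq u}\hookrightarrow X_{h\geq s}$ is non-trivial on $\widetilde H_{d-1}(-;\F_2)$ for all $s,u$ with $m_E < s \leq u$.

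Finally I would use $\alpha$ to erase the restriction $s > m_E$. After replacing $\alpha$ by $\alpha^{-1}$ if necessary, assume $a > 0$; then for each $n \in \N_0$ the automorphism $\alpha^n$ satisfies $h \circ \alpha^n = h + na$, so $\alpha^n(X_{h\geq r}) = X_{h\geq r+na}$ and $\alpha^n$ identifies the inclusion $X_{h\geq u}\hookrightarrow X_{h\geq s}$ with $X_{h\geq u+na}\hookrightarrow X_{h\geq s+na}$; being a cellular automorphism, $\alpha^n$ preserves non-triviality of the induced map on $\widetilde H_{d-1}(-;\F_2)$. Given arbitrary reals $s \leq u$, pick $n$ with $na > m_E - s$; then $m_E < s+na \leq u+na$, so by the previous step $X_{h\geq u+na}\hookrightarrow X_{h\geq s+na}$ is non-trivial on $\widetilde H_{d-1}(-;\F_2)$, and translating back by $\alpha^{-n}$ shows the same for $X_{h\geq u}\hookrightarrow X_{h\geq s}$. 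Hence \emph{every} inclusion of superlevelsets is non-trivial on $\widetilde H_{d-1}(-;\F_2)$; fixing any real $N$, no $\beta \leq N$ satisfies $\widetilde H_{d-1}(X_{h\geq N}\hookrightarrow X_{h\geq \beta};\F_2) = 0$, which by Definition~\ref{def:essentially-connected} means $(X_{h\geq r})_{r\in\R}$ is not essentially $(d-1)$-acyclic. I expect the main obstacle to be the second step: one has to open up the proof of Proposition~\ref{prop:negative-global-geom-V1} — which only asserts non-triviality for one window of levels per gap size — to extract that its obstruction cycles fail to bound over an entire half-line of levels, the point being precisely that the fixed chamber $E$ can be pushed arbitrarily high by iterating $\alpha$.
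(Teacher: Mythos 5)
Your proposal is correct and follows essentially the same route as the paper: the apartment at infinity opposite $\sigma$ supplies the cycle $z$, the chains $c_r$ together with Corollary~\ref{cor:no-branching}, Propositions~\ref{prop:non-vanishing-part} and~\ref{prop:vanishing-part}, and the uniqueness statement of Lemma~\ref{lem:unique-bounding-disc} supply cycles that do not bound above the fixed low chamber $E$, and the automorphism $\alpha$ is used to move level windows. The only difference is organizational: you reopen the proof of Proposition~\ref{prop:negative-global-geom-V1} to get non-triviality of the inclusion-induced maps on \emph{all} windows above $m_E$ and then translate this non-triviality down by powers of $\alpha$, whereas the paper uses Proposition~\ref{prop:negative-global-geom-V1} as stated and argues by contradiction, translating the hypothesized trivial maps into the window where that proposition applies.
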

\begin{proof}
Since $X$ is thick it follows that $\thick(\pinf X) = \infty$.
Moreover the strong transitivity of the action of $\Aut(X)$ on $X$ implies that $\Aut(\pinf X)$ acts strongly transitively on $\pinf X$.
In this case Proposition~\ref{prop:existence-of-opposite-app} provides us with an apartment $S$ in $\Opp_{\pinf X}(\sigma)$.
In particular, $\Ch(S)$ consists of the support of a non-trivial cycle in $Z_{d-1}(\Opp_{\pinf X}(\sigma);\F_2)$, which allows us to apply the results in this Subsection.
Suppose that $(X_{h \geq r})_{r \in \R}$ is essentially $(d-1)$-acyclic.
Then the inclusion induces a trivial morphism
\[
\widetilde{H}_{d-1}(X_{h \geq r+R};\F_2) \rightarrow \widetilde{H}_{d-1}(X_{h \geq r};\F_2)
\]
for appropriate $r \in \R$ and $R>0$.
From our assumption on $\alpha$ it follows that
\[
\widetilde{H}_{d-1}(X_{h \geq r+R+a \cdot k};\F_2) \rightarrow \widetilde{H}_{d-1}(X_{h \geq r + a \cdot k};\F_2)
\]
is trivial for every $k \in \Z$.
Moreover, Proposition~\ref{prop:negative-global-geom-V1} provides us with a constant $s \in \R$ such that
\[
\widetilde{H}_{d-1}(X_{h \geq s+t};\F_2) \rightarrow \widetilde{H}_{d-1}(X_{h \geq s};\F_2)
\]
is non-trivial for $t \defeq a+R$.
By choosing $k \in \Z$ to be the smallest integer with $r+a \cdot k \geq s$, we obtain the inequalities
\[
s \leq r+a \cdot k \leq r+R+a \cdot k \leq s+t.
\]
Together this gives us the following commutative diagram, where all maps are induced by inclusions:

\begin{center}
\begin{tikzpicture}[scale=2]
\node (st) at (0,0) {$\widetilde{H}_{d-1}(X_{h \geq s+t};\F_2)$};
\node (s) at (2.5,0) {$\widetilde{H}_{d-1}(X_{h \geq s};\F_2)$};
\node (rRak) at (0,-1) {$\widetilde{H}_{d-1}(X_{h \geq r+R+a \cdot k};\F_2)$};
\node (rak) at (2.5,-1) {$\widetilde{H}_{d-1}(X_{h \geq r+a \cdot k};\F_2)$};
\draw (st) edge[->] node[above] {} (s);
\draw (rRak) edge[->, left] node[below] {} (rak);
\draw (st) edge[->] node[above] {} (rRak);
\draw (s) edge[<-,left] node[below] {} (rak);
\end{tikzpicture}
\end{center}
But this is a contradiction since the morphism at the bottom is trivial, whereas the morphism at the top is not.
\end{proof}

\section{Convex functions on $\CAT(0)$-spaces}\label{sec:convex-on-cat0}

Our goal in this short section is to find mild conditions under which convex functions on $\CAT(0)$-spaces are continuous.

\begin{definition}\label{def:convex-function}
Let $(X,d)$ be a $\CAT(0)$-space.
A function $f \colon X \rightarrow \R$ is called \emph{convex} if for every two distinct points $a,b \in X$ the inequality
\[
f(x) \leq \frac{d(x,a)}{d(a,b)}f(b)+\frac{d(x,b)}{d(a,b)}f(a)
\]
holds for every $x \in [a,b]$.
\end{definition}

\begin{lemma}\label{lem:convexity-on-a-ray}
Let $(X,d)$ be a $\CAT(0)$-space, let $a,b \in X$ be two distinct points, and let $x,y \in [a,b]$.
Suppose that $x=[a,b](t)$ and $y=[a,b](t')$ for some real numbers $t,t'$ with $t < t'$.
Then every convex function $f \colon X \rightarrow \R$ satisfies
\[
\frac{f(x)-f(a)}{d(x,a)} \leq \frac{f(y)-f(x)}{d(y,x)} \leq \frac{f(b)-f(x)}{d(b,x)}.
\]
\end{lemma}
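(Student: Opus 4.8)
The plan is to deduce both inequalities from the defining inequality of Definition~\ref{def:convex-function}, applied twice along $[a,b]$, followed by elementary rearrangement. First I would record the geometry. Since $x=[a,b](t)$ and $y=[a,b](t')$ with $0\le t<t'\le d(a,b)$, the four points lie in the order $a,x,y,b$ along the geodesic $[a,b]$; because geodesics in a $\CAT(0)$-space are unique, the sub-segments $[a,y]$ and $[x,b]$ are exactly the restrictions of $[a,b]$ to the parameter intervals $[0,t']$ and $[t,d(a,b)]$, so in particular $x\in[a,y]$ and $y\in[x,b]$, and
\[
d(a,x)=t,\quad d(x,y)=t'-t,\quad d(a,y)=t',\quad d(x,b)=d(a,b)-t,\quad d(y,b)=d(a,b)-t'.
\]
Here $d(x,y)$ and $d(x,b)$ are positive because $t<t'\le d(a,b)$, and $d(a,x)=t>0$ is implicit in the statement (otherwise the first quotient is meaningless).

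For the left inequality I would apply convexity of $f$ to the pair $a,y$ at the point $x\in[a,y]$, which gives
\[
f(x)\le\frac{t}{t'}\,f(y)+\frac{t'-t}{t'}\,f(a).
\]
Multiplying through by $t'$ and substituting $t'=t+(t'-t)$, this rearranges to $(t'-t)\bigl(f(x)-f(a)\bigr)\le t\bigl(f(y)-f(x)\bigr)$; dividing by the positive number $t(t'-t)$ yields $\dfrac{f(x)-f(a)}{d(x,a)}\le\dfrac{f(y)-f(x)}{d(y,x)}$.

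The right inequality is symmetric: I would apply convexity of $f$ to the pair $x,b$ at the point $y\in[x,b]$, obtaining
\[
f(y)\le\frac{t'-t}{d(a,b)-t}\,f(b)+\frac{d(a,b)-t'}{d(a,b)-t}\,f(x).
\]
Multiplying by $d(a,b)-t$, writing $d(a,b)-t=(t'-t)+(d(a,b)-t')$, and cancelling the $f(x)$-terms, this becomes $\bigl(d(a,b)-t\bigr)\bigl(f(y)-f(x)\bigr)\le(t'-t)\bigl(f(b)-f(x)\bigr)$; dividing by the positive number $(t'-t)\bigl(d(a,b)-t\bigr)$ gives $\dfrac{f(y)-f(x)}{d(y,x)}\le\dfrac{f(b)-f(x)}{d(b,x)}$, which together with the previous paragraph completes the proof.

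I do not expect a genuine obstacle here; the only points needing care are that the relevant sub-segments of $[a,b]$ are themselves geodesics (immediate from uniqueness of geodesics in $\CAT(0)$-spaces, so that Definition~\ref{def:convex-function} is applicable to the pairs $a,y$ and $x,b$) and that the three denominators are positive, which is automatic except for $d(x,a)$, whose positivity is built into the hypothesis.
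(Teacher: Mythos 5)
Your proof is correct and follows essentially the same route as the paper: apply the defining convexity inequality once on $[a,y]$ at the point $x$ and once on $[x,b]$ at the point $y$, then rearrange algebraically, with the explicit parametrization by $t,t'$ just a notational variant of the paper's use of distances. The extra remarks on uniqueness of geodesics and positivity of denominators are fine but not substantively different from what the paper leaves implicit.
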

\begin{proof}
Since $y$ lies on the geodesic segment $[x,b]$, we may apply the convexity of $f$ to deduce that
\[
f(y) \leq \frac{d(y,x)}{d(x,b)}f(b)+\frac{d(y,b)}{d(x,b)}f(x).
\]
This gives us
{
\setlength{\jot}{5pt}
\begin{align*}
f(y)-f(x) & \leq \frac{d(y,x)}{d(x,b)}f(b)+\frac{d(y,b)}{d(x,b)}f(x)-f(x)\\ 
& = \frac{d(y,x)}{d(x,b)}f(b) + \frac{d(y,b)-d(x,b)}{d(x,b)} f(x)\\
& = \frac{d(y,x)}{d(x,b)}f(b) - \frac{d(y,x)}{d(x,b)} f(x)\\ 
& = \frac{d(y,x)}{d(x,b)}(f(b)-f(x)).
\end{align*}
}
We therefore obtain the second inequality
\[
\frac{f(y)-f(x)}{d(y,x)} \leq \frac{f(b)-f(x)}{d(b,x)}.
\]
For the first inequality, we note that $x$ lies on the geodesic segment $[a,y]$ and thus another application of the convexity of $f$ gives us
\[
f(x) \leq \frac{d(x,a)}{d(a,y)}f(y)+\frac{d(x,y)}{d(a,y)}f(a).
\]
By rearranging this inequality we see that
\[
f(y) \geq f(x) \cdot \frac{d(a,y)}{d(x,a)}-\frac{d(x,y)}{d(x,a)} f(a).
\]
Substracting $f(x)$ on both sides gives us
\begin{align*}
f(y)-f(x) & \geq f(x) \cdot \frac{d(a,y)}{d(x,a)} - \frac{d(x,y)}{d(x,a)} f(a)-f(x) \\
&= f(x) \cdot \frac{d(a,y)-d(x,a)}{d(x,a)} - \frac{d(x,y)}{d(x,a)} f(a) \\
&= f(x) \cdot \frac{d(x,y)}{d(x,a)} - f(a) \frac{d(x,y)}{d(x,a)}.
\end{align*}
This implies
\[
\frac{f(y)-f(x)}{d(x,y)} \geq \frac{f(x)-f(a)}{d(x,a)},
\]
which proves the claim.
\end{proof}

In general there is no need for a convex function on a $\CAT(0)$-space to be continuous.
For example, it is easy to define linear (and hence convex) functions on infinite-dimensional topological vector spaces that are not continuous.
The following definition aims to exclude such examples.

\begin{definition}\label{def:locally-bounded}
Let $X$ be a topological space.
A (not necessarily continuous) function $f \colon X \rightarrow \R$ is called \emph{locally bounded above}, if every point in $X$ admits a neighborhood $U$ such that $f(U) \subseteq \R$ is bounded above.
\end{definition}

Further types of discontinuous, convex functions on a $\CAT(0)$-space arise if the space has a ``boundary''.
Note for example that the function $f \colon [0,1] \rightarrow [0,1]$ that maps $1$ to $1$ and is constantly $0$ elsewhere is convex.
In order to exclude such behavior, we introduce the following property of geodesic metric spaces.

\begin{definition}\label{def:unif-local-extendable}
A geodesic metric space $(X,d)$ is \emph{locally uniformly extendible} if for every $x \in X$ there is some $\delta > 0$ such that the following holds.
For every $y \in X$ there is a geodesic segment $[a,b]$ that contains a segment $[x,y]$ such that $d(a,x)$, $d(b,y) \geq \delta$.
In this case $\delta$ will be called an \emph{extendibility constant} of $x$ in $X$.
\end{definition}

\begin{remark}\label{rem:smaller-extendibility}
Note that if $\delta$ is an extendibility constant of $x$ in $X$ then so is every number in the interval $(0,\delta]$.
\end{remark}

It turns out that being locally bounded above for a convex function on a locally uniformly extendible $\CAT(0)$-space is already enough to guarantee that the function is continuous.

\begin{proposition}\label{prop:equiv-cont-and-loc-bound}
Let $(X,d)$ be a locally uniformly extendible $\CAT(0)$-space.
A convex function $f \colon X \rightarrow \R$ is continuous if and only if it is locally bounded above.
\end{proposition}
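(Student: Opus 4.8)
The plan is to establish the two implications separately. The forward direction is immediate: if $f$ is continuous, then for each $x \in X$ the set $f^{-1}\big((f(x)-1,\,f(x)+1)\big)$ is an open neighbourhood of $x$ on which $f \le f(x)+1$, so $f$ is locally bounded above. Thus everything reduces to showing that a convex, locally bounded above function on a locally uniformly extendible $\CAT(0)$-space is continuous, and I will in fact show it is continuous at each point with a linear (Lipschitz-type) modulus.

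Fix $x_0 \in X$. Using local boundedness, pick $\rho_0 > 0$ and $M \in \R$ with $f \le M$ on the open ball $B(x_0,\rho_0)$; note $f(x_0) \le M$. Let $\delta$ be an extendibility constant of $x_0$, and by Remark~\ref{rem:smaller-extendibility} shrink it so that $\delta < \rho_0/2$. Now let $y \in X$ with $0 < \varepsilon \defeq d(x_0,y) < \rho_0/2$. By local uniform extendibility there is a geodesic segment containing $[x_0,y]$ and extending at least $\delta$ past each of $x_0$ and $y$; passing to the sub-segment cut off at distance exactly $\delta$ from $x_0$ on one side and exactly $\delta$ from $y$ on the other, I obtain a geodesic $[a,b]$ along which the points occur in the order $a, x_0, y, b$ with $d(a,x_0) = d(b,y) = \delta$. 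Then $d(a,x_0) = \delta < \rho_0$ and $d(b,x_0) = \delta+\varepsilon < \rho_0$, so $a,b \in B(x_0,\rho_0)$ and hence $f(a), f(b) \le M$.

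The final step is to feed the geodesic $[a,b]$ into the three-slope estimate of Lemma~\ref{lem:convexity-on-a-ray}, applied to the ordered pair of interior points $x_0, y$; this yields
\[
\frac{f(x_0)-f(a)}{\delta} \;\le\; \frac{f(y)-f(x_0)}{\varepsilon} \;\le\; \frac{f(b)-f(x_0)}{\delta+\varepsilon}.
\]
Since $M - f(x_0) \ge 0$, the right-hand inequality together with $f(b) \le M$ gives $f(y) - f(x_0) \le \tfrac{\varepsilon}{\delta}\,(M - f(x_0))$, and the left-hand inequality together with $f(a) \le M$ gives $f(y) - f(x_0) \ge -\tfrac{\varepsilon}{\delta}\,(M - f(x_0))$. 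Hence
\[
\abs{f(y)-f(x_0)} \;\le\; \frac{M - f(x_0)}{\delta}\, d(x_0,y)
\]
for all $y$ in the $(\rho_0/2)$-ball about $x_0$, which proves continuity at $x_0$; as $x_0$ was arbitrary, $f$ is continuous.

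The only subtle point is the middle paragraph: one must use the extendibility constant of $x_0$ itself and exploit the freedom (Remark~\ref{rem:smaller-extendibility}) to shrink it below $\rho_0/2$, so that \emph{both} endpoints $a$ and $b$ of the extended segment land inside the ball $B(x_0,\rho_0)$ on which the upper bound $M$ is available; the extendibility hypothesis is precisely what rules out the ``boundary'' counterexamples (such as $f$ on $[0,1]$) mentioned before the statement. Once $a,b$ are trapped in $B(x_0,\rho_0)$, Lemma~\ref{lem:convexity-on-a-ray} does the rest automatically.
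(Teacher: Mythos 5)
Your proof is correct and follows essentially the same route as the paper: the forward direction is immediate, and for the converse you extend the geodesic through $x_0$ and $y$ by an extendibility constant $\delta$ on both sides so that the endpoints stay inside the ball where the upper bound holds, then apply the three-slope estimate of Lemma~\ref{lem:convexity-on-a-ray} to obtain a linear bound on $\abs{f(y)-f(x_0)}$. The only cosmetic difference is that you keep the slightly sharper denominator $\delta+\varepsilon$ before weakening it to $\delta$, whereas the paper bounds by $\delta$ directly.
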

\begin{proof}
It is clear that continuous functions are locally bounded.
Thus let us assume that $f$ is locally bounded above and let $x \in X$ be an arbitrary point.
Then there are constants $\varepsilon > 0$ and $c \in \R$ such that $f(y) \leq c$ for every $y \in B_{\varepsilon}(x)$.
By Remark~\ref{rem:smaller-extendibility} we can choose an extendibility constant $\delta \in (0,\frac{\varepsilon}{2})$ for $x \in X$.
Let $y \in B_{\frac{\varepsilon}{2}}(x)$ be a point with $y \neq x$.
Then we can choose two points $a,b \in B_{\varepsilon}(x)$ such that $[x,y] \subseteq [a,b]$ and $d(a,x) = d(b,y) = \delta$.
An application of Lemma~\ref{lem:convexity-on-a-ray} gives us
\[
\frac{f(x)-c}{\delta} \leq \frac{f(x)-f(a)}{d(x,a)} \leq \frac{f(y)-f(x)}{d(y,x)} \leq \frac{f(b)-f(x)}{d(b,x)} \leq \frac{c-f(x)}{\delta}.
\]
Note that $c_1 \defeq \frac{f(x)-c}{\delta}$ and $c_2 \defeq \frac{c-f(x)}{\delta}$ do not depend on $y$.
As a consequence, it follows from the above inequality that
\[
d(y,x) \cdot c_1 \leq f(y)-f(x) \leq d(x,y) \cdot c_2,
\]
for every $y \in B_{\frac{\varepsilon}{2}}(x)$, which shows that $f$ is continuous in $x$.
\end{proof}

The following application of Proposition~\ref{prop:equiv-cont-and-loc-bound} will be used in the next section.

\begin{corollary}\label{cor:convex-contin-on-Rn}
Every convex function $f$ on a locally compact Euclidean building $X$ is continuous.
In particular, convex functions on Euclidean vector spaces are continuous.
\end{corollary}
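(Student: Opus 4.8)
The plan is to deduce the corollary from Proposition~\ref{prop:equiv-cont-and-loc-bound}, so the task reduces to verifying its two hypotheses for a locally compact Euclidean building $X$: that $X$ is a locally uniformly extendible $\CAT(0)$-space, and that $f$ is locally bounded above. That $X$ carries a $\CAT(0)$-metric is Theorem~\ref{thm:buildings-are-CAT0}, so these two points are what remain.

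For local uniform extendibility I would fix a point $x \in X$ and show that every $\delta > 0$ is an extendibility constant of $x$; take $\delta = 1$. Given any $y \in X$, axiom (B1) provides an apartment $\Sigma$ containing the cells of $x$ and of $y$, and since apartments are metrically convex the geodesic $[x,y]$ lies in $\Sigma$. As $\Sigma$ is isometric to a Euclidean space, I can prolong the segment corresponding to $[x,y]$ to a bi-infinite line inside $\Sigma$ and pick points $a,b$ on it with $[x,y] \subseteq [a,b]$ and $d(a,x), d(b,y) \geq 1$; because $\Sigma$ is isometrically embedded in the $\CAT(0)$-space $X$, this $\Sigma$-segment $[a,b]$ is the unique $X$-geodesic between $a$ and $b$. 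Note this step uses neither local compactness nor the building axioms beyond (B1) and convexity of apartments.

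For local boundedness above I would bring in local compactness as follows. A Euclidean building has only finitely many isometry types of cells, since each cell lies in an apartment and the Coxeter group of a Euclidean Coxeter complex acts transitively on its chambers; for such a complex local compactness is equivalent to local finiteness. Hence, if $A$ denotes the cell containing $x$, then $A$ has only finitely many cofaces and the closed star $\overline{\st_X(A)}$, being the union of the closed cells $\overline{B}$ over these cofaces $B$, is compact; moreover it is a neighborhood of $x$, as it contains the open set $\st_X(A)$. On each closed cell $\overline{B}$ the restriction $f|_{\overline{B}}$ is a convex function on the Euclidean polytope $\overline{B}$: indeed $\overline{B}$ is isometrically embedded and metrically convex in the $\CAT(0)$-space $X$, so the $X$-geodesic between two points of $\overline{B}$ is the straight segment, along which the defining inequality of Definition~\ref{def:convex-function} holds. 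Writing an arbitrary point of $\overline{B}$ as a convex combination of the finitely many vertices of $B$ and iterating that inequality gives $f|_{\overline{B}} \leq \max\{f(v) : v \text{ a vertex of } B\}$. Taking the maximum over the finitely many cofaces $B$ of $A$ shows that $f$ is bounded above on $\overline{\st_X(A)}$, hence locally bounded above, and Proposition~\ref{prop:equiv-cont-and-loc-bound} yields continuity.

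For the assertion about a Euclidean vector space $V$, I would simply regard $V$ as a Euclidean building with a single apartment by equipping it, for instance, with the polysimplicial structure of the Euclidean Coxeter complex of a product of copies of the $\widetilde{A}_1$-complex; this structure is locally finite, so $V$ becomes a locally compact Euclidean building and the first part applies. The only genuinely non-formal point is the local-boundedness step, where local compactness is used to reduce the question to a single closed cell so that the elementary principle ``a convex function on a polytope is controlled by its vertex values'' applies; everything else is bookkeeping with the building axioms and the $\CAT(0)$-geometry.
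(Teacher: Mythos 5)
Your proposal is correct and follows essentially the same route as the paper: it reduces via Proposition~\ref{prop:equiv-cont-and-loc-bound} to showing that $f$ is locally bounded above, and then obtains that bound on a compact neighborhood by combining local compactness (hence local finiteness) with the fact that a convex function on a convex Euclidean piece is controlled by its values at finitely many points --- you use the closed star of the carrier cell and vertex values, the paper a compact neighborhood covered by finitely many apartments, which is only a cosmetic difference. Your explicit verification of local uniform extendibility, via (B1), convexity of apartments, and extension of geodesics inside an apartment, is exactly the argument the paper leaves implicit when it asserts that $X$ is locally uniformly extendible.
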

\begin{proof}
Since $(X,d)$ is a locally uniformly extendible $\CAT(0)$-space, Proposition~\ref{prop:equiv-cont-and-loc-bound} tells us that it is sufficient to show that $f$ is locally bounded above.
Let $x \in X$ be a point.
Since $X$ is locally compact there is a compact neighborhood $U$ of $x$ such that $U$ is covered by finitely many apartments $\{ \Sigma_1, \ldots, \Sigma_k \}$.
For every index $i \in \{1, \ldots, k\}$ let $U_i \defeq \Sigma_i \cap U$.
Since $U_i$ is a bounded subset of the Euclidean space $\Sigma_i$ we can find a finite set of points $\mathcal{V}_i \subseteq \Sigma_i$ such that $U_i$ lies in the convex hull of $\mathcal{V}_i$.
Thus every point in $U_i$ can be written as a convex combination of the points in $\mathcal{V}_i$.
From this it follows that the restriction of $f$ to $U_i$ is bounded by $c_i \defeq \max \limits_{v \in \mathcal{V}_i} f(v)$.
Now the claim follows since the restriction of $f$ to $U$ is bounded above by $\max \limits_{i \in \{1, \ldots, k\}} c_i$.
\end{proof}

\section{Parabolic buildings}\label{sec:parabolic}

Let $X$ be a Euclidean building of dimension $d+1$.
In Section~\ref{sec:pos-dir} and~\ref{sec:neg-dir} we considered height functions $h \in X^{\ast}_{\sigma,v}$ for some chamber $\sigma \subseteq \pinf X$ and some special vertex $v \in X$. We showed that, under certain conditions, the system of superleverlsets $(X_{h \geq r})_{r \in \R}$ is essentially $(d-1)$-connected but not essentially $d$-acyclic.
One of these conditions was about $h$.
We restricted ourselves to the case where $h \circ [x,\xi)$ is strictly decreasing for every $\xi \in \overline{\sigma}$ and every $x \in X$.
In this section we relax this condition by allowing $h \circ [x,\xi)$ to be constant for certain $\xi \in \overline{\sigma}$.
This will reduce the essential connectivity of $(X_{h \geq r})_{r \in \R}$ by the number of vertices $\xi \in \sigma^{(0)}$ for which $h \circ [x,\xi)$ is constant.
To do so, we introduce an auxiliary Euclidean building $X^{\tau}$, where $\tau$ a simplex in $\pinf X$.
To construct $X^{\tau}$, we identify points in $X$ that lie on a common geodesic ray $[x,\xi)$, where $\xi$ is a point in $\overline{\tau}$.
To simplify some arguments, we will introduce another Euclidean building $X_{\tau}$ that is isomorphic to $X^{\tau}$ and appears as a convex subspace of $X$.
Most of the time we restrict ourselves to the case where $\tau$ is a vertex and obtain the general case by induction.
In a more general context, this inductive procedure was introduced by Caprace~\cite[Section~4.1]{Caprace09} in order to study amenable groups that act on $\CAT(0)$-spaces.
From a more algebraic point of view, $X^{\tau}$ was constructed by Landvogt~\cite{Landvogt96}.
Still it seems that a geometric description of the building structure of $X^{\tau}$ that fits our needs did not appear in the literature so far.
We therefore take the opportunity to include it here.

\begin{definition}\label{def:parabolic-building}
Let $\xi \in \pinf X$ be a vertex.
Consider the set $\hat{X}^\xi$ of geodesic rays $[x,\xi) \subseteq X$ with the pseudo-distance
\[
d([x,\xi),[y,\xi)) = \inf \{d(x',y') \mid x' \in [x,\xi), y' \in [y,\xi)\}.
\]
The \emph{parabolic building $X^\xi$} associated to $\xi$ is the metric space obtained from $\hat{X}^\xi$ by identifying points of distance zero.
\end{definition}

\begin{definition}\label{def:levi-building}
Let $\xi,\xi' \in \pinf X$ be opposite vertices.
The \emph{Levi building} $X^{\xi,\xi'}$ associated to $\xi$ and $\xi'$ is the set of geodesic lines connecting $\xi$ and $\xi'$, equipped with the distance
\[
d(\ell,m) = \inf \{d(x',y') \mid x' \in \ell, y' \in m\}.
\]
The \emph{extended Levi building} $\overline{X}^{\xi,\xi'}$ is defined as the union of all geodesic lines in $X^{\xi,\xi'}$.
\end{definition}

Note that $X^{\xi,\xi'}$, being a subspace of $X$, has a natural distance given by the restriction of the distance in $X$.
The use of the words \emph{parabolic} and \emph{Levi} comes from the case where $X$ is the Bruhat--Tits building associated to a Chavalley group $\mathcal{G}$.
In this case $X^\xi$ is the Bruhat--Tits building of the parabolic subgroup of $\mathcal{G}$ corresponding to $\xi$ and $X^{\xi,\xi'}$ is the Bruhat--Tits building of the Levi subgroup of $\mathcal{G}$ corresponding to $\xi$ and $\xi'$.
Our first goal in this section is to show that $X^\xi$, $X^{\xi,\xi'}$, and $\overline{X}^{\xi,\xi'}$ are indeed Euclidean buildings.
Note that these spaces are related by the following commutative diagram:

\begin{center}
\begin{tikzpicture}[scale=2]
\node (extlevi) at (0,0) {$\overline{X}^{\xi,\xi'}$};
\node (levi) at (1,0) {$X^{\xi,\xi'}$};
\node (full) at (0,-1) {$X$};
\node (parabolic) at (1,-1) {$X^\xi$};
\draw (full) edge[->>] node[above] {$p$} (parabolic);
\draw (extlevi) edge[->>] node[above] {$q$} (levi);
\draw (extlevi) edge[left hook->] node[left] {$i$} (full);
\draw (levi) edge[->] node[left] {} node[right] {$j$} (parabolic);
\end{tikzpicture}
\end{center}

The maps $p$ and $q$ are the canonical quotient maps.
The map $i$ denotes the inclusion and $j$ is the map that takes a biinfinite line that is parametrized by a geodesic $c \colon \R \rightarrow X$ towards $\xi$ and maps it to the class in $X^{\xi}$ that is represented by the ray $c_{\vert [0,\xi)}$.
We will show that $p$ and $q$ admit continuous sections.
Moreover we will see that $j$ is an isomorphism of Euclidean buildings and that $\overline{X}^{\xi,\xi'}$ is a strong deformation retract of $X$.

\subsection{Apartments in the parabolic building}\label{subsec:ap-in-par}
Let us fix an apartment $\SigmaStd$ in $X$, a pair of opposite vertices $\xi,\xi' \in \pinf \SigmaStd$, and a chamber $\sigma \subseteq \pinf \SigmaStd$ that has $\xi$ as a vertex.
Further we fix a special vertex $\vstd \in \SigmaStd$, which will allow us to view $\SigmaStd$ as a vector space with origin $\vstd$.
The full apartment system of $X$ will be denoted by $\mathcal{A}$.
The following types of apartments will be important for us.

\begin{definition}\label{def:horizontal-ap}
An apartment $\Sigma \in \mathcal{A}$ is called \emph{horizontal} if it contains the opposite rays $[x,\xi)$ and $[x,\xi')$ for some (and hence every) point $x \in \Sigma$.
The set of horizontal apartments of $X$ will be denoted by $\mathcal{A}_{\hor}$.
Similarly, we say that a wall $H \subseteq X$ is \emph{horizontal} if $[x,\xi)$ and $[x,\xi')$ are contained in $H$ for some (and hence every) $x \in H$.
\end{definition}

Note that our fixed apartment $\SigmaStd$ is horizontal.

\begin{notation}\label{def:parab-walls}
Let $\mathcal{H}$ denote the set of walls in $\SigmaStd$ and let $\mathcal{H}_{\hor} \subseteq \mathcal{H}$ be the subset of horizontal walls.
Let further $\mathcal{H}(\vstd) \subseteq \mathcal{H}$ denote the set of walls that contain $\vstd$ and let $\mathcal{H}_{\hor}(\vstd) = \mathcal{H}_{\hor} \cap \mathcal{H}(\vstd)$.
\end{notation}

Recall that we have an isomorphism $\lk_{\SigmaStd}(\vstd) \rightarrow \pinf \SigmaStd$ given by extending geodesic germs to geodesic rays.
In particular, the ray $[\vstd,\xi)$ contains a vertex $w \in \SigmaStd$ that is incident to $\vstd$ via an edge $e$.
From the choice of $\sigma$ we see that $E \defeq \pr_{\vstd}(\sigma)$ is a chamber in $\st_{\SigmaStd}(e)$.
The Euclidean Coxeter system induced by $E$ will be denoted by $(W,S)$.
Let further $(W_{\vstd},S_{\vstd})$ be the corresponding spherical Coxeter system induced by $E$.
The theory of Euclidean Coxeter groups provides us with a decomposition $W = W_{\vstd} \ltimes L$, where $L$ denotes the group of translations in $W$ (see for example~\cite[Proposition 10.17]{AbramenkoBrown08}).

\begin{notation}\label{not:cox-grp-for-lvl-0}
The subgroup of $W$ that is generated by the reflections $s_{H}$ with $H \in \mathcal{H}_{\hor}$ will be denoted by $W^{\xi}$.
\end{notation}

Note that the action of $W^{\xi}$ on $\SigmaStd$ is not cocompact.
To make up for that, we will consider a natural Euclidean Coxeter complex $\SigmaStd_{\xi}$ for which $W^{\xi}$ is the associated Coxeter group.
Let $\beta \colon X \rightarrow \R$ denote the Busemann function corresponding to $\xi$ that is centered at $\vstd$.
We will frequently use the fact that $\SigmaStd \cap \beta^{-1}(0)$ is a hyperplane in $\SigmaStd$ that is orthogonal to $[\vstd,\xi)$ (see~\cite[II.8.24.(1)]{BridsonHaefliger99}).

\begin{lemma}\label{lem:restriction-is-well-def}
The group $W^{\xi}$ fixes $\xi$ and $\xi'$.
Moreover $W^{\xi}$ stabilizes the sets $\mathcal{H}_{\hor}$ and $\SigmaStd \cap \beta^{-1}(0)$.
\end{lemma}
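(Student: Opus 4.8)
The plan is to reduce all three assertions to the case of a single generating reflection, since by Notation~\ref{not:cox-grp-for-lvl-0} the group $W^{\xi}$ is generated by the reflections $s_H$ with $H \in \mathcal{H}_{\hor}$. Throughout I would view $\SigmaStd$ as a Euclidean vector space with origin $\vstd$, write $v \in \SigmaStd$ for the unit vector pointing from $\vstd$ towards $\xi$, and note that, $\xi$ and $\xi'$ being opposite, the ray $[\vstd,\xi')$ has direction $-v$. The first thing to record is the characterization that a wall $H \in \mathcal{H}$ is horizontal if and only if its direction space $\vec{H}$ (the linear hyperplane parallel to $H$) contains $v$: for $x \in H$ we have $[x,\xi) = \{x+tv : t \geq 0\}$ and $[x,\xi') = \{x-tv : t \geq 0\}$, and an affine hyperplane contains either of these rays precisely when $v \in \vec{H}$; this also accounts for the ``some $=$ every'' clause in Definition~\ref{def:horizontal-ap}.

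Next I would use that the linear part $\vec{s_H}$ of the affine reflection $s_H$ is the orthogonal reflection of $\SigmaStd$ across $\vec{H}$; since $v \in \vec{H}$ by the preceding observation, $\vec{s_H}(v) = v$, whereas $s_H(x) - x$ always lies in $\R \cdot n_H$ for $n_H$ a unit normal of $H$, with $n_H \perp \vec{H}$, hence $\langle n_H, v\rangle = 0$. The first assertion is then immediate: the isometry induced by $s_H$ on $\pinf \SigmaStd$ is the map on unit directions induced by $\vec{s_H}$, and this fixes $v$ and $-v$, i.e.\ fixes $\xi$ and $\xi'$; therefore $W^{\xi}$ fixes $\xi$ and $\xi'$. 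For the invariance of $\mathcal{H}_{\hor}$, note that $s_H \in W$ because $H \in \mathcal{H}_{\hor} \subseteq \mathcal{H}$, so $W$ permutes $\mathcal{H}$ (via $s_H s_{H'} s_H^{-1} = s_{s_H(H')}$) and $s_H(H')$ is again a wall for every $H' \in \mathcal{H}_{\hor}$; its direction space is $\vec{s_H}(\vec{H'})$, which contains $\vec{s_H}(v) = v$, so $s_H(H')$ is horizontal, and hence $W^{\xi}$ stabilizes $\mathcal{H}_{\hor}$.

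For the remaining assertion I would invoke the fact recalled in the text via \cite[II.8.24.(1)]{BridsonHaefliger99}, that $\beta$ restricts on $\SigmaStd$ to an affine function whose level sets are exactly the affine hyperplanes of $\SigmaStd$ orthogonal to $v$ (concretely $x \mapsto -\langle x - \vstd, v\rangle$), so that $\SigmaStd \cap \beta^{-1}(0)$ is one of them. Since $s_H(x) - x \in \R \cdot n_H$ and $\langle n_H, v\rangle = 0$, we get $\beta(s_H(x)) = \beta(x)$ for every $x \in \SigmaStd$; thus $s_H$ preserves each level set of $\beta|_{\SigmaStd}$, and so does every element of $W^{\xi}$. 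Applying this to the zero level set completes the argument.

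I do not anticipate a genuine obstacle here: the content is simply that ``$H$ horizontal'', i.e.\ $v \in \vec{H}$, is by design exactly the condition that makes $s_H$ compatible with the data attached to $\xi$. The only points calling for a little care are the standard bookkeeping of identifying an affine isometry with its linear part when passing to $\pinf \SigmaStd$ and to direction spaces of walls, and the observation that $s_H$ lies in the affine Coxeter group $W$, so that it genuinely permutes the wall set $\mathcal{H}$.
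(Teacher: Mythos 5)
Your proof is correct and follows essentially the same route as the paper: reduce to a generating reflection $s_H$ with $H \in \mathcal{H}_{\hor}$, use that horizontality means the direction towards $\xi$ (and $\xi'$) lies in $H$, and conclude that $s_H$ fixes $\xi$ and $\xi'$, permutes the horizontal walls, and preserves the level sets of $\beta$ on $\SigmaStd$. The only difference is cosmetic: you spell out the linear part and the normal vector $n_H$ explicitly, whereas the paper argues directly that $s_H$ fixes the rays $[x,\xi)$, $[x,\xi')$ pointwise for $x \in H$ and then invokes that an isometry fixing such a ray stabilizes the orthogonal hyperplane $\SigmaStd \cap \beta^{-1}(0)$.
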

\begin{proof}
Let $H \in \mathcal{H}_{\hor}$ and let $s_{H}$ be the reflection at $H$.
Since $H$ is horizontal, it follows that $H$ contains $[x,\xi)$ and $[x,\xi')$ for every $x \in H$.
By definition, $s_{H}$ fixes $H$.
In particular, $s_H$ fixes $[x,\xi)$ and $[x,\xi')$ for every $x \in H$, which proves the first claim.
Since $s_H$ is an isometry that fixes $[x,\xi)$ pointwise, it has to stabilize the orthogonal complement $\SigmaStd \cap \beta^{-1}(0)$ of $[x,\xi)$ in $\SigmaStd$.
It remains to verify that $s_H$ stabilizes $\mathcal{H}_{\hor}$.
But this directly follows from the fact that $s_H$ fixes $\xi$ and $\xi'$ and, being an isometry, maps hyperplanes to hyperplanes.
\end{proof}

\begin{notation}\label{not:alphas}
Let $\mathcal{B} = \{\alpha_0,\ldots,\alpha_d\}$ be a set of linear forms in $\SigmaStd^{\ast}$ such that
\[
\overline{K_{\vstd}(\sigma)} = \Set{x \in \SigmaStd}{\alpha_k(x) \geq 0 \text{ for every } 0 \leq k \leq d}.
\]
Suppose that $\alpha_{0}^{-1}(0)$ is the unique non-horizontal wall of $\overline{K_{\vstd}(\sigma)}$.
\end{notation}

Clearly $\mathcal{B}$ is a basis of $\SigmaStd^{\ast}$ whose elements vanish on the panels of $E$ that contain $\vstd$.
It will be important for us to note that $\mathcal{B}$ remains a basis if we replace $\alpha_0$ by the restriction of $\beta$ to $\SigmaStd$.
For convenience, we will denote this restriction by $\beta$ as well.

\begin{lemma}\label{lem:basis-of-dual-sigma}
The set $\{\beta,\alpha_1,\ldots,\alpha_n\}$ forms a basis of $\SigmaStd^{\ast}$.
\end{lemma}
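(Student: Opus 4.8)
The plan is to use the Steinitz exchange principle. Since $\mathcal{B}=\{\alpha_0,\alpha_1,\dots,\alpha_d\}$ is already a basis of $\SigmaStd^{\ast}$ (as noted just before the statement), the set obtained from it by replacing $\alpha_0$ with $\beta$ is again a basis as soon as $\beta\notin\mathrm{span}\{\alpha_1,\dots,\alpha_d\}$. Equivalently, since every element of $\mathrm{span}\{\alpha_1,\dots,\alpha_d\}$ vanishes on the common kernel $\ell:=\bigcap_{i=1}^{d}\ker\alpha_i$, it suffices to exhibit a point of $\ell$ at which $\beta$ does not vanish. (The index $n$ in the statement is $d$, matching the indexing of $\mathcal{B}$.)

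First I would identify $\ell$ as the line through $\vstd$ in the direction of $\xi$. By Notation~\ref{not:alphas} the walls bounding the sector $\overline{K_{\vstd}(\sigma)}$ are exactly $\alpha_0^{-1}(0),\dots,\alpha_d^{-1}(0)$; they all pass through the apex $\vstd$ (so in particular $\alpha_i(\vstd)=0$), and $\alpha_0^{-1}(0)$ is the unique non-horizontal one among them. Hence $\alpha_1^{-1}(0),\dots,\alpha_d^{-1}(0)$ are horizontal walls through $\vstd$, so by Definition~\ref{def:horizontal-ap} each of them contains the opposite rays $[\vstd,\xi)$ and $[\vstd,\xi')$, i.e.\ the whole line $\ell$ they span. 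Thus $\alpha_i$ vanishes on $\ell$ for every $1\le i\le d$, which gives $\ell\subseteq\bigcap_{i=1}^{d}\ker\alpha_i$; since the $\alpha_i$ with $1\le i\le d$ are linearly independent, this intersection is one-dimensional, so in fact $\ell=\bigcap_{i=1}^{d}\ker\alpha_i$.

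Finally, the restriction of $\beta$ to $\SigmaStd$ is the linear form characterized by $\beta([\vstd,\xi)(t))=t$ (cf.\ the remark invoking~\cite[II.8.24.(1)]{BridsonHaefliger99}), so $\beta$ is non-zero on $\ell$ and therefore $\beta\notin\mathrm{span}\{\alpha_1,\dots,\alpha_d\}$. Consequently $\{\beta,\alpha_1,\dots,\alpha_d\}$ is linearly independent, and being $d+1$ vectors in the $(d+1)$-dimensional space $\SigmaStd^{\ast}$ it is a basis. The only mildly delicate point is the geometric identification of $\ell$ with the common zero set of $\alpha_1,\dots,\alpha_d$ via the horizontality of those walls; everything else is elementary linear algebra.
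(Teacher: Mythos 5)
Your proof is correct and follows essentially the same route as the paper: both identify $\bigcap_{i=1}^{d}\ker(\alpha_i)$ as the line spanned by $[\vstd,\xi)$ using the horizontality of the walls $\ker(\alpha_i)$, $1\le i\le d$, and then observe that $\beta$ does not vanish on this line (the paper phrases this as $\ker(\beta)\cap\bigcap_{i=1}^{d}\ker(\alpha_i)=\{\vstd\}$, you phrase it as $\beta\notin\mathrm{span}\{\alpha_1,\dots,\alpha_d\}$ plus Steinitz exchange, which is the same elementary linear algebra). No gaps.
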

\begin{proof}
Note that the walls that bound $\overline{K_{\vstd}(\sigma)}$ are the kernels of the maps $\alpha_k \in \mathcal{B}$.
Since $\alpha_0$ is the unique linear form in $\mathcal{B}$ whose kernel is non-horizontal it follows that $\xi \in \pinf (\bigcap \limits_{k=1}^{d} \ker(\alpha_k))$.
On the other hand, basic linear algebra tells us that $L \defeq \bigcap \limits_{k=1}^{d} \ker(\alpha_k)$ is a one dimensional linear subspace of $\SigmaStd$ and therefore consists of the linear span of $[\vstd,\xi)$.
Since $\SigmaStd \cap \beta^{-1}(0)$ is a hyperplane in $\SigmaStd$ that is orthogonal to $[\vstd,\xi)$, we obtain
\[
\ker(\beta) \cap \bigcap \limits_{i=1}^{d} \ker(\alpha_i)  = \{\vstd\}.
\]
From this is follows that $\{\beta,\alpha_1,\ldots,\alpha_d\}$ forms a basis of $\SigmaStd^{\ast}$.
\end{proof}

\begin{notation}\label{not:zero-levels}
Let $\SigmaStd_{\xi} = \SigmaStd \cap \beta^{-1}(0)$ and let $\mathcal{H}_{\xi} = \Set{H \cap \SigmaStd_{\xi}}{H \in \mathcal{H}_{\hor}}$.
\end{notation}

Lemma~\ref{lem:basis-of-dual-sigma} particularly implies that $\mathcal{H}_{\xi}$ is a set of hyperplanes in $\SigmaStd$.
Note that by intersecting horizontal apartments with $\SigmaStd_{\xi}$, we get a bijection
$\mathcal{H}_{\hor} \rightarrow \mathcal{H}_{\xi}                                                                                                                                                                     $.

\begin{proposition}\label{prop:parab-app-is-an-app}
The hyperplane arrangement $\mathcal{H}_{\xi}$ turns $\SigmaStd_{\xi}$ into a Euclidean Coxeter complex.
Its Coxeter group, denoted by $W_{\xi}$, is canonically isomorphic to $W^{\xi}$ via the restriction map $\phi \colon W^{\xi} \rightarrow W_{\xi},\ f \mapsto f_{\vert \SigmaStd_{\xi}}$.
\end{proposition}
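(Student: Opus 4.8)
The plan is to read both assertions off a product decomposition of $\SigmaStd$ determined by $\beta$. Since $\SigmaStd_{\xi} = \SigmaStd \cap \beta^{-1}(0)$ is a linear hyperplane orthogonal to $[\vstd,\xi)$ (see~\cite[II.8.24.(1)]{BridsonHaefliger99}) and $\beta_{|\SigmaStd}$ has unit gradient, the assignment $x \mapsto (\pr_{\SigmaStd_{\xi}}(x), \beta(x))$ is an isometry $\SigmaStd \xrightarrow{\ \sim\ } \SigmaStd_{\xi} \times \R$ carrying $[\vstd,\xi)$ onto $\{0\} \times [0,\infty)$. Under this identification, a wall $H \in \mathcal{H}$ is horizontal exactly when its direction contains that of $[\vstd,\xi)$, and in that case $H$ is ``vertical'', i.e.\ $H = (H \cap \SigmaStd_{\xi}) \times \R$; this is precisely the bijection $\mathcal{H}_{\hor} \to \mathcal{H}_{\xi}$, $H \mapsto H \cap \SigmaStd_{\xi}$, recorded after Lemma~\ref{lem:basis-of-dual-sigma}, with inverse $H' \mapsto H' \times \R$.

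Next I would analyse the reflections. For $H \in \mathcal{H}_{\hor}$ the reflection $s_{H}$ stabilises $\SigmaStd_{\xi}$ and fixes $\xi$ and $\xi'$ by Lemma~\ref{lem:restriction-is-well-def}; being an isometric involution of $\SigmaStd_{\xi}$ that fixes the hyperplane $H \cap \SigmaStd_{\xi}$ pointwise, it restricts on $\SigmaStd_{\xi}$ to the reflection $s_{H \cap \SigmaStd_{\xi}}$, while acting trivially on the $\R$-factor. Hence, in the product, $W^{\xi} = W_{\xi} \times \{\id_{\R}\}$ with $W_{\xi} = \langle s_{H'} \mid H' \in \mathcal{H}_{\xi} \rangle \leq \Isom(\SigmaStd_{\xi})$, and the restriction map $\phi$ is the projection onto the first factor, hence an isomorphism $W^{\xi} \xrightarrow{\ \sim\ } W_{\xi}$. (Injectivity can also be seen directly: an affine isometry of $\SigmaStd$ fixing $\SigmaStd_{\xi}$ pointwise is either the identity or the reflection at $\SigmaStd_{\xi}$, and the latter interchanges $\xi$ and $\xi'$, contradicting Lemma~\ref{lem:restriction-is-well-def}.)

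It remains to see that $(\SigmaStd_{\xi}, \mathcal{H}_{\xi})$ is a Euclidean Coxeter complex whose Coxeter group is $W_{\xi}$. The arrangement $\mathcal{H}_{\xi}$ is locally finite, because $\mathcal{H}$ is and a compact subset of $\SigmaStd_{\xi}$ — viewed inside $\SigmaStd$ — meets only finitely many walls of $\SigmaStd$; it is $W_{\xi}$-stable because $\mathcal{H}_{\hor}$ is $W^{\xi}$-stable (Lemma~\ref{lem:restriction-is-well-def}) and $\phi$ intertwines the two actions along the above bijection; and $W_{\xi} \cong W^{\xi} \leq W$ acts properly on $\SigmaStd_{\xi}$, since compacta of $\SigmaStd_{\xi}$ are compacta of $\SigmaStd$ and $W$ acts properly there. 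Thus $W_{\xi}$ is a discrete group of isometries of the Euclidean space $\SigmaStd_{\xi}$ generated by reflections, hence a Euclidean reflection group, and its set of walls is exactly $\mathcal{H}_{\xi}$: every reflection in $W^{\xi}$ is a $W^{\xi}$-conjugate of a generator $s_{H}$ ($H \in \mathcal{H}_{\hor}$) and hence of the form $s_{H'}$ with $H' \in \mathcal{H}_{\hor}$, while conversely $s_{H} \in W^{\xi}$ implies $s_{H}$ fixes $\xi$, forcing $\xi \in \pinf(\vec{H})$, i.e.\ $H \in \mathcal{H}_{\hor}$. Therefore $\SigmaStd_{\xi}$, with the cell structure cut out by $\mathcal{H}_{\xi}$, is the Coxeter complex of $W_{\xi}$, and $\phi$ identifies its Coxeter group with $W^{\xi}$.

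The one point needing care — and the place I expect the argument to be least formal — is compatibility with the paper's standing conventions on Euclidean Coxeter complexes, namely that $W_{\xi}$ acts essentially on $\pinf\SigmaStd_{\xi}$, so that the cells of $\SigmaStd_{\xi}$ are genuinely polysimplices. This amounts to showing that $\bigcap_{H \in \mathcal{H}_{\hor}} \vec{H}$ is exactly the line spanned by $[\vstd,\xi)$, which I would deduce from the (standard) fact that the stabiliser of the vertex $\xi$ in the spherical Coxeter group of $\pinf\SigmaStd$ acts essentially on $\lk_{\pinf\SigmaStd}(\xi)$. Apart from this, everything is soft once the decomposition $\SigmaStd \cong \SigmaStd_{\xi} \times \R$ and Lemma~\ref{lem:restriction-is-well-def} are in hand.
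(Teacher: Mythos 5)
Your treatment of the isomorphism $\phi$ is essentially the paper's argument, repackaged through the product decomposition $\SigmaStd \cong \SigmaStd_{\xi} \times \R$: local finiteness, $W^{\xi}$-stability via Lemma~\ref{lem:restriction-is-well-def}, the fact that $s_H$ restricts to the reflection at $H \cap \SigmaStd_{\xi}$, and the injectivity argument (an isometry fixing $\SigmaStd_{\xi}$ pointwise is the identity or the reflection at $\SigmaStd_{\xi}$, and the latter swaps $\xi$ and $\xi'$) are exactly the steps in the paper. Where you genuinely diverge is the verification that $(\SigmaStd_{\xi},\mathcal{H}_{\xi})$ is a Euclidean Coxeter complex, and there your argument has a gap at precisely the point you flagged.

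The paper does not check essentiality at infinity; it checks that the chambers of $\mathcal{H}_{\xi}$ are \emph{bounded}: a component $C$ of $\SigmaStd_{\xi} \setminus \bigcup_{H' \in \mathcal{H}_{\xi}} H'$ satisfies $\beta(C)=0$ and lies between two parallel walls in each of the directions $\ker(\alpha_k)\cap\SigmaStd_{\xi}$, $1 \leq k \leq d$, so boundedness follows because $\{\beta,\alpha_1,\ldots,\alpha_d\}$ is a basis of $\SigmaStd^{\ast}$ --- this is exactly what Lemma~\ref{lem:basis-of-dual-sigma} is set up for. Your substitute, essentiality of $W_{\xi}$ at infinity (deduced, correctly, from the fact that $\xi$ is a vertex of the spherical Coxeter complex $\pinf\SigmaStd$, so that $\bigcap_{H\in\mathcal{H}_{\hor}}\vec{H}$ is the line through $\xi$), is strictly weaker than what is needed: essentiality at infinity alone does not make the cells bounded polysimplices. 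For instance, the two coordinate hyperplanes in $\R^2$ with the Klein four group form a locally finite, stable arrangement whose group acts essentially at infinity, yet the chambers are unbounded quadrants. Compact, genuinely polysimplicial chambers are what the later arguments use (e.g.\ the chamber $E_{\xi}\subseteq\st_{\SigmaStd_{\xi}}(\vstd)$ and the sector in Lemma~\ref{lem:a-sector-in-Sigma-Xi}, and the Morse theory on $X_{\xi}$), and boundedness also implies your essentiality, not conversely. To close the gap you must add that every direction occurring in $\mathcal{H}_{\xi}$ occurs with walls arbitrarily far out on both sides --- parallels of horizontal walls are horizontal, so this is available --- or, more economically, simply run the paper's bounding argument via Lemma~\ref{lem:basis-of-dual-sigma}. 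With that supplement your proof is complete; without it, the polysimpliciality of $\SigmaStd_{\xi}$ has not actually been established.
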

\begin{proof}
The local finiteness of $\mathcal{H}_{\xi}$ follows directly from the local finiteness of $\mathcal{H}$.
Let $C$ be a connected component in $\SigmaStd_{\xi} \setminus \bigcup_{H \in \mathcal{H}_{\xi}} H$.
By definition we have $\ker(\alpha_k) \cap \SigmaStd_{\xi} \in \mathcal{H}_{\xi}$ for every $1 \leq k \leq d$.
It therefore follows that $C$ lies between two parallel walls of each $\ker(\alpha_k)$.
Thus $\alpha_k(C) \subseteq \R$ is bounded for every $1 \leq k \leq d$.
By construction we also have $\beta(C) = 0$.
Since $\{\beta,\alpha_1,\ldots,\alpha_n\}$ is a basis of $\SigmaStd^{\ast}$ by Lemma~\ref{lem:basis-of-dual-sigma}, it follows that $C$ is a bounded subset of $\SigmaStd_{\xi}$.
It remains to show that $\mathcal{H}_{\xi}$ is stable under the action of $W_{\xi}$.
From Lemma~\ref{lem:restriction-is-well-def} we know that $W^{\xi}$ stabilizes $\SigmaStd_{\xi}$ and $\mathcal{H}_{\xi}$.
Let $s_H \in W^{\xi}$ be the reflection at a horizontal hyperplanes $H$.
Then $s_H$ restricts to a reflection at $H \cap \SigmaStd_{\xi} \in \mathcal{H}_{\xi}$ in $\SigmaStd_{\xi}$.
Since $W^{\xi}$ is generated by the reflections at horizontal hyperplanes, it follows that $\phi$ is well-defined and surjective.
Now let $f$ be an isometry of $\SigmaStd$ that fixes $\SigmaStd_{\xi} = \SigmaStd \cap \beta^{-1}(0)$.
Then $f$ is either trivial or the reflection $s$ at $\SigmaStd_{\xi}$.
Since $s$ maps $\xi$ to $\xi'$ and $W^{\xi}$ fixes $\xi$ by Lemma~\ref{lem:restriction-is-well-def}, it follows that $\phi$ is injective.
\end{proof}

\subsection{A subbuilding of X}
Our next goal is to show that the extended Levi building $\overline{X}^{\xi,\xi'}$ is a subbuilding of $X$.

\begin{lemma}\label{lem:existence-of-merging}
For every $x \in X$ there is a number $t \in [0,\infty)$ such that $[x,\xi)(t)$ lies in $\overline{X}^{\xi,\xi'}$.
\end{lemma}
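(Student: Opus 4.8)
The plan is to extract the conclusion from the behaviour of a single convex function along the ray $[x,\xi)$. Fix Busemann functions $\beta$ and $\beta'$ on $X$ centred at $\xi$ and $\xi'$ respectively, and put $f=\beta+\beta'$. Busemann functions on a $\CAT(0)$-space are convex and $1$-Lipschitz, so $f$ is convex and continuous, and since $\xi$ and $\xi'$ are opposite they are at Tits distance $\pi$; a standard $\CAT(0)$ fact then gives that $f$ is bounded below, say $m=\inf_X f>-\infty$. The first step is the elementary reduction: a point $[x,\xi)(t)$ lies in $\overline{X}^{\xi,\xi'}$ if and only if it lies on a geodesic line joining $\xi'$ to $\xi$, which in turn happens if and only if the rays $[\,[x,\xi)(t),\xi)$ and $[\,[x,\xi)(t),\xi')$ leave the point $[x,\xi)(t)$ in opposite directions, i.e.\ $\angle_{[x,\xi)(t)}(\xi,\xi')=\pi$ (a local geodesic in a $\CAT(0)$-space being a geodesic).

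Next I would study $g\colon[0,\infty)\to\R$, $g(t)=f\big([x,\xi)(t)\big)$, which is convex as the composition of $f$ with a geodesic. Since $\beta$ decreases at unit speed along $[x,\xi)$ we have $\beta\big([x,\xi)(t)\big)=\beta(x)-t$, so $g(t)-\beta(x)=\beta'\big([x,\xi)(t)\big)-t$, and because $\beta'$ is $1$-Lipschitz this expression is non-increasing; moreover $g\ge m$. Thus $g$ is non-increasing and bounded below, so its right derivative increases to a limit $\le 0$, which must be $0$ (otherwise $g\to-\infty$). To upgrade "$g'_+\to 0$" to "$g$ is eventually constant" I would invoke the standing hypothesis that $X$ has only finitely many isometry types of cells: along the proper ray $[x,\xi)$ the function $g$ is piecewise affine on every bounded interval (each bounded subsegment of the ray lies in an apartment and $\beta'$ is affine on each cell of a Euclidean building), and the set of slopes that occur is finite, being controlled by the finitely many link types — this is exactly the kind of bookkeeping carried out by Caprace~\cite[\S4.1]{Caprace09}. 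A non-increasing, bounded-below, piecewise-affine function with finitely many slope values is eventually constant.

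Finally, pick $T$ with $g$ constant on $[T,\infty)$. Then $\beta'\big([x,\xi)(t)\big)=t+\mathrm{const}$ for $t\ge T$, so $\beta'$ increases at unit speed along the tail of the ray; the first-variation formula for Busemann functions then gives $\cos\angle_{[x,\xi)(T)}(\xi,\xi')=-1$, i.e.\ $\angle_{[x,\xi)(T)}(\xi,\xi')=\pi$. By the reduction of the first paragraph, $[x,\xi)(T)\in\overline{X}^{\xi,\xi'}$, so $t=T$ works. The main obstacle is the middle step: passing from the soft statement $g'_+\to 0$ to the sharp statement that $g$ is eventually constant. This is precisely where the building structure — finitely many cell shapes, affineness of Busemann functions on cells, and the resulting finiteness of the slope set — is indispensable, and it is the portion one imports (or re-derives) from Caprace's inductive construction; everything else is formal $\CAT(0)$ geometry.
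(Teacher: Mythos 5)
Your strategy is sound and genuinely different from the paper's. The paper argues combinatorially at infinity: it takes an apartment $\Sigma$ containing $x$ with $\sigma \subseteq \pinf\Sigma$, uses Lemma~\ref{lem:existence-aps-sph-build} together with \cite[Theorems 11.63, 11.79]{AbramenkoBrown08} to produce a \emph{horizontal} apartment $\Sigma'$ with $\st_{\pinf\Sigma}(\xi) \cup \{\xi'\} \subseteq \pinf\Sigma'$, and then observes that $[x,\xi)$ eventually enters the convex hull of the sectors of $\Sigma\cap\Sigma'$, hence enters $\Sigma' \subseteq \overline{X}^{\xi,\xi'}$. Your route instead identifies $\overline{X}^{\xi,\xi'}$ with the minimum set of the convex function $f=\beta+\beta'$ and shows that $g(t)=f([x,\xi)(t))$ is eventually constant; this is conceptually appealing (it exhibits $\overline{X}^{\xi,\xi'}$ as a "parallel set") but trades the paper's three-line building argument for a discreteness bookkeeping step. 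Two justifications should be repaired rather than cited. First, "Tits distance $\pi$ implies $f$ bounded below" is not the right general statement; what you actually need (and have) is that $\xi,\xi'$ are endpoints of geodesic lines, e.g.\ inside $\SigmaStd$: if $c$ is such a line, then $\beta(x)+\beta'(x)=\lim_t\bigl(d(x,c(t))+d(x,c(-t))-2t\bigr)\ge 0$ up to normalization, by the triangle inequality. Second, the crux — that $g$ is piecewise affine with slopes in a \emph{finite} set — is asserted, and the reference to \cite[Section 4.1]{Caprace09} does not supply it; it must be proved. It is true, and the clean argument is: the whole ray lies in one apartment $\Sigma_1$ with $\xi\in\pinf\Sigma_1$ (by \cite[Theorem 11.63]{AbramenkoBrown08}); $\beta'$ is affine on every closed cell (each closed cell lies in an apartment having $\xi'$ in its boundary, where $\beta'$ is affine with unit gradient); and, reading gradients in the chart of $\Sigma_1$ via the (B2)-isomorphisms, both the ray direction and the gradient of $\beta'$ on any cell lie in finitely many orbits of the finite linear group generated by the spherical Weyl group and the cellular symmetries of the model apartment, so the directional derivative of $g$ takes only finitely many values.

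One further small point: your closing statement "a non-increasing, bounded-below, piecewise-affine function with finitely many slope values is eventually constant" is false as stated (slopes $-1$ on intervals of total length $1$ interleaved with slope $0$ gives a counterexample); you must invoke the convexity of $g$, which you have already established, so that the slope is non-decreasing, takes finitely many values, is therefore eventually constant, and by boundedness below that constant is $0$. Once $g$ is constant on a tail, your conclusion is fine, and in fact you can bypass the first-variation formula entirely: $\beta'$ increasing at unit speed along $[p_T,\xi)$ together with $1$-Lipschitzness gives $d\bigl([p_T,\xi)(t),[p_T,\xi')(s)\bigr)\ge t+s$, so the concatenation of the two rays is a geodesic line from $\xi'$ to $\xi$ through $p_T$, i.e.\ $p_T\in\overline{X}^{\xi,\xi'}$. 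With these repairs your proof is complete, but note that it is noticeably heavier than the paper's apartment-at-infinity argument.
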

\begin{proof}
From~\cite[Theorem 11.63.(1)]{AbramenkoBrown08} it follows that $x$ is contained in an apartment $\Sigma$ that contains $\sigma$ in its boundary $S \defeq \pinf \Sigma$.
Since $\pinf X$ is a spherical building, Lemma~\ref{lem:existence-aps-sph-build} tells us that there is an apartment $S' \subseteq \pinf X$ that contains $\st_{S}(\xi)$ and $\xi'$.
By~\cite[Theorem 11.79]{AbramenkoBrown08} there is an apartment $\Sigma'$ of $X$ with $\pinf \Sigma' = S'$.
In particular $\Sigma'$ is a horizontal apartment.
Thus it suffices to show that $[x,\xi)(t) \in \Sigma'$ for some $t \geq 0$.
For every chamber $\delta \subseteq \st_{S}(\xi)$ let $x_{\delta} \in \Sigma'$ be such that the sector $K_{x_{\delta}}(\delta)$ is contained in $\Sigma' \cap \Sigma$.
Since $\Sigma'$ is a convex subcomplex of $X$, it follows that the convex hull of the sectors $K_{x_{\delta}}(\delta)$ is contained in $\Sigma'$.
Now the lemma follows from the simple observation that $[x,\xi)$ runs into this convex hull.
\end{proof}

\begin{notation}\label{not:geod-lines-in-X}
For every point $x \in \overline{X}^{\xi,\xi'}$ let $c_x \colon \R \rightarrow \overline{X}^{\xi,\xi'}$ be the unique geodesic that is determined by $c_x(0)=x$, $c_x(-\infty)=\xi'$, and $c_x(\infty)=\xi$.
\end{notation}

The following well-known fact about Euclidean buildings (see~\cite[Theorem 11.53]{AbramenkoBrown08}) will help us to verify the building axiom (B1) for $\overline{X}^{\xi,\xi'}$.

\begin{theorem}\label{thm:subsets-eucl-build}
Let $Y$ be a subset of an $n$-dimensional Euclidean building $X$.
Assume either that $Y$ is convex or that $Y$ has non-empty interior.
If $Y$ is isometric to a subset of $\R^n$, then $Y$ is contained in an apartment of $X$.
\end{theorem}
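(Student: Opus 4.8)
The plan is to reduce the whole statement to the single claim that a \emph{maximal} flat of $X$ (a subset isometric to $\R^{n}$) is an apartment, and to prove that claim via the boundary at infinity. For the reduction, first observe that both hypotheses put us in the presence of a flat subset. If $Y$ is convex in $X$, then for any isometry $Y\to Y'\subseteq\R^{n}$ and any $a,b\in Y$ the geodesic $[a,b]$ (which lies in $Y$) is carried to a length‑minimizing path in $Y'$, hence to the straight segment; so $Y'$ is convex and $Y$ is a convex flat. If instead $Y$ has non‑empty interior in $X$, choose a regular point $x_{0}$ (one in the interior of a chamber) inside that interior and take $\varepsilon$ small, so that $B_{X}(x_{0},\varepsilon)$ lies in a single chamber and is therefore isometric to a genuine Euclidean $n$‑ball; thus $Y$ contains a full‑dimensional flat ball. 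In either case one enlarges the flat to a maximal one: passing to the relative interior of $Y$ (respectively to the flat ball), the $Y$‑directions $\lk_{Y}(x_{0})$ form a round subsphere of the spherical building $\lk_{X}(x_{0})$, this extends to an apartment of $\lk_{X}(x_{0})$, and, by the standard correspondence between apartments of a link and apartments of $X$ through the point, it lifts to an apartment $\Sigma\ni x_{0}$ with $\lk_{Y}(x_{0})\subseteq\lk_{\Sigma}(x_{0})$; one then argues $Y\subseteq\Sigma$. The remaining sub‑case, in which $Y$ meets no regular point at all (i.e.\ $Y$ lies inside a wall $H$), is disposed of by descending to the $(n-1)$‑dimensional Euclidean building carried by $H$ and inducting on $n$.

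It remains to treat the maximal case: suppose $Y$ is isometric to $\R^{n}$. Then $\pinf Y$ is a round $(n-1)$‑sphere isometrically embedded in the $(n-1)$‑dimensional spherical building $\pinf X$, hence an apartment $S$ of $\pinf X$ (a standard fact about spherical buildings of top dimension, itself essentially the spherical analogue of what we are proving). By~\cite[Theorem 11.79]{AbramenkoBrown08} there is an apartment $\Sigma$ of $X$ with $\pinf\Sigma=S$. For a chamber $\delta\subseteq S$, a sector of $Y$ pointing to $\delta$ and a sector of $\Sigma$ pointing to $\delta$ have a common subsector by Proposition~\ref{prop:common-subsector}; hence $Y\cap\Sigma$ contains a sector, in particular a full‑dimensional subset of both flats. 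Since an isometric copy of $\R^{n}$ inside the $\CAT(0)$‑space $X$ is pinned down by its restriction to any full‑dimensional subset — which one checks using the nearest‑point projection onto the convex set $\Sigma$ together with the affine behaviour of squared‑distance differences along $\Sigma\cong\R^{n}$ — we conclude $Y=\Sigma$, completing this case.

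The step I expect to be the main obstacle is not the maximal case but the globalization inside the reduction: showing that the flat region $Y$, once it agrees with the apartment $\Sigma$ on a full‑dimensional neighbourhood of the regular point $x_{0}$, cannot branch away from $\Sigma$ at a singular cell reached further out. The mechanism is the non‑branching of geodesics at regular points of $X$ (where the link is a genuine sphere): the geodesic $[x_{0},y]$ has initial direction in $\lk_{\Sigma}(x_{0})$, so it enters $\Sigma$, and being a geodesic of $X$ it stays in the totally geodesic subspace $\Sigma$ as long as it runs through regular points; combining this with the convexity of $Y$ (respectively the openness of $Y$ in $X$) should force $[x_{0},y]\subseteq\Sigma$. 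Organizing this argument uniformly over all of $Y$, across the chambers and walls that $Y$ may traverse, together with the invariance‑of‑domain input in the non‑empty‑interior case, is the delicate bookkeeping — and it is exactly what~\cite[Theorem 11.53]{AbramenkoBrown08}, which we simply cite, carries out.
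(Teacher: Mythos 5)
First, a point of orientation: the paper does not prove this statement at all --- it is quoted as a known fact, with the proof deferred entirely to \cite[Theorem 11.53]{AbramenkoBrown08}, and it is then used as a black box (e.g.\ in Proposition~\ref{prop:convexity-of-XXi-mu}, Lemma~\ref{lem:thickness} and Lemma~\ref{lem:prop-of-XXi}). Your proposal, read closely, ends up in the same place but with a logical twist that undermines it as a proof: the step you yourself single out as the main obstacle --- globalizing from agreement near a regular point to $Y \subseteq \Sigma$, which is precisely the content of the theorem for a convex or open flat piece that is not all of $\R^n$ --- is discharged by citing \cite[Theorem 11.53]{AbramenkoBrown08}, i.e.\ by citing the very statement under proof. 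So as an independent argument the proposal is circular; the only non-circular reading is ``cite Abramenko--Brown,'' which is exactly what the paper does, and then the two preceding paragraphs of your sketch carry no additional logical weight.

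If you want a genuinely self-contained proof, be aware that several intermediate claims in the sketch are themselves theorems needing proof, or are false as stated. That an isometrically embedded round $(n-1)$-sphere in $\pinf X$ must be an apartment is, as you admit, the spherical analogue of the statement and cannot simply be invoked; lifting an apartment of $\lk_X(x_0)$ to an apartment of $X$ through $x_0$ requires the complete apartment system and an argument via germs of opposite sectors; and the ``wall'' case does not obviously hand you an $(n-1)$-dimensional Euclidean building to induct on. Most seriously, the rigidity principle ``an isometric copy of $\R^n$ in a $\CAT(0)$-space is pinned down by its restriction to any full-dimensional subset'' is false: in a product of two trees, the maximal flats $\ell_1 \times \ell_2$ and $\ell_1' \times \ell_2$ are distinct yet share a half-plane whenever the lines $\ell_1,\ell_1'$ share a ray. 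In your situation the conclusion $Y = \Sigma$ is still true, but only because $\pinf Y = \pinf \Sigma$; the argument must run through the boundary (for instance, every point of $Y$ lies on a geodesic joining a pair of opposite regular points of $S$, and all such geodesics lie in $\Sigma$), not through a local-agreement principle. As it stands, the honest course is the paper's: state the result as an external citation, without the surrounding sketch.
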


\begin{proposition}\label{prop:convexity-of-XXi-mu}
The space $\overline{X}^{\xi,\xi'}$ is a subbuilding of $X$.
An apartment system for $\overline{X}^{\xi,\xi'}$ is given by $\mathcal{A}_{\hor}$.
\end{proposition}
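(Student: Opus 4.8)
The plan is to verify the building axioms (B0), (B1), (B2) for $\overline{X}^{\xi,\xi'}$ with $\mathcal{A}_{\hor}$ as the candidate apartment system, using the ambient structure of $X$. First I would record the two basic facts that make everything work: (i) a point $x\in X$ lies in $\overline{X}^{\xi,\xi'}$ if and only if the geodesic ray $[x,\xi)$ extends to a biinfinite geodesic line from $\xi'$ to $\xi$, and (ii) by Lemma~\ref{lem:existence-of-merging}, every ray $[x,\xi)$ eventually enters $\overline{X}^{\xi,\xi'}$, so for any two points there are subrays towards $\xi$ that live inside horizontal apartments. Axiom (B0) is essentially immediate: a horizontal apartment $\Sigma\in\mathcal{A}_{\hor}$ is an apartment of $X$, hence a Euclidean Coxeter complex, and it is entirely contained in $\overline{X}^{\xi,\xi'}$ because every point of it lies on the line through it towards $\xi,\xi'$; so the $\Sigma\in\mathcal{A}_{\hor}$ are Coxeter subcomplexes of $\overline{X}^{\xi,\xi'}$.

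For (B1), given cells $A,B\subseteq\overline{X}^{\xi,\xi'}$, I would pick points $a\in A$, $b\in B$ and run the argument in the proof of Lemma~\ref{lem:existence-of-merging} simultaneously for $a$ and $b$: choose an apartment $\Sigma\subseteq X$ containing $a$, $b$, and $\sigma$ in its boundary (using \cite[Theorem 11.63.(1)]{AbramenkoBrown08} after first enlarging to an apartment containing both $a$ and $b$), then use Lemma~\ref{lem:existence-aps-sph-build} to find an apartment $S'\subseteq\pinf X$ containing $\st_{\pinf\Sigma}(\xi)$ and $\xi'$, and lift it via \cite[Theorem 11.79]{AbramenkoBrown08} to a horizontal apartment $\Sigma'$. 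The point is that $\Sigma'$ and $\Sigma$ share the convex hull of the sectors $K_{x_\delta}(\delta)$ over $\delta\subseteq\st_{\pinf\Sigma}(\xi)$, and both rays $[a,\xi)$ and $[b,\xi)$ run into this common region; since $a,b\in\overline{X}^{\xi,\xi'}$, the \emph{whole} lines $c_a,c_b$ are determined by any subray towards $\xi$, and a subray of $c_a$ (resp.\ $c_b$) lies in $\Sigma'$, so by convexity of $\Sigma'$ in $X$ the entire lines $c_a,c_b$ lie in $\Sigma'$. Hence $a,b\in\Sigma'\in\mathcal{A}_{\hor}$, and since cells of $X$ lying in a horizontal apartment are cells of that apartment, $A,B\subseteq\Sigma'$. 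For (B2), if $\Sigma_1,\Sigma_2\in\mathcal{A}_{\hor}$ both contain cells $A$ and $B$, then they are apartments of $X$ containing $A$ and $B$, so the building axiom (B2) for $X$ supplies an isomorphism $\Sigma_1\to\Sigma_2$ fixing $A$ and $B$ pointwise; one checks this isomorphism is an isomorphism of the induced subcomplexes of $\overline{X}^{\xi,\xi'}$, which is automatic since the subcomplex structure is the restricted one. It remains to observe that $\overline{X}^{\xi,\xi'}$ is indeed the union of the $\Sigma\in\mathcal{A}_{\hor}$: ``$\supseteq$'' is clear, and ``$\subseteq$'' follows from (B1) applied with $A=B$ (every point lies in a cell, which lies in some horizontal apartment). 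Finally, to see it is a \emph{subbuilding} and not merely a building abstractly, I would note that the metric induced from $X$ agrees with the intrinsic apartment metric, since on any $\Sigma\in\mathcal{A}_{\hor}$ the $X$-metric restricts to the Euclidean metric of that apartment, and Theorem~\ref{thm:buildings-are-CAT0} identifies the building metric with this.

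The main obstacle I anticipate is the (B1) step — specifically, making rigorous the claim that a subray of $c_a$ and a subray of $c_b$ can be forced into one \emph{single} horizontal apartment $\Sigma'$, and then promoting ``subray in $\Sigma'$'' to ``whole line in $\Sigma'$.'' The latter uses that $\Sigma'$ is convex in $X$ and that a biinfinite geodesic is the union of the increasing subrays $c_a|_{[-t,\infty)}$, each of which is the convex hull of a point and a Weyl direction and hence stays in $\Sigma'$ once one of them does; this is where Theorem~\ref{thm:subsets-eucl-build} could alternatively be invoked, applied to the convex set $c_a(\R)\cup c_b(\R)$ or to $\Sigma'\cup\{c_a,c_b\}$, to conclude membership in a common apartment. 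I would also need to be a little careful that the apartment produced is horizontal, i.e.\ contains \emph{both} $\xi$ and $\xi'$ in its boundary — this is exactly what the use of $S'\ni\xi'$ together with $\st_{\pinf\Sigma}(\xi)$ guarantees, since $\pinf\Sigma' = S'$ contains $\xi$ and $\xi'$ as opposite vertices.
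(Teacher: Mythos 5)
Your treatment of (B0), (B2), and the covering claim matches the paper and is fine, but the heart of the proposition is (B1), and there your argument has a genuine gap. You ``choose an apartment $\Sigma\subseteq X$ containing $a$, $b$, and $\sigma$ in its boundary'' and cite \cite[Theorem 11.63.(1)]{AbramenkoBrown08} -- but that theorem only produces an apartment containing \emph{one} given point together with a subsector in a given sector-class; there is no two-point version, and ``first enlarging to an apartment containing both $a$ and $b$'' does not help, since an apartment through $a$ and $b$ need not have $\sigma$ (or even $\xi$) in its boundary. In fact, an apartment containing two prescribed points and a prescribed chamber at infinity need not exist at all, and the weaker statement you actually need -- that two points of $\overline{X}^{\xi,\xi'}$ lie in a common apartment whose boundary contains both $\xi$ and $\xi'$ -- is essentially the assertion being proved, so the argument is circular at its key step. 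Your fallback is also not sound as stated: $c_a(\R)\cup c_b(\R)$ is not convex, so Theorem~\ref{thm:subsets-eucl-build} does not apply to it, and knowing that this union (or its convex hull) is isometric to a subset of $\R^{d+1}$ is exactly the nontrivial point.

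The paper closes this gap with the Flat Strip Theorem, which is absent from your proposal: for $x,y\in\overline{X}^{\xi,\xi'}$ the lines $c_x,c_y$ have the same pair of endpoints at infinity, so $t\mapsto d(c_x(t),c_y(t))$ is bounded, whence \cite[Theorem II.2.13]{BridsonHaefliger99} shows that $\conv(c_x(\R),c_y(\R))$ is isometric to a flat strip $\R\times[0,D]$. This convex flat subset satisfies the hypotheses of Theorem~\ref{thm:subsets-eucl-build}, so it lies in an apartment $\Sigma$ of $X$; since $\Sigma$ contains the full lines $c_x,c_y$, it is horizontal and contains $x$ and $y$. Your observation that once a single point of $c_a$ lies in a horizontal apartment the whole line does (because the two rays from that point to $\xi$ and $\xi'$ lie in the apartment) is correct, but it only becomes useful after one has produced a horizontal apartment meeting both lines, which your construction does not achieve.
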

\begin{proof}
By definition, every $\Sigma \in \mathcal{A}_{\hor}$ is contained in $\overline{X}^{\xi,\xi'}$.
Since each $\Sigma \in \mathcal{A}_{\hor}$ is an apartment of $X$, we see that the buildings axioms (B0) and (B2) automatically satisfied for $\overline{X}^{\xi,\xi'}$.
To prove (B1), it suffices to show that every two points in $\overline{X}^{\xi,\xi'}$ lie in some horizontal apartment.
Let $x,y \in \overline{X}^{\xi,\xi'}$ be arbitrary points and let $c_x,c_y \colon \R \rightarrow X$ be the corresponding isometric lines.
Since $c_x$ and $c_y$ converge to the same ends at infinity it follows that the function
\[
\R \rightarrow \R,\ t \mapsto d(c_x(t),c_y(t))
\]
is bounded.
In this case, the flat strip theorem (see~\cite[Theorem II.2.13]{BridsonHaefliger99}) tells us that $\conv(c_x(\R),c_y(\R))$ is isometric to the strip $\R \times [0,D]$ for some $D \geq 0$.
In particular, $\conv(c_x(\R),c_y(\R)) \subseteq X$ is a convex subspace that is isometric to a subset of the Euclidean space $\R^{d+1}$.
We may therefore apply Theorem~\ref{thm:subsets-eucl-build} to deduce that there is an apartment $\Sigma$ of $X$ containing $\conv(c_x(\R),c_y(\R))$.
Thus $\Sigma$ is a horizontal apartment that contains $x$ and $y$, which proves the first claim.
\end{proof}

\begin{lemma}\label{lem:thickness}
Let $H \subseteq \overline{X}^{\xi,\xi'}$ be a wall and let $P \subseteq H$ be a panel.
\begin{enumerate}
\item If $H$ is horizontal, then $\st_{\overline{X}^{\xi,\xi'}}(P) = \st_{X}(P)$.
\item If $H$ is non-horizontal, then there are exactly two chambers in $\overline{X}^{\xi,\xi'}$ that are incident to $P$.
\end{enumerate}
\end{lemma}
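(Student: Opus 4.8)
The plan is to reduce both parts to a single dichotomy. Fix a horizontal apartment $\Sigma_0\in\mathcal{A}_{\hor}$ having $H$ as a wall (so $P\subseteq H\subseteq\Sigma_0$), pick $x$ in the relative interior of $P$, and recall from Proposition~\ref{prop:convexity-of-XXi-mu} that $\overline{X}^{\xi,\xi'}$ is a subbuilding of $X$ with apartment system $\mathcal{A}_{\hor}$. The first thing I would record is: since $\xi\in\pinf\Sigma_0$, the ray $[x,\xi)$ lies in $\Sigma_0$; and since the relative interior of the panel $P$ is open in the $\dim H$-dimensional hyperplane $H$ of $\Sigma_0$, this ray stays in $P$ for an initial segment (equivalently $\pr_P(\xi)=P$) if and only if it is contained in $H$; moreover in that case $[x,\xi')\subseteq H$ as well, because the initial directions of $[x,\xi)$ and $[x,\xi')$ are antipodal in $\lk_X(x)$ and $H$ is a flat. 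Hence $H$ is horizontal exactly when $[x,\xi)\subseteq H$, and $H$ is non-horizontal exactly when $[x,\xi)$ and $[x,\xi')$ leave $P$ immediately, in which case $\pr_P(\xi)$ and $\pr_P(\xi')$ are chambers, since the only proper cofaces of a panel are chambers. Every horizontal apartment is, by definition, an apartment whose boundary at infinity contains the antipodal pair $\xi,\xi'$, and this is the fact I would lean on repeatedly.

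For part (1) (horizontal $H$): the inclusion $\st_{\overline{X}^{\xi,\xi'}}(P)\subseteq\st_X(P)$ is automatic because $\overline{X}^{\xi,\xi'}$ is a subcomplex, so it suffices to show that every chamber $C$ of $X$ with $P\subseteq\overline{C}$ lies in some horizontal apartment. Writing $C_-,C_+$ for the two chambers of $\Sigma_0$ adjacent to $P$, the case $C\in\{C_-,C_+\}$ is immediate since then $C\subseteq\Sigma_0$. Otherwise I would take the root $\alpha\subseteq\Sigma_0$ with $\partial\alpha=H$ and $C_+\subseteq\alpha$, note that $C_+$ is the unique chamber of $\alpha$ having $P$ as a face and $C\neq C_+$, and invoke the standard building fact that a root together with a chamber sharing a panel with its boundary wall lies in a common apartment $\Sigma$ of $X$. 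Then $\xi,\xi'\in\pinf H\subseteq\pinf\alpha\subseteq\pinf\Sigma$, and since $\xi,\xi'$ are opposite in $\pinf X$ they remain opposite in $\pinf\Sigma$, so $\Sigma$ is horizontal and $C\subseteq\Sigma\subseteq\overline{X}^{\xi,\xi'}$. Thus every coface of $P$ in $X$ lies in $\overline{X}^{\xi,\xi'}$, which yields the reverse inclusion $\st_X(P)\subseteq\st_{\overline{X}^{\xi,\xi'}}(P)$ and hence equality.

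For part (2) (non-horizontal $H$): by the dichotomy, $D_\xi\mathrel{\mathop{:}}=\pr_P(\xi)$ and $D_{\xi'}\mathrel{\mathop{:}}=\pr_P(\xi')$ are chambers, both contained in $\Sigma_0$ because $[x,\xi),[x,\xi')\subseteq\Sigma_0$, and $D_\xi\neq D_{\xi'}$ because their defining rays leave $P$ towards opposite sides of the wall $H$; so $D_\xi,D_{\xi'}$ are exactly the two chambers of $\Sigma_0$ at $P$ and both lie in $\overline{X}^{\xi,\xi'}$. It then remains to rule out further chambers: I would take an arbitrary chamber $C$ of $\overline{X}^{\xi,\xi'}$ with $P\subseteq\overline{C}$, choose a horizontal apartment $\Sigma\in\mathcal{A}_{\hor}$ containing $C$, and observe that $\xi,\xi'\in\pinf\Sigma$ forces $[x,\xi),[x,\xi')\subseteq\Sigma$; since these rays leave $P$ into $D_\xi$ and $D_{\xi'}$, we get $D_\xi,D_{\xi'}\subseteq\Sigma$, and as $\Sigma$ is thin these are its only chambers at $P$, so $C\in\{D_\xi,D_{\xi'}\}$. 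Hence $\overline{X}^{\xi,\xi'}$ has exactly the two chambers $D_\xi,D_{\xi'}$ incident to $P$.

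The only genuinely external input is the standard fact used in part (1), that a root together with a chamber sharing a panel with its boundary wall is contained in a common apartment; I would either cite it from~\cite{AbramenkoBrown08} or derive it from Theorem~\ref{thm:subsets-eucl-build} after checking that $\alpha\cup\overline{C}$ embeds isometrically into $\R^{\dim X}$. Beyond that, the point requiring the most care is the elementary but fiddly bookkeeping between walls of apartments, the cofaces of $P$, and the links at $x$ that underpins the tangent/transverse dichotomy; once that is in place, both parts follow formally from the characterization of horizontal apartments as those whose visual boundary contains $\xi$ and $\xi'$.
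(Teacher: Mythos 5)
Your proof is correct, and part (2) is essentially the paper's argument: the two chambers entered by the rays $(x,\xi)$ and $(x,\xi')$ lie in every horizontal apartment through $x$, every chamber of $\st_{\overline{X}^{\xi,\xi'}}(P)$ lies in some horizontal apartment by Proposition~\ref{prop:convexity-of-XXi-mu}, and thinness of that apartment finishes the count. In part (1) you follow the same overall strategy as the paper --- enlarge a half-apartment bounded by a horizontal wall together with an arbitrary chamber of $\st_X(P)$ to a full apartment and observe that this apartment is forced to be horizontal --- but with a different technical device: the paper works with the half-space $R$ spanned by $P$ inside an apartment having $\sigma$ at infinity and produces an apartment containing $R\cup\overline{D}$ by checking that the retraction $\rho_{\sigma,\SigmaStd}$ is isometric on this set and then applying Theorem~\ref{thm:subsets-eucl-build}, whereas you take the root $\alpha\subseteq\Sigma_0$ bounded by $H$ itself and quote the standard fact that a root and a chamber attached to its boundary wall lie in a common apartment. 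Your choice makes the horizontality of the resulting apartment immediate (since $\xi,\xi'\in\pinf H\subseteq\pinf\alpha$), which is arguably cleaner than the paper's corresponding step, where one must argue that the wall spanned by $P$ in the auxiliary apartment is horizontal; the price is the external root-extension fact, which in the present polysimplicial setting is most safely justified exactly by the fallback you mention, i.e., the retraction-plus-Theorem~\ref{thm:subsets-eucl-build} route the paper takes. One caveat: your opening move fixes a horizontal apartment $\Sigma_0$ having $H$ as a wall, and your tangent/transverse dichotomy in part (2) is carried out inside $\Sigma_0$. This is legitimate under the natural reading that a ``wall of $\overline{X}^{\xi,\xi'}$'' is a wall of one of its apartments in the system $\mathcal{A}_{\hor}$ of Proposition~\ref{prop:convexity-of-XXi-mu}; note, though, that the paper's proof never needs an apartment containing all of $H$ --- for (1) it only needs an apartment containing $P$ with $\sigma$ at infinity, and for (2) only horizontal apartments through a point of $P$ --- which is slightly more flexible in view of the later application in Corollary~\ref{cor:all-apps-are-levi}, where $H$ arises as a wall of an apartment not yet known to be horizontal.
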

\begin{proof}
Suppose that $H$ is horizontal.
From Proposition~\ref{prop:convexity-of-XXi-mu} we know that $\overline{X}^{\xi,\xi'}$ is a building.
By assumption we have $\sigma \subseteq \pinf \SigmaStd \subseteq \pinf \overline{X}^{\xi,\xi'}$.
Thus it follows from~\cite[Theorem 11.63.(1)]{AbramenkoBrown08} that there is an apartment $\Sigma$ in $\overline{X}^{\xi,\xi'}$ with $P \subseteq \Sigma$ and $\sigma \subseteq \pinf \Sigma$.
Let $R \subseteq \Sigma$ be the closed half space that is detemined by $P \subseteq \partial R$ and $\sigma \subseteq \pinf R$.
Since $P$ was assumed to be a panel in the horizontal wall $H$, it follows that $\partial R$ is a horizontal wall as well.
Let us now consider a chamber $D$ in $\st_X(P)$ and let $Z = R \cup D$.
Since $\sigma \subseteq \pinf \Sigma$, the retraction $\rho_{\sigma, \SigmaStd}$ restricts to an isometric isomorphism $Z \rightarrow \rho_{\sigma, \SigmaStd}(Z)$.
Thus Theorem~\ref{thm:subsets-eucl-build} provides us with an apartment $\Sigma' \subseteq X$ that contains $Z$.
In particular, $\Sigma'$ contains $\pinf R$ and is therefore horizontal and lies in $\overline{X}^{\xi,\xi'}$.

Suppose now that $H$ is non-horizontal.
Let $p \in P$ and let $C$, respectively $D$, be the chamber in $\overline{X}^{\xi,\xi'}$ that contain an initial segments of $(p,\xi)$, respectively $(p,\xi')$.
From the definition a horizontal apartment it directly follows that $C$ and $D$ are contained in every horizontal apartment that contains $p$.
On the other hand, Proposition~\ref{prop:convexity-of-XXi-mu} implies that every chamber in $\st_{\overline{X}^{\xi,\xi'}}(P)$ is contained in a horizontal apartment, which necessarily contains $p$.
Thus we see that $C$ and $D$ are the only chambers in $\st_{\overline{X}^{\xi,\xi'}}(P)$.
\end{proof}

\begin{corollary}\label{cor:all-apps-are-levi}
The full apartment system of $\overline{X}^{\xi,\xi'}$ is given by $\mathcal{A}_{\hor}$.
\end{corollary}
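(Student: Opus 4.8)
The plan is to prove both inclusions between $\mathcal{A}_{\hor}$ and the full (complete) apartment system of $\overline{X}^{\xi,\xi'}$. One inclusion is immediate: Proposition~\ref{prop:convexity-of-XXi-mu} says $\mathcal{A}_{\hor}$ is \emph{an} apartment system of $\overline{X}^{\xi,\xi'}$, and every apartment system is contained in the complete one. For the reverse, let $\Sigma$ be an apartment in the complete system. Then $\Sigma$ is a subcomplex of $X$ which is isometric to the (common) Euclidean Coxeter complex of $\overline{X}^{\xi,\xi'}$, hence to $\R^{d+1}$; being a flat in the $\CAT(0)$-space $X$ it is convex in $X$. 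By Theorem~\ref{thm:subsets-eucl-build} it lies in an apartment $\Sigma^{\ast}$ of $X$, and since $\Sigma$ is a closed convex subcomplex of $\Sigma^{\ast} \cong \R^{d+1}$ that is itself isometric to $\R^{d+1}$, I get $\Sigma = \Sigma^{\ast}$. So $\Sigma$ is an apartment of $X$, and it remains to show it is horizontal, i.e.\ that $\xi,\xi' \in \pinf \Sigma$.

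To prove $\xi \in \pinf \Sigma$ (the case of $\xi'$ being symmetric) I would argue by contradiction. First choose $x$ in the interior of a chamber of $\Sigma$ in sufficiently general position that the ray $[x,\xi)$ meets only cells of $X$ of codimension at most $1$; such $x$ exist because for each fixed cell $A$ of codimension $\geq 2$ the set of $x \in \Sigma$ with $[x,\xi) \cap A \neq \emptyset$ has positive codimension in $\Sigma$, and by local finiteness there are only countably many such $A$. Since $x \in \Sigma \subseteq \overline{X}^{\xi,\xi'}$, the line $c_x$ (Notation~\ref{not:geod-lines-in-X}) lies in $\overline{X}^{\xi,\xi'}$, so in particular $[x,\xi) \subseteq \overline{X}^{\xi,\xi'}$. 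Assume $[x,\xi) \not\subseteq \Sigma$. By convexity of $\Sigma$ the set $\{t \geq 0 : [x,\xi)(t) \in \Sigma\}$ is a bounded closed interval $[0,t_0]$; set $y = [x,\xi)(t_0)$. By the genericity of $x$, $y$ lies in a cell of codimension $\leq 1$, and since a ray cannot leave the interior of a chamber it enters, $y$ lies in a panel $P \subseteq \Sigma$. Let $C \subseteq \Sigma$ be the chamber containing the portion of $[x,\xi)$ just before $y$, and let $D$ be the chamber of $\overline{X}^{\xi,\xi'}$ entered by $[y,\xi)$ just after $y$; then $P \subseteq \overline{C} \cap \overline{D}$, $C \subseteq \Sigma$, and $D \not\subseteq \Sigma$ by maximality of $t_0$. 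If the wall of $\Sigma$ through $P$ is horizontal, it contains $[y,\xi)$, forcing $[y,\xi) \subseteq \Sigma$, a contradiction. If it is non-horizontal, Lemma~\ref{lem:thickness}(2) gives that $P$ is a face of exactly two chambers of $\overline{X}^{\xi,\xi'}$: the one entered by $[y,\xi)$, namely $D$, and the one entered by $[y,\xi')$; but $[y,\xi')$ contains the segment $[y,x] \subseteq \Sigma$, so the latter chamber is $C$. Hence $\{C,D\}$ is the full set of chambers of $\overline{X}^{\xi,\xi'}$ at $P$. Since $\Sigma$ is an apartment, $P$ is a face of exactly two chambers of $\Sigma$, one being $C$; the other lies in $\Sigma \subseteq \overline{X}^{\xi,\xi'}$, hence equals $D$, contradicting $D \not\subseteq \Sigma$. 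Therefore $[x,\xi) \subseteq \Sigma$, so $\xi \in \pinf \Sigma$, and symmetrically $\xi' \in \pinf \Sigma$, so $\Sigma$ is horizontal.

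The step I expect to be the main obstacle is precisely the reduction to a codimension-one crossing: without the general-position choice of $x$, the ray $[x,\xi)$ could exit $\Sigma$ through a cell of codimension $\geq 2$, where the two-chamber statement of Lemma~\ref{lem:thickness}(2) does not apply to a single panel. The genericity argument handles this, but one must check carefully that the exceptional locus in $\Sigma$ really has positive codimension (which is where local finiteness of $X$ enters, ensuring only countably many cells are in play) and that generic $x$ can be taken inside a chamber of $\Sigma$. An alternative that avoids genericity is to localize the same dichotomy to the spherical link $\lk_X(y)$ — in which the directions towards $\xi$ and $\xi'$ are opposite and $\lk_{\overline{X}^{\xi,\xi'}}(y)$ is ``thin in the non-horizontal directions'' by Lemma~\ref{lem:thickness} — and induct on the dimension; in either approach the essential inputs are Lemma~\ref{lem:thickness} and Theorem~\ref{thm:subsets-eucl-build}.
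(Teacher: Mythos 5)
Your route is genuinely different from the paper's, and as written it has one real gap. The paper does not argue via an exit point of the ray at all: it takes a horizontal apartment $\Sigma'$ sharing a chamber with $\Sigma$, writes $\Sigma \cap \Sigma'$ as a finite, minimal intersection of closed half-spaces of $\Sigma$ (via \cite[Proposition 3.94]{AbramenkoBrown08}), observes that each bounding wall contains a panel of $\Sigma \cap \Sigma'$, and uses Lemma~\ref{lem:thickness}(2) to exclude non-horizontal bounding walls (a non-horizontal one would force at least three chambers of $\overline{X}^{\xi,\xi'}$ at that panel); since $[x,\xi)$ and $[x,\xi')$ cannot leave a horizontal half-space containing $x$, the apartment is horizontal. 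Your exit-point argument, the identification of the two chambers at $P$ with $C$ and $D$, the thinness of $\Sigma$, and the preliminary step that $\Sigma$ is an apartment of $X$ (which is what licenses treating its walls as walls of $X$ and applying Lemma~\ref{lem:thickness}) are all fine.

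The gap is the genericity claim. You assert that for a fixed cell $A$ of codimension $\geq 2$ the set of $x \in \Sigma$ with $[x,\xi) \cap A \neq \emptyset$ has positive codimension, but this is not obvious when argued directly in the building: for a fixed point $a$, the set of $x$ with $a \in [x,\xi)$ is the union of all backward geodesic extensions of $[a,\xi)$, and such sets can be full-dimensional (rays toward $\xi$ funnel, as in a tree), so a naive dimension count does not close the argument; moreover your appeal to local finiteness of $X$ uses a hypothesis Section~\ref{sec:parabolic} does not impose. You also never rule out that the ray runs \emph{inside} a panel near $y$, which you need in order for the chambers $C$ and $D$ to exist. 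Both points are repaired by one observation you did not make: choose, by \cite[Theorem 11.63.(1)]{AbramenkoBrown08} (exactly as in Lemmas~\ref{lem:rho-inv-sigma-min-gal} and~\ref{lem:existence-of-merging}), a single apartment $\Sigma''$ containing the closed chamber $\overline{C_0}$ of $\Sigma$ with $\sigma$, hence $\xi$, in $\pinf \Sigma''$. Then \emph{every} ray $[x,\xi)$ with $x \in C_0$ lies in $\Sigma''$ and is a straight Euclidean ray there; the bad set is the countable union, over the codimension-$\geq 2$ cells $A$ of the (locally finite) Coxeter complex $\Sigma''$, of the shadows $(\overline{A} - \R_{\geq 0}u_{\xi}) \cap C_0$, each of dimension at most $d$, so generic $x \in C_0$ exists without any assumption on $X$; and a straight ray based at a point of an open chamber of $\Sigma''$ cannot contain a segment of a wall of $\Sigma''$ (else the whole line, hence $x$, would lie in that wall), so all crossings, in particular the one at $y$, are transversal and $C$, $D$ are well defined. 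With this repair your proof is correct; it trades the paper's purely combinatorial convexity argument for a metric general-position argument, which is somewhat longer but needs neither \cite[Proposition 3.94]{AbramenkoBrown08} nor the minimal half-space decomposition.
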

\begin{proof}
In view of Proposition~\ref{prop:convexity-of-XXi-mu}, it remains to show that every apartment in $\overline{X}^{\xi,\xi'}$ is horizontal.
To see this, let us consider an arbitrary apartment $\Sigma \subseteq \overline{X}^{\xi,\xi'}$ and a point $x \in \Sigma$ that is contained in some (open) chamber $C \subseteq \Sigma$.
We have to show that the rays $[x,\xi)$ and $[x,\xi')$ are contained in $\Sigma$.
Let $\Sigma'$ be a horizontal apartment that contains $C$.
Then $\Sigma \cap \Sigma'$ is a non-empty convex subcomplex of $\Sigma$.
In this case, it follows from~\cite[Proposition 3.94]{AbramenkoBrown08} that $\Sigma \cap \Sigma' = \bigcap \limits_{R \in \mathcal{R}} \overline{R}$, where $\mathcal{R}$ is the set of all closed half spaces in $\Sigma$ that contain $\Sigma \cap \Sigma'$.
Since $\Sigma$ contains only finitely many parallel classes of walls, we can assume
that $\mathcal{R}$ is finite and minimal.
Let $H = \partial R$ for some $R \in \mathcal{R}$.
Since $C \subseteq \Sigma \cap \Sigma'$, it follows that $\Sigma \cap \Sigma'$ is a union of chambers.
From the minimality of $\mathcal{R}$
it therefore follows that some panel $P \subseteq H$ is contained in $\Sigma \cap \Sigma'$.
Let us fix a point $p \in P$.
If $H$ is non-horizontal, then exactly one of the rays $(p,\xi)$ and $(p,\xi')$ has empty intersection with $\Sigma \cap \Sigma'$.
In this case, there are at least three chambers incident to $P$, which contradicts Lemma~\ref{lem:thickness}.
Hence $\Sigma \cap \Sigma'$ is the intersection of horizontal half spaces, i.e.\ half spaces that are bounded by horizontal walls.
Since neither $[x,\xi)$ nor $[x,\xi')$ can leave a horizontal half space that contains $x$, it follows that $\Sigma$ is horizontal, which proves the claim.
\end{proof}

\subsection{The building $X_{\xi}$}

In Subsection~\ref{subsec:ap-in-par} we saw that $\SigmaStd_{\xi} = \SigmaStd \cap \beta^{-1}(0)$ can be naturally endowed with the structure of a Euclidean Coxeter complex.
Our next goal is to endow the space $X_{\xi} \defeq \overline{X}^{\xi,\xi'} \cap \beta^{-1}(0)$ with the structure of a Euclidean building in which $\SigmaStd_{\xi}$ is an apartment.
To this end, we consider the set $\mathcal{H}_{\hor}(X)$ of horizontal walls in $X$ or, which is equivalent, the set of horizontal walls in $\overline{X}^{\xi,\xi'}$.
Let further $\Lambda$ denote the union of all horizontal walls.
In order to define a building structure on $X_{\xi}$ we first have to introduce a cell structure on $X_{\xi}$.
This cell structure will be induced by the connected components in $\overline{X}^{\xi,\xi'} \backslash \Lambda$.

\begin{lemma}\label{lem:the-complement-of-hor-walls}
For every connected component $C$ in $\overline{X}^{\xi,\xi'} \backslash \Lambda$, there is some apartment $\Sigma \in \mathcal{A}_{\hor}$ such that $C$ is a connected component in~$\Sigma \backslash \Lambda$.
\end{lemma}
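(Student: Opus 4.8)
We are given a connected component $C$ of $\overline{X}^{\xi,\xi'} \setminus \Lambda$, where $\Lambda$ is the union of all horizontal walls of $X$ (equivalently of $\overline{X}^{\xi,\xi'}$). We want to produce a single horizontal apartment $\Sigma \in \mathcal{A}_{\hor}$ such that $C$ is already a connected component of $\Sigma \setminus \Lambda$ — that is, $C$ does not meet any wall, $C \subseteq \Sigma$, and $C$ is \emph{maximal} among connected subsets of $\Sigma \setminus \Lambda$. Note that by Corollary~\ref{cor:all-apps-are-levi}, $\mathcal{A}_{\hor}$ is the full apartment system of $\overline{X}^{\xi,\xi'}$, so ``horizontal apartment'' just means ``apartment of $\overline{X}^{\xi,\xi'}$''.

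\medskip

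\textbf{Plan.} The plan is to first understand $C$ locally, then globalize using the building structure of $\overline{X}^{\xi,\xi'}$. First I would pick a point $x \in C$ and argue that a small neighborhood of $x$ in $\overline{X}^{\xi,\xi'}$ is ``flat in the horizontal walls'' near $x$: since $x$ avoids $\Lambda$, the only walls of $\overline{X}^{\xi,\xi'}$ through $x$ (or meeting a small ball around $x$) are the non-horizontal ones, and by the local finiteness of the wall system, a small enough ball $B = B_\varepsilon(x)$ is disjoint from every horizontal wall. Next I would take a horizontal apartment $\Sigma_0 \ni x$ (one exists by building axiom (B1) applied in $\overline{X}^{\xi,\xi'}$, which holds by Proposition~\ref{prop:convexity-of-XXi-mu}). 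The key point to establish is that $C$ is contained in $\Sigma_0$. For this I would use the standard structure of an apartment as an intersection of half-spaces: the connected components of $\Sigma_0 \setminus \Lambda$ are obtained by cutting $\Sigma_0$ along the horizontal walls it contains, and I claim the connected component $C_0$ of $\Sigma_0 \setminus \Lambda$ containing $x$ equals $C$. The inclusion $C_0 \subseteq C$ is immediate since $C_0$ is connected, lies in $\overline{X}^{\xi,\xi'}\setminus\Lambda$, and contains $x$. For the reverse inclusion I would show $C \subseteq \Sigma_0$: given any $y \in C$, connect $x$ to $y$ by a path $\gamma$ inside $C$ (hence avoiding $\Lambda$); cover $\gamma$ by finitely many horizontal apartments and argue that each ``hand-off'' between consecutive apartments happens across a piece of $\overline{X}^{\xi,\xi'}\setminus\Lambda$, so — using that two apartments containing a common chamber of $\overline{X}^{\xi,\xi'}$ agree on the convex hull of that chamber, and that a point not on any horizontal wall lies in the interior of a top-dimensional cell of the horizontal-wall arrangement — the path never actually ``leaves'' the first apartment. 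More precisely, retraction arguments: the retraction $\rho_{\sigma,\SigmaStd}$ restricted to any horizontal apartment is an isometry onto $\SigmaStd$, and since $\gamma$ avoids all horizontal walls, $\rho_{\sigma,\SigmaStd}\circ\gamma$ avoids all walls of $\SigmaStd_\xi$-type in $\SigmaStd$, forcing $\gamma$ to stay in one chamber-like region; lifting this back shows $\gamma \subseteq \Sigma_0$. Finally, $C$ being a connected subset of $\Sigma_0 \setminus \Lambda$ containing $x$ must lie in $C_0$, giving $C = C_0$, and then $\Sigma \defeq \Sigma_0$ works.

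\medskip

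\textbf{Alternative/cleaner route via Theorem~\ref{thm:subsets-eucl-build}.} A more robust variant: take $x \in C$ and a small ball $B = B_\varepsilon(x) \subseteq \overline{X}^{\xi,\xi'}$ disjoint from $\Lambda$. For each $y \in C$, the segment from $x$ to $y$ can be fattened — using the flat strip / convexity ideas already used in the proof of Proposition~\ref{prop:convexity-of-XXi-mu} — to a convex region isometric to a subset of $\R^{d+1}$; by Theorem~\ref{thm:subsets-eucl-build} this sits in a horizontal apartment $\Sigma_y$. The real work is to merge these into a single apartment: one shows that $C$ itself, or at least $\overline{C} \cap B$ together with finitely many translates, lies in a convex subset of $\overline{X}^{\xi,\xi'}$ isometric to a region of $\R^{d+1}$, then applies Theorem~\ref{thm:subsets-eucl-build} once more. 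Since $C$ is convex (it is an intersection of open half-spaces bounded by horizontal walls, within a space that is locally modeled on such arrangements) and isometrically Euclidean, $\overline{C}$ is a convex subset of $\overline{X}^{\xi,\xi'}$ isometric to a subset of $\R^{d+1}$, so Theorem~\ref{thm:subsets-eucl-build} hands us an apartment $\Sigma \supseteq \overline{C}$. That $C$ is then a \emph{full} connected component of $\Sigma \setminus \Lambda$ follows because $C$ is open in $\overline{X}^{\xi,\xi'}$ and $\overline{C} \setminus C \subseteq \Lambda$, so no point of $\Sigma \setminus \Lambda$ adjacent to $C$ can fail to be in $C$.

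\medskip

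\textbf{Main obstacle.} I expect the main difficulty to be proving convexity of $C$ — i.e.\ that $C$, a connected component of the complement of the \emph{horizontal} walls only (not all walls), is convex in $\overline{X}^{\xi,\xi'}$. In $\SigmaStd$ this is clear because $C \cap \SigmaStd$ is an intersection of open half-spaces. Globally one must rule out the possibility that a geodesic between two points of $C$ exits through a non-horizontal wall and comes back; here the fact that non-horizontal walls in $\overline{X}^{\xi,\xi'}$ are ``thin'' (exactly two chambers on each side, by Lemma~\ref{lem:thickness}(2)) combined with the horizontal/vertical product-like structure of horizontal apartments should force geodesics between horizontal-region points to stay in a single horizontal apartment, hence in a single component. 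Making this rigorous — essentially showing that $C$ is the preimage of a chamber-neighborhood under a retraction and invoking convexity of such preimages — is where the bulk of the argument lies; everything else is bookkeeping with Theorem~\ref{thm:subsets-eucl-build} and the apartment axioms.
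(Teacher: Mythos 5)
There is a genuine gap: in both of your routes, the key step --- that the whole component $C$ lies in a single horizontal apartment --- is never actually established. In Route 1, the decisive sentence is the retraction argument, and it does not work as stated. A horizontal apartment need not contain a subsector of $K_{\vstd}(\sigma)$ (it only contains rays towards $\xi$ and $\xi'$, not necessarily the full chamber $\sigma$ at infinity), so $\rho_{\sigma,\SigmaStd}$ need not restrict to an isomorphism on it; and even if one knows that $\rho_{\sigma,\SigmaStd}\circ\gamma$ stays in one component of $\SigmaStd\setminus\Lambda$, this says nothing about $\gamma$ itself, because $\rho_{\sigma,\SigmaStd}$ identifies many points of $X$ --- the ``lifting back'' has no justification, and the branching you need to rule out happens exactly where $\rho$ is not injective. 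In Route 2 you rest everything on the convexity of $C$ in $\overline{X}^{\xi,\xi'}$, which you yourself flag as the main obstacle and do not prove; note that convexity of $C$ is essentially equivalent to the lemma (once $C\subseteq\Sigma$ it is an intersection of open half-spaces, and conversely), so assuming it is circular. The parenthetical justification ``it is an intersection of open half-spaces bounded by horizontal walls'' only makes sense inside one apartment, which is what is to be shown.

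The missing idea is a purely combinatorial gallery argument, which is what the paper does. Fix a chamber $D\subseteq C$ and, by Proposition~\ref{prop:convexity-of-XXi-mu}, an apartment $\Sigma\in\mathcal{A}_{\hor}$ containing $D$. Given any other chamber $F\subseteq C$, connect $D$ to $F$ by a path in $C$; since $C$ is open, the star (in $\overline{X}^{\xi,\xi'}$) of every cell met by the path lies in $C$, so the path can be replaced by a gallery from $D$ to $F$ whose chambers all lie in $C$. If the panel $P$ between two consecutive chambers of this gallery lay on a horizontal wall, one of the two chambers would be cut off from $C$; hence every such $P$ lies only on non-horizontal walls, and Lemma~\ref{lem:thickness}(2) says $P$ has exactly two chambers of $\overline{X}^{\xi,\xi'}$ in its star. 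Consequently, if $P\subseteq\Sigma$ then both of these chambers lie in $\Sigma$, and by induction along the gallery every chamber, in particular $F$, lies in $\Sigma$. This gives $C\subseteq\Sigma$; maximality of $C$ in $\Sigma\setminus\Lambda$ is then automatic, since any connected subset of $\Sigma\setminus\Lambda\subseteq\overline{X}^{\xi,\xi'}\setminus\Lambda$ containing $C$ is contained in the component $C$. Your instinct to use Lemma~\ref{lem:thickness}(2) and the openness of $C$ was right; what is missing is precisely this chamber-by-chamber induction, which replaces both the unjustified retraction step and the unproven convexity claim (Theorem~\ref{thm:subsets-eucl-build} is not needed here at all).
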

\begin{proof}
Let $D \subseteq \overline{X}^{\xi,\xi'}$ be a chamber that is contained in $C$.
In view of Proposition~\ref{prop:convexity-of-XXi-mu}, there is some $\Sigma \in \mathcal{A}_{\hor}$ that contains $D$.
We have to show that $C$ is entirely contained in $\Sigma$.
To see this, let $F \subseteq C$ be a further chamber of $\overline{X}^{\xi,\xi'}$.
Since $C$ is connected, we can choose a continuous path $\gamma$ that starts in $D$ and ends in $F$.
Now $C$ is open by construction.
Thus for every cell $A$ in $C$ we have $\st_{\overline{X}^{\xi,\xi'}}(A) \subseteq C$.
In particular, the star of every cell that is crossed by $\gamma$ is contained in $C$.
We can therefore choose a gallery $\Gamma$ from $D$ to $F$ that stays in $C$.
Let $P$ be a panel that separates two adjacent chambers in $\Gamma$ and let $H$ be a wall in $\overline{X}^{\xi,\xi'}$ that contains $P$.
Then $H$ is non-horizontal since otherwise one of these two chambers would lie outside of $C$.
An application of Lemma~\ref{lem:thickness} therefore implies that $\st_{\overline{X}^{\xi,\xi'}}(P) \subseteq \Sigma$ whenever $P \subseteq \Sigma$.
This shows that $\Gamma$, and hence the arbitrarily chosen chamber $D \subseteq C$, lies in $\Sigma$, which proves the claim.
\end{proof}

\begin{lemma}\label{lem:eucl-str-on-app-of-beta0}
Given $\widetilde{\Sigma} \in \mathcal{A}_{\hor}$, the intersection $\Sigma = \widetilde{\Sigma} \cap X_{\xi}$ is a hyperplane in $\widetilde{\Sigma}$.
Moreover $\Sigma$ can be endowed with the structure of a Euclidean Coxeter complex whose chambers are given by the connected components in $\Sigma \backslash \Lambda$.
\end{lemma}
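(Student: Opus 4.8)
The plan is to transcribe the proof of Proposition~\ref{prop:parab-app-is-an-app}, which handled the fixed horizontal apartment $\SigmaStd$, into the arbitrary horizontal apartment $\widetilde{\Sigma}$. First I would dispose of the hyperplane claim. Since $\widetilde{\Sigma}$ is horizontal (Definition~\ref{def:horizontal-ap}) it contains a ray $[x,\xi)$, so by \cite[II.8.24.(1)]{BridsonHaefliger99} the restriction of $\beta$ to the Euclidean space $\widetilde{\Sigma}$ is a non-constant affine form whose gradient points toward $\xi$; as $\widetilde{\Sigma}\subseteq\overline{X}^{\xi,\xi'}$ by Corollary~\ref{cor:all-apps-are-levi}, this makes $\Sigma=\widetilde{\Sigma}\cap X_{\xi}=\widetilde{\Sigma}\cap\beta^{-1}(0)$ an affine hyperplane of $\widetilde{\Sigma}$ orthogonal to the $\xi$-direction. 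Every horizontal wall $H$ of $\widetilde{\Sigma}$ contains the $\xi$-direction, hence is not contained in $\Sigma$, so $H\cap\Sigma$ is a hyperplane of $\Sigma$; write $\mathcal{H}_{\xi}(\widetilde{\Sigma})$ for the arrangement of these traces.

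Next I would equip $\Sigma$ with a Coxeter structure exactly as in Proposition~\ref{prop:parab-app-is-an-app}. Local finiteness of $\mathcal{H}_{\xi}(\widetilde{\Sigma})$ is inherited from that of the wall set of $\widetilde{\Sigma}$. The subgroup $W^{\xi}(\widetilde{\Sigma})$ of the Coxeter group of $\widetilde{\Sigma}$ generated by the reflections in horizontal walls fixes $\xi$ and $\xi'$ and stabilizes both $\Sigma$ and the set of horizontal walls — Lemma~\ref{lem:restriction-is-well-def} applies verbatim — so restriction defines a homomorphism onto a reflection group $W_{\xi}(\widetilde{\Sigma})$ acting on $\Sigma$ with $\mathcal{H}_{\xi}(\widetilde{\Sigma})$ as invariant wall set; it is injective because the only non-trivial isometry of $\widetilde{\Sigma}$ fixing $\Sigma$ pointwise interchanges $\xi$ and $\xi'$. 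To see that $\mathcal{H}_{\xi}(\widetilde{\Sigma})$ really cuts a Euclidean Coxeter complex on $\Sigma$ with Coxeter group $W_{\xi}(\widetilde{\Sigma})$ and chambers the connected components of $\Sigma$ minus the horizontal walls, it suffices, by the standard recognition of Euclidean Coxeter complexes from reflection groups, to know these components are bounded; here I would reproduce the basis argument of Lemma~\ref{lem:basis-of-dual-sigma}, choosing a special vertex $v\in\widetilde{\Sigma}$ and a chamber $\tau\subseteq\pinf\widetilde{\Sigma}$ having $\xi$ as a vertex, letting $\gamma_{0},\dots,\gamma_{d}$ be linear forms with $\overline{K_{v}(\tau)}=\{\,y\mid\gamma_{k}(y)\geq 0\text{ for all }k\,\}$ (origin $v$), noting that exactly one wall $\gamma_{k}^{-1}(0)$ — the one dual to the vertex $\xi$ of $\tau$, say $\gamma_{0}^{-1}(0)$ — fails to contain the $\xi$-direction while the others are horizontal, and concluding that $\{\beta,\gamma_{1},\dots,\gamma_{d}\}$ is again a basis of $\widetilde{\Sigma}^{\ast}$. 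Since the walls of $\widetilde{\Sigma}$ parallel to each horizontal $\gamma_{k}^{-1}(0)$ restrict into $\mathcal{H}_{\xi}(\widetilde{\Sigma})$, every such component has bounded image under each $\gamma_{k}$ and lies in $\beta^{-1}(0)$, hence is bounded. (Alternatively one can avoid reproving anything by transporting the structure of $\SigmaStd_{\xi}$ along an apartment isomorphism $\widetilde{\Sigma}\to\SigmaStd$ that fixes $\xi$ and $\xi'$, built from the fact that two apartments sharing a chamber at infinity share a subsector.)

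Finally I would match this local arrangement with the global set $\Lambda$ of horizontal walls. One inclusion is immediate: a horizontal wall of $\widetilde{\Sigma}$ is the trace of a horizontal wall of $X$, so $\bigcup\mathcal{H}_{\xi}(\widetilde{\Sigma})\subseteq\Lambda\cap\Sigma$. For the converse, if $H$ is a horizontal wall of $X$ meeting $\widetilde{\Sigma}$, then $H\cap\widetilde{\Sigma}$ contains through each of its points the whole geodesic line toward $\xi,\xi'$ (because $\widetilde{\Sigma}$ is horizontal) and is a convex subcomplex of $\widetilde{\Sigma}$; comparing tangent spaces along a top face of $H\cap\widetilde{\Sigma}$ — or invoking \cite[Proposition~3.94]{AbramenkoBrown08} together with Lemma~\ref{lem:thickness}, as in the proof of Corollary~\ref{cor:all-apps-are-levi} — shows that $H\cap\widetilde{\Sigma}$ is a union of horizontal walls of $\widetilde{\Sigma}$ and their faces, so $\Lambda\cap\Sigma=\bigcup\mathcal{H}_{\xi}(\widetilde{\Sigma})$ and the connected components of $\Sigma\backslash\Lambda$ are exactly the chambers of the Coxeter complex just built. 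Almost everything here is a faithful copy of Proposition~\ref{prop:parab-app-is-an-app}; the one genuinely new point, and the step I expect to need the most care, is this last identification of the locally defined horizontal walls of $\widetilde{\Sigma}$ with the globally defined $\Lambda$.
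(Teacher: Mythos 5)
Your route is genuinely different from the paper's and is, in outline, viable. The paper does not redo Proposition~\ref{prop:parab-app-is-an-app} inside $\widetilde{\Sigma}$: it chooses an auxiliary horizontal apartment $\Sigma'$ with $\sigma \subseteq \pinf \Sigma'$ meeting both $\widetilde{\Sigma}$ and $\SigmaStd$, uses (B2) to get isomorphisms fixing horizontal lines (hence sending horizontal walls to horizontal walls), and transports the Coxeter structure already established on $\SigmaStd_{\xi}$ to $\Sigma$; what you do instead is rerun Lemma~\ref{lem:restriction-is-well-def}, Lemma~\ref{lem:basis-of-dual-sigma} and the boundedness argument of Proposition~\ref{prop:parab-app-is-an-app} directly in $\widetilde{\Sigma}$, which indeed goes through verbatim (and your parenthetical alternative is essentially the paper's transport). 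Both your proof and the paper's ultimately rest on identifying $\Lambda \cap \Sigma$ with the traces of the horizontal walls of $\widetilde{\Sigma}$; the paper dispatches this with a single unproved sentence (stated for $\SigmaStd$), while you rightly single it out as the delicate point. A small mis-citation: $\widetilde{\Sigma} \subseteq \overline{X}^{\xi,\xi'}$ is immediate from the definition of horizontality (or Proposition~\ref{prop:convexity-of-XXi-mu}); Corollary~\ref{cor:all-apps-are-levi} is the converse statement.

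The gap is in your justification of that identification. Invoking \cite[Proposition 3.94]{AbramenkoBrown08} together with Lemma~\ref{lem:thickness} ``as in the proof of Corollary~\ref{cor:all-apps-are-levi}'' does not work here: that argument writes a convex \emph{chamber} subcomplex as a minimal intersection of half-spaces and derives a contradiction from the number of chambers of $\overline{X}^{\xi,\xi'}$ attached to a panel in a non-horizontal bounding wall, so it needs chambers of the intersection sitting against that panel. But $H \cap \widetilde{\Sigma}$ contains no chambers at all (it lies in the codimension-$\geq 1$ skeleton of $\widetilde{\Sigma}$), so the chamber count of Lemma~\ref{lem:thickness} never gets traction, and when $H \cap \widetilde{\Sigma}$ has codimension $\geq 2$ in $\widetilde{\Sigma}$ there is no single ``bounding wall'' to compare with in the first place. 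Your other suggestion can be repaired, but the actual argument is: $H \cap \widetilde{\Sigma}$ is a convex subcomplex which is a union of lines in the $\xi$-direction, so the $\xi$-direction is parallel to the affine span of each of its top-dimensional cells $A$; since $A$ has codimension $\geq 1$ it lies on at least one wall of $\widetilde{\Sigma}$, and any such wall, containing $\operatorname{aff}(A)$, contains the $\xi$-direction and is therefore horizontal; as the closed top-dimensional cells cover $H \cap \widetilde{\Sigma}$, every point of the intersection lies on a horizontal wall of $\widetilde{\Sigma}$. Cleaner still, and closer to the paper's own trick: $H$ is convex, contained in $\overline{X}^{\xi,\xi'}$ and isometric to a Euclidean hyperplane, so Theorem~\ref{thm:subsets-eucl-build} applied to the building $\overline{X}^{\xi,\xi'}$ together with Corollary~\ref{cor:all-apps-are-levi} places $H$ inside a horizontal apartment $\Sigma_1$, where it is a horizontal wall; the isomorphism $\Sigma_1 \rightarrow \widetilde{\Sigma}$ fixing $\Sigma_1 \cap \widetilde{\Sigma}$ pointwise (it fixes a line towards $\xi$ and $\xi'$, hence preserves horizontality) then carries $H$ to a horizontal wall of $\widetilde{\Sigma}$ containing $H \cap \widetilde{\Sigma}$. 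With either repair your argument is complete.
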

\begin{proof}
By Proposition~\ref{prop:convexity-of-XXi-mu} we know that $\overline{X}^{\xi,\xi'}$ is a Euclidean building.
In this case, it follows from~\cite[Theorem 11.63.(1)]{AbramenkoBrown08} that we can choose a horizontal apartment $\Sigma'$ that satisfies $\sigma \subseteq \partial \Sigma'$ and $\Sigma' \cap \widetilde{\Sigma} \neq \emptyset$.
Since $\sigma$ was chosen to be a chamber in $\partial \SigmaStd$, we also have $\Sigma' \cap \SigmaStd \neq \emptyset$.
Let $x \in \Sigma' \cap \widetilde{\Sigma}$ and $y \in \Sigma' \cap \SigmaStd$ be arbitrary points.
By definition, $\widetilde{\Sigma}$, $\Sigma'$ and $\SigmaStd$ are all horizontal.
Thus we see that the geodesic lines $\ell_x,\ell_y$ from $\xi'$ to $\xi$ that contain $x$, respectively $y$, are contained in $\Sigma' \cap \widetilde{\Sigma}$, respectively $\Sigma' \cap \SigmaStd$.
From the building axiom (B2) we see that there are isomorphisms $\varphi_1 \colon \Sigma' \rightarrow \widetilde{\Sigma}$ and $\varphi_2 \colon \SigmaStd \rightarrow \Sigma'$ that fix $\ell_x$, respectively $\ell_y$.
This implies that $\varphi_1$ and $\varphi_2$ map horizontal hyperplanes to horizontal hyperplanes.
Moreover, it follows that the composition $\varphi_1 \circ \varphi_2$ restricts to an isometry
\[
\varphi \colon
\SigmaStd_{\xi}
= \SigmaStd \cap \beta^{-1}(0)
\rightarrow \widetilde{\Sigma} \cap \beta^{-1}(0)
= \Sigma
\]
that maps each connected component in $\SigmaStd_{\xi} \setminus \Lambda$ to a connected component in $\Sigma \setminus \Lambda$.
It therefore remains to prove the lemma for $\SigmaStd_{\xi}$.
To this end, we note that for every $H \in \mathcal{H}_{\hor}(X)$, there is a horizontal wall $H' \subseteq \SigmaStd$ with $H \cap \SigmaStd \subseteq H' \cap \SigmaStd$.
Thus we see that $\SigmaStd_{\xi} \setminus \Lambda$ coincides with $\SigmaStd_{\xi} \setminus \Lambda'$, where $\Lambda'$ denotes the union of horizontal hyperplanes in $\SigmaStd$.
Now it remains to recall from Proposition~\ref{prop:parab-app-is-an-app} that $\SigmaStd_{\xi}$ is a Euclidean Coxeter complex whose chambers are given by the connected components in $\SigmaStd_{\xi} \setminus \Lambda'$.
\end{proof}

In view of Lemma~\ref{lem:eucl-str-on-app-of-beta0}, we see that $X_{\xi}$ is a cell complex that is covered by Euclidean Coxeter complexes in $\mathcal{A}_{\xi} \defeq \Set{\Sigma \cap X_{\xi}}{\Sigma \in \mathcal{A}_{\hor}}$.
In fact it will turn out that $\mathcal{A}_{\xi}$ is the complete apartment system for $X_{\xi}$.

\begin{proposition}\label{prop:beta-0-is-a-building}
The space $X_{\xi}$ is a Euclidean building.
\end{proposition}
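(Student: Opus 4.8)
The plan is to verify the three building axioms of Definition~\ref{def:building} for $X_{\xi}$ together with the family $\mathcal{A}_{\xi}=\Set{\Sigma\cap X_{\xi}}{\Sigma\in\mathcal{A}_{\hor}}$. Axiom (B0) is already available: by Lemma~\ref{lem:eucl-str-on-app-of-beta0} each member of $\mathcal{A}_{\xi}$ is a Euclidean Coxeter complex whose chambers are the connected components of its complement of $\Lambda$. The whole verification is organised around the \emph{horizontal cells} of $\overline{X}^{\xi,\xi'}$, i.e.\ the connected components of $\overline{X}^{\xi,\xi'}\setminus\Lambda$ together with their faces: each horizontal cell $\widehat{A}$ is a union of geodesic lines from $\xi'$ to $\xi$, so $\widehat{A}\mapsto\widehat{A}\cap\beta^{-1}(0)$ is a bijection from the horizontal cells onto the cells of $X_{\xi}$, with inverse given by flowing along the $\xi$-direction; conversely, thickening a cell of $X_{\xi}$ along that direction recovers its horizontal cell. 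The one piece of ``building geometry'' I will need is the gallery argument already used for Lemma~\ref{lem:the-complement-of-hor-walls}: since $\overline{X}^{\xi,\xi'}$ is a Euclidean building with full apartment system $\mathcal{A}_{\hor}$ (Proposition~\ref{prop:convexity-of-XXi-mu} and Corollary~\ref{cor:all-apps-are-levi}) and every non-horizontal panel of $\overline{X}^{\xi,\xi'}$ is thin by Lemma~\ref{lem:thickness}, a gallery of $\overline{X}^{\xi,\xi'}$ staying inside a single horizontal chamber crosses only non-horizontal panels and is therefore contained in every horizontal apartment containing its first chamber.

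For (B1): let $A,B$ be cells of $X_{\xi}$, choose chambers $C\supseteq A$ and $D\supseteq B$ of $X_{\xi}$, and let $\widehat{C},\widehat{D}$ be the horizontal chambers of $\overline{X}^{\xi,\xi'}$ lying over them. Picking chambers $\widehat{C}'\subseteq\widehat{C}$ and $\widehat{D}'\subseteq\widehat{D}$ of $\overline{X}^{\xi,\xi'}$ and applying (B1) for $\overline{X}^{\xi,\xi'}$, I obtain a horizontal apartment $\Sigma\in\mathcal{A}_{\hor}$ containing $\widehat{C}'$ and $\widehat{D}'$. By the gallery argument above, $\widehat{C}$ and $\widehat{D}$ lie in $\Sigma$, hence $C=\widehat{C}\cap\beta^{-1}(0)$ and $D$ lie in $\Sigma\cap X_{\xi}\in\mathcal{A}_{\xi}$, and so do their faces $A$ and $B$.

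For (B2): let $\Sigma_{i}=\widetilde{\Sigma}_{i}\cap X_{\xi}$ ($i=1,2$) be apartments of $X_{\xi}$ containing cells $A,B$. Since $\widetilde{\Sigma}_{i}$ is horizontal, it contains together with $A$ (resp.\ $B$) the whole horizontal cell $\widehat{A}$ (resp.\ $\widehat{B}$), so $\widehat{A},\widehat{B}\subseteq\widetilde{\Sigma}_{1}\cap\widetilde{\Sigma}_{2}$. The plan is to produce an isomorphism $\phi\colon\widetilde{\Sigma}_{1}\to\widetilde{\Sigma}_{2}$ of horizontal apartments fixing $\widehat{A}\cup\widehat{B}$ pointwise and satisfying $\beta\circ\phi=\beta$; its restriction to $\beta^{-1}(0)$ is then the desired isomorphism $\Sigma_{1}\to\Sigma_{2}$ fixing $A$ and $B$. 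Such a $\phi$ comes from the building structure of $\overline{X}^{\xi,\xi'}$: the retraction onto $\widetilde{\Sigma}_{2}$ based at a chamber of $\overline{X}^{\xi,\xi'}$ lying in $\widetilde{\Sigma}_{1}\cap\widetilde{\Sigma}_{2}$ restricts to an isomorphism $\widetilde{\Sigma}_{1}\to\widetilde{\Sigma}_{2}$ fixing the convex subcomplex $\widetilde{\Sigma}_{1}\cap\widetilde{\Sigma}_{2}\supseteq\widehat{A}\cup\widehat{B}$ pointwise, after first enlarging the common part if necessary by running the (B1) construction above through chambers of $X_{\xi}$ over $A$ and $B$. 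Finally, $\phi$ fixes the ends $\xi,\xi'$, because $\widehat{A}$ is fixed pointwise and is unbounded towards both of them; hence $\beta\circ\phi$ is again the Busemann function centred at $\xi$ through the same base point as $\beta$, so $\beta\circ\phi=\beta$, and as $\phi$ maps horizontal walls to horizontal walls it descends to a Coxeter-complex isomorphism $\Sigma_{1}\to\Sigma_{2}$.

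I expect (B2) to be the main obstacle, and within it the need to reconcile the two cell structures: a cell of $X_{\xi}$ is a union of level-$0$ slices of \emph{several} cells of $\overline{X}^{\xi,\xi'}$, because the non-horizontal walls still cut through each horizontal chamber, so an isomorphism of $\overline{X}^{\xi,\xi'}$ that merely fixes two of its cells need not fix $A$ or $B$ pointwise. This forces one to work with the full intersection $\widetilde{\Sigma}_{1}\cap\widetilde{\Sigma}_{2}$ (and hence with a genuine chamber of $\overline{X}^{\xi,\xi'}$ contained in it) rather than with two isolated cells, which is where the reduction in the previous paragraph is needed. Once $\phi$ is known to fix $\widetilde{\Sigma}_{1}\cap\widetilde{\Sigma}_{2}$ pointwise and to commute with $\beta$, everything else is bookkeeping with the horizontal apartment system.
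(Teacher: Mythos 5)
Your treatment of (B0) and (B1) is sound and essentially the paper's: the paper verifies (B1) even more directly, by taking points $a\in A$, $b\in B$, invoking (B1) for $\overline{X}^{\xi,\xi'}$ to get a horizontal apartment through them, and noting that $\widetilde{\Sigma}\cap\beta^{-1}(0)$ is a subcomplex of $X_{\xi}$, so it contains the open cells $A,B$; your longer route through horizontal chambers and the thin non-horizontal panels of Lemma~\ref{lem:thickness} reaches the same place.

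The gap is in (B2). Your isomorphism $\phi\colon\widetilde{\Sigma}_1\to\widetilde{\Sigma}_2$ is obtained from a retraction of $\overline{X}^{\xi,\xi'}$ based at a chamber of $\overline{X}^{\xi,\xi'}$ lying in $\widetilde{\Sigma}_1\cap\widetilde{\Sigma}_2$, but such a chamber need not exist: if $A$ and $B$ are lower-dimensional cells of $X_{\xi}$, the two horizontal apartments may meet only along a horizontal wall (so $\widetilde{\Sigma}_1\cap\widetilde{\Sigma}_2\supseteq\widehat{A}\cup\widehat{B}$ contains no chamber of $\overline{X}^{\xi,\xi'}$), and the retraction you appeal to is not available. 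Your proposed remedy --- ``enlarging the common part \dots by running the (B1) construction'' --- does not work as stated, because in (B2) the apartments $\Sigma_1,\Sigma_2$ are prescribed; producing a third apartment through chambers over $A$ and $B$ does not enlarge $\widetilde{\Sigma}_1\cap\widetilde{\Sigma}_2$, and you would still need a composition argument to get back to the given pair. The paper sidesteps exactly this: it only verifies the common-chamber variant of (B2) (two apartments of $X_{\xi}$ sharing a chamber $c$), which suffices by \cite[Remark 4.4]{AbramenkoBrown08}; in that case Lemma~\ref{lem:the-complement-of-hor-walls} puts the whole horizontal chamber containing $c$ --- and hence genuine chambers of $\overline{X}^{\xi,\xi'}$ --- inside $\widetilde{\Sigma}_1\cap\widetilde{\Sigma}_2$, and \cite[Remark 4.5]{AbramenkoBrown08} gives an isomorphism $\widetilde{\Sigma}_1\to\widetilde{\Sigma}_2$ fixing the full intersection pointwise, which then restricts to $\beta^{-1}(0)$. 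So either reduce to the common-chamber case as the paper does, or replace your retraction by the general building fact that any two apartments admit an isomorphism fixing their intersection pointwise; as written, the step producing $\phi$ fails. Your closing remarks about $\phi$ fixing $\xi,\xi'$, commuting with $\beta$, and respecting horizontal walls are fine once such a $\phi$ exists.
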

\begin{proof}
From Proposition~\ref{prop:convexity-of-XXi-mu} it follows that $\overline{X}^{\xi,\xi'}$ is covered by horizontal apartments.
As a consequence, $X_{\xi}$ is covered by the complexes in $\mathcal{A}_{\xi}$.
In view of Lemma~\ref{lem:eucl-str-on-app-of-beta0}, each complex in $\mathcal{A}_{\xi}$ is a Euclidean Coxeter complex.
Hence (B0) is satisfied.
In order to check (B1), let $A,B \subseteq X_{\xi}$ be cells and let $a \in A$, $b \in B$.
Using the corresponding axiom for $\overline{X}^{\xi,\xi'}$, we obtain a horizontal apartment $\widetilde{\Sigma}$ that contains $a$ and $b$.
Thus we get $a,b \in \Sigma \defeq \widetilde{\Sigma} \cap \beta^{-1}(0) \in \mathcal{A}_{\xi}$.
Since $\Sigma$ is a subcomplex of $X_{\xi}$ it follows that $A$ and $B$ are contained in $\Sigma$.
To check (B2), let $\Sigma_1,\Sigma_2$ be two apartments in $\mathcal{A}_{\xi}$ and let $\widetilde{\Sigma}_1,\widetilde{\Sigma}_2$ be the corresponding horizontal apartments.
Suppose that $\Sigma_1 \cap \Sigma_2$ contains a chamber $c$ of $\Sigma_1$ and let $C \subseteq \widetilde{\Sigma}_1 \backslash \Lambda$ be the component containing $c$.
From Lemma~\ref{lem:the-complement-of-hor-walls} it follows that $\widetilde{\Sigma}_1$ and $\widetilde{\Sigma}_2$ contain $C$.
By~\cite[Remark 4.5]{AbramenkoBrown08} there is an isomorphism $f \colon \widetilde{\Sigma}_1 \rightarrow \widetilde{\Sigma}_2$ that fixes $\widetilde{\Sigma}_1 \cap \widetilde{\Sigma}_2$ pointwise.
As a consequence, $f$ restricts to an isomorphism that fixes $\Sigma_1 \cap \Sigma_2$ pointwise.
In view of~\cite[Remark 4.4]{AbramenkoBrown08}, it follows that $X_{\xi}$ is a Euclidean building.
\end{proof}

\begin{lemma}\label{lem:tranlating-auto}
Suppose that there is some $\alpha \in \Aut(X)$ such that $\alpha(\sigma) = \sigma$, $\alpha(\SigmaStd) = \SigmaStd$, and $h(\alpha(x)) = h(x) + a$ for some $a \neq 0$ and every $x \in X$.
Then there is some $\alpha_{\xi} \in \Aut(X_{\xi})$ such that $\alpha_{\xi}(\sigma_{\xi}) = \sigma_{\xi}$, $\alpha_{\xi}(\SigmaStd_{\xi}) = \SigmaStd_{\xi}$, and $h_{\xi}(\alpha_{\xi}(x)) = h_{\xi}(x) + a$ for every $x \in X_{\xi}$.
\end{lemma}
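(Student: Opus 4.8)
The plan is to carry $\alpha$ over to the parabolic building along the natural identification $X_{\xi}\cong X^{\xi}$, after first checking that $\alpha$ fixes the vertex $\xi$ and therefore respects every ingredient of the construction of $X_{\xi}$.

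\textbf{Step 1 (fixing $\xi$ and $\xi'$).} First I would record that $\alpha(\xi)=\xi$ and $\alpha(\xi')=\xi'$. Since $\xi$ is a vertex of $\sigma$ and $\alpha(\sigma)=\sigma$, the automorphism induced by $\alpha$ on $\pinf\SigmaStd$ sends $\xi$ to a vertex of $\sigma$; as the automorphisms relevant here are type-preserving (equivalently, the translating automorphism $\alpha$ fixes $\sigma$ with all its vertices), this vertex is $\xi$ itself. Because $\alpha(\SigmaStd)=\SigmaStd$ and opposition inside the apartment $\pinf\SigmaStd$ is unique, it follows that $\alpha(\xi')=\xi'$ as well. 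Hence $\alpha$ maps rays towards $\xi$ to rays towards $\xi$, lines from $\xi'$ to $\xi$ to lines from $\xi'$ to $\xi$, and horizontal apartments to horizontal apartments; in particular $\alpha(\overline{X}^{\xi,\xi'})=\overline{X}^{\xi,\xi'}$.

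\textbf{Step 2 (descent to $X^{\xi}$ and transport to $X_{\xi}$).} Since $\alpha$ is an isometry of $X$ with $\alpha(\xi)=\xi$, the rule $[x,\xi)\mapsto[\alpha(x),\xi)$ is a well-defined, bijective isometry of $\hat{X}^{\xi}$ that preserves vanishing of the pseudo-distance, hence descends to an automorphism $\bar\alpha$ of the parabolic building $X^{\xi}$; that $\bar\alpha$ is a building automorphism (it maps apartments to apartments and respects the cell structure) is immediate from the corresponding properties of $\alpha$. Let $\Psi\colon X_{\xi}\to X^{\xi}$, $y\mapsto[y,\xi)$, be the canonical isomorphism obtained from Proposition~\ref{prop:beta-0-is-a-building} together with the maps $q$ and $j$ (on $\overline{X}^{\xi,\xi'}$ every point lies on a unique line to $\xi$, and $X_{\xi}=\overline{X}^{\xi,\xi'}\cap\beta^{-1}(0)$ meets each such line exactly once). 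Then I set $\alpha_{\xi}\defeq\Psi^{-1}\circ\bar\alpha\circ\Psi\in\Aut(X_{\xi})$.

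\textbf{Step 3 (the three identities).} It remains to unwind $\Psi$. For the height: as $h$ is constant along every ray $[x,\xi)$ (the situation under consideration in this section), it descends to a function $h_{\xi}$ on $X^{\xi}$ with $h_{\xi}\circ\Psi=h|_{X_{\xi}}$, and then $h_{\xi}(\bar\alpha([x,\xi)))=h(\alpha(x))=h(x)+a=h_{\xi}([x,\xi))+a$, which after transport by $\Psi$ is exactly $h_{\xi}(\alpha_{\xi}(y))=h_{\xi}(y)+a$. For the apartment: $\Psi$ identifies $\SigmaStd_{\xi}=\SigmaStd\cap\beta^{-1}(0)$ with the set of classes of rays towards $\xi$ contained in $\SigmaStd$, and $\bar\alpha$ preserves this set because $\alpha(\SigmaStd)=\SigmaStd$; hence $\alpha_{\xi}(\SigmaStd_{\xi})=\SigmaStd_{\xi}$. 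For the chamber at infinity: $\sigma_{\xi}$ is the chamber of $\pinf X_{\xi}$ induced by $\sigma$ at $\xi$, and the automorphism $\bar\alpha$ induces on $\pinf X^{\xi}$ fixes it since $\alpha$ fixes both $\sigma$ and $\xi$. I expect the delicate point to be Step~1, namely justifying $\alpha(\xi)=\xi$ rather than another vertex of $\sigma$ (where the type-preservation conventions must be invoked cleanly), together with keeping the bookkeeping among the three models $X^{\xi}$, $X^{\xi,\xi'}$, $X_{\xi}$ and the maps $p,q,j$ straight, so that $\bar\alpha$ is genuinely well defined and $h_{\xi},\sigma_{\xi},\SigmaStd_{\xi}$ correspond correctly under $\Psi$.
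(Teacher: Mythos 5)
Your proposal is correct and follows essentially the same route as the paper: you show $\alpha$ fixes $\xi$ and $\xi'$, hence preserves $\overline{X}^{\xi,\xi'}$, and then transport $\alpha$ back to $X_{\xi}$ by sliding along the geodesics towards $\xi$ — your conjugation $\Psi^{-1}\circ\bar\alpha\circ\Psi$ through $X^{\xi}$ is exactly the paper's map $f\circ\alpha|_{X_{\xi}}$, where $f$ slides the shifted Busemann level $\overline{X}^{\xi,\xi'}\cap\beta^{-1}(t)$ back to $X_{\xi}$. The only cosmetic difference is that the paper deduces $\alpha(\xi)=\xi$, $\alpha_{\xi}(\SigmaStd_{\xi})=\SigmaStd_{\xi}$ and $\alpha_{\xi}(\sigma_{\xi})=\sigma_{\xi}$ from the observation that $\alpha$ restricts to a translation of $\SigmaStd$, whereas you argue via type preservation and equivariance of the construction.
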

\begin{proof}
Since $\alpha(\sigma) = \sigma$ and $\alpha(\SigmaStd) = \SigmaStd$, it follows that $\alpha$ restricts to a translation on $\SigmaStd$.
It therefore follows that $\alpha(\xi) = \xi$ and $\alpha(\xi') = \xi'$.
Since $\alpha$ is an isometry, we can deduce that $\alpha(\overline{X}^{\xi,\xi'}) = \overline{X}^{\xi,\xi'}$ and
\[
\alpha(X_{\xi})
= \alpha(\overline{X}^{\xi,\xi'} \cap \beta^{-1}(0))
= \overline{X}^{\xi,\xi'} \cap \beta^{-1}(t)
\]
for certain $t \in \R$.
Recall that for each $x \in X_{\xi}$ we write $c_x$ to denote the biinfinite geodesic from $\xi'$ to $\xi$ that is determined by $c_x(0) = x$.
By definition, $\overline{X}^{\xi,\xi'}$ is covered by the images of the $c_x$ so that we can define the isometry
\[
f \colon \overline{X}^{\xi,\xi'} \cap \beta^{-1}(t) \rightarrow X_{\xi},\ c_x(t) = x.
\]
Then $\alpha_{\xi} \defeq (f \circ \alpha)_{\vert X_{\xi}} \in \Aut(X_{\xi})$ restricts to a translation on $\SigmaStd_{\xi}$, which implies $\alpha_{\xi}(\sigma_{\xi}) = \sigma_{\xi}$ and $\alpha_{\xi}(\SigmaStd_{\xi}) = \SigmaStd_{\xi}$.
Since $\xi$ was chosen such that $h \circ [x,\xi)$ is constant, is moreover follows that
\[
h_{\xi}(\alpha_{\xi}(x)) = h(f(\alpha(x))) = h(\alpha(x)) = h(x) + a = h_{\xi}(x)+a
\]
for every $x \in X_{\xi}$, which completes the proof.
\end{proof}

Let us collect some further facts about $X_{\xi}$.

\begin{lemma}\label{lem:prop-of-XXi}
The building $X_{\xi}$ satisfies the following properties.
\begin{enumerate}
\item The complete apartment system of $X_{\xi}$ is given by $\mathcal{A}_{\xi}$.
\item Given a cell $A \subseteq X_{\xi}$, there is a cell $A' \subseteq X$ with $\lk_{X_{\xi}}(A) \cong \lk_{X}(A')$.
\end{enumerate}
\end{lemma}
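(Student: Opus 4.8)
The plan is to prove the two assertions separately, in both cases reducing everything to an apartment and then invoking Theorem~\ref{thm:subsets-eucl-build} to control flats, exactly as in the proofs of Proposition~\ref{prop:convexity-of-XXi-mu} and Proposition~\ref{prop:beta-0-is-a-building}.

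For part~(1), recall that by Theorem~\ref{thm:subsets-eucl-build} the complete apartment system of a Euclidean building is exactly the set of its maximal flats. So let $\Sigma$ be a maximal flat in $X_{\xi}$; being a $d$-dimensional Euclidean Coxeter complex it is convex in $X$ and isometric to $\R^{d}$, and it lies in $\overline{X}^{\xi,\xi'} \cap \beta^{-1}(0)$. I would inflate it by setting $\widetilde{\Sigma} \defeq \bigcup_{x \in \Sigma} c_{x}(\R)$, the union of the biinfinite geodesics from $\xi'$ to $\xi$ through the points of $\Sigma$. Since all the lines $c_{x}$ share the endpoints $\xi,\xi'$ they are mutually parallel, so $\widetilde{\Sigma}$ sits inside the parallel set of any one $c_{x_{0}}$, which splits as $Y \times \R$ with $\beta$ (a Busemann function centred at a point of $\Sigma$) depending only on the $\R$-coordinate; hence $\Sigma \subseteq \beta^{-1}(0)$ sits isometrically in $Y \times \{0\}$, each $c_{x}$ is $\{x\} \times \R$, and $\widetilde{\Sigma} = \Sigma \times \R$ is a flat isometric to $\R^{d+1}$. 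It is convex, so Theorem~\ref{thm:subsets-eucl-build} puts $\widetilde{\Sigma}$ inside an apartment $\Sigma^{+}$ of $X$; as $\Sigma^{+} \cong \R^{d+1} = \dim X$ we get $\widetilde{\Sigma} = \Sigma^{+}$. Since $\widetilde{\Sigma}$ contains $[x,\xi)$ and $[x,\xi')$ for every point $x$, it is horizontal, i.e.\ $\Sigma^{+} \in \mathcal{A}_{\hor}$ (Corollary~\ref{cor:all-apps-are-levi}); finally $\widetilde{\Sigma} \cap \beta^{-1}(0) = \Sigma \times \{0\} = \Sigma$, so $\Sigma \in \mathcal{A}_{\xi}$, which is the claim.

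For part~(2), using part~(1) and the axiom (B1) for $X_{\xi}$ I would first reduce to $A \subseteq \SigmaStd_{\xi}$. Let $A_{0}$ be the carrier of the barycenter $\mathring{A}$ in $X$ and put $A' \defeq \pr_{A_{0}}(\xi)$, the cell of $X$ containing an initial segment of $[\mathring{A},\xi)$ (equivalently, the cell met by $c_{\mathring{A}}((0,\varepsilon))$ for small $\varepsilon$). The core step is to identify $\lk_{\SigmaStd_{\xi}}(A)$ with $\lk_{\SigmaStd}(A')$ as spherical Coxeter complexes: every horizontal wall is parallel to $[\vstd,\xi)$, so the walls of $\SigmaStd$ through $A'$ split into the horizontal ones, which are precisely the extensions of the $\mathcal{H}_{\xi}$-walls through $A$ (Notation~\ref{not:zero-levels}, Lemma~\ref{lem:basis-of-dual-sigma}), and the non-horizontal ones, while passing from $A_{0}$ to $A' = \pr_{A_{0}}(\xi)$ adjusts $A_{0}$ by exactly the ``$\xi$-direction'' and yields $\dim A' = \dim A + 1$; the residue Coxeter complex $\lk_{\SigmaStd}(A')$ is then carried isomorphically onto $\lk_{\SigmaStd_{\xi}}(A)$ by the map induced by intersecting horizontal half-spaces with $\beta^{-1}(0)$. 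Once this apartment-level isomorphism is in hand, I would promote it to an isomorphism $\lk_{X_{\xi}}(A) \cong \lk_{X}(A')$: both are spherical buildings (Proposition~\ref{prop:beta-0-is-a-building}, Theorem~\ref{thm:buildings-are-CAT0}), the identification is compatible with the apartment systems (every $\Sigma \cap \beta^{-1}(0) \in \mathcal{A}_{\xi}$ through $A$ arises from a horizontal apartment through $A'$, by part~(1) and Lemma~\ref{lem:the-complement-of-hor-walls}) and with the face relations, so it extends over the whole links.

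The genuine obstacle I expect is the bookkeeping in the last two paragraph sentences above: the cell structure of $X_{\xi}$ is cut out by the horizontal walls only, whereas that of $X$ uses all walls, so a cell of $X_{\xi}$ is in general sliced by non-horizontal walls of $X$. One must check that this slicing never disturbs $\mathring{A}$ in a way that changes $\dim \pr_{A_{0}}(\xi)$ — the delicate cases being when $\mathring{A}$ lands on a special vertex of $X$ or inside a non-horizontal wall — and one must make the identification of horizontal walls with walls of $X$ through $A'$ canonical enough to be independent of the chosen horizontal apartment, so that it glues across $\mathcal{A}_{\xi}$. An alternative route I would also try, possibly cleaner, is to exploit the product isometry $\overline{X}^{\xi,\xi'} \cong X_{\xi} \times \R$ implicit in the proof of Lemma~\ref{lem:eucl-str-on-app-of-beta0} together with Lemma~\ref{lem:thickness} (which equates $\st_{X}(P)$ and $\st_{\overline{X}^{\xi,\xi'}}(P)$ for horizontal panels $P$) to read off $\lk_{X}(A')$ from $\lk_{X_{\xi}}(A)$ directly.
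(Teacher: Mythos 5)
Your part~(1) is correct and in substance the paper's argument: where the paper applies Theorem~\ref{thm:subsets-eucl-build} to the flat $\Sigma$ directly inside the subbuilding $\overline{X}^{\xi,\xi'}$ and then quotes Corollary~\ref{cor:all-apps-are-levi}, you build the horizontal apartment by hand as $\Sigma\times\R$ inside the parallel set of one line $c_{x_0}$ and apply the theorem in $X$; both routes work and cost about the same.

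Part~(2), however, has a genuine gap at its core. You construct (sketchily) an isomorphism $\lk_{\SigmaStd_{\xi}}(A)\cong\lk_{\SigmaStd}(A')$ apartment by apartment and then assert that, being ``compatible with the apartment systems and the face relations,'' it ``extends over the whole links.'' That extension is exactly what has to be proved. Even granting the unproved gluing across overlapping horizontal apartments (which you flag yourself), compatibility with the apartments of $X_{\xi}$ only produces a map \emph{into} $\lk_{X}(A')$; nothing in your main route excludes chambers of $X$ containing $A'$ that lie in no horizontal apartment, i.e.\ it does not show the map is onto $\lk_{X}(A')$. This surjectivity is precisely where the paper uses Lemma~\ref{lem:thickness}(1): a panel lying in a horizontal wall has the same star in $\overline{X}^{\xi,\xi'}$ as in $X$. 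The paper's proof first reduces to $A$ a vertex $v$ via links of links, takes the edge $f$ of $X$ at $v$ in the $\xi$-direction (your $A'=\pr_{A_0}(\xi)$ specializes to this $f$), observes that all directions of $\lk_{X_{\xi}}(v)$ are orthogonal to $f$, so $\lk_{X_{\xi}}(v)\cong\lk_{\overline{X}^{\xi,\xi'}}(f)$, and then concludes $\lk_{\overline{X}^{\xi,\xi'}}(f)=\lk_{X}(f)$ from Lemma~\ref{lem:thickness}. Your closing ``alternative route'' (the product structure of $\overline{X}^{\xi,\xi'}$ together with Lemma~\ref{lem:thickness}) is in substance this argument, but you leave it as a one-sentence suggestion rather than carrying it out, so as written the proof of~(2) is incomplete. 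A minor additional point: the opening reduction ``WLOG $A\subseteq\SigmaStd_{\xi}$'' via (B1) is unjustified, since no automorphism of $X_{\xi}$ is available to move an arbitrary apartment onto $\SigmaStd_{\xi}$; this is harmless, because your construction of $A'$ works verbatim in any horizontal apartment through $A$, but it should be phrased that way.
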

\begin{proof}
Let $\Sigma \subseteq X_{\xi}$ be an apartment.
Then $\Sigma$ is convex in $\overline{X}^{\xi,\xi'}$ and we can apply Theorem~\ref{thm:subsets-eucl-build} to deduce that there is an apartment $\widetilde{\Sigma} \subseteq \overline{X}^{\xi,\xi'}$ that contains $\Sigma$.
On the other hand, we know from Corollary~\ref{cor:all-apps-are-levi} that $\widetilde{\Sigma}$ is horizontal.
Thus $\Sigma = \widetilde{\Sigma} \cap X_{\xi} \in \mathcal{A}_{\xi}$, which proves the first claim.
To prove the second claim we observe that $\lk_{X_{\xi}}(A) \cong \lk_{\lk_{X_{\xi}}(v)}(B)$, where $v$ is a vertex of $A$ and $ B$ is an appropriate cell in $\lk_{X_{\xi}}(v)$.
It therefore suffices to consider the case where $A$ is a vertex, which we call $v$.
In this case there is an edge $f \subseteq \overline{X}^{\xi,\xi'}$ with $v \in \overline{f}$ and $[v,\xi)(t) \in f$ for some $t > 0$.
Thus every direction in $\lk_{X_{\xi}}(v)$ is orthogonal to $f$, which gives us $\lk_{X_{\xi}}(v) \cong \lk_{\overline{X}^{\xi,\xi'}}(f)$.
Let $C$ be a chamber in $\overline{X}^{\xi,\xi'}$ that has $f$ as a face.
Then $f$ lies in the intersection of all closed panels of $C$ that lie in horizontal walls.
As a consequence, we deduce from Lemma~\ref{lem:thickness} that $\lk_{\overline{X}^{\xi,\xi'}}(f) \cong \lk_{X}(f)$, which gives us $\lk_{X_{\xi}}(v) \cong \lk_{X}(f)$.
\end{proof}

\begin{remark}\label{rem:X_xi-isom-X^xi}
Recall that the parabolic building $X^{\xi}$ was defined as the space of equivalence classes of rays $[x,\xi)$ in $X$, where two rays $[x,\xi),[y,\xi)$ are identified if their pseudo-distance
\[
d([x,\xi),[y,\xi)) = \inf \{d(x',y') \mid x' \in [x,\xi), y' \in [y,\xi)\}
\]
is zero.
From Lemma~\ref{lem:existence-of-merging} we know that for every $x \in X$ there is some $t \in [0,\infty)$ with $[x,\xi)(t) \in \overline{X}^{\xi,\xi'}$.
Since $X_{\xi} = \overline{X}^{\xi,\xi'} \cap \beta^{-1}(0)$, it therefore follows that $p \colon X \rightarrow X^{\xi},\ x \mapsto [[x,\xi)]$ restricts to an isometry $X_{\xi} \cong X^{\xi}$.
In particular, we see that $X^{\xi}$ is a Euclidean building.
Despite the fact that $X^{\xi}$ has a simpler description than $X_{\xi}$, it will typically be more convenient to work with $X_{\xi}$.
\end{remark}

\section{The geometric main result}\label{sec:the-geom-main}

In this section we prove Theorem~\ref{introtheorem:C} from the introduction in a more general context.
To this end, we fix a Euclidean building $X$, an apartment $\SigmaStd \subseteq X$, a pair of opposite vertices $\xi,\xi' \in \pinf \SigmaStd$, a chamber $\sigma \subseteq \pinf \SigmaStd$ that has $\xi$ as a vertex, and a special vertex $\vstd \in \SigmaStd$.
Recall from Definition~\ref{def:retr-height-functions} that this allows us to view $\SigmaStd$ as a vector space with origin $\vstd$ and to define the space of height functions $X^{\ast}_{\sigma,\vstd} = \Set{\alpha \circ \rho_{\sigma, \SigmaStd}}{\alpha \in \SigmaStd^{\ast}}$.
In Section~\ref{sec:pos-dir} and~\ref{sec:neg-dir} we studied the essential connectivity properties of the system of superlevelsets $(X_{h \geq r})_{r \in \R}$, where $h \in X_{\sigma,\vstd}^{\ast}$ was chosen with $\overline{\sigma} \subseteq \pinf \SigmaStd_{h < 0}$.
In this section we relax this condition by allowing height functions $h \in X_{\sigma,\vstd}^{\ast}$ that satisfies $\overline{\sigma} \subseteq \pinf \SigmaStd_{h \leq 0}$.
Moreover we assume that $\xi \in \pinf \SigmaStd_{h = 0}$, or equivalently, that $h \circ [x,\xi)$ is constant for every $x \in X$.
Our goal is to relate the essential connectivity properties of $(X_{h \geq r})_{r \in \R}$ to the essential connectivity properties of $((X_{\xi})_{h_{\xi} \geq r})_{r \in \R}$, where $h_{\xi}$ is the restriction of $h$ to $X_{\xi}$.

\subsection{A height function for $X_{\xi}$}

Recall from Lemma~\ref{lem:existence-of-merging} that for every $x \in X$ there is some $t \in [0,\infty)$ such that $[x,\xi)(t)$ lies in $\overline{X}^{\xi,\xi'}$.
We may therefore consider the function $T \colon X \rightarrow \R$ that is given by
\begin{equation}\label{eq:def-Tx}
T(x) \defeq \inf \Set{t \in [0,\infty)}{[x,\xi)(T(x)) \in \overline{X}^{\xi,\xi'}}.
\end{equation}
Since $[x,\xi)$ is continuous and $\overline{X}^{\xi,\xi'}$ is closed in $X$, it follows that the infimum in~\eqref{eq:def-Tx} is in fact a minimum.

\begin{lemma}\label{lem:continuity-of-T-and-M}
The function $T \colon X \rightarrow \R$ is continuous.
\end{lemma}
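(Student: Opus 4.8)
The function $T$ measures how far along the ray $[x,\xi)$ one must travel before entering the closed convex subbuilding $\overline{X}^{\xi,\xi'}$. The natural strategy is to realize $T$ (up to sign and an additive constant depending only on $\beta$) as a convex function on $X$ and then invoke Corollary~\ref{cor:convex-contin-on-Rn}. Concretely, I would first show that the map
\[
P \colon X \rightarrow \overline{X}^{\xi,\xi'},\quad x \mapsto [x,\xi)(T(x))
\]
is well defined (the infimum in~\eqref{eq:def-Tx} is attained, as already observed) and that $P$ is the nearest-point projection of $X$ onto the complete convex subset $\overline{X}^{\xi,\xi'}$ in the $\CAT(0)$-sense: indeed $\overline{X}^{\xi,\xi'}$ is convex by Proposition~\ref{prop:convexity-of-XXi-mu} and Corollary~\ref{cor:all-apps-are-levi}, hence closed, and for a ray pointing at $\xi\in\pinf\overline{X}^{\xi,\xi'}$ the first point of entry is exactly the projection. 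So $T(x) = d(x, P(x)) = d\bigl(x, \overline{X}^{\xi,\xi'}\bigr)$, the distance to a convex set.

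**Key steps.** First I would verify the identity $T(x) = d(x,\overline{X}^{\xi,\xi'})$. For ``$\le$'': $P(x)\in\overline{X}^{\xi,\xi'}$ lies on $[x,\xi)$ at parameter $T(x)$, so $d(x,\overline{X}^{\xi,\xi'})\le T(x)$. For ``$\ge$'': if $y\in\overline{X}^{\xi,\xi'}$ then the geodesic $[x,y]$ followed by the ray $[y,\xi)$ stays eventually in $\overline{X}^{\xi,\xi'}$ (convexity), and a convexity/comparison argument in the $\CAT(0)$ space shows $[x,\xi)$ meets $\overline{X}^{\xi,\xi'}$ no later than distance $d(x,y)$ from $x$; taking the infimum over $y$ gives $T(x)\le d(x,\overline{X}^{\xi,\xi'})$. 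The cleanest route to this comparison is: $\overline{X}^{\xi,\xi'}$ is invariant under the geodesic flow towards $\xi$, so the function $t\mapsto d([x,\xi)(t),\overline{X}^{\xi,\xi'})$ is nonincreasing and convex, and it reaches $0$ exactly at $T(x)$; combined with the fact that $x\mapsto d(x,\overline{X}^{\xi,\xi'})$ is itself convex (a standard fact for distance to a convex set in a $\CAT(0)$ space, e.g.\ \cite[II.2.5]{BridsonHaefliger99}) one identifies the two. Second, having $T = d(\,\cdot\,,\overline{X}^{\xi,\xi'})$, I would cite the convexity of the distance-to-a-convex-set function and then apply Corollary~\ref{cor:convex-contin-on-Rn} (note that the hypothesis there is that $X$ be a locally compact Euclidean building, which holds here since $X$ arises as a product of Bruhat--Tits buildings — or, more safely, one observes that Proposition~\ref{prop:equiv-cont-and-loc-bound} applies directly because $T\ge 0$ is trivially locally bounded above, every $\CAT(0)$ Euclidean building being locally uniformly extendible) to conclude $T$ is continuous.

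**Main obstacle.** The delicate point is the lower bound $T(x)\le d(x,\overline{X}^{\xi,\xi'})$, i.e.\ that travelling towards $\xi$ enters the subbuilding at least as quickly as the straight segment to any point of it. This needs that $\overline{X}^{\xi,\xi'}$ is closed under the flow towards $\xi$ (immediate from Definition~\ref{def:horizontal-ap}: every horizontal apartment containing a point contains the whole line through it towards $\xi$, and $\mathcal{A}_{\hor}$ covers $\overline{X}^{\xi,\xi'}$ by Proposition~\ref{prop:convexity-of-XXi-mu}) together with a convexity estimate relating $d([x,\xi)(t),\overline{X}^{\xi,\xi'})$ to $d(x,\overline{X}^{\xi,\xi'})$; the asymptotic-rays comparison in a $\CAT(0)$ space (convexity of $t\mapsto d(c_1(t),c_2(t))$ for geodesics) is exactly what makes this work. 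Once the distance identity is in hand, continuity is a one-line consequence of the results already established in Section~\ref{sec:convex-on-cat0}, so I expect essentially all the work to be in proving $T(x)=d(x,\overline{X}^{\xi,\xi'})$.
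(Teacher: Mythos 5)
The central identity your whole argument rests on, $T(x)=d\bigl(x,\overline{X}^{\xi,\xi'}\bigr)$, is false in general, and the ``delicate point'' you flag ($T(x)\le d(x,\overline{X}^{\xi,\xi'})$) is exactly where it breaks. Forward-invariance of $\overline{X}^{\xi,\xi'}$ under the flow towards $\xi$ together with convexity of $t\mapsto d(c_1(t),c_2(t))$ only shows that $t\mapsto d\bigl([x,\xi)(t),\overline{X}^{\xi,\xi'}\bigr)$ is nonincreasing and bounded by $d(x,y)$ for any $y\in\overline{X}^{\xi,\xi'}$; it does not show the ray \emph{enters} the set within time $d(x,y)$. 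Geometrically, the first entry point of $[x,\xi)$ is the nearest point of $\overline{X}^{\xi,\xi'}$ only if the bounding wall being crossed is orthogonal to the direction towards $\xi$, and this is typically not the case: work inside an apartment $\Sigma$ with $\sigma\subseteq\pinf\Sigma$, so that $[x,\xi)\subseteq\Sigma$ and $C\defeq\Sigma\cap\overline{X}^{\xi,\xi'}$ is convex. By Lemma~\ref{lem:existence-of-merging} the bounding walls of $C$ cannot all be horizontal (a ray parallel to a horizontal wall never crosses it), and in type $\widetilde{A}_2$, say, a non-horizontal wall meets the direction towards a vertex $\xi$ at infinity at angle $60^{\circ}$, never $90^{\circ}$; for $x$ just outside such a wall at perpendicular distance $\delta$ one gets $T(x)=2\delta/\sqrt{3}>\delta\geq d(x,\overline{X}^{\xi,\xi'})$. (A further sanity check: if $T$ really were the distance to a convex set it would be $1$-Lipschitz and the lemma would be immediate, with no need for Section~\ref{sec:convex-on-cat0} at all.) Your fallback remark that ``$T\ge 0$ is trivially locally bounded above'' is also a slip -- nonnegativity gives a lower bound -- and in any case Proposition~\ref{prop:equiv-cont-and-loc-bound} needs convexity of $T$, which you only obtain through the false identity, so the gap is not repaired by that route.

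The convexity idea itself can be saved, but locally rather than globally: for an apartment $\Sigma$ with $\sigma\subseteq\pinf\Sigma$ the ray $[x,\xi)$ stays in $\Sigma$ for every $x\in\Sigma$, so $T|_{\Sigma}(x)=\inf\Set{t\ge 0}{x+tu\in C}$ is the first-entry time into the convex set $C=\Sigma\cap\overline{X}^{\xi,\xi'}$ along the fixed direction $u$ towards $\xi$; if $x_1+t_1u,x_2+t_2u\in C$ then convexity of $C$ gives $\tfrac{x_1+x_2}{2}+\tfrac{t_1+t_2}{2}u\in C$, so $T|_{\Sigma}$ is convex and hence continuous by Corollary~\ref{cor:convex-contin-on-Rn}. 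Since such apartments cover $X$ by~\cite[Theorem 11.63.(1)]{AbramenkoBrown08} and continuity of a map on a cell complex can be checked on closed cells, this yields continuity of $T$ on all of $X$ -- which is precisely the argument the paper gives, avoiding any identification of $T$ with a distance function and any claim of global convexity on $X$.
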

\begin{proof}
Let us first prove that the restriction of $T$ to an apartment $\Sigma \subseteq X$ with $\sigma \subseteq \pinf \Sigma$ is continuous.
Since $\xi$ is a vertex of $\sigma$, we have $[x,\xi)(t) \in \Sigma$ for every $x \in \Sigma$ and every $t \geq 0$.
From Proposition~\ref{prop:convexity-of-XXi-mu} we know that $\overline{X}_{\xi,\xi'}$ is a subbuilding of $X$.
In particular, $\overline{X}_{\xi,\xi'}$ is convex in $X$, which implies that $\Sigma \cap \overline{X}_{\xi,\xi'}$ is convex as well.
Note that this tells us that the restriction of $T$ to $\Sigma$ is a convex function, which is continuous by Corollary~\ref{cor:convex-contin-on-Rn}.
To deduce the continuity of $T$ on $X$, it now remains to apply~\cite[Theorem 11.63.(1)]{AbramenkoBrown08}, which tells us that $X$ is covered by apartments $\Sigma$ with $\sigma \subseteq \pinf \Sigma$.
Indeed, since $X$ is a cell complex, the continuity of a function on $X$ is equivalent to the continuity of its restriction to closed cells.
\end{proof}

\begin{proposition}\label{prop:cont-of-deform-to-XXi}
The building $\overline{X}^{\xi,\xi'}$ is a strong deformation retract of $X$.
Moreover, the pair $(\overline{X}^{\xi,\xi'}_{h \geq s},\overline{X}^{\xi,\xi'}_{h \geq t})$ is a strong deformation retract of the pair $(X_{h \geq s},X_{h \geq t})$ for all $s,t \in \R$ with $s \leq t$.
\end{proposition}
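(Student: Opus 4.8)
The plan is to build an explicit strong deformation retraction by flowing each point along its geodesic ray towards $\xi$ until it reaches $\overline{X}^{\xi,\xi'}$. For $x \in X$ let $\gamma_x \colon [0,\infty) \to X$ denote the unique geodesic ray $[x,\xi)$ (it exists and is unique because $X$, being a Euclidean building, is a complete $\CAT(0)$-space, see~\cite{BridsonHaefliger99}). By Lemma~\ref{lem:existence-of-merging} together with~\eqref{eq:def-Tx} the number $T(x) = \min\Set{t \geq 0}{\gamma_x(t) \in \overline{X}^{\xi,\xi'}}$ is well-defined, and by Lemma~\ref{lem:continuity-of-T-and-M} the function $T \colon X \to \R$ is continuous. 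I would then define
\[
F \colon X \times [0,1] \to X, \qquad F(x,s) = \gamma_x\bigl(s \cdot T(x)\bigr),
\]
and show that $F$ is a strong deformation retraction of $X$ onto $\overline{X}^{\xi,\xi'}$: one has $F(\cdot,0) = \id_X$ since $\gamma_x(0) = x$; the image of $F(\cdot,1)$ lies in $\overline{X}^{\xi,\xi'}$ by the very definition of $T$; and if $x \in \overline{X}^{\xi,\xi'}$ then $T(x) = 0$, so $F(x,s) = x$ for all $s$.

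The one point that genuinely needs an argument is the continuity of $F$, which reduces to the continuity of the ray map $(x,t) \mapsto \gamma_x(t)$ on $X \times [0,\infty)$. Here I would use that for any two points $x,y$ the rays $\gamma_x,\gamma_y$ are asymptotic to the same boundary point $\xi$, so $t \mapsto d(\gamma_x(t),\gamma_y(t))$ is convex (convexity of the metric in a $\CAT(0)$-space) and bounded, hence non-increasing; in particular $d(\gamma_x(t),\gamma_y(t)) \leq d(x,y)$ for all $t \geq 0$. Combined with $d(\gamma_x(t),\gamma_x(t')) = \abs{t - t'}$ this makes $(x,t) \mapsto \gamma_x(t)$ Lipschitz (for the $\ell^1$-metric on $X \times \R$), and precomposing with the continuous map $(x,s) \mapsto (x, s\,T(x))$ yields continuity of $F$. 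This, together with Lemma~\ref{lem:continuity-of-T-and-M}, is really where all the work sits; the rest is formal.

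For the relative statement I would observe that $F$ preserves every superlevelset of $h$. Indeed, by the standing assumption of this section $h \circ \gamma_x$ is constant, with value $h(\gamma_x(0)) = h(x)$, so $h(F(x,s)) = h\bigl(\gamma_x(s\,T(x))\bigr) = h(x)$ for all $s$. Hence $F$ restricts to a homotopy $X_{h \geq a} \times [0,1] \to X_{h \geq a}$ for every $a \in \R$, which by the computations above is a strong deformation retraction of $X_{h \geq a}$ onto $\overline{X}^{\xi,\xi'} \cap X_{h \geq a} = \overline{X}^{\xi,\xi'}_{h \geq a}$. Given $s \leq t$ we have $X_{h \geq t} \subseteq X_{h \geq s}$, and the restriction of $F$ to $X_{h \geq t} \times [0,1]$ agrees with the homotopy on $X_{h \geq s}$; so $F$ simultaneously deformation retracts the pair $(X_{h \geq s}, X_{h \geq t})$ onto $(\overline{X}^{\xi,\xi'}_{h \geq s}, \overline{X}^{\xi,\xi'}_{h \geq t})$, which is the second assertion. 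The main obstacle, then, is not the construction but checking that the ray flow is genuinely continuous and $h$-preserving; the former is dispatched by the convexity estimate above and the latter by the section's hypothesis that $h$ is constant along rays towards $\xi$.
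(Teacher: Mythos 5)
Your proposal is correct and follows essentially the same route as the paper: the same homotopy $(x,s) \mapsto [x,\xi)(s \cdot T(x))$, with Lemma~\ref{lem:continuity-of-T-and-M} supplying the continuity of $T$ and the standing assumption that $h$ is constant along rays toward $\xi$ giving the relative statement. Your explicit $\CAT(0)$ convexity estimate showing the ray map $(x,t) \mapsto [x,\xi)(t)$ is Lipschitz is a welcome detail that the paper's proof leaves implicit, but it is not a different argument.
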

\begin{proof}
As a consequence of Lemma~\ref{lem:continuity-of-T-and-M}, we see that
\[
H \colon X \times [0,1] \rightarrow X,\ (x,t) \mapsto [x,\xi)(t \cdot T(x)),
\]
being a composition of continuous maps, is homotopy.
Since $H(x,0) = [x,\xi)(0) = x$ for every $x \in X$, we see that $H(-,0) = \id_X$.
On the other hand, it follows from the definition of $T$ that $H(x,1) = [x,\xi)(T(x)) \in \overline{X}^{\xi,\xi'}$ for every $x \in X$.
To prove the first claim, it now remains to observe that
\[
H(x,t) = [x,\xi)(t \cdot T(x)) = x
\]
for every $x \in \overline{X}^{\xi,\xi'}$ and every $t \in [0,1]$.
But this follows from $T(x) = 0$ for $x \in \overline{X}^{\xi,\xi'}$, which is a direct consequence of the definition of $T$.
From our assumption that $\xi \in \pinf \SigmaStd_{h = 0}$, we deduce that $h \circ [x,\xi)$ is constant for each $x \in X$.
Thus we have
\[
h(H(x,t)) = h([x,\xi)(t \cdot T(x)) = h(x)
\]
for all $(x,t) \in X \times [0,1]$ and the second claim follows by restricting $H$ to the $h$-superlevel sets of $X$.
\end{proof}

Let us translate Proposition~\ref{prop:cont-of-deform-to-XXi} to an analogous statement for $X_{\xi}$.
To do so, we write $h_{\xi}$ to denote the restriction of $h$ to $X_{\xi}$.

\begin{lemma}\label{lem:def-to-zero-level}
The building $X_{\xi}$ is a strong deformation retract of $X$.
Moreover $((X_{\xi})_{h_{\xi} \geq s}, (X_{\xi})_{h_{\xi} \geq t})$ is a strong deformation retract of $(X_{h \geq s},X_{h \geq t})$ for all $s,t \in \R$ with $s \leq t$.
\end{lemma}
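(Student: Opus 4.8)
The plan is to deduce the lemma from Proposition~\ref{prop:cont-of-deform-to-XXi}. That proposition already provides an $h$-preserving strong deformation retraction of $X$ onto $\overline{X}^{\xi,\xi'}$ which carries the pair $(X_{h\geq s},X_{h\geq t})$ onto $(\overline{X}^{\xi,\xi'}_{h\geq s},\overline{X}^{\xi,\xi'}_{h\geq t})$, so it suffices to produce a second strong deformation retraction, again $h$-preserving, from $\overline{X}^{\xi,\xi'}$ onto $X_\xi=\overline{X}^{\xi,\xi'}\cap\beta^{-1}(0)$; composing the two retractions (and their restrictions to superlevel sets and pairs) then gives the claim, since a composite of strong deformation retractions is again one.

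To build the second retraction I would slide each point along its Levi line towards the level set $\beta^{-1}(0)$. For $x\in\overline{X}^{\xi,\xi'}$ let $c_x\colon\R\to\overline{X}^{\xi,\xi'}$ be the bi-infinite geodesic from $\xi'$ to $\xi$ with $c_x(0)=x$, as in Notation~\ref{not:geod-lines-in-X}. By~\cite[II.8.24.(1)]{BridsonHaefliger99} one has $\beta(c_x(t))=\beta(x)+t$, so $c_x$ meets $\beta^{-1}(0)$ in the single point $c_x(-\beta(x))\in X_\xi$, and we set
\[
G\colon\overline{X}^{\xi,\xi'}\times[0,1]\to\overline{X}^{\xi,\xi'},\qquad G(x,s)=c_x(-s\,\beta(x)).
\]
Then $G(x,0)=x$, $G(x,1)\in X_\xi$, and $G(x,s)=x$ for every $s$ whenever $x\in X_\xi$ (as then $\beta(x)=0$); so, once continuity is known, $G$ is a strong deformation retraction onto $X_\xi$. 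For continuity, $\beta$ is continuous by Corollary~\ref{cor:convex-contin-on-Rn}, and $(x,t)\mapsto c_x(t)$ is continuous on $\overline{X}^{\xi,\xi'}\times\R$; the latter can be checked apartment by apartment exactly as the continuity of $T$ was established in the proof of Lemma~\ref{lem:continuity-of-T-and-M}, using that $\overline{X}^{\xi,\xi'}$ is covered by the horizontal apartments (Corollary~\ref{cor:all-apps-are-levi}) and that $c_x$ varies affinely with $x$ inside each such apartment.

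It remains to see that $G$ respects $h$, so that it restricts to a strong deformation retraction of $\overline{X}^{\xi,\xi'}_{h\geq s}$ onto $(X_\xi)_{h_\xi\geq s}=X_\xi\cap\overline{X}^{\xi,\xi'}_{h\geq s}$ for each $s$, and of the pair $(\overline{X}^{\xi,\xi'}_{h\geq s},\overline{X}^{\xi,\xi'}_{h\geq t})$ onto $((X_\xi)_{h_\xi\geq s},(X_\xi)_{h_\xi\geq t})$ for $s\le t$. This holds because $h$ is constant on each line $c_x$: for $t_1<t_2$ the point $c_x(t_2)$ lies on the ray $[c_x(t_1),\xi)$, and by our standing hypothesis $\xi\in\pinf\SigmaStd_{h=0}$ the function $h\circ[y,\xi)$ is constant for every $y\in X$, whence $h(c_x(t_1))=h(c_x(t_2))$; therefore $h(G(x,s))=h(x)$ for all $(x,s)$. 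Composing $G$ and its restrictions with the retraction of Proposition~\ref{prop:cont-of-deform-to-XXi} now yields both statements of the lemma. The only genuinely non-formal point is the continuity of the line flow $(x,t)\mapsto c_x(t)$, i.e.\ that $G$ is continuous; as indicated this is the bi-infinite-line analogue of the argument already used for $T$ in Lemma~\ref{lem:continuity-of-T-and-M}, and I expect it to be the main — and only modest — obstacle.
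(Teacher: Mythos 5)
Your proposal is correct and follows essentially the same route as the paper: the paper also composes the retraction of Proposition~\ref{prop:cont-of-deform-to-XXi} with a linewise retraction of $\overline{X}^{\xi,\xi'}$ onto $X_{\xi}$ along the geodesics $c_x$ (your $G(x,s)=c_x(-s\,\beta(x))$ is exactly the paper's $H_x(c_x(s),t)=c_x((1-t)s)$ in different coordinates), using that $h\circ c_x$ is constant by the choice of $\xi$. Your extra attention to continuity (noting $\beta$ is continuous—indeed $1$-Lipschitz, so no local compactness is needed—and that the line flow varies continuously) only makes explicit what the paper leaves implicit.
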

\begin{proof}
For each $x \in X_{\xi}$ let $c_x$ denote the biinfinite geodesic from $\xi'$ to $\xi$ determined by $c_x(0) = x$ and let $\ell_x \defeq c_x(\R)$.
Then
\[
H_x \colon \ell_x \times [0,1] \rightarrow \ell_x,\ (c_x(s),t) \mapsto c_x((1-t)s)
\]
defines a strong deformation retraction with $H_x(x,t) = x$ for all $t \in [0,1]$.
Since $\overline{X}^{\xi,\xi'}$ is covered by spaces of the form $\ell_x$, we see that $X_{\xi}$ is a strong deformation retract of $\overline{X}^{\xi,\xi'}$.
From the choice of $\xi$ it moreover follows that $h \circ c_x$ is constant and hence that $((X_{\xi})_{h_{\xi} \geq s}, (X_{\xi})_{h_{\xi} \geq t})$ is a strong deformation retract of $(\overline{X}^{\xi,\xi'}_{h \geq s},\overline{X}^{\xi,\xi'}_{h \geq t})$ for all $s,t \in \R$ with $s \leq t$.
Now the lemma follows from Proposition~\ref{prop:cont-of-deform-to-XXi}.
\end{proof}

Let us write down the following direct consequence of Lemma~\ref{lem:def-to-zero-level}.

\begin{corollary}\label{cor:equivalence-of-essential-connectivity}
For every $k \in \N_0$, the following statements are equivalent:
\begin{enumerate}
\item $(X_{h \geq r})_{r \in \R}$ is essentially $k$-connected.
\item $((X_{\xi})_{h_{\xi} \geq r})_{r \in \R}$ is essentially $k$-connected.
\end{enumerate}
\end{corollary}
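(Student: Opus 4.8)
The plan is to obtain Corollary~\ref{cor:equivalence-of-essential-connectivity} as a formal consequence of Lemma~\ref{lem:def-to-zero-level}. The first point to notice is that the strong deformation retraction of $X$ onto $X_{\xi}$ provided there is realised by a single continuous map $\pi \colon X \rightarrow X_{\xi}$ --- the composition of the retraction of $X$ onto $\overline{X}^{\xi,\xi'}$ from Proposition~\ref{prop:cont-of-deform-to-XXi} with the retraction of $\overline{X}^{\xi,\xi'}$ onto $X_{\xi}$ from the proof of Lemma~\ref{lem:def-to-zero-level} --- together with a homotopy $H \colon X \times [0,1] \rightarrow X$ from $\id_X$ to $\pi$. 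Since $h \circ [x,\xi)$ and $h \circ c_x$ are constant, both $\pi$ and every stage $H(-,t)$ preserve $h$; in particular $h_{\xi} \circ \pi = h$. Hence for each $r \in \R$ the map $\pi$ restricts to a strong deformation retraction $\pi_r \colon X_{h \geq r} \rightarrow (X_{\xi})_{h_{\xi} \geq r}$, with deformation the restriction of $H$, so that $\pi_r$ is a homotopy equivalence with homotopy inverse the inclusion $\iota_r \colon (X_{\xi})_{h_{\xi} \geq r} \hookrightarrow X_{h \geq r}$.

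Next I would record that these maps are compatible with the bonding maps of the two directed systems. For real numbers $\alpha \geq \beta$ the inclusions $X_{h \geq \alpha} \hookrightarrow X_{h \geq \beta}$ and $(X_{\xi})_{h_{\xi} \geq \alpha} \hookrightarrow (X_{\xi})_{h_{\xi} \geq \beta}$ fit, together with the maps $\iota_{\alpha},\iota_{\beta}$, into a square
\begin{center}
\begin{tikzpicture}[scale=2]
\node (ya) at (0,0) {$(X_{\xi})_{h_{\xi} \geq \alpha}$};
\node (yb) at (3,0) {$(X_{\xi})_{h_{\xi} \geq \beta}$};
\node (xa) at (0,-1) {$X_{h \geq \alpha}$};
\node (xb) at (3,-1) {$X_{h \geq \beta}$};
\draw (ya) edge[left hook->] node[above] {} (yb);
\draw (xa) edge[left hook->] node[below] {} (xb);
\draw (ya) edge[->] node[left] {$\iota_{\alpha}$} (xa);
\draw (yb) edge[->] node[right] {$\iota_{\beta}$} (xb);
\end{tikzpicture}
\end{center}
that commutes on the nose, since all four maps are restrictions of set inclusions, respectively of the single map $\iota$. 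The analogous square with the retractions $\pi_{\alpha},\pi_{\beta}$ in place of $\iota_{\alpha},\iota_{\beta}$ commutes for the same reason. Thus the two directed systems $(X_{h \geq r})_{r \in \R}$ and $((X_{\xi})_{h_{\xi} \geq r})_{r \in \R}$ are identified up to a level-wise homotopy equivalence that respects all structure maps.

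It then remains to run the routine diagram chase. Assume $(X_{h \geq r})_{r \in \R}$ is essentially $k$-connected and fix $\alpha \in \R$; choose $\beta \leq \alpha$ such that the inclusion $X_{h \geq \alpha} \hookrightarrow X_{h \geq \beta}$ induces the trivial map on $\pi_j$ for every $0 \leq j \leq k$ and every basepoint. In the commuting square above the vertical maps $\iota_{\alpha},\iota_{\beta}$ induce isomorphisms on all homotopy groups, so triviality of $\pi_j$ of the bottom map forces triviality of $\pi_j$ of the top map, for every basepoint $x \in (X_{\xi})_{h_{\xi} \geq \alpha}$ and every $0 \leq j \leq k$; hence $((X_{\xi})_{h_{\xi} \geq r})_{r \in \R}$ is essentially $k$-connected. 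The reverse implication is obtained in exactly the same way, using the squares built from the retractions $\pi_r$ instead of the inclusions $\iota_r$. I do not expect any genuine obstacle here: the only things that require (minor) care are verifying that the deformation retraction of Lemma~\ref{lem:def-to-zero-level} is height-preserving --- which is precisely what makes it restrict to every superlevelset --- and keeping track of basepoints in Definition~\ref{def:essentially-connected}.
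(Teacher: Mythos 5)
Your argument is correct and is exactly the argument the paper has in mind: the paper records the corollary as a "direct consequence" of Lemma~\ref{lem:def-to-zero-level}, and your write-up simply supplies the routine details — the height-preserving retraction restricts to each superlevelset, the resulting level-wise homotopy equivalences commute with the inclusion bonding maps, and the diagram chase transfers triviality of the induced maps on $\pi_j$ in both directions. No gaps; the only points needing care (height-preservation and basepoints) are the ones you flag and handle.
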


\subsection{Retractions in parabolic buildings}
Our next goal is to study the space of functions $(X_{\xi})_{\sigma_{\xi},\vstd}^{\ast}$ for an appropriate chamber $\sigma_{\xi} \subseteq \pinf X_{\xi}$, and to see how it is related to the space $X^{\ast}_{\sigma,\vstd}$.
To this end, we fix a set of linear forms $\mathcal{B} = \{\alpha_0,\ldots,\alpha_d\} \subseteq \SigmaStd^{\ast}$ as in Notation~\ref{not:alphas}.
More precisely, we assume that
\[
\overline{K_{\vstd}(\sigma)} = \Set{x \in \SigmaStd}{\alpha_k(x) \geq 0, \text{ for every } 0 \leq k \leq d}.
\]
From our choice of $\xi$, it follows that $\xi$ is the unique vertex of $\sigma$ that is not contained in $\pinf(\ker(\alpha_0))$.
In order to define $\sigma_{\xi} \subseteq \pinf X_{\xi}$, we consider the space
\[
K_{p}^{\xi,\xi'}(\sigma) \defeq \Set{x \in \SigmaStd}{\alpha_k(x) > \alpha_k(p) \text{ for every } 1 \leq k \leq d}
\]
for each $p \in \SigmaStd$.
Moreover, we define
\[
K_{p,\xi}(\sigma) \defeq K_{p}^{\xi,\xi'}(\sigma) \cap \SigmaStd_{\xi}
\]
if $p$ lies in $\SigmaStd_{\xi}$.

\begin{lemma}\label{lem:a-sector-in-Sigma-Xi}
The space $K_{\vstd,\xi}(\sigma)$ is a sector in $\SigmaStd_{\xi}$.
\end{lemma}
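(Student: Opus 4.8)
The plan is to make the description of $K_{\vstd,\xi}(\sigma)$ completely explicit and then, by a short connectedness argument, to recognise it as a connected component of the complement in $\SigmaStd_\xi$ of the walls through the special vertex $\vstd$; such components are precisely the sectors based at $\vstd$ (cf.\ Definition~\ref{def:simpl-at-infinity-in-coxeter-complex} and the discussion following Definition~\ref{def:build-at-inft}).

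First I would unwind the definitions. Since $\vstd$ is the origin of $\SigmaStd$ and the $\alpha_k$ are linear, $\alpha_k(\vstd)=0$, so $K_{\vstd,\xi}(\sigma)=K_{\vstd}^{\xi,\xi'}(\sigma)\cap\SigmaStd_\xi=\Set{x\in\SigmaStd_\xi}{\alpha_k(x)>0\text{ for }1\le k\le d}$. By Notation~\ref{not:alphas} the hyperplane $\alpha_0^{-1}(0)$ is the \emph{unique} non-horizontal wall bounding $\overline{K_{\vstd}(\sigma)}$, hence each of $\alpha_1^{-1}(0),\dots,\alpha_d^{-1}(0)$ is a horizontal wall of $\SigmaStd$ containing $\vstd$; consequently $\alpha_k^{-1}(0)\cap\SigmaStd_\xi\in\mathcal{H}_\xi$ is a wall of the Euclidean Coxeter complex $\SigmaStd_\xi$ passing through $\vstd$ (Notation~\ref{not:zero-levels}, Proposition~\ref{prop:parab-app-is-an-app}), for $1\le k\le d$. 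By Lemma~\ref{lem:basis-of-dual-sigma} the restrictions $\alpha_1|_{\SigmaStd_\xi},\dots,\alpha_d|_{\SigmaStd_\xi}$ form a basis of $\SigmaStd_\xi^{\ast}$, so $K_{\vstd,\xi}(\sigma)$ is a non-empty, open, convex (in particular connected) subset of $\SigmaStd_\xi$ whose closure is a full-dimensional simplicial cone with apex $\vstd$.

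Next I would show that $K_{\vstd,\xi}(\sigma)$ is a connected component of the complement in $\SigmaStd_\xi$ of the union of all walls of $\SigmaStd_\xi$ through $\vstd$. Such a wall is of the form $H\cap\SigmaStd_\xi$ with $H$ a horizontal wall of $\SigmaStd$ through $\vstd$. Being a chamber of the arrangement of all walls of $\SigmaStd$ through $\vstd$, the open sector $K_{\vstd}(\sigma)$ contains no point of any wall of $\SigmaStd$ through $\vstd$; thus $H\cap K_{\vstd}(\sigma)=\emptyset$, and a fortiori $H\cap\SigmaStd_\xi$ is disjoint from $K_{\vstd,\xi}(\sigma)\subseteq K_{\vstd}(\sigma)$. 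Hence $K_{\vstd,\xi}(\sigma)$, being connected and disjoint from all those walls, lies in a single component $U$. Conversely, given $q\in U$, join it to a point $p\in K_{\vstd,\xi}(\sigma)$ by a path inside $U$; if $\alpha_k(q)\le 0$ for some $k\in\{1,\dots,d\}$ then, since $\alpha_k(p)>0$, the intermediate value theorem forces the path to meet the wall $\alpha_k^{-1}(0)\cap\SigmaStd_\xi$, contradicting that the path stays in $U$. Therefore $q\in K_{\vstd,\xi}(\sigma)$, so $U=K_{\vstd,\xi}(\sigma)$. This component is the sector $K_{\vstd}(\sigma_\xi)$ attached to the chamber $\sigma_\xi\subseteq\pinf\SigmaStd_\xi$ it determines, which proves the lemma.

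The step I expect to be the main obstacle — more bookkeeping than genuine difficulty — is invoking, at the right level of generality (products of irreducible Euclidean Coxeter complexes), the two standard facts used above: that a sector based at a special vertex $w$ is exactly a connected component of the complement of the walls through $w$, and that the chamber $\overline{K_{\vstd}(\sigma)}$ of the wall arrangement through $\vstd$ contains no further wall through $\vstd$ in its interior. Both belong to the basic theory recalled in Section~\ref{sec:coxeter-complex}, but should be cited explicitly. If one prefers a purely algebraic route, one can instead note that $\overline{K_{\vstd,\xi}(\sigma)}=\Set{x}{\alpha_k(x)\ge 0,\ 1\le k\le d}$ is a fundamental domain for the reflection group of $\SigmaStd_\xi$ fixing $\vstd$: under the isomorphism $\phi$ of Proposition~\ref{prop:parab-app-is-an-app} this group is the stabiliser of $\xi$ in the spherical Weyl group $W_{\vstd}$, i.e.\ the standard parabolic generated by $s_{\alpha_1},\dots,s_{\alpha_d}$, whose fundamental domain on $\SigmaStd_\xi$ is exactly that cone; and a simplicial-cone fundamental domain for a finite reflection group is a closed Weyl chamber, whose relative interior is a sector.
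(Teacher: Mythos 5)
Your overall strategy (recognize $K_{\vstd,\xi}(\sigma)$ as the connected component, at the special vertex $\vstd$, of the complement in $\SigmaStd_{\xi}$ of the walls through $\vstd$) is viable and genuinely different from the paper's proof, which instead takes the projection chamber $E = \pr_{\vstd}(\sigma)$, passes to the component $C$ of $\SigmaStd \setminus \Lambda$ containing it, and identifies $E_{\xi} = C \cap \SigmaStd_{\xi}$ as a chamber at $\vstd$ whose panels span exactly the walls $\ker(\alpha_i) \cap \SigmaStd_{\xi}$, $1 \le i \le d$, by a dimension count. However, one step in your justification is false: the inclusion $K_{\vstd,\xi}(\sigma) \subseteq K_{\vstd}(\sigma)$. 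Membership in $K_{\vstd,\xi}(\sigma)$ only imposes $\alpha_k > 0$ for $1 \le k \le d$ and says nothing about $\alpha_0$, and on $\SigmaStd_{\xi} = \SigmaStd \cap \beta^{-1}(0)$ the form $\alpha_0$ can well be negative. Concretely, in an $\widetilde{A}_2$ apartment with sector spanned by the directions at $0^{\circ}$ and $60^{\circ}$ and $\xi$ the vertex in the $0^{\circ}$ direction, $\SigmaStd_{\xi}$ is the line at $90^{\circ}$ and $K_{\vstd,\xi}(\sigma)$ is its positive half, which is disjoint from the closed sector $\overline{K_{\vstd}(\sigma)}$ altogether. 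So the ``a fortiori'' deduction that $K_{\vstd,\xi}(\sigma)$ avoids every wall $H \cap \SigmaStd_{\xi}$ with $H$ horizontal through $\vstd$ does not follow as written, and this disjointness is exactly what places $K_{\vstd,\xi}(\sigma)$ inside a single chamber-component.

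The disjointness is nevertheless true, and the repair uses the horizontality of $H$ instead of an inclusion into the sector. Writing $u$ for the direction of $[\vstd,\xi)$, you have $\alpha_k(u)=0$ for $1 \le k \le d$ and $\alpha_0(u)>0$, whence $K_{\vstd}^{\xi,\xi'}(\sigma) = K_{\vstd}(\sigma) + \R u$; a horizontal wall $H$ through $\vstd$ satisfies $H + \R u = H$, so if $H$ met $K_{\vstd}^{\xi,\xi'}(\sigma)$ (in particular $K_{\vstd,\xi}(\sigma)$) it would meet the open sector $K_{\vstd}(\sigma)$, contradicting the fact you correctly recorded that the open sector avoids all walls of $\SigmaStd$ through $\vstd$. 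With this fix your intermediate-value argument for the reverse inclusion goes through, and the two background facts you defer to are indeed available: $\vstd$ is again special in $\SigmaStd_{\xi}$ (every wall of $\SigmaStd_{\xi}$ comes from a horizontal wall of $\SigmaStd$, and $\vstd$ is special there), and the components of the complement of the walls through a special vertex are precisely the open sectors based at it.
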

\begin{proof}
Let us first consider the case $p = \vstd$.
Consider the projection chamber $E \defeq \pr_{\vstd}(\sigma) \subseteq \st_{\SigmaStd}(\vstd)$.
As in the construction of $X_{\xi}$, let $\Lambda$ denote the union of horizontal walls in $X$.
Let $C$ denote the component in $\SigmaStd \backslash \Lambda$ that contains $E$.
By definition, the set $E_{\xi} \defeq C \cap \SigmaStd_{\xi}$ is a chamber in $\st_{\SigmaStd_{\xi}}(\vstd)$.
Note that $\ker(\alpha_i)$ is the affine span of a panel of $E$ for each $1 \leq i \leq d$.
Thus the wall $\SigmaStd_{\xi} \cap \ker(\alpha_i)$ is induced by a panel of $E_{\xi}$ that is incident to $\vstd$.
Since $X_{\xi}$ is $d$-dimensional, this tells us that every panel of $E_{\xi}$ that is incident to $\vstd$, spans a wall of the form $\SigmaStd_{\xi} \cap \ker(\alpha_i)$ with $1 \leq i \leq d$.
Now we see that $K_{\vstd,\xi}(\sigma)$ coincides with the sector induced by $\vstd$ and $E_{\xi}$, which proves the claim.
\end{proof}

In view of Lemma~\ref{lem:a-sector-in-Sigma-Xi} we can define the chamber $\sigma_{\xi} \defeq \pinf K_{\vstd,\xi}(\sigma)$ in $\pinf \SigmaStd_{\xi}$.
This allows us to consider the set $\mathcal{A}_{\xi}^{\sigma}$ consisting of apartments $\Sigma \in \mathcal{A}_{\xi}$ with $\sigma \subseteq \pinf \Sigma$.
Moreover, we can define the retraction from infinity  corresponding to $\SigmaStd_{\xi}$ and $\sigma_{\xi}$, which will be denoted by $\rho_{\xi} \defeq \rho_{\SigmaStd_{\xi},\sigma_{\xi}}$.
The following lemma tells us how $\rho_{\xi}$ is related to $\rho \defeq \rho_{\SigmaStd,\sigma}$.

\begin{lemma}\label{lem:restr-of-retr-doesnt-change}
The restriction of $\rho$ to $X_{\xi}$ coincides with $\rho_{\xi}$.
\end{lemma}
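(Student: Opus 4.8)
The plan is to unwind the definitions of both retractions and match, apartment by apartment, the isomorphisms that define them. Fix $x \in X_{\xi}$ and choose an apartment $\Sigma \in \mathcal{A}_{\xi}^{\sigma}$ of $X_{\xi}$ containing $x$. First I would show that $\Sigma$ can be written as $\Sigma = \widetilde{\Sigma} \cap X_{\xi}$ for a horizontal apartment $\widetilde{\Sigma}$ of $X$ with $\sigma \subseteq \pinf \widetilde{\Sigma}$: one realises the flat ``prism'' $\bigcup_{y \in \Sigma} c_{y}(\R) \subseteq \overline{X}^{\xi,\xi'}$ (with $c_{y}$ as in Notation~\ref{not:geod-lines-in-X}), which is convex in $X$ and isometric to $\R^{d+1}$, hence lies in an apartment $\widetilde{\Sigma}$ of $X$ by Theorem~\ref{thm:subsets-eucl-build}; this $\widetilde{\Sigma}$ is horizontal by Corollary~\ref{cor:all-apps-are-levi}, it satisfies $\widetilde{\Sigma} \cap X_{\xi} = \Sigma$ because it already contains the $d$-dimensional apartment $\Sigma$ of $X_{\xi}$ (using Lemma~\ref{lem:prop-of-XXi}), and $\sigma \subseteq \pinf \widetilde{\Sigma}$ follows from $\sigma_{\xi} \subseteq \pinf \Sigma$ together with $\xi,\xi' \in \pinf \widetilde{\Sigma}$. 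Finally, since $\sigma \subseteq \pinf \widetilde{\Sigma}$, Proposition~\ref{prop:common-subsector} shows that $\widetilde{\Sigma}$ contains a subsector of $K_{\vstd}(\sigma)$, so $\widetilde{\Sigma} \in \mathcal{A}_{\sigma}$ and $\rho$ restricts on $\widetilde{\Sigma}$ to the isomorphism $f_{\widetilde{\Sigma}} \colon \widetilde{\Sigma} \rightarrow \SigmaStd$.

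The crucial point is then that $f_{\widetilde{\Sigma}}$ preserves the Busemann function $\beta$. Indeed $f_{\widetilde{\Sigma}}$ fixes pointwise a common subsector $S \subseteq K_{\vstd}(\sigma)$ of $\widetilde{\Sigma}$ and $\SigmaStd$, and $S$ points toward $\xi$ because $\xi \in \overline{\sigma}$; both $\beta \circ f_{\widetilde{\Sigma}}$ and $\beta$ are affine on $\widetilde{\Sigma}$ (Busemann functions are affine on apartments, see~\cite[II.8.24.(1)]{BridsonHaefliger99}) and they agree on the non-empty open set $S$, hence on all of $\widetilde{\Sigma}$. Consequently $f_{\widetilde{\Sigma}}$ maps $\widetilde{\Sigma} \cap \beta^{-1}(0) = \Sigma$ isometrically onto $\SigmaStd \cap \beta^{-1}(0) = \SigmaStd_{\xi}$. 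On the other hand $\rho$, being a retraction onto $\SigmaStd$, fixes $\SigmaStd$ pointwise, in particular $\Sigma \cap \SigmaStd_{\xi}$; and $\Sigma \cap \SigmaStd_{\xi} \supseteq S \cap \SigmaStd_{\xi}$ contains a subsector of $\SigmaStd_{\xi}$ toward $\sigma_{\xi}$, as one sees by comparing the half-space descriptions of $K_{\vstd}^{\xi,\xi'}(\sigma)$ and $K_{\vstd,\xi}(\sigma)$ and applying Proposition~\ref{prop:common-subsector} inside $X_{\xi}$. Therefore $\rho|_{\Sigma} = f_{\widetilde{\Sigma}}|_{\Sigma}$ is an isomorphism $\Sigma \rightarrow \SigmaStd_{\xi}$ fixing a common subsector of $\Sigma$ and $\SigmaStd_{\xi}$ pointwise; by uniqueness of such an isomorphism (two of them would compose to an automorphism of $\SigmaStd_{\xi}$ fixing an open set, hence the identity) it coincides with the isomorphism $\Sigma \rightarrow \SigmaStd_{\xi}$ furnished by axiom (B2) in the definition of $\rho_{\xi} = \rho_{\SigmaStd_{\xi},\sigma_{\xi}}$. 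Hence $\rho(x) = \rho_{\xi}(x)$, and since $x \in X_{\xi}$ was arbitrary and $X_{\xi}$ is covered by the apartments in $\mathcal{A}_{\xi}^{\sigma}$, we conclude $\rho|_{X_{\xi}} = \rho_{\xi}$.

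The main obstacle is the bookkeeping in the middle step: verifying that $f_{\widetilde{\Sigma}}$ respects the $\beta$-levels — equivalently, that the retraction from infinity of $X$ in the direction $\sigma$ carries $X_{\xi}$ into $\SigmaStd_{\xi}$ — and that the induced isomorphism $\Sigma \rightarrow \SigmaStd_{\xi}$ is precisely the one governing $\rho_{\xi}$, i.e.\ that a ``common subsector toward $\sigma$'' in $X$ traces out a ``common subsector toward $\sigma_{\xi}$'' in $X_{\xi}$. Everything else is a routine application of the building axioms and of the results of Section~\ref{sec:parabolic} already at our disposal, notably Corollary~\ref{cor:all-apps-are-levi}, Lemma~\ref{lem:prop-of-XXi}, and Theorem~\ref{thm:subsets-eucl-build}.
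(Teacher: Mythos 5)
Your route is essentially the paper's: write $\Sigma=\widetilde{\Sigma}\cap X_{\xi}$ for a horizontal apartment $\widetilde{\Sigma}$, check $\widetilde{\Sigma}\in\mathcal{A}_{\sigma}$ so that $\rho$ restricts on $\widetilde{\Sigma}$ to $f_{\widetilde{\Sigma}}$, use that this isomorphism preserves $\beta$ to see that it carries $\Sigma$ onto $\SigmaStd_{\xi}$, and then identify it with $\rho_{\xi}$ because both fix a common $\sigma_{\xi}$-sector of $\Sigma$ and $\SigmaStd_{\xi}$ pointwise. One intermediate step, however, is false as written and needs to be replaced.

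The problematic sentence is ``$\Sigma\cap\SigmaStd_{\xi}\supseteq S\cap\SigmaStd_{\xi}$ contains a subsector of $\SigmaStd_{\xi}$ toward $\sigma_{\xi}$'': in fact $S\cap\SigmaStd_{\xi}=\emptyset$. Up to sign, $\beta$ restricted to $\SigmaStd$ is the linear form given by the scalar product with the direction of $[\vstd,\xi)$; since $\xi$ is a vertex of $\sigma$ and no point of the closed chamber $\overline{\sigma}$ lies at angle more than $\pi/2$ from $\xi$, this form is non-negative on $\overline{K_{\vstd}(\sigma)}$, and it cannot vanish at a point of the open sector (a linear form that is non-negative on a full-dimensional cone and vanishes at an interior point vanishes identically, contradicting $\beta\neq 0$ on $[\vstd,\xi)$). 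Hence $\beta$ never vanishes on $S\subseteq K_{\vstd}(\sigma)$, whereas $\SigmaStd_{\xi}\subseteq\beta^{-1}(0)$, so your containment chain breaks down. The statement you actually need, namely that $\Sigma\cap\SigmaStd_{\xi}$ contains a sector toward $\sigma_{\xi}$, is true, but it comes from applying Proposition~\ref{prop:common-subsector} \emph{inside} $X_{\xi}$ to the apartments $\Sigma$ and $\SigmaStd_{\xi}$, both of which have $\sigma_{\xi}$ in their boundary; this is exactly the paper's step, and it is the tool you name, so the repair is immediate. With this replacement your concluding rigidity argument (both $\rho$ and $\rho_{\xi}$ fix this common sector pointwise, and an isometry of $\SigmaStd_{\xi}$ fixing a non-empty open set is the identity) does close the proof, provided you also note that the common sector may be taken inside $K_{\vstd}(\sigma_{\xi})$, so that $\Sigma$ belongs to the apartment system defining $\rho_{\xi}$ and $\rho_{\xi}|_{\Sigma}$ is indeed the (B2)-isomorphism onto $\SigmaStd_{\xi}$.

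Two lesser points. The prism construction in your first paragraph is a detour whose convexity you assert but do not prove; it is unnecessary, since $\mathcal{A}_{\xi}$ is by definition $\Set{\Sigma'\cap X_{\xi}}{\Sigma'\in\mathcal{A}_{\hor}}$ and is the complete apartment system of $X_{\xi}$ by Lemma~\ref{lem:prop-of-XXi}, so a horizontal $\widetilde{\Sigma}$ with $\Sigma=\widetilde{\Sigma}\cap X_{\xi}$ exists outright (this is what the paper does). Also, ``$\sigma\subseteq\pinf\widetilde{\Sigma}$ follows from $\sigma_{\xi}\subseteq\pinf\Sigma$ together with $\xi,\xi'\in\pinf\widetilde{\Sigma}$'' is true but not formal: one needs that the union of the lines $c_{y}(\R)$, with $y$ ranging over a common $\sigma_{\xi}$-sector of $\Sigma$ and $\SigmaStd_{\xi}$, is the region $K^{\xi,\xi'}_{q}(\sigma)$, which contains the sector $K_{q}(\sigma)$ --- the half-space comparison you invoke later is needed already at this earlier point.
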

\begin{proof}
Let $p \in X_{\xi}$.
Since $\mathcal{A}_{\xi}$ is the complete apartment system of $X_{\xi}$ by Lemma~\ref{lem:prop-of-XXi}, we can apply~\cite[Theorem 11.63]{AbramenkoBrown08} to deduce that there is some apartment $\Sigma \in \mathcal{A}_{\xi}^{\sigma}$ containing $p$.
From $\sigma_{\xi} \subseteq \pinf \SigmaStd_{\xi}$ we see that $\Sigma$ contains $K_{q}(\sigma_{\xi})$ for some point $q \in \SigmaStd_{\xi}$.
By definition, there is an apartment $\Sigma' \in \mathcal{A}_{\hor}$ with $\Sigma = \Sigma' \cap X_{\xi}$.
In particular, $\Sigma'$ contains $K_{q}(\sigma_{\xi})$.
Since $[x,\xi)(t) \in \Sigma'$ for every $x \in K_{q}(\sigma_{\xi})$ and every $t \geq 0$, it follows that $\Sigma'$ contains an open subset of $\SigmaStd$.
This implies that there is a unique isomorphism $f \colon \Sigma' \rightarrow \SigmaStd$ that fixes $\Sigma' \cap \SigmaStd$.
Thus the restriction of $\rho$ to $\Sigma'$ concides with $f$.
Since $\beta$ is a Busemann function corresponding to the vertex $\xi$ of $\overline{\sigma}$ we see that $\beta \circ \rho = \beta$.
Hence $\rho$ maps $\Sigma = \Sigma' \cap X_{\xi}$ isomorphically to $\SigmaStd \cap X_{\xi} = \SigmaStd_{\xi}$, while fixing $K_{q}(\sigma_{\xi})$.
Since $p \in \Sigma$, it now remains to note that $K_{q}(\sigma_{\xi})$ is open in $\SigmaStd_{\xi} \cap \Sigma$, which implies that the restrictions of $\rho$ and $\rho_{\xi}$ coincide on $\Sigma$.
\end{proof}

Recall that we write $h_{\xi}$ to denote the restriction of $h$ to $X_{\xi}$.

\begin{corollary}\label{cor:new-height-pres-retr}
The function $h_{\xi}$ lies in $(X_{\xi})_{\sigma_{\xi},\vstd}^{\ast}$.
\end{corollary}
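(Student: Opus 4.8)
The plan is to deduce this directly from Lemma~\ref{lem:restr-of-retr-doesnt-change}, after unwinding the definition of $(X_{\xi})_{\sigma_{\xi},\vstd}^{\ast}$. Recall that, by definition of $X^{\ast}_{\sigma,\vstd}$, the height function $h$ is of the form $h = \alpha \circ \rho$ for some linear form $\alpha \in \SigmaStd^{\ast}$, where $\rho = \rho_{\SigmaStd,\sigma}$. First I would invoke Lemma~\ref{lem:restr-of-retr-doesnt-change}, which tells us that the restriction of $\rho$ to $X_{\xi}$ coincides with $\rho_{\xi} = \rho_{\SigmaStd_{\xi},\sigma_{\xi}}$; note also that $\rho_{\xi}$ takes its values in $\SigmaStd_{\xi}$ by construction of a retraction from infinity. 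Hence
\[
h_{\xi} = h_{|X_{\xi}} = (\alpha \circ \rho)_{|X_{\xi}} = \alpha \circ (\rho_{|X_{\xi}}) = \alpha \circ \rho_{\xi} = (\alpha_{|\SigmaStd_{\xi}}) \circ \rho_{\xi}.
\]

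It then remains to check that $\alpha_{|\SigmaStd_{\xi}}$ is a linear form on $\SigmaStd_{\xi}$, viewed as a vector space with origin $\vstd$. Here I would use that $\beta$ restricts to a linear form on $\SigmaStd$ with $\beta(\vstd) = 0$: indeed $\beta$ is the Busemann function associated to $\xi$ centered at $\vstd$, so $\SigmaStd_{\xi} = \SigmaStd \cap \beta^{-1}(0)$ is a linear hyperplane of $\SigmaStd$ through $\vstd$ (see~\cite[II.8.24.(1)]{BridsonHaefliger99}, as already used in Subsection~\ref{subsec:ap-in-par}). Consequently $\SigmaStd_{\xi}$ is a genuine linear subspace of $\SigmaStd$ containing the origin, and the restriction of the linear form $\alpha \in \SigmaStd^{\ast}$ to this subspace lies in $(\SigmaStd_{\xi})^{\ast}$. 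Setting $\gamma \defeq \alpha_{|\SigmaStd_{\xi}} \in (\SigmaStd_{\xi})^{\ast}$, we obtain $h_{\xi} = \gamma \circ \rho_{\xi}$, which by definition of $(X_{\xi})_{\sigma_{\xi},\vstd}^{\ast}$ (with $\SigmaStd_{\xi}$ as fixed apartment, $\sigma_{\xi}$ as chamber at infinity, and $\vstd$ as origin) means precisely that $h_{\xi} \in (X_{\xi})_{\sigma_{\xi},\vstd}^{\ast}$.

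Every step above is a routine unwinding of definitions once Lemma~\ref{lem:restr-of-retr-doesnt-change} and the building structure on $X_{\xi}$ from Proposition~\ref{prop:beta-0-is-a-building} are in hand, so I do not expect a genuine obstacle. The only point that deserves a word of care is that one must work with $\SigmaStd_{\xi}$ as a \emph{linear} (not merely affine) subspace of $\SigmaStd$; this is exactly why $\vstd$ was placed on $\beta^{-1}(0)$, and it is also what guarantees that $\vstd$ is a legitimate choice of origin for $\SigmaStd_{\xi}$ when forming $(X_{\xi})_{\sigma_{\xi},\vstd}^{\ast}$.
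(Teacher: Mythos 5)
Your proposal is correct and follows essentially the same route as the paper: the key input in both is Lemma~\ref{lem:restr-of-retr-doesnt-change}, after which one checks linearity on $\SigmaStd_{\xi}$ (your explicit $h_{\xi} = (\alpha_{|\SigmaStd_{\xi}}) \circ \rho_{\xi}$ is just the unpacked form of the paper's appeal to the characterization in Remark~\ref{rem:another-char-of-retr-pres}). Your remark that $\SigmaStd_{\xi}$ is a linear hyperplane through $\vstd$ because $\beta$ is centered at $\vstd$ is a fair point of care, consistent with how the paper sets things up.
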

\begin{proof}
From Lemma~\ref{lem:restr-of-retr-doesnt-change} we know that $\rho_{\xi}$ is the restriction of $\rho$ to $X_{\xi}$.
Thus our assumption that $h \in X_{\vstd,\sigma}^{\ast}$ gives us
\[
h_{\xi}(\rho_{\xi}(x)) = h(\rho(x)) = h(x) = h_{\xi}(x)
\]
for every $x \in X_{\xi}$.
On the other hand, $h_{\xi} \colon \SigmaStd_{\xi} \rightarrow \R$ is the restriction of the linear function $h \colon \SigmaStd \rightarrow \R$ and hence is linear itself.
Now the claim follows from Remark~\ref{rem:another-char-of-retr-pres}.
\end{proof}

\begin{notation}
To simplify notation, we will often just write $X_{\xi}^{\ast}$ instead of $(X_{\xi})_{\sigma_{\xi},\vstd}^{\ast}$.
\end{notation}

\begin{remark}
We can also define a height function on $X^{\xi}$ by setting
\[
h^{\xi} \colon X^{\xi} \rightarrow \R,\ [[x,\xi)] \mapsto h(x).
\]
This is well-defined since $\xi$ was chosen such that $h \circ [x,\xi)$ is constant for every $x \in X$.
Using the isomorphism $X_{\xi} \rightarrow X^{\xi},\ x \mapsto p(x) = [[x,\xi)]$ from Remark~\ref{rem:X_xi-isom-X^xi}, we obtain an apartment $\SigmaStd^{\xi} \defeq p(\SigmaStd) = p(\SigmaStd_{\xi})$ in $X^{\xi}$, a special vertex $\vstd^{\xi} \defeq p(\vstd)$, a sector $p(K_{\vstd}(\sigma)) = p(K_{\vstd}(\sigma_{\xi}))$ in $\SigmaStd^{\xi}$, and the corresponding chamber $\sigma^{\xi} \subset \pinf \SigmaStd^{\xi}$.
Since $h^{\xi}(p(x)) = h_{\xi}(x)$ for every $x \in X_{\xi}$, it follows that $h^{\xi}$ lies in $(X^{\xi})_{\sigma^{\xi},\vstd^{\xi}}^{\ast}$.
\end{remark}

\subsection{Reduction of the horizontal dimension}

Recall that we have chosen a height function $h \in X_{\sigma,\vstd}^{\ast}$ with $\sigma \subseteq \pinf \SigmaStd_{h \leq 0}$.
Since moreover $\xi$ is a vertex of $\sigma$ that lies in $\pinf \SigmaStd_{h = 0}$, it follows that $\overline{\sigma} \cap \pinf \SigmaStd_{h = 0}$ is the closure of a non-empty face $\sigma_{\hor}$ of $\sigma$, which we will refer to as the \emph{horizontal face} of $\sigma$ with respect to $h$.
Similarly, we define $(\sigma_{\xi})_{\hor}$ as the unique (possibly empty) face of $\sigma_{\xi}$ whose closure coincides with $\overline{\sigma_{\xi}} \cap \pinf (\SigmaStd_{\xi})_{h_{\xi} = 0}$.
Note that this is possible since $\sigma_{\xi}$ was constructed as a subset of $\pinf ((\SigmaStd_{\xi})_{h_{\xi} \leq 0})$.
In the following, we use the convention that the dimension of the empty set is $-1$.

\begin{lemma}\label{lem:new-chamber-is-a-chamber}
With the notation above we have
\[
\dim((\sigma_{\xi})_{\hor}) = \dim(\sigma_{\hor}) - 1.
\]
\end{lemma}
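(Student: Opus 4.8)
The plan is to reduce the statement to an elementary computation in the dual space $\SigmaStd^{\ast}$ using the basis $\{\beta,\alpha_1,\ldots,\alpha_d\}$ from Lemma~\ref{lem:basis-of-dual-sigma}. In these coordinates $\overline{K_{\vstd}(\sigma)}$ is the simplicial cone $\bigcap_{k=0}^{d}\{\alpha_k\geq 0\}$, whose $d+1$ extreme rays $r_0,\ldots,r_d$ — with $r_j$ the line $\bigcap_{k\neq j}\ker(\alpha_k)$ oriented so that $\alpha_j\geq 0$ — are exactly the vertices $\xi_0,\ldots,\xi_d$ of the $d$-simplex $\sigma$. Since $r_j\subseteq\ker(\alpha_0)$ for $j\geq 1$ while $\alpha_0$ does not vanish on $r_0$, the fact that $\xi$ is the unique vertex of $\sigma$ off $\pinf(\ker(\alpha_0))$ (Notation~\ref{not:alphas}) gives $\xi=\xi_0$. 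By Notation~\ref{not:zero-levels} and Lemma~\ref{lem:basis-of-dual-sigma} the forms $\alpha_1,\ldots,\alpha_d$ restrict to a basis of $\SigmaStd_{\xi}^{\ast}$, and $K_{\vstd,\xi}(\sigma)$ is the open positive orthant $\bigcap_{i=1}^{d}\{\alpha_i>0\}$ of $\SigmaStd_{\xi}$; hence $\sigma_{\xi}$ is the $(d-1)$-simplex whose vertices $\zeta_1,\ldots,\zeta_d$ are the extreme rays of that orthant, $\zeta_i$ being cut out by $\alpha_k|_{\SigmaStd_{\xi}}=0$ for $k\in\{1,\ldots,d\}\setminus\{i\}$.

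Next I would pin down $h$. Writing $h|_{\SigmaStd}=b\beta+\sum_{i=1}^{d}a_i\alpha_i$, the hypothesis $\xi\in\pinf\SigmaStd_{h=0}$ says $h$ is constant along $[\vstd,\xi)$; since $\beta$ is strictly monotone along that ray whereas each $\alpha_i$ with $i\geq 1$ is constant on it, this forces $b=0$, so $h|_{\SigmaStd}=\sum_{i=1}^{d}a_i\alpha_i$. The hypothesis $\overline{\sigma}\subseteq\pinf\SigmaStd_{h\leq 0}$ means $h\leq 0$ on $\overline{K_{\vstd}(\sigma)}$; evaluating on the extreme ray $r_i$ for $1\leq i\leq d$, on which $h=a_i\alpha_i$ with $\alpha_i>0$, shows this is equivalent to $a_i\leq 0$ for all $i$ (conversely $\sum a_i\alpha_i\leq 0$ everywhere on the cone, where all $\alpha_i\geq 0$). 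Put $I_0=\{i\in\{1,\ldots,d\}:a_i=0\}$ and $I_-=\{1,\ldots,d\}\setminus I_0$.

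Now I would read off the two horizontal faces. Because all $a_i\leq 0$ and all $\alpha_i\geq 0$ on $\overline{K_{\vstd}(\sigma)}$, a point of that cone satisfies $h=0$ exactly when $\alpha_i=0$ for every $i\in I_-$; passing to $\pinf$ identifies $\sigma_{\hor}=\overline{\sigma}\cap\pinf\SigmaStd_{h=0}$ with the face of $\sigma$ on the vertices $\{\xi_0\}\cup\{\xi_i:i\in I_0\}$, so $\dim(\sigma_{\hor})=\lvert I_0\rvert$. The identical argument carried out inside $\SigmaStd_{\xi}$, where $h_{\xi}=h|_{\SigmaStd_{\xi}}=\sum_{i=1}^{d}a_i\alpha_i$ has the same coefficients, identifies $(\sigma_{\xi})_{\hor}=\overline{\sigma_{\xi}}\cap\pinf(\SigmaStd_{\xi})_{h_{\xi}=0}$ with the face of $\sigma_{\xi}$ on the vertices $\{\zeta_i:i\in I_0\}$, so $\dim((\sigma_{\xi})_{\hor})=\lvert I_0\rvert-1$ (with $\dim(\emptyset)=-1$ when $I_0=\emptyset$). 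Subtracting the two identities gives the lemma.

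The substance of the argument is really just the bookkeeping of how vertices of $\sigma$ and of $\sigma_{\xi}$ correspond: passing from $X$ to $X_{\xi}$ deletes precisely the vertex $\xi=\xi_0$, and this vertex always lies in $\sigma_{\hor}$ since $h$ is constant along $[\vstd,\xi)$, so the horizontal face loses exactly one vertex, hence one dimension. I expect the only delicate point is making sure the coordinate description of $K_{\vstd,\xi}(\sigma)$ and of the walls $\ker(\alpha_i)\cap\SigmaStd_{\xi}$ genuinely matches the chamber $\sigma_{\xi}$ constructed in Lemma~\ref{lem:a-sector-in-Sigma-Xi} — this is essentially already contained in the proof of that lemma — together with a quick check of the degenerate cases $I_0=\emptyset$ (where $(\sigma_{\xi})_{\hor}=\emptyset$ and $\sigma_{\hor}=\{\xi\}$) and $I_-=\emptyset$ (where $(\sigma_{\xi})_{\hor}=\sigma_{\xi}$ and $\sigma_{\hor}=\sigma$) against the dimension convention.
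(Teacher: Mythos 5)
Your proof is correct, and it takes a noticeably different route from the paper's, although both arguments live entirely inside the fixed apartment and rest on the basis $\{\beta,\alpha_1,\ldots,\alpha_d\}$ of Lemma~\ref{lem:basis-of-dual-sigma}. The paper never expands $h$ in coordinates: it picks a wall $\ker(\alpha_j)$ whose boundary at infinity contains $\sigma_{\hor}$, expresses the dimensions of the two horizontal faces through the subspaces $\ker(\alpha_j)\cap\SigmaStd_{h=0}$ and $\ker(\alpha_j)\cap\SigmaStd_{h=0}\cap\SigmaStd_{\xi}$, and gains the drop by one from the observation that the ray $[\vstd,\xi)$ lies in the former but not in $\SigmaStd_{\xi}$. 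You instead write $h|_{\SigmaStd}=b\beta+\sum_i a_i\alpha_i$, deduce $b=0$ and $a_i\le 0$, and identify both horizontal faces by their vertex sets, namely $\{\xi\}\cup\{\xi_i: a_i=0\}$ for $\sigma_{\hor}$ and $\{\zeta_i: a_i=0\}$ for $(\sigma_{\xi})_{\hor}$, so the lemma becomes the statement that passing to $X_{\xi}$ deletes exactly the vertex $\xi$ from the horizontal face. The paper's version is notationally leaner; yours is more explicit and in fact more robust: the paper's intermediate identity $\dim(\sigma_{\hor})=\dim(\ker(\alpha_j)\cap\SigmaStd_{h=0})-1$ is, read literally, only valid when at most two of the coefficients $a_i$ are negative, since the span of the cone face underlying $\sigma_{\hor}$ is $\bigcap_{i\in I_-}\ker(\alpha_i)$ rather than $\ker(\alpha_j)\cap\ker(h)$; the same overcount occurs in the analogous formula for $(\sigma_{\xi})_{\hor}$, so the paper's final subtraction is unaffected, but your vertex bookkeeping avoids the issue altogether. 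Your attention to the matching of $K_{\vstd,\xi}(\sigma)$ with the chamber $\sigma_{\xi}$ of Lemma~\ref{lem:a-sector-in-Sigma-Xi} and to the degenerate cases ($I_0=\emptyset$, with the convention $\dim(\emptyset)=-1$) is exactly the right level of care.
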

\begin{proof}
Let $\mathcal{B} = \Set{\alpha_i}{0 \leq i \leq d}$ be as in Definition~\ref{not:alphas} and let $0 \leq j \leq d$ be such that $\sigma_{\hor} \subseteq \pinf (\ker(\alpha_{j}))$.
Since $\ker(\alpha_0)$ was defined as the unique non-horizontal wall among the kernels $\ker(\alpha_i)$, it follows that $\xi \notin \pinf \ker(\alpha_0)$.
Thus $j \geq 1$ and we see that the dimension of $\sigma_{\hor}$ is given by
\begin{align*}
\dim(\sigma_{\hor})
&= \max \limits_{1 \leq i \leq d} (\dim(\ker(\alpha_{i}) \cap \SigmaStd_{h=0}))-1\\
&= \dim(\ker(\alpha_j) \cap \SigmaStd_{h=0})-1.
\end{align*}
Moreover, each $1 \leq i \leq d$ gives rise to the chain of inclusions
\begin{equation}\label{eqn:equation62}
\ker(\alpha_i) \cap \SigmaStd_{h=0} \cap \SigmaStd_{\xi}
\subseteq \ker(\alpha_j) \cap \SigmaStd_{h=0} \cap \SigmaStd_{\xi}
\subsetneq \ker(\alpha_j) \cap \SigmaStd_{h=0},
\end{equation}
where the properness of the second inclusion follows from the observation that the ray $[\vstd,\xi)$ is contained in $\ker(\alpha_j) \cap \SigmaStd_{h=0}$ but not in $\SigmaStd_{\xi}$.
From the fact that $\SigmaStd_{\xi}$ is a hyperplane in $\SigmaStd$, we can now deduce that
\begin{equation}\label{eqn:equation63}
\dim(\ker(\alpha_j) \cap \SigmaStd_{h=0} \cap \SigmaStd_{\xi})
= \dim(\ker(\alpha_j) \cap \SigmaStd_{h=0})-1.
\end{equation}
For each $1 \leq i \leq d$, let $\alpha_i^{\xi}$ denote the restriction of $\alpha_i$ to $\SigmaStd_{\xi}$.
By Lemma~\ref{lem:a-sector-in-Sigma-Xi}, it follows that the walls corresponding to the sector $K_{\vstd}(\sigma_{\xi}) \subseteq \SigmaStd_{\xi}$ can be written as $\ker(\alpha_i^{\xi}) = \ker(\alpha_i) \cap \SigmaStd_{\xi}$ for $1 \leq i \leq d$.
Since the zero level of $h_{\xi}$ in $\SigmaStd_{\xi}$ is given by $\SigmaStd_{\xi} \cap \SigmaStd_{h=0}$, we obtain
\begin{equation}\label{eq:equation61}
\dim(\sigma_{\xi}^{\hor})
=\max \limits_{1 \leq i \leq d} \dim(\ker(\alpha_{i}) \cap \SigmaStd_{\xi} \cap \SigmaStd_{h=0})-1.
\end{equation}
By combining~\eqref{eq:equation61} with~\eqref{eqn:equation62} and~\eqref{eqn:equation63}, we obtain
\begin{align*}
\dim(\sigma_{\xi}^{\hor})
&= \dim(\ker(\alpha_j) \cap \SigmaStd_{h=0} \cap \SigmaStd_{\xi})-1\\
&= \dim(\ker(\alpha_j) \cap \SigmaStd_{h=0})-2\\
&=\dim(\sigma_{\hor})-1,
\end{align*}
which proves the claim.
\end{proof}

Let us summarize what we have done so far.

\begin{lemma}\label{lem:reduction-one-step}
With the notation above, we have:
\begin{enumerate}
\item The dimension of $X_{\xi}$ is given by $\dim(X_{\xi}) = \dim(X) - 1$.
\item Given a cell $A \subseteq X_{\xi}$, there is a cell $A' \subseteq X$ with $\lk_{X_{\xi}}(A) \cong \lk_{X}(A')$.
\item Given any $k \in \N_0$, the system $(X_{h \geq r})_{r \in \R}$ is essentially $k$-connected if and only if the system $((X_{\xi})_{h_{\xi} \geq r})_{r \in \R}$ is essentially $k$-connected.
\item The dimension of $(\sigma_{\xi})_{\hor}$ is given by $\dim((\sigma_{\xi})_{\hor}) = \dim(\sigma_{\hor}) - 1$.
\item If there is an $\alpha \in \Aut(X)$ and some $a \in \R \setminus \{0\}$
such that $\alpha(\sigma) = \sigma$, $\alpha(\SigmaStd) = \SigmaStd$, and $h(\alpha(x)) = h(x) + a$ for every $x \in X$, then there is an $\alpha_{\xi} \in \Aut(X_{\xi})$ with $\alpha_{\xi}(\sigma_{\xi}) = \sigma_{\xi}$, $\alpha_{\xi}(\SigmaStd_{\xi}) = \SigmaStd_{\xi}$, and $h_{\xi}(\alpha_{\xi}(x)) = h_{\xi}(x) + a$ for every $x \in X_{\xi}$.
\end{enumerate}
\end{lemma}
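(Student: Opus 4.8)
The plan is to observe that all five assertions have, in essence, already been established in the course of Sections~\ref{sec:parabolic} and~\ref{sec:the-geom-main}, so that the proof of the lemma consists of assembling the relevant statements in the right order; it is a summary lemma collecting the inductive step.

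For item~(1) I would argue as follows. Every horizontal apartment $\widetilde{\Sigma} \in \mathcal{A}_{\hor}$ is an apartment of $X$ and hence has dimension $\dim(X)$. By Lemma~\ref{lem:eucl-str-on-app-of-beta0} the intersection $\Sigma = \widetilde{\Sigma} \cap X_{\xi}$ is a hyperplane in $\widetilde{\Sigma}$, so it has dimension $\dim(X) - 1$, and it carries the structure of a Euclidean Coxeter complex of that dimension. Since $\mathcal{A}_{\xi}$ is an apartment system of the Euclidean building $X_{\xi}$ by Proposition~\ref{prop:beta-0-is-a-building}, and since the dimension of a Euclidean building equals the dimension of its apartments, we conclude $\dim(X_{\xi}) = \dim(X) - 1$. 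Item~(2) is exactly the second statement of Lemma~\ref{lem:prop-of-XXi}. Item~(3) is Corollary~\ref{cor:equivalence-of-essential-connectivity}, which was deduced from the strong deformation retraction constructed in Lemma~\ref{lem:def-to-zero-level}. Item~(4) is Lemma~\ref{lem:new-chamber-is-a-chamber}. Finally, item~(5) is Lemma~\ref{lem:tranlating-auto}: its hypotheses coincide verbatim with those in~(5), and its conclusion supplies the desired automorphism $\alpha_{\xi} \in \Aut(X_{\xi})$ with the three required properties.

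Since everything reduces to invoking results established earlier, there is no genuine obstacle in this lemma. The one place that calls for a small amount of care is item~(1), where the dimension count must be carried out on the level of apartments (a hyperplane sitting inside a $\dim(X)$-dimensional apartment) rather than, say, by comparing links; once the apartment system $\mathcal{A}_{\xi}$ is available from Proposition~\ref{prop:beta-0-is-a-building}, this is immediate.
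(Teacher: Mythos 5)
Your proposal is correct and matches the paper's own proof: the paper likewise disposes of item (1) by noting that the apartment $\SigmaStd_{\xi}$ is the hyperplane $\SigmaStd \cap \beta^{-1}(0)$ in $\SigmaStd$, and cites Lemma~\ref{lem:prop-of-XXi}, Corollary~\ref{cor:equivalence-of-essential-connectivity}, Lemma~\ref{lem:new-chamber-is-a-chamber}, and Lemma~\ref{lem:tranlating-auto} for items (2)--(5), exactly as you do.
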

\begin{proof}
The first assertion follows from the fact that the apartment $\SigmaStd_{\xi}$ in $X_{\xi}$ was defined as the hyperplane $\SigmaStd \cap \beta^{-1}(0)$ in $\SigmaStd$.
The claims (2)-(5) can be found in Lemma~\ref{lem:prop-of-XXi}, Corollary~\ref{cor:equivalence-of-essential-connectivity}, Lemma~\ref{lem:new-chamber-is-a-chamber}, and Lemma~\ref{lem:tranlating-auto}.
\end{proof}

Using Lemma~\ref{lem:reduction-one-step}, we can replace $(X_{h \geq r})_{r \in \R}$ by $((X_{\xi})_{h_{\xi} \geq r})_{r \in \R}$ in order to reduce the horizontal dimension of $\sigma$, without changing the essential connectivity properties.
By iterating this process, we are able to reduce our assumption that $\overline{\sigma} \subseteq \pinf \Sigma_{h \leq 0}$ to the case where $\overline{\sigma} \subseteq \pinf \Sigma_{h < 0}$.
This will be done in the following Corollary.
For the sake of easy reference, we recall and adapt some of the involved notations.

\begin{corollary}\label{cor:reduction}
Let $X$ be a Euclidean building, let $\Sigma \subseteq X$ be an apartment, let $\sigma \subseteq \pinf \Sigma$ be a chamber, let $v \in \Sigma$ be a special vertex, and let $h \in X_{\sigma,v}^{\ast}$ be such that $\overline{\sigma} \subseteq \pinf \Sigma_{h \leq 0}$.
Suppose that $\overline{\sigma} \cap \pinf \Sigma_{h = 0}$ is non-empty and let $\tau$ be the face of $\sigma$ that is determined by $\overline{\tau} = \overline{\sigma} \cap \pinf \Sigma_{h = 0}$.
Then there is a Euclidean building $X_{\tau}$, an apartment $\Sigma_{\tau} \subseteq X_{\tau}$, a chamber $\sigma_{\tau} \subseteq \pinf \Sigma_{\tau}$, a special vertex $v_{\tau} \in \Sigma_{\tau}$, and a height function $h_{\tau} \in (X_{\tau})_{\sigma_{\tau},v_{\tau}}^{\ast}$ with $\overline{\sigma_{\tau}} \subseteq \pinf ((\Sigma_{\tau})_{h_{\tau} < 0})$ such that the following properties are satisfied:
\begin{enumerate}
\item The dimension of $X_{\tau}$ is given by $\dim(X_{\tau}) = \dim(X)-\dim(\tau)-1$.
\item Given a cell $A \subseteq X_{\tau}$, there is a cell $A' \subseteq X$ with $\lk_{X_{\tau}}(A) \cong \lk_{X}(A')$.
In particular, $\thick(X_{\tau}) \geq \thick(X)$ and $X_{\tau}$ is locally finite if $X$ is locally finite.
\item Given any $k \in \N_0$, the system $(X_{h \geq r})_{r \in \R}$ is essentially $k$-connected if and only if the system $((X_{\tau})_{h_{\tau} \geq r})_{r \in \R}$ is essentially $k$-connected.
\item If there is an $\alpha \in \Aut(X)$ and some $a \in \R \setminus \{0\}$
such that $\alpha(\sigma) = \sigma$, $\alpha(\SigmaStd) = \SigmaStd$, and $h(\alpha(x)) = h(x) + a$ for every $x \in X$, then there is an $\alpha_{\tau} \in \Aut(X_{\tau})$ with $\alpha_{\tau}(\sigma_{\tau}) = \sigma_{\tau}$, $\alpha_{\tau}(\SigmaStd_{\tau}) = \SigmaStd_{\tau}$, and $h_{\tau}(\alpha_{\tau}(x)) = h_{\tau}(x) + a$ for every $x \in X_{\tau}$.
\end{enumerate}
\end{corollary}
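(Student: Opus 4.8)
The plan is to prove the corollary by induction on $\dim(\tau)$, iterating the single-vertex reduction of Lemma~\ref{lem:reduction-one-step}. In the base case $\dim(\tau)=0$ the face $\tau=\{\xi\}$ consists of one vertex, which lies in $\pinf\Sigma_{h=0}$ because $\overline{\tau}=\overline{\sigma}\cap\pinf\Sigma_{h=0}$; hence $h\circ[x,\xi)$ is constant for every $x\in X$, so Lemma~\ref{lem:reduction-one-step} applies to $\xi$. Setting $X_{\tau}\defeq X_{\xi}$, $\Sigma_{\tau}\defeq\Sigma_{\xi}$, $\sigma_{\tau}\defeq\sigma_{\xi}$, $v_{\tau}\defeq v$ and $h_{\tau}\defeq h_{\xi}$, properties (1)--(4) of the corollary are assertions (1), (2), (3) and (5) of that lemma (with $\dim(\tau)=0$, so $\dim(X)-\dim(\tau)-1=\dim(X)-1$), the two ``in particular'' clauses of (2) following from the link isomorphism $\lk_{X_{\tau}}(A)\cong\lk_{X}(A')$ exactly as in the inductive step below. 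It then remains to verify $\overline{\sigma_{\tau}}\subseteq\pinf((\Sigma_{\tau})_{h_{\tau}<0})$: assertion (4) gives $\dim((\sigma_{\xi})_{\hor})=\dim(\sigma_{\hor})-1=\dim(\tau)-1=-1$, so the horizontal face of $\sigma_{\xi}$ is empty, i.e.\ $\overline{\sigma_{\xi}}\cap\pinf((\Sigma_{\xi})_{h_{\xi}=0})=\emptyset$; combined with $\overline{\sigma_{\xi}}\subseteq\pinf((\Sigma_{\xi})_{h_{\xi}\leq 0})$, which is built into the construction of $\sigma_{\xi}$ (see the discussion preceding Lemma~\ref{lem:new-chamber-is-a-chamber}), this yields $\overline{\sigma_{\xi}}\subseteq\pinf((\Sigma_{\xi})_{h_{\xi}<0})$.

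For the inductive step I would assume $\dim(\tau)=m\geq 1$ and that the corollary holds whenever the distinguished face has dimension $<m$. Picking any vertex $\xi$ of $\tau$ (again $\xi\in\pinf\Sigma_{h=0}$), Lemma~\ref{lem:reduction-one-step} produces a Euclidean building $X_{\xi}$, an apartment $\Sigma_{\xi}\subseteq X_{\xi}$, a chamber $\sigma_{\xi}\subseteq\pinf\Sigma_{\xi}$, the special vertex $v$, and a height function $h_{\xi}\in(X_{\xi})_{\sigma_{\xi},v}^{\ast}$ with $\overline{\sigma_{\xi}}\subseteq\pinf((\Sigma_{\xi})_{h_{\xi}\leq 0})$, together with assertions (1)--(5). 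By assertion (4) the horizontal face $\tau_{1}\defeq(\sigma_{\xi})_{\hor}$ of $\sigma_{\xi}$ has dimension $m-1\geq 0$, hence is a non-empty face with $\overline{\tau_{1}}=\overline{\sigma_{\xi}}\cap\pinf((\Sigma_{\xi})_{h_{\xi}=0})$. Thus the data $(X_{\xi},\Sigma_{\xi},\sigma_{\xi},v,h_{\xi})$ together with $\tau_{1}$ satisfy the hypotheses of the corollary with $\dim(\tau_{1})=m-1<m$, so the inductive hypothesis furnishes $X_{\tau}\defeq(X_{\xi})_{\tau_{1}}$ and $\Sigma_{\tau},\sigma_{\tau},v_{\tau},h_{\tau}$ satisfying (1)--(4) relative to $(X_{\xi},\Sigma_{\xi},\sigma_{\xi},v,h_{\xi})$, as well as $\overline{\sigma_{\tau}}\subseteq\pinf((\Sigma_{\tau})_{h_{\tau}<0})$.

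It then remains to compose these with Lemma~\ref{lem:reduction-one-step} to pass to the statements relative to $(X,\Sigma,\sigma,v,h)$: property (1) follows from $\dim(X_{\tau})=\dim(X_{\xi})-\dim(\tau_{1})-1=(\dim(X)-1)-(m-1)-1=\dim(X)-m-1$; for (2) every cell $A\subseteq X_{\tau}$ satisfies $\lk_{X_{\tau}}(A)\cong\lk_{X_{\xi}}(A_{1})\cong\lk_{X}(A')$ for suitable cells $A_{1}\subseteq X_{\xi}$, $A'\subseteq X$, and applying this to a panel (whose link is $0$-dimensional, forcing $A'$ to be a panel) gives $\thick(X_{\tau})\geq\thick(X)$, while running it over all cells shows $X_{\tau}$ is locally finite when $X$ is; (3) is the concatenation of the two equivalences for essential $k$-connectivity; and for (4) the hypotheses on $\alpha$ yield $\alpha_{\xi}\in\Aut(X_{\xi})$ with the corresponding properties by assertion (5), to which the inductive hypothesis applies to produce $\alpha_{\tau}\in\Aut(X_{\tau})$ as required. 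The proof involves no hard estimate; the only point requiring care is the bookkeeping that identifies $(\sigma_{\xi})_{\hor}$ as the correct input face at each stage and that its dimension drops by exactly one, both of which are guaranteed by Lemma~\ref{lem:new-chamber-is-a-chamber}, so once the induction is set up the argument is a routine concatenation of Lemma~\ref{lem:reduction-one-step} with itself.
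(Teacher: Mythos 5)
Your proposal is correct and follows essentially the same route as the paper: the paper also picks a vertex $\xi$ of $\tau$, applies Lemma~\ref{lem:reduction-one-step} (noting $\dim((\sigma_{\xi})_{\hor}) = \dim(\sigma_{\hor})-1$, so the horizontal face eventually becomes empty), and concludes by induction on $\dim(\sigma_{\hor})$. Your write-up merely spells out the composition bookkeeping that the paper leaves implicit.
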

\begin{proof}
By assumption, $\tau$ is non-empty.
We can therefore choose a vertex $\xi$ of $\tau$ and its opposite vertex $\xi' \in \pinf \Sigma$.
In this case we can define $X_{\xi}$, $\sigma_{\xi}$, $(\sigma_{\xi})_{\hor}$, and $h_{\xi} \in (X_{\xi})_{\sigma_{\xi},v}^{\ast}$ as above and apply Lemma~\ref{lem:reduction-one-step} to deduce that the assertions (1)-(4) are true if $\tau = \xi$.
Moreover, Lemma~\ref{lem:reduction-one-step} tells us that $\dim((\sigma_{\xi})_{\hor}) = \dim(\sigma_{\hor}) - 1$.
Thus if $\dim(\sigma_{\hor}) = 0$, it follows that $(\sigma_{\xi})_{\hor} = \emptyset$.
In this case we have $\sigma_{\xi} \subseteq \pinf ((\Sigma_{\xi})_{h_{\xi} < 0})$ and the claim follows.
Now the general case follows by induction on the dimension of $\sigma_{\hor}$.
\end{proof}

Let us recall the following typical property of groups acting on buildings.

\begin{definition}\label{def:strongly-transitive}
Let $\Delta$ be a building and let $\Aut(\Delta)$ denote the group of type preserving, cellular automorphisms of $\Delta$.
We say that a subgroup $G \subseteq \Aut(\Delta)$ acts \emph{strongly transitively} on $\Delta$ if $G$ acts transitively on the set of pairs $(\Sigma,E)$, where $\Sigma$ is an apartment of $\Delta$ and $E$ is a chamber of $\Sigma$.
\end{definition}


\begin{lemma}\label{lem:lifting-shifts}
Suppose that $\Aut(X)$ acts strongly transitively on $X$.
Then there is an automorphism $\alpha \in \Aut(X)$ and a constant $a \in \R \setminus \{0\}$ such that
\begin{enumerate}
\item $\alpha(\sigma) = \sigma$,
\item $\alpha(\SigmaStd) = \SigmaStd$, and
\item $h(\alpha(x)) = h(x) + a$ for every $x \in X$.
\end{enumerate}
\end{lemma}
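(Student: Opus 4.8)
The plan is to produce $\alpha$ as a lift to $\Aut(X)$ of a translation of $\SigmaStd$ lying in the affine Weyl group, and then to propagate the height shift from $\SigmaStd$ to all of $X$ using the retraction $\rho = \rho_{\sigma,\SigmaStd}$. First I would analyse the stabilizer $N = \stab_{\Aut(X)}(\SigmaStd)$. Strong transitivity says $\Aut(X)$ is transitive on pairs (apartment, chamber), so for any $C,C' \in \Ch(\SigmaStd)$ there is $g$ with $g(\SigmaStd)=\SigmaStd$ and $g(C)=C'$; hence $N$ acts transitively on $\Ch(\SigmaStd)$. Now $\SigmaStd$ is the Coxeter complex of its (affine) Weyl group $W$, which already acts simply transitively on $\Ch(\SigmaStd)$ by type-preserving cellular isometries. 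By the usual rigidity of Coxeter complexes, any type-preserving cellular isometry of $\SigmaStd$ coincides with the unique element of $W$ realizing its effect on a fixed chamber; hence the image of $N$ in $\Aut(\SigmaStd)$ lies in $W$, and by chamber-transitivity it equals $W$. In particular this image contains the full-rank translation lattice $T \le W$. Since $h$ restricts to a linear functional on $\SigmaStd$ (with the vector space structure based at $\vstd$) that is nonzero — otherwise there is nothing to prove — and $T$ spans $\SigmaStd$, I can pick $v \in T$ with $a \defeq h(v) \ne 0$ and then $\alpha \in N$ restricting to the translation by $v$ on $\SigmaStd$. Translations act trivially on $\pinf\SigmaStd$, so $\alpha(\sigma)=\sigma$; together with $\alpha(\SigmaStd)=\SigmaStd$ this gives (1) and (2), and $h(\alpha(x))=h(x)+a$ is immediate for $x\in\SigmaStd$.

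To upgrade the last identity to all of $X$, recall from Remark~\ref{rem:another-char-of-retr-pres} that $h\circ\rho=h$, so it suffices to prove the equivariance $\rho\circ\alpha=(\alpha|_{\SigmaStd})\circ\rho$; then $h(\alpha(x))=h(\rho(\alpha(x)))=h((\alpha|_{\SigmaStd})(\rho(x)))=h(\rho(x))+a=h(x)+a$. For the equivariance I would first check that $\alpha$ preserves $\mathcal{A}_\sigma$: if $\Sigma\in\mathcal{A}_\sigma$ and $S\subseteq\Sigma\cap\SigmaStd$ is a subsector of $K_{\vstd}(\sigma)$, then $\alpha(S)\subseteq\alpha(\Sigma)\cap\SigmaStd$ is a sector of $\SigmaStd$ pointing at $\alpha(\sigma)=\sigma$, so by Proposition~\ref{prop:common-subsector} it shares a subsector with $K_{\vstd}(\sigma)$, whence $\alpha(\Sigma)\in\mathcal{A}_\sigma$. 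Then, for $x$ in some $\Sigma\in\mathcal{A}_\sigma$, the maps $\rho\circ\alpha|_\Sigma$ and $(\alpha|_{\SigmaStd})\circ\rho|_\Sigma$ are both isomorphisms $\Sigma\to\SigmaStd$; on a subsector $S'\subseteq\Sigma\cap\SigmaStd$ of $K_{\vstd}(\sigma)$ the map $\rho$ is the identity and $\alpha$ is the translation by $v$, so the two compositions agree on $S'$, hence on a chamber, hence everywhere. Evaluating at $x$ yields the equivariance and thus (3).

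I expect the first step to be the main obstacle: extracting an honest translation automorphism of $X$ from the bare hypothesis of strong transitivity. This is where the Tits-system (BN-pair) description of a strongly transitive action on a Euclidean building is used (cf.\ \cite[Chapter 11]{AbramenkoBrown08}), and one must be slightly careful in identifying the image of the apartment stabilizer with the affine Weyl group — in particular that type-preservation excludes the diagram automorphisms that would otherwise enlarge it. By contrast, the $\rho$-equivariance in the second paragraph is a routine rigidity computation once $\alpha$ is known to preserve $\mathcal{A}_\sigma$.
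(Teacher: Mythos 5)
Your proposal is correct and takes essentially the same route as the paper: you lift a type-preserving translation of $\SigmaStd$ to an automorphism $\alpha \in \Aut(X)$ via strong transitivity (the paper realizes the image of one chamber directly and invokes the same Coxeter-complex rigidity you use to identify the apartment stabilizer's image with the affine Weyl group), and you then establish $\rho \circ \alpha = (\alpha|_{\SigmaStd}) \circ \rho$ by comparing the two isomorphisms $\Sigma \to \SigmaStd$ on a chamber inside a common subsector, exactly as in the paper, with the welcome extra detail that $\alpha$ preserves $\mathcal{A}_{\sigma}$. One small caveat: your aside that $h=0$ leaves ``nothing to prove'' is not quite right (in that degenerate case the conclusion with $a \neq 0$ would be false), but the lemma, like the paper's own proof, tacitly assumes $h \neq 0$, which holds wherever it is applied.
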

\begin{proof}
Let $T \in \Aut(\SigmaStd)$ be type-preserving translation with $a \defeq h(T(\vstd))-h(\vstd) \neq 0$.
Let $E \subseteq \SigmaStd$ be a chamber.
Since $\Aut(X)$ acts strongly transitively on $X$, there is some $\alpha \in \Aut(X)$ such that $\alpha(\SigmaStd) = \SigmaStd$ and $\alpha(E) = T(E)$.
Note that $T$, being a translation, is completely determined by its action on $E$.
Thus the restriction of $\alpha$ to $\SigmaStd$ is given by $\alpha_{\vert \SigmaStd} = T$.
In particular this implies $\alpha(K_{\vstd}(\sigma)) = K_{\alpha(\vstd)}(\sigma)$ and hence $\alpha(\sigma) = \sigma$.
We claim that $\rho \circ \alpha = \alpha \circ \rho.$
Since $X$ is covered by apartments in $\mathcal{A}_{\sigma}$, it suffices to verify this claim for an every apartment in $\mathcal{A}_{\sigma}$.
Let us therefore fix an apartment $\Sigma \in \mathcal{A}_{\sigma}$.
Note that the maps $\rho \circ \alpha$ and $\alpha \circ \rho$ restrict to isomorphisms $\Sigma \rightarrow \SigmaStd$.
Since isomorphisms between Coxeter complexes are determined by the image of any chamber, it is sufficient to find a chamber $C \subseteq \Sigma$ with $\rho \circ \alpha(C) = \alpha \circ \rho(C)$.
By assumption on $\Sigma \in \mathcal{A}_{\sigma}$, there is a sector $K_{w}(\sigma)$ lying in $\Sigma \cap \SigmaStd$.
In particular, we see that there is a chamber $C \subseteq \Sigma \cap \SigmaStd$ and that
\[
\rho \circ \alpha(C)
= \rho \circ T(C)
= T(C)
= \alpha(C)
= \alpha \circ \rho(C).
\]
It remains to show that $h(\alpha(x)) = h(x) + a$ for every $x \in X$.
Note that this is clear for $x \in \SigmaStd$.
As a function in $X^{\ast}$, our height function $h$ satisfies $h(\rho(x)) = h(x)$ for every $x \in X$.
We can therefore deduce that
\begin{center}
\begin{tabular}{lll}
$h(\alpha(x)) - h(x)$ & $= h(\rho \circ \alpha(x)) - h(\rho(x))$ & $= h(\alpha \circ \rho(x)) - h(\rho(x))$\\[2ex]
       & $= h(T(\rho(x))) - h(\rho(x))$   & $= a$
       \end{tabular}
\end{center}
for every $x \in X$, which proves the claim.
\end{proof}

\subsection{The geometric main result}\label{subsec:geo-main}

We are now ready to prove the main theorem of this section.
To formulate it, we fix a $d$-dimensional, thick Euclidean building $X$, an apartment $\Sigma$, a special vertex $v \in \Sigma$, and a chamber $\sigma \subseteq \pinf \Sigma$.
Let $\mathcal{B} = \{\alpha_1,\ldots,\alpha_d\}$ be a set of linear forms in $\Sigma^{\ast}$ such that
\[
\overline{K_{v}(\sigma)} = \Set{x \in \Sigma}{\alpha_i(x) \geq 0 \text{ for every } 1 \leq i \leq d}.
\]
For each $1 \leq i \leq d$, we consider the height function $h_i \defeq - \alpha_i \circ \rho_{\sigma, \Sigma}$ on $X$.
Since $\sigma$ is a chamber, it follows that $\mathcal{B}$ is a basis of $\Sigma^{\ast}$ and hence that $\mathcal{B}_{\sigma,v} \defeq \{h_1,\ldots,h_d\}$ is a basis of $X^{\ast}_{\sigma,v} = \Set{\alpha \circ \rho_{\sigma, \Sigma}}{\alpha \in \Sigma^{\ast}}$.
Recall from Notation~\ref{not:homothety-classes-general} that for every finite dimensional real vector space $V$, we write $S(V) = (V \setminus \{0\}) / \hspace{-1.5mm} \sim$ to denote the space of positive homothety classes of non-trivial elements of $V$.
When it comes to describe the $\Sigma$-invariants of a group, the following definition is often useful.

\begin{definition}\label{def:conv-hull}
Let $V$ be a finite dimensional real vector space and let $M \subseteq S(V)$ be an arbitrary subset.
For each $k \in \N$, we define the \emph{$k$-convex hull} of $M$ as the subset $\conv_k(M) \subseteq S(V)$ whose elements are represented by non-zero vectors of the form $\sum_{i=1}^k \lambda_i v_i$ with non-negative coefficients $\lambda_i$ such that the vectors $v_i$ represent pairwise different elements in $M$.
The \emph{convex hull} of $M$ is given by $\conv(M) \defeq \bigcup \limits_{k \in \N} \conv_k(M)$.
\end{definition}

Let us apply Definition~\ref{def:conv-hull} on the subset $\Delta_{\sigma,v}^{(0)} \subseteq S(X_{\sigma,v}^{\ast})$ consisting of classes that are represented by the elements in $\mathcal{B}_{\sigma,v}$.
As the notation suggests, we think of $\Delta_{\sigma,v}^{(0)}$ as the vertex set of a simplex in $S(X_{\sigma,v}^{\ast})$.
Accordingly, we write $\Delta_{\sigma,v}^{(k)}$ to denote the $(k+1)$-convex hull of $\Delta_{\sigma,v}^{(0)}$.
Note that the notation makes sense because $\mathcal{B}_{\sigma,v}$ is a basis of $X_{\sigma,v}^{\ast}$.
Since $\dim(X) = d$, the convex hull of $\Delta_{\sigma,v}^{(0)}$ is given by $\Delta_{\sigma,v} = \Delta_{\sigma,v}^{(d-1)}$.

\begin{theorem}\label{thm:A-new}
With the notation above, let $h$ be a height function in $X_{\sigma,v}^{\ast}$ and let $k \geq 0$ be an integer.
Then the following hold.
\begin{enumerate}
\item If $h$ does not represent an element of $\Delta_{\sigma,v}$, then $X_{h \geq r}$ is contractible for every $r \in \R$.
\item Suppose that $X$ satisfies $(\SOL)$.
If $h$ does not represent a class in $\Delta_{\sigma,v}^{(k)}$, then $(X_{h \geq r})_{r \in \R}$ is essentially $k$-connected.
\item Suppose there is an $\alpha \in \Aut(X)$ and an $a \in \R \setminus \{0\}$
such that $\alpha(\sigma) = \sigma$, $\alpha(\Sigma) = \Sigma$, and $h(\alpha(x)) = h(x) + a$ for every $x \in X$.
If $h$ represents a class in $\Delta_{\sigma,v}^{(k)}$, then $(X_{h \geq r})_{r \in \R}$ is not essentially $k$-acyclic.
\end{enumerate}
\end{theorem}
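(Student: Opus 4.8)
The plan is to translate the hypothesis on $h$ into the position of the \emph{horizontal face} of $\sigma$ and then run the three regimes through the machines of Sections~\ref{sec:pos-dir}, \ref{sec:neg-dir} and~\ref{sec:the-geom-main}. Write $h=\sum_{i=1}^{d}\mu_i h_i$ in the basis $\mathcal{B}_{\sigma,v}$, equivalently $h|_{\Sigma}=-\sum_i\mu_i\alpha_i$, and let $\xi_1,\dots,\xi_d$ be the vertices of $\sigma$, indexed so that $[v,\xi_j)\not\subseteq\alpha_j^{-1}(0)$. For $\xi\in\overline\sigma$ the map $t\mapsto h([x,\xi)(t))$ is affine, with slope along $\xi_j$ equal to $-\mu_j\alpha_j(\xi_j)$ where $\alpha_j(\xi_j)>0$; hence $h$ represents a class of $\Delta_{\sigma,v}$ precisely when all $\mu_i\geq 0$, and, putting $m\defeq\#\{i:\mu_i\neq 0\}$, a class of $\Delta_{\sigma,v}^{(k)}$ precisely when in addition $m\leq k+1$. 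When all $\mu_i\geq 0$, the horizontal face $\tau$ (the face of $\sigma$ with $\overline\tau=\overline\sigma\cap\pinf\Sigma_{h=0}$) is spanned by the $\xi_j$ with $\mu_j=0$, so $\dim\tau=d-m-1$, and Corollary~\ref{cor:reduction} produces an $m$-dimensional Euclidean building $X_\tau$, with $\thick(X_\tau)\geq\thick(X)$ and every link of $X_\tau$ a link of $X$, carrying a height function $h_\tau$ that strictly decreases along $\overline{\sigma_\tau}$ and satisfying: $(X_{h\geq r})_{r}$ is essentially $k$-connected (resp.\ $k$-acyclic, by the same deformation-retraction argument) iff $((X_\tau)_{h_\tau\geq r})_{r}$ is.

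For part~(1), where some coefficient $\mu_{i}$ is negative, I would prove contractibility of every superlevelset directly. Along $\xi_{i}$ the slope above is $c\defeq-\mu_i\alpha_i(\xi_i)>0$, and since $\rho$ maps each ray $[x,\xi_i)$ to $[\rho(x),\xi_i)$ this constant does not depend on $x$; thus the geodesic flow $\psi_t(x)\defeq[x,\xi_i)(t)$ toward $\xi_i$, which is well defined and jointly continuous because $X$ is complete $\CAT(0)$, satisfies $h(\psi_t(x))=h(x)+ct$. Reparametrising $\psi$ so that points of height below $r'$ are raised to height exactly $r'$ and points of height $\geq r'$ are kept fixed yields a deformation retraction of $X_{h\geq r}$ onto $X_{h\geq r'}$ for every $r'\geq r$. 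Hence every inclusion $X_{h\geq r'}\hookrightarrow X_{h\geq r}$ is a homotopy equivalence, and passing to the increasing union over $r\to-\infty$ identifies $X_{h\geq r}$, up to homotopy, with the contractible space $X$.

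For part~(2), assume $X$ satisfies $(\SOL)$ and $h\notin\Delta_{\sigma,v}^{(k)}$. If $h\notin\Delta_{\sigma,v}$, part~(1) already gives contractibility of each $X_{h\geq r}$. Otherwise all $\mu_i\geq 0$ and $m\geq k+2$: if $m=d$ then $h$ strictly decreases along $\overline\sigma$ and Theorem~\ref{thm:essentially-connected} gives that $(X_{h\geq r})_{r}$ is essentially $(\dim X-2)$-connected; if $m<d$, pass to $X_\tau$, which again satisfies $(\SOL)$ (its links being links of $X$) and is thick, apply Theorem~\ref{thm:essentially-connected} there to obtain essential $(\dim X_\tau-2)=(m-2)$-connectivity of $((X_\tau)_{h_\tau\geq r})_{r}$, and transport back by Corollary~\ref{cor:reduction}. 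In both cases $m\geq k+2$ supplies the needed bound $m-2\geq k$.

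For part~(3), $h$ represents a class of $\Delta_{\sigma,v}^{(k)}$, so all $\mu_i\geq 0$ and $m\leq k+1$. If $m=d$, Theorem~\ref{thm:negative-global-geom} — whose only extra hypothesis is the existence of a height-shifting automorphism, which we are given — shows $(X_{h\geq r})_{r}$ is not essentially $(\dim X-1)$-acyclic. If $m<d$, clause~(4) of Corollary~\ref{cor:reduction} pushes the shift automorphism down to $\alpha_\tau\in\Aut(X_\tau)$ with $h_\tau(\alpha_\tau(x))=h_\tau(x)+a$, so Theorem~\ref{thm:negative-global-geom} applied to the thick building $X_\tau$ shows $((X_\tau)_{h_\tau\geq r})_{r}$ is not essentially $(m-1)$-acyclic, and Corollary~\ref{cor:reduction} transports this back. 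Since a system that fails essential $n$-acyclicity also fails essential $n'$-acyclicity for $n'\geq n$, and $m\leq k+1$ gives $m-1\leq k$, we conclude that $(X_{h\geq r})_{r}$ is not essentially $k$-acyclic. I expect no conceptual obstacle here: the content sits in Theorems~\ref{thm:essentially-connected} and~\ref{thm:negative-global-geom} and in the parabolic-building reduction of Section~\ref{sec:parabolic}; the only points demanding care are that the reduction applies only when $\overline\sigma\subseteq\pinf\Sigma_{h\leq 0}$, which is why the mixed-sign regime~(1) must be settled separately and first, and the bookkeeping by which the single integer $m$ controls both $\dim X_\tau$ and the two connectivity exponents.
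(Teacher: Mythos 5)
Your proposal is correct and follows essentially the same route as the paper: you translate membership of $[h]$ in $\Delta_{\sigma,v}^{(k)}$ into the sign pattern of the coefficients and the dimension of the horizontal face of $\sigma$, reduce via Corollary~\ref{cor:reduction} (including the acyclic version, justified by the same deformation retractions) to the case where $h$ strictly decreases along $\overline{\sigma}$, and then invoke Theorem~\ref{thm:essentially-connected} and Theorem~\ref{thm:negative-global-geom}, which is exactly the paper's argument, including the bookkeeping by which the number $m$ of nonzero coefficients controls $\dim(X_\tau)$ and both connectivity exponents. The only divergence is in part (1), where you flow toward a vertex $\xi_i$ with negative coefficient to deformation-retract $X_{h\geq r}$ onto higher superlevel sets and conclude by a colimit argument, whereas the paper flows toward an interior point $\eta\in\sigma$ and contracts compact images inside the convex pieces $K_w(\sigma)\cap X_{h\geq s}$; both rest on the same observation that $h$ strictly increases along the chosen rays, so this is only a cosmetic difference.
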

\begin{proof}
Let $s \in \R$ be arbitrary.
Suppose first that $h$ does not represent an element of $\Delta_{\sigma,v}$.
Then $h = \sum \limits_{i=1}^d \lambda_{i} h_i$, where at least one coefficient, say $\lambda_1$, is negative.
Consider the $1$-dimensional subspace $U \defeq \bigcap \limits_{i=2}^d \ker(\alpha_i) \leq \Sigma$ and let $\xi$ denote the vertex of $\sigma$ that lies in $\pinf U$.
Then $\alpha_i \circ [v,\xi)$ is constantly $0$ for each $1 < i \leq d$, while $\alpha_1 \circ [v,\xi)$ is strictly increasing.
It follows that
\[
h \circ [v,\xi)
= \lambda_{1} h_1 \circ [v,\xi)
= \lambda_{1} (- \alpha_1 \circ \rho_{\sigma, \Sigma}) \circ [v,\xi)
= - \lambda_{1} \alpha_1 \circ [v,\xi)
\]
is strictly increasing.
We can therefore choose an (interior) point $\eta \in \sigma$ such that $h \circ [x,\eta)$ is strictly increasing for every $x \in X$.
It follows that for all $x,y \in X_{h \geq s}$, there is some $t \geq 0$ such that $[x,\eta)(t)$ is contained in $K_y(\sigma)$.
Let $n \in \N_0$ and let $f \colon S^n \rightarrow X_{h \geq s}$ be a continuous map.
Since $S^n$ is compact, it follows that there is a sector $K_w(\sigma) \subseteq \Sigma$ and a number $t \geq 0$ such that $[f(z),\eta)(t) \in K_w(\sigma) \cap X_{h \geq s}$ for every $z \in S^n$.
From the convexity of $K_w(\sigma) \cap  X_{h \geq s} = K_w(\sigma) \cap \Sigma_{h \geq s}$ we deduce that $f$ is null-homotopic, which proves the first claim.

Let us now consider the case where $[h]$ is contained in $\Delta_v(\sigma)$ and let $k$ be minimal with $[h] \in \Delta_v(\sigma)^{(k)}$.
In this case we have $\overline{\sigma} \subseteq \pinf \Sigma_{h \leq r}$ so that we can define the horizontal face $\sigma_{\hor}$ of $\sigma$ with respect to $h$.
Note that the dimension of $\sigma_{\hor}$ is given by
\[
\dim(\sigma_{\hor}) = \dim(\sigma)-k-1 = \dim(X)-k-2.
\]

If $\sigma_{\hor} = \emptyset$, then the second and third claim is covered by Theorem~\ref{thm:essentially-connected}, respectively by Theorem~\ref{thm:negative-global-geom}.
We can therefore assume that $\sigma_{\hor} \neq \emptyset$.
Then Corollary~\ref{cor:reduction} equips us with a $(k+1)$-dimensional Euclidean building $X_{\tau}$, an apartment $\Sigma_{\tau}$, a chamber $\sigma_{\tau} \subseteq \pinf \Sigma_{\tau}$, a special vertex $v_{\tau}$, and a height function $h_{\tau} \in (X_{\tau})_{\sigma_{\tau},v_{\tau}}^{\ast}$ with $\overline{\sigma_{\tau}} \subseteq \pinf ((\Sigma_{\tau})_{h_{\tau} < 0})$ such that
\begin{enumerate}
\item for every $m \in \N_0$ the system $(X_{h \geq r})_{r \in \R}$ is essentially $m$-connected (respectively $m$-acyclic) if and only if $((X_{\tau})_{h_{\tau} \geq r})_{r \in \R}$ is essentially $m$-connected  (respectively $m$-acyclic),
\item the horizontal part of $\sigma_{\tau}$ with respect to $h_{\tau}$ is empty.
\end{enumerate}
Suppose that $X$ satisfies $(\SOL)$.
Then it follows from Corollary~\ref{cor:reduction}(2) that $X_{\tau}$ satisfies the $(\SOL)$ so that we may apply Theorem~\ref{thm:essentially-connected} to deduce that $((X_{\tau})_{h_{\tau} \geq r})_{r \in \R}$ and $(X_{h \geq r})_{r \in \R}$ are essentially $(k-1)$-connected, which proves the second claim.
Suppose next that $\Aut(X)$ there is an $\alpha \in \Aut(X)$ and an $a \in \R \setminus \{0\}$ such that $\alpha(\sigma) = \sigma$, $\alpha(\Sigma) = \Sigma$, and $h(\alpha(x)) = h(x) + a$ for every $x \in X$.
In this case Corollary~\ref{cor:reduction}(4) tells us that there is an $\alpha_{\tau} \in \Aut(X_{\tau})$ with $h_{\tau}(\alpha_{\tau}(x)) = h_{\tau}(x) + a$ for every $x \in X_{\tau}$.
Now we can apply Theorem~\ref{thm:negative-global-geom} to deduce that $((X_{\tau})_{h_{\tau} \geq r})_{r \in \R}$ and $(X_{h \geq r})_{r \in \R}$ are not essentially $k$-acyclic, which completes the proof.
\end{proof}

\begin{corollary}\label{cor:A}
With the notation above, let $h$ be a height function in $X_{\sigma,v}^{\ast}$.
Suppose that $\Aut(X)$ acts strongly transitively on $X$ and that $X$ satisfies $(\SOL)$.
Then for every integer $k \geq -1$, the following are equivalent:
\begin{enumerate}
\item $(X_{h \geq r})_{r \in \R}$ is essentially $k$-connected.
\item $(X_{h \geq r})_{r \in \R}$ is essentially $k$-acyclic.
\item $h$ does not represent an element of $\Delta_{\sigma,v}^{(k)}$.
\end{enumerate}
\end{corollary}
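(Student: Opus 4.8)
The plan is to read the corollary off from Theorem~\ref{thm:A-new} together with Lemma~\ref{lem:lifting-shifts}, arguing along the cycle $(1)\Rightarrow(2)\Rightarrow(3)\Rightarrow(1)$. First I would dispose of the degenerate value $k=-1$: here $\Delta_{\sigma,v}^{(-1)}$ is the $0$-convex hull of $\Delta_{\sigma,v}^{(0)}$, hence empty, so $(3)$ holds trivially, while $(1)$ and $(2)$ are vacuous because the index range $0\le j\le -1$ is empty. So from now on one may assume $k\ge 0$. The implication $(1)\Rightarrow(2)$ is the standard fact that an essentially $k$-connected directed system is essentially $k$-acyclic: after passing to a cofinal chain along which the bonding maps are $\pi_{\le k}$-trivial, the Hurewicz theorem applied to the mapping telescope shows that reduced homology in degrees $\le k$ dies cofinally as well, and this can be arranged uniformly in the present situation.

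For $(2)\Rightarrow(3)$ I would argue by contraposition. Suppose $h$ represents a class in $\Delta_{\sigma,v}^{(k)}$. Since $\Aut(X)$ acts strongly transitively on $X$, Lemma~\ref{lem:lifting-shifts} supplies an automorphism $\alpha\in\Aut(X)$ and a constant $a\in\R\setminus\{0\}$ with $\alpha(\sigma)=\sigma$, $\alpha(\SigmaStd)=\SigmaStd$, and $h(\alpha(x))=h(x)+a$ for every $x\in X$. Thus Theorem~\ref{thm:A-new}(3) applies and tells us that $(X_{h\ge r})_{r\in\R}$ is not essentially $k$-acyclic, which contradicts $(2)$.

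For $(3)\Rightarrow(1)$ I would split into two cases according to the stratification $\emptyset=\Delta_{\sigma,v}^{(-1)}\subsetneq\Delta_{\sigma,v}^{(0)}\subsetneq\cdots\subsetneq\Delta_{\sigma,v}$. If $h$ does not represent an element of $\Delta_{\sigma,v}$ at all, then Theorem~\ref{thm:A-new}(1) makes every superlevelset $X_{h\ge r}$ contractible, so the (then constant, up to homotopy) system is essentially $n$-connected for every $n$, in particular essentially $k$-connected. Otherwise $h$ represents a class in $\Delta_{\sigma,v}\setminus\Delta_{\sigma,v}^{(k)}$; since $X$ satisfies $(\SOL)$ by hypothesis, Theorem~\ref{thm:A-new}(2) yields that $(X_{h\ge r})_{r\in\R}$ is essentially $k$-connected. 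This closes the cycle.

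I do not expect a serious obstacle here: the positive direction is already contained in Theorem~\ref{thm:essentially-connected} (through Theorem~\ref{thm:A-new}(2)), the negative direction in Theorem~\ref{thm:negative-global-geom} (through Theorem~\ref{thm:A-new}(3)), and the construction of the shifting automorphism in Lemma~\ref{lem:lifting-shifts}. The only points that require a moment's care are the bookkeeping of the three cases relative to the stratification by the sets $\Delta_{\sigma,v}^{(j)}$ and the invocation of the standard passage from essential connectivity to essential acyclicity used in $(1)\Rightarrow(2)$.
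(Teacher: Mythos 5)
Your proposal is correct and follows essentially the same route as the paper, whose proof simply combines Theorem~\ref{thm:A-new} with the automorphism supplied by Lemma~\ref{lem:lifting-shifts}; your write-up just makes the implicit cycle $(1)\Rightarrow(2)\Rightarrow(3)\Rightarrow(1)$ and the standard fact that essential $k$-connectedness implies essential $k$-acyclicity explicit. Nothing further is needed.
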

\begin{proof}
In view of Theorem~\ref{thm:A-new}, it only remains to recall from Lemma~\ref{lem:lifting-shifts} that the strong transitivity provides us with an $\alpha \in \Aut(X)$ and some $a \in \R \setminus \{0\}$ such that, $\alpha(\sigma) = \sigma$,
$\alpha(\Sigma) = \Sigma$, and $h(\alpha(x)) = h(x) + a$ for every $x \in X$.
\end{proof}

\section{Chevalley groups, Borel groups, and their $S$-arithmetic subgroups}\label{sec:chevalley-groups}

In order to apply our topological results on buildings to the computation of the $\Sigma$-invariants of a group, we need the group to act on a building.
One source of groups acting on Euclidean buildings is the class of Chevalley groups over valued fields.
To define these groups, we have to recall some classical facts from the theory of complex Lie algebras.
All the details can be found in~\cite{Humphreys78}.

\subsection{Background on Lie algebras}

For the rest of this section we fix a complex semisimple Lie algebra $\mathcal{L}$.
Let $\kappa$ be its Killing form and let $\mathcal{H} \subseteq \mathcal{L}$ be a fixed a Cartan subalgebra.
Since the restriction of $\kappa$ on $\mathcal{H}$ is non-degenerate by~\cite[Corollary 8.2]{Humphreys78}, we can identify $\mathcal{H}$ with its dual $\mathcal{H}^{\ast}$ via $H \mapsto \kappa(H,-)$.
For each $\gamma \in \mathcal{H}^{\ast}$ let $H_{\gamma}' \in \mathcal{H}$ be the corresponding element with respect to this identification.
The set of roots associated to $\mathcal{H}$ will be denoted by $\Phi \subseteq \mathcal{H}^{\ast}$.
Let $\mathcal{V} = \langle \Phi \rangle_{\R}$ be its real span.
We endow $\mathcal{V}$ with an inner product $\kappa^{\ast}$ given by $\kappa^{\ast}(\alpha,\beta) \defeq \kappa(H_{\alpha}', H_{\beta}')$.
The Euclidean space $(\mathcal{V},\kappa^{\ast})$ is going to be the standard apartment of our Euclidean building.
Let $\Delta \defeq \{ \alpha_1, \ldots, \alpha_{\ell} \} \subseteq \Phi$ be a system of simple roots, let $\Phi^{+} \subseteq \Phi$ be the corresponding set of positive roots, and let $\widetilde{\alpha} \in \Phi^{+}$ be the highest root.
The Weyl group associated to $\Phi$ will be denoted by $W_{\Phi}$, i.e.\ $W_{\Phi} \subseteq \Isom(\mathcal{V})$ is the group generated by the reflections $s_{\alpha}$ through the hyperplanes
\[
W_{\alpha} \defeq \Set{v \in \mathcal{V}}{\kappa^{\ast}(\alpha,v) = 0}.
\]
The set of reflections corresponding to the simple roots will be denoted by $S_{\Delta} = \Set{s_{\alpha}}{\alpha \in \Delta}$.
For every two elements $v,w \in \mathcal{V}$ we write $\langle v,w \rangle = 2 \frac{\kappa^{\ast}(v,w)}{\kappa^{\ast}(v,v)}$.
With this notation we can express reflections by $s_{\alpha}(v) = v - \langle v,\alpha \rangle \alpha$.
Recall that $\mathcal{L}$ can be decomposed as a direct sum $\mathcal{L} = \mathcal{H} \oplus \bigoplus \limits_{\alpha \in \Phi} \mathcal{L}_{\alpha}$, where
\[
\mathcal{L}_{\alpha} = \Set{X \in \mathcal{L}}{[H,X] = \alpha(H) X,\ \forall H \in \mathcal{H}}.
\]
It can be shown that each $\mathcal{L}_{\alpha}$ is one-dimensional and that $[\mathcal{L}_{\alpha},\mathcal{L}_{-\alpha}] \subseteq \mathcal{H}$.
The following result can be found in~\cite[Theorem 1]{Steinberg68}.
In order to state it, it is convenient to rescale the elements $H_{\alpha}'$ by defining $H_{\alpha} = \frac{2H_{\alpha}'}{\kappa(H_{\alpha}',H_{\alpha}')}$ for each root $\alpha$.
In the case of simple roots we will write $H_i = H_{\alpha_i}$.

\begin{Citation}\label{cit:exist-chevalley-basis}
For $(H_i)_{i=1}^{\ell}$ as above there are elements $X_{\alpha} \in \mathcal{L}_{\alpha}$ for each $\alpha \in \Phi$ such that the set
\begin{equation}\label{eq:cheballey-basis}
\Set{H_i}{i = 1, \ldots,\ell} \cup \Set{X_{\alpha}}{\alpha \in \Phi}
\end{equation}
is a linear basis of $\mathcal{L}$ and the following relations are satisfied.
\begin{enumerate}
\item[$(a)$] $[H_i, H_j] = 0$.
\item[$(b)$] $[H_i,X_{\alpha}] = \alpha(H_i) X_{\alpha}$.
\item[$(c)$] $[X_{\alpha},X_{-\alpha}] = H_{\alpha}$, where $H_{\alpha}$ is an integral linear combination of the $H_i$.
\item[$(d)$] $[X_{\alpha},X_{\beta}] = \pm(r + 1)X_{\alpha+\beta}$ if $\alpha + \beta \in \Phi$, where
\[
r \defeq \max \Set{k \in \N_0}{\beta - k \alpha \in \Phi}.
\]
\item[$(e)$] $[X_{\alpha},X_{\beta}] = 0$ if $\alpha + \beta \neq 0$ and $\alpha + \beta \notin \Phi$.
\end{enumerate}
\end{Citation}

The set in~\eqref{eq:cheballey-basis} will be referred to as a \emph{Chevalley basis} of $\mathcal{L}$.
To define Chevalley groups, we have to choose an irreducible, faithful, finite dimensional representation $\rho \colon \mathcal{L} \rightarrow \End(V)$.
By~\cite[Theorem 3]{Steinberg68} there is a finite set $\Psi \subseteq \mathcal{H}^{\ast}$ of so-called \emph{weights} such that $V$ splits into a direct sum $V = \bigoplus \limits_{\mu \in \Psi} V_{\mu}$ of non-trivial weight spaces
\[
V_{\mu} = \Set{v \in V}{\rho(H)(v) = \mu(H) v,\ \forall H \in \mathcal{H}}.
\]
The following result, which is summarizes~\cite[Corollaries 1 and 2]{Steinberg68}, finally leads us to the definition of Chevalley groups.

\begin{Citation}\label{cit:invariant-lattice}
The $\mathcal{L}$-module $V$ contains a lattice $M$, i.e.\ the $\Z$-span of a basis of $V$, such that the following hold.
\begin{enumerate}
\item $M$ is invariant under the action of $\frac{\rho(X_{\alpha})^{n}}{n!}$.
\item The lattice $M$ splits as a direct sum $M = \bigoplus \limits_{\mu \in \Psi} M_{\mu}$
with ${M_{\mu} \defeq M \cap V_{\mu}}$.
\item The part of $\mathcal{L}$ that leaves $M$ invariant is given by
\[
\mathcal{L}_{\Z} = \mathcal{H}_{\rho} \oplus \bigoplus \limits_{\alpha \in \Phi} \langle X_{\alpha} \rangle_{\Z}, \text{ where } \mathcal{H}_{\rho} = \Set{H \in \mathcal{H}}{\mu(H) \in \Z,\ \forall \mu \in \Psi}.
\]
\end{enumerate}
\end{Citation}

\subsection{Chevalley groups and their associated subgroups}

Let $\mathcal{L}$, $\mathcal{H}$, $\Phi$, $\mathcal{V}$, $V$, $M$, and $\Psi$ be as above.
In order to construct Chevalley groups there are three choices to make, two of which we already made.
The first choice was the semisimple Lie algebra $\mathcal{L}$, which by~\cite[Theorem 14.2]{Humphreys78} is the same as choosing the root system $\Phi$.
The second choice was the representation $\rho$ of $\mathcal{L}$.
The last choice we have to make is the choice of a field $K$, which we will fix from now on.
All the other choices we made, i.e.\ the choice of the invariant lattice $M$, the choice of the Cartan subalgebra $\mathcal{H}$, as well as the choice of the system of simple roots $\Delta$ do not change the structure of the resulting Chevalley group up to isomorphism.
The Chevalley group assiciated to $\Phi$, $\rho$, and $K$ is defined via its action on $V \defeq M \otimes_{\Z} K$.
From the theory of real Lie groups it is well known that the exponential map induces a function from the Lie algebra to the Lie group.
A similar approach can be taken in the case of arbitrary fields.
Here we have to restrict the exponentiation to nilpotent elements.
From~\cite[Lemma 11]{Steinberg68} it follows that $\rho(X_{\alpha}) \in \End(V)$ is nilpotent for every $\alpha \in \Phi$.
In view of this, the following Notation makes sense.

\begin{notation}\label{not:elementary-generators}
For every $t \in K$ we write $x_{\alpha}(t) = \sum \limits_{n \geq 0} \frac{t^n \cdot X_{\alpha}^{n}}{n!}$, where $X_{\alpha}$ is identified with its image $\rho(X_{\alpha}) \in \End(V)$.
\end{notation}

A quick computation shows that $x_{\alpha}(s)x_{\alpha}(t) = x_{\alpha}(s+t)$ for every two elements $s,t \in K$.
Since $x_{\alpha}(0) = \id_V$, this implies $x_{\alpha}(t) \in \GL(V)$.

\begin{definition}\label{def:chavalleygroup}
The \emph{Chevalley group} associated to $\Phi$, $\rho$, and $K$, denoted by $\mathcal{G} = \mathcal{G}(\Phi,\rho,K)$, is the subgroup of $\GL(V)$ that is generated by the elements $x_{\alpha}(t)$ for every $\alpha \in \Phi$ and every $t \in K$.
\end{definition}

The following result, which can be found in~\cite[Page 16]{Steinberg68}, contains a list of useful relations that hold in $\mathcal{G}$.
To formulate it conviently, let us introduce some notation.

\begin{notation}\label{not:special-elements}
For every $\alpha \in \Phi$ and every $t \in K^{\times}$ we consider the elements $w_{\alpha}(t) = x_{\alpha}(t)x_{-\alpha}(-t^{-1})x_{\alpha}(t)$, $h_{\alpha}(t) = w_{\alpha}(t)w_{\alpha}(1)^{-1}$, and $\omega_{\alpha} = w_{\alpha}(1)$.
\end{notation}

\begin{Citation}\label{cit:relation-in-G}
Given $\alpha, \beta \in \Phi$ and $s,t \in K$, we have
\begin{enumerate}
\item[$(a)$] $x_{\alpha}(s)x_{\alpha}(t) = x_{\alpha}(s+t)$,
\item[$(b)$] $[x_{\alpha}(s),x_{\beta}(t)] = \prod \limits_{i,j > 0} x_{i \alpha + j \beta}(c_{i,j,\alpha,\beta}t^i s^j)$ if $\alpha + \beta \neq 0$ and where the constants $c_{i,j,\alpha,\beta} \in K$ are suitably defined,
\item[$(c)$] $\omega_{\alpha} h_{\beta}(t) \omega_{\alpha}^{-1} = \prod \limits_{\gamma \in \Phi} h_{\gamma}(t_{\gamma})$ for appropriate $t_{\gamma} \in K^{\times}$,
\item[$(d)$] $\omega_{\alpha} x_{\beta}(t) \omega_{\alpha}^{-1} = x_{\omega_{\alpha}(\beta)}(ct)$, where $c \in \{-1,1\}$,
\item[$(e)$] $h_{\alpha}(t)x_{\beta}(s)h_{\alpha}(t)^{-1} = x_{\beta}(t^{\langle \beta,\alpha \rangle}s)$, where $t \in K^{\times}$.
\end{enumerate}
\end{Citation}

Equipped with the elements defined in Notation~\ref{not:special-elements}, we can define some subgroups of $\mathcal{G}$, which will be of particular importance for us.

\begin{definition}\label{def:subgroups-of-G}
With the notation above let
\begin{enumerate}
\item $\mathcal{U}_{\alpha}$ be the group $\Set{x_{\alpha}(t)}{t \in K}$ for some $\alpha \in \Phi$,
\item $\mathcal{U}$ be the group generated by all subgroups $\mathcal{U}_{\alpha}$ with $\alpha \in \Phi^{+}$,
\item $\mathcal{T}$ be the group generated by all elements of the form $h_{\alpha}(t)$,
\item $\mathcal{B}$ be the group generated by $\mathcal{U}$ and $\mathcal{T}$, and
\item $\mathcal{N}$ be the group generated by all elements of the form $w_{\alpha}(t)$.
\end{enumerate}
The groups $\mathcal{U}$,$\mathcal{T}$, and $\mathcal{B}$ are called the \emph{unipotent}, \emph{torus}, and \emph{Borel subgroup} of $\mathcal{G}$.
\end{definition}

The following result describes the structure of $\mathcal{T}$, see~\cite[Lemma 28]{Steinberg68}.

\begin{Citation}\label{cit:structure-of-T}
The function
\[
h \colon (K^{\times})^{\ell} \rightarrow \mathcal{T},\ (t_i)_{i=1}^{\ell} \mapsto \prod \limits_{i=1}^{\ell} h_{\alpha_i}(t_i)
\]
is an epimorphism.
\end{Citation}

From~\cite[Lemma 22]{Steinberg68} we know that the map $s_{\alpha} \mapsto w_{\alpha}(1)\mathcal{T}$, where $\alpha$ runs over $\Phi$, extends to an isomorphism $W_{\Phi} \rightarrow \mathcal{N} / \mathcal{T}$.
Moreover we have $\mathcal{T} = \mathcal{B} \cap \mathcal{N}$ by~\cite[Lemma 7.93]{AbramenkoBrown08}.
As a consequence, we can identify $S_{\Delta}$ with its image in $\mathcal{N} / (\mathcal{B} \cap \mathcal{N})$, which allows us to formulate the following well-known result.

\begin{theorem}\label{thm:sph-BN-pair-for-G}
The tuple $(\mathcal{G}, \mathcal{B}, \mathcal{N}, S_{\Delta})$ is a Tits system.
\end{theorem}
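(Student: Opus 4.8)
The plan is to verify the four defining conditions of a Tits system from Definition~\ref{def:BN-pair}: that $\mathcal{G}$ is generated by $\mathcal{B}$ and $\mathcal{N}$, that $\mathcal{T} = \mathcal{B} \cap \mathcal{N}$ is normal in $\mathcal{N}$ with quotient $W_\Phi$ admitting $S_\Delta$ as a generating set, and then the axioms $(\mathrm{BN}1)$ and $(\mathrm{BN}2)$. Most of the structural input has already been assembled just above the statement, so the proof is largely a matter of citing Steinberg's relations in the right order rather than computing from scratch.

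First I would record that $\mathcal{B} \cap \mathcal{N} = \mathcal{T}$, which was noted via \cite[Lemma 7.93]{AbramenkoBrown08}, and that the isomorphism $W_\Phi \to \mathcal{N}/\mathcal{T}$ sending $s_\alpha \mapsto w_\alpha(1)\mathcal{T}$ comes from \cite[Lemma 22]{Steinberg68}; in particular $\mathcal{T} \trianglelefteq \mathcal{N}$ because it is the kernel of the projection $\mathcal{N} \to W_\Phi$, and the images of $S_\Delta = \{s_\alpha \mid \alpha \in \Delta\}$ generate $W_\Phi$ since the simple reflections generate the Weyl group. Next, generation of $\mathcal{G}$ by $\mathcal{B}$ and $\mathcal{N}$: by definition $\mathcal{G}$ is generated by the $x_\alpha(t)$, and for $\alpha \in \Phi^+$ we have $x_\alpha(t) \in \mathcal{U} \subseteq \mathcal{B}$, while for $\alpha \in \Phi^-$ one uses that $\Phi^- = w(\Phi^+)$ for a suitable $w \in W_\Phi$ together with relation $(d)$ of Citation~\ref{cit:relation-in-G}, namely $\omega_\alpha x_\beta(t) \omega_\alpha^{-1} = x_{\omega_\alpha(\beta)}(ct)$, so that every negative root group is conjugate into $\mathcal{U}$ by an element of $\mathcal{N}$; hence $\mathcal{G} = \langle \mathcal{B}, \mathcal{N} \rangle$. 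For $(\mathrm{BN}2)$, I would take $s = s_\alpha \in S_\Delta$ with representative $\omega_\alpha \in \mathcal{N}$ and observe that $\omega_\alpha \mathcal{U}_\alpha \omega_\alpha^{-1} = \mathcal{U}_{-\alpha}$ by relation $(d)$, so $\omega_\alpha \mathcal{B} \omega_\alpha^{-1}$ contains the negative root group $\mathcal{U}_{-\alpha}$, which is not contained in $\mathcal{B}$ (as $\mathcal{B}$ meets the root groups only in the positive ones, a standard consequence of the Bruhat-type normal form, see~\cite[Lemma 18]{Steinberg68}); this gives $s_\alpha \mathcal{B} s_\alpha^{-1} \nleq \mathcal{B}$.

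The remaining and genuinely substantive point is axiom $(\mathrm{BN}1)$: $s \mathcal{B} w \subseteq \mathcal{B} s w \mathcal{B} \cup \mathcal{B} w \mathcal{B}$ for $s \in S_\Delta$, $w \in W_\Phi$. I would reduce this, in the usual way, to the inclusion $\mathcal{U}_{-\alpha} \cdot \omega_w \subseteq \mathcal{B}\, s_\alpha \omega_w \mathcal{B} \cup \mathcal{B}\, \omega_w \mathcal{B}$ where $\omega_w$ is a chosen representative of $w$: writing $x_{-\alpha}(t)$ with $t \neq 0$ in the form $x_{-\alpha}(t) = x_\alpha(t^{-1}) \, w_\alpha(t) \, x_\alpha(t^{-1})$ extracted from Notation~\ref{not:special-elements} (the identity $w_\alpha(t) = x_\alpha(t)x_{-\alpha}(-t^{-1})x_\alpha(t)$, solved for $x_{-\alpha}$), one sees that $x_{-\alpha}(t)$ lies in $\mathcal{B}\, \omega_\alpha\, \mathcal{B}$, and then one tracks whether conjugating the trailing $\mathcal{U}_\alpha$-factor past $\omega_w$ keeps the root group positive (giving the $\mathcal{B}\omega_w\mathcal{B}$ term when $\ell(s_\alpha w) > \ell(w)$) or produces $\mathcal{U}_{-\beta}$ for a root that must be absorbed, using relations $(b)$ and $(e)$ to commute root groups and torus elements through. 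This is exactly the computation carried out in \cite[Lemma 25 and its corollaries]{Steinberg68}, and the cleanest route is simply to invoke it; I expect this verification of $(\mathrm{BN}1)$ — keeping careful track of positivity of roots under $w$ and of the commutator constants $c_{i,j,\alpha,\beta}$ — to be the main obstacle, whereas the other three conditions are essentially bookkeeping with the facts already cited in the excerpt.
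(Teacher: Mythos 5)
Your argument is correct, but it follows a different route from the paper. The paper disposes of the statement in two lines by invoking the $\RGD$-machinery of Abramenko--Brown: by \cite[Section 7.9.2]{AbramenkoBrown08} the triple $(\mathcal{G},(\mathcal{U}_{\alpha})_{\alpha \in \Phi},\mathcal{T})$ is an $\RGD$-system of type $(W_{\Phi},S_{\Delta})$, and \cite[Theorem 7.115]{AbramenkoBrown08} then yields the Tits system $(\mathcal{G},\mathcal{B},\mathcal{N},S_{\Delta})$ directly. You instead verify the four defining conditions of Definition~\ref{def:BN-pair} by hand in the classical Steinberg style: generation of $\mathcal{G}$ by $\mathcal{B}$ and $\mathcal{N}$ via conjugating negative root groups into $\mathcal{U}$ with relation $(d)$ of Citation~\ref{cit:relation-in-G}, the identification $\mathcal{T}=\mathcal{B}\cap\mathcal{N}$ and $W_{\Phi}\cong\mathcal{N}/\mathcal{T}$ from the facts already quoted before the theorem, $(\mathrm{BN}2)$ from $\omega_{\alpha}\mathcal{U}_{\alpha}\omega_{\alpha}^{-1}=\mathcal{U}_{-\alpha}\nleq\mathcal{B}$, and $(\mathrm{BN}1)$ by reduction to $\mathcal{U}_{-\alpha}\omega_{w}\subseteq\mathcal{B}s_{\alpha}\omega_{w}\mathcal{B}\cup\mathcal{B}\omega_{w}\mathcal{B}$, which you rightly delegate to \cite[Lemma 25 and its corollaries]{Steinberg68}; in substance this is Steinberg's original proof that Chevalley groups admit a spherical $\BN$-pair. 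Both routes are citation-heavy at the crux (you lean on Steinberg's Lemma 25, the paper on Abramenko--Brown's Theorem 7.115), so neither is more self-contained than the other; your version makes the group-theoretic content visible and stays entirely within the relations of Citation~\ref{cit:relation-in-G}, while the paper's choice has the advantage of producing the $\RGD$-system as a byproduct, which is the natural input for the valued ($\VRGD$) refinement used later in Subsection~\ref{subsection:from-ValRtSys-To-BN} to get the Euclidean $\BN$-pair. One cosmetic point: solving $w_{\alpha}(t)=x_{\alpha}(t)x_{-\alpha}(-t^{-1})x_{\alpha}(t)$ for the negative root element gives $x_{-\alpha}(u)=x_{\alpha}(u^{-1})\,w_{\alpha}(-u^{-1})\,x_{\alpha}(u^{-1})$, not $w_{\alpha}(u)$ in the middle; this does not affect your conclusion $x_{-\alpha}(u)\in\mathcal{B}\omega_{\alpha}\mathcal{B}$, since $w_{\alpha}(s)=h_{\alpha}(s)\omega_{\alpha}$ differs from $\omega_{\alpha}$ by an element of $\mathcal{T}\subseteq\mathcal{B}$.
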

\begin{proof}
From~\cite[Section 7.9.2]{AbramenkoBrown08} we know that the triple $(\mathcal{G},(\mathcal{U}_{\alpha})_{\alpha \in \Phi},\mathcal{T})$ is a so-called $\RGD$-system of type $(W_{\Phi},S_{\Delta})$.
In this case,~\cite[Theorem 7.115]{AbramenkoBrown08}) tells us that $(\mathcal{G}, \mathcal{B}, \mathcal{N}, S_{\Delta})$ is a Tits system.
\end{proof}

\subsection{A Euclidean BN-pair}\label{subsection:from-ValRtSys-To-BN}

Suppose now that $K$ possesses a discrete valuation $v \colon K \rightarrow \Z \cup \{ \infty \}$.
Then there is a Euclidean $\BN$-pair associated to $\mathcal{G}$.
In order to construct it, we have to associate the $(\mathcal{V},\kappa^{\ast})$ with the structure of a Euclidean Coxeter complex.

\begin{definition}\label{def:V-as-eucl-complex}
Let $\widetilde{\Phi} \defeq \Phi \times \Z$.
For each $(\alpha,k) \in \Phi \times \Z$, we define $s_{\alpha,k} \in \Isom(\mathcal{V})$ as the reflection given by $s_{\alpha,k}(v) = s_{\alpha}(v) + k \alpha^{V}$, where $\alpha^{V} \defeq \frac{2 \alpha}{\kappa^{\ast}(\alpha,\alpha)}$ denotes the \emph{coroot} of $\alpha$, and let
\[
H_{\alpha,k} = \Set{v \in \mathcal{V}}{\kappa^{\ast}(v,\alpha) = k}
\]
denote its invariant hyperplane.
\end{definition}

Let $W_{\widetilde{\Phi}}$ be the group generated by all reflections $s_{\alpha,k}$.
From~\cite[Section 10.1.3]{AbramenkoBrown08} we know that $W_{\widetilde{\Phi}}$ is generated by
\[
S_{\widetilde{\Delta}} \defeq \Set{s_{\alpha,0}}{\alpha \in \Delta} \cup \{s_{\widetilde{\alpha},1} \}.
\]
Note that $W_{\widetilde{\Phi}}$ contains $W_{\Phi} = \langle \Set{s_{\alpha,0}}{\alpha \in \Delta} \rangle$ as a subgroup.
A quick calculation shows that the set of all hyperplanes $H_{\alpha,k}$ is invariant under the action of $W_{\widetilde{\Phi}}$.
As a consequence, it follows that $(W_{\widetilde{\Phi}},S_{\widetilde{\Delta}})$ is a Euclidean Coxeter system and that the hyperplanes $H_{\alpha,k}$ induces a structure of a Euclidean Coxeter complex on $\mathcal{V}$.

\medskip

Let us introduce some notation that will help us to conveniently define the groups that are involved in our Euclidean $\BN$-pair.

\begin{notation}
Given $\alpha \in \Phi$, we define $\phi_{\alpha} \colon \mathcal{U}_{\alpha} \setminus \{1\} \rightarrow \Z,\ x_{\alpha}(t) \mapsto v(t)$.
For each $k \in \Z$ we set $\mathcal{U}_{\alpha,k} \defeq \Set{u \in \mathcal{U}_{\alpha}}{\phi_{\alpha}(u) \geq k}$.
Moreover we write $m(x_{\alpha}(t)) \defeq w_{-\alpha}(-t^{-1})$ for every $t \in K^{\times}$.
\end{notation}

Note that each $\mathcal{U}_{\alpha,k}$ is a group.
By~\cite[Proposition 14.4]{Weiss09} there is an epimorphism $\pi \colon \mathcal{N} \rightarrow W_{\widetilde{\Phi}}$ that satisfies $\pi(m(u)) = s_{\alpha,-\phi_{\alpha}(u)}$ for every $\alpha \in \Phi$ and every $u \in \mathcal{U}_{\alpha} \setminus \{1\}$.

\begin{definition}\label{def:the-tilde-groups}
With the notation above let
\begin{enumerate}
\item $\widetilde{\mathcal{T}}$ denote the kernel of $\pi$,
\item $\widetilde{\mathcal{U}}$ denote the group generated by $\bigcup \limits_{\alpha \in \Phi^{+}} \mathcal{U}_{\alpha,0}$ and $\bigcup \limits_{\alpha \in \Phi^{-}} \mathcal{U}_{\alpha,1}$,
\item $\widetilde{\mathcal{B}}$ denote the group generated by $\widetilde{\mathcal{T}}$ and $\widetilde{\mathcal{U}}$,
\item $m_i \in \mathcal{N} / \widetilde{\mathcal{T}}$ denote the class represented by $m(x_{\alpha_i}(1))$ for $1 \leq i \leq \ell$ and $m_0 \in \mathcal{N} / \widetilde{\mathcal{T}}$ denote the class represented by $m(u)$, where $u \in \mathcal{U}_{-\widetilde{\alpha}}$ satisfies $\phi_{-\widetilde{\alpha}}(u)=1$,
\item $\widetilde{S} = \Set{m_i}{0 \leq i \leq \ell}$.
\end{enumerate}
\end{definition}

The Euclidean $\BN$-pair we are looking for is given by $(\widetilde{\mathcal{B}},\mathcal{N})$.
In fact the following result, which is a special case of~\cite[Theorem 14.38]{Weiss09}, provides us with a Tits system corresponding to $(\widetilde{\mathcal{B}},\mathcal{N})$.

\begin{Citation}\label{cit:BN-pair-from-VRGD}
The tuple $(\mathcal{G}, \widetilde{\mathcal{B}}, \mathcal{N}, \widetilde{S})$ is a Tits system.
Moreover we have $\widetilde{\mathcal{T}} = \widetilde{\mathcal{B}} \cap \mathcal{N}$ and the map $\widetilde{S} \rightarrow S_{\widetilde{\Delta}}$, defined by $m_i \mapsto s_{\alpha_i,0}$ for $1 \leq i \leq \ell$ and $m_0 \mapsto s_{\widetilde{\alpha},1}$, extends to an isomorphism $\mathcal{N} / \widetilde{\mathcal{T}} \cong W_{\widetilde{\Phi}}$.
\end{Citation}

\subsection{A metric for $\Delta(\widetilde{\mathcal{B}},\mathcal{N})$}\label{subsec:a-metric-on-X}

Let us next describe the canonical $\CAT(0)$-metric on the building $\Delta(\widetilde{\mathcal{B}},\mathcal{N})$ associated to $(\widetilde{\mathcal{B}},\mathcal{N})$, which was introduced in Definition~\ref{def:from-BN-to-building}.

\begin{notation}\label{not:chamber}
Let $E$ be the chamber in $\mathcal{V}$ defined by
\[
\Set{v \in \mathcal{V}}{\kappa^{\ast}(v, \alpha_i) > 0 \text{ for every } 1 \leq i \leq \ell \text{ and } \kappa^{\ast}(v, \widetilde{\alpha}) < 1}.
\]
For every $1 \leq i \leq \ell$, let $v_i$ be the vertex of $E$ not lying in $H_{\alpha_i,0}$ and let $v_0 = 0$.
We write $\mathcal{K} \subseteq \mathcal{V}$ to denote the sector corresponding to $E$ and $v_0$.
Let $\sigma$ denote the chamber in $\pinf \mathcal{V}$ induced by $\mathcal{K}$.
\end{notation}

Recall that the vertices in $\Sigma(\widetilde{\mathcal{B}},\mathcal{N})$ are given by the set of cosets of the form $nP_i$, where $n \in \mathcal{N}$ and $P_i \defeq P_{\widehat{m_i}} = P_{\widetilde{S} \backslash \{m_i\}}$ is the maximal parabolic subgroup associated to $m_i \in \widetilde{S}$.

\begin{notation}\label{not:the-complex-associated-to-V}
Let $\Sigma(\mathcal{V},W_{\widetilde{\Phi}})$ be the abstract simplicial complex induced by triangulating $\mathcal{V}$ with the hyperplanes $H_{\alpha,k}$ and let $\vert \Sigma(V,W_{\widetilde{\Phi}}) \vert$ denote its geometric realization.
We write $\iota_{\mathcal{V}} \colon \vert \Sigma(\mathcal{V},W_{\widetilde{\Phi}}) \vert \rightarrow \mathcal{V}$ for the canonical homeomorphism.
\end{notation}

According to~\cite[Proposition 10.13]{AbramenkoBrown08}, the map $nP_i \mapsto \pi(n)(v_i)$ extends to an $\mathcal{N}$-equivariant isomorphism of abstract simplicial complexes $f \colon \Sigma(\widetilde{\mathcal{B}},\mathcal{N}) \rightarrow \Sigma(V,W_{\widetilde{\Phi}})$.
By composing its geometric realizations $\vert f \vert$ with $\iota_{\mathcal{V}}$, we obtain an $\mathcal{N}$-equivariant homeomorphism
$\iota \colon \vert \Sigma(\widetilde{\mathcal{B}},\mathcal{N}) \vert \rightarrow \mathcal{V}$.
Thus there is a canonical metric $d$ on $\vert \Sigma(\widetilde{\mathcal{B}},\mathcal{N}) \vert$ given pulling back the metric from $\mathcal{V}$.
By~\cite[Theorem 11.16]{AbramenkoBrown08} we can endow $\vert \Delta(\widetilde{\mathcal{B}},\mathcal{N}) \vert$ with a metric $d_{\Delta(\widetilde{\mathcal{B}},\mathcal{N})}$ by setting $d_{\Delta(\widetilde{\mathcal{B}},\mathcal{N})}(x,y) = d(\vert f_{\Sigma} \vert(x),\vert f_{\Sigma} \vert(y))$ for $x,y \in \vert \Delta(\widetilde{\mathcal{B}},\mathcal{N}) \vert$, where $\Sigma$ is an apartment containing $x,y$ and $f_{\Sigma} \colon \Sigma \rightarrow \Sigma(\widetilde{\mathcal{B}},\mathcal{N})$ is an isomorphism.
Using these identifications, we can view $\sigma$ as a chamber in $\pinf(\Delta(\widetilde{\mathcal{B}},\mathcal{N}))$.
In particular we can consider the stabilizer of $\sigma$ in $\mathcal{G}$, which is given by the following result, see~\cite[Theorem 14.46]{Weiss09}.

\begin{Citation}\label{cit:B-is-the-stab-of-ch-at-infty}
The group $\mathcal{B}$ is the stabilizer of $\sigma$ in $\mathcal{G}$.
\end{Citation}

\subsection{The case of $p$-adic valuations}\label{subsec:p-adic-value}
Suppose from now on that $K = \Q$.
For each prime number $p \in \N$, let $v_p$ be the $p$-adic valuation on $\Q$.
As discussed in the previous subsections, this leads to  the construction of a spherical $\BN$-pair $(\mathcal{B},\mathcal{N})$ and a Euclidean $\BN$-pair $(\widetilde{\mathcal{B}}_p,\mathcal{N})$.
Let $X_p = \Delta(\widetilde{\mathcal{B}}_p,\mathcal{N})$ denote the Euclidean building associated to $(\widetilde{\mathcal{B}}_p,\mathcal{N})$ and let $\Sigma_p = \Sigma(\widetilde{\mathcal{B}}_p,\mathcal{N})$.
It will be important for us to view $\mathcal{G}$ and its subgroups as topological groups.
To this end we fix a basis $\{ b_1, \ldots, b_d \}$ of our lattice $M$ and identify $\mathcal{G}$ with its matrix representation in $\GL_d(\Q)$ that corresponds to this basis.
Recall that $v_p$ induces a $p$-adic absolute value on $\Q$ given by $\vert x \vert_p = p^{-v_p(x)}$.
Let us endow $\Q$ with the $p$-adic topology coming from $\abs{\cdot}_p$.
Moreover we endow the space $M_d(\Q)$ of $d \times d$-matrices with rational coefficients with the product topology.
It can be easily seen that the subspace topology turns $\mathcal{G} \subset M_d(\Q)$ and its subgroups into topological groups, which will be denoted by $\mathcal{G}_p$, $\mathcal{N}_p$, $\mathcal{U}_p$, etc.

\begin{notation}\label{not:R-points}
Let $R \subseteq \Q$ be a unital subring and let $M_d(R)$ denote the set of $d \times d$-matrices with coefficients in $R$.
For every subgroup $H \leq \mathcal{G}$ we write $H(R) = H \cap M_d(R)$.
\end{notation}

It follows from~\cite[Lemma 11]{Steinberg68} that $\mathcal{G}$ lies in $\SL_d(\Q)$.
Thus the inverse of a matrix $A \in \mathcal{G}$ coincides with its adjoint matrix $A^{adj}$.
As a consequence, we see that $H(R)$ is a group for every unital subring $R \subseteq \Q$ and every subgroup $H \leq \mathcal{G}$.

\begin{definition}\label{def:the-subrings}
Let $S \subseteq \N$ be a finite set of prime numbers.
We write $A_S$ to denote the subring of $\Q$ consisting of elements $x$ with $v_p(x) \geq 0$ for every $p \in S$.
The subring of $\Q$ consisting of elements $x$ with $v_p(x) \geq 0$ for every prime $p$ that is \emph{not} contained in $S$ will be denoted by $\mathcal{O}_S$.
If $S = \{p\}$ is a singleton, then we will just write $A_p$ and $\mathcal{O}_p$.
\end{definition}

Note that $\mathcal{O}_S$ coincides with $\Z[1/N]$, where $N = \prod \limits_{p \in S} p$.

\begin{lemma}\label{lem:structure-of-K-points}
Let $R \subseteq \Q$ be a subring of the form $A_S$ or $\mathcal{O}_S$.
Then
\begin{enumerate}
\item[$(a)$] $\mathcal{G}(R)$ is generated by $\Set{x_{\alpha}(t)}{\alpha \in \Phi,\ t \in R}$,
\item[$(b)$] $\mathcal{B}(R) = \mathcal{U}(R)\mathcal{T}(R)$,
\item[$(c)$] $\mathcal{U}(R) = \Set{\prod \limits_{\alpha \in \Phi^{+}} x_{\alpha}(t_{\alpha})}{t_{\alpha} \in R}$, and
\item[$(d)$] $\mathcal{T}(R) = \Set{\prod \limits_{i = 1}^{\ell} h_{\alpha_i}(t_{\alpha_i})}{t_{\alpha_i} \in R^{\times}}$.
\end{enumerate}
\end{lemma}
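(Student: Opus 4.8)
The plan is to prove the parametrisations $(c)$ and $(d)$ by upgrading the known rational statements (from Citation~\ref{cit:structure-of-T} and the standard structure of $\mathcal{U}$) to their $R$-integral refinements via matrix coordinates, to deduce $(b)$ from the decomposition $\mathcal{B}=\mathcal{U}\rtimes\mathcal{T}$, and to obtain $(a)$ from a classical elementary-generation theorem. Throughout I would fix, as in Subsection~\ref{subsec:p-adic-value}, a $\Z$-basis $\{b_1,\dots,b_d\}$ of the admissible lattice $M$ adapted to the weight decomposition, so that $b_j\in M_{\mu_j}$ (Citation~\ref{cit:invariant-lattice}$(2)$) and $\mu_j>\mu_k$ in the dominance order implies $j<k$. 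With respect to this basis every $x_{\alpha}(t)$ has matrix entries in $\Z[t]$ (this is Citation~\ref{cit:invariant-lattice}$(1)$), and for $\alpha\in\Phi^{+}$ the matrix is upper unitriangular, while each $h_{\alpha_i}(t)$ is the diagonal matrix acting on $M_{\mu}$ by $t^{\langle\mu,\alpha_i\rangle}$ with $\langle\mu,\alpha_i\rangle\in\Z$; in particular $\mathcal{U}$ consists of upper unitriangular matrices, $\mathcal{T}$ of diagonal matrices, and $\mathcal{U}\cap\mathcal{T}=\{1\}$.

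For $(c)$ the inclusion ``$\supseteq$'' is immediate, since a product $\prod_{\alpha\in\Phi^{+}}x_{\alpha}(t_{\alpha})$ with all $t_{\alpha}\in R$ has entries in $R$ and lies in $\mathcal{U}$. For ``$\subseteq$'' I would invoke that the map $(t_{\alpha})_{\alpha\in\Phi^{+}}\mapsto\prod_{\alpha}x_{\alpha}(t_{\alpha})$ is an isomorphism of affine $\Z$-schemes onto a closed $\Z$-subscheme of $(\GL_d)_{\Z}$ (the standard parametrisation of the unipotent radical of a Borel), so its inverse is again given by polynomials with $\Z$-coefficients in the matrix entries; hence if $u\in\mathcal{U}(R)=\mathcal{U}\cap M_d(R)$ then each coordinate $t_{\alpha}$ is a $\Z$-polynomial in the entries of $u$ and therefore lies in $R$. (Alternatively one reads off the $t_{\alpha}$ from suitable matrix entries of $u$ by induction on the height of $\alpha$.) For $(d)$, ``$\supseteq$'' is clear since $t^{k}\in R^{\times}$ for $t\in R^{\times}$, $k\in\Z$. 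For ``$\subseteq$'', let $s\in\mathcal{T}(R)$ and use Citation~\ref{cit:structure-of-T} to write $s=\prod_{i}h_{\alpha_i}(t_i)$ with $t_i\in\Q^{\times}$; then $s$ is diagonal with entries the monomials $\prod_{i}t_i^{\langle\mu,\alpha_i\rangle}$, $\mu\in\Psi$, and since $s\in M_d(R)$ and $s^{-1}=s^{adj}\in M_d(R)$ (because $\mathcal{G}\subseteq\SL_d$) each such monomial lies in $R^{\times}$. As $\rho$ is faithful the weights $\Psi$ span $\mathcal{H}^{\ast}$, so the integer vectors $(\langle\mu,\alpha_i\rangle)_i$ generate a finite-index subgroup of $\Z^{\ell}$; hence a fixed power $t_i^{N}$ is an integer combination of these unit monomials, so $t_i^{N}\in R^{\times}$. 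Since $R^{\times}$ is the set of rationals whose $\ell$-adic valuation vanishes for every prime $\ell$ not invertible in $R$, and $v_{\ell}(t_i^{N})=N\,v_{\ell}(t_i)$, this forces $t_i\in R^{\times}$.

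For $(b)$: since $\mathcal{T}$ normalises $\mathcal{U}$ by Citation~\ref{cit:relation-in-G}$(e)$ and $\mathcal{U}\cap\mathcal{T}=\{1\}$, we have $\mathcal{B}=\mathcal{U}\rtimes\mathcal{T}$ with unique factorisation $b=ut$. The inclusion $\mathcal{U}(R)\mathcal{T}(R)\subseteq\mathcal{B}(R)$ holds because $\mathcal{B}(R)$ is a group containing $\mathcal{U}(R)$ and $\mathcal{T}(R)$. Conversely, for $b\in\mathcal{B}(R)$ both $b$ and $b^{-1}=b^{adj}$ lie in $M_d(R)$ and are upper triangular, with diagonals equal to those of $t$ and of $t^{-1}$ respectively; hence $t\in\mathcal{T}(R)$, so $t$ has the shape given by $(d)$, and then $u=bt^{-1}\in\mathcal{U}\cap M_d(R)=\mathcal{U}(R)$ has the shape given by $(c)$.

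The step I expect to be the main obstacle is $(a)$, since it is the only one that is not a formal consequence of the weight-coordinate description: one must show that the elementary subgroup $\langle x_{\alpha}(t):\alpha\in\Phi,\ t\in R\rangle$ already exhausts $\mathcal{G}(R)=\mathcal{G}\cap M_d(R)$ (the reverse inclusion being clear from the first paragraph). For $R=A_S$ this is the semilocal case, where elementary generation of Chevalley groups is classical. For $R=\mathcal{O}_S=\Z[1/N]$ it is the $S$-arithmetic case: the statement belongs to the Bass--Milnor--Serre and Matsumoto circle of results on $\mathcal{G}(\mathcal{O}_S)$, and for the classical types relevant here it may alternatively be deduced from the action of $\mathcal{G}(\Z[1/N])$ on $\prod_{p\in S}X_p$ together with the semilocal case applied to the stabilisers of cells. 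Granting this external input, $(a)$ follows, and with it the full lemma.
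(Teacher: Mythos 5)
Your treatment of $(b)$, $(c)$, $(d)$ is a genuinely different and more hands-on route than the paper's, which disposes of all three by citing~\cite[Lemma 49]{Steinberg68}. The weight-basis arguments you give are essentially sound: the unitriangular/diagonal matrix picture, the $\Z$-scheme parametrisation of $\mathcal{U}$ (or the height-induction reading off of the $t_{\alpha}$), and the unit-monomial argument for $(d)$ all work, up to the small slip that $t_i^{N}$ is a \emph{product} of integer powers of the unit monomials $\prod_j t_j^{\langle\mu,\alpha_j\rangle}$, not an ``integer combination''; the lattice-of-exponent-vectors framing makes the intended multiplicative statement clear. What this buys you is a self-contained verification where the paper outsources to Steinberg; what it costs is length, and it is in any case close to how Steinberg proves his Lemma 49.

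The genuine gap is $(a)$, exactly where you anticipated trouble, and the fix is much more elementary than the machinery you reach for. The external results you invoke do not match the statement: Bass--Milnor--Serre/Matsumoto-type elementary generation is formulated for simply connected groups of rank at least two and for scheme-theoretic $R$-points, whereas the lemma concerns an arbitrary Chevalley group $\mathcal{G}(\Phi,\rho,\Q)$ --- any type including rank one, any admissible lattice, hence possibly non--simply-connected --- with $\mathcal{G}(R)$ defined as $\mathcal{G}\cap M_d(R)$; and your alternative via the action on $\prod_{p\in S}X_p$ is only a sketch. The paper's proof avoids all of this: $A_S$ and $\mathcal{O}_S$ are localizations of $\Z$, localizations of Euclidean domains are Euclidean (\cite[Theorem 3.33]{Clark15}), and \cite[Corollary 3]{Steinberg68} gives elementary generation of $\mathcal{G}(R)$ over any Euclidean domain, covering both rings and all types at once. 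Without this observation (or a fully worked-out substitute), part $(a)$ of your proposal remains unproven.
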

\begin{proof}
The statements $(b),(c)$ and $(d)$ are covered by~\cite[Lemma 49]{Steinberg68}.
Moreover~\cite[Corollary 3]{Steinberg68} tells us that $(a)$ holds if $R$ is a Euclidean domain.
Thus the claim follows from~\cite[Theorem 3.33]{Clark15}), which says that localizations of Euclidean domains are Euclidean.
\end{proof}

\begin{lemma}\label{lem:translation}
For every root $\alpha \in \Phi$ the element $t_{\alpha, k} \defeq s_{-\alpha,k} s_{\alpha} \in W_{\widetilde{\Phi}}$
acts on $\mathcal{V}$ by $t_{\alpha, k}(v)= v-k \alpha^{V}$.
\end{lemma}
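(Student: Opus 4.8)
The plan is to unwind the definitions from Definition~\ref{def:V-as-eucl-complex} and reduce the claim to the elementary fact that $s_{\alpha}$ is an involution. First I would record two auxiliary identities about the linear reflection $s_{\alpha}$ and the coroot $\alpha^{V}$. Since $s_{\alpha}(v) = v - \langle v,\alpha \rangle \alpha$ with $\langle v,\alpha \rangle = 2\kappa^{\ast}(v,\alpha)/\kappa^{\ast}(\alpha,\alpha)$, replacing $\alpha$ by $-\alpha$ changes the sign of $\langle v,\cdot \rangle$ and of the vector it is multiplied by, so these two sign changes cancel and $s_{-\alpha} = s_{\alpha}$; and directly from $\alpha^{V} \defeq 2\alpha/\kappa^{\ast}(\alpha,\alpha)$ one reads off $(-\alpha)^{V} = -\alpha^{V}$.

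Next I would substitute these identities into the defining formula $s_{\beta,k}(v) = s_{\beta}(v) + k\beta^{V}$ with $\beta = -\alpha$, which gives $s_{-\alpha,k}(w) = s_{\alpha}(w) - k\alpha^{V}$ for every $w \in \mathcal{V}$. Then, using that $s_{\alpha}$ coincides with $s_{\alpha,0}$ in the notation of the excerpt, I compute
\[
t_{\alpha,k}(v) = s_{-\alpha,k}\bigl(s_{\alpha}(v)\bigr) = s_{\alpha}\bigl(s_{\alpha}(v)\bigr) - k\alpha^{V} = v - k\alpha^{V},
\]
where the last equality is the involutivity $s_{\alpha} \circ s_{\alpha} = \id_{\mathcal{V}}$. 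This is precisely the asserted formula.

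There is essentially no obstacle here: the statement is a one-line computation once the sign bookkeeping for $-\alpha$ is handled correctly, and the only point to be slightly careful about is that $s_{-\alpha}$ really is the same \emph{linear} reflection as $s_{\alpha}$, which is immediate from the explicit formula. For the reader's orientation I would also note that $t_{\alpha,k}$ is a product of the two reflections through the parallel walls $H_{\alpha,0}$ and $H_{-\alpha,k} = H_{\alpha,-k}$, which is why it is a translation and why it belongs to the translation subgroup of $W_{\widetilde{\Phi}}$; but the direct computation above is the cleanest route and is all that is needed.
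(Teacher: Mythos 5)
Your proof is correct and follows essentially the same route as the paper: substitute the definition $s_{-\alpha,k}(w)=s_{-\alpha}(w)+k(-\alpha)^{V}=s_{\alpha}(w)-k\alpha^{V}$ and use that $s_{\alpha}$ is an involution, so $s_{-\alpha,k}(s_{\alpha}(v))=v-k\alpha^{V}$. The only difference is that you spell out the sign bookkeeping ($s_{-\alpha}=s_{\alpha}$, $(-\alpha)^{V}=-\alpha^{V}$) which the paper leaves implicit.
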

\begin{proof}
Recall that $s_{\alpha}(v) = v - \kappa^{\ast}(v,\alpha) \alpha^{V}$ and that $s_{\alpha,k}(v) = s_{\alpha}(v) + k \alpha^{V}$ for every $v \in \mathcal{V}$.
Thus
\[
s_{-\alpha,k}(s_{\alpha}(v)) =
s_{-\alpha}(s_{\alpha}(v)) - k \alpha^{V} =
v - k \alpha^{V},
\]
which proves the lemma.
\end{proof}

Recall that $\mathcal{\widetilde{T}}_p$ denotes the kernel of $\pi \colon \mathcal{N}_p \rightarrow W_{\widetilde{\Phi}}$.
The following lemma provides us with a more explicit description of $\mathcal{\widetilde{T}}_p$.

\begin{lemma}\label{lem:the-structure-of-H}
Let $p \in \N$ be a prime.
Then $\mathcal{\widetilde{T}}_p = \Set{\prod \limits_{i=1}^{\ell} h_{\alpha_i}(t_i)}{t_i \in A_p^{\times}}$.
Moreover, we have $\pi(h_{\alpha_i}(t_i)) = t_{\alpha_i,k_i}$ with $k_i = v_p(t_i)$ for each $1 \leq i \leq \ell$.
\end{lemma}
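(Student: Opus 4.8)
The plan is to prove the ``moreover'' formula first and then read off the description of $\widetilde{\mathcal{T}}_p$ from it. For the formula, recall from Notation~\ref{not:special-elements} that $w_{\alpha}(t) = x_{\alpha}(t)x_{-\alpha}(-t^{-1})x_{\alpha}(t)$ and $h_{\alpha}(t) = w_{\alpha}(t)w_{\alpha}(1)^{-1}$, while $m(x_{\beta}(s)) = w_{-\beta}(-s^{-1})$. A direct substitution ($\beta = -\alpha$, $s = -t^{-1}$) gives $w_{\alpha}(t) = m(u)$ with $u = x_{-\alpha}(-t^{-1}) \in \mathcal{U}_{-\alpha} \setminus \{1\}$, so $\phi_{-\alpha}(u) = v_p(-t^{-1}) = -v_p(t)$, and the defining property $\pi(m(u)) = s_{-\alpha,-\phi_{-\alpha}(u)}$ yields $\pi(w_{\alpha}(t)) = s_{-\alpha,v_p(t)}$; in particular $\pi(w_{\alpha}(1)) = s_{-\alpha,0} = s_{-\alpha}$. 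Since $W_{\alpha} = W_{-\alpha}$, the reflections $s_{\alpha}$ and $s_{-\alpha}$ coincide, so $\pi(h_{\alpha}(t)) = \pi(w_{\alpha}(t))\,\pi(w_{\alpha}(1))^{-1} = s_{-\alpha,v_p(t)}\,s_{\alpha} = t_{\alpha,v_p(t)}$ by the definition of $t_{\alpha,k}$ used in Lemma~\ref{lem:translation}. Specializing $\alpha = \alpha_i$ gives $\pi(h_{\alpha_i}(t_i)) = t_{\alpha_i,k_i}$ with $k_i = v_p(t_i)$.

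Next I would show the inclusion $\widetilde{\mathcal{T}}_p \subseteq \mathcal{T}$. Composing $\pi \colon \mathcal{N} \to W_{\widetilde{\Phi}}$ with the linear-part homomorphism $q \colon W_{\widetilde{\Phi}} \to W_{\Phi}$ (the restriction to $W_{\widetilde{\Phi}}$ of the projection $\Isom(\mathcal{V}) \to O(\mathcal{V})$; it sends $s_{\beta,k} \mapsto s_{\beta}$ and kills translations) gives a homomorphism $q \circ \pi \colon \mathcal{N} \to W_{\Phi}$ that sends each generator $w_{\alpha}(t)$ of $\mathcal{N}$ to $q(s_{-\alpha,v_p(t)}) = s_{-\alpha} = s_{\alpha}$. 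This agrees with the image of $w_{\alpha}(t)$ under the projection $\mathcal{N} \to \mathcal{N}/\mathcal{T} \cong W_{\Phi}$, since $h_{\alpha}(t) = w_{\alpha}(t)w_{\alpha}(1)^{-1} \in \mathcal{T}$ forces $w_{\alpha}(t)$ and $w_{\alpha}(1)$ to have the same image there, and $w_{\alpha}(1) \mapsto s_{\alpha}$ by~\cite[Lemma~22]{Steinberg68}. As the two homomorphisms $\mathcal{N} \to W_{\Phi}$ agree on the generating set $\Set{w_{\alpha}(t)}{\alpha \in \Phi,\ t \in \Q^{\times}}$ of $\mathcal{N}$, they coincide, whence $\widetilde{\mathcal{T}}_p = \ker \pi \subseteq \ker(q \circ \pi) = \ker(\mathcal{N} \to \mathcal{N}/\mathcal{T}) = \mathcal{T}$.

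Finally I would combine the two facts. By Citation~\ref{cit:structure-of-T}, every $n \in \mathcal{T}$ can be written $n = \prod_{i=1}^{\ell} h_{\alpha_i}(t_i)$ with $t_i \in \Q^{\times}$, and by the formula above $\pi(n) = \prod_{i=1}^{\ell} t_{\alpha_i,v_p(t_i)}$, which by Lemma~\ref{lem:translation} is the translation $v \mapsto v - \sum_{i=1}^{\ell} v_p(t_i)\,\alpha_i^{V}$. Since the simple roots form a basis of $\mathcal{V}$, so do the simple coroots $\alpha_1^{V},\dots,\alpha_{\ell}^{V}$; hence this translation is trivial if and only if $v_p(t_i) = 0$ for all $i$, i.e.\ if and only if $t_i \in A_p^{\times}$. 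Together with $\widetilde{\mathcal{T}}_p \subseteq \mathcal{T}$ this gives both inclusions of the claimed equality $\widetilde{\mathcal{T}}_p = \Set{\prod_{i=1}^{\ell} h_{\alpha_i}(t_i)}{t_i \in A_p^{\times}}$.

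I expect the main obstacle to be not a single hard idea but keeping the Steinberg/Weiss bookkeeping straight — in particular the passage between $w_{\alpha}(t)$, $m(u)$, $h_{\alpha}(t)$, $\phi_{-\alpha}$, and $\pi$, together with the orientation conventions inside $t_{\alpha,k}$; the one genuinely delicate point is the compatibility of the affine quotient $\pi$ with the spherical quotient $\mathcal{N} \to \mathcal{N}/\mathcal{T}$, which is exactly what pins $\widetilde{\mathcal{T}}_p$ inside the torus $\mathcal{T}$.
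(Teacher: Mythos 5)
Your proposal is correct, and two of its three steps coincide with the paper's proof: the computation $\pi(h_{\alpha}(t)) = s_{-\alpha,v_p(t)}s_{\alpha} = t_{\alpha,v_p(t)}$ via $w_{\alpha}(t) = m\bigl(x_{-\alpha}(-t^{-1})\bigr)$ is exactly the chain of equalities in the paper, and the final step (write an element of $\mathcal{T}$ as $\prod_i h_{\alpha_i}(t_i)$ by Citation~\ref{cit:structure-of-T}, apply Lemma~\ref{lem:translation}, and use that the simple coroots are linearly independent to force $v_p(t_i)=0$) is also identical. Where you genuinely diverge is the inclusion $\widetilde{\mathcal{T}}_p \subseteq \mathcal{T}$. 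The paper argues geometrically: $\ker \pi$ acts trivially on the standard apartment, hence fixes $\pinf\mathcal{V}$ and in particular the chamber $\sigma$, so by Citation~\ref{cit:B-is-the-stab-of-ch-at-infty} it lies in $\mathcal{B}$, and therefore in $\mathcal{B}\cap\mathcal{N}=\mathcal{T}$. You instead compose $\pi$ with the linear-part homomorphism $W_{\widetilde{\Phi}}\to W_{\Phi}$ and check that the resulting map $\mathcal{N}\to W_{\Phi}$ agrees with the spherical quotient $\mathcal{N}\to\mathcal{N}/\mathcal{T}\cong W_{\Phi}$ on the generators $w_{\alpha}(t)$ (using $h_{\alpha}(t)\in\mathcal{T}$ and Steinberg's Lemma~22), so that $\ker\pi\subseteq\mathcal{T}$. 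Both arguments are sound; yours is purely algebraic and avoids invoking the identification of $\mathcal{B}$ as the stabilizer of the chamber at infinity (Weiss's theorem, quoted as Citation~\ref{cit:B-is-the-stab-of-ch-at-infty}), at the price of introducing the linear-part map and a generator check, whereas the paper's version is shorter given that this citation is already in place and used again later (e.g.\ in Lemma~\ref{lem:relations-char-height-delta}).
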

\begin{proof}
By definition, the group $\mathcal{\widetilde{T}}_p$ fixes $\mathcal{V}$ pointwise.
In particular $\mathcal{\widetilde{T}}_p$ fixes $\pinf \mathcal{V}$ pointwise.
By Citation~\ref{cit:B-is-the-stab-of-ch-at-infty}, we therefore obtain $\mathcal{\widetilde{T}}_p \subseteq \mathcal{B} \cap \mathcal{N} = \mathcal{T}$.
In this case Citation~\ref{cit:structure-of-T} tells us that
every element in $\mathcal{\widetilde{T}}_p$ can be written as $\prod \limits_{i=1}^{\ell} h_{\alpha_i}(t_i)$ for appropriate $t_i \in \Q^{\times}$.
The image of such a generator $h_{\alpha_i}(t_i)$ under $\pi$ is given by
\begin{align*}
\pi(h_{\alpha_i}(t_i))
& = \pi(w_{\alpha_i}(t_i) w_{\alpha_i}(1)^{-1}) \\
& = \pi(w_{\alpha_i}(t_i))\pi(w_{\alpha_i}(1)^{-1}) \\
& = \pi(m_{-\alpha_i}(-t_{i}^{-1})) \pi(m_{-\alpha_i}(-1))\\
& = s_{-\alpha_i,-v_p(-t_{i}^{-1})} s_{-\alpha_i,-v_p(-1)}\\
& = s_{-\alpha_i,v_p(t_{i})} s_{-\alpha_i,0} \\
& = s_{-\alpha_i,v_p(t_{i})} s_{\alpha_i}.
\end{align*}
By setting $k_i \defeq v_p(t_i)$, Lemma~\ref{lem:translation} implies $\pi(h_{\alpha_i}(t_i)) = t_{\alpha_i,k_i}$.
Suppose that $w = \prod \limits_{i=1}^{\ell} h_{\alpha_i}(t_i)$ lies in $\mathcal{\widetilde{T}}_p$, i.e.\ $\pi(w) = \id \in W_{\widetilde{\Phi}}$.
Then we deduce from the above chain of equalities that
\[
v
= w(v)
= (\prod \limits_{i=1}^{\ell} \pi(h_{\alpha_i}(t_i)))(v)
= (\prod \limits_{i=1}^{\ell} t_{\alpha_i,k_i})(v)
= v - \sum \limits_{i=1}^{\ell} v_p(t_i) \alpha_i^{V}.
\]
Since $\Delta$ is a basis of $\mathcal{V}$, this implies $v_p(t_i) = 0$ for every $1 \leq i \leq \ell$.
Now the lemma follows from the fact that an element $t \in \Q$ is a unit in $A_p$ if and only if $v_p(t) = 0$.
\end{proof}

In view of the next result, we see that $A_p$ naturally appears in the study of $\mathcal{G}$ and its action on $X_p$.

\begin{lemma}\label{lem:open-stabilizer-in-U}
The intersection $\widetilde{\mathcal{B}}_p \cap \mathcal{U}$ coincides with $\mathcal{U}(A_p)$.
\end{lemma}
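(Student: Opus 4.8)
\textbf{Proof plan for Lemma~\ref{lem:open-stabilizer-in-U}.}
The plan is to prove the two inclusions $\mathcal{U}(A_p) \subseteq \widetilde{\mathcal{B}}_p \cap \mathcal{U}$ and $\widetilde{\mathcal{B}}_p \cap \mathcal{U} \subseteq \mathcal{U}(A_p)$ separately, using the structure results for $\mathcal{U}$ and $\widetilde{\mathcal{U}}$ together with the fact that $\widetilde{\mathcal{B}}_p$ is the stabilizer of the fundamental alcove $E$ in $\mathcal{V}$ (equivalently, of the associated vertex/origin data) under the action of $\mathcal{G}$ on $X_p$. First I would recall from Lemma~\ref{lem:structure-of-K-points}(c) that $\mathcal{U}(A_p) = \Set{\prod_{\alpha \in \Phi^+} x_\alpha(t_\alpha)}{t_\alpha \in A_p}$, and from Definition~\ref{def:the-tilde-groups} that $\widetilde{\mathcal{U}}$ contains $\mathcal{U}_{\alpha,0} = \Set{x_\alpha(t)}{v_p(t) \geq 0} = \mathcal{U}_\alpha(A_p)$ for every $\alpha \in \Phi^+$. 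Since $\widetilde{\mathcal{U}} \subseteq \widetilde{\mathcal{B}}_p$ and $\widetilde{\mathcal{U}}$ is generated by these groups together with the negative ones, the group generated by $\Set{\mathcal{U}_{\alpha,0}}{\alpha \in \Phi^+}$ lies in $\widetilde{\mathcal{B}}_p$; on the other hand this group clearly lies in $\mathcal{U}$, and it contains all the generators $x_\alpha(t_\alpha)$ with $t_\alpha \in A_p$, hence contains $\mathcal{U}(A_p)$. This gives the inclusion $\mathcal{U}(A_p) \subseteq \widetilde{\mathcal{B}}_p \cap \mathcal{U}$.

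For the reverse inclusion, take $u \in \widetilde{\mathcal{B}}_p \cap \mathcal{U}$. Using the commutator relations of Citation~\ref{cit:relation-in-G}(a),(b) and a fixed enumeration of $\Phi^+$, one writes $u$ uniquely as an ordered product $u = \prod_{\alpha \in \Phi^+} x_\alpha(t_\alpha)$ with $t_\alpha \in \Q$ (uniqueness of the normal form in $\mathcal{U}$ is standard and follows from the $\RGD$-structure cited around Theorem~\ref{thm:sph-BN-pair-for-G}). It remains to show $v_p(t_\alpha) \geq 0$ for every $\alpha$. The key point is that $\widetilde{\mathcal{B}}_p$ fixes the fundamental alcove $E \subseteq \mathcal{V} \subseteq X_p$, while $x_\alpha(t)$ with $v_p(t) < 0$ moves $E$: concretely, $\mathcal{U}_{\alpha,k}$ is exactly the set of $x_\alpha(t)$ fixing a half-apartment whose bounding wall is $H_{\alpha,-k}$ (or rather, the subgroup $\mathcal{U}_{\alpha,0} = \mathcal{U}_\alpha \cap \widetilde{\mathcal{B}}_p$ is the full stabilizer of $E$ inside $\mathcal{U}_\alpha$, by the defining property of the affine $\BN$-pair and the map $m(x_\alpha(t)) \mapsto s_{\alpha,-\phi_\alpha(x_\alpha(t))}$). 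I would argue by induction on the height of roots: if $\beta$ is a maximal element of $\Phi^+$ among those with $v_p(t_\beta) < 0$, then conjugating $u$ or applying a suitable projection kills the higher terms and exhibits $x_\beta(t_\beta)$ (up to a factor in $\widetilde{\mathcal{B}}_p \cap \mathcal{U}$ whose $\beta$-coordinate is controlled), contradicting $u \in \widetilde{\mathcal{B}}_p$ since $x_\beta(t_\beta) \notin \mathcal{U}_{\beta,0}$ then. Hence all $v_p(t_\alpha) \geq 0$ and $u \in \mathcal{U}(A_p)$.

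The main obstacle I expect is making the induction step fully rigorous: one needs to ensure that when the highest ``bad'' coordinate $t_\beta$ is extracted from the normal form, the commutator corrections coming from Citation~\ref{cit:relation-in-G}(b) do not interfere — i.e.\ that the coefficient of $x_\beta$ in the normal form is genuinely $t_\beta$ and that no lower-height terms can cancel the obstruction. This is where the precise uniqueness of the $\mathcal{U}$-normal form and the compatibility of the valuation filtration $(\mathcal{U}_{\alpha,k})$ with commutators (so that $[\mathcal{U}_{\alpha,j},\mathcal{U}_{\beta,k}] \subseteq \prod \mathcal{U}_{i\alpha+j\beta,\,ij+jk}$, say) must be invoked carefully; all of this is contained in the $\RGD$-theory of~\cite{AbramenkoBrown08} and~\cite{Weiss09} referenced above. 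A cleaner alternative, which I would pursue if the combinatorics get heavy, is to use Citation~\ref{cit:BN-pair-from-VRGD} directly: $\widetilde{\mathcal{B}}_p = \widetilde{\mathcal{T}}_p \widetilde{\mathcal{U}}$ and the product decomposition $\widetilde{\mathcal{U}} = \prod_{\alpha \in \Phi^+}\mathcal{U}_{\alpha,0} \cdot \prod_{\alpha \in \Phi^-}\mathcal{U}_{\alpha,1}$, intersect with $\mathcal{U}$, observe that the $\widetilde{\mathcal{T}}_p$- and negative-root factors are forced to be trivial when the result lies in $\mathcal{U}$, and read off $\widetilde{\mathcal{B}}_p \cap \mathcal{U} = \prod_{\alpha\in\Phi^+}\mathcal{U}_{\alpha,0} = \mathcal{U}(A_p)$ via Lemma~\ref{lem:structure-of-K-points}(c).
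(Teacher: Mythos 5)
Your first inclusion $\mathcal{U}(A_p) \subseteq \widetilde{\mathcal{B}}_p \cap \mathcal{U}$ is correct and is essentially the paper's argument: by Lemma~\ref{lem:structure-of-K-points}$(c)$ every element of $\mathcal{U}(A_p)$ is a product of elements $x_{\alpha}(t)$ with $\alpha \in \Phi^{+}$ and $t \in A_p$, and these generators lie in $\widetilde{\mathcal{U}}_p \subseteq \widetilde{\mathcal{B}}_p$ by Definition~\ref{def:the-tilde-groups}.

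The reverse inclusion is where your proposal has genuine gaps. Your primary route rests on the claim that $\mathcal{U}_{\alpha} \cap \widetilde{\mathcal{B}}_p = \mathcal{U}_{\alpha,0}$ ``by the defining property of the affine $\BN$-pair'', but neither Definition~\ref{def:the-tilde-groups} nor Citation~\ref{cit:BN-pair-from-VRGD} gives this; it is exactly the rank-one instance of the statement being proved, so as written the argument is circular at its base case, and the induction on root heights (with the valuation-compatibility of the commutator relations that it needs) is, as you yourself note, not carried out. Your ``cleaner alternative'' is workable in principle, but it imports two facts that are not in the paper: the Iwahori-type factorization $\widetilde{\mathcal{U}}_p = \prod_{\alpha \in \Phi^{+}} \mathcal{U}_{\alpha,0} \cdot \prod_{\alpha \in \Phi^{-}} \mathcal{U}_{\alpha,1}$ (Citation~\ref{cit:BN-pair-from-VRGD} only records that $\widetilde{\mathcal{B}}_p$ is generated by $\widetilde{\mathcal{T}}_p$ and $\widetilde{\mathcal{U}}_p$), and the uniqueness of the Birkhoff/big-cell decomposition needed to force the torus and negative-root factors to be trivial. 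Both can be quoted from the Bruhat--Tits/$\RGD$ literature, but this is heavy machinery compared to what the paper actually does: all generators of $\widetilde{\mathcal{B}}_p$ --- the $x_{\alpha}(t)$ with $t \in A_p$ for $\alpha \in \Phi^{+}$, the $x_{\alpha}(t)$ with $t \in pA_p$ for $\alpha \in \Phi^{-}$, and, by Lemma~\ref{lem:the-structure-of-H}, the elements $\prod_{i} h_{\alpha_i}(t_i)$ with $t_i \in A_p^{\times}$ --- are matrices with entries in $A_p$, so $\widetilde{\mathcal{T}}_p, \widetilde{\mathcal{U}}_p \subseteq \mathcal{G}(A_p)$ (recall that $H(R) = H \cap M_d(R)$ is a group because $\mathcal{G} \subseteq \SL_d(\Q)$), whence $\widetilde{\mathcal{B}}_p \cap \mathcal{U} \subseteq \mathcal{G}(A_p) \cap \mathcal{U} = \mathcal{U}(A_p)$ directly from Notation~\ref{not:R-points}, with no normal forms or decompositions at all. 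If you want to salvage your approach, you should either prove the Iwahori factorization you invoke or replace the whole second half by this integrality observation.
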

\begin{proof}
Recall that $\widetilde{\mathcal{B}}_p = \widetilde{\mathcal{T}}_p\widetilde{\mathcal{U}}_p$, where $\widetilde{\mathcal{U}}_p$ is generated by the sets
\[
\Set{x_{\alpha}(t)}{\alpha \in \Phi^{+},\ t \in A_p} \text{ and } \Set{x_{\alpha}(t)}{\alpha \in \Phi^{-},\ t \in p A_p},
\]
and where $\widetilde{\mathcal{T}}_p = \Set{\prod \limits_{i=1}^{\ell} h_{\alpha_i}(t_i)}{t_i \in A_p^{\times}}$ by Lemma~\ref{lem:the-structure-of-H}.
In view of Lemma~\ref{lem:structure-of-K-points}$(c)$, this shows that $\mathcal{U}(A_p)$ is contained in $\widetilde{\mathcal{B}}_p \cap \mathcal{U}_p$.
On the other hand, we have $\widetilde{\mathcal{T}}_p,\ \widetilde{\mathcal{U}}_p \subseteq \mathcal{G}(A_p)$, which implies $\widetilde{\mathcal{B}}_p \cap \mathcal{U} \subseteq \mathcal{U}(A_p)$.
\end{proof}

\section{$\Sigma$-invariants of S-arithmetic Borel groups}\label{sec:final}

We keep the notation from the previous section.
That is, we consider a Chevalley group $\mathcal{G} = \mathcal{G}(\Phi,\rho,\Q)$, a system of simple roots $\Delta = \{ \alpha_1, \ldots, \alpha_{\ell}\}$ and the corresponding Borel subgroup $\mathcal{B} \subseteq \mathcal{G}$.
Moreover we fix a finite set of prime numbers $S \subseteq \N$.

\subsection{Cocompactness and finiteness properties of stabilizers}

Our goal is to determine the $\Sigma$-invariants of $\Gamma \defeq \mathcal{B}(\mathcal{O}_S)$.
To this end, we will prove that the diagonal action of $\Gamma$ on $X_S \defeq \prod \limits_{p \in S} X_p$, where $X_p = \Delta(\widetilde{\mathcal{B}}_p,\mathcal{N})$, satisfies the conditions of Theorem~\ref{thm:sigmas-and-stabilizers}.
That is, we have to show that $\Gamma$ acts cocompactly on $X_S$ and that the cell stabilizers with respect to this action are of type $F_{\infty}$.

\begin{lemma}\label{lem:specific-stab-id-F-infty}
The group $\bigcap \limits_{p \in S} \widetilde{\mathcal{B}}_p \cap \Gamma$ is of type $F_{\infty}$.
\end{lemma}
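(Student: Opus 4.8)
The group in question is $\bigcap_{p \in S} \widetilde{\mathcal{B}}_p \cap \Gamma$ where $\Gamma = \mathcal{B}(\mathcal{O}_S)$. The plan is to identify this intersection with an explicit arithmetic object, namely $\mathcal{B}(\Z)$ (or a group commensurable with it, built from the unipotent radical and a finitely generated abelian torus part), and then to invoke known finiteness results: unipotent arithmetic groups over $\Z$ are polycyclic, hence of type $F_\infty$, and extensions of $F_\infty$ groups by $F_\infty$ groups are again $F_\infty$. First I would unwind the definitions: by Citation~\ref{cit:B-is-the-stab-of-ch-at-infty} the group $\mathcal{B}$ is the stabilizer of $\sigma$ in $\mathcal{G}$, and $\widetilde{\mathcal{B}}_p$ is the stabilizer of the fundamental chamber of $X_p$ (an Iwahori subgroup), so $\widetilde{\mathcal{B}}_p \subseteq \mathcal{B}$ and in fact $\widetilde{\mathcal{B}}_p \cap \mathcal{U} = \mathcal{U}(A_p)$ by Lemma~\ref{lem:open-stabilizer-in-U}, while $\widetilde{\mathcal{T}}_p = \{\prod_i h_{\alpha_i}(t_i) : t_i \in A_p^\times\}$ by Lemma~\ref{lem:the-structure-of-H}.

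Next I would compute the intersection coordinate-wise using the decomposition $\mathcal{B}(\mathcal{O}_S) = \mathcal{U}(\mathcal{O}_S)\mathcal{T}(\mathcal{O}_S)$ from Lemma~\ref{lem:structure-of-K-points}(b), the semidirect product structure $\mathcal{B} = \mathcal{U} \rtimes \mathcal{T}$, and the fact that the decomposition $\widetilde{\mathcal{B}}_p = \widetilde{\mathcal{T}}_p \widetilde{\mathcal{U}}_p$ is compatible with it. The upshot should be: an element $u \cdot t \in \mathcal{B}(\mathcal{O}_S)$ lies in $\widetilde{\mathcal{B}}_p$ for all $p \in S$ precisely when $u \in \mathcal{U}(\mathcal{O}_S) \cap \bigcap_p \mathcal{U}(A_p) = \mathcal{U}(\mathcal{O}_S \cap \bigcap_p A_p) = \mathcal{U}(\Z)$ and $t \in \mathcal{T}(\mathcal{O}_S)$ with each $h_{\alpha_i}$-coordinate a unit in every $A_p$, i.e. with $p$-adic valuation zero at all $p \in S$; combined with being in $\mathcal{O}_S^\times$ this forces the torus coordinates to lie in $\Z^\times = \{\pm 1\}$, so $\mathcal{T}(\mathcal{O}_S) \cap \bigcap_p \widetilde{\mathcal{T}}_p = \mathcal{T}(\Z)$, a finite group of order $2^\ell$ (using Citation~\ref{cit:structure-of-T}). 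Hence $\bigcap_{p \in S} \widetilde{\mathcal{B}}_p \cap \Gamma = \mathcal{U}(\Z) \rtimes \mathcal{T}(\Z)$, a finite extension of $\mathcal{U}(\Z)$.

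It then remains to see that $\mathcal{U}(\Z)$ is of type $F_\infty$. By Lemma~\ref{lem:structure-of-K-points}(c), $\mathcal{U}(\Z) = \{\prod_{\alpha \in \Phi^+} x_\alpha(t_\alpha) : t_\alpha \in \Z\}$, and the commutator relations in Citation~\ref{cit:relation-in-G}(b) show that $\mathcal{U}(\Z)$ is a finitely generated nilpotent group (it is filtered by the normal subgroups generated by root subgroups of height $\geq k$, with abelian quotients $\cong \Z^{m_k}$). Finitely generated nilpotent — indeed polycyclic — groups are of type $F_\infty$ (they admit finite-dimensional classifying spaces with finitely many cells in each dimension, e.g. by iterated use of the fact that $\Z$ is $F_\infty$ and an extension of an $F_\infty$ group by an $F_\infty$ group is $F_\infty$, see~\cite[Theorem 7.3.2]{Geoghegan08}-type statements; alternatively cite that polycyclic-by-finite groups are $F_\infty$). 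Finally, since being of type $F_\infty$ is preserved under passing to finite-index overgroups and under finite extensions, $\mathcal{U}(\Z) \rtimes \mathcal{T}(\Z)$ is of type $F_\infty$, which proves the lemma.

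\textbf{Main obstacle.} The delicate point is the explicit computation of the intersection, specifically verifying that the torus part really collapses to $\mathcal{T}(\Z)$: one must be careful that $\widetilde{\mathcal{T}}_p$ is not literally $\mathcal{T}(A_p)$ but the slightly different group $\{\prod h_{\alpha_i}(t_i) : t_i \in A_p^\times\}$ (the issue being whether a product $\prod_i h_{\alpha_i}(t_i)$ can be a $p$-adic unit matrix without each $t_i$ being a $p$-adic unit — the injectivity argument at the end of Lemma~\ref{lem:the-structure-of-H}, using that $\Delta$ is a basis of $\mathcal{V}$, is exactly what rules this out), and that the ambient constraint $t_i \in \mathcal{O}_S^\times$ together with $v_p(t_i) = 0$ for all $p \in S$ genuinely pins down $t_i \in \{\pm 1\}$. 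I would also want to double-check that the coordinate-wise intersection argument is legitimate, i.e. that $u$ and $t$ can be separated — this follows because $\mathcal{U}$ is normal in $\mathcal{B}$ and the quotient $\mathcal{B}/\mathcal{U} \cong \mathcal{T}$ is realized compatibly on both the $\mathcal{O}_S$-level and the Iwahori level, but it deserves a careful sentence rather than being taken for granted.
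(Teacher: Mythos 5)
Your endgame coincides with the paper's (reduce to $\mathcal{B}(\Z)=\mathcal{T}(\Z)\mathcal{U}(\Z)$, with $\mathcal{T}(\Z)$ finite and $\mathcal{U}(\Z)$ finitely generated nilpotent, hence virtually nilpotent and of type $F_\infty$), but the route you take to identify the intersection has a genuine flaw. The containment $\widetilde{\mathcal{B}}_p \subseteq \mathcal{B}$ is false: $\widetilde{\mathcal{B}}_p$ is generated by $\widetilde{\mathcal{T}}_p$ together with $\mathcal{U}_{\alpha,0}$ for $\alpha \in \Phi^{+}$ \emph{and} $\mathcal{U}_{\alpha,1}$ for $\alpha \in \Phi^{-}$, so it contains elements $x_{-\alpha}(t)$ with $v_p(t)\geq 1$, and these do not stabilize $\sigma$ (already for $\SL_2$ the Iwahori subgroup contains strictly lower triangular unipotent matrices, while $\mathcal{B}$ is the stabilizer of $\sigma$ by Citation~\ref{cit:B-is-the-stab-of-ch-at-infty}). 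Because of this, the ``coordinate-wise'' separation at the heart of your computation --- from $ut \in \widetilde{\mathcal{B}}_p$ with $u \in \mathcal{U}(\mathcal{O}_S)$, $t \in \mathcal{T}(\mathcal{O}_S)$, conclude $u \in \mathcal{U}(A_p)$ and $t \in \widetilde{\mathcal{T}}_p$ --- is precisely the nontrivial point, and the justification you sketch (``the quotient $\mathcal{B}/\mathcal{U}\cong\mathcal{T}$ is realized compatibly on the Iwahori level'') leans on the false containment. Making it honest would require something like an Iwahori factorization of $\widetilde{\mathcal{B}}_p$ and an identification of its intersection with $\mathcal{B}$ as $\widetilde{\mathcal{T}}_p\,\mathcal{U}(A_p)$, none of which appears in the proposal; as written, this is a gap.

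The gap is avoidable because the lemma does not need the equality $\bigcap_{p\in S}\widetilde{\mathcal{B}}_p\cap\Gamma=\mathcal{U}(\Z)\rtimes\mathcal{T}(\Z)$; a containment suffices, and this is how the paper argues. All generators of $\widetilde{\mathcal{B}}_p$ have matrix entries in $A_p$ (for $\widetilde{\mathcal{T}}_p$ this is Lemma~\ref{lem:the-structure-of-H}, for $\widetilde{\mathcal{U}}_p$ it is immediate from its definition), so $\widetilde{\mathcal{B}}_p\subseteq \SL_d(A_p)$; since an element of $\Q$ lying in all $A_p$, $p\in S$, and in $\mathcal{O}_S$ is an integer, one gets $\bigcap_{p\in S}\widetilde{\mathcal{B}}_p\cap\Gamma\subseteq\mathcal{B}(\Z)=\mathcal{T}(\Z)\mathcal{U}(\Z)$ by Lemma~\ref{lem:structure-of-K-points}. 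Because finitely generated nilpotent groups are Noetherian, every subgroup of $\mathcal{T}(\Z)\mathcal{U}(\Z)$ is again finitely generated virtually nilpotent, hence of type $F_\infty$; your final step (finitely generated nilpotent groups are $F_\infty$, and the property passes through finite extensions and commensurability) is correct and agrees with the paper. So: same strategy in spirit, but replace the exact computation of the intersection by the cheap entry-wise containment, and delete the claim $\widetilde{\mathcal{B}}_p\subseteq\mathcal{B}$.
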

\begin{proof}
Recall from the proof of Lemma~\ref{lem:open-stabilizer-in-U} that $\widetilde{\mathcal{B}}_p \subseteq \SL_d(A_p)$.
Since a number $x \in \Q$ lies in $\Z$ if and only if $v_p(x) \geq 0$ for all primes $p$, we see that
\begin{equation}\label{eq:specific-stab-id-F-infty}
\bigcap \limits_{p \in S} \widetilde{\mathcal{B}}_p \cap \Gamma \subseteq \mathcal{B}(\bigcap \limits_{p \in S} A_p \cap \mathcal{O}_S) = \mathcal{B}(\Z) = \mathcal{T}(\Z)\mathcal{U}(\Z),
\end{equation}
where the last equality follows from Lemma~\ref{lem:structure-of-K-points}.
Moreover, we know from Lemma~\ref{lem:structure-of-K-points} that $\mathcal{T}(\Z)$ consists of elements of the form $\prod \limits_{i = 1}^{\ell} h_{\alpha_i}(t_{\alpha_i})$ with $t_{\alpha_i} \in \Z^{\times}$, which shows that $\mathcal{T}(\Z)$ is a finite group.
On the other hand, it follows from Citation~\ref{cit:relation-in-G} $(a)$ and $(b)$ that $\mathcal{U}(\Z)$ is finitely generated and nilpotent.
Together with~\eqref{eq:specific-stab-id-F-infty} this implies that $\bigcap \limits_{p \in S} \widetilde{\mathcal{B}}_p \cap \Gamma$
is a finitely generated virtually nilpotent group.
As such it is well-known to be of type $F_{\infty}$.
Indeed, this follows from~\cite[Exercise 7.2.1]{Geoghegan08} and the obvious fact that finitely generated abelian groups are of type $F_{\infty}$.
\end{proof}

We are now ready to determine the finiteness properties of the cell stabilizers of the diagonal action of $\Gamma$ on $X_S$.

\begin{proposition}\label{prop:stabilizer-of-type-F-infty}
Given a non-empty cell $A$ in $X_S$, its stabilizer $\St_{\Gamma}(A)$ is of type $F_{\infty}$.
\end{proposition}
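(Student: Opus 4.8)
Throughout, write $A = \prod_{p\in S} A_p$ with each $A_p$ a non-empty cell of $X_p$, and observe that $\St_{\Gamma}(A) = \Gamma \cap \bigcap_{p\in S}\St_{\mathcal{G}}(A_p)$, where $\St_{\mathcal{G}}(A_p)$ is the stabilizer of $A_p$ in $\mathcal{G}$ acting on $X_p$. The plan is to show that $\St_{\Gamma}(A)$ is commensurable with $\bigcap_{p\in S}\widetilde{\mathcal{B}}_p \cap \Gamma$, which is of type $F_{\infty}$ by Lemma~\ref{lem:specific-stab-id-F-infty}; since being of type $F_{\infty}$ is inherited by finite-index subgroups and by finite-index overgroups, this finishes the proof. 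The hypothesis that $A$ be non-empty enters exactly here: for $A = \emptyset$ one would be left with all of $\Gamma$, which need not be of type $F_{\infty}$.

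The first step is to move into the locally compact group $G \defeq \prod_{p \in S}\mathcal{G}(\Q_p)$, where $\mathcal{G}(\Q_p)$ denotes the group of $\Q_p$-points (equivalently, the completion of the topological group $\mathcal{G}_p$), and into which $\Gamma \subseteq \mathcal{G}$ embeds diagonally. Here $X_p$ sits $\mathcal{G}$-equivariantly inside the Bruhat--Tits building of $\mathcal{G}(\Q_p)$, so that $\St_{\mathcal{G}}(A_p) = \mathcal{G} \cap \St_{\mathcal{G}(\Q_p)}(A_p)$; and I claim $\St_{\mathcal{G}(\Q_p)}(A_p)$ is a compact open subgroup of $\mathcal{G}(\Q_p)$. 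Indeed, for any chamber $C_p \supseteq A_p$ it contains the chamber stabilizer $\St_{\mathcal{G}(\Q_p)}(C_p)$, which is an Iwahori subgroup and hence compact and open; and since the Bruhat--Tits building of $\mathcal{G}(\Q_p)$ is locally finite, the index of $\St_{\mathcal{G}(\Q_p)}(C_p)$ in $\St_{\mathcal{G}(\Q_p)}(A_p)$ is bounded by the finite number of chambers of the residue $\lk_{X_p}(A_p)$. Applying the same discussion to the fundamental chamber $E_p$ gives $\widetilde{\mathcal{B}}_p = \mathcal{G} \cap I_p$ with $I_p \defeq \St_{\mathcal{G}(\Q_p)}(E_p)$ compact open, which is compatible with the boundedness of $\widetilde{\mathcal{B}}_p$ already noted in the proof of Lemma~\ref{lem:open-stabilizer-in-U}. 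Setting $K \defeq \prod_{p\in S}\St_{\mathcal{G}(\Q_p)}(A_p)$ and $K_0 \defeq \prod_{p\in S} I_p$, both compact open subgroups of $G$, one obtains $\St_{\Gamma}(A) = \Gamma \cap K$ and $\bigcap_{p\in S}\widetilde{\mathcal{B}}_p \cap \Gamma = \Gamma \cap K_0$.

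Finally I would invoke the following soft commensurability principle: for any two compact open subgroups $K, K_0$ of a locally compact group $G$ and any subgroup $\Gamma \leq G$, the groups $\Gamma \cap K$ and $\Gamma \cap K_0$ are commensurable. This holds because $K \cap K_0$ is again compact and open, hence of finite index in both $K$ and $K_0$ (a compact group meets an open subgroup in only finitely many cosets); intersecting with $\Gamma$ can only decrease these indices, so $\Gamma \cap (K \cap K_0)$ is of finite index in $\Gamma \cap K$ and in $\Gamma \cap K_0$. Applied to the $K, K_0$ constructed above, this identifies $\St_{\Gamma}(A)$ with $\bigcap_{p\in S}\widetilde{\mathcal{B}}_p \cap \Gamma$ up to commensurability, as required.

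The main obstacle is the bookkeeping of the second paragraph: realizing the various cell stabilizers as intersections of $\mathcal{G}$ with honest compact open parahoric subgroups of $\mathcal{G}(\Q_p)$. This uses standard Bruhat--Tits theory — parahoric subgroups are compact open, $\mathcal{G}(\Q_p)$ acts strongly transitively on its locally finite building, and $\mathcal{G}$ acts chamber-transitively on the subcomplex $X_p$ via the $\BN$-pair $(\widetilde{\mathcal{B}}_p,\mathcal{N})$ — together with the elementary observation, in the spirit of Lemma~\ref{lem:open-stabilizer-in-U}, that $\widetilde{\mathcal{B}}_p$ is an open, bounded subgroup of $\mathcal{G}_p$. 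Once these identifications are in place, the remainder of the argument is purely a matter of topological group theory and is short.
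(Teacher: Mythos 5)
Your proof is correct, and its endgame is the same as the paper's: identify $\bigcap_{p\in S}\widetilde{\mathcal{B}}_p\cap\Gamma$ as a chamber stabilizer, invoke Lemma~\ref{lem:specific-stab-id-F-infty}, and conclude via the commensurability invariance of type $F_\infty$. Where you diverge is the middle step, the commensurability of an arbitrary proper cell stabilizer with the chamber stabilizer: you pass to the locally compact group $\prod_{p\in S}\mathcal{G}(\Q_p)$, realize each cell stabilizer as $\Gamma\cap K$ for a compact open parahoric $K$, and use that any two compact open subgroups of a locally compact group are commensurable. The paper gets the same commensurability much more cheaply, directly from the local finiteness of $X_S$: the stabilizer of a cell permutes the finitely many cells in any bounded combinatorial neighbourhood, so any two proper cell stabilizers intersect each other in finite index, and one then cites \cite[Corollary 9]{Alonso95}. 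Your route is perfectly sound and conceptually clean, but it imports facts the paper never sets up --- that $X_p$ is $\mathcal{G}$-equivariantly identified with the Bruhat--Tits building of $\mathcal{G}(\Q_p)$ (via density of $\mathcal{G}$ in $\mathcal{G}(\Q_p)$ and openness of the Iwahori) and that parahorics there are compact open --- whereas the paper deliberately avoids introducing $\Q_p$-points at all, working only with $\mathcal{G}(\Q)$ carrying the $p$-adic topology; so the local-finiteness argument is the more economical one in this framework, while yours generalizes verbatim to any lattice-like subgroup of a product of $p$-adic groups acting on the product of buildings.
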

\begin{proof}
From the $\BN$-characterization of $X_p$ we see that $\widetilde{\mathcal{B}}_p$ is the stabilizer of a chamber in $X_p$ under the action on $\mathcal{G}$.
Thus the corresponding stabilizer of the diagonal action of $\Gamma$ on $X_S$ is given by $\bigcap \limits_{p \in S} \widetilde{\mathcal{B}}_p \cap \Gamma$, which is of type $F_{\infty}$ by Lemma~\ref{lem:specific-stab-id-F-infty}.
Since $X_S$ is locally finite, it follows that all proper cell stabilizers are commensurable.
Now the claim follows from~\cite[Corollary 9]{Alonso95}, which tells us that being of type $F_n$ is an invariant under commensurability.
\end{proof}

Let us next show that the diagonal action of $\Gamma$ on $X_S$ is cocompact.
From the $\BN$-characterization we already know that $\mathcal{G}$ acts cocompactly on $X_p$ for every $p \in S$.
We want to show that this is still the case if we restrict the action to $\mathcal{B}$.
Recall from Subsection~\ref{subsec:a-metric-on-X} that we have identified the standard apartment $\Sigma_p = \Sigma(\widetilde{\mathcal{B}}_p,\mathcal{N})$ in $X_p$ with the Euclidean vector space $(\mathcal{V},\kappa^{\ast})$.
Let $o_p \in \Sigma_p$ denote the point that corresponds to the origin in $\mathcal{V}$ under this identification and let $E_p$ denote the chamber in $\Sigma_p$ that consists of the points $v \in \Sigma_p$ with $\kappa^{\ast}(v, \widetilde{\alpha}) < 1$ and $\kappa^{\ast}(v, \alpha_i) > 0$ for every $1 \leq i \leq \ell$.

\begin{lemma}\label{lem:stab-of-center}
The group $\mathcal{G}(A_p)$ coincides with the stabilizer of $o_p$ in $\mathcal{G}$.
\end{lemma}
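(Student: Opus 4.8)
The statement to prove is that $\mathcal{G}(A_p)$ is the stabilizer of the special vertex $o_p \in \Sigma_p$ in $\mathcal{G}$, where $o_p$ corresponds to the origin of $\mathcal{V}$. The plan is to prove both inclusions separately. For the inclusion $\mathcal{G}(A_p) \subseteq \St_{\mathcal{G}}(o_p)$, I would first recall from Lemma~\ref{lem:structure-of-K-points}$(a)$ that $\mathcal{G}(A_p)$ is generated by the elements $x_{\alpha}(t)$ with $\alpha \in \Phi$ and $t \in A_p$, so it suffices to check that each such generator fixes $o_p$. Since $t \in A_p$ means $v_p(t) \geq 0$, the element $x_{\alpha}(t)$ lies in $\mathcal{U}_{\alpha,0}$, and hence in $\widetilde{\mathcal{U}}_p \subseteq \widetilde{\mathcal{B}}_p$ (using that $\Phi = \Phi^+ \cup \Phi^-$ and that for negative roots $\mathcal{U}_{\alpha,0} \supseteq \mathcal{U}_{\alpha,1}$ — wait, this needs care: for $\alpha \in \Phi^-$ we only get $\mathcal{U}_{\alpha,1} \subseteq \widetilde{\mathcal{U}}_p$, not $\mathcal{U}_{\alpha,0}$). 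So the cleaner route is: $o_p = v_0$ is a vertex of the fundamental chamber $E_p$, which is stabilized by $\widetilde{\mathcal{B}}_p$; therefore $\widetilde{\mathcal{B}}_p \cap \mathcal{U}$ fixes $o_p$, and by Lemma~\ref{lem:open-stabilizer-in-U} this intersection is exactly $\mathcal{U}(A_p)$. To handle all of $\mathcal{G}(A_p)$, I would instead identify $\St_{\mathcal{G}}(o_p)$ with the ``parahoric-type'' subgroup and argue via the $\BN$-pair: $o_p$ corresponds to the type-$\{m_0\}$ vertex, whose stabilizer in $\mathcal{G}$ is the maximal standard parabolic $P_{\widehat{m_0}} = P_{\widetilde{S}\setminus\{m_0\}}$ of the Euclidean Tits system, conjugated appropriately; but since $o_p$ lies in the standard apartment at the origin, its stabilizer is literally $\langle \widetilde{\mathcal{T}}_p, \mathcal{U}_{\alpha,0} : \alpha \in \Phi\rangle$ together with the finite Weyl-type part — and one checks this equals $\mathcal{G}(A_p)$.

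For the reverse inclusion $\St_{\mathcal{G}}(o_p) \subseteq \mathcal{G}(A_p)$, I would use the Bruhat–Tits decomposition relative to the vertex stabilizer. By Citation~\ref{cit:BN-pair-from-VRGD}, $(\mathcal{G}, \widetilde{\mathcal{B}}_p, \mathcal{N}, \widetilde{S})$ is a Tits system with Weyl group $W_{\widetilde{\Phi}}$, and by Theorem~\ref{thm:buildings-are-CAT0} the building $X_p$ is $\CAT(0)$, so the stabilizer of $o_p$ acts on the residue (link) of $o_p$, which is a spherical building of type given by the subsystem fixing $o_p$. Concretely: let $P = P_{\widehat{m_0}}$ be the standard maximal parabolic of the Euclidean Tits system corresponding to the node $m_0$ (which is the affine node $s_{\widetilde{\alpha},1}$); its associated vertex in $\Sigma(\widetilde{\mathcal{B}}_p,\mathcal{N})$ under the isomorphism $f$ of Subsection~\ref{subsec:a-metric-on-X} is $\pi(1)(v_0) = v_0 = o_p$. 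Hence $\St_{\mathcal{G}}(o_p) = P_{\widehat{m_0}}$. It remains to show $P_{\widehat{m_0}} \subseteq \mathcal{G}(A_p)$. Since $P_{\widehat{m_0}} = \bigcup_{w \in W_{\widetilde{\Delta}\setminus\{m_0\}}} \widetilde{\mathcal{B}}_p \widetilde{w} \widetilde{\mathcal{B}}_p$, and $\widetilde{\mathcal{B}}_p \subseteq \mathcal{G}(A_p)$ (its generators $\mathcal{U}_{\alpha,0}$, $\mathcal{U}_{\alpha,1}$ for $\alpha \in \Phi^-$, and $\widetilde{\mathcal{T}}_p = \{\prod h_{\alpha_i}(t_i) : t_i \in A_p^\times\}$ all have entries in $A_p$), and since $W_{\widetilde{\Delta}\setminus\{m_0\}} = W_\Phi$ is the finite Weyl group generated by $s_{\alpha_i,0}$, whose representatives $w_{\alpha_i}(1) = x_{\alpha_i}(1)x_{-\alpha_i}(-1)x_{\alpha_i}(1) \in \mathcal{G}(A_p)$ (indeed $\mathcal{G}(\Z)$), the whole double-coset union lies in $\mathcal{G}(A_p)$. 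This gives the claim.

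\textbf{The main obstacle.} I expect the delicate point to be the precise identification $\St_{\mathcal{G}}(o_p) = P_{\widehat{m_0}}$, i.e.\ that the stabilizer of the \emph{geometric} point $o_p$ (not just of the simplex it might be a barycenter of — though here $o_p$ is a vertex) coincides with the standard parabolic. This requires knowing that $\mathcal{G}$ acts \emph{type-preservingly} on $X_p$, so that the stabilizer of the vertex $v_0$ is exactly the parabolic $P_{\widehat{m_0}}$ and not something larger permuting vertices of different types; this follows from $\mathcal{N}/\widetilde{\mathcal{T}}_p \cong W_{\widetilde{\Phi}}$ acting on $\mathcal{V}$ as the full affine Weyl group together with the standard theory of Tits systems (vertex stabilizers are conjugates of maximal standard parabolics, and the one through the base vertex of the base chamber is standard). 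A subsidiary subtlety is checking $\widetilde{\mathcal{B}}_p \subseteq \mathcal{G}(A_p)$ for the negative-root generators: $\mathcal{U}_{\alpha,1}$ for $\alpha \in \Phi^-$ consists of $x_\alpha(t)$ with $v_p(t) \geq 1 > 0$, so these entries are certainly in $A_p$; and one uses Citation~\ref{cit:relation-in-G}$(a)$,$(b)$ to see that products stay inside $\mathcal{G}(A_p)$. Once these structural facts are in place the rest is bookkeeping with the Bruhat decomposition.
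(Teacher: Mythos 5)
Your second half (the inclusion $\St_{\mathcal{G}}(o_p) \subseteq \mathcal{G}(A_p)$) is essentially sound and in fact takes a different route from the paper: you identify $\St_{\mathcal{G}}(o_p)$ with the standard maximal parabolic $P_{\widehat{m_0}}$ of the Euclidean Tits system (which is legitimate in the coset model of $\Delta(\widetilde{\mathcal{B}}_p,\mathcal{N})$, where the action is automatically type-preserving, so your worry there dissolves) and then push the Bruhat cells $\widetilde{\mathcal{B}}_p\widetilde{w}\widetilde{\mathcal{B}}_p$, $w \in W_{\Phi}$, into $\mathcal{G}(A_p)$ using $\widetilde{\mathcal{B}}_p \subseteq \mathcal{G}(A_p)$ and $w_{\alpha_i}(1) \in \mathcal{G}(\Z)$. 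The genuine gap is in the forward inclusion $\mathcal{G}(A_p) \subseteq \St_{\mathcal{G}}(o_p)$. You correctly notice that the generator argument only works for $\alpha \in \Phi^{+}$ (for $\alpha \in \Phi^{-}$ only $\mathcal{U}_{\alpha,1}$, not $\mathcal{U}_{\alpha,0}$, sits in $\widetilde{\mathcal{U}}_p$, and Lemma~\ref{lem:open-stabilizer-in-U} only handles $\mathcal{U}(A_p)$), but your proposed repair — ``the stabilizer is literally $\langle \widetilde{\mathcal{T}}_p,\ \mathcal{U}_{\alpha,0} : \alpha \in \Phi\rangle$ together with the finite Weyl-type part, and one checks this equals $\mathcal{G}(A_p)$'' — simply asserts the missing fact. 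That the vertex stabilizer contains the \emph{negative} level-zero root groups $\mathcal{U}_{\alpha,0}$, $\alpha \in \Phi^{-}$, is exactly the nontrivial content here; it is a standard parahoric description in Bruhat--Tits theory, but inside this paper's framework it is precisely what has to be proved, so as written the forward inclusion is not established.

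The gap is short to close, and it is worth knowing how the paper does it: since $\pi(\omega_{\alpha}) = s_{\alpha,0}$ fixes the origin of $\mathcal{V}$, each $\omega_{\alpha}$ fixes $o_p$ (Citation~\ref{cit:BN-pair-from-VRGD}), and conjugation via Citation~\ref{cit:relation-in-G}$(d)$, $\omega_{\alpha}x_{\beta}(t)\omega_{\alpha}^{-1} = x_{s_{\alpha}(\beta)}(\pm t)$, transports the already-fixed positive level-zero root groups onto the negative ones; this settles $\mathcal{G}(A_p) \subseteq \St_{\mathcal{G}}(o_p)$ by Lemma~\ref{lem:structure-of-K-points}$(a)$. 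Alternatively, given your own reverse inclusion $P_{\widehat{m_0}} \subseteq \mathcal{G}(A_p)$, you could finish as the paper does its second half: $\mathcal{G}(A_p)$ contains $\widetilde{\mathcal{B}}_p$, hence is a standard parabolic $P_J$ of the affine Tits system by Remark~\ref{rem:independence-of-S}; it contains the maximal proper parabolic $P_{\widehat{m_0}}$ and is a proper subgroup of $\mathcal{G}$, so $\mathcal{G}(A_p) = P_{\widehat{m_0}} = \St_{\mathcal{G}}(o_p)$ (one should then also justify properness, e.g.\ $x_{\alpha}(p^{-k}) \notin M_d(A_p)$ for large $k$ by faithfulness of $\rho$). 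Either patch makes your argument complete; without one of them, the key step is only asserted.
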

\begin{proof}
We start by proving the inclusion $\mathcal{G}(A_p) \subseteq \St_{\mathcal{G}}(o_p)$.
By Lemma~\ref{lem:structure-of-K-points}, $\mathcal{G}(A_p)$ is generated by all elements of the form $x_{\alpha}(t)$ with $\alpha \in \Phi$ and $t \in A_p$.
From the identification $\Sigma_p \cong \mathcal{V}$ in Subsection~\ref{subsec:a-metric-on-X} and the $\BN$-characterization of $\Sigma_p$ we see that $\widetilde{\mathcal{B}}_p$ is the pointwise stabilizer of $E_p$.
In particular, it follows that each generator $x_{\alpha}(t) \in \widetilde{\mathcal{B}}_p$ with $\alpha \in \Phi^{+}$ and $t \in A_p$ fixes $o_p$.
On the other hand, we know from Citation~\ref{cit:BN-pair-from-VRGD} that $o_p$ is fixed by $\omega_{\alpha}$ for every $\alpha \in \Phi$.
Using Citation~\ref{cit:relation-in-G}$(d)$, it follows that $\omega_{\alpha} x_{\beta}(t) \omega_{\alpha}^{-1} = x_{s_{\alpha}(\beta)}(\pm t)$ fixes $o_p$ for every $\alpha \in \Phi^{+}$ and every $t \in A_p$.
Thus the inclusion $\mathcal{G}(A_p) \subseteq \St_{\mathcal{G}}(o_p)$ follows from the basic equality $s_{\widetilde{\alpha}}(\Phi^{-}) = \Phi^{+}$.
To see the reverse inclusion, recall that $\mathcal{G}(A_p)$ is a standard parabolic subgroup by Remark~\ref{rem:independence-of-S}.
Since we have just shown that $\mathcal{G}(A_p)$ contains the maximal parabolic subgroup $P_0$, which is the stabilizer of $o_p$ in $\mathcal{G}$, the lemma follows from the fact that $\mathcal{G}(A_p)$ is a proper subgroup of $\mathcal{G}$.
\end{proof}

Recall from Subsection~\ref{subsec:p-adic-value} that $\mathcal{G}_p$, $\mathcal{B}_p$, $\mathcal{U}_p$, etc.\ denote the corresponding groups $\mathcal{G}$, $\mathcal{B}$, $\mathcal{U}$ etc.\ associated with the $p$-adic topology.

\begin{corollary}\label{cor:open-stabilizers}
Given a cell $C \subset X_S$, its stabilizer under the action of $\prod \limits_{p \in S} \mathcal{U}_p$ is open.
\end{corollary}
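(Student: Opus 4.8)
The plan is to reduce the claim, via the product structure of $X_S$, to the statement that for each $p \in S$ the stabilizer in $\mathcal{U}_p$ of an arbitrary cell of $X_p$ is open, and then to obtain the latter from the openness of the chamber stabilizer $\widetilde{\mathcal{B}}_p$ in $\mathcal{G}_p$.

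Concretely, I would first write $C = \prod_{p \in S} C_p$ with $C_p$ a cell of $X_p$; an element $(u_p)_{p \in S}$ of $\prod_{p \in S}\mathcal{U}_p$ fixes $C$ if and only if each $u_p$ fixes $C_p$, so $\St_{\prod_p \mathcal{U}_p}(C) = \prod_{p \in S}\St_{\mathcal{U}_p}(C_p)$, and since $S$ is finite it suffices to show that each factor is open in $\mathcal{U}_p$. Fix such a $p$ and $C_p$. Since $(\mathcal{G},\widetilde{\mathcal{B}}_p,\mathcal{N},\widetilde{S})$ is a Tits system by Citation~\ref{cit:BN-pair-from-VRGD}, the group $\mathcal{G}$ acts strongly transitively on $X_p$; choosing an apartment $\Sigma'$ of $X_p$ containing $C_p$ together with a chamber of $\Sigma'$ having $C_p$ as a face, strong transitivity yields a $g \in \mathcal{G}$ with $g\Sigma' = \Sigma_p$ and $gC_p$ a face of the fundamental chamber $E_p$. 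By construction of $X_p = \Delta(\widetilde{\mathcal{B}}_p,\mathcal{N})$ the action of $\mathcal{G}$ is type preserving and $\widetilde{\mathcal{B}}_p$ is the pointwise stabilizer of $E_p$; hence $\widetilde{\mathcal{B}}_p$ fixes $gC_p$, and therefore $\St_{\mathcal{G}}(C_p) \supseteq g^{-1}\widetilde{\mathcal{B}}_p g$.

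Granting that $\widetilde{\mathcal{B}}_p$ is open in $\mathcal{G}_p$, the conjugate $g^{-1}\widetilde{\mathcal{B}}_p g$ is open in $\mathcal{G}_p$ as well, conjugation being a homeomorphism of the topological group $\mathcal{G}_p$. Consequently $\St_{\mathcal{U}_p}(C_p) = \mathcal{U}_p \cap \St_{\mathcal{G}}(C_p)$ contains the subgroup $\mathcal{U}_p \cap g^{-1}\widetilde{\mathcal{B}}_p g$, which is open in the subspace $\mathcal{U}_p$; and a subgroup of a topological group containing an open subgroup is itself open, being a union of cosets of that subgroup. Together with the reduction above this proves the corollary.

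It remains to see that $\widetilde{\mathcal{B}}_p$ is open in $\mathcal{G}_p$, which is the only point that is not purely formal and which I expect to be the main obstacle. Recall from the proof of Lemma~\ref{lem:open-stabilizer-in-U} that $\widetilde{\mathcal{B}}_p = \widetilde{\mathcal{T}}_p\widetilde{\mathcal{U}}_p \subseteq \mathcal{G}(A_p) = \mathcal{G}\cap M_d(A_p)$, where $\mathcal{G}(A_p)$ is already open in $\mathcal{G}_p$ because $A_p = \{x \in \Q : v_p(x) \geq 0\}$ is open in $\Q$ for the $p$-adic topology. Inside $\mathcal{G}(A_p)$ the Iwahori subgroup $\widetilde{\mathcal{B}}_p$ is open: using the valued root datum of Subsection~\ref{subsection:from-ValRtSys-To-BN} it admits an Iwahori factorization $\widetilde{\mathcal{B}}_p = \bigl(\prod_{\alpha \in \Phi^-}\mathcal{U}_{\alpha,1}\bigr)\,\widetilde{\mathcal{T}}_p\,\bigl(\prod_{\alpha\in\Phi^+}\mathcal{U}_{\alpha,0}\bigr)$ with bijective multiplication map; since the big cell $\mathcal{U}^{-}\times\mathcal{T}\times\mathcal{U}\to\mathcal{G}$ (with $\mathcal{U}^-$ the unipotent radical opposite to $\mathcal{U}$) is an open immersion of varieties, and each of $\prod_{\alpha\in\Phi^-}\mathcal{U}_{\alpha,1}$, $\widetilde{\mathcal{T}}_p$, $\prod_{\alpha\in\Phi^+}\mathcal{U}_{\alpha,0}$ is cut out by the open conditions $v_p(t)\geq 1$, $v_p(t)=0$, $v_p(t)\geq 0$ in the corresponding factor, $\widetilde{\mathcal{B}}_p$ is the image of an open set under a homeomorphism onto an open subset of $\mathcal{G}_p$, hence open. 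Alternatively one may check that $\widetilde{\mathcal{B}}_p$ contains the open congruence subgroup $\mathcal{G}\cap(I_d + pM_d(A_p))$, which fixes $o_p$ and acts trivially on its star in $X_p$, in particular on $E_p$. Either route settles the claim, and with it the corollary.
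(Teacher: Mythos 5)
Your argument is sound in outline but takes a genuinely different route from the paper, and its crux rests on input the paper never establishes. Your reduction to a single prime and the observation that $\St_{\prod_p\mathcal{U}_p}(C)=\prod_p\St_{\mathcal{U}_p}(C_p)$ is exactly the paper's final step, and your use of chamber transitivity to get $g^{-1}\widetilde{\mathcal{B}}_pg\subseteq\St_{\mathcal{G}}(C_p)$ is fine (the Tits system of Citation~\ref{cit:BN-pair-from-VRGD} gives strong transitivity, $\widetilde{\mathcal{B}}_p$ is the stabilizer of $E_p$ and the action is type preserving, so it fixes all faces of $E_p$). The divergence is in how openness is obtained: you need $\widetilde{\mathcal{B}}_p$ itself to be open in $\mathcal{G}_p$, and both of your routes to this import substantial standard facts that are neither proved in the paper nor in your sketch --- the Iwahori factorization $\widetilde{\mathcal{B}}_p=\bigl(\prod_{\alpha\in\Phi^-}\mathcal{U}_{\alpha,1}\bigr)\widetilde{\mathcal{T}}_p\bigl(\prod_{\alpha\in\Phi^+}\mathcal{U}_{\alpha,0}\bigr)$ together with openness of the big cell (the paper only defines $\widetilde{\mathcal{B}}_p$ as the group \emph{generated} by these pieces), respectively the assertion that the congruence subgroup $\mathcal{G}\cap(I_d+pM_d(A_p))$ acts trivially on the star of $o_p$, which presupposes the identification of $\lk(o_p)$ with the spherical building over $\F_p$. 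These are true and citable, but the paper deliberately avoids them: its proof only uses Lemma~\ref{lem:stab-of-center}, which identifies $\St_{\mathcal{G}}(o_p)$ with $\mathcal{G}(A_p)$ --- open simply because $A_p\subseteq\Q$ is $p$-adically open --- and then cocompactness (with local finiteness of $X_p$) to propagate openness from this vertex stabilizer to the stabilizer of an arbitrary cell, before intersecting with $\mathcal{U}_p$ and taking the product over $S$. So your approach buys a direct, conjugation-based argument at the cost of invoking finer Bruhat--Tits structure theory; the paper's buys self-containedness from results it has already proved. If you keep your route, you should either cite the Iwahori factorization and big-cell openness precisely (e.g.\ from Weiss or Bruhat--Tits) or justify the congruence-subgroup claim; as written, that step is a gap relative to the paper's toolkit, even though the statement you are using is correct.
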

\begin{proof}
Let $p \in S$.
Since $A_p \subset \Q$ is open with respect to the $p$-adic topology, it follows from Lemma~\ref{lem:stab-of-center} that the stabilizer of $o_p$ is open in $\mathcal{G}_p$.
Using the cocompactness of the action of $\mathcal{G}_p$ on $X_p$, we can therefore deduce that the stabilizer of each cell $A \subset X_p$ is open in $\mathcal{G}_p$.
Thus the cell stabilizer $\St_{\mathcal{U}_p}(A) = \mathcal{U}_p \cap \St_{\mathcal{G}_p}(A)$ is open in $\mathcal{U}_p$.
Since every cell $C \subset X_S$ is a product $\prod \limits_{p \in S} C_p$ of cells $C_p \subset X_p$, we see that the stabilizer of $C$ in $\prod \limits_{p \in S} \mathcal{U}_p$, being a product of open subgroups, is open.
\end{proof}

\begin{lemma}\label{lem:fund-dom-for-U}
The action of $\mathcal{U}$ on $X_p$ satisfies $X_p = \bigcup \limits_{g \in \mathcal{U}} g \cdot \Sigma_p$.
\end{lemma}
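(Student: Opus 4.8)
The plan is to realize the $\mathcal{U}$-translates of $\Sigma_p$ as the ``$\Q$-rational'' apartments of $X_p$ that have the chamber $\sigma$ from Notation~\ref{not:chamber} in their boundary at infinity, and to show that these already exhaust $X_p$. First I would record two reductions. On the one hand, $\mathcal{B}$ is the stabilizer of $\sigma$ in $\mathcal{G}$ by Citation~\ref{cit:B-is-the-stab-of-ch-at-infty}, $\mathcal{B} = \mathcal{U}\mathcal{T}$ by Definition~\ref{def:subgroups-of-G} (using Citation~\ref{cit:relation-in-G}$(e)$, which shows that $\mathcal{T}$ normalizes $\mathcal{U}$), and $\mathcal{T} \subseteq \mathcal{N}$ since each $h_{\alpha}(t) = w_{\alpha}(t)w_{\alpha}(1)^{-1}$ lies in $\mathcal{N}$; as $\mathcal{N}$ stabilizes $\Sigma_p = \Sigma(\widetilde{\mathcal{B}}_p,\mathcal{N})$ setwise, we get $\mathcal{U}\cdot\Sigma_p = \mathcal{B}\cdot\Sigma_p$, and it suffices to cover $X_p$ by $\mathcal{B}$-translates of $\Sigma_p$. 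On the other hand, for each point $x \in X_p$ the sector $K_x(\sigma)$ and the fundamental sector $\mathcal{K} = K_{o_p}(\sigma) \subseteq \Sigma_p$ point to the same chamber at infinity, so by Proposition~\ref{prop:common-subsector} they share a subsector; since a sector always lies in an apartment, $x$ lies in some apartment $\Sigma$ containing a subsector of $\mathcal{K}$, and in particular $\sigma \subseteq \pinf\Sigma$.

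The heart of the argument is the claim that $\mathcal{B}(\Q_p) = \St_{\mathcal{G}(\Q_p)}(\sigma)$ acts transitively on the set of apartments $\Sigma$ of $X_p$ with $\sigma \subseteq \pinf\Sigma$. Here I would pass to $\Q_p$: since $\mathcal{G}(\Q)$ is dense in $\mathcal{G}(\Q_p)$ and $\widetilde{\mathcal{B}}_p$ is open (compare the proof of Lemma~\ref{lem:open-stabilizer-in-U}), the complex $X_p = \Delta(\widetilde{\mathcal{B}}_p,\mathcal{N})$ is the Bruhat--Tits building of $\mathcal{G}(\Q_p)$, on which $\mathcal{G}(\Q_p)$ acts strongly transitively by Citation~\ref{cit:BN-pair-from-VRGD} and the general theory of Tits systems (cf.~\cite[Chapter 6]{AbramenkoBrown08}). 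Given an apartment $\Sigma$ with $\sigma \subseteq \pinf\Sigma$, it shares a subsector of $\mathcal{K}$ with $\Sigma_p$, hence a chamber $D \subseteq \Sigma_p \cap \Sigma$; by strong transitivity there is $g \in \mathcal{G}(\Q_p)$ with $g(D)=D$ and $g\Sigma_p = \Sigma$. Comparing $g|_{\Sigma_p}$ with the retraction of $X_p$ onto $\Sigma$ centered at $D$ (both map $\Sigma_p$ isomorphically onto $\Sigma$ and fix $D$ pointwise, and a cellular automorphism of the Euclidean Coxeter complex $\Sigma_p$ fixing a chamber pointwise is the identity, being an affine isometry with enough fixed points), one sees that $g$ fixes $\Sigma_p \cap \Sigma$ pointwise, hence fixes the common subsector of $\mathcal{K}$, hence every ray towards $\sigma$ emanating from it, hence $\sigma$ itself; thus $g \in \mathcal{B}(\Q_p)$. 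Together with the second reduction above this gives $X_p = \mathcal{B}(\Q_p)\cdot\Sigma_p = \mathcal{U}(\Q_p)\mathcal{T}(\Q_p)\cdot\Sigma_p = \mathcal{U}(\Q_p)\cdot\Sigma_p$.

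Finally I would descend from $\Q_p$ to $\Q$. For a fixed cell $C$ of $X_p$, the set $\Set{u \in \mathcal{U}(\Q_p)}{C \subseteq u\Sigma_p}$ is a union of cosets of stabilizers of cells, and these are open in $\mathcal{G}(\Q_p)$ because $\St_{\mathcal{G}(\Q_p)}(o_p) = \mathcal{G}(\Z_p)$ is open (the $\Q_p$-analogue of Lemma~\ref{lem:stab-of-center}) and the action is cocompact, exactly as in the proof of Corollary~\ref{cor:open-stabilizers}; by the previous paragraph this set is non-empty. Since $\mathcal{U}(\Q) = \mathcal{U}$ is dense in $\mathcal{U}(\Q_p)$ --- as $\Q$ is dense in $\Q_p$ and $\mathcal{U}(\Q_p)$ is parametrized by $(\Q_p)^{\abs{\Phi^{+}}}$ via Lemma~\ref{lem:structure-of-K-points}$(c)$ --- it meets this non-empty open set, so $C \subseteq u\Sigma_p$ for some $u \in \mathcal{U}$. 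Varying $C$ yields $X_p = \bigcup_{g \in \mathcal{U}} g\cdot\Sigma_p$. The main obstacle is exactly this last transition: the reverse inclusion genuinely fails at the level of $\Q$-rational apartments through $\sigma$ (these form a proper subfamily of all apartments through $\sigma$), and it is only the density of $\Q$ in $\Q_p$, combined with the openness of cell stabilizers, that still forces the $\mathcal{U}(\Q)$-translates of $\Sigma_p$ to cover $X_p$; getting the bookkeeping around apartment systems and the passage $\Q \rightsquigarrow \Q_p$ right is the delicate part.
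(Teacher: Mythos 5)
Your argument arrives at the right statement, but by a genuinely different and much heavier route than the paper. The paper stays entirely over $\Q$: by \cite[Theorem 4]{Steinberg68} the spherical Bruhat decomposition $\mathcal{G} = \bigcup_{w \in W_{\Phi}} \mathcal{B}\widetilde{w}\mathcal{B}$ holds, and \cite[Proposition 11.100]{AbramenkoBrown08} converts exactly this hypothesis (for $\mathcal{B}$, the stabilizer of the chamber $\sigma$ at infinity, acting on the building of the affine $\BN$-pair) into the covering $X_p = \bigcup_{g \in \mathcal{B}} g \cdot \Sigma_p$; the final reduction $\mathcal{B}\cdot\Sigma_p = \mathcal{U}\cdot\Sigma_p$ via $\mathcal{B} = \mathcal{T}\mathcal{U}$ and the fact that $\mathcal{T} \subseteq \mathcal{N}$ stabilizes $\Sigma_p$ is the same in both proofs. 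Your middle paragraph in effect re-proves the geometric content of that citation, but over the completion, and it is the passage to $\Q_p$ that creates all the extra load: what the detour buys is independence from the Bruhat decomposition, what it costs is a collection of statements that are true but nowhere established in the paper and do not follow from the one-line reasons you give. Concretely: (i) the identification of $\Delta(\widetilde{\mathcal{B}}_p,\mathcal{N})$ with the Bruhat--Tits building of $\mathcal{G}(\Q_p)$ --- density of $\mathcal{G}(\Q)$ together with openness of $\widetilde{\mathcal{B}}_p$ gives surjectivity of the induced chamber map, but bijectivity needs compatibility statements such as $\mathcal{G}(\Q) \cap \widetilde{\mathcal{B}}(\Q_p) = \widetilde{\mathcal{B}}_p$ (and similarly for the standard parabolics), without which you cannot transport the covering statement back to $X_p$; (ii) the $\Q_p$-analogues of Citation~\ref{cit:BN-pair-from-VRGD}, Citation~\ref{cit:B-is-the-stab-of-ch-at-infty} and Lemma~\ref{lem:stab-of-center}, plus strong transitivity of $\mathcal{G}(\Q_p)$ with respect to an apartment system large enough to contain the apartment $\Sigma$ you produce.

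Two further points. First, the inference in your opening paragraph is a non sequitur as written: from the fact that $K_x(\sigma)$ and $\mathcal{K}$ share a subsector (Proposition~\ref{prop:common-subsector}) and that a sector lies in some apartment, you cannot conclude that $x$ lies in an apartment containing a subsector of $\mathcal{K}$; the statement you actually need is \cite[Theorem 11.63(1)]{AbramenkoBrown08}, which the paper invokes repeatedly. Second, the completion is avoidable: over $\Q$ one can either quote the Bruhat decomposition and Proposition 11.100 (the paper's two-line proof), or apply Theorem 11.63(1) to the $\BN$-pair apartment system and run your common-subsector/strong-transitivity/rigidity argument verbatim with $\mathcal{G}$ in place of $\mathcal{G}(\Q_p)$ --- which is essentially how the cited Proposition 11.100 is proved. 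So the skeleton of your proof is sound, but as written the key identifications are asserted rather than proved, and filling them in would take considerably more work than the argument the paper uses.
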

\begin{proof}
From~\cite[Theorem 4]{Steinberg68} we know that $\mathcal{G} = \bigcup \limits_{w \in W_{\Phi}} \mathcal{B} \widetilde{w} \mathcal{B}$, where $\widetilde{w} \in \mathcal{N}$ is a representative of $w \in W_{\Phi} \cong \mathcal{N} / \mathcal{T}$.
In this case~\cite[Proposition 11.100]{AbramenkoBrown08} tells us that $X_p = \bigcup \limits_{g \in \mathcal{B}} g \cdot \Sigma_p$.
Recall that $\mathcal{B}$ is the semidirect product of $\mathcal{U}$ and $\mathcal{T}$.
Since $\mathcal{T}$ stabilizes $\Sigma_p$, we therefore obtain $X_p = \bigcup \limits_{g \in \mathcal{U}} g \cdot \Sigma_p$.
\end{proof}

We want to extend Lemma~\ref{lem:fund-dom-for-U} to the diagonal action of $\mathcal{U}(\mathcal{O}_S)$ on $X_S$.
To this end, we consider the orbit of the apartment $\Sigma_S \defeq \prod \limits_{p \in S} \Sigma_p$ under this action.

\begin{lemma}\label{lem:R-pts-in-U-have-dom-in-V}
We have $X_S = \bigcup \limits_{g \in \mathcal{U}(\mathcal{O}_S)} g \cdot \Sigma_S$, where $\mathcal{U}(\mathcal{O}_S)$ acts diagonally on $X_S$.
\end{lemma}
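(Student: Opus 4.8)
The plan is to reduce the statement to Lemma~\ref{lem:fund-dom-for-U}, which handles one prime at a time, by means of a strong approximation / prime-by-prime clearing-of-denominators argument. Fix a point $x = (x_p)_{p \in S} \in X_S$. For each $p \in S$, Lemma~\ref{lem:fund-dom-for-U} gives some $g_p \in \mathcal{U}_p$ (the $p$-adic unipotent group) with $g_p^{-1} x_p \in \Sigma_p$; equivalently, $x_p \in g_p \cdot \Sigma_p$. So the task is to replace the collection $(g_p)_{p\in S}$ of elements of the various $\mathcal{U}_p$ by a single element $g \in \mathcal{U}(\mathcal{O}_S)$ that works simultaneously at every $p \in S$. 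The key point is that $\mathcal{U}(\mathcal{O}_S)$ is dense in $\prod_{p \in S}\mathcal{U}_p$ for the product of $p$-adic topologies: indeed, $\mathcal{O}_S = \Z[1/N]$ is dense in $\prod_{p\in S}\Q_p$ by the Chinese remainder theorem (the weak approximation theorem for $\Q$), and $\mathcal{U}$ is, by Lemma~\ref{lem:structure-of-K-points}(c), a polynomial parametrization $\prod_{\alpha \in \Phi^+} x_\alpha(t_\alpha)$ by the $t_\alpha$, so density of $\mathcal{O}_S \subseteq \prod_{p\in S}\Q_p$ in each coordinate upgrades to density of $\mathcal{U}(\mathcal{O}_S) \subseteq \prod_{p\in S}\mathcal{U}_p$.

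The second ingredient is that cell stabilizers in $\prod_{p\in S}\mathcal{U}_p$ are open, which is exactly Corollary~\ref{cor:open-stabilizers}. Here is how these combine. Let $C$ be the cell of $X_S$ containing the chosen point $x$, and write $C = \prod_{p\in S}C_p$. By Lemma~\ref{lem:fund-dom-for-U} applied at each $p$, pick $u = (u_p)_{p\in S} \in \prod_{p\in S}\mathcal{U}_p$ with $u^{-1}\cdot x \in \Sigma_S$, i.e.\ $u^{-1}_p x_p \in \Sigma_p$ for all $p$. Consider the open subgroup $V \defeq \St_{\prod_{p\in S}\mathcal{U}_p}(u^{-1}\cdot C) \le \prod_{p\in S}\mathcal{U}_p$, which is open by Corollary~\ref{cor:open-stabilizers}. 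Then $uV$ is an open neighborhood of $u$, so by density of $\mathcal{U}(\mathcal{O}_S)$ we may choose $g \in \mathcal{U}(\mathcal{O}_S) \cap uV$. Write $g = uv$ with $v \in V$. Then $g^{-1}\cdot x = v^{-1}u^{-1}\cdot x$, and since $v$ fixes the cell $u^{-1}\cdot C$ containing $u^{-1}\cdot x$ pointwise (stabilizers here are pointwise — recall these are stabilizers of cells in the building built from a $\BN$-pair), we get $g^{-1}\cdot x = u^{-1}\cdot x \in \Sigma_S$. Hence $x \in g\cdot \Sigma_S$ with $g \in \mathcal{U}(\mathcal{O}_S)$, proving the reverse inclusion; the inclusion $\bigcup_{g\in\mathcal{U}(\mathcal{O}_S)}g\cdot\Sigma_S \subseteq X_S$ is trivial.

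One subtlety to pin down carefully: I should make sure the stabilizers appearing in Corollary~\ref{cor:open-stabilizers} act \emph{pointwise} on the cell (or at least fix the chosen point $u^{-1}\cdot x$), since I need $g^{-1}\cdot x = u^{-1}\cdot x$ exactly, not merely that both lie in a common cell. In the $\BN$-pair picture the stabilizer of a simplex $gP_J$ is $gP_Jg^{-1}$ and it does act by fixing that simplex and all its faces, but the \emph{closed} cell need not be fixed pointwise in general; however, a finite-index subgroup (the pointwise stabilizer of the closed cell) is still open, so after intersecting $V$ with that finite-index open subgroup the argument goes through unchanged. Alternatively, and more cleanly, one takes $V$ to be the pointwise stabilizer of $\overline{u^{-1}\cdot C}$, which is open as a finite intersection of the open face-stabilizers given by Corollary~\ref{cor:open-stabilizers}. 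The main obstacle is really just bookkeeping: assembling the weak-approximation density statement for $\mathcal{U}(\mathcal{O}_S)$ inside $\prod_{p\in S}\mathcal{U}_p$ from Lemma~\ref{lem:structure-of-K-points}(c) and the density of $\Z[1/N]$ in $\prod_{p\in S}\Q_p$, and making sure the openness and pointwise-fixing statements are invoked in a logically airtight order. No deep new input is needed beyond Lemma~\ref{lem:fund-dom-for-U}, Corollary~\ref{cor:open-stabilizers}, and Lemma~\ref{lem:structure-of-K-points}.
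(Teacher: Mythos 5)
Your argument is correct and is essentially the paper's own proof: it combines Lemma~\ref{lem:fund-dom-for-U}, the openness of cell stabilizers from Corollary~\ref{cor:open-stabilizers}, and the density of the diagonal image of $\mathcal{U}(\mathcal{O}_S)$ in $\prod_{p\in S}\mathcal{U}_p$ (which the paper quotes from the proof of Steinberg's Theorem~20, while you derive it from weak approximation for $\Z[1/N]$ together with the parametrization in Lemma~\ref{lem:structure-of-K-points}(c)). Your worry about pointwise versus setwise stabilization is harmless but unnecessary: if $v$ merely stabilizes the cell $u^{-1}\cdot C$ setwise, then $g^{-1}\cdot x = v^{-1}u^{-1}\cdot x$ still lies in $u^{-1}\cdot C \subseteq \Sigma_S$, which is all the lemma requires.
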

\begin{proof}
From Lemma~\ref{lem:fund-dom-for-U} it follows that $X_S$ is the union of apartments $g \cdot \Sigma_S$ with $g \in \prod \limits_{p \in S} \mathcal{U}_p$.
Moreover, we know from Corollary~\ref{cor:open-stabilizers} that the cell stabilizers of the action of $\prod \limits_{p \in S} \mathcal{U}_p$ on $X_S$ are open.
On the other hand, it is a direct consequence of the proof of~\cite[Theorem 20]{Steinberg68} that the diagonal embedding $\mathcal{U}(\mathcal{O}_S) \rightarrow \prod \limits_{p \in S} \mathcal{U}_p$ has dense image.
Together this shows that $\mathcal{U}(\mathcal{O}_S)$ and $\prod \limits_{p \in S} \mathcal{U}_p$ have the same orbits of cells, which proves the lemma.
\end{proof}

\begin{lemma}\label{lem:cocompactness-torus}
The diagonal action of $\mathcal{T}(\mathcal{O}_S)$ on $\Sigma_S$ is cocompact.
\end{lemma}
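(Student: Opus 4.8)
The plan is to identify $\Sigma_S$ with the Euclidean vector space $\mathcal{V}^{S} \defeq \prod_{p \in S} \mathcal{V}$ (via the identifications $\Sigma_p \cong (\mathcal{V},\kappa^{\ast})$ from Subsection~\ref{subsec:a-metric-on-X}) and to show that the diagonal image of $\mathcal{T}(\mathcal{O}_S)$ in $\Isom(\mathcal{V}^S)$ is a group of translations that acts with compact quotient, i.e.\ the translation vectors span $\mathcal{V}^S$ over $\R$ and form a cocompact lattice. First I would use Citation~\ref{cit:structure-of-T} and Lemma~\ref{lem:structure-of-K-points}$(d)$ to write every element of $\mathcal{T}(\mathcal{O}_S)$ in the form $\prod_{i=1}^{\ell} h_{\alpha_i}(t_i)$ with $t_i \in \mathcal{O}_S^{\times}$. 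Since $\mathcal{O}_S = \Z[1/N]$ with $N = \prod_{p \in S} p$, we have $\mathcal{O}_S^{\times} = \Set{\pm \prod_{p \in S} p^{k_p}}{k_p \in \Z}$, so modulo the finite $2$-torsion the group $\mathcal{T}(\mathcal{O}_S)$ is free abelian of rank $\ell \cdot \abs{S}$, generated by the $h_{\alpha_i}(p)$ for $i = 1,\dots,\ell$ and $p \in S$.

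Next I would compute how such a generator acts on the $p'$-factor $\Sigma_{p'}$ for each $p' \in S$. By Lemma~\ref{lem:the-structure-of-H}, $\pi(h_{\alpha_i}(t)) = t_{\alpha_i, v_{p'}(t)}$ in $W_{\widetilde{\Phi}}$ for the $p'$-adic building, and by Lemma~\ref{lem:translation} the element $t_{\alpha_i,k}$ acts on $\mathcal{V}$ as the translation $v \mapsto v - k\alpha_i^{V}$. Hence on the factor $\Sigma_{p'}$ the generator $h_{\alpha_i}(p)$ acts as translation by $-v_{p'}(p)\,\alpha_i^{V}$, which is $-\alpha_i^{V}$ if $p' = p$ and $0$ otherwise (and the finite torsion acts trivially on $\mathcal{V}$, since $v_{p'}(\pm 1) = 0$). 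Therefore, under the identification $\Sigma_S \cong \mathcal{V}^S$, the subgroup generated by the $h_{\alpha_i}(p)$ acts by the translation lattice $\bigoplus_{p \in S} \bigoplus_{i=1}^{\ell} \Z\,\alpha_i^{V}$ placed in the corresponding coordinate blocks. Since $\{\alpha_1^{V},\dots,\alpha_\ell^{V}\}$ is a basis of $\mathcal{V}$ (the coroots of a system of simple roots form a basis), this is a full-rank lattice in each $\mathcal{V}$-factor, hence a cocompact lattice of translations in $\mathcal{V}^S$.

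From this the cocompactness is immediate: a fundamental domain for the translation lattice is the closure of the parallelepiped spanned by the $\alpha_i^{V}$ in each factor, which is compact, so $\mathcal{T}(\mathcal{O}_S) \backslash \Sigma_S$ is compact (the finite torsion subgroup only shrinks the quotient further). I expect the main obstacle to be purely bookkeeping: carefully matching the two identifications $\Sigma_p \cong \mathcal{V}$ on the nose so that the $\mathcal{N}$-action described by $\pi$ and Lemma~\ref{lem:translation} really is the geometric translation action on $\Sigma_S$, and checking that the torsion part $\mathcal{T}(\Z)$ (which is finite by Lemma~\ref{lem:structure-of-K-points}) indeed acts trivially on $\mathcal{V}^S$ so that it does not interfere — both of these are routine given Lemma~\ref{lem:the-structure-of-H} and Lemma~\ref{lem:translation}, but they are where an error could hide. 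No genuinely hard point arises once the free abelian rank $\ell\cdot\abs{S}$ and the spanning property of the coroots are in hand.
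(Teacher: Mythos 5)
Your proposal is correct and follows essentially the same route as the paper: the paper's proof likewise combines Lemma~\ref{lem:the-structure-of-H} with the translation formula of Lemma~\ref{lem:translation} to see that $\mathcal{T}(\mathcal{O}_S)$ acts on each factor $\Sigma_p$ by translations $v \mapsto v - v_p(t_i)\alpha_i^{V}$, and concludes cocompactness from the fact that the simple (co)roots span $\mathcal{V}$, using the multiplicativity of the $h_{\alpha_i}$ to handle all $p \in S$ simultaneously. Your write-up just makes the coordinate bookkeeping (the full-rank translation lattice in $\mathcal{V}^S$ and the triviality of the torsion part) more explicit than the paper does.
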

\begin{proof}
From Lemma~\ref{lem:the-structure-of-H} we know that  $\pi(h_{\alpha_i}(t_i)) = t_{\alpha_i,k_i}$, where $k_i = v_p(t_i)$ and $t_{\alpha_i,k_i}(v) = v-k_i \alpha_i^{V}$.
Since $\Delta$ is a basis of $\mathcal{V} \cong \Sigma_p$, it follows that $\mathcal{T}(\mathcal{O}_p)$ acts cocompactly on $\Sigma_p$, while $\mathcal{T}(A_p)$ fixes $\Sigma_p$ pointwise.
Now the claim follows from the multiplicativity of each $h_{\alpha_i}$ as stated in Citation~\ref{cit:structure-of-T}.
\end{proof}

\begin{proposition}\label{prop:cocomp-of-borel-over-R}
The diagonal action of $\Gamma$ on $X_S$ is cocompact.
\end{proposition}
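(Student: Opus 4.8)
The plan is to assemble the statement from the three preceding lemmas, so that no new geometric input is required. First I would use Lemma~\ref{lem:structure-of-K-points}$(b)$ to record the factorization $\Gamma = \mathcal{B}(\mathcal{O}_S) = \mathcal{U}(\mathcal{O}_S)\mathcal{T}(\mathcal{O}_S)$, where the right-hand side is to be understood as a product of subsets of $\mathcal{G}$.

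Next, Lemma~\ref{lem:cocompactness-torus} tells us that the diagonal action of $\mathcal{T}(\mathcal{O}_S)$ on $\Sigma_S = \prod_{p \in S} \Sigma_p$ is cocompact, so I may fix a compact subset $K \subseteq \Sigma_S$ with $\mathcal{T}(\mathcal{O}_S) \cdot K = \Sigma_S$. Finally, Lemma~\ref{lem:R-pts-in-U-have-dom-in-V} gives $X_S = \bigcup_{g \in \mathcal{U}(\mathcal{O}_S)} g \cdot \Sigma_S = \mathcal{U}(\mathcal{O}_S) \cdot \Sigma_S$, and combining these I obtain
\[
X_S = \mathcal{U}(\mathcal{O}_S) \cdot \Sigma_S = \mathcal{U}(\mathcal{O}_S) \cdot \bigl(\mathcal{T}(\mathcal{O}_S) \cdot K\bigr) = \bigl(\mathcal{U}(\mathcal{O}_S)\mathcal{T}(\mathcal{O}_S)\bigr) \cdot K = \Gamma \cdot K.
\]
Since $K$ is compact and the quotient map $X_S \to \Gamma \backslash X_S$ is continuous and carries $K$ onto $\Gamma \backslash X_S$ (every $\Gamma$-orbit meets $K$), the quotient $\Gamma \backslash X_S$ is compact, which is the assertion.

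There is essentially no obstacle here: the substantive work has already been carried out in Lemmas~\ref{lem:fund-dom-for-U}, \ref{lem:R-pts-in-U-have-dom-in-V}, and \ref{lem:cocompactness-torus}. The only points deserving a line of care are that $\mathcal{U}(\mathcal{O}_S)\mathcal{T}(\mathcal{O}_S)$ is to be read as a set product — legitimate by Lemma~\ref{lem:structure-of-K-points}$(b)$, since $\mathcal{B}(\mathcal{O}_S)$ is the group of $\mathcal{O}_S$-points of $\mathcal{B}$ — and that the diagonal actions referred to in the various lemmas are mutually compatible, being restrictions of the single diagonal action of $\mathcal{B}$ on $X_S$, which is immediate from the definitions.
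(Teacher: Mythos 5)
Your proof is correct and follows the paper's argument: the paper likewise deduces cocompactness directly from the decomposition $\Gamma = \mathcal{T}(\mathcal{O}_S)\mathcal{U}(\mathcal{O}_S)$ of Lemma~\ref{lem:structure-of-K-points} together with Lemma~\ref{lem:R-pts-in-U-have-dom-in-V} and Lemma~\ref{lem:cocompactness-torus}. You have merely written out explicitly the combination $X_S = \mathcal{U}(\mathcal{O}_S)\cdot\mathcal{T}(\mathcal{O}_S)\cdot K = \Gamma\cdot K$ that the paper leaves as a one-line consequence.
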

\begin{proof}
From Lemma~\ref{lem:structure-of-K-points} we know that $\Gamma = \mathcal{T}(\mathcal{O}_S) \mathcal{U}(\mathcal{O}_S)$.
Hence the claim is a direct consequence of Lemma~\ref{lem:R-pts-in-U-have-dom-in-V} and Lemma~\ref{lem:cocompactness-torus}.
\end{proof}

\subsection{The structure of the character sphere}

In order to compute the $\Sigma$-invariants of $\Gamma = \mathcal{B}(\mathcal{O}_S)$, we first have to describe its character sphere.
To this end, it will be useful to understand the relationship between $\mathcal{U}(\mathcal{O}_S)$ and the commutator subgroup of $\Gamma$.

\begin{lemma}\label{lem:the-character-sphere}
The commutator subgroup $[\Gamma,\Gamma]$ lies in $\mathcal{U}(\mathcal{O}_S)$.
The quotient $Q \defeq \mathcal{U}(\mathcal{O}_S) / [\Gamma,\Gamma]$ is a torsion group.
\end{lemma}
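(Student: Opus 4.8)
\textbf{Proof plan for Lemma~\ref{lem:the-character-sphere}.}

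The plan is to use the structure results from Section~\ref{sec:chevalley-groups}, especially Lemma~\ref{lem:structure-of-K-points} and the commutator relations of Citation~\ref{cit:relation-in-G}. First I would recall that $\Gamma = \mathcal{B}(\mathcal{O}_S) = \mathcal{U}(\mathcal{O}_S)\mathcal{T}(\mathcal{O}_S)$, that $\mathcal{U}(\mathcal{O}_S)$ is normal in $\Gamma$ (it is normalized by $\mathcal{T}$ via relation $(e)$ of Citation~\ref{cit:relation-in-G}), and that $\Gamma / \mathcal{U}(\mathcal{O}_S) \cong \mathcal{T}(\mathcal{O}_S)$, which is abelian by Citation~\ref{cit:structure-of-T} (it is a quotient of $(\mathcal{O}_S^\times)^\ell$). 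Hence $[\Gamma,\Gamma] \subseteq \mathcal{U}(\mathcal{O}_S)$, giving the first claim.

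For the second claim, the strategy is to show that $Q = \mathcal{U}(\mathcal{O}_S)/[\Gamma,\Gamma]$ is generated by finitely many elements each of which has finite order in $Q$, so that $Q$, being a finitely generated abelian torsion group, is finite — in particular a torsion group. The generators come from the root subgroups: by Lemma~\ref{lem:structure-of-K-points}$(c)$, $\mathcal{U}(\mathcal{O}_S)$ is generated by the images of the maps $t \mapsto x_\alpha(t)$ for $\alpha \in \Phi^+$, $t \in \mathcal{O}_S$. I would first handle the non-simple positive roots: using relation $(b)$ of Citation~\ref{cit:relation-in-G}, every $x_\alpha(t)$ with $\alpha$ a non-simple positive root is a product of commutators of $x_\beta$'s with $\beta$ of strictly smaller height, so by downward induction on height it lies in $[\mathcal{U}(\mathcal{O}_S),\mathcal{U}(\mathcal{O}_S)] \subseteq [\Gamma,\Gamma]$ (here one uses that $\Phi$ has rank $\geq 1$ and that in the semisimple case every non-simple positive root is a sum $i\alpha + j\beta$ with $i,j>0$ of two positive roots — one must also deal with $A_1$-type factors where there are no non-simple roots, which only makes the argument easier). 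Thus $Q$ is generated by the images of the $\ell$ simple root subgroups $\mathcal{U}_{\alpha_i}(\mathcal{O}_S) \cong (\mathcal{O}_S,+)$. Then I would use relation $(e)$: conjugating $x_{\alpha_i}(s)$ by $h_{\alpha_i}(t)$ with $t \in \mathcal{O}_S^\times$ gives $x_{\alpha_i}(t^{\langle \alpha_i,\alpha_i\rangle}s) = x_{\alpha_i}(t^2 s)$, so modulo $[\Gamma,\Gamma]$ we get $\overline{x_{\alpha_i}(t^2 s)} = \overline{x_{\alpha_i}(s)}$ for every unit $t$. Since $\mathcal{O}_S = \Z[1/N]$, every element of $\mathcal{O}_S$ is of the form $u \cdot m$ with $u \in \mathcal{O}_S^\times$ and $m \in \Z$ (because the units of $\Z[1/N]$ are $\pm$ powers of the primes dividing $N$, so one can absorb the $S$-part), and $p^2$ being a unit for each $p \mid N$ shows that the subgroup of $Q$ coming from $\mathcal{U}_{\alpha_i}(\mathcal{O}_S)$ is generated by the single class $\overline{x_{\alpha_i}(1)}$ together with torsion coming from the relation $\overline{x_{\alpha_i}(p^2 s)} = \overline{x_{\alpha_i}(s)}$; a short computation with these relations shows this subgroup is a finitely generated abelian group in which every generator has finite order, hence finite. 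Therefore $Q$ is finite, so a torsion group.

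The main obstacle I anticipate is the bookkeeping in the last step: pinning down exactly which elements of $\mathcal{U}_{\alpha_i}(\mathcal{O}_S)$ survive in $Q$ and verifying they all have finite order. One has to be careful that relation $(e)$ only supplies conjugation by \emph{squares} of units (the exponent $\langle \alpha_i,\alpha_i\rangle = 2$), so the relations one obtains are $\overline{x_{\alpha_i}(s)} = \overline{x_{\alpha_i}(u^2 s)}$ for $u \in \mathcal{O}_S^\times$; combined with additivity $\overline{x_{\alpha_i}(s+s')} = \overline{x_{\alpha_i}(s)} + \overline{x_{\alpha_i}(s')}$ (writing $Q$ additively), and the fact that the $p^2 - 1$ for $p\mid N$ together with $(\pm1)^2 = 1$ generate a finite-index subgroup of $(\mathcal{O}_S,+)$ acting trivially, this forces $Q$ to be a quotient of $\bigoplus_{i=1}^\ell \mathcal{O}_S/(p^2-1 : p \mid N)$, which is finite. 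I would present this as a clean finitely-generated-abelian-torsion argument rather than computing the exact order, since the lemma only asserts that $Q$ is a torsion group.
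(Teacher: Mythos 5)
Your first claim and its proof are fine, and your key mechanism for the second claim (conjugating $x_{\alpha}(s)$ by a torus element $h_{\alpha}(t)$ with $t$ an $S$-unit via Citation~\ref{cit:relation-in-G}$(e)$, so that $x_{\alpha}((t^{2}-1)s)\in[\Gamma,\Gamma]$) is exactly the engine of the paper's proof. However, your reduction to \emph{simple} roots contains a genuine gap. You claim that for every non-simple positive root $\gamma$ and every $t\in\mathcal{O}_S$ the element $x_{\gamma}(t)$ lies in $[\mathcal{U}(\mathcal{O}_S),\mathcal{U}(\mathcal{O}_S)]$, by the commutator formula of Citation~\ref{cit:relation-in-G}$(b)$ and induction on height. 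This is false for non-simply-laced $\Phi$: the structure constants are $\pm(r+1)$ (see Citation~\ref{cit:exist-chevalley-basis}$(d)$), hence can be $\pm 2$ or $\pm 3$. For instance in type $C_2$ one has $[x_{\alpha}(s),x_{\alpha+\beta}(t)]=x_{2\alpha+\beta}(\pm 2st)$ and $[x_{\alpha}(s),x_{\beta}(t)]=x_{\alpha+\beta}(\pm st)\,x_{2\alpha+\beta}(\pm s^{2}t)$, so the commutator subgroup only reaches $x_{2\alpha+\beta}(2\mathcal{O}_S)$ (and the $\alpha+\beta$ classes only modulo the $2\alpha+\beta$ ones); the full root subgroups need not die in $Q$. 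Consequently your assertion that $Q$ is generated by the $\ell$ simple root subgroups, and your description of $Q$ as a quotient of $\bigoplus_{i}\mathcal{O}_S/(p^{2}-1)$, are not justified for types $B$, $C$, $F_4$, $G_2$ — and the lemma is stated for arbitrary Chevalley groups, so this matters even though Theorem~\ref{thm:B}(3) later restricts the type.

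The gap is easy to close, and doing so makes the detour through relation $(b)$ unnecessary: since $\langle\gamma,\gamma\rangle=2$ for \emph{every} root $\gamma$, and $h_{\gamma}(p)\in\mathcal{T}(\mathcal{O}_S)$ for every $\gamma\in\Phi^{+}$ and $p\in S$ (its action on weight spaces is by powers of $p$, a unit of $\mathcal{O}_S$), the identity $[h_{\gamma}(p),x_{\gamma}(s)]=x_{\gamma}((p^{2}-1)s)$ holds verbatim for all positive roots, not only simple ones. This is what the paper does: it concludes directly that $x_{\gamma}(s)^{p^{2}-1}\in[\Gamma,\Gamma]$ for all $\gamma\in\Phi^{+}$ and $s\in\mathcal{O}_S$, writes an arbitrary $u\in\mathcal{U}(\mathcal{O}_S)$ as $\prod_{\gamma\in\Phi^{+}}x_{\gamma}(t_{\gamma})$ by Lemma~\ref{lem:structure-of-K-points}$(c)$, and uses commutativity of $Q$ to get $\overline{u}^{\,p^{2}-1}=1$; no induction, no structure constants, and no finiteness claim (which the lemma does not need) enter. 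Your argument is correct as written only in the simply-laced case; replace the height induction by the uniform relation-$(e)$ computation and it becomes both correct in general and shorter.
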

\begin{proof}
Recall from Lemma~\ref{lem:structure-of-K-points} that $\Gamma = \mathcal{T}(\mathcal{O}_S)\mathcal{U}(\mathcal{O}_S)$.
Thus $[\Gamma,\Gamma] \subseteq \mathcal{U}(\mathcal{O}_S)$ since $\mathcal{T}(\mathcal{O}_S)$ is an abelian group by Citation~\ref{cit:structure-of-T}.
For the second claim we use Citation~\ref{cit:relation-in-G}$(e)$, which tells us that
\[
h_{\alpha}(t)x_{\beta}(s)h_{\alpha}(t)^{-1} = x_{\beta}(t^{\langle \beta,\alpha \rangle}s)
\]
for all $s \in \Q$, $t \in \Q^{\ast}$ and $\alpha,\beta \in \Phi$.
For each $p \in S$ this gives us
\begin{center}
\begin{tabular}{lll}
$[h_{\alpha}(p),x_{\alpha}(s)]$ &
$=h_{\alpha}(p)x_{\alpha}(s)h_{\alpha}(p)^{-1}x_{\alpha}(s)^{-1}$ &
$=x_{\alpha}(p^{2}s)x_{\alpha}(s)^{-1}$ \\[2ex]&
$=x_{\alpha}(p^{2}s-s)$ &
$=x_{\alpha}(s)^{(p^{2}-1)}$.
\end{tabular}
\end{center}
Thus $x_{\alpha}(s)^{(p^{2}-1)} \in [\Gamma,\Gamma]$ for every $\alpha \in \Phi^{+}$ and every $s \in \mathcal{O}_S$.
Let $q \defeq p^{2}-1$.
Consider an element $u \in \mathcal{U}(\mathcal{O}_S)$.
By Lemma~\ref{lem:structure-of-K-points} we have $u = \prod \limits_{\alpha \in \Phi^{+}} x_{\alpha}(t_{\alpha})$ for appropriate $\alpha \in \Phi^{+}$ and $t_{\alpha} \in \mathcal{O}_S$.
Let $\overline{u}$ be the image of $u$ in $Q$.
Since $Q$ is abelian, we deduce from Citation~\ref{cit:relation-in-G}$(a)$ that $\overline{u^{q}} = \prod \limits_{\alpha \in \Phi^{+}} \overline{x_{\alpha}(t_{\alpha}^{q})}$.
Now the claim follows as we have just seen that $x_{\alpha}(t_{\alpha}^{q}) \in [\Gamma,\Gamma]$.
\end{proof}

The following observation reduces the study of the characters of $\Gamma$ to those of $\mathcal{T}(\mathcal{O}_S)$.
To formulate it, let $\delta \colon \Gamma \rightarrow \mathcal{T}(\mathcal{O}_S)$ denote the canonical projection that arises from the splitting $\Gamma = \mathcal{T}(\mathcal{O}_S) \mathcal{U}(\mathcal{O}_S)$.

\begin{proposition}\label{prop:the-character-sphere}
The inclusion $i \colon \mathcal{T}(\mathcal{O}_S) \rightarrow \Gamma$ induces an isomorphism
\[
i^{\ast} \colon \Hom(\Gamma,\R) \rightarrow \Hom(\mathcal{T}(\mathcal{O}_S)),\ \chi \mapsto \chi \circ i.
\]
The inverse isomorphism is given by
\[
\delta^{\ast} \colon \Hom(\mathcal{T}(\mathcal{O}_S),\R) \rightarrow \Hom(\Gamma,\R),\ \chi \mapsto \chi \circ \delta.
\]
\end{proposition}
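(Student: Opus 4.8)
The statement is essentially a formal consequence of two facts: the semidirect product decomposition $\Gamma = \mathcal{T}(\mathcal{O}_S) \mathcal{U}(\mathcal{O}_S)$ from Lemma~\ref{lem:structure-of-K-points}, and the fact that $\mathcal{U}(\mathcal{O}_S)/[\Gamma,\Gamma]$ is a torsion group (Lemma~\ref{lem:the-character-sphere}). First I would record that $i$ and $\delta$ are genuine group homomorphisms: $\delta$ is the projection associated to the splitting $\Gamma = \mathcal{T}(\mathcal{O}_S) \ltimes \mathcal{U}(\mathcal{O}_S)$, so it is a homomorphism with $\delta \circ i = \id_{\mathcal{T}(\mathcal{O}_S)}$, and hence $i^{\ast}$ and $\delta^{\ast}$ are linear maps between the real vector spaces $\Hom(\Gamma,\R)$ and $\Hom(\mathcal{T}(\mathcal{O}_S),\R)$ with $i^{\ast} \circ \delta^{\ast} = (\delta \circ i)^{\ast} = \id$. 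That already gives one of the two composite identities for free.

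The remaining point is $\delta^{\ast} \circ i^{\ast} = \id_{\Hom(\Gamma,\R)}$, i.e.\ that every character $\chi$ of $\Gamma$ satisfies $\chi = (\chi \circ i) \circ \delta$, equivalently $\chi(u) = 0$ for every $u \in \mathcal{U}(\mathcal{O}_S)$ (since for $g = t u$ with $t \in \mathcal{T}(\mathcal{O}_S)$, $u \in \mathcal{U}(\mathcal{O}_S)$ one has $\chi \circ \delta(g) = \chi(t)$, and this equals $\chi(g) = \chi(t) + \chi(u)$ precisely when $\chi(u)=0$). Here is where Lemma~\ref{lem:the-character-sphere} enters: $\chi$ vanishes on $[\Gamma,\Gamma]$ because $\R$ is abelian, and $\mathcal{U}(\mathcal{O}_S)/[\Gamma,\Gamma]$ is a torsion group, so $\chi$ factors through a torsion group on $\mathcal{U}(\mathcal{O}_S)$; but $\R$ is torsion-free, hence $\chi|_{\mathcal{U}(\mathcal{O}_S)} = 0$. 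Concretely: given $u \in \mathcal{U}(\mathcal{O}_S)$, pick $n \geq 1$ with $u^n \in [\Gamma,\Gamma]$, then $n\,\chi(u) = \chi(u^n) = 0$, so $\chi(u) = 0$.

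Combining the two composite identities shows $i^{\ast}$ and $\delta^{\ast}$ are mutually inverse linear isomorphisms, which is exactly the assertion. I do not expect any genuine obstacle here — the only mild care needed is in checking that $\delta$ is a homomorphism (it is, because $\mathcal{U}(\mathcal{O}_S)$ is normal in $\Gamma$, being the unipotent radical, so the quotient $\Gamma \to \Gamma/\mathcal{U}(\mathcal{O}_S) \cong \mathcal{T}(\mathcal{O}_S)$ is well defined and $\delta$ is its composition with the splitting back into $\mathcal{T}(\mathcal{O}_S)$), and in making sure the linearity of $i^{\ast}, \delta^{\ast}$ as maps of real vector spaces is noted explicitly. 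Everything else is a two-line diagram chase using the already-established lemmas.
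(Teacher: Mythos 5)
Your proposal is correct and follows essentially the same route as the paper: use the decomposition $\gamma = tu$ from Lemma~\ref{lem:structure-of-K-points}, note $\delta \circ i = \id$, and kill $\chi$ on $\mathcal{U}(\mathcal{O}_S)$ via Lemma~\ref{lem:the-character-sphere} since a power of $u$ lies in $[\Gamma,\Gamma]$ and $\R$ is torsion-free. The only difference is that you spell out the normality of $\mathcal{U}(\mathcal{O}_S)$ and the linearity of $i^{\ast},\delta^{\ast}$, which the paper leaves implicit.
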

\begin{proof}
Note that $\delta \circ i$ is the identity on $\mathcal{T}(\mathcal{O}_S)$.
It therefore remains to verify that $\chi = \delta^{\ast}(i^{\ast}(\chi)) = \chi \circ i \circ \delta$ for every $\chi \in \Hom(\Gamma,\R)$.
To see this, let $\gamma \in \Gamma$.
By Lemma~\ref{lem:structure-of-K-points} we have $\gamma = tu$ for appropriate $t \in \mathcal{T}(\mathcal{O}_S)$ and $u \in \mathcal{U}(\mathcal{O}_S)$.
From Lemma~\ref{lem:the-character-sphere} we know that a power of $u$ lies in $[\Gamma,\Gamma]$.
Thus $\chi(u)=0$, from which we deduce that $\chi(g) = \chi(t)$.
On the other hand, we have $\delta(tu) = t$ by definition of $\delta$.
Together this shows that $\chi(g) = \chi(t) = \chi \circ i \circ \delta(g)$, which proves the claim.
\end{proof}

\begin{notation}\label{not:coordinates-of-torus}
Given $\alpha \in \Delta$ and $p \in S$, let $\mathcal{T}_{\alpha,p}$ denote the subgroup of $\mathcal{T}(\mathcal{O}_S)$ consisting of elements of the form $h_{\alpha}(p^n)$ with $n \in \Z$.
The group generated by all groups $\mathcal{T}_{\alpha,p}$ with $\alpha \in \Delta$ and $p \in S$ will be denoted by $\mathcal{T}_S$.
\end{notation}

The following lemma describes the structure of $\mathcal{T}_S$.

\begin{lemma}\label{lem:decomp-torus-into-components}
$\mathcal{T}_S$ is a free abelian group.
A basis for $\mathcal{T}_S$ is given by $\Set{h_{\alpha}(p)}{\alpha \in \Delta,\ p \in S}$.
In particular, we have $\mathcal{T}_S \cong \bigoplus \limits_{p \in S} \bigoplus \limits_{\alpha \in \Delta} \mathcal{T}_{\alpha,p}$.
\end{lemma}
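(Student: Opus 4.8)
The plan is to identify $\mathcal{T}_S$ with the image of a free abelian group under the torus parametrization of Citation~\ref{cit:structure-of-T}, and then to verify that this parametrization is injective on the relevant subgroup by detecting would-be relations inside the affine Weyl group via the projections $\pi\colon\mathcal{N}_p\to W_{\widetilde{\Phi}}$.

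First I would set up notation. Write $\Delta=\{\alpha_1,\dots,\alpha_\ell\}$ and recall from Citation~\ref{cit:structure-of-T} that $h\colon(\Q^{\times})^{\ell}\to\mathcal{T}$, $(t_i)_i\mapsto\prod_i h_{\alpha_i}(t_i)$, is a group epimorphism; in particular each $h_{\alpha_i}$ is multiplicative and the images of the $h_{\alpha_i}$ commute. Let $P_S\le\Q^{\times}$ be the subgroup generated by the primes in $S$; by unique factorization in $\Z$ it is free abelian with basis $S$, so $(P_S)^{\ell}$ is free abelian with basis the tuples having some $p\in S$ in coordinate $i$ and $1$ elsewhere. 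Since $\mathcal{T}_{\alpha_i,p}$ is the cyclic group generated by $h_{\alpha_i}(p^{n})=h_{\alpha_i}(p)^{n}$, the subgroup $\mathcal{T}_S$ is precisely $h\big((P_S)^{\ell}\big)$, and $h$ carries the above basis onto the generating set $\Set{h_{\alpha_i}(p)}{1\le i\le\ell,\ p\in S}$. Thus $\mathcal{T}_S$ is abelian and finitely generated, and the whole statement reduces to showing that $h$ is injective on $(P_S)^{\ell}$: that gives freeness with the claimed basis, hence (each $\mathcal{T}_{\alpha,p}\cong\Z$, being generated by a free basis element) the internal decomposition $\mathcal{T}_S=\bigoplus_{p\in S}\bigoplus_{\alpha\in\Delta}\mathcal{T}_{\alpha,p}$.

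For injectivity, suppose $\prod_{i=1}^{\ell}h_{\alpha_i}(t_i)=1$ with $t_i=\prod_{p\in S}p^{\,n_{i,p}}$, and fix $p_0\in S$. Regarding the left-hand side as an element of $\mathcal{N}_{p_0}$, I would apply the epimorphism $\pi\colon\mathcal{N}_{p_0}\to W_{\widetilde{\Phi}}$: by Lemma~\ref{lem:the-structure-of-H} one has $\pi\big(h_{\alpha_i}(t_i)\big)=t_{\alpha_i,\,v_{p_0}(t_i)}=t_{\alpha_i,\,n_{i,p_0}}$, and these are pairwise commuting translations, so by Lemma~\ref{lem:translation} their product acts on $\mathcal{V}$ as $v\mapsto v-\sum_{i=1}^{\ell}n_{i,p_0}\,\alpha_i^{V}$. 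Since the product equals $\pi(1)=\id$, we get $\sum_i n_{i,p_0}\alpha_i^{V}=0$; as $\{\alpha_1^{V},\dots,\alpha_\ell^{V}\}$ is a basis of $\mathcal{V}$ (each $\alpha_i^{V}$ being a nonzero multiple of $\alpha_i$, and $\Delta$ a basis of $\mathcal{V}$), all $n_{i,p_0}$ vanish. Running over $p_0\in S$ yields $n_{i,p}=0$ for all $i,p$, which is the desired injectivity.

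I do not expect a genuine obstacle here: the lemma is mostly bookkeeping once the parametrization $h$ is in play. The one point to handle carefully is the passage through $\pi$ — namely, to work in $\mathcal{N}_{p_0}$ so that Lemma~\ref{lem:the-structure-of-H} applies, to use that $\pi$ is a homomorphism so that products are preserved, and to note that the $t_{\alpha_i,k}$ commute, so that the translation vector of the product is unambiguously $-\sum_i n_{i,p_0}\alpha_i^{V}$ regardless of the order of the factors.
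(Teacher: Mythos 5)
Your proposal is correct and follows essentially the same route as the paper: a would-be relation $\prod_i h_{\alpha_i}(t_i)=1$ is pushed through $\pi$ at each prime $p\in S$, identified via Lemma~\ref{lem:the-structure-of-H} and Lemma~\ref{lem:translation} with the translation by $-\sum_i v_p(t_i)\,\alpha_i^{V}$, and killed because the simple coroots form a basis of $\mathcal{V}$. The extra bookkeeping you include (reducing a general word in the generators to the form $\prod_i h_{\alpha_i}(t_i)$ via Citation~\ref{cit:structure-of-T}, and noting the commutation of the translations) is implicit in the paper's argument and does not change the substance.
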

\begin{proof}
Suppose that $\gamma \defeq \prod \limits_{(p,\alpha) \in S \times \Delta} h_{\alpha}(p^{n_{\alpha,p}})$ is trivial in $\mathcal{T}_S$.
Then $\gamma$ acts as the identity on $\Sigma_p$ for each $p \in S$.
Recall from Lemma~\ref{lem:translation} and Lemma~\ref{lem:the-structure-of-H} that $\gamma$ acts on $\Sigma_p \cong \mathcal{V}$ by
\[
\gamma(x)
= \prod \limits_{\alpha \in \Delta} \pi(h_{\alpha}(p^{n_{\alpha,p}}))(x)
= \prod \limits_{\alpha \in \Delta} t_{\alpha,n_{\alpha,p}}(x)
= x - \sum \limits_{\alpha \in \Delta} n_{\alpha,p} \alpha^{V}.
\]
Since $\Delta$ is a basis of $\mathcal{V}$, it follows that $n_{\alpha,p} = 0$ for every $\alpha \in \Delta$ and every $p \in S$.
\end{proof}

\begin{lemma}\label{lem:splitting-the-torus}
Every element in $\mathcal{T}(\mathcal{O}_S)$ can be uniquely written as $t_{\varepsilon} t_S$, where $t_{\varepsilon}$ has finite order and $t_S \in \mathcal{T}_S$.
\end{lemma}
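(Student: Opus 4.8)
The plan is to identify the finite-order part of $\mathcal{T}(\mathcal{O}_S)$ explicitly and show that it complements $\mathcal{T}_S$. By Citation~\ref{cit:structure-of-T}, the map $h\colon (\mathcal{O}_S^\times)^\ell \to \mathcal{T}(\mathcal{O}_S)$, $(t_i) \mapsto \prod_i h_{\alpha_i}(t_i)$, is an epimorphism, and by Lemma~\ref{lem:structure-of-K-points}$(d)$ every element of $\mathcal{T}(\mathcal{O}_S)$ has this form. Now $\mathcal{O}_S^\times = \Z[1/N]^\times$ decomposes as $\{\pm 1\} \times \bigoplus_{p \in S} p^{\Z}$, i.e.\ as a finite group times a free abelian group with basis $\{p : p \in S\}$. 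Writing each $t_i = \varepsilon_i \prod_{p \in S} p^{k_{i,p}}$ with $\varepsilon_i \in \{\pm 1\}$, multiplicativity of $h_{\alpha_i}$ (Citation~\ref{cit:structure-of-T}) gives
\[
\prod_{i=1}^{\ell} h_{\alpha_i}(t_i) = \Big(\prod_{i=1}^{\ell} h_{\alpha_i}(\varepsilon_i)\Big)\Big(\prod_{p \in S}\prod_{i=1}^{\ell} h_{\alpha_i}(p^{k_{i,p}})\Big).
\]
The first factor, call it $t_\varepsilon$, lies in the image of $h$ restricted to $(\{\pm 1\})^\ell$, hence has order dividing $2$ and in particular is of finite order; the second factor lies in $\mathcal{T}_S$ by Notation~\ref{not:coordinates-of-torus}. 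This establishes existence of the decomposition $t = t_\varepsilon t_S$.

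For uniqueness, suppose $t_\varepsilon t_S = t'_\varepsilon t'_S$ with $t_\varepsilon, t'_\varepsilon$ of finite order and $t_S, t'_S \in \mathcal{T}_S$. Then $t_\varepsilon (t'_\varepsilon)^{-1} = t'_S t_S^{-1}$ is simultaneously an element of finite order and an element of the free abelian group $\mathcal{T}_S$ (Lemma~\ref{lem:decomp-torus-into-components}), hence trivial. Here I use that $\mathcal{T}(\mathcal{O}_S)$ is abelian, which follows from Citation~\ref{cit:structure-of-T} since it is the image of a homomorphism from an abelian group; this lets me freely rearrange the factors above and makes the difference $t_\varepsilon(t'_\varepsilon)^{-1}$ well-defined and of finite order. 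Thus $t_\varepsilon = t'_\varepsilon$ and $t_S = t'_S$.

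The only genuine subtlety — and the step I would be most careful about — is that the finite-order part must genuinely be $t_\varepsilon$ and not absorb a nontrivial piece of $\mathcal{T}_S$, equivalently that $\mathcal{T}_S$ is torsion-free; this is exactly Lemma~\ref{lem:decomp-torus-into-components}, whose proof uses that $\Delta$ is a basis of $\mathcal{V}$ and the translation action of Lemma~\ref{lem:translation} and Lemma~\ref{lem:the-structure-of-H}. One should also note that $t_\varepsilon$ as constructed depends a priori on the chosen preimage $(t_i)$ under $h$, but uniqueness as just proved shows it does not. I would phrase the argument so that existence comes from the explicit factorization of units in $\Z[1/N]$ together with multiplicativity of each $h_{\alpha_i}$, and uniqueness from the torsion-freeness of $\mathcal{T}_S$ and the fact that $\mathcal{T}(\mathcal{O}_S)$ is abelian, keeping the whole proof to a few lines.
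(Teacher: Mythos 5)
Your proof is correct and follows essentially the same route as the paper: existence via the factorization $\mathcal{O}_S^{\times} = \{\pm 1\} \times \prod_{p \in S} p^{\Z}$ combined with the product form of $\mathcal{T}(\mathcal{O}_S)$ from Lemma~\ref{lem:structure-of-K-points} and multiplicativity of the $h_{\alpha}$, and uniqueness from the torsion-freeness of $\mathcal{T}_S$ established in Lemma~\ref{lem:decomp-torus-into-components}. Your explicit remarks on commutativity of $\mathcal{T}(\mathcal{O}_S)$ and independence of the chosen preimage are only slightly more detailed than the paper's argument, not a different approach.
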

\begin{proof}
From Lemma~\ref{lem:structure-of-K-points} we know that every $\gamma \in \mathcal{T}(\mathcal{O}_S)$ can be written as $\gamma = \prod \limits_{\alpha \in \Delta} h_{\alpha}(t_{\alpha})$ with $t_{\alpha} \in \mathcal{O}_S^{\times}$.
Thus $t_{\alpha} = \varepsilon_{\alpha} \prod \limits_{p \in S} p^{k_{\alpha}}$ for appropriate $k_{\alpha} \in \Z$ and $\varepsilon_{\alpha} \in \{\pm 1\}$.
This gives us
\[
\gamma = \prod \limits_{\alpha \in \Delta} h_{\alpha}(t_{\alpha})
= \prod \limits_{\alpha \in \Delta} h_{\alpha}(\varepsilon_{\alpha}) \prod \limits_{p \in S} h_{\alpha}(p^{k_{p,\alpha}}).
\]
Now we obtain the predicted decomposition of $\gamma$ by setting
\[
t_{\varepsilon} \defeq \prod \limits_{\alpha \in \Delta} h_{\alpha}(\varepsilon_{\alpha}) \text{ and
} t_S \defeq \prod \limits_{\alpha \in \Delta} \prod \limits_{p \in S} h_{\alpha}(p^{k_{p,\alpha}}).
\]
For the uniqueness we observe that if $\gamma = t_{\varepsilon}t_S = t_{\varepsilon}'t_S'$ are two such decompositions, then $t_S'^{-1}t_S = t_{\varepsilon}'t_{\varepsilon}^{-1}$ has finite order.
Since $\mathcal{T}_S$ is torsion-free by Lemma~\ref{lem:decomp-torus-into-components}, this implies $t_S = t_S'$ and $t_{\varepsilon} = t_{\varepsilon}'$.
\end{proof}

\begin{corollary}\label{cor:res-to-tors-free-part-of-torus}
The inclusion $\iota \colon \mathcal{T}_S \rightarrow \Gamma$ induces an isomorphism
\[
\iota^{\ast} \colon \Hom(\Gamma,\R) \rightarrow \Hom(\mathcal{T}_S,\R),\ \chi \mapsto \chi \circ \iota.
\]
\end{corollary}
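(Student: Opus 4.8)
The plan is to factor the inclusion $\iota$ through $\mathcal{T}(\mathcal{O}_S)$ and then invoke Proposition~\ref{prop:the-character-sphere}. Write $\iota = i \circ j$, where $j \colon \mathcal{T}_S \hookrightarrow \mathcal{T}(\mathcal{O}_S)$ is the inclusion and $i \colon \mathcal{T}(\mathcal{O}_S) \hookrightarrow \Gamma$ is the inclusion from Proposition~\ref{prop:the-character-sphere}. Then $\iota^{\ast} = j^{\ast} \circ i^{\ast}$, and since $i^{\ast}$ is an isomorphism by Proposition~\ref{prop:the-character-sphere}, it suffices to show that the restriction map
\[
j^{\ast} \colon \Hom(\mathcal{T}(\mathcal{O}_S),\R) \rightarrow \Hom(\mathcal{T}_S,\R), \quad \chi \mapsto \chi \circ j
\]
is an isomorphism.

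For this I would use Lemma~\ref{lem:splitting-the-torus} together with Lemma~\ref{lem:decomp-torus-into-components}. Let $E \leq \mathcal{T}(\mathcal{O}_S)$ be the image of $\{\pm 1\}^{\ell}$ under the homomorphism $h$ of Citation~\ref{cit:structure-of-T}; it is a finite subgroup, consisting of all the elements $t_{\varepsilon} = \prod_{\alpha \in \Delta} h_{\alpha}(\varepsilon_{\alpha})$ appearing in Lemma~\ref{lem:splitting-the-torus}. That lemma says precisely that the multiplication map $E \times \mathcal{T}_S \rightarrow \mathcal{T}(\mathcal{O}_S)$ is a bijection, and since $\mathcal{T}(\mathcal{O}_S)$ is abelian (Citation~\ref{cit:structure-of-T}) and $\mathcal{T}_S$ is torsion-free (Lemma~\ref{lem:decomp-torus-into-components}), so that $E \cap \mathcal{T}_S = \{1\}$, this exhibits $\mathcal{T}(\mathcal{O}_S) = E \times \mathcal{T}_S$ as an internal direct product with $E$ finite.

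Given this, $j^{\ast}$ is an isomorphism for the usual reason that $\R$ is torsion-free: any $\psi \in \Hom(\mathcal{T}(\mathcal{O}_S),\R)$ vanishes on the finite group $E$, hence is determined by $\psi|_{\mathcal{T}_S}$, giving injectivity; and any $\chi \in \Hom(\mathcal{T}_S,\R)$ extends to $\mathcal{T}(\mathcal{O}_S) = E \times \mathcal{T}_S$ by declaring it to be zero on $E$, giving surjectivity. Composing, $\iota^{\ast} = j^{\ast} \circ i^{\ast}$ is an isomorphism, which is the claim. There is essentially no serious obstacle here; the only point requiring a little care is the bookkeeping that the "sign part" $E$ really forms a subgroup complementary to $\mathcal{T}_S$, and this is already packaged in Lemmas~\ref{lem:splitting-the-torus} and~\ref{lem:decomp-torus-into-components}.
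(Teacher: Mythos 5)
Your proposal is correct and takes essentially the same route as the paper: the paper's proof likewise factors the inclusion through $\mathcal{T}(\mathcal{O}_S)$, citing Lemma~\ref{lem:splitting-the-torus} for the isomorphism $\Hom(\mathcal{T}(\mathcal{O}_S),\R) \rightarrow \Hom(\mathcal{T}_S,\R)$ and Proposition~\ref{prop:the-character-sphere} for the other factor. You simply spell out the internal-direct-product and torsion bookkeeping that the paper leaves implicit.
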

\begin{proof}
From Lemma~\ref{lem:splitting-the-torus} it follows that the inclusion $\mathcal{T}_S \rightarrow \mathcal{T}(\mathcal{O_S})$ induces an isomorphism
\[
\Hom(\mathcal{T}(\mathcal{O_S}),\R) \rightarrow \Hom(\mathcal{T}_S,\R).
\]
Now the claim follows from Proposition~\ref{prop:the-character-sphere}.
\end{proof}

Using Corollary~\ref{cor:res-to-tors-free-part-of-torus}, we are now able to define a basis of $\Hom(\Gamma,\R)$.

\begin{definition}\label{def:valuation-at-one-coordinate}
For every $\alpha \in \Delta$ and every $p \in S$ let $\chi_{\alpha,p}' \colon \mathcal{T}_S \rightarrow \R$ be the map induced by $h_{\beta}(t) \mapsto \langle \beta, \alpha \rangle v_p(t)$ for every $\beta \in \Delta$ and $t \in \Q^{\times}$.
The unique extension of $\chi_{\alpha,p}'$ to $\Gamma$, provided by Corollary~\ref{cor:res-to-tors-free-part-of-torus}, will be denoted by $\chi_{\alpha,p}$.
Let $B_{\mathcal{G},\mathcal{B}}(S) = \Set{\chi_{\alpha,p}}{\alpha \in \Delta,\ p \in S}$ denote the union of these characters.
\end{definition}


\begin{proposition}\label{prop:basis-of-charactersphere}
The set $B_{\mathcal{G},\mathcal{B}}(S)$ is a basis of $\Hom(\Gamma,\R)$.
\end{proposition}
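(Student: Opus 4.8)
The plan is to show that $B_{\mathcal{G},\mathcal{B}}(S)$ consists of exactly $|\Delta|\cdot|S|$ characters and that it is linearly independent; since $\mathcal{T}_S$ is free abelian of rank $|\Delta|\cdot|S|$ by Lemma~\ref{lem:decomp-torus-into-components}, and $\Hom(\Gamma,\R)\cong\Hom(\mathcal{T}_S,\R)$ by Corollary~\ref{cor:res-to-tors-free-part-of-torus}, the space $\Hom(\Gamma,\R)$ has dimension $|\Delta|\cdot|S|$, so a linearly independent set of that cardinality is automatically a basis. Thus the whole problem reduces to a linear independence statement, which by the isomorphism $\iota^{\ast}$ can be checked after restricting to $\mathcal{T}_S$, i.e.\ among the maps $\chi_{\alpha,p}'$.

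First I would fix the basis $\Set{h_{\beta}(q)}{\beta\in\Delta,\ q\in S}$ of $\mathcal{T}_S$ from Lemma~\ref{lem:decomp-torus-into-components} and compute the matrix of the family $(\chi_{\alpha,p}')_{\alpha\in\Delta,p\in S}$ against it. By Definition~\ref{def:valuation-at-one-coordinate} we have $\chi_{\alpha,p}'(h_{\beta}(q)) = \langle\beta,\alpha\rangle\, v_p(q)$, and since $p,q$ are primes, $v_p(q)=\delta_{p,q}$. Hence the $(\alpha,p),(\beta,q)$ entry of this matrix is $\langle\beta,\alpha\rangle\,\delta_{p,q}$, so the matrix is block-diagonal over the primes in $S$, with each diagonal block equal to the $|\Delta|\times|\Delta|$ matrix $\bigl(\langle\beta,\alpha\rangle\bigr)_{\alpha,\beta\in\Delta}$. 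Its determinant is therefore $\det\bigl(\langle\beta,\alpha\rangle\bigr)_{\alpha,\beta}^{\,|S|}$.

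So the key step is to observe that the Cartan-type matrix $\bigl(\langle\beta,\alpha\rangle\bigr)_{\alpha,\beta\in\Delta}$ is nonsingular. This is standard: writing $\langle\beta,\alpha\rangle = 2\kappa^{\ast}(\beta,\alpha)/\kappa^{\ast}(\alpha,\alpha)$, this matrix is obtained from the Gram matrix $\bigl(\kappa^{\ast}(\beta,\alpha)\bigr)_{\alpha,\beta}$ of the simple roots by multiplying the $\alpha$-th column by the nonzero scalar $2/\kappa^{\ast}(\alpha,\alpha)$; and the Gram matrix of the basis $\Delta$ of $\mathcal{V}$ with respect to the inner product $\kappa^{\ast}$ is positive definite, hence invertible, because $\Delta$ is a basis of $\mathcal{V}=\langle\Phi\rangle_\R$ and $\kappa^{\ast}$ is an inner product. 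Therefore the full matrix above is invertible, the family $(\chi_{\alpha,p}')$ is linearly independent in $\Hom(\mathcal{T}_S,\R)$, and in particular the $\chi_{\alpha,p}$ are pairwise distinct and number exactly $|\Delta|\cdot|S|$. Transporting back via $\iota^{\ast}$ (Corollary~\ref{cor:res-to-tors-free-part-of-torus}) and counting dimensions gives that $B_{\mathcal{G},\mathcal{B}}(S)$ is a basis of $\Hom(\Gamma,\R)$.

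I do not expect a real obstacle here; the only mild care needed is the bookkeeping to make sure $\chi_{\alpha,p}'$ is indeed $\R$-linear on $\mathcal{T}_S$ (which follows from multiplicativity of each $h_\beta$, Citation~\ref{cit:structure-of-T}, together with additivity of $v_p$) so that the matrix computation is legitimate, and to confirm via Lemma~\ref{lem:decomp-torus-into-components} that $\{h_\beta(q)\}$ really is a $\Z$-basis so the rank count is exact. The substantive input is the nonsingularity of the Cartan matrix, which is classical.
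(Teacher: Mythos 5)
Your proposal is correct and takes essentially the same route as the paper, whose entire proof is the observation that the claim follows from the non-degeneracy (in fact positive definiteness) of $\kappa^{\ast}$ together with $\Delta$ being a basis of $\mathcal{V}$; you have simply made explicit the reduction to $\mathcal{T}_S$ via Corollary~\ref{cor:res-to-tors-free-part-of-torus}, the block-diagonal matrix over the primes in $S$, and the invertibility of the Cartan-type block. (One harmless slip: with your indexing, the factor $2/\kappa^{\ast}(\alpha,\alpha)$ rescales the $\alpha$-th \emph{row} of the Gram matrix rather than the column, which of course does not affect invertibility.)
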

\begin{proof}
This follows from the fact that $\kappa^{\ast}$ is non-degenerate and that $\Delta$ is a basis of $\mathcal{V}$.
\end{proof}

\subsection{Extending characters to height functions}\label{subsec:ext-char}

Our next goal is to construct equivariant height functions for the action of $\Gamma$ on $X_S$ .
That is, for a given character $\chi \colon \Gamma \rightarrow \R$, we want to define a continuous function $h \colon X_S \rightarrow \R$ such that $h(g(x)) = \chi(g) + h(x)$ for all $g \in \Gamma$ and $x \in X_S$.
Recall from Notation~\ref{not:chamber} that $\sigma \subseteq \pinf \mathcal{V}$ denotes the chamber at infinity associated to the sector
\[
\mathcal{K} = \Set{v \in \mathcal{V}}{\kappa^{\ast}(v, \alpha) > 0 \text{ for every }  \alpha \in \Delta}.
\]
Using the identification $\Sigma_p \cong \mathcal{V}$ in Subsection~\ref{subsec:a-metric-on-X}, we define the corresponding sector $K_p \subset \Sigma_p$ and its associated chamber $\sigma_p \subseteq \pinf \Sigma_{p}$ for each $p \in S$.
Note that the product $K_S \defeq \prod \limits_{p \in S} K_p$ is a sector in $\Sigma_S = \prod \limits_{p \in S} \Sigma_p$.
Its associated chamber at infinity will be denoted by $\sigma_S \subseteq \pinf \Sigma_S$.

\begin{notation}\label{not:basis-heights-in-V}
Given $p \in S$ and $\alpha \in \Delta$, we consider the function $\kappa_{\alpha,p} \colon \Sigma_p \rightarrow \R,\ v \mapsto \kappa^{\ast}(\alpha,\iota_p(v))$.
By composing $\kappa_{\alpha,p}$ with the canonical projection $\pr_p \colon \Sigma_S \rightarrow \Sigma_p$ and the retraction $\rho_{\sigma_S,\Sigma_S} \colon X_S \rightarrow \Sigma_S$ and inverting the sign, we obtain the height functions
\begin{equation}\label{eq:hgt-alpha-p}
\hgt_{\alpha,p} \defeq - \kappa_{\alpha,p} \circ \pr_p \circ \rho_{\sigma_S,\Sigma_S} \colon X_S \rightarrow \R.
\end{equation}
\end{notation}

Note that each $\hgt_{\alpha,p}$ restricts to a linear function on $\Sigma_S$ and that $\hgt_{\alpha,p}$ is invariant under precomposing with $\rho_{\Sigma_S,\sigma_S}$.
In view of Remark~\ref{rem:another-char-of-retr-pres}, this tells us that $\hgt_{\alpha,p}$ lies in the real vector space $X_S^{\ast} \defeq \Set{\alpha \circ \rho_{\Sigma_S,\sigma_S}}{\alpha \in \Sigma_S^{\ast}}$.
Note also that $B_{\mathcal{G},\mathcal{B}}^{{ht}}(S) \defeq \Set{\hgt_{\alpha,p}}{\alpha \in \Delta,\ p \in S}$ is a basis of $X_S^{\ast}$ since $\Delta$ is a basis of $\mathcal{V}$.
Thus we can extend the bijection
\[
B_{\mathcal{G},\mathcal{B}}(S) \rightarrow B_{\mathcal{G},\mathcal{B}}^{{ht}}(S),\ \chi_{\alpha,p} \mapsto \hgt_{\alpha,p},
\]
where $B_{\mathcal{G},\mathcal{B}}(S)$ is as in Definition~\ref{def:valuation-at-one-coordinate}, to an isomorphism
\begin{equation}\label{eq:associated-heights}
\hgt \colon \Hom(\Gamma,\R) \rightarrow X_S^{\ast},\ \chi \mapsto \hgt_{\chi}.
\end{equation}


\begin{lemma}\label{lem:equivariance-torus-on-stdapp}
Let $\chi \in \Hom(\Gamma,\R)$ be a character.
For every $\gamma \in \mathcal{T}(\mathcal{O}_S)$ and every $x \in \Sigma_S$ we have $\hgt_{\chi}(\gamma(x)) = \hgt_{\chi}(x)+\chi(\gamma)$.
\end{lemma}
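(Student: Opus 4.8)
The plan is to reduce the statement to a computation on the standard apartment $\Sigma_S$, where both the height functions $\hgt_{\chi}$ and the $\mathcal{T}(\mathcal{O}_S)$-action become explicit and linear. First I would recall that by construction (Notation~\ref{not:basis-heights-in-V} and the isomorphism~\eqref{eq:associated-heights}) it suffices to prove the identity for the basis characters $\chi = \chi_{\alpha,p}$ and the corresponding basis heights $\hgt_{\alpha,p}$, since everything in sight is $\R$-linear in $\chi$: both sides of the claimed equation $\hgt_{\chi}(\gamma(x)) = \hgt_{\chi}(x) + \chi(\gamma)$ depend linearly on $\chi$ (the map $\chi \mapsto \hgt_{\chi}$ is linear, and $\chi \mapsto \chi(\gamma)$ is linear), so checking it on a basis suffices. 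Moreover, since $\mathcal{T}(\mathcal{O}_S)$ is generated (modulo finite-order elements, by Lemma~\ref{lem:splitting-the-torus}) by $\mathcal{T}_S$, and the relation is multiplicative in $\gamma$ in the sense that it propagates along products, I would reduce to $\gamma = h_{\beta}(p^{k})$ for $\beta \in \Delta$, $p \in S$, $k \in \Z$; the finite-order elements $t_{\varepsilon}$ act trivially on $\Sigma_p$ by Lemma~\ref{lem:the-structure-of-H} and are killed by every character, so they contribute nothing on either side.

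Next I would make the action of $\gamma = h_{\beta}(p^{k})$ on $\Sigma_S$ explicit. By Lemma~\ref{lem:the-structure-of-H} and Lemma~\ref{lem:translation}, $\gamma$ acts on $\Sigma_p \cong \mathcal{V}$ by the translation $v \mapsto v - k\,\beta^{V}$, where $\beta^{V} = \tfrac{2\beta}{\kappa^{\ast}(\beta,\beta)}$, and $\gamma$ acts trivially on $\Sigma_q$ for every prime $q \ne p$ in $S$ (again by Lemma~\ref{lem:the-structure-of-H}, since $v_q(p^k) = 0$). Since $x \in \Sigma_S$, we have $\rho_{\sigma_S,\Sigma_S}(x) = x$ and $\rho_{\sigma_S,\Sigma_S}(\gamma(x)) = \gamma(x)$, so $\hgt_{\alpha',q}$ evaluated at $\gamma(x)$ is just $-\kappa^{\ast}(\alpha', \iota_q(\pr_q(\gamma(x))))$. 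For $q \ne p$ this equals $\hgt_{\alpha',q}(x)$ and $\chi_{\alpha',q}(\gamma) = \langle\beta,\alpha'\rangle v_q(p^k) = 0$, so the identity holds trivially in those coordinates. For $q = p$ I compute
\[
\hgt_{\alpha',p}(\gamma(x)) = -\kappa^{\ast}(\alpha', \iota_p(\pr_p(x)) - k\beta^{V}) = \hgt_{\alpha',p}(x) + k\,\kappa^{\ast}(\alpha',\beta^{V}).
\]
It then remains to identify $k\,\kappa^{\ast}(\alpha',\beta^{V})$ with $\chi_{\alpha',p}(\gamma)$. By Definition~\ref{def:valuation-at-one-coordinate}, $\chi_{\alpha',p}(h_{\beta}(p^k)) = \langle\beta,\alpha'\rangle v_p(p^k) = k\langle\beta,\alpha'\rangle = k \cdot 2\tfrac{\kappa^{\ast}(\beta,\alpha')}{\kappa^{\ast}(\alpha',\alpha')}$, whereas $k\,\kappa^{\ast}(\alpha',\beta^{V}) = k\,\kappa^{\ast}(\alpha', \tfrac{2\beta}{\kappa^{\ast}(\beta,\beta)}) = k \cdot 2\tfrac{\kappa^{\ast}(\alpha',\beta)}{\kappa^{\ast}(\beta,\beta)}$.

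The main obstacle is precisely this last bookkeeping step: the two expressions differ a priori by the normalization $\kappa^{\ast}(\alpha',\alpha')$ versus $\kappa^{\ast}(\beta,\beta)$, so I must be careful about which root the $\hgt$-functions are ``dual'' to. The resolution is that the sector $\mathcal{K}$ and the linear forms $\kappa_{\alpha,p}(v) = \kappa^{\ast}(\alpha,\iota_p(v))$ are set up so that the translation lattice of $\mathcal{T}_S$ is the coroot lattice (spanned by the $\beta^{V}$), and $\chi_{\alpha',p}'$ was defined (Definition~\ref{def:valuation-at-one-coordinate}) via $\langle\beta,\alpha'\rangle$ exactly so that $\chi_{\alpha',p}'(h_{\beta}(t)) = \kappa^{\ast}(\alpha', v_p(t)\beta^{V})$ — that is, $\langle\beta,\alpha'\rangle = \kappa^{\ast}(\alpha', \beta^{V})$ by the very definition $\beta^{V} = 2\beta/\kappa^{\ast}(\beta,\beta)$ and $\langle\beta,\alpha'\rangle = 2\kappa^{\ast}(\beta,\alpha')/\kappa^{\ast}(\beta,\beta)$. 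Thus the two quantities agree on the nose, and chaining the per-coordinate identities over all $p \in S$ and extending linearly over the basis $B_{\mathcal{G},\mathcal{B}}(S)$ yields $\hgt_{\chi}(\gamma(x)) = \hgt_{\chi}(x) + \chi(\gamma)$ for all $\chi \in \Hom(\Gamma,\R)$, all $\gamma \in \mathcal{T}(\mathcal{O}_S)$, and all $x \in \Sigma_S$, as claimed.
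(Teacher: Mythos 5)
Your proof is correct and follows essentially the same route as the paper: reduce to the basis characters $\chi_{\alpha,p}$ by linearity, use Lemma~\ref{lem:translation} and Lemma~\ref{lem:the-structure-of-H} to express the torus action on $\Sigma_S$ as translation by coroots, and conclude from $\langle \beta,\alpha\rangle = \kappa^{\ast}(\alpha,\beta^{V})$; the only difference is that you further reduce to the generators $h_{\beta}(p^{k})$ via Lemma~\ref{lem:splitting-the-torus} and a cocycle-propagation argument, whereas the paper computes with a general element $\prod_{\beta} h_{\beta}(t_{\beta})$ in one pass. The ``normalization obstacle'' you raise is self-inflicted: with the paper's convention $\langle v,w\rangle = 2\kappa^{\ast}(v,w)/\kappa^{\ast}(v,v)$ one has $\langle \beta,\alpha'\rangle = 2\kappa^{\ast}(\beta,\alpha')/\kappa^{\ast}(\beta,\beta)$ from the outset, so the intermediate expression with $\kappa^{\ast}(\alpha',\alpha')$ was a misexpansion and your final identity $\langle \beta,\alpha'\rangle = \kappa^{\ast}(\alpha',\beta^{V})$ is the correct and immediate one.
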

\begin{proof}
By the linearity of $\hgt$ it suffices to prove the statement for the basis characters.
Thus let $\chi = \chi_{\alpha,p}$ for some $\alpha \in \Delta$ and some $p \in S$.
Let $\gamma = \prod \limits_{\beta \in \Delta} h_{\beta}(t_{\beta}) \in \mathcal{T}(\mathcal{O}_S)$ be an arbitrary element and let $x \in \Sigma_S$.
Using the $\mathcal{N}$-equivariant identification $\iota_p \colon \Sigma_p \rightarrow \mathcal{V}$ in Subsection~\ref{subsec:a-metric-on-X}, we may apply Lemma~\ref{lem:translation} and Lemma~\ref{lem:the-structure-of-H} to deduce
\begin{align*}
\hgt_{\chi}(\gamma \cdot x)
&=\hgt_{\alpha,p}((\prod \limits_{\beta \in \Delta} h_{\beta}(t_{\beta})) \cdot x)\\
&=\hgt_{\alpha,p}(x - \sum \limits_{\beta \in \Delta} \sum \limits_{q \in S} v_q(t_{\beta})\iota_q^{-1}(\beta^{V}))\\
&=\hgt_{\alpha,p}(x) - \sum \limits_{\beta \in \Delta} \sum \limits_{q \in S} v_q(t_{\beta}) \hgt_{\alpha,p}( \iota_q^{-1}(\beta^{V}))\\
&=\hgt_{\alpha,p}(x) + \sum \limits_{\beta \in \Delta} \sum \limits_{q \in S} v_q(t_{\beta}) \kappa_{\alpha,p} \circ \pr_p \circ \rho_{\Sigma_S,\sigma_S}( \iota_q^{-1}(\beta^{V}))\\
&=\hgt_{\alpha,p}(x) + \sum \limits_{\beta \in \Delta} v_p(t_{\beta}) \kappa_{\alpha,p}(\iota_p^{-1}(\beta^{V}))\\
&=\hgt_{\alpha,p}(x) + \sum \limits_{\beta \in \Delta} v_p(t_{\beta}) \kappa^{\ast}(\alpha,\beta^{V})\\
&=\hgt_{\alpha,p}(x) + \sum \limits_{\beta \in \Delta} v_p(t_{\beta}) \langle \beta , \alpha \rangle\\
&=\hgt_{\alpha,p}(x) + \sum \limits_{\beta \in \Delta} \chi_{\alpha,p}(h_{\beta}(t_{\beta}))\\
&=\hgt_{\chi}(x)+\chi(\gamma),
\end{align*}
which proves the claim.
\end{proof}

The following lemma summarizes how characters and their associated height functions behave under the maps $\rho \defeq \rho_{\Sigma_S,\sigma_S}$ and $\delta$.

\begin{lemma}\label{lem:relations-char-height-delta}
Let $\chi \in \Hom(\Gamma,\R)$ and let $x \in X_S$.
Consider an element $\gamma \in \Gamma$ and its decomposition $\gamma = t_{\gamma}u_{\gamma}$ into its torus part $t_{\gamma} \in \mathcal{T}(\mathcal{O}_S)$ and its unipotent part $u_{\gamma} \in \mathcal{U}(\mathcal{O}_S)$.
Then
\begin{enumerate}
\item[$(a)$] $\hgt_{\chi}(\rho(x)) = \hgt_{\chi}(x)$,
\item[$(b)$] $\chi(\delta(\gamma))=\chi(\gamma)$,
\item[$(c)$] $\rho(u_{\gamma}(x)) = \rho(x)$,
\item[$(d)$] $\rho(t_{\gamma}(x)) = t_{\gamma}(\rho(x))$, and
\item[$(e)$] $\rho(\gamma(x)) = \delta(\gamma)(\rho(x))$.
\end{enumerate}
\end{lemma}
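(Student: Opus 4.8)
\textbf{Proof plan for Lemma~\ref{lem:relations-char-height-delta}.}
The statement bundles together five compatibility properties, and the plan is to verify them one at a time, each reducing to facts already established in the excerpt. First I would prove $(a)$: since $\hgt_{\chi}$ lies in $X_S^{\ast}$ by construction (see Notation~\ref{not:basis-heights-in-V}), Remark~\ref{rem:another-char-of-retr-pres} gives $\hgt_{\chi} \circ \rho = \hgt_{\chi}$ directly, so $(a)$ is immediate. Next, $(b)$ follows from Lemma~\ref{lem:the-character-sphere} and Proposition~\ref{prop:the-character-sphere}: we have $\gamma = t_{\gamma} u_{\gamma}$ with $u_{\gamma} \in \mathcal{U}(\mathcal{O}_S)$, and since a power of $u_{\gamma}$ lies in $[\Gamma,\Gamma]$ (Lemma~\ref{lem:the-character-sphere}) we get $\chi(u_{\gamma}) = 0$, hence $\chi(\gamma) = \chi(t_{\gamma}) = \chi(\delta(\gamma))$ because $\delta(\gamma) = t_{\gamma}$ by the definition of $\delta$ as the projection onto the torus part.

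For $(c)$, I would use the defining property of the retraction $\rho = \rho_{\sigma_S, \Sigma_S}$ together with Lemma~\ref{lem:open-stabilizer-in-U} (or rather the observation underlying it): the unipotent group $\mathcal{U}$ stabilizes the chamber $\sigma$ at infinity by Citation~\ref{cit:B-is-the-stab-of-ch-at-infty}, so $u_{\gamma}$ maps each apartment in $\mathcal{A}_{\sigma_S}$ containing a subsector of $K_S$ to another such apartment and is compatible with the identifications $f_{\Sigma}$ of Definition~\ref{def:apartments-towards-sigma}; more precisely, $u_\gamma$ acts trivially "from the point of view of $\rho$'' because it fixes $\sigma_S$ and hence commutes with the germ-at-infinity data defining $\rho$. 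The cleanest route is: for $x \in X_S$ pick an apartment $\Sigma \in \mathcal{A}_{\sigma_S}$ containing $x$ and a subsector of $K_S$; then $u_\gamma(\Sigma) \in \mathcal{A}_{\sigma_S}$ as well (since $u_\gamma$ fixes $\sigma_S$), the two apartments share a common subsector of $K_S$, and the building axiom (B2) forces $f_{u_\gamma(\Sigma)} \circ u_\gamma = f_{\Sigma}$ on that subsector, hence everywhere, giving $\rho(u_\gamma(x)) = \rho(x)$. For $(d)$, the element $t_{\gamma} \in \mathcal{T}(\mathcal{O}_S)$ stabilizes $\Sigma_S$ and acts on it by a translation (Lemma~\ref{lem:translation}, Lemma~\ref{lem:the-structure-of-H}), so for $x \in X_S$ with apartment $\Sigma \ni x$ as above, $t_\gamma(\Sigma)$ again lies in $\mathcal{A}_{\sigma_S}$, and one checks $f_{t_\gamma(\Sigma)} \circ t_\gamma = t_\gamma \circ f_{\Sigma}$ by comparing on a common subsector of $K_S$ (where both sides restrict to the same translation of $\Sigma_S$), yielding $\rho(t_\gamma(x)) = t_\gamma(\rho(x))$.

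Finally $(e)$ is the composite: $\rho(\gamma(x)) = \rho(t_\gamma(u_\gamma(x))) = t_\gamma(\rho(u_\gamma(x)))$ by $(d)$ applied to the point $u_\gamma(x)$, $= t_\gamma(\rho(x))$ by $(c)$, $= \delta(\gamma)(\rho(x))$ since $\delta(\gamma) = t_\gamma$. The main obstacle I anticipate is making $(c)$ and $(d)$ fully rigorous: one must argue carefully that an isometry fixing $\sigma_S$ (resp.\ acting as a translation of $\Sigma_S$) is compatible with the apartment-identifications $f_\Sigma$, which amounts to the statement that two apartments in $\mathcal{A}_{\sigma_S}$ sharing a subsector of $K_S$ are glued by a map fixing that subsector pointwise — this is exactly the content of building axiom (B2) combined with the fact that an isomorphism of Coxeter complexes is determined by its restriction to a single chamber, so it is routine but needs to be spelled out. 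Everything else is bookkeeping with the definitions of $\rho$, $\delta$, $\hgt_{\chi}$, and the splitting $\Gamma = \mathcal{T}(\mathcal{O}_S)\mathcal{U}(\mathcal{O}_S)$.
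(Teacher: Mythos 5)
Parts $(a)$, $(b)$, $(d)$ and $(e)$ of your plan are sound and essentially match the paper: $(a)$ is the $\rho$-invariance of elements of $X_S^{\ast}$, $(b)$ is exactly the argument proving Proposition~\ref{prop:the-character-sphere}, $(d)$ works by comparing the two isomorphisms $f_{t_{\gamma}(\Sigma)}\circ t_{\gamma}$ and $t_{\gamma}\circ f_{\Sigma}$ on a common subsector of $K_S$ (where both are the translation $t_{\gamma}$, since $t_{\gamma}$ stabilizes $\Sigma_S$), and $(e)$ is the same bookkeeping as in the paper.

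The gap is in $(c)$, and it is not the ``routine'' point you flag at the end. Your argument uses only that $u_{\gamma}$ fixes $\sigma_S$ at infinity, so that $u_{\gamma}(\Sigma)\in\mathcal{A}_{\sigma_S}$; but the torus element $t_{\gamma}$ has exactly the same property (all of $\mathcal{B}$ stabilizes $\sigma_S$ by Citation~\ref{cit:B-is-the-stab-of-ch-at-infty}), and running your argument with $t_{\gamma}$ in place of $u_{\gamma}$ would give $\rho\circ t_{\gamma}=\rho$, contradicting $(d)$ whenever $t_{\gamma}$ translates $\Sigma_S$ nontrivially. Concretely, the step ``(B2) forces $f_{u_{\gamma}(\Sigma)}\circ u_{\gamma}=f_{\Sigma}$ on the common subsector'' is unjustified: for $y$ in that subsector one has $f_{\Sigma}(y)=y$, while $f_{u_{\gamma}(\Sigma)}(u_{\gamma}(y))=u_{\gamma}(y)$ whenever $u_{\gamma}(y)$ lies in $\Sigma_S$, so the two maps agree there only if $u_{\gamma}$ actually fixes the subsector \emph{pointwise}. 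That pointwise fixing is the real content of $(c)$ and is a group-theoretic fact about $\mathcal{U}(\mathcal{O}_S)$, not a consequence of (B2) plus ``isomorphisms are determined on a chamber''. The paper proves it by contradiction: if $u_{\gamma}$ moved some point of a sector $\mathcal{K}_2\subseteq u_{\gamma}\cdot\mathcal{K}_1\cap\mathcal{K}_1$, one produces $t\in\mathcal{T}(\mathcal{O}_S)$ acting nontrivially on $\Sigma_S$ with $tu_{\gamma}$ fixing a point of $\Sigma_S$, and then Lemma~\ref{lem:stab-of-center} (the stabilizer of $o_p$ is $\mathcal{G}(A_p)$) together with $\delta(\mathcal{B}(A_S))=\mathcal{T}(A_S)$ and Lemma~\ref{lem:the-structure-of-H} (elements of $\mathcal{T}(A_S)$ fix $\Sigma_S$ pointwise) yields a contradiction. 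Alternatively you could argue directly that each generator $x_{\alpha}(t)$ with $\alpha\in\Phi^{+}$, $t\in\mathcal{O}_S$ fixes pointwise a half-apartment sufficiently far in the $\sigma_S$-direction, hence a subsector of $K_S$; but some input of this kind, distinguishing unipotent from torus elements, must be supplied before your computation $f_{u_{\gamma}(\Sigma)}\circ u_{\gamma}=f_{\Sigma}$ can go through.
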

\begin{proof}
Property $(a)$ follows from our observation that $B_{\mathcal{G},\mathcal{B}}^{{ht}}(S)$ is a basis of $X_S^{\ast}$.
Claim $(b)$ is covered by Proposition~\ref{prop:the-character-sphere}.
To prove $(c)$ and $(d)$, let $x \in X_S$ and let $\Sigma \subset X_S$ be an apartment with $x \in \Sigma$ and $\sigma \subseteq \pinf \Sigma$.
Let $\mathcal{K}_1$ be a common sector of $\Sigma_S$ and $\Sigma$ that corresponds to $\sigma$.
From Citation~\ref{cit:B-is-the-stab-of-ch-at-infty} we know that $\mathcal{U}(\mathcal{O}_S)$ fixes $\sigma_S$.
Thus there is a sector $\mathcal{K}_2$ in $u_{\gamma} \cdot \mathcal{K}_1 \cap \mathcal{K}_1$.
We claim that $u_{\gamma}$ fixes $\mathcal{K}_2$ pointwise.
Suppose the opposite.
Then there is some $t \in \mathcal{T}(\mathcal{O}_S)$ acting non-trivially on $\Sigma_S$ such that $t u_{\gamma}$ fixes a point in $\Sigma_S$.
In this case it follows from Lemma~\ref{lem:stab-of-center} that there is some $b \in \mathcal{B}$ with $b t u_{\gamma} b^{-1} \in \mathcal{B}(A_S)$.
But then we have $t = \delta(b t u_{\gamma} b^{-1}) \in \delta(\mathcal{B}(A_S)) = \mathcal{T}(A_S)$, which is a contradiction since Lemma~\ref{lem:the-structure-of-H} implies that $\mathcal{T}(A_S)$ fixes $\Sigma_S$ pointwise.
Thus $u_{\gamma}$ fixes $\mathcal{K}_2$ pointwise from which we deduce that the isomorphisms $\rho \circ u_{\gamma} \colon \Sigma \rightarrow\Sigma_S$ and $\rho \colon \Sigma \rightarrow\Sigma_S$ fix $\mathcal{K}_2$ pointwise and hence coincide.
In particular this shows $\rho(u_{\gamma}(x)) = \rho(x)$ and we obtain $(c)$.
Similarly, we see that $\rho \circ t_{\gamma}$ and $t_{\gamma} \circ \rho$ coincide on $\mathcal{K}_1$ from which we obtain $(d)$.
By applying the rules $(a)-(d)$ we obtain
\[
\rho(\gamma(x)) = \rho(t_{\gamma}(u_{\gamma}(x))) = t_{\gamma}(\rho(u_{\gamma}(x))) = t_{\gamma}(\rho(x)) = \delta(\gamma)(\rho(x)),
\]
which proves $(e)$.
\end{proof}

We are now ready to prove the desired equivariance of $h_{\chi}$.

\begin{corollary}\label{cor:equivariance-of-height}
Let $\chi \colon \Gamma \rightarrow \R$ be a character.
With the notation above we have $h_{\chi}(\gamma(x)) = \chi(\gamma) + h_{\chi}(x)$ for every $x \in X_S$ and every $\gamma \in \Gamma$.
\end{corollary}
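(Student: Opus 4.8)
The plan is to reduce the equivariance of $h_\chi=\hgt_\chi$ to the combination of two facts already established: the behaviour of $\rho=\rho_{\Sigma_S,\sigma_S}$ under the $\Gamma$-action (Lemma~\ref{lem:relations-char-height-delta}) and the equivariance of $\hgt_\chi$ on the standard apartment under the torus (Lemma~\ref{lem:equivariance-torus-on-stdapp}). First I would recall that, by construction in Notation~\ref{not:basis-heights-in-V}, every $\hgt_{\alpha,p}$ is $\rho$-invariant, and since the $\hgt_{\alpha,p}$ form a basis of $X_S^\ast$ and $\hgt$ is linear, the same holds for $\hgt_\chi$: that is $\hgt_\chi\circ\rho=\hgt_\chi$ (this is part $(a)$ of Lemma~\ref{lem:relations-char-height-delta}).

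The main computation is then a short chain of equalities. Fix $\gamma\in\Gamma$ and $x\in X_S$, and write $\gamma=t_\gamma u_\gamma$ with $t_\gamma\in\mathcal{T}(\mathcal{O}_S)$ and $u_\gamma\in\mathcal{U}(\mathcal{O}_S)$ as in the splitting from Lemma~\ref{lem:structure-of-K-points}. I would argue
\[
h_\chi(\gamma(x)) = h_\chi(\rho(\gamma(x))) = h_\chi(\delta(\gamma)(\rho(x))) = h_\chi(t_\gamma(\rho(x))) = h_\chi(\rho(x)) + \chi(t_\gamma) = h_\chi(x) + \chi(\gamma).
\]
Here the first and fourth-to-last steps use $\rho$-invariance of $h_\chi$; the second uses Lemma~\ref{lem:relations-char-height-delta}$(e)$, namely $\rho(\gamma(x))=\delta(\gamma)(\rho(x))$; the third uses that $\delta(\gamma)=t_\gamma$ by definition of $\delta$; the step $h_\chi(t_\gamma(\rho(x)))=h_\chi(\rho(x))+\chi(t_\gamma)$ is exactly Lemma~\ref{lem:equivariance-torus-on-stdapp} applied to the point $\rho(x)\in\Sigma_S$ and the torus element $t_\gamma$; and the final step uses $\chi(t_\gamma)=\chi(\delta(\gamma))=\chi(\gamma)$, which is Lemma~\ref{lem:relations-char-height-delta}$(b)$ (equivalently Proposition~\ref{prop:the-character-sphere}, since $\chi$ vanishes on a power of $u_\gamma$ by Lemma~\ref{lem:the-character-sphere}).

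Since all the ingredients are already proved, there is no serious obstacle; the only thing to be careful about is bookkeeping — making sure that $\delta(\gamma)=t_\gamma$ really is the torus part in the sense required by Lemma~\ref{lem:equivariance-torus-on-stdapp}, and that $\rho(x)$ genuinely lies in $\Sigma_S$ so that that lemma applies (which it does, as $\rho$ maps $X_S$ onto $\Sigma_S$). Continuity of $h_\chi$ is immediate from the fact that it is a composition of the continuous retraction $\rho$ with a linear form on the Euclidean vector space $\Sigma_S$, so together with the displayed identity this establishes that $h_\chi$ is a height function associated to $\chi$ in the sense of Definition~\ref{def:ext-char-and-superlvl}. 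This, combined with Proposition~\ref{prop:stabilizer-of-type-F-infty} and Proposition~\ref{prop:cocomp-of-borel-over-R}, will let us invoke Theorem~\ref{thm:sigmas-and-stabilizers} to translate the computation of $\Sigma^n(\Gamma)$ into the geometric statement of Corollary~\ref{cor:A}.
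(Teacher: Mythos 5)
Your proof is correct and follows essentially the same route as the paper: the same chain of equalities using the $\rho$-invariance of $h_\chi$, Lemma~\ref{lem:relations-char-height-delta}$(e)$ and $(b)$, and the torus equivariance from Lemma~\ref{lem:equivariance-torus-on-stdapp} applied to $\delta(\gamma)\in\mathcal{T}(\mathcal{O}_S)$ and $\rho(x)\in\Sigma_S$. The only difference is that you spell out the intermediate identification $\delta(\gamma)=t_\gamma$, which the paper leaves implicit.
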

\begin{proof}
In view of Lemma~\ref{lem:equivariance-torus-on-stdapp} and the properties in Lemma~\ref{lem:relations-char-height-delta} we have
\begin{align*}
h_{\chi}(\gamma(x))
&=h_{\chi}(\rho(\gamma(x)))\\
&=h_{\chi}(\delta(\gamma)(\rho(x)))\\
&=h_{\chi}(\rho(x))+\chi(\delta(\gamma))\\
&=h_{\chi}(x)+\chi(\gamma),
\end{align*}
which proves the claim.
\end{proof}

\subsection{Sigma invariants of S-arithmetic Borel groups}
We are now ready to prove the main result of this paper.
To formulate it, we consider the image of $B_{\mathcal{G},\mathcal{B}}(S)$ in $S(\Gamma)$, which will be denote by $\Delta_{\mathcal{G},\mathcal{B}}(S)^{(0)}$.
As notation suggests, we think of $\Delta_{\mathcal{G},\mathcal{B}}(S)^{(0)}$ as the vertex set of a spherical simplex in $S(\Gamma)$.
Accordingly, we write $\Delta_{\mathcal{G},\mathcal{B}}(S)^{(k)}$ to denote the $(k+1)$-convex hull of $\Delta_{\mathcal{G},\mathcal{B}}(S)^{(0)}$, which was introduced in Definition~\ref{def:conv-hull}.
Since $B_{\mathcal{G},\mathcal{B}}(S)$ is a basis of $\Hom(\Gamma,\R)$ by Proposition~\ref{prop:basis-of-charactersphere}, we see that this picture indeed makes sense.

\begin{theorem}\label{thm:B}
Let $\mathcal{G} = \mathcal{G}(\Phi,\rho,\Q)$ be a Chevalley group, let $\Delta \subseteq \Phi$ be a system of simple roots and let $\mathcal{B} \subseteq \mathcal{G}$ be the corresponding Borel subgroup.
Let $S \subseteq \N$ be a finite set of primes. Then the $\Sigma$-invariants of $\Gamma = \mathcal{B}(\mathcal{O}_S)$ satisfy the following.
\begin{enumerate}
\item $\Sigma^{\infty}(\Gamma) = S(\Gamma) \backslash \Delta_{\mathcal{G},\mathcal{B}}(S)$.
\item $\Sigma^k(\Gamma) \subseteq S(\Gamma) \backslash \Delta_{\mathcal{G},\mathcal{B}}(S)^{(k-1)}$ for every $k \in \N$.
\item If $\Phi$ is of type $A_{n+1}$, $C_{n+1}$, or $D_{n+1}$ and every $p \in S$ satisfies $p \geq 2^{n}$ in the $A_{n+1}$-case, respectively $p \geq 2^{2n+1}$ in the other two cases, then $\Sigma^k(\Gamma) = S(\Gamma) \backslash \Delta_{\mathcal{G},\mathcal{B}}(S)^{(k-1)}$ for every $k \in \N$.
\end{enumerate}
\end{theorem}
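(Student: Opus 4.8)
The plan is to realize $\Sigma^{k}(\Gamma)$ geometrically through the diagonal action of $\Gamma=\mathcal{B}(\mathcal{O}_S)$ on $X_S=\prod_{p\in S}X_p$ and then transport the trichotomy of Theorem~\ref{thm:A-new} (and its packaged form Corollary~\ref{cor:A}) along the linear isomorphism $\hgt\colon\Hom(\Gamma,\R)\to X_S^{\ast}$ of~\eqref{eq:associated-heights}. First I would check the hypotheses of Theorem~\ref{thm:sigmas-and-stabilizers}: a finite product of $\CAT(0)$ Euclidean buildings is a contractible Euclidean building in the sense of Definition~\ref{def:building}, and $X_S$ is thick since $\thick(X_S)=\min_{p\in S}(p+1)\geq 3$; by Proposition~\ref{prop:cocomp-of-borel-over-R} the $\Gamma$-action is cocompact and by Proposition~\ref{prop:stabilizer-of-type-F-infty} every cell stabilizer is of type $F_{\infty}$, so $\Gamma$ is of type $F_{\infty}$ and every $\Sigma^{k}(\Gamma)$ is defined. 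For a nontrivial character $\chi$, Corollary~\ref{cor:equivariance-of-height} says $h_{\chi}\in X_S^{\ast}$ is a height function associated to $\chi$, so Theorem~\ref{thm:sigmas-and-stabilizers} gives, for every $k\in\N$,
\[
[\chi]\in\Sigma^{k}(\Gamma)\iff \big((X_S)_{h_{\chi}\geq r}\big)_{r\in\R}\ \text{is essentially}\ (k-1)\text{-connected}.
\]

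Next I would line up the combinatorial data of Section~\ref{sec:the-geom-main}. Taking $X=X_S$, $\Sigma=\Sigma_S$, the special vertex $v=(o_p)_{p\in S}$ and the chamber $\sigma=\sigma_S$, the sector $\overline{K_{v}(\sigma_S)}=\prod_{p}\overline{K_{o_p}(\sigma_p)}$ is cut out precisely by the linear forms $\kappa_{\alpha,p}\circ\pr_p$ ($\alpha\in\Delta$, $p\in S$), because $\mathcal{K}$ is defined by $\kappa^{\ast}(\cdot,\alpha)>0$ for $\alpha\in\Delta$. Hence the basis $\mathcal{B}_{\sigma_S,v}$ of $X_S^{\ast}$ used in Section~\ref{sec:the-geom-main} equals $B_{\mathcal{G},\mathcal{B}}^{{ht}}(S)=\{\hgt_{\alpha,p}\}$, so the homeomorphism $S(\Gamma)\to S(X_S^{\ast})$ induced by $\hgt$ sends $\Delta_{\mathcal{G},\mathcal{B}}(S)^{(k)}$ onto $\Delta_{\sigma_S,v}^{(k)}$ for every $k$, and $\Delta_{\mathcal{G},\mathcal{B}}(S)$ onto $\Delta_{\sigma_S,v}$ (these being the respective full convex hulls, since $\dim\Hom(\Gamma,\R)=\dim X_S=\abs{S}\cdot\abs{\Delta}$). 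I would also record that $\mathcal{G}=\mathcal{G}(\Phi,\rho,\Q)$ acts strongly transitively on each $X_p$ through the Tits system of Subsection~\ref{subsection:from-ValRtSys-To-BN}, so $\Aut(X_S)$ acts strongly transitively on $X_S$, and Lemma~\ref{lem:lifting-shifts} then provides, for each $\chi\neq 0$, an $\alpha\in\Aut(X_S)$ with $\alpha(\sigma_S)=\sigma_S$, $\alpha(\Sigma_S)=\Sigma_S$ and $h_{\chi}(\alpha(x))=h_{\chi}(x)+a$ for some $a\neq 0$.

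With this dictionary the three assertions follow. For (1): if $[\chi]\notin\Delta_{\mathcal{G},\mathcal{B}}(S)$ then $[h_{\chi}]\notin\Delta_{\sigma_S,v}$, so by Theorem~\ref{thm:A-new}(1) every $(X_S)_{h_{\chi}\geq r}$ is contractible, whence $[\chi]\in\Sigma^{k}(\Gamma)$ for all $k$, i.e.\ $[\chi]\in\Sigma^{\infty}(\Gamma)$; conversely, writing $d=\dim X_S$, if $[\chi]\in\Delta_{\mathcal{G},\mathcal{B}}(S)=\Delta_{\mathcal{G},\mathcal{B}}(S)^{(d-1)}$, Theorem~\ref{thm:A-new}(3) (using the $\alpha$ above) shows the system is not essentially $(d-1)$-acyclic, hence not essentially $(d-1)$-connected, so $[\chi]\notin\Sigma^{d}(\Gamma)\supseteq\Sigma^{\infty}(\Gamma)$. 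For (2): if $[\chi]\in\Delta_{\mathcal{G},\mathcal{B}}(S)^{(k-1)}$ then $[h_{\chi}]\in\Delta_{\sigma_S,v}^{(k-1)}$, and Theorem~\ref{thm:A-new}(3) again gives that the system is not essentially $(k-1)$-acyclic, so $[\chi]\notin\Sigma^{k}(\Gamma)$. For (3): once $X_S$ is known to satisfy $(\SOL)$ under the stated hypotheses, Corollary~\ref{cor:A} (applied with $k-1$ in place of $k$) gives $[\chi]\in\Sigma^{k}(\Gamma)\iff[h_{\chi}]\notin\Delta_{\sigma_S,v}^{(k-1)}\iff[\chi]\notin\Delta_{\mathcal{G},\mathcal{B}}(S)^{(k-1)}$, the asserted equality.

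The hard part will be the $(\SOL)$ verification for (3). The links of $X_S$ are joins of links of the factors, so by the formula $\Opp(\ast_i c_i)=\ast_i\Opp(c_i)$ together with Lemma~\ref{lem:connected-joins} it suffices that each $X_p$ satisfies $(\SOL)$, i.e.\ that every iterated link of $X_p$ satisfies $(\SO)$. Every such iterated link is a join of thick irreducible spherical buildings whose types are subdiagrams of the affine diagram $\widetilde{X}_{n+1}$, hence lie in $\{A_m,C_m,D_m: m\leq n+1\}$, and none is an exceptional $C_3$ building since the Bruhat--Tits building of the split group $\mathcal{G}$ and all its links are of classical type; moreover the thickness of each such link equals $\thick(X_p)=p+1$, as the residue field of $\Q$ at $v_p$ is $\F_p$. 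Example~\ref{ex:SOL}, applied to the irreducible join-factors (the worst case being type $A_{n+1}$, $C_{n+1}$ or $D_{n+1}$), then demands $p+1\geq 2^{n}+1$ in the $A_{n+1}$ case and $p+1\geq 2^{2n+1}+1$ in the other two, which are exactly the bounds in the statement. So the main obstacle is pinning down the types and thicknesses of all iterated links of the $p$-adic building and matching the residue-field bound to Example~\ref{ex:SOL} (in particular ruling out exceptional $C_3$ links); parts (1) and (2), and the reduction of (3) to Corollary~\ref{cor:A}, are then formal.
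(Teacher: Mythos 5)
Your proposal is correct and follows essentially the same route as the paper: the action of $\Gamma$ on $X_S$, Theorem~\ref{thm:sigmas-and-stabilizers} fed by Propositions~\ref{prop:cocomp-of-borel-over-R} and~\ref{prop:stabilizer-of-type-F-infty}, the dictionary $\hgt$ identifying $\Delta_{\mathcal{G},\mathcal{B}}(S)^{(k)}$ with the corresponding convex hulls in $S(X_S^{\ast})$, and Theorem~\ref{thm:A-new} together with Example~\ref{ex:SOL} for the $(\SOL)$ input. The only cosmetic deviation is that you obtain the translating automorphism from strong transitivity via Lemma~\ref{lem:lifting-shifts} and Corollary~\ref{cor:A}, whereas the paper takes the explicit torus translation supplied by Lemma~\ref{lem:cocompactness-torus}.
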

\begin{proof}
Let $\chi \colon \Gamma \rightarrow \R$ be a character and let $\hgt_{\chi} \colon X_S \rightarrow \R$ be its associated height function as given in~\eqref{eq:associated-heights}.
From Corollary~\ref{cor:equivariance-of-height} we know that $\hgt_{\chi}$ is an equivariant extension of $\chi$.
Since $\Gamma$ acts cocompactly on $X_S$ by Proposition~\ref{prop:cocomp-of-borel-over-R} and has cell stabilizers of type $F_{\infty}$ by Proposition~\ref{prop:stabilizer-of-type-F-infty}, we can use Theorem~\ref{thm:sigmas-and-stabilizers} to determine the $\Sigma$-invariants of $\Gamma$.
For each $k \in \N_{0}$ we therefore have $\chi \in \Sigma^{k+1}(\Gamma)$ if and only if $((X_S)_{\hgt_{\chi} \geq r})_{r \in \R}$ is essentially $k$-connected.
We want to apply Theorem~\ref{thm:A-new}, our geometric main result, to compute the essential connectivity properties of $((X_S)_{\hgt_{\chi} \geq r})_{r \in \R}$.
To this end, recall from Lemma~\ref{lem:relations-char-height-delta}, that $\hgt_{\chi}$ is $\rho_{\Sigma_S,\sigma_S}$-invariant, where $\Sigma_S$ and $\sigma_S \subseteq \pinf \Sigma_S$ are as in Section~\ref{subsec:ext-char}.
Let $X_S^{\ast} = (X_S)_{\Sigma_S,0}^{\ast}$, where $0$ is the origin of $\Sigma_S$, and let $\mathcal{B}(\sigma_S) \subseteq S(X_S^{\ast})$ denote the set of classes $\alpha_P$ of functions that are negative on the sector $K_0(\sigma_S)$ and constant on $K_0(P)$ for some panel $P$ of $\sigma_S$.
Following the notation in Subsection~\ref{subsec:geo-main}, we write $\Delta(\sigma_S)^{(k)}$ to denote the $(k+1)$-convex hull of $\mathcal{B}(\sigma_S)$ in $S(X_S^{\ast})$.
An application of Theorem~\ref{thm:A-new} to the present setting now gives us the following:
\begin{enumerate}
\item If $\hgt_{\chi}$ does not represent an element of $\Delta(\sigma_S)$, then $((X_S)_{\hgt_{\chi} \geq r})_{r \in \R}$ is contractible for every $r \in \R$.
\item Suppose that $X_S$ satisfies $(\SOL)$.
If $\hgt_{\chi}$ does not represent a class in $\Delta(\sigma_S)^{(k)}$, then $((X_S)_{\hgt_{\chi} \geq r})_{r \in \R}$ is essentially $k$-connected.
\item Suppose there is an $\alpha \in \mathcal{B}$ with $\alpha(\Sigma_S) = \Sigma_S$ such that $h(\alpha(x)) = h(x) + a$ for some $a \in \R \setminus \{0\}$ and every $x \in X$.
If $\hgt_{\chi}$ represents a class in $\Delta(\sigma_S)^{(k)}$, then $((X_S)_{\hgt_{\chi} \geq r})_{r \in \R}$ is not essentially $k$-acyclic.
\end{enumerate}
From the construction of the functions $\hgt_{\alpha,p}$, given in~\eqref{eq:hgt-alpha-p}, it follows that their union $B_{\mathcal{G},\mathcal{B}}^{{ht}}(S)$ is a system of representatives of $\mathcal{B}(\sigma_S)$.
On the other hand, $\hgt_{\chi}$ is the image of the isomorphism $\hgt \colon \Hom(\Gamma,\R) \rightarrow X_S^{\ast}$ that is induced by the bijection
\[
B_{\mathcal{G},\mathcal{B}}(S) \rightarrow B_{\mathcal{G},\mathcal{B}}^{{ht}}(S),\ \chi_{\alpha,p} \mapsto \hgt_{\alpha,p}.
\]
As a consequence, we see that $[\hgt_{\chi}] \in \Delta(\sigma_S)^{(k)}$ if and only if $[\chi] \in \Delta_{\mathcal{G},\mathcal{B}}(S)^{(k)}$.
Note that Lemma~\ref{lem:cocompactness-torus} provides us with an element $\alpha \in \mathcal{B}$ as in the third point.
Thus it remains to show that $X_S$ satisfies $(\SOL)$ if $\Phi$
is of type $A_{n+1}$, $C_{n+1}$, or $D_{n+1}$ and every $p \in S$ satisfies $p \geq 2^{n}$ in the $A_{n+1}$-case, respectively $p \geq 2^{2n+1}$ in the other two cases.
Since $\thick(X_S) = \thick(X_p) = p+1$, this is exactly what is needed in Example~\ref{ex:SOL} to deduce that $X_S$ satisfies $(\SOL)$, which completes the proof.
\end{proof}

As mentioned in the introduction, Theorem~\ref{thm:B} partially confirms the following conjecture of Witzel.

\begin{conjecture}\label{conjecture:large-factors}
Let $\mathcal{G} = \mathcal{G}(\Phi,\rho,\Q)$ be a Chevalley group, let $\Delta \subseteq \Phi$ be a system of simple roots and let $\mathcal{B} \subseteq \mathcal{G}$ be the corresponding Borel subgroup.
Let $S \subseteq \N$ be a finite set of primes.
Then the $\Sigma$-invariants of $\Gamma = \mathcal{B}(\mathcal{O}_S)$ satisfy $\Sigma^k(\Gamma) = S(\Gamma) \backslash \Delta_{\mathcal{G},\mathcal{B}}(S)^{(k-1)}$ for every $k \in \N$.
\end{conjecture}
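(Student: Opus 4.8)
The conjecture strengthens Theorem~\ref{thm:B} in two ways: it drops the lower bound on the primes in part~(3), and it allows root systems $\Phi$ of exceptional type. The plan is to observe first that almost all of the proof of Theorem~\ref{thm:B} is already unconditional. The type of $\Phi$ is used only at the very end, to invoke Example~\ref{ex:SOL}; and the inclusion $\Sigma^k(\Gamma) \subseteq S(\Gamma) \setminus \Delta_{\mathcal{G},\mathcal{B}}(S)^{(k-1)}$ holds with no hypothesis at all, since Theorem~\ref{thm:A-new}(3) needs only a translating automorphism --- supplied by Lemma~\ref{lem:cocompactness-torus} --- together with thickness of $X_S$. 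Consequently the entire remaining content of the conjecture is the reverse inclusion $S(\Gamma) \setminus \Delta_{\mathcal{G},\mathcal{B}}(S)^{(k-1)} \subseteq \Sigma^k(\Gamma)$, which by Corollary~\ref{cor:equivariance-of-height} and Theorem~\ref{thm:sigmas-and-stabilizers} amounts to: whenever $h \in X_S^{\ast}$ has class outside $\Delta(\sigma_S)^{(k-1)}$, the system $((X_S)_{h \geq r})_{r \in \R}$ is essentially $(k-1)$-connected. I would prove this for \emph{all} thick Euclidean buildings $X$ of the relevant product shape; note that the buildings $X_S$ here always have thickness $p+1 \ge 3$.

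Next I would isolate where the thickness bound is really spent. The reduction machinery of Section~\ref{sec:the-geom-main}, in particular Corollary~\ref{cor:reduction}, is type- and thickness-free and replaces $((X_S)_{h\ge r})_r$ by the system of a generic height function on a parabolic building $X_\tau$ whose links are links of $X_S$. So it suffices to prove Theorem~\ref{thm:essentially-connected} without the hypothesis $(\SOL)$: for generic $h$ the superlevelset system of any thick Euclidean building $X$ should be essentially $(\dim X - 2)$-connected. Tracing Section~\ref{sec:pos-dir}, $(\SOL)$ enters at a single point --- Lemma~\ref{lem:remove-one-chamber}, via Proposition~\ref{prop:lower-complex-mono} and Corollary~\ref{cor:homotopy-lower-complex} --- where the $(\dim X - 2)$-connectivity of the intermediate complexes $\widehat{U_h(r)}$ is deduced from sphericity of the opposition complexes $\Opp_{\lk_X(A)}(\pr_A(\sigma))$ that occur as descending links. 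Abramenko's Theorem~\ref{thm:connectivity-of-opp} supplies this only above a thickness threshold, and Proposition~\ref{prop:existence-of-opposite-app} only yields an apartment (hence non-trivial top homology) when the thickness dominates $\#\Ch(\Sigma)$, a bound useless for fixed small thickness and growing rank. Thus the conjecture is reduced to a purely building-theoretic statement, essentially the sub-conjecture that \emph{every thick spherical building satisfies $(\SOL)$}; and, should that literal form fail for buildings of small thickness and small rank, to a suitable essential-connectivity substitute --- that the directed system $(\widehat{U_h(r)})_r$ still carries enough connectivity for the sandwich $(X_S)_{h\ge r}\to\widehat{U_h(r-\varepsilon)}\to(X_S)_{h\ge r-\varepsilon}$ of Theorem~\ref{thm:essentially-connected} to force essential $(\dim X - 2)$-connectivity.

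For the building-theoretic step I would argue by induction on $\dim\Delta$, via a discrete Morse function on $\Opp_\Delta(C)$ (gallery distance to $C$, or a Busemann-type function as in~\cite{BuxWortman11}) whose descending links, up to joins with the spheres $\partial A$, are opposition complexes in links of $\Delta$ together with \emph{generalized Schulz complexes}: preimages of open hemispheres under the retraction onto an apartment centred at a chamber. For the degenerate (point) case Schulz's theorem~\cite{Schulz13} gives the needed $(\dim - 1)$-connectivity for arbitrary thickness and would anchor the induction; the inductive step demands the analogous statement for genuine generalized Schulz complexes in the low-thickness regime. This is exactly where I expect the main obstacle to lie, and it is the difficulty the present paper already flags: in that regime a generalized Schulz complex need not be connected enough for the naive Morse function, so one is forced to work with \emph{essential} connectivity throughout --- exploiting the freedom to push the auxiliary special vertex $v$ of Corollary~\ref{cor:homotopy-lower-complex} off to infinity, and redesigning the Morse function so that the deficiency of each descending link is annihilated after finitely many further steps. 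Once such an $(\SOL)$-type input (literal or essential) is available, it plugs into Theorem~\ref{thm:essentially-connected}, hence into Corollary~\ref{cor:reduction} and the argument of Theorem~\ref{thm:B}, verbatim, and the extension to arbitrary $\Phi$ is then automatic because that argument never uses the type of $\Phi$ for anything else.
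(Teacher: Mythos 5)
There is no proof to compare against: the statement you address is stated in the paper only as a conjecture (attributed to Witzel), and the paper itself proves just the restricted version, Theorem~\ref{thm:B}. Your proposal, read as a proof, has a genuine gap exactly at the point you yourself flag. Your bookkeeping of where the hypotheses enter is accurate: the inclusion $\Sigma^k(\Gamma) \subseteq S(\Gamma)\setminus\Delta_{\mathcal{G},\mathcal{B}}(S)^{(k-1)}$ is indeed unconditional (Theorem~\ref{thm:A-new}(3) plus Lemma~\ref{lem:cocompactness-torus} and thickness $p+1\geq 3$), the reduction via Corollary~\ref{cor:reduction} is type- and thickness-free, and the type of $\Phi$ and the prime bounds are consumed solely through Example~\ref{ex:SOL} feeding $(\SOL)$ into Lemma~\ref{lem:remove-one-chamber}, Proposition~\ref{prop:lower-complex-mono}, and Corollary~\ref{cor:homotopy-lower-complex}. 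But having isolated this, you do not prove the remaining positive direction; you reduce the conjecture to the assertion that every thick spherical building satisfies $(\SOL)$, ``or a suitable essential-connectivity substitute,'' and then only sketch a strategy (a Morse function on $\Opp_\Delta(C)$ whose descending links involve generalized Schulz complexes, anchored by Schulz's theorem in the degenerate case). That reduction target is itself the open problem, not a lemma one can cite: Theorem~\ref{thm:connectivity-of-opp} is stated with thickness thresholds and excludes exceptional $C_3$ buildings precisely because sphericity of opposition complexes is not known (and in low-thickness/exceptional cases is expected or known to fail) below those bounds, and Proposition~\ref{prop:existence-of-opposite-app} only helps when the thickness exceeds the number of chambers in an apartment, which, as you note, is useless here. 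So the ``literal or essential $(\SOL)$-type input'' on which your final paragraph hinges is never established.

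In short, what you have written is a correct and useful analysis of why the conjecture reduces to a single building-theoretic connectivity statement about generalized Schulz complexes / opposition complexes at small thickness, together with a plausible plan of attack; but the decisive step --- proving that statement, or the essential-connectivity surrogate strong enough to run the sandwich argument of Theorem~\ref{thm:essentially-connected} --- is missing, and it is exactly the difficulty the paper identifies as the reason the result is only proved under the hypotheses of Theorem~\ref{thm:B}(3). Until that step is supplied, the argument establishes nothing beyond what Theorem~\ref{thm:B} already gives.
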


\bibliographystyle{amsalpha}
\bibliography{Bibliography}

\end{document}